\newtheorem{theorem}{Theorem}[section]
\newtheorem{proposition}[theorem]{Proposition}
\newtheorem{lemma}[theorem]{Lemma}
\newtheorem{corollary}[theorem]{Corollary}
\newtheorem{conjecture}[theorem]{Conjecture}
\newtheorem{bigtheorem}{Theorem}
\newtheorem{bigconjecture}[bigtheorem]{Conjecture}
\newtheorem{Atheorem}{Theorem}[chapter]
\newtheorem{Acorollary}[Atheorem]{Corollary}
\theoremstyle{definition}
\newtheorem{definition}[theorem]{Definition}
\theoremstyle{remark}
\newtheorem{remark}[theorem]{Remark}
\newtheorem{Aremark}[Atheorem]{Remark}
\numberwithin{section}{chapter}
\numberwithin{equation}{section}
\newcommand{\map}[1]{\xrightarrow{#1}}
\newcommand{\mil}{\varprojlim}
\newcommand{\iso}{\cong}
\newcommand{\define}{\stackrel{\mathrm{def}}{=}}
\newcommand{\kk}{{\bm{k}}}
\newcommand{\inv}{\operatorname{inv}}
\newcommand{\Gal}{\operatorname{Gal}}
\newcommand{\Hom}{\operatorname{Hom}}
\newcommand{\Aut}{\operatorname{Aut}}
\newcommand{\End}{\operatorname{End}}
\newcommand{\Spec}{\operatorname{Spec}}
\newcommand{\Q}{\mathbb{Q}}
\newcommand{\Z}{\mathbb{Z}}
\newcommand{\R}{\mathbb{R}}
\newcommand{\C}{\mathbb{C}}
\newcommand{\F}{\mathbb{F}}
\newcommand{\A}{\mathbb{A}}
\renewcommand{\H}{\mathcal{H}}
\newcommand{\calS}{\mathcal{S}}
\newcommand{\calY}{\mathcal{Y}}
\newcommand{\calD}{\mathcal{D}}
\newcommand{\calH}{\mathcal{H}}
\newcommand{\calZ}{\mathcal{Z}}
\newcommand{\frakg}{\mathfrak{g}}
\newcommand{\frakz}{\mathfrak{z}}
\newcommand{\co}{\mathcal O}
\newcommand{\alg}{\mathrm{alg}}
\newcommand{\Lie}{\operatorname{Lie}}
\newcommand{\GS}{\mathrm{GS}}
\newcommand{\BKK}{\mathrm{pre}}
\newcommand{\CH}{\operatorname{CH}}
\newcommand{\chern}{\operatorname{\mathrm{c}_1}}
\newcommand{\Pic}{\operatorname{Pic}}
\newcommand{\vol}{\operatorname{vol}}
\newcommand{\Kra}{\mathrm{Kra}}
\newcommand{\Pap}{\mathrm{Pap}}
\newcommand{\nonexc}{\mathrm{nexc}}
\newcommand{\taut}{\mathcal{L}}
\newcommand{\tautmod}{\mathcal{K}}
\newcommand{\zxz}[4]{\begin{pmatrix} #1 & #2 \\ #3 & #4 \end{pmatrix}}
\newcommand{\leg}[2]{\left( \frac{#1}{#2} \right)}
\newcommand{\kzxz}[4]{\left(\begin{smallmatrix} #1 & #2 \\ #3 & #4\end{smallmatrix}\right) }
\newcommand{\eps}{\varepsilon}
\newcommand{\norm}{\operatorname{N}}
\newcommand{\tr}{\operatorname{tr}}
\newcommand{\sgn}{\operatorname{sgn}}
\newcommand{\bs}{\backslash}
\newcommand{\Orth}{\operatorname{O}}
\newcommand{\Uni}{\operatorname{U}}
\newcommand{\GL}{\operatorname{GL}}
\newcommand{\SO}{\operatorname{SO}}
\newcommand{\SL}{\operatorname{SL}}
\newcommand{\dv}{\operatorname{div}}
\newcommand{\Div}{\operatorname{Div}}
\newcommand{\reg}{\operatorname{reg}}
\begin{document}

\frontmatter

\title[Arithmetic volumes of unitary Shimura varieties]{Arithmetic volumes of unitary Shimura varieties}


\author[Jan H.~Bruinier]{Jan Hendrik Bruinier}
\address{Fachbereich Mathematik, Technische Universit\"at Darmstadt, Schlossgartenstrasse 7, D--64289 Darmstadt, Germany}
\email{bruinier@mathematik.tu-darmstadt.de}
\thanks{J.B. was  supported in part by the LOEWE research unit USAG and by the
DFG Collaborative Research Centre TRR 326 ``Geometry and Arithmetic of Uniformized Structures'', project number 444845124.}

\author[Benjamin Howard]{Benjamin Howard}\address{Department of Mathematics, Boston College, 140 Commonwealth Ave, Chestnut Hill, MA 02467, USA}
\email{howardbe@bc.edu}
\thanks{B.H. was supported in part by NSF grants DMS-1801905 and DMS-2101636.}

\date{}

\subjclass[2020]{Primary: 11G18, 14G35}

\keywords{Arithmetic intersection theory, Shimura varieties}


\begin{abstract}
The integral model of a $\mathrm{GU}(n-1,1)$ Shimura variety  carries a universal abelian scheme over it,
and the  dual top exterior power of its Lie algebra carries a natural hermitian metric.  We express the arithmetic volume of this metrized line bundle, defined as  an iterated self-intersection in the arithmetic Chow ring, in terms of logarithmic derivatives of Dirichlet $L$-functions. We also determine the arithmetic volumes of Kudla-Rapoport divisors and relate them to coefficients of Eisenstein series. 
\end{abstract}

\maketitle

\tableofcontents


\mainmatter


\chapter{Introduction}





The explicit calculation of arithmetic volumes of Shimura varieties began with the work of K\"uhn \cite{kuhn} and Bost (independently, in unpublished work).  Over the moduli stack $\mathcal{X} \to \Spec(\Z)$ of elliptic curves there is a  line bundle of weight one modular forms, characterized as the dual  Lie algebra of the universal elliptic curve.  
This line bundle carries a natural hermitian metric, and so determines a class in the codimension one arithmetic Chow group of $\mathcal{X}$  of Gillet-Soul\'e (as extended by Burgos-Kramer-K\"uhn).  Working on a suitable compactification, K\"uhn and Bost computed the self-intersection multiplicity of this hermitian line bundle, and gave a simple formula for it in terms of the logarithmic derivative of the Riemann zeta function at $s=-1$.

More generally, if $\mathcal{X}$ is an  integral model of a PEL type (or even Hodge type) Shimura variety, then $\mathcal{X}$ carries a \emph{Hodge bundle}: the dual of the top exterior power of the Lie algebra of the universal abelian scheme over $\mathcal{X}$.  The Hodge bundle again carries a natural hermitian metric, and one can ask if its arithmetic volume, defined as the  $\mathrm{dim}(\mathcal{X})$-fold self-intersection in the arithmetic Chow group, is again related to logarithmic derivatives of Dirichlet $L$-functions.

In some instances this is known. 
 If $\mathcal{X}$ is the integral model of a quaternionic Shimura curve over $\Q$, the volume was computed by Kudla-Rapoport-Yang \cite{KRY}.   Hilbert modular surfaces and the Siegel threefold were considered by Bruinier-Burgos-K\"uhn \cite{BBK} and Jung-von Pippich \cite{JvP}, respectively.
 The arithmetic volumes of $\mathrm{GSpin}(n,2)$   Shimura varieties, which include all of the above examples as special cases, were computed (up to a $\Q$-linear combination of logarithms of certain bad primes)  by H\"ormann \cite{hormann}.
Most recently, Yuan \cite{Yuan}  extended the above mentioned result of Kudla-Rapoport-Yang 
(using completely different methods)
 to all  quaternionic Shimura curves over totally real fields.

In \cite{FS}  Freixas and Sankaran study arithmetic intersections on a Shimura curve and on a twisted Hilbert modular surface over the same real quadratic base field. They relate the corresponding arithmetic volumes via the arithmetic Riemann-Roch Theorem and the Jacquet-Langlands correspondence.
Maillot and R\"ossler \cite{MR} employ the equivariant Grothendieck-Riemann-Roch Theorem in Arakelov geometry to derive general conjectures relating Chern classes of certain hermitian vector bundles in Arakelov geometry to logarithmic derivatives of Artin $L$-functions of number fields. See in particular Section~8 of \cite{MR} for a discussion of the relationship with some of the above results on arithmetic volumes of Shimura varieties.
Finally, we mention the work of Andreasson and Berman, who study arithmetic varieties for which the log canonical bundle (or its dual) with respect to a suitable divisor is relatively ample \cite{AB}. Under some stability condition, they equip  this bundle with a K\"ahler-Einstein metric and study various properties of the corresponding arithmetic volume. In the special case when the arithmetic variety is $\mathbb{P}^1_\Z$ they use their results to obtain different proofs of some of the above results for Shimura curves of genus zero, see Section 9 of \cite{AB}.

The present paper deals with the case of $\mathrm{GU}(n-1,1)$ Shimura varieties.
 The overall strategy is closely related to that of \cite{BBK} and \cite{hormann}, insofar as it ultimately rests on the use of Borcherds products.  However,  Borcherds products will make no explicit appearance in this paper;  all the arithmetic information we need from the theory has already been extracted  in \cite{BHKRY-1}, and is recalled here in \S \ref{ss:BHKRYmodularity}.


\section{The volume formulas}


Let $\kk\subset \C$ be an imaginary quadratic field of odd discriminant 
\[
-D = \mathrm{disc}(\kk).
\]
Given an integer $n\ge 1$, there is a regular  Deligne-Mumford stack 
\[
\mathcal{M}_{(n-1,1)} \to \Spec(\co_\kk),
\]
 flat of relative dimension $n-1$,  
 parametrizing principally polarized abelian schemes $A$, endowed with an action  $\co_\kk\to \End(A)$,  and extra data encoding a signature $(n-1,1)$ condition on $\Lie(A)$.
See \S \ref{ss:basic moduli} for details.

This stack admits a decomposition 
\[
\mathcal{M}_{(n-1,1)}  = \bigsqcup_W \mathcal{M}_W
\]
into open and closed substacks indexed by strict similarity classes of relevant $\kk$-hermitian spaces $(W,h)$ of signature $(n-1,1)$,  in such a way that the generic fiber of $\mathcal{M}_W$ is a   Shimura variety  for the unitary similitude group $\mathrm{GU}(W)$.
Here \emph{relevant} means  that $W$ contains an $\co_\kk$-lattice $\mathfrak{a} \subset W$ that is self-dual, in the sense of \eqref{self-dual lattice}.
The notion of strict similarity is defined in \S \ref{ss:basic moduli}, but the important fact is that there is a dichotomy between the cases of $n$ odd and $n$ even.   
If $n$ is odd the  relevant $W$ form a single strict similarity class, and the disjoint union has a single term.
If $n$ is even then strict similarity is equivalent to isometry, and the number of terms in the disjoint union is $2^{o(D) -1}$, where
\begin{equation*}
o(D) = \# \{ \mbox{prime divisors of }D\}.
\end{equation*}

Fix a relevant $W$ as above, and denote by 
$
\pi: A \to \mathcal{M}_W
$
the restriction to $\mathcal{M}_W$ of the universal abelian scheme over $\mathcal{M}_{(n-1,1)}$.
Its relative dimension is $n=\operatorname{dim}(A)$, and  its \emph{metrized Hodge bundle}
\[
\widehat{\omega}^\mathrm{Hdg}_{A / \mathcal{M}_W} \in \widehat{\Pic}(\mathcal{M}_W)
\]
is the line bundle
$
\omega^\mathrm{Hdg}_{A / \mathcal{M}_W} =  \pi_* \Omega^{\operatorname{dim}(A)}_{A / \mathcal{M}_W}  
$
  endowed with the  hermitian metric 
\begin{equation}\label{hodge metric}
\| s_z \|^2 =   \left|\frac{1}{ (2\pi i )^{\operatorname{dim}(A) }  }   \int_{A_z (\C)} s_z \wedge \overline{s}_z  \right|
\end{equation}
for any  $z\in \mathcal{M}_W(\C)$ and 
$
s_z\in \omega^\mathrm{Hdg}_{A / \mathcal{M}_W , z} \iso H^0( A_z , \Omega^{\operatorname{dim}(A_z) }_{A_z / \C } ) .  
$

The stack $\mathcal{M}_W$ has a  canonical toroidal compactification $\bar{\mathcal{M}}_W$,  with boundary a Cartier divisor smooth over $\co_\kk$. 
 Although the Hodge bundle has a distinguished extension to the  compactification, the metric on it does not extend smoothly.  Instead, the metric is pre-log singular along the boundary,  in the sense of  Definition 1.20 of \cite{BBK}.
The pre-log singular conditions  allow us to view  
\begin{equation}\label{introHodge}
\widehat{\omega}^\mathrm{Hdg}_{A / \mathcal{M}_W} \in   \widehat{\CH}^1( \bar{\mathcal{M}}_W ,\mathscr{D}_\BKK) 
\end{equation}
as a class in the codimension one arithmetic Chow group of  Burgos-Kramer-K\"uhn \cite{BKK}.  

These arithmetic Chow groups come with intersection  pairings, and distinguished linear functionals in codimensions $n-1$ and $n$.  The first of these linear functionals is the  \emph{complex degree}, denoted
\[
\deg_\C : \widehat{\CH}^{n-1}( \bar{\mathcal{M}}_W ,\mathscr{D}_\BKK)  \to \Q,
\]
defined by first sending an arithmetic cycle class to its image in the Chow group $\CH^{n-1}( \bar{\mathcal{M}}_{W/\C})$ of $0$-cycles on the complex fiber, and then taking the degree in the usual sense.
The second is the more subtle \emph{arithmetic degree}, denoted
\[
\widehat{\deg} :   \widehat{\CH}^n( \bar{\mathcal{M}}_W ,\mathscr{D}_\BKK)  \to \R. 
\]
We define the \emph{complex volume}
\[
\vol_\C (  \widehat{\omega}^\mathrm{Hdg}_{A / \mathcal{M}_W} )  
= 
\deg_\C \big( 
\underbrace{ \widehat{\omega}^\mathrm{Hdg}_{A / \mathcal{M}_W} \cdots \widehat{\omega}^\mathrm{Hdg}_{A / \mathcal{M}_W} }_{n-1 \ \mathrm{times} }  
\big) 
\]
and the
 \emph{arithmetic volume}
\[
\widehat{\vol} (  \widehat{\omega}^\mathrm{Hdg}_{A / \mathcal{M}_W} )  
= 
\widehat{\deg}\big( 
\underbrace{ \widehat{\omega}^\mathrm{Hdg}_{A / \mathcal{M}_W} \cdots \widehat{\omega}^\mathrm{Hdg}_{A / \mathcal{M}_W} }_{n \ \mathrm{times} }  
\big) 
\]
 as the complex and arithmetic degrees of the $(n-1)$-fold and $n$-fold iterated intersections, respectively.

Our main result is the calculation of these complex and arithmetic volumes in terms of Dirichlet $L$-functions.
To state it, for any place $\ell \le \infty$ denote by    
\[
\inv_\ell(W)= (\det W,-D)_\ell  \in \{ \pm 1\}
\]
 the local invariant of  $W$, where the right hand side is the usual Hilbert symbol.
Our assumption that $W$ contains a self-dual $\co_\kk$-lattice is equivalent to the condition that  $\mathrm{inv}_\ell(W) =1$ for all finite primes $\ell\nmid D$.  As  $\inv_\infty(W)=-1$, and the product of all local invariants is $1$, we obtain
\begin{equation}\label{signs 1}
\prod_{\ell\mid D}  \mathrm{inv}_\ell(W) 
=-1 .
\end{equation}

The quadratic Dirichlet character determined by  $\kk/\Q$ is denoted
\[
\eps:(\Z/D\Z)^\times \to \{ \pm 1\}.
\]
For an integer $k \ge 1$ set
\begin{equation}\label{a_k}
\mathbf{a}_k(s) =   \frac { D^{k/2} \Gamma ( s + k )  L(2s+k,\eps^k) }{2^k \pi^{s+k} } ,
\end{equation}
where, if $k$ is even,  we understand $L(s,\eps^k)=\zeta(s)$.    Define
\begin{align}
\mathbf{A}_W (s)  & =  \mathbf{a}_1(s) \cdots \mathbf{a}_n(s)  \nonumber \\
& \quad \times 
\begin{cases}
 \prod_{\ell \mid D} \big(   1+ \leg{-1}{\ell}^{\frac{n}{2} }  \mathrm{inv}_\ell(W) \ell^{ -s- \frac{n}{2} }   \big) & \mbox{if $n$ is even} \\
 1 & \mbox{if $n$ is odd.}
\end{cases}
\label{A_V}
\end{align}
If $n$ is odd there is no dependence on $W$ beyond its dimension.
We note that all $\mathbf{a}_{k}(0)$  are positive rational numbers, and hence so is $\mathbf{A}_W(0)$.

The following is our main result.  It  appears in the text as Theorem \ref{thm:final hodge}.

\begin{bigtheorem}\label{thm:intro main}
The metrized Hodge bundle has  complex volume
\[
\vol_\C (  \widehat{\omega}^\mathrm{Hdg}_{A / \mathcal{M}_W} ) 
 =
 \mathbf{A}_W (0) \cdot
 \begin{cases}
 2^{n-1  }    & \mbox{if $n$ is odd } \\
2^{n-o(D)  }   & \mbox{if $n$ is even}
\end{cases}
\]
and arithmetic volume
\[
\widehat{\vol}  ( \widehat{\omega}^\mathrm{Hdg}_{A/ \mathcal{M}_W  } )
= 
  \left(  2 \frac{  \mathbf{A}_W'(0) }{ \mathbf{A}_W(0) }    - nC_0(n)  
  + \log(D)  \right)  \cdot     \vol_\C (  \widehat{\omega}^\mathrm{Hdg}_{A / \mathcal{M}_W} ) ,
\]
where we have set
\[
C_0 (n) = 2  \log\left( \frac{4\pi e^\gamma}{ \sqrt{D} } \right)     +     (n-4)  \left(   \frac{L'(0,\eps)}{L(0,\eps)}  +  \frac{\log(D)}{2}    \right) .
\]
Here $\gamma=-\Gamma'(1)$ is the Euler-Mascheroni constant.
\end{bigtheorem}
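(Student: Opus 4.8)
The plan is to prove both volume formulas by induction on $n$, using the theory of Borcherds products to reduce the $n$-dimensional intersection problem to $(n-1)$-dimensional ones on smaller unitary Shimura varieties embedded as special divisors in $\bar{\mathcal{M}}_W$. The base case $n = 2$ should be essentially the K\"uhn--Kramer calculation adapted to the unitary setting (or one of the cases treated in \cite{KRY}, \cite{BBK}), recording the complex and arithmetic volumes of a modular-type curve in terms of $L'(0,\eps)/L(0,\eps)$ and $\zeta'(-1)/\zeta(-1)$. The inductive mechanism is the following: one constructs a Borcherds product $\psi$ whose divisor on $\bar{\mathcal{M}}_W$ is a linear combination of special divisors $\mathcal{Z}(m)$ (themselves finite disjoint unions of $\mathcal{M}_{W'}$ for smaller hermitian spaces $W'$ of signature $(n-2,1)$), a multiple of the boundary divisor, and--crucially in the unitary case--a complicated but explicitly known linear combination of \emph{vertical} divisors supported at primes $\ell \mid D$. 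The logarithmic norm $-\log\|\psi\|^2$ is the Green function, so $\psi$ gives an identity in $\widehat{\mathrm{CH}}^1(\bar{\mathcal{M}}_W, \mathscr{D}_\BKK)$ expressing a multiple of $\widehat{\omega}^\mathrm{Hdg}$ as a combination of these divisor classes plus an explicit archimedean constant coming from the constant term of the input harmonic Maass form.

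The key steps, in order, are as follows. \textbf{Step 1:} Recall from \cite{BHKRY-1} (and the surrounding literature) the precise statement of the Borcherds product isomorphism on $\bar{\mathcal{M}}_W$, including the multiplicities of the vertical components at bad primes; this input is the technical heart and is quoted, not reproved. \textbf{Step 2:} Intersect the resulting identity $c\cdot\widehat{\omega}^\mathrm{Hdg} = \sum_m c_m \widehat{\mathcal{Z}}(m) + (\text{boundary}) + (\text{vertical}) + (\text{archimedean constant})$ with $(\widehat{\omega}^\mathrm{Hdg})^{n-1}$. The boundary contributions are handled by a separate computation of $\widehat{\omega}^\mathrm{Hdg}|_{\text{boundary}}$ (the Hodge bundle restricted to the boundary is trivial or torsion, killing most such terms), and the vertical contributions contribute only $\Q$-linear combinations of $\log\ell$ for $\ell\mid D$, whose total must be organized into the clean $\log(D)$ term. \textbf{Step 3:} For each special divisor $\mathcal{Z}(m)$, identify its metrized Hodge bundle with the pullback of $\widehat{\omega}^\mathrm{Hdg}$ on the smaller stack (up to an explicit twist by the normal bundle / a Kudla--Millson-type correction), so that $\widehat{\deg}(\widehat{\mathcal{Z}}(m)\cdot (\widehat{\omega}^\mathrm{Hdg})^{n-1})$ is expressed via the inductive hypothesis applied to the $W'$ appearing in $\mathcal{Z}(m)$. \textbf{Step 4:} Assemble everything: the complex volume formula follows from the archimedean (Chern form) part of the same identity, integrated against $\chern(\widehat{\omega}^\mathrm{Hdg})^{n-2}$ over $\mathcal{Z}(m)(\C)$, and reduces to a combinatorial identity among the $\mathbf{a}_k(0)$ and the local factors at $\ell\mid D$ encoded in $\mathbf{A}_W(0)$. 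The arithmetic volume formula then follows by matching the logarithmic-derivative terms: $2\mathbf{A}_W'(0)/\mathbf{A}_W(0)$ collects the $L'/L$ contributions from all the $\mathbf{a}_k$ and the Euler factors, $-nC_0(n)$ absorbs the archimedean constants (the $4\pi e^\gamma/\sqrt D$ and $\Gamma$-factor derivatives) together with the contribution of the Chern form normalization, and $\log(D)$ is the net vertical contribution.

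I would set up the bookkeeping so that both formulas are proved simultaneously by a single induction, since the complex volume appears as a factor in the arithmetic volume and the inductive step for the latter requires knowing the former in dimension $n-1$. It is cleanest to introduce auxiliary quantities---say the ``arithmetic degree of $\widehat{\mathcal{Z}}(m)$ against powers of $\widehat{\omega}^\mathrm{Hdg}$''---and derive recursions for them from the Borcherds identity, then solve the recursions in closed form and check that the solution matches the claimed $\mathbf{A}_W$-expressions. The matching at the end is a finite computation with $L$-functions: one must verify that the recursion $\mathbf{A}_W(s)$ satisfies (coming from the relation between the Eisenstein series/derivative-of-Eisenstein-series coefficients $c_m$ and the factors $\mathbf{a}_k$) is exactly the product formula \eqref{A_V}, and that differentiating at $s = 0$ reproduces the stated constants.

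The main obstacle will be \textbf{Step 2--3 at the bad primes}: keeping exact track of the vertical divisors in the Borcherds product divisor, their intersection with $(\widehat{\omega}^\mathrm{Hdg})^{n-1}$, and the contribution of the special divisors $\mathcal{Z}(m)$ whose own bad-reduction behavior (and whose decomposition into strict-similarity classes of $W'$, with the parity dichotomy between $n$ odd and $n$ even) feeds into the induction. The archimedean side is comparatively routine---it is the familiar Borcherds/Kudla-Millson machinery---but assembling the vertical terms from \cite{BHKRY-1} into precisely the coefficient $\log(D)$ (independent of $W$ and of the number of bad primes, which is surprising) requires a genuine computation, and getting the local Euler factors at $\ell\mid D$ in $\mathbf{A}_W(s)$ to come out with the correct signs $\inv_\ell(W)$ and $\leg{-1}{\ell}^{n/2}$ is where sign errors are most likely to hide. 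A secondary difficulty is verifying that the distinguished extension of $\widehat{\omega}^\mathrm{Hdg}$ to $\bar{\mathcal{M}}_W$ is compatible with restriction to the $\mathcal{Z}(m)$ and their own compactifications, so that the induction is actually on the correct objects; this is where the pre-log-singularity estimates and the results of \cite{BKK} on the arithmetic intersection pairing enter.
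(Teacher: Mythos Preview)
Your overall strategy---induction on $n$ via Borcherds products, with the base case $n=2$ coming from K\"uhn/Kramer/Kudla--Rapoport--Yang---matches the paper's. But there is a genuine structural gap in your Step~2 that the paper resolves with substantial extra machinery.

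The Borcherds product $\psi(f)$ is a rational section of $\taut_V^{\otimes k(f)}$, the line bundle of weight-one modular forms, \emph{not} of a power of the Hodge bundle $\omega^{\mathrm{Hdg}}_{A/\mathcal{M}_W}$. These two line bundles are not the same, and the identity you write as ``$c\cdot\widehat{\omega}^{\mathrm{Hdg}} = \sum_m c_m\widehat{\mathcal{Z}}(m)+\cdots$'' does not follow from the existence of $\psi(f)$. The paper devotes all of \S\ref{s:special shimura bundles} to proving the comparison (Theorem~\ref{thm:taut-hodge compare}): up to numerical equivalence,
\[
2\widehat{\taut}_V \;=\; \widehat{\omega}^{\mathrm{Hdg}}_{A/\mathcal{S}_V} \;+\; (\mathrm{Exc}_V,\, C_0(n)),
\]
where $\mathrm{Exc}_V$ is the exceptional divisor of Kr\"amer's blow-up. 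This is where the constant $C_0(n)$ comes from: it is assembled from the Chowla--Selberg formula (handling $\widehat{\omega}^{\mathrm{Hdg}}_{A_0}$, Proposition~\ref{prop:easy numerical}) and a theorem of Gross on the triviality of $\det(\mathcal{V})$ (Proposition~\ref{prop:gross numerical}). Your proposal has no mechanism to produce $C_0(n)$; the claim that ``$-nC_0(n)$ absorbs the archimedean constants\ldots together with the Chern form normalization'' is not how it arises.

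The exceptional divisor $\mathrm{Exc}_V$ is the second missing ingredient. It is a vertical divisor at primes $\ell\mid D$ with nontrivial arithmetic intersection against $\widehat{\taut}_V$ (Proposition~\ref{prop:projective intersection}), and the special divisors $\mathcal{Z}_V(p)$ can have irreducible components supported on it (Lemma~\ref{lem:exceptional KR error}). The paper's solution is to run the entire induction not on $\widehat{\omega}^{\mathrm{Hdg}}$ or $\widehat{\taut}_V$ but on the auxiliary bundle $\widehat{\tautmod}_V = 2\widehat{\taut}_V - (\mathrm{Exc}_V,0)$, engineered so that $(E,0)\cdot\widehat{\tautmod}_V = 0$ for every exceptional component $E$. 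This makes the error terms from the exceptional locus vanish in the height computations of \S\ref{s:KR divisors}. Only after computing $\widehat{\vol}(\widehat{\tautmod}_V)$ does the paper translate back to $\widehat{\omega}^{\mathrm{Hdg}}$ via the comparison above. Without introducing something like $\widehat{\tautmod}_V$, your Step~3 (pulling back to $\mathcal{Z}(m)$ and invoking the inductive hypothesis) will acquire uncontrolled exceptional terms that do not obviously cancel.

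Two smaller points: the paper works throughout on the cover $\mathcal{S}_V\to\mathcal{M}_W$ (since this is where both the Borcherds products of \cite{BHKRY-1} and the comparison of line bundles live), and it restricts to special divisors $\mathcal{Z}_V(p)$ with $p$ a prime split in $\kk$ and $p\equiv 1\pmod D$, which is what makes the decomposition of $\mathcal{Z}_V(p)$ into copies of $\mathcal{S}_{V'}$ (\S\ref{s:KR divisors}) tractable and keeps the local invariants of $V'$ equal to those of $V$.
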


\begin{remark}\label{rem:gross motive}
Gross  \cite{gross-motive} associates a motive to a  reductive group over a  field.
The  $L$-function of the motive associated to $\mathrm{U}(W)$ is essentially  $\mathbf{a}_1(s) \cdots \mathbf{a}_n(s)$, which  perhaps provides some explanation for why this particular product appears in the statement of  Theorem \ref{thm:intro main}.
\end{remark}

\begin{remark}
There is a slightly different way to express the meromorphic function $\mathbf{A}_W(s)$ as a product.
Define  
$
\mathbf{b}_{W,1}(s)  =\mathbf{a}_1(s).
$
For  $k \ge 2$  even,  set
\[
\mathbf{b}_{W,k}(s)  
= \mathbf{a}_k(s) 
\prod_{\ell \mid D} \left(   1+ \leg{-1}{\ell}^{ \frac{k}{2} } \mathrm{inv}_\ell(W) \ell^{ -s- \frac{k}{2} }   \right).
\]
 For  $k \ge 3$ odd,  set
\[
\mathbf{b}_{W,k}(s)  
= \mathbf{a}_k(s) 
\prod_{\ell \mid D} \left(  1+ \leg{-1}{\ell}^{ \frac{k-1}{2} } \mathrm{inv}_\ell(W) \ell^{ -s +\frac{1-k}{2}}  \right)^{-1}.
\]
When  $k>1$ we have
$\mathbf{b}_{W,k}(s)  \mathbf{b}_{W,k+1}(s) =  \mathbf{a}_{k}(s)  \mathbf{a}_{k+1} (s)$,
which implies  that 
\[
\mathbf{A}_W(s) = \mathbf{b}_{W,1}(s) \cdots \mathbf{b}_{W,n}(s)  .
\]
\end{remark}


\section{The generating series of divisors}
\label{ss:intro generating}


Most of the calculations that go into the proof of Theorem \ref{thm:intro main}  are not carried out on the $\mathrm{GU}(W)$ Shimura variety $\mathcal{M}_W$,  but rather on a slightly different Shimura variety.  

Let us again fix a relevant $\kk$-hermitian space $V$ of signature $(n-1,1)$, and now assume $n\ge 2$.
We may present this hermitian space in the form 
\[
V \iso  \Hom_\kk (W_0,W),
\]
where  $W_0$ and $W$ are  relevant $\kk$-hermitian space of signature $(1,0)$ and $(n-1,1)$, respectively (the right hand side carries a natural $\kk$-hermitian form, and here we require that the isomorphism is an isometry).
  Following \cite{KRunitaryII}, we associate to $V$ an open and closed substack
\[
\mathcal{S}_V \subset \mathcal{M}_{W_0} \times_{\Spec(\co_\kk)} \mathcal{M}_W .
\]
As explained in \S 2 of \cite{BHKRY-1},  the generic fiber of $\mathcal{S}_V$ is a Shimura variety for the subgroup $G\subset \mathrm{GU}(W_0) \times \mathrm{GU}(W)$ consisting of pairs for which the similitude factors are equal, and there is a natural surjection $G \to \mathrm{U}(V)$.   
This allows one to  view the moduli space $\mathcal{S}_V$ as an approximation to the  Shimura variety for the unitary group $\mathrm{U}(V)$, which is awkward to work with directly because it is  of abelian type but not PEL type.

In \S \ref{ss:special shimura} we recall that the Shimura variety $\mathcal{S}_V$ carries a family of Kudla-Rapoport divisors 
\[
\mathcal{Z}_V(m) \to \mathcal{S}_V
\]
indexed by positive $m\in \Z$.  It also carries a distinguished line bundle $\mathcal{L}_V$, called the \emph{tautological bundle} or the \emph{line bundle of weight one modular forms}, and an \emph{exceptional divisor} $\mathrm{Exc}_V \subset  \mathcal{S}_V$ that is proper and supported in characteristics $\ell \mid D$.

The main result of \cite{BHKRY-1} says that the Kudla-Rapoport divisors can be enhanced  to arithmetic divisors
\[
\widehat{\mathcal{Z}}_V^\mathrm{tot} (m) = (  \bar{\mathcal{Z}}_V(m) + \mathcal{B}_V(m) , \Phi_V(m) )
 \in  \widehat{\CH}^1(\bar{\mathcal{S}}_V , \mathscr{D}_\BKK)
\]
(for $m>0$) in such a way that the formal generating series 
\[
 \widehat{\phi}_V (\tau) =
\sum_{m\ge 0}  \widehat{\mathcal{Z}}_V^\mathrm{tot} (m)  \cdot  q^m  \in \widehat{\CH}^1( \bar{\mathcal{S}}_V ,\mathscr{D}_\BKK ) [[q]] 
\]
is, at least when $n\ge 2$, a modular form.  More precisely, it is the $q$-expansion of an element 
\[
 \widehat{\phi}_V (\tau) \in
M_n(\Gamma_0(D) , \eps^n) \otimes \widehat{\CH}^1( \bar{\mathcal{S}}_V ,\mathscr{D}_\BKK ).
\]
Here $\bar{\mathcal{S}}_V$ is the canonical toroidal compactification of $\mathcal{S}_V$,
$\bar{\mathcal{Z}}_V(m) \subset \bar{\mathcal{S}}_V$ is the Zariski closure of $\mathcal{Z}_V(m)$, 
$\mathcal{B}_V(m)$ is an explicit linear combination of boundary components, and $\Phi_V(m)$ is a Green function for $ \bar{\mathcal{Z}}_V(m) + \mathcal{B}_V(m)$ constructed using the theory of  regularized theta lifts.  The constant term is defined by first endowing $\taut_V$ with a particular pre-log singular hermitian metric, and then setting
\[
\widehat{\mathcal{Z}}_V^\mathrm{tot} (0) =  - \widehat{\taut}_V + ( \mathrm{Exc}_V  , -\log(D))  .
\]
The second term on the right is the exceptional divisor endowed with the constant Green function $-\log(D)$.

Given that the above generating series is modular, the arithmetic divisor
\begin{equation}\label{introK}
\widehat{\tautmod}_V \define 2 \widehat{\taut}_V - (\mathrm{Exc}_V,0) \in 
\widehat{\CH}^1( \bar{\mathcal{S}}_V ,\mathscr{D}_\BKK )
\end{equation}
determines a modular form
\begin{equation}\label{intro second form}
\widehat{\deg} ( \widehat{\phi}_V (\tau)  \cdot \widehat{\tautmod}_V^{n-1} ) =
\sum_{m\ge 0}   \widehat{\deg} ( \widehat{\mathcal{Z}}_V^\mathrm{tot} (m)  \cdot \widehat{\tautmod}_V^{n-1} ) \cdot  q^m  .
\end{equation}
Here $\widehat{\tautmod}_V^{n-1}$ is the $(n-1)$-fold iterated arithmetic intersection.
The peculiar choice of \eqref{introK} is motivated by the fact 
(Theorem \ref{thm:taut-hodge compare}) that it has trivial arithmetic intersection with every irreducible component of  the exceptional divisor $\mathrm{Exc}_V$, which simplifies the calculation of the coefficients of \eqref{intro second form}.

\begin{bigconjecture}\label{intro conjecture}
If $n>2$, we have the equality of modular forms
\begin{align}\lefteqn{
\widehat{\deg}(  \widehat{\phi}_V (\tau) \cdot \widehat{\tautmod}_V^{n-1} )  } \nonumber  \\
  & \stackrel{?}{=}
  - 
   \mathrm{vol}_\C (  \widehat{\tautmod}_V )
 \sum_{r\mid D}\left(  \frac{\mathbf{A}'_V (0) }{\mathbf{A}_V (0)}     +  \frac{3}{2}  \log(D) -\log(r)   \right)    \gamma_r E_r  (\tau)  ,\label{intro degree series} 
  \end{align}
 in which 
 \[
 E_r(\tau) \in M_n(\Gamma_0(D) , \eps^n)
 \]
  is the Eisenstein series of Proposition \ref{prop:eisenstein formulas}, and $\gamma_r$ is the $4^\mathrm{th}$ root of unity (depending on $V$) defined in Proposition \ref{prop:multi eisenstein}.
\end{bigconjecture}

Conjecture \ref{intro conjecture} is stated in an equivalent form as Conjecture \ref{conj:arithmetic degrees}; the equivalence of the two statements follows from Corollary \ref{cor:arithmetic eis}.
Because the coefficients of the Eisenstein series $E_r(\tau)$ are given by explicit formulas, 
Conjecture \ref{intro conjecture} provides us with (conjectural) explicit formulas for the arithmetic intersection multiplicities appearing as the coefficients in \eqref{intro second form}.  
These can be found in \S \ref{ss:volume corollaries},  especially \eqref{degree conjecture 2}.

The following is our main evidence for Conjecture \ref{intro conjecture}.

\begin{bigtheorem}\label{thm:intro many coefficients}
If  $n> 2$, the  modular forms on either side of \eqref{intro degree series} have the same  $m^\mathrm{th}$ Fourier coefficient  for all  positive $m\in \Z$ with $\eps(m)=1$.  Moreover, their difference is a cusp form.
\end{bigtheorem}

The final claim of Theorem \ref{thm:intro many coefficients} allows us to equate the constant terms on both sides of  \eqref{intro degree series}, yielding
\[
\widehat{\deg}( \widehat{\mathcal{Z}}_V^\mathrm{tot} (0)  \cdot \widehat{\tautmod}_V^{n-1} )
=
 - 
   \mathrm{vol}_\C (  \widehat{\tautmod}_V )
\left(  \frac{\mathbf{A}'_V (0) }{\mathbf{A}_V (0)}     +  \frac{3}{2}  \log(D)    \right)  .
\]
This equality is equivalent to the second claim in the following theorem, which appears in the text as 
Theorems \ref{thm:degree induction} and \ref{thm:K volume}.

\begin{bigtheorem}\label{thm:intro K volume}
The arithmetic divisor  \eqref{introK} has complex volume 
\[
\vol_\C(  \widehat{\tautmod}_V )  = 
\frac{|\mathrm{CL} (\kk)|}{    2^{o(D) - n }    |\co_\kk^\times|     } \cdot  \mathbf{A}_V (0) ,
\]
and  arithmetic volume 
\[
 \widehat{\vol}( \widehat{\tautmod}_V)  
=  \left(  2\frac{\mathbf{A}'_V (0) }{\mathbf{A}_V (0)}     +   \log(D)   \right)
 \vol_\C(  \widehat{\tautmod}_V).
\]
\end{bigtheorem}

In practice we do not deduce the second claim of Theorem \ref{thm:intro K volume} from the cuspidality claim of Theorem \ref{thm:intro many coefficients}.  Instead, the proofs of the two statements are intertwined in a rather complicated way.  

In the text, Theorem \ref{thm:intro main} is deduced from Theorem \ref{thm:intro K volume} using the relations between 
  \eqref{introHodge} and  \eqref{introK}  found in   Theorem \ref{thm:taut-hodge compare}.
This latter theorem also allows us to deduce formulas for the complex and arithmetic volumes of $\widehat{\taut}$ from Theorem \ref{thm:intro K volume}.   
These are found in  Theorem \ref{thm:degree induction} and Corollary \ref{cor:other volumes}, but the formula for the arithmetic volume is somewhat complicated by correction terms coming from the components of the exceptional divisor $\mathrm{Exc}_V \subset \mathcal{S}_V$.

We end this section with some  remarks about the restriction $\eps(m)=1$ in Theorem \ref{thm:intro many coefficients}.  In the proof of that theorem, we first prove the equality for all primes $p$ satisfying $\eps(p)=1$.  The equality for all $m$ with $\eps(m)=1$ then follows from  a result of Bella\"{i}che, found in Appendix \ref{appendix:bel}.

When $\eps(p)=1$,  the strategy for the calculation of the coefficients of \eqref{intro second form} is to first relate the Kudla-Rapoport divisor $\mathcal{Z}_V(p)$ to a Shimura variety  $\mathcal{S}_{V^\flat}$ in one dimension lower, where $V^\flat \subset V$ is the orthogonal complement to a vector of hermitian norm $p$.  This is done in  Chapter \ref{s:KR divisors}.
The assumption  $\eps(p)=1$ guarantees that the hermitian space $V^\flat$ is relevant (i.e.~contains a self-dual lattice), just as $V$ was assumed to be, and so by  induction one knows the analogue of Theorem \ref{thm:intro K volume} with $V$ replaced by $V^\flat$.  
This formula for the arithmetic volume of $\widehat{\tautmod}_{V^\flat}$ on $\bar{\mathcal{S}}_{V^\flat}$  then gives a partial formula (Lemma \ref{lem:second volume lemma}) for the $p^\mathrm{th}$ coefficient in \eqref{intro second form}.  Combining this partial formula with the modularity of the generating series $\widehat{\phi}_V(\tau)$ then proves Theorems \ref{thm:intro many coefficients} and \ref{thm:intro K volume} simultaneously.

In contrast, when $\eps(p)=-1$ the Kudla-Rapoport divisor $\mathcal{Z}_V(p)$ should be related to the Shimura variety defined by a  hermitian space $V^\flat$ that is not relevant in the sense above.
Thus  carrying out the above inductive procedure when $\eps(p)=-1$ would require working with Shimura varieties whose moduli interpretation involves non-principal polarizations, but the results of  \cite{BHKRY-1}, and in particular the  modularity of $\widehat{\phi}_V(\tau)$, are not known in this generality.

In summary, an extension of  the modularity results of \cite{BHKRY-1} to Shimura  varieties associated to non-relevant hermitian spaces seems to be the missing ingredient needed to   remove the restriction $\eps(m)=1$ in Theorem \ref{thm:intro many coefficients}.


\section{Connections with the Kudla-Rapoport conjecture}

Theorem \ref{thm:intro K volume} is closely related to a conjecture of Kudla-Rapoport \cite{KRunitaryII} on the degrees of arithmetic  $0$-cycles on integral models of unitary Shimura varieties.  
Briefly, the Shimura variety $\mathcal{S}_V$ is  endowed not just with the family of Kudla-Rapoport divisors of the previous section, but with special cycles in all codimensions.
In particular, it carries a family of arithmetic $0$-cycles $\widehat{\mathcal{Z}}_V(T)$  indexed by $n\times n$ hermitian matrices $T$.  
The conjecture of Kudla-Rapoport predicts that the arithmetic degrees of these $0$-cycles should agree with the Fourier coefficients of the derivative of an Eisenstein series on the quasi-split unitary group $\Uni(n,n)$.

For those $0$-cycles  with $\det(T)\neq 0$, the Kudla-Rapoport conjecture is now a theorem of Li-Zhang \cite{LZ} and He-Li-Shi-Yang \cite{HLSY}, who prove  the unramified and ramified cases, respectively.
The relevance of Theorem \ref{thm:intro K volume} is to the degenerate  cases in which $\det(T)=0$, and especially to the most degenerate case $T=0$.  
In this case the associated arithmetic $0$-cycle is, up to some correction factors at the primes $p\mid D$,  the $n$-fold iterated intersection 
of  the  hermitian line bundle $\widehat{\tautmod}_V$ (or $\widehat{\taut}_V$) on $\mathcal{S}_V$ appearing above.
In other words, the Kudla-Rapoport conjecture predicts that the arithmetic volume of $\widehat{\tautmod}_V$ is essentially the constant term of the derivative of an Eisenstein series.

We expect that the  proof of Theorem \ref{thm:intro main} makes accessible more degenerate cases of the Kudla-Rapoport conjecture,  beyond the most degenerate case  $T=0$. For example,  consider  a matrix of the form
\[
T = \begin{pmatrix}
T' &  \\
 & 0_{n-d}
\end{pmatrix}
\]
with $T'$ a nonsingular  $d\times d$ hermitian matrix with $m=\det(T')$ a squarefree product of primes split in $\kk$.
One could hope to relate the arithmetic degree of $\widehat{\mathcal{Z}}_V(T)$   to the arithmetic intersection number
\[
\widehat{\deg}(   \widehat{\mathcal{Z}}^\mathrm{tot}_{V^\flat}(m) \cdot \widehat{\tautmod}_{V^\flat}^{n-d} ) 
\]
appearing in \eqref{intro second form},  where $V^\flat$ is a hermitian space of signature $(n-d,1)$.
Theorem \ref{thm:intro many coefficients} might then be used to related these arithmetic degrees to degenerate coefficients of a $\mathrm{U}(n,n)$ Eisenstein series.

We hope to return to these questions in future work.  


\section{Outline of the paper}


Fix a  self-dual $\co_\kk$-lattice $L\subset V$ in a $\kk$-hermitian space of signature $(n-1,1)$.

We construct in  Chapter \ref{s:eisenstein}  an Eisenstein series  $E_L(\tau,s,n)$ of weight $n$, valued in a finite dimensional representation of $\SL_2(\Z)$.
Its Fourier coefficients  can be expressed in terms of local representation densities of the lattice $L$, which we then compute in order to make the  coefficients completely explicit.   
Evaluating at the non-central point $s_0=(n-1)/2$ yields a holomorphic Eisenstein series, which we express in terms of the Eisenstein series $E_r(\tau)$ appearing in Conjecture \ref{intro conjecture}.

In Chapter \ref{s:green integrals}  we introduce a complex Shimura variety $\mathrm{Sh}_K(H,\mathcal{D})$ associated to the unitary group $H=\Uni(V)$.  It carries a family of special divisors $Z(m)$ indexed by positive $m\in \Z$, and a metrized line bundle $\widehat{\taut}$.  
To a harmonic Maass form $f$ of weight $2-n$ we attach a divisor $Z(f)$, defined as a linear combination of $Z(m)$'s.  
We then construct a Green function $\Phi(f)$ for $Z(f)$ using the machinery of regularized theta lifts, and show in  Theorem \ref{thm:int} that the normalized integral
\[
 \mathrm{vol}_\C (\widehat{\taut})^{-1} \int_{\mathrm{Sh}_K(H,\mathcal{D}) } \Phi(f) \chern(\widehat{\taut})^{n-1} 
\]
can be expressed in terms of the Fourier coefficients of the derivative of $E_L(\tau,s,n)$ at $s=s_0$.  
Here $c_1$ denotes the first Chern form of a metrized line bundle, as in Remark \ref{rem:chern form}.
The proof follows and generalizes ideas of Kudla \cite{Ku:Integrals}, who computed similar integrals on orthogonal Shimura varieties.

In Chapter \ref{s:integral models} we work with $\kk$-hermitian spaces $W_0$ and $W$ of signatures $(1,0)$ and $(n-1,1)$.   The Shimura varieties associated to their similitude groups  have moduli interpretations, which  can be used to construct regular integral models $\mathcal{M}_{W_0}$ and $\mathcal{M}_W$.
We recall  the definition of these integral models, their toroidal compactifications,  and the structure of their reductions at $p\mid D$.  Then we recall those aspects of the Gillet-Soul\'e  arithmetic intersection theory (as extended by Burgos-Kramer-K\"uhn \cite{BKK}) that are needed in the sequel.

In Chapter \ref{s:special shimura bundles} we choose $W_0$ and $W$ so that 
$
V \iso \Hom_\kk(W_0,W) 
$
as hermitian spaces.   This allows us to define the open and closed substack
\[
\mathcal{S}_V \subset \mathcal{M}_{W_0} \times_{\co_\kk} \mathcal{M}_W
\]
appearing in \S \ref{ss:intro generating}.  It is related to the unitary Shimura variety  of Chapter \ref{s:green integrals}  by a finite cover
\[
\mathcal{S}_V(\C) \to \mathrm{Sh}_K(H,\mathcal{D}).
\]
We  define  the  Kudla-Rapoport divisors  on $\mathcal{S}_V$, and the  metrized line bundle $\widehat{\taut}_V$.
The universal abelian scheme over $\mathcal{M}_W$ pulls back to an abelian scheme $A \to \mathcal{S}_V$, which determines a metrized Hodge bundle on $\mathcal{S}_V$.   
The main result of Chapter \ref{s:special shimura bundles} is Theorem \ref{thm:taut-hodge compare}, which explains the precise connection between $\widehat{\taut}_V$ and the metrized Hodge bundle.
Going back and forth between these  line bundles and \eqref{introK}, which for our purposes is the best behaved of the three, is essential to our methods.

In Chapter \ref{s:KR divisors} we prove Theorems \ref{thm:height descent 1} and \ref{thm:height descent 2}, which are essential to the induction arguments used to prove  Theorems \ref{thm:intro many coefficients} and \ref{thm:intro K volume}.   
The idea is that if $p$ is a prime split in $\kk$, the Kudla-Rapoport divisor $\mathcal{Z}_V(p)$ should be closely related to a Shimura variety $\mathcal{S}_{V^\flat}$ defined in the same way as $\mathcal{S}_V$, but with $\mathrm{dim}(V^\flat)= \mathrm{dim}(V)-1$.   
This is true, up to some error terms coming from divisors supported in characteristics dividing $pD$.
Using such a relation, we are able to express  the height of the divisor $\mathcal{Z}_V(p)$ with respect to $\widehat{\tautmod}_V$ in terms of the  arithmetic volume of the metrized line bundle $\widehat{\tautmod}_{V^\flat}$ on the lower-dimensional Shimura variety $\mathcal{S}_{V^\flat}$.  
Because of the many subtleties in the moduli problem defining the integral models, this is perhaps the most technical part of this work.

In Chapter \ref{s:borcherds} we recall the modularity results of \cite{BHKRY-1}.
We also prove Theorem \ref{thm:no boundary heights}, which can be summarized as saying that the boundary of the toroidal compactification $\bar{\mathcal{S}}_V$ can, for our purposes, be ignored.

In Chapter \ref{s:volumes}, we put everything together to prove the main results.


\section{Notation and conventions}


Throughout the paper,  $\kk \subset \C$ is a quadratic imaginary field  of discriminant $-D$ whose associated  associated quadratic character is denoted 
\[
\eps: (\Z/D\Z)^\times \to \{\pm 1\}.
\]

The symbol   $V$   always denotes a $\kk$-hermitian space of signature $(n-1,1)$ with $n\ge 1$, and the hermitian form is denoted by $\langle - , - \rangle$.
Hermitian forms are always linear in the first variable and conjugate-linear in the second, and are assumed to be nondegenerate.

Beginning in Chapter \ref{s:integral models}, and continuing for the rest of the paper, we assume that $D$ is odd and  the hermitian space admits an $\co_\kk$-lattice $L\subset V$ that is \emph{self-dual}, in the sense that 
\begin{equation}\label{self-dual lattice}
L = \{ x \in V :  \langle  x , L \rangle \subset \co_\kk \}.
\end{equation}
These restrictions are not imposed in  Chapter \ref{s:eisenstein} and Chapter \ref{s:green integrals} unless stated explicitly.

The term \emph{stack}  means separated Deligne-Mumford stack.


\section{Acknowledgements}


We thank Joel Bella\"{i}che for his help with Appendix \ref{appendix:bel},
and Dick Gross for pointing out Remark \ref{rem:gross motive}.
We also thank the anonymous referee for a careful reading of the manuscript, and for many helpful comments.


\chapter{Eisenstein series and theta functions}
\label{s:eisenstein}


Fix  a $\kk$-hermitian space $V$ of signature $(n-1,1)$ with $n \ge 1$, and denote the hermitian form by $\langle-,-\rangle$.
After reviewing some basics of  Eisenstein series, theta functions, and the Siegel-Weil formula,
we attach a particular Eisenstein series to a lattice $L \subset V$, and  express  its Fourier coefficients in terms of representation densities.
We then  compute these representation densities in the case of a self-dual lattice, and so obtain explicit formulas for the Fourier coefficients; these formulas have a different shape depending on whether $n$ is even  or odd.  


\section{A seesaw dual reductive pair}
\label{ss:seesaw}


Let $V_0$ be the unique symplectic space over $\Q$ of dimension $2$, and denote the symplectic form by $\langle x,y\rangle_0$. 
Set $V_{0\kk}=V_0\otimes_\Q\kk$,  and extend the symplectic form $\kk$-linearly in the first argument and $\kk$-conjugate-linearly in the second argument. This defines a skew-hermitian form on $V_{0\kk}$.

The   $\Q$-vector space underlying $V$ carries a $\Q$-bilinear form
\begin{equation}\label{Q bilinear}
[  x,y ] =\tr_{\kk/\Q} \langle x, y\rangle
\end{equation}
of signature $(2n-2,2)$, with associated quadratic form
\begin{equation}\label{Q quadratic}
Q(x) =  \langle x, x\rangle .
\end{equation}
This  data determine  a seesaw dual reductive pair 
\begin{align*}
\xymatrix{
G=\Uni(V_{0\kk}) \ar@{-}[d] \ar@{-}[dr]& H' =\Orth(V)  \ar@{-}[d]\\
G' =\operatorname{Sp}(V_0) \ar@{-}[ur]& H=\Uni(V).
}
\end{align*}
In particular, there are compatible Weil representations of $G(\A)\times H(\A)$ and $G'(\A)\times H'(\A)$ on the space $S(V(\A))$ of Schwartz-Bruhat functions on $V(\A)$.  This will be made explicit in \S \ref{ss:siegel-weil}.

Let $e,f$ be a basis of $V_0$ with Gram matrix
$
\kzxz{0}{1}{-1}{0}, 
$
and use this to identify 
\[
G \iso  \left\{ g\in \operatorname{Res}_{\kk/\Q} \GL_2  : g\zxz{0}{1}{-1}{0} {}^t\bar g= \zxz{0}{1}{-1}{0}\right\} 
\]
and $G' \iso \SL_2$.
Let $P=NM$ be the parabolic subgroup of $G$ with Levi factor 
\[
M=\left\{m(a)= \zxz{a}{0}{0}{\bar a^{-1}}  :  a\in\operatorname{Res}_{\kk/\Q} \mathbb{G}_m\right\}
\]
and unipotent radical 
\[
N=\left\{ n(b)=\zxz{1}{b}{0}{1}  :  b\in \mathbb{G}_{a} \right\}.
\]

For a place $p\leq \infty$ of $\Q$, let $K_p \subset G(\Q_p)$ be the maximal compact subgroup defined by
\[
K_p= \begin{cases} G(\Q_p) \cap \GL_2(\co_{\kk,p}),&\text{if $p<\infty$,}\\
G(\R) \cap \Uni(2,\R),&\text{if $p=\infty$,}
\end{cases}
\]
and put $K=\prod_{p\leq\infty} K_p$. 
If we let  $\kk^1$ be the torus of norm one elements in  $\kk^\times=\mathrm{Res}_{\kk/\Q}\mathbb{G}_m$, there is a natural homomorphism
\[
G' \times\kk^1\map{ (g,a)\mapsto  g \cdot m(a)  }   G ,
\]
and the image of $\kk^1$ is the center of $G$.
This homomorphism    induces an isomorphism
\[
\SO(2,\R)\times \Uni(1,\R) /\{\pm 1\}\cong K_\infty.
\]

Let  $\H$ be the complex upper half-plane. 
The first isomorphism  in 
\[
\H\iso \SL_2(\R)/\SO(2,\R)\iso G(\R)/K_\infty .
\]
sends   $\tau=u+iv\in \H $ to the element 
\begin{equation}\label{gtau}
  g_\tau =  n(u)m(v^{1/2})  \in \SL_2(\R) 
\end{equation}
satisfying $g_\tau \cdot  i =\tau$.  The second is induced $\SL_2(\R) = G'(\R) \subset G(\R)$.


\section{Eisenstein series} 


Given an $s\in \C$ and a character $\chi : \A_\kk^\times/\kk^\times \to \C^\times$,  let 
\[
I(s,\chi)=
\operatorname{Ind}_{P(\A)}^{G(\A)}(\chi |\cdot|_{\A_\kk}^s)
\]
 be the induced representation, realized on the space of smooth $K$-finite functions $\Phi$ on $G(\A)$  satisfying
\[
\Phi(n(b)m(a)g,s)= \chi(a) |a|_{\A_\kk}^{s+\frac{1}{2}}\Phi(g,s)
\] 
for $b\in \A$ and  $a\in \A_\kk^\times$. 
Here  $|a|_{\A_\kk}$ denotes the norm on $\A_\kk^\times$. 
In particular, at the archimedian place we have the normalized absolute value $|a|_{\infty}=a\bar a$. 

Recall that a section  of $I(s,\chi)$ is called \emph{standard} if its restriction to $K$ is independent on $s$.
For a standard section $\Phi$ of $I(s,\chi)$, the associated Siegel Eisenstein series 
\[
E(g,s,\Phi) = \sum_{\gamma\in P(\Q)\backslash G(\Q)} \Phi(\gamma g, s)
\]
converges absolutely for $\operatorname{Re}(s)> 1/2$, and  defines an automorphic form on $G(\A)$.
The Eisenstein series has  meromorphic continuation to all $s\in \C$.
We now describe this Eisenstein series in more classical terms, by restricting it to the subgroup $\SL_2\iso G'\subset G$.

%
%

Assume that $\Phi(s)=\Phi_\infty(s)\otimes \Phi_f(s)$  is a factorizable standard section  with $\Phi_f=\otimes_{p<\infty} \Phi_p(s)$, and that $\Phi_\infty$ is a normalized standard section of weight $\ell\in \Z$.
In other words, $\Phi_\infty(1,s)=1$, and for all 
\[
 k=\zxz{a}{b}{-b}{a} \in \SO(2,\R) \subset K_\infty
\]
we have $\Phi_\infty(gk,s)=\Phi_\infty(g,s)  \cdot \underline{k}^\ell$,  where 
\begin{equation}\label{underline k}
\underline{k} = a+ib \in \C^\times .
\end{equation}

Abbreviate $\Gamma=\SL_2(\Z) \subset G(\A)$,  and let $\Gamma_\infty=P(\Q)\cap \Gamma$ be the subgroup of   upper triangular matrices.

\begin{lemma}
\label{lem:eis1}
If $\Phi_\infty$ is a normalized standard section of weight $\ell\in \Z$ as above, then 
\begin{align*}
E(g_\tau,s,\Phi)
&= j(g_\tau,i)^{-\ell}  \sum_{\gamma\in \Gamma_\infty\backslash \Gamma}
 \operatorname{Im}(\gamma \tau)^{s+ \frac{1}{2}-\frac{\ell } {2 }  } \cdot j(\gamma,\tau)^{-\ell}\cdot \Phi_f(\gamma).
\end{align*}
Here $g_\tau \in \SL_2(\R)$ is as in \eqref{gtau}, and  $j(\gamma,\tau)=c\tau+d$  the usual automorphy factor
associated to   \[ \gamma= \begin{pmatrix} a& b \\ c & d\end{pmatrix} \in \SL_2(\R). \]
\end{lemma}

\begin{proof}
For $\gamma \in \Gamma$ we factor
$\gamma g_\tau = n(\beta)m(\alpha) k$
with $\beta\in \R$, $\alpha\in \R^+$, and $k\in \SO(2,\R)$. Then a computation shows that 
\begin{align*}
\alpha = \frac{v^{1/2}}{|c\tau +d|}, \qquad 
\underline k = \frac{c\bar \tau +d}{|c\tau +d|},
\end{align*}
and hence
\[
\Phi_\infty(\gamma g_\tau,s) = 
v^{\ell/2}\cdot \frac{v^{s+ \frac{1}{2}-\frac{\ell}{2}  }  }  {|c\tau+d|^{2s+1-\ell}} \cdot (c\tau+d)^{-\ell}.
\]

The  inclusions $\Gamma\subset \SL_2(\Q)\subset G(\Q)$ induce  bijections 
\[
\Gamma_\infty\backslash \Gamma \iso  (P(\Q)\cap \SL_2(\Q))\backslash \SL_2(\Q)   \iso P(\Q)\backslash G(\Q),
\]
and hence 
\begin{align*}
E(g_\tau,s,\Phi) 
& = \sum_{\gamma\in \Gamma_\infty\backslash \Gamma} \Phi_\infty(\gamma g_\tau,s)\Phi_f(\gamma) \\
&= 
v^{\ell/2} \sum_{\gamma\in \Gamma_\infty\backslash \Gamma}
 \operatorname{Im}(\gamma \tau)^{s+  \frac{ 1-\ell }{ 2 } } \cdot (c\tau+d)^{-\ell}\cdot \Phi_f(\gamma)\\
&=
j(g_\tau,i)^{-\ell}  \sum_{\gamma\in \Gamma_\infty\backslash \Gamma}
 \operatorname{Im}(\gamma \tau)^{s+  \frac{ 1-\ell }{ 2 } }  \cdot j(\gamma,\tau)^{-\ell}\cdot \Phi_f(\gamma),
\end{align*}
as desired.
\end{proof}


\section{The Siegel-Weil formula}
\label{ss:siegel-weil}


Write $\psi$ for the standard additive character of $\A/\Q$, satisfying  $\psi_\infty(x)=e^{2\pi i x}$. 
Let 
\[
\eps_\A : \A^\times /\Q^\times \to \{\pm 1\}
\]
 be the idele class character determined by the quadratic extension $\kk/\Q$, and fix a character  
 \[
  \chi :   \A_\kk^\times/ \kk^\times \to \C^\times
  \]
   such that $\chi |_{\A^\times}=\eps_\A^n$.

 As explained in \cite{HKS} and \cite{Ic2}, the choices of $\psi$ and $\chi$ determine a Weil representation 
$\omega=\omega_{\psi,\chi}$ of the group $G(\A)\times H(\A)$ on the space of Bruhat-Schwartz functions $S(V(\A))$. 
Recalling the $\Q$-bilinear form \eqref{Q bilinear} and associated quadratic form  \eqref{Q quadratic}, the action is given  by the formulas 
\begin{align*}
\omega(m(a))\varphi (x) &= \chi(a) |a|_{\A_\kk}^{n/2} \varphi(xa),\\
\omega(n(b))\varphi(x) & = \psi( bQ(x) )\varphi(x),\\
\omega(\zxz{0}{1}{-1}{0})\varphi(x) &= \int_{V(\A)} \varphi(y) \psi( [  x,y] )\,dy,\\
\omega(h)\varphi(x) & = \varphi(h^{-1} x)
\end{align*}
for $\varphi\in S(V(\A))$ and $x\in V(\A)$. Here $a\in \A_\kk^\times$, $b\in \A$, $h\in H(\A)$, and the Fourier transform is taken with respect to the self-dual measure $dy$ on $V(\A)$.
To a Schwartz function $\varphi$ there is an associated theta function
\begin{equation}\label{theta kernel}
\theta(g,h,\varphi)=  \sum_{x\in V(\Q)} \omega(g,h)\varphi(x).
\end{equation}

Abbreviating $s_0=(n-1)/2$, there is a $G(\A)$-intertwining operator 
\begin{align*}
\lambda:S(V(\A))\to I(s_0,\chi) 
\end{align*}
defined by $\lambda(\varphi)(g)= (\omega(g,1)\varphi)(0)$.
We extend $\lambda(\varphi)$ to a standard section of $I(s,\chi)$ by setting
\[
\lambda(\varphi)(g,s)= |a(g)|_{\A_\kk}^{s-s_0}(\omega(g,1)\varphi)(0).
\]
The following theorem is a case of the Siegel-Weil formula;  see \cite{We2} or  Theorem 1.1 of \cite{Ic2}.

\begin{theorem}
\label{thm:sw}
Assume that $n>2$,  or that $V$ is anisotropic. For $\varphi\in S(V(\A))$ and $g\in G(\A)$ we have 
\[
 E(g,\lambda(\varphi),s_0) =\kappa  \int_{H(\Q)\backslash H(\A)} \theta(g,h,\varphi)\, dh ,
\]
where  Haar measure  is normalized by  $\vol(H(\Q)\backslash H(\A))=1$, and 
\begin{equation}\label{eq:kappa}
\kappa = 
\begin{cases}
1 & \mbox{if }n>1 \\
2 & \mbox{if }n=1. 
\end{cases}
\end{equation}
\end{theorem}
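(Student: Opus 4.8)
The plan is to prove the Siegel–Weil identity $E(g,\lambda(\varphi),s_0)=\kappa\int_{H(\Q)\backslash H(\A)}\theta(g,h,\varphi)\,dh$ by citing the known cases of the Siegel–Weil formula and explaining how the specific normalizations in the excerpt (the choice of $\psi$, the choice of $\chi$ with $\chi|_{\A^\times}=\eps_\A^n$, the self-dual measure on $V(\A)$, and the intertwining operator $\lambda$) match the hypotheses of those references. Concretely, I would first invoke the classical Siegel–Weil formula of Weil \cite{We2} in the convergent range, which applies precisely under the dichotomy ``$n>2$ or $V$ anisotropic'' that guarantees the theta integral on the right converges absolutely and the Eisenstein series $E(g,s,\lambda(\varphi))$ is holomorphic at $s=s_0$. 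In the dual-pair language for $(\Uni(V),\Uni(V_{0\kk}))$ this is exactly the content of Theorem 1.1 of Ichino \cite{Ic2}, so the main task is bookkeeping: verify that $\lambda(\varphi)$ as defined here, namely $\lambda(\varphi)(g,s)=|a(g)|_{\A_\kk}^{s-s_0}(\omega(g,1)\varphi)(0)$, is the standard section attached to $\varphi$ in Ichino's setup, and that the Weil representation $\omega_{\psi,\chi}$ written out in \S\ref{ss:siegel-weil} coincides with his.

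The key steps, in order: (1) Recall the doubling/Rallis inner product setup or directly the formulation of \cite{Ic2}: for $\varphi\in S(V(\A))$ the map $g\mapsto(\omega(g,1)\varphi)(0)$ lands in $I(s_0,\chi)$ because the formulas for $\omega(m(a))$ and $\omega(n(b))$ show $(\omega(n(b)m(a)g,1)\varphi)(0)=\chi(a)|a|_{\A_\kk}^{n/2}(\omega(g,1)\varphi)(0)$, and $n/2=s_0+1/2$ with our $s_0=(n-1)/2$; the extension to a standard section of $I(s,\chi)$ is forced by requiring $K$-invariance of the restriction, which is what the factor $|a(g)|_{\A_\kk}^{s-s_0}$ achieves. (2) Form the Eisenstein series $E(g,s,\lambda(\varphi))=\sum_{\gamma\in P(\Q)\backslash G(\Q)}\lambda(\varphi)(\gamma g,s)$ and the theta integral; both are $G(\A)$-automorphic, and under the stated hypothesis on $n$ both are defined at $s=s_0$ without regularization. (3) Apply the Siegel–Weil identity of \cite{We2}/\cite{Ic2} to conclude they agree up to the constant $\kappa$; the value of $\kappa$ is $1$ in all cases covered here (since $n>2$ or $V$ anisotropic forces $n\ge 1$, and the case $n=1$ with its $\kappa=2$ is included in the statement only for completeness/later use), and the normalization $\vol(H(\Q)\backslash H(\A))=1$ absorbs any remaining measure constant. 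The factor $\kappa=2$ in the $n=1$ anisotropic case is the standard ``boundary'' phenomenon (the image of the Siegel–Weil map is twice the theta integral when the Eisenstein series has a pole that cancels), so I would simply point to the relevant discussion in \cite{Ic2} or \cite{KRY} for that value.

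The main obstacle is purely one of normalization matching rather than mathematical content: one must be careful that the additive character $\psi$ with $\psi_\infty(x)=e^{2\pi i x}$, the self-dual Haar measure $dy$ on $V(\A)$ used for the Fourier transform in $\omega(\kzxz{0}{1}{-1}{0})$, and the Tamagawa-type normalization $\vol(H(\Q)\backslash H(\A))=1$ together pin down the constant on the nose. Different sources use different conventions (Tamagawa measure versus a self-dual product measure, $e^{2\pi i x}$ versus $e^{-2\pi i x}$), and an incorrect match would introduce a spurious power of $2$ or a class number. Once those normalizations are reconciled with \cite{Ic2} — which uses precisely the dual pair $\Uni(V)\times\Uni(W)$ with a Weil representation depending on $(\psi,\chi)$ as here — the identity follows immediately, with the constant being $\kappa$ exactly as stated. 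I would therefore devote the bulk of the written proof to this reconciliation and then cite Theorem 1.1 of \cite{Ic2} (together with \cite{We2} for the underlying convergent Siegel–Weil) to finish.
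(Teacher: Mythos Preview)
Your proposal is correct and matches the paper's approach: the paper does not give a proof of this theorem at all, but simply states it as ``a case of the Siegel--Weil formula; see \cite{We2} or Theorem 1.1 of \cite{Ic2}.'' Your plan to cite these same references and verify that the normalizations of $\psi$, $\chi$, the self-dual measure, and $\lambda$ line up with Ichino's is exactly the right (and only) thing to do here, and is in fact more detailed than what the paper itself provides.
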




\section{A special Eisenstein series}
\label{ss:lattice eisenstein}


We consider Eisenstein series constructed from  particular Schwartz functions on $V(\A)$. 
At the archimedian place we take the Gaussian associated to a majorant and at the non-archimedian places we take the characteristic function of a coset of an $\co_\kk$-lattice.  

Let $L \subset V$ be an $\co_\kk$-lattice on which the hermitian form is $\co_\kk$-valued.
Its dual lattice  under the $\Q$-bilinear form \eqref{Q bilinear} is  denoted
\begin{equation}\label{dual lattice}
L'  \define    \{ x\in V  :  [  x,L]  \subset \Z\} .
\end{equation}
  For $\mu\in L'/L$  let 
  \[
  \varphi_\mu=\operatorname{char}(\mu+\hat L)  \in S(V(\A_f))
  \]
   be the characteristic function of $\mu+\hat L\subset V(\A_f)$, and denote 
by $S_L$ the subspace of $S(V(\A_f))$ generated by all $\varphi_\mu$ with  $\mu\in L'/L$. 
Hence we may identify
\[
S_L\iso  \C[L'/L].
\]

The restriction to $\Gamma=\SL_2(\Z)$ of the Weil representation  of $\SL_2(\A_f)\subset G(\A_f)$ takes the subspace 
$S_L$ to itself, giving rise to a representation 
\begin{align}
\label{eq:ol}
\omega_L:\Gamma\to \Aut(S_L).
\end{align}
The corresponding dual representation   
\[
\omega_L^\vee:\Gamma\to \Aut(S_L^\vee)
\]
is given by $\omega_L^\vee(\gamma)(f)=f\circ \omega_L^{-1}(\gamma)$ for $f\in S_L^\vee$.
On the space $S_L$ we also have the conjugate representation $\bar\omega_L$ defined by
\[
\bar\omega_L(\gamma)(\varphi)= \overline{\omega_L(\gamma)(\bar\varphi)}
\]
for $\varphi\in S_L$. 
If we use the standard $\C$-bilinear pairing  
\begin{equation}\label{S_L pairing}
\Big\langle \sum_\mu a_\mu\varphi_\mu, \sum_\mu b_\mu\varphi_\mu \Big\rangle = \sum_\mu a_\mu b_\mu  
\end{equation}
to identify $S_L\iso S_L^\vee$ as $\C$-vector spaces, then $\bar{\omega}_L = \omega_L^\vee$, 
and this agrees with  the representation denoted $\rho_L$  in \cite{Bo1}, \cite{Br1}, and \cite{BY1}.

For $\ell\in \Z$ and $\tau\in \H$, we define an $S_L$-valued Eisenstein series 
\begin{align}
\label{eq:eisl}
E_L(\tau,s,\ell)= \sum_{\gamma\in \Gamma_\infty\backslash \Gamma}
 \operatorname{Im}(\gamma \tau)^{s+\frac{1-\ell}{2}}  (c\tau+d)^{-\ell}\cdot \bar\omega_L(\gamma)^{-1}\varphi_0
\end{align}
 of weight $\ell$ and representation $\bar \omega_L$.
Note that this Eisenstein series has a slightly different normalization than the Eisenstein series in equation (2.17) of \cite{BY1}, since it is adapted to the unitary setting.

Recall  the Maass lowering and raising operators in weight $\ell$  defined by
\[
L_\ell =-2iv^2\frac{\partial}{\partial \bar \tau}, \qquad 
R_\ell =2i\frac{\partial}{\partial \tau}+\ell v^{-1}.
\]
They lower (respectively raise) the weight of an automorphic form by $2$.
It is easily seen that
\begin{align}
\label{eq:leis}
L_\ell
E_L(\tau,s, \ell) &= \left(s+ \frac{1-\ell}{2} \right) E_L(\tau,s, \ell-2) \\
R_\ell E_L(\tau,s, \ell) &= \left(s+\frac{1+\ell}{2}\right) E_L(\tau,s,\ell+2). \nonumber
\end{align}
The Eisenstein series $E_L(\tau,s,\ell)$ is an eigenform of the hyperbolic Laplacian $\Delta_\ell$ of weight $\ell$ (normalized as in (3.1) of \cite{BF}) with eigenvalue $(\frac{\ell-1}{2})^2-s^2$.
The following is a consequence of \eqref{eq:leis}.

\begin{lemma}
\label{lem:eisl}
Assume that $n>2$, or that $V$ is anisotropic.
The  Eisenstein series $E_L(\tau,s_0,n)$  of weight $n=\mathrm{dim}(V)$ at   $s_0=(n-1)/2$ is  holomorphic  in $\tau$.
Its derivative 
\[
E_L'(\tau,s_0,n) \define \frac{\partial}{\partial s} E_L'(\tau,s,n)\big|_{s=s_0}
\]
 satisfies
\begin{align*}
L_n(E_L'(\tau,s_0,n)) & = E_L(\tau,s_0,n-2).
\end{align*}
\end{lemma}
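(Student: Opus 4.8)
The plan is to exploit the two facts collected just before the statement: the commutation relation \eqref{eq:leis} for the Maass lowering operator, and the Siegel--Weil identity of Theorem~\ref{thm:sw}, which together pin down both the holomorphy of $E_L(\tau,s_0,n)$ and the value of $L_n E_L'(\tau,s_0,n)$.

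\medskip

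First I would establish holomorphy of $E_L(\tau,s_0,n)$. Specializing the first formula of \eqref{eq:leis} to weight $\ell = n$ gives
\[
L_n E_L(\tau,s,n) = \Big( s + \tfrac{1-n}{2} \Big) E_L(\tau,s,n-2),
\]
and the factor $s + \tfrac{1-n}{2}$ vanishes precisely at $s = s_0 = (n-1)/2$. Hence $L_n E_L(\tau,s_0,n) = 0$, provided $E_L(\tau,s,n-2)$ is holomorphic (finite) at $s = s_0$; this is where I must check that the relevant Eisenstein series has no pole at $s_0$, which is the standard fact that the weight-$\ell$ Eisenstein series attached to the Weil representation is holomorphic at the point $s = \tfrac{1-\ell}{2} + (\text{something})$ in the range where the hypothesis $n > 2$ (or $V$ anisotropic) guarantees absolute convergence of the Siegel--Weil integral, cf.\ Theorem~\ref{thm:sw}. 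Since $L_n$ annihilates a nearly holomorphic form of weight $n$ exactly when that form is holomorphic, I conclude $E_L(\tau,s_0,n)$ is holomorphic in $\tau$.

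\medskip

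Next I would compute $L_n E_L'(\tau,s_0,n)$. Differentiating the displayed identity $L_n E_L(\tau,s,n) = (s+\tfrac{1-n}{2}) E_L(\tau,s,n-2)$ with respect to $s$ and using the product rule, one gets
\[
L_n E_L'(\tau,s,n) = E_L(\tau,s,n-2) + \Big(s + \tfrac{1-n}{2}\Big) E_L'(\tau,s,n-2).
\]
Evaluating at $s = s_0$, the second term drops out because of the vanishing factor (using again finiteness of $E_L'(\tau,s,n-2)$ at $s_0$, which follows from the holomorphy established in the previous step together with the meromorphic continuation recalled after the definition of the Eisenstein series), leaving
\[
L_n E_L'(\tau,s_0,n) = E_L(\tau,s_0,n-2),
\]
as claimed. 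Here I should note that $L_n$ commutes with $\tfrac{\partial}{\partial s}$ since $L_n$ acts only in the variable $\tau$.

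\medskip

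The main obstacle is not the formal manipulation but the analytic input: one must know that $E_L(\tau,s,n-2)$ and its $s$-derivative are actually holomorphic (no pole) at $s = s_0$, so that the vanishing prefactor genuinely kills the offending terms. This is exactly the reason for the hypothesis $n > 2$ or $V$ anisotropic, which via the Siegel--Weil formula (Theorem~\ref{thm:sw}) identifies the Eisenstein series at $s_0$ with a convergent theta integral and hence guarantees the required regularity; one also uses that the lower-weight Eisenstein series $E_L(\tau,s,n-2)$, obtained from $E_L(\tau,s,n)$ by applying $L_n$, inherits good analytic behavior near $s_0$ from the same source. Once this regularity is in hand, the lemma follows purely formally from \eqref{eq:leis} by differentiating in $s$ and specializing.
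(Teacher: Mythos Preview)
Your proposal is correct and matches the paper's approach exactly: the paper states only that the lemma is a consequence of \eqref{eq:leis}, and your argument---differentiating that lowering identity in $s$ and evaluating at $s_0=(n-1)/2$, where the factor $s+\tfrac{1-n}{2}$ vanishes---is precisely the intended derivation. One small imprecision: since $L_n=-2iv^2\,\partial/\partial\bar\tau$, its kernel already consists of all holomorphic functions, so no a priori ``nearly holomorphic'' assumption is needed for the holomorphy step.
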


\begin{remark}
The Eisenstein series $E_L(\tau,s,n-2)$ is coherent as it arises via the Siegel-Weil formula from the global hermitian space $V$. On the other hand, the Eisenstein series $E_L(\tau,s,n)$ is incoherent as it is associated to the incoherent collection of local hermitian spaces obtained by replacing $V_\infty$ by the positive definite hermitian space of dimension $n$ and keeping the non-archimedian part. In particular, at $s=0$, the center of symmetry of the functional equation, the Eisenstein series $E_L(\tau,s,n)$ vanishes identically. 
\end{remark}

\section{Coefficients of Eisenstein series}
\label{ss:coefficients}


When the weight of the  Eisenstein series \eqref{eq:eisl}  is $n=\mathrm{dim}(V)$, we denote its Fourier expansion by 
\begin{align}
\label{eq:fouriereis}
E_L(\tau,s,n)= \sum_{\mu\in L'/L}\sum_{m\in \Z+\langle \mu,\mu\rangle} B(m,\mu;s,v) q^m\varphi_\mu,
\end{align}
where $v = \mathrm{Im}(\tau)$ is the imaginary part of $\tau$.
We  summarize some well-known facts about the coefficients;  details can be found in \cite{BrKue} and \cite{KY}.

If $m\neq 0$, there is a factorization
\begin{align}
\label{eq:coeff}
B(m,\mu;s,v)= B(m,\mu,s)\cdot \mathcal{W}_m(s,v)
\end{align}
in which the non-archimedian contribution $B(m,\mu,s)$ is independent of $v$.
The second factor is the   archimedian Whittaker function 
\begin{align}
\label{eq:whittaker}
\mathcal{W}_m(s,v) = (4\pi |m| v)^{-n/2} e^{2\pi m v} \cdot W_{\sgn(m)\frac{n}{2}, s}(4\pi |m| v),
\end{align}
where  
\[
W_{\kappa,\mu}(z)= e^{-z/2}z^{1/2+\mu} U(\frac{1}{2}+\mu-\kappa, 1+2\mu,z)
\]
 denotes the classical confluent hypergeometric function as in \cite[Chapter 13]{AS}, and 
\begin{align}
\label{eq:uint}
U(a,b,z) = \frac{1}{\Gamma(a)}\int_0^\infty e^{-zt} t^{a-1} (1+t)^{b-a-1} \,dt
\end{align}
for $\mathrm{Re}(a)>0$ and $|\arg(z)|<\pi/2$.  

If $m=0$, the Fourier coefficient has the form 
\[
B(0,\mu;s,v)= \delta_{\mu,0}\cdot  v^{s+\frac{1}{2}-\frac{n}{2}}+ B(0,\mu,s)\cdot 
v^{-s+\frac{1}{2}-\frac{n}{2}},
\]
where $B(0,\mu,s)$ is independent of $v$, and $\delta_{\mu,0}$ is the Kronecker delta. 

\begin{remark}
Note that our normalization of the archimedian Whittaker function in \eqref{eq:whittaker} differs from the normalization in \cite[Proposition 2.3]{KY}. The normalizing factor $\Gamma(s+\frac{n}{2}+\frac{1}{2})^{-1}$ for $m>0$, and $\Gamma(s-\frac{n}{2}+\frac{1}{2})^{-1}$ for $m<0$, appears in loc.~cit. In our case this factor is included in the normalization of the $B(m,\mu,s)$, since this leads to slightly cleaner formulas in Theorem~\ref{thm:int}.
\end{remark}

The integral representation \eqref{eq:uint} can be used to  
describe the asymptotic behavior of the Whittaker function as $v\to \infty$, which is given by 
\begin{align*}
\mathcal{W}_m(s,v)= \begin{cases} 1+O(v^{-1}) &\mbox{if }m>0 \\
O(e^{-4\pi |m| v})  &\mbox{if }m<0, 
\end{cases}
\end{align*}
locally uniformly in $s$ and $m$. 

We will also require the following lemma on the value at $s_0=(n-1)/2$ of the Whittaker function and its derivative $\mathcal{W}_m'(s_0,v)=\frac{\partial}{\partial s}\mathcal{W}_m(s,v)\mid_{s=s_0}$.

\begin{lemma} 
\label{lem:wasymptotic}
The special value at $s_0$ of the Whittaker function is 
\begin{align*}
\mathcal{W}_m(s_0,v)= \begin{cases} 1 &\mbox{if } m>0 \\
\Gamma(1-n,4\pi |m|v) &\mbox{if }m<0 .
\end{cases}
\end{align*}
For $m>0$ we have 
\[
\mathcal{W}_m'(s_0,v)= 
\sum_{j=1}^{n-1} \binom{n-1}{j} \frac{\Gamma(j)}{(4\pi mv)^j}.
\]
In particular, 
\[
\mathcal{W}_m'(s_0,v)= O(v^{-1}), \quad \text{for $v\to \infty$.}
\]
\end{lemma}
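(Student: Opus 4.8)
The plan is to deduce everything from the standard integral representation of the confluent hypergeometric function $U(a,b,z)$ recalled in \eqref{eq:uint}, specialized at $s=s_0=(n-1)/2$. Recall from \eqref{eq:whittaker} that
\[
\mathcal{W}_m(s,v) = (4\pi|m|v)^{-n/2} e^{2\pi mv} W_{\sgn(m)\frac{n}{2},s}(4\pi|m|v),
\]
and $W_{\kappa,\mu}(z) = e^{-z/2} z^{1/2+\mu} U(\tfrac12+\mu-\kappa,1+2\mu,z)$. Setting $z = 4\pi|m|v$ and $\mu = s$, at $s=s_0$ the parameter $1+2\mu$ becomes $n$, and the first parameter $a = \tfrac12 + s_0 - \sgn(m)\tfrac n2$ becomes $0$ if $m>0$ and $1-n$ if $m<0$. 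So the whole computation is about understanding $U(a,n,z)$ as $a\to 0$ and at $a = 1-n$.

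First I would treat $m>0$, where $a\to 0$. Since $1/\Gamma(a)\to 0$ as $a\to 0$, one expects $U(a,n,z)\to$ something finite; in fact $U(0,b,z)=1$ identically (the standard value of the Kummer $U$-function), which after assembling the prefactors $e^{-z/2}z^{n/2}$ against $(4\pi|m|v)^{-n/2}e^{2\pi mv}$ gives exactly $\mathcal{W}_m(s_0,v)=1$. For the derivative, I would differentiate $\mathcal{W}_m(s,v)$ in $s$ at $s_0$. The $s$-dependence enters in three places: the exponent $1/2+\mu$ on $z^{1/2+\mu}$ (contributing a $\log z$ term), and both arguments of $U$. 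Differentiating $U(\tfrac12+s-\tfrac n2,\,1+2s,\,z)$ in $s$ at $s_0$ and using $U(0,n,z)=1$ together with the known derivative formulas for $U$ in its parameters — equivalently, working directly from \eqref{eq:uint}: write $U(a,n,z) = \frac{1}{\Gamma(a)}\int_0^\infty e^{-zt}t^{a-1}(1+t)^{n-a-1}\,dt$ and note $\frac{d}{da}\big[\frac{1}{\Gamma(a)}\big]\big|_{a=0}=1$, so that only the endpoint behavior of the integral near $t=0$ survives — one finds the $\log z$ contributions cancel and the surviving term is a finite sum. Carrying this out and substituting $z = 4\pi mv$ should produce $\mathcal{W}_m'(s_0,v) = \sum_{j=1}^{n-1}\binom{n-1}{j}\frac{\Gamma(j)}{(4\pi mv)^j}$. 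The asymptotic $\mathcal{W}_m'(s_0,v)=O(v^{-1})$ is then immediate since every term is $O(v^{-1})$.

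Next, for $m<0$, where $a = 1-n$: here $U(1-n,n,z)$ is, up to normalization, the standard identity $U(1-n,n,z) = e^z z^{n-1}\,\Gamma(1-n,z)/\Gamma(1-n)$ — more transparently, from \eqref{eq:uint} with $a = 1-n$, $b = n$, one gets $\frac{1}{\Gamma(1-n)}\int_0^\infty e^{-zt}t^{-n}(1+t)^{2n-2}\,dt$, but the cleaner route is the known relation $\Gamma(1-n,z) = e^{-z} z^{1-n}\, \Gamma(1-n)\, U(1-n,n,z)$ rewritten; then assembling with the prefactor $e^{-z/2}z^{1/2+s_0} = e^{-z/2}z^{n/2}$ against $(4\pi|m|v)^{-n/2}e^{2\pi mv}$ and $z = 4\pi|m|v$, the powers of $z$ and the exponentials combine to give exactly $\mathcal{W}_m(s_0,v) = \Gamma(1-n, 4\pi|m|v)$.

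The main obstacle, such as it is, is the $m>0$ derivative computation: keeping careful track of all three sources of $s$-dependence and verifying that the $\log z$ terms cancel so that a clean finite sum emerges, with the binomial coefficients coming out correctly. I would organize this by expanding $(1+t)^{n-a-1}$ binomially in the integral representation (legitimate since $n$ is a positive integer when $a = 1-n+\epsilon$, or by handling the integral near $t=0$), differentiating term-by-term in $a$ at $a=0$, and matching against $\binom{n-1}{j}\Gamma(j)$. Everything else is a bookkeeping exercise with standard special-function identities; I expect no genuine difficulty, just care with normalizations — particularly the factors of $4\pi|m|v$ versus $z$, and the sign conventions in $W_{\kappa,\mu}$.
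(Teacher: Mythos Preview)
Your overall strategy for $m>0$ is the same as the paper's---work from the integral representation \eqref{eq:uint} and use that $1/\Gamma(a)$ vanishes to first order at $a=0$---but the paper's execution is considerably cleaner. Rather than tracking three separate sources of $s$-dependence and verifying that $\log z$ terms cancel, the paper first collapses everything to the single expression
\[
\mathcal{W}_m(s,v)=\frac{(4\pi m v)^{s-s_0}}{\Gamma(s-s_0)}\int_0^\infty e^{-4\pi m v t}\,t^{s-s_0-1}(1+t)^{s+s_0}\,dt,
\]
and then writes $(1+t)^{s+s_0}=1+[(1+t)^{s+s_0}-1]$. The first piece integrates to exactly $1$ for all $s$; the second piece has a convergent integral near $s_0$ and is killed by the prefactor $1/\Gamma(s-s_0)$ at $s=s_0$, so its $s$-derivative there is simply $\int_0^\infty e^{-4\pi m v t}[(1+t)^{n-1}-1]\,\frac{dt}{t}$, and the binomial expansion finishes it. No $\log z$ bookkeeping is needed.

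For $m<0$ you have an arithmetic slip: with $\sgn(m)=-1$ the first parameter is
\[
\tfrac12+s_0-\sgn(m)\tfrac{n}{2}=\tfrac12+\tfrac{n-1}{2}+\tfrac{n}{2}=n,
\]
not $1-n$. So the relevant value is $U(n,n,z)$, and your claimed identity involving $U(1-n,n,z)$ does not apply. The correct route is: $\mathcal{W}_m(s_0,v)=e^{-z}U(n,n,z)$ with $z=4\pi|m|v$, then apply the Kummer transformation $U(n,n,z)=z^{1-n}U(1,2-n,z)$ together with the standard identity $\Gamma(a,z)=z^{a}e^{-z}U(1,1+a,z)$ at $a=1-n$ to obtain $\Gamma(1-n,z)$. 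The paper itself leaves this case to the reader.
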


\begin{proof}
We only prove this for $m>0$ and leave the similar case when $m<0$ to the reader.
According to \eqref{eq:whittaker} and \eqref{eq:uint}, we have for $m>0$ that 
\begin{align*}
\mathcal{W}_m(s,v) &=  \frac{(4\pi m v)^{s-s_0} }{\Gamma(s-s_0)} \cdot 
\int_0^\infty e^{-4\pi m v t} t^{s-s_0-1} (1+t)^{s+s_0} \, dt\\
&=  1
+\frac{(4\pi m v)^{s-s_0} }{\Gamma(s-s_0)} \cdot 
\int_0^\infty e^{-4\pi m v t} t^{s-s_0-1} ((1+t)^{s+s_0}-1) \,dt,
\end{align*}
where the latter integral converges absolutely for $\mathrm{Re}(s)>s_0-1$. We immediately see that $\mathcal{W}_m(s_0,v)=1$, and in addition that   
\begin{align*}
\mathcal{W}'_m(s_0,v) &= 
\int_0^\infty e^{-4\pi m v t} ((1+t)^{n-1}-1) \,\frac{dt}{t}\\
&= \sum_{j=1}^{n-1} \binom{n-1}{j} 
\int_0^\infty e^{-4\pi m v t} t^j \,\frac{dt}{t}\\
&= \sum_{j=1}^{n-1} \binom{n-1}{j} \frac{\Gamma(j)}{(4\pi mv)^j}.
\end{align*}
This gives the claimed formula and the bound for $v\to \infty$.
\end{proof}

We now turn to the explicit calculation of the factor $B(m,\mu,s)$ in \eqref{eq:coeff}, using  formulas from \cite{BrKue} and \cite{KY}. 
Fix $\mu \in L'/L$,  and let \[d_\mu=\min\{b\in \Z^+ :  b\mu=0\}\]  be the order of $\mu$.
For $m\in \Z+Q(\mu)$ and $a\in \Z$, we denote by $N_{m,\mu}(a)$ the modulo $a$ representation number
\begin{align*}
N_{m,\mu}(a)= \#\{ r\in L/aL:\; Q(r+\mu)\equiv m\pmod{a\Z}\}.
\end{align*}
For a prime $p$, set 
\begin{align}
\label{eq:lp}
L^{(p)}_{m,\mu}(p^{-s})= (1-p^{-s+2n-1}) \sum_{\nu=0}^\infty N_{m,\mu}(p^\nu) p^{-\nu s}
\end{align}
so that 
\[
\sum_{a=1}^\infty N_{m,\mu}(a) a^{- s} = \zeta(s-2n+1)\prod_p L^{(p)}_{m,\mu}(p^{-s}).
\]

\begin{proposition}
\label{prop:eis1}
For nonzero $m\in \Z+Q(\mu)$ we have 
\[
B(m,\mu,s)=-\frac{2^{n} \pi^{s+\frac{n}{2}+\frac{1}{2}}}{ \sqrt{|L'/L|}} \cdot \begin{cases}
\displaystyle
\frac{|m|^{s+\frac{n}{2}-\frac{1}{2}}}{\Gamma(s+\frac{n}{2}+\frac{1}{2})} 
\prod_{p} L^{(p)}_{m,\mu}(p^{-2s-n})
 & \mbox{if } m>0 \\[3ex]
\displaystyle
\frac{|m|^{s+\frac{n}{2}-\frac{1}{2}}}{\Gamma(s-\frac{n}{2}+\frac{1}{2})} 
\prod_{p} L^{(p)}_{m,\mu}(p^{-2s-n})
 & \mbox{if }m<0.
\end{cases}
\]
\end{proposition}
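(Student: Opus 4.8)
The plan is to compute the nonzero Fourier coefficients of the Eisenstein series $E_L(\tau,s,n)$ directly from the definition \eqref{eq:eisl}, unfolding the sum over $\Gamma_\infty\backslash\Gamma$ in the usual way. First I would write $E_L(\tau,s,n)$ as a sum of the constant term (the $\gamma\in\Gamma_\infty$ contribution, which has no nonzero $q^m$ coefficients) plus $\sum_{c\ge 1}\sum_{d\bmod c,\ (c,d)=1}$ of the corresponding term, and then collapse the $c,d$ sums against the lattice using the explicit action of $\omega_L$ on $\varphi_0$. This reduces the problem to two ingredients: (i) an archimedian integral over $u\in\R$ producing the confluent hypergeometric Whittaker function $\mathcal{W}_m(s,v)$ exactly as in \eqref{eq:whittaker} — this is the standard computation of the $\mathrm{GL}_2$ Whittaker integral, already implicitly recorded in Lemma~\ref{lem:wasymptotic}; and (ii) a non-archimedian ``singular series'' which, after the usual manipulation, is a Dirichlet series in the local representation numbers $N_{m,\mu}(a)$.

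The key step is to identify that non-archimedian factor with $\prod_p L^{(p)}_{m,\mu}(p^{-2s-n})$ up to the $\Gamma$- and power-of-$m$ normalizations. Here I would use the relation, recorded just before the Proposition,
\[
\sum_{a=1}^\infty N_{m,\mu}(a)\, a^{-s} = \zeta(s-2n+1)\prod_p L^{(p)}_{m,\mu}(p^{-s}),
\]
together with the fact that the Gram determinant of the $\Q$-bilinear form \eqref{Q bilinear} on $L$ has absolute value $|L'/L|$ (so that $\sqrt{|L'/L|}$ is exactly the normalizing volume factor that enters through the self-dual Haar measure in the Weil representation). The shift $p^{-2s-n}$ rather than $p^{-s}$ comes from the fact that the Eisenstein series is built from $\operatorname{Im}(\gamma\tau)^{s+(1-\ell)/2}$ with $\ell=n$, which contributes $|c\tau+d|^{-2s-1+\ell}=|c\tau+d|^{-(2s+n-1)}$, so the relevant Dirichlet series variable is $2s+n$; the factor $\zeta(2s+1)$ that would appear is cancelled by the $(1-p^{-s+2n-1})$ built into the definition \eqref{eq:lp} of $L^{(p)}_{m,\mu}$ (evaluated at $s\mapsto 2s+n$). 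Rather than redo all of this from scratch, the cleanest route is to cite \cite{BrKue} and \cite{KY}, whose formulas for Eisenstein series of this exact shape (attached to a lattice and its Weil representation) give precisely the stated answer; one only has to carefully track the differences in normalization — most importantly, the remark above notes that our $\mathcal{W}_m(s,v)$ omits the factor $\Gamma(s\pm\tfrac n2+\tfrac12)^{-1}$ that appears in \cite[Prop.~2.3]{KY}, so that factor is reinstated inside $B(m,\mu,s)$, which is exactly why $\Gamma(s+\tfrac n2+\tfrac12)$ appears in the denominator for $m>0$ and $\Gamma(s-\tfrac n2+\tfrac12)$ for $m<0$.

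So the actual structure of the proof is: (1) unfold \eqref{eq:eisl}; (2) separate archimedian and non-archimedian parts; (3) match the archimedian part to $\mathcal{W}_m(s,v)$ via the $U$-function integral \eqref{eq:uint}; (4) match the non-archimedian part to the local densities via the displayed Dirichlet-series identity and the computation of $\det$ on $L$; (5) assemble, comparing normalizations with \cite{BrKue,KY}. I expect the main obstacle to be entirely bookkeeping: getting every power of $2$, $\pi$, $D$, and $|L'/L|^{1/2}$, every sign, and every argument of the $\Gamma$-functions to come out matching the stated formula, since the paper's conventions for the Weil representation, the measure, and the Whittaker normalization all differ slightly from the cited sources. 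The mathematical content — unfolding plus the standard $p$-adic Whittaker computation — is routine; the delicate point is that the sign is $-$ (coming from the $c=0$ versus $c\ne 0$ split, equivalently from the functional-equation/intertwining normalization), and that the exponent of $|m|$ is $s+\tfrac n2-\tfrac12$ uniformly in the sign of $m$ even though the $\Gamma$-factor differs; both of these are visible once one carries the unfolding through carefully and are consistent with the asymptotics already extracted in Lemma~\ref{lem:wasymptotic}.
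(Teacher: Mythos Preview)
Your proposal is correct and lands on essentially the same approach as the paper: both reduce the statement to the formulas already proved in \cite{BrKue} (the paper cites Proposition~3.2 there specifically) and then track the normalization differences. The paper's proof is more terse than your sketch---it simply records the exact identification $E_L(\tau,s,n)=\tfrac{1}{2}E_0(\tau,s-\tfrac{n}{2}+\tfrac{1}{2})$ with $\kappa=n$, $r=2n$ in the notation of \cite{BrKue}---whereas you outline the underlying unfolding computation before arriving at the same citation; but the substance is the same.
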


\begin{proof}
This is obtained from Proposition 3.2 of  \cite{BrKue} by noticing that our  Eisenstein series $E_L(\tau,s,n)$ is $1/2$  the Eisenstein series $E_0(\tau,s-\frac{n}{2}+\frac{1}{2})$ of \cite{BrKue} with $\kappa=n$ and $r=2n$.
\end{proof}

According to equation (3.20) of \cite{BrKue}, the Euler factors  $L^{(p)}_{m,\mu}(p^{-s})$ are polynomials in $p^{-s}$. 
Moreover, by (3.23) of \cite{BrKue}, if $p$ does not divide $d_\mu^2 m |L'/L|$ then
\[
L^{(p)}_{m,\mu}(p^{-s})=1-\chi_F(p) p^{n-1-s},
\]
where $\chi_F$ is the quadratic Dirichlet character associated to  the quadratic extension $\Q(\sqrt{(-1)^n |L'/L|})$. 
This implies that the Euler product in Proposition  \ref{prop:eis1} converges absolutely for $\operatorname{Re}(s)>0$ and that $B(m,\mu,s_0)$ is rational if  $n>1$ (so that $s_0=\frac{n-1}{2}$ lies in the region of convergence).
 In this case it is nonpositive for positive $m$ and vanishes for negative $m$.

Later we will require the following lemma on the behavior of the coefficients of the Eisenstein series in the limit $v\to \infty$.

\begin{lemma}
\label{lem:coeffasy}
Assume that $n>2$,  or that $V$ is anisotropic.
The coefficients $B'(m,\mu;s_0,v)$ of $E_L'(\tau,s_0,n)$ satisfy
\[
\lim_{v\to \infty}\big( B'(m,\mu;s_0,v ) -\kappa\delta_{m,0}\delta_{\mu,0}\log v\big)=\begin{cases}
B'(m,\mu,s_0) &\text{if $m>0$} \\
B'(0,\mu,s_0) &\text{if $m=0$ and $n=1$} \\
0 &\text{if $m=0$ and $n>1$} \\
0 &\text{if $m<0$.}
\end{cases}
\]
\end{lemma}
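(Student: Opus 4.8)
The plan is to analyze the limiting behavior of each Fourier coefficient $B'(m,\mu;s_0,v)$ by using the factorization $B(m,\mu;s,v) = B(m,\mu,s)\cdot\mathcal W_m(s,v)$ for $m\neq 0$ (and the explicit shape of the $m=0$ coefficient), differentiating in $s$, and then sending $v\to\infty$ using the asymptotics of the Whittaker function and its derivative from Lemma~\ref{lem:wasymptotic}.

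First, consider $m>0$. Differentiating $B(m,\mu;s,v) = B(m,\mu,s)\mathcal W_m(s,v)$ with respect to $s$ at $s_0$ gives
\[
B'(m,\mu;s_0,v) = B'(m,\mu,s_0)\mathcal W_m(s_0,v) + B(m,\mu,s_0)\mathcal W_m'(s_0,v).
\]
By Lemma~\ref{lem:wasymptotic}, $\mathcal W_m(s_0,v) = 1$ and $\mathcal W_m'(s_0,v) = O(v^{-1})$ as $v\to\infty$. Since $B(m,\mu,s_0)$ is finite (it is rational when $n>1$, which is guaranteed by our hypothesis $n>2$ or $V$ anisotropic; and the case $n=1$ only arises when $V$ is anisotropic so $B(m,\mu,s_0)$ is still finite), the second term vanishes in the limit and we get $\lim_{v\to\infty}B'(m,\mu;s_0,v) = B'(m,\mu,s_0)$, matching the claim (the $\kappa\delta_{m,0}\delta_{\mu,0}\log v$ correction term is zero here since $m>0$).

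Next, for $m<0$: again $B'(m,\mu;s_0,v) = B'(m,\mu,s_0)\mathcal W_m(s_0,v) + B(m,\mu,s_0)\mathcal W_m'(s_0,v)$. By Proposition~\ref{prop:eis1}, $B(m,\mu,s)$ has a factor $\Gamma(s-\tfrac n2+\tfrac12)^{-1}$, which vanishes at $s_0 = \tfrac{n-1}{2}$ when $n>1$ (since then $s_0 - \tfrac n2 + \tfrac12 = 0$ is a pole of $\Gamma$); one checks $B'(m,\mu,s_0)$ is finite. Meanwhile $\mathcal W_m(s_0,v) = \Gamma(1-n,4\pi|m|v) = O(e^{-4\pi|m|v})$ decays exponentially, and similarly $\mathcal W_m'(s_0,v)$ decays, so both terms go to $0$. (When $n=1$ and $V$ is anisotropic, $B(m,\mu,s_0)$ vanishes for negative $m$ by the remark after Proposition~\ref{prop:eis1}, and one argues directly.) This gives the claimed limit $0$.

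Finally, for $m=0$: from $B(0,\mu;s,v) = \delta_{\mu,0}v^{s+\frac12-\frac n2} + B(0,\mu,s)v^{-s+\frac12-\frac n2}$, differentiate in $s$ at $s_0$:
\[
B'(0,\mu;s_0,v) = \delta_{\mu,0}v^{s_0+\frac12-\frac n2}\log v + B'(0,\mu,s_0)v^{-s_0+\frac12-\frac n2} - B(0,\mu,s_0)v^{-s_0+\frac12-\frac n2}\log v.
\]
When $n>1$, $s_0 = \tfrac{n-1}{2}$ gives $s_0 + \tfrac12 - \tfrac n2 = 0$, so the first term is $\delta_{\mu,0}\log v = \kappa\delta_{m,0}\delta_{\mu,0}\log v$ since $\kappa = 1$ for $n>1$; and $-s_0+\tfrac12-\tfrac n2 = 1-n < 0$, so the other two terms vanish as $v\to\infty$, yielding limit $0$ after subtracting the $\log v$ term, as claimed. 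When $n=1$ (so $V$ anisotropic and $\kappa = 2$, $s_0 = 0$), all exponents $\pm s_0 + \tfrac12 - \tfrac n2$ equal $0$, so $B'(0,\mu;s_0,v) = \delta_{\mu,0}\log v + B'(0,\mu,s_0) - B(0,\mu,s_0)\log v$; here $B(0,\mu,s_0) = -\delta_{\mu,0}$ by the functional equation / Siegel--Weil (the $n=1$ Eisenstein series being coherent with $\kappa=2$), so the combined $\log v$ coefficient is $\delta_{\mu,0}(1 - (-1)) = 2\delta_{\mu,0} = \kappa\delta_{\mu,0}$, and subtracting leaves $B'(0,\mu,s_0)$.

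The main obstacle is the bookkeeping at $n=1$: there the formulas of Proposition~\ref{prop:eis1} sit at the edge of (or outside) the region of convergence, $s_0 = 0$ is the center of symmetry, and one must invoke the functional equation of $E_L(\tau,s,1)$ — equivalently the Siegel--Weil value with $\kappa = 2$ — to pin down $B(0,\mu,s_0) = -\delta_{\mu,0}$ and thereby get the coefficient $\kappa = 2$ of $\log v$ correct. For $n>1$ everything is straightforward asymptotic analysis of the Whittaker function via Lemma~\ref{lem:wasymptotic}.
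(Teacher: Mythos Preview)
Your proof is correct and follows essentially the same approach as the paper: differentiate the factorization $B(m,\mu;s,v)=B(m,\mu,s)\,\mathcal W_m(s,v)$ (or the explicit constant-term formula when $m=0$) at $s=s_0$, and then apply the Whittaker asymptotics of Lemma~\ref{lem:wasymptotic}. One terminological slip: in the $n=1$ case your conclusion $B(0,\mu,0)=-\delta_{\mu,0}$ is right, but the weight-$n$ Eisenstein series $E_L(\tau,s,n)$ is \emph{incoherent}, not coherent; it is the incoherence that forces $E_L(\tau,0,1)=0$ at the center $s_0=0$ of the functional equation, whence the constant coefficient $\delta_{\mu,0}+B(0,\mu,0)$ vanishes.
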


\begin{proof}
For $m\neq 0$ the assertion follows from Proposition \ref{prop:eis1} together with Lemma \ref{lem:wasymptotic} on the asymptotic behavior of the Whittaker function $\mathcal{W}_m(s,v)$ and its derivative at $s=s_0$.

For $m=0$, we consider the Laurent expansion of $B(0,\mu;s,v)$ at $s=s_0$, which is given by
\begin{align*}
&B(0,\mu;s,v)= \delta_{\mu,0}+ B(0,\mu,s_0)v^{-2s_0}\\
&\phantom{=}{}+\left( \delta_{\mu,0} \log(v) - B(0,\mu,s_0)\log(v) v^{-2s_0} + B'(0,\mu,s_0)v^{-2s_0}\right)(s-s_0) \\
&\phantom{=}{}+O((s-s_0)^2).
\end{align*}
If $n>1$, then the holomorphicity of $E_L(\tau,s_0,n)$ implies that $B(0,\mu,s_0)=0$. Since $2s_0>0$, the term involving $v^{-2s_0}$ vanishes in the limit $v\to \infty$,  and we obtain the assertion.

If $n=1$ then $s_0=0$ is the center of symmetry of the functional equation of the incoherent Eisenstein series, and hence  $E_L(\tau,0,n)$ vanishes. The vanishing of the constant Fourier coefficient  implies 
\[
B(0,\mu,0)=-\delta_{\mu,0},
\]
and consequently
\[
B'(0,\mu;0,v)= 2\delta_{\mu,0} \log(v) + B'(0,\mu,s_0).
\]
This concludes the proof of the lemma.
\end{proof}


\section{Self-dual lattices}
\label{ss:self-dual coefficients}


In this section  we assume  the $\co_\kk$-lattice $L\subset V$ is self-dual \eqref{self-dual lattice} under the hermitian form, and that $D=- \mathrm{disc}(\kk)$ is odd.
The first of these assumptions implies that   \eqref{dual lattice} satisfies  $L'= \mathfrak{d}_\kk^{-1} L$, where   $\mathfrak{d}_\kk \subset \co_\kk$ is the different.
We will determine the coefficient $B(m,\mu,s)$ from   \eqref{eq:coeff} more explicitly.

To compute the Euler factors $L^{(p)}_{m,\mu}(p^{-s})$ in Proposition \ref{prop:eis1}, we need to determine the representation numbers $N_{m,\mu}(p^\nu)$. 
We now  derive explicit formulas for these quantities  using finite Fourier transforms and formulas for certain lattice Gauss sums.   As we will only compute the coefficients for $\mu=0$, we abbreviate
$N_m(p^\nu)=N_{m,0}(p^\nu)$.

Fix an  $a\in \Z$.  For $c\in \Z^+$,  consider the Gauss sum
\[
G(a, c ) = \sum_{x\in \mathcal{O}_\kk/ c  \mathcal{O}_\kk} e\left(\frac{a\norm(x)}{c}\right).
\]
We are mainly interested in the case when $c=p^\nu$ is a prime power.

\begin{lemma}
\label{lem:gauss1}
Let $p^\nu$ be a prime power.
If we write  
$\gcd(a,p^\nu) =p^\alpha$ with $0\leq \alpha \leq \nu$ and $a=p^\alpha a'$ with $a'\in \Z$, then
\[
G(a,p^\nu)= \begin{cases}
p^{2\nu} &\text{if $\alpha=\nu$}\\
p^{\alpha+\nu}\leg{-D}{p}^{\nu-\alpha} &\text{if $\alpha <\nu$ and $p\nmid D$}\\
\epsilon_p p^{\alpha+\nu+\frac{1}{2}}\leg{a'}{p}\leg{D'}{p}^{\nu-\alpha-1} &\text{if $\alpha <\nu$ and $p\mid D$.}
\end{cases}
\]
Here, in the final case, we have set  $D'=D/p$ and 
\[
\epsilon_r = \begin{cases}
1 & \mbox{if } r\equiv 1\pmod{4} \\
i & \mbox{if } r\equiv 3 \pmod{4} .
\end{cases}
\]
for any odd positive integer $r$.
 \end{lemma}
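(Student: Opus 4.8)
The plan is to pick an explicit integral basis of $\co_\kk$, recognize $G(a,c)$ as the Gauss sum of a binary quadratic form, reduce to the case $\gcd(a,p)=1$, and then diagonalize over $\Z_p$. Since $D$ is odd, $\co_\kk = \Z + \Z\omega$ with $\omega = \tfrac{1+\sqrt{-D}}{2}$, so writing $x = r + s\omega$ gives $\norm(x) = Q(r,s)$, where $Q(r,s) = r^2 + rs + \tfrac{1+D}{4}s^2$ is an integral binary quadratic form of discriminant $-D$, and $G(a,p^\nu) = \sum_{r,s \bmod p^\nu} e\big(aQ(r,s)/p^\nu\big)$. The case $\alpha = \nu$ is immediate: then $aQ(r,s)/p^\nu \in \Z$ for all $r,s$, so every term is $1$ and the sum equals $\#(\co_\kk/p^\nu\co_\kk) = p^{2\nu}$. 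For $\alpha < \nu$ write $a = p^\alpha a'$ with $p\nmid a'$; since $Q(v + p^{\nu-\alpha}w) \equiv Q(v) \pmod{p^{\nu-\alpha}}$ for all $v,w \in \Z^2$ (a quadratic form shifts by a multiple of the modulus under such a translation), the summand depends only on $(r,s) \bmod p^{\nu-\alpha}$, each class being hit $p^{2\alpha}$ times; hence $G(a,p^\nu) = p^{2\alpha}\,G(a',p^{\nu-\alpha})$. This reduces everything to evaluating $G(a',p^\mu)$ with $p\nmid a'$ and $\mu = \nu-\alpha \ge 1$, and reinstating $p^{2\alpha}$ at the end produces the powers $p^{\alpha+\nu+\cdots}$ appearing in the three cases.

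Next I would diagonalize $Q$ over $\Z_p$ for $p$ odd by completing the square: $Q(r,s) = (r + \tfrac s2)^2 + \tfrac D4 s^2$, with $r \mapsto r - \tfrac s2$ unimodular over $\Z_p$. If $p \nmid D$, both coefficients $1$ and $\tfrac D4$ are $p$-adic units, so $G(a',p^\mu)$ factors as $g(a',p^\mu)\,g(a'D/4,p^\mu)$, where $g(b,p^\mu) = \sum_{x \bmod p^\mu} e(bx^2/p^\mu)$; inserting the classical evaluation ($g(b,p^\mu) = p^{\mu/2}$ for $\mu$ even and $g(b,p^\mu) = \leg{b}{p}\epsilon_p p^{\mu/2}$ for $\mu$ odd) and simplifying with $\epsilon_p^2 = \leg{-1}{p}$ and $\leg{-1}{p}\leg{D}{p} = \leg{-D}{p}$ gives $G(a',p^\mu) = \leg{-D}{p}^{\mu} p^\mu$, which is the second case after multiplying by $p^{2\alpha}$. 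If $p \mid D$, then $p$ is odd and, as $D$ is squarefree, $p\,\|\,D$; writing $D = pD'$, the form is $\Z_p$-equivalent to the Jordan splitting $\langle 1\rangle \perp \langle pD'/4\rangle$ with $D'/4 \in \Z_p^\times$, so $G(a',p^\mu) = g(a',p^\mu)\cdot p\,g(a'D'/4,p^{\mu-1})$, the second factor being the $s$-sum, which runs effectively over $\Z/p^{\mu-1}\Z$ and so carries an extra $p$. Exactly one of $\mu,\mu-1$ is odd, so exactly one of the two factors carries an $\epsilon_p$ and a Legendre symbol; feeding in the evaluations (using $\leg{D'/4}{p} = \leg{D'}{p}$) while tracking the parity of $\mu$ produces the third case, including the exponent $\nu-\alpha-1$ on $\leg{D'}{p}$ and its degeneration to $1$ when $\mu = 1$.

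Finally I would treat $p = 2$, which occurs only when $2\nmid D$: then $2$ is split or inert in $\kk$ according as $D \equiv 7$ or $3 \pmod 8$, i.e.\ as $\leg{-D}{2} = 1$ or $-1$. In the split case $\co_\kk \otimes \Z_2 \cong \Z_2 \times \Z_2$ with $\norm$ the product pairing, and the inner sum immediately gives $G(a',2^\mu) = 2^\mu$; in the inert case $Q$ is the anisotropic unimodular binary $\Z_2$-form and $G(a',2^\mu) = (-1)^\mu 2^\mu$ by the standard evaluation of $2$-adic quadratic Gauss sums (or a short direct computation). Either way $G(a',2^\mu) = \leg{-D}{2}^{\mu} 2^\mu$, which matches the second formula. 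I expect the only genuinely delicate point to be the constant bookkeeping in the ramified case — deciding which of the two factors carries the half-integer power $p^{\pm 1/2}$ and the factor $\epsilon_p$, and handling $\mu = 1$ where the second Gauss sum degenerates to $1$ — together with the by-hand verification of the $2$-adic case; everything else is routine manipulation of classical one-variable Gauss sums.
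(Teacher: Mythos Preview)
Your argument is correct and follows the same overall strategy as the paper: reduce to $\gcd(a',p)=1$ via $G(a,p^\nu)=p^{2\alpha}G(a',p^{\nu-\alpha})$, then diagonalize over $\Z_p$ and invoke classical one-variable Gauss sums. For odd $p$ the paper is terser than you are (it simply says ``can be easily deduced from the classical Gauss sum'' without spelling out the diagonalization), but the content is identical. The one genuine difference is at $p=2$: the paper keeps the form $x^2+xy+\tfrac{1+D}{4}y^2$, completes the square explicitly for $\nu\ge 2$, and reduces to the $2$-adic Gauss sums $\sum_{x\bmod 2^\nu}e(ax^2/2^\nu)=(1+i)\epsilon_a^{-1}2^{\nu/2}\leg{2^\nu}{a}$, arriving at $G(a,2^\nu)=2^\nu\leg{-D}{2}^\nu$ uniformly; you instead branch on the splitting type of $2$ in $\kk$, getting $2^\mu$ immediately in the split case from the product pairing and appealing to the anisotropic-form evaluation in the inert case. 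Your route is a bit more conceptual but requires knowing (or checking by hand) the inert-case Gauss sum, whereas the paper's route is purely computational and handles both cases at once.
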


\begin{proof}
It is easily seen that 
\[
G(a,p^\nu)= p^{2\alpha} G(a',p^{\nu-\alpha}).
\]
Hence we may assume that $a$ is coprime to $p$. 
Now, if $p$ is odd, the assertion can be easily deduced from the classical Gauss sum 
\[
\sum_{x\in \Z/c \Z} e\left(\frac{a x^2}{c}\right)=\epsilon_{c}\sqrt{c} \leg{a}{c}
\]
for odd positive $c$ with $\gcd(c,a)=1$.

For $p=2$ and $a$ odd we have
\begin{align}
\label{eq:gauss1}
G(a,2^\nu)= \sum_{x,y\in \Z/2^\nu\Z} e\left( a\frac{x^2+xy+\frac{1+D}{4}y^2}{2^\nu}\right),
\end{align}
where we have used that $D$ is odd.
If $\nu=1$ the asserted formula is easily checked. Hence, assume that $\nu\geq 2$. Then we use the classical Gauss sum
\[
\sum_{x\in\Z/2^\nu\Z}  e\left( a\frac{x^2+xy}{2^\nu}\right)
=\begin{cases}
\displaystyle 
0,& 2\nmid y,\\[1ex]
\displaystyle
e\left(-a\frac{y^2}{2^{\nu+2}}\right)\sum_{x\in \Z/2^\nu\Z}  e\left( a\frac{x^2}{2^\nu}\right), & 2\mid y,
\end{cases}
\]
to rewrite \eqref{eq:gauss1} as follows:
\begin{align*}
G(a,2^\nu)= \frac{1}{2}\sum_{x,y\in \Z/2^{\nu}\Z} e\left( a\frac{x^2+Dy^2}{2^\nu}\right).
\end{align*}
Inserting the evaluation of the  Gauss sum 
\[
\sum_{x\in\Z/2^\nu\Z}  e\left( a\frac{x^2}{2^\nu}\right)
= (1+i)\epsilon_a^{-1}2^{\nu/2} \leg{2^\nu}{a},
\]
we finally find that 
\begin{align*}
G(a,2^\nu)= 2^\nu \leg{2^\nu}{D}=2^\nu \leg{-D}{2}^\nu,
\end{align*}
concluding the proof of the lemma.
\end{proof}

\begin{lemma}
\label{lem:gauss2}
Let $p^\nu$ be a prime power. 
If we write  
$\gcd(a,p^\nu) =p^\alpha$ with $0\leq \alpha \leq \nu$ and $a=p^\alpha a'$ with $a'\in \Z$, then 
the Gauss sum
\begin{align*}
G_L(a,p^\nu)= \sum_{x\in L/p^\nu L} e\left(\frac{a\langle x,x\rangle }{p^\nu}\right)
\end{align*}
is given by
\[
G_L(a,p^\nu)= \begin{cases}
p^{2n\nu }&\text{if $\alpha=\nu$ }\\
p^{n\alpha+n\nu}\leg{-D}{p}^{n(\nu-\alpha)} &\text{if $\alpha <\nu$ and $p\nmid D$}\\
\epsilon_p^n \inv_p(V) p^{n\alpha+n\nu+\frac{n}{2}}\leg{a'}{p}^n\leg{D'}{p}^{n(\nu-\alpha-1)} &\text{if $\alpha <\nu$ and $p\mid D$,}
\end{cases}
\]
where $\epsilon_p$ and $D'$  have the same meaning as in Lemma \ref{lem:gauss1}.
\end{lemma}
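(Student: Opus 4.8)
The plan is to express $G_L(a,p^\nu)$ as a product of the rank-one Gauss sums $G(\,\cdot\,,p^\nu)$ evaluated in Lemma~\ref{lem:gauss1}, by localizing at $p$ and using the structure of hermitian lattices over $\co_{\kk,p}$. Fix a prime $p$ and set $L_p=L\otimes_{\co_\kk}\co_{\kk,p}$. Since $L$ is self-dual, $L_p$ is a unimodular hermitian $\co_{\kk,p}$-lattice of rank $n$, so in particular $\langle L_p,L_p\rangle\subset\co_{\kk,p}$. Because $D$ is odd, the algebra $\kk_p=\kk\otimes_\Q\Q_p$ is either split or an unramified field extension of $\Q_p$ when $p\nmid D$, and is a ramified field extension, of necessarily odd residue characteristic, when $p\mid D$.

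For $p\nmid D$ I would use that a unimodular hermitian $\co_{\kk,p}$-lattice of rank $n$ is unique up to isometry; hence $G_L(a,p^\nu)$ depends only on $n$, and may be computed with the standard lattice $\co_{\kk,p}^{n}$ carrying the form $x\mapsto\sum_i\norm(x_i)$. This gives $G_L(a,p^\nu)=G(a,p^\nu)^n$ outright, and raising the value of $G(a,p^\nu)$ from Lemma~\ref{lem:gauss1} to the $n$-th power produces exactly the first two lines of the asserted formula. For $p\mid D$ the residue characteristic is odd, so $L_p$ admits an orthogonal $\co_{\kk,p}$-basis $e_1,\dots,e_n$; put $c_i=\langle e_i,e_i\rangle$, which lies in $\Z_p^\times$ because $L_p$ is unimodular. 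Using $\langle te_i,te_i\rangle=\norm(t)\,c_i$ for $t\in\co_{\kk,p}$, the Gauss sum factors:
\[
G_L(a,p^\nu)=\prod_{i=1}^n\ \sum_{t\in\co_{\kk,p}/p^\nu\co_{\kk,p}} e\left(\frac{a\,c_i\,\norm(t)}{p^\nu}\right)=\prod_{i=1}^n G(ac_i,p^\nu).
\]
Applying Lemma~\ref{lem:gauss1} to each factor — using that $\gcd(ac_i,p^\nu)=\gcd(a,p^\nu)=p^\alpha$ since $c_i$ is a unit, and that $ac_i/p^\alpha=a'c_i$ when $a=p^\alpha a'$ — the product collapses to $p^{2n\nu}$ when $\alpha=\nu$, and to $\epsilon_p^{n}\,p^{n(\alpha+\nu)+n/2}\,\leg{a'}{p}^{n}\,\big(\prod_i\leg{c_i}{p}\big)\,\leg{D'}{p}^{n(\nu-\alpha-1)}$ when $\alpha<\nu$. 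Thus the ramified case reduces to the identity $\prod_i\leg{c_i}{p}=\inv_p(V)$.

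To prove that identity, note that $\operatorname{diag}(c_1,\dots,c_n)$ is a Gram matrix of the hermitian space $V_p$, so $\det V\equiv\prod_i c_i$ in $\Q_p^\times/\norm(\kk_p^\times)$; hence $\inv_p(V)=(\det V,-D)_p=\prod_i(c_i,-D)_p$ by bilinearity of the Hilbert symbol. Writing $D=pD'$ with $p\nmid D'$, and using that $p$ is odd and each $c_i$ is a unit, $(c_i,-D)_p=(c_i,-1)_p(c_i,D')_p(c_i,p)_p=(c_i,p)_p=\leg{c_i}{p}$, so $\prod_i\leg{c_i}{p}=\inv_p(V)$ as wanted. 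I expect the only genuinely delicate point to be this identification at $p\mid D$: one must be sure that the determinant entering $\inv_p(V)$ is that of the hermitian form on $V$ — not of the $\Q$-bilinear form $[\,\cdot\,,\,\cdot\,]=\tr_{\kk/\Q}\langle\,\cdot\,,\,\cdot\,\rangle$, which differs from it by the different $\mathfrak{d}_\kk$ and would alter the exponent of $p$ — and that passing to an orthogonal $\co_{\kk,p}$-basis introduces no spurious sign; by comparison, the appeal to uniqueness of unimodular hermitian lattices for $p\nmid D$ and to diagonalization in odd residue characteristic for $p\mid D$ are standard.
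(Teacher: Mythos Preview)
Your proof is correct and follows essentially the same approach as the paper: diagonalize $L_p$ over $\co_{\kk,p}$ (the paper does this uniformly for all $p$, justified by $D$ odd, rather than splitting into cases), factor $G_L(a,p^\nu)=\prod_i G(a\langle b_i,b_i\rangle,p^\nu)$, apply Lemma~\ref{lem:gauss1}, and for $p\mid D$ identify $\prod_i\leg{\langle b_i,b_i\rangle}{p}$ with $\inv_p(V)$ via the Hilbert symbol. Your write-up is in fact more explicit than the paper's on the Hilbert symbol step, which the paper dispatches with the phrase ``standard formulas for the Hilbert symbol.''
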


\begin{proof}
The Gauss sum $G_L(a,p^\nu)$ only depends on $L_p=L\otimes_\Z \Z_p$. 
If we let  $b_1,\dots,b_n$ be an orthogonal  $\mathcal{O}_{\kk,p}$-module basis of $L_p$ (which exists because of our hypothesis that $D$ is odd), then 
\[
G_L(a,p^\nu) = G(a\langle b_1,b_1\rangle,p^\nu)\cdots G(a\langle b_n,b_n\rangle,p^\nu),
\] 
and  the claim follows from Lemma \ref{lem:gauss1}.  
Note that in the case $p\mid D$, standard formulas for the Hilbert symbol imply
\[
\inv_p(V) = \leg{\langle b_1,b_1\rangle}{p}\cdots \leg{\langle b_n,b_n\rangle}{p} . \qedhere
\]
\end{proof}

\begin{proposition}
\label{prop:lp1}
Assume that $n$ is even, and $m\in \Z$ is nonzero.
Fix a prime $p$, and  factor $m=p^\beta m'$ with  $\gcd(m',p)=1$. 
If $p\nmid D$ then
\[
L^{(p)}_{m}(p^{-s})= 
(1-p^{n-s-1})\sum_{\gamma=0}^{\beta} p^{(n-s)\gamma}.
\]
If $p\mid D$ then
\begin{align*}
L^{(p)}_{m}(p^{-s})  
&= 1-\leg{-1}{p}^{\frac{n}{2}} \inv_p(V)p^{\frac{n}{2}}p^{n-1-s} \\
& \quad + \leg{-1}{p}^{\frac{n}{2}} \inv_p(V)p^{\frac{n}{2}}\left(1 -  p^{n-1-s}\right)\sum_{\gamma=1}^{\beta} p^{(n-s)\gamma} .
\end{align*}
\end{proposition}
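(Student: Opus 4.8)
The plan is to compute the Euler factor $L^{(p)}_m(p^{-s})$ directly from its defining series \eqref{eq:lp} by evaluating the representation numbers $N_m(p^\nu)$ via finite Fourier analysis. First I would write, for each $\nu$,
\[
N_m(p^\nu) = \frac{1}{p^\nu} \sum_{a \in \Z/p^\nu\Z} \sum_{x \in L/p^\nu L} e\!\left( \frac{a(\langle x,x\rangle - m)}{p^\nu} \right) = \frac{1}{p^\nu} \sum_{a \in \Z/p^\nu\Z} e\!\left( \frac{-am}{p^\nu} \right) G_L(a, p^\nu),
\]
where $G_L(a,p^\nu)$ is the lattice Gauss sum evaluated in Lemma \ref{lem:gauss2}. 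Grouping the sum over $a$ by $\alpha = \ord_p(\gcd(a, p^\nu))$ and using that $n$ is even (so the factors $\leg{a'}{p}^n$ and $\epsilon_p^n \leg{D'}{p}^{n(\nu-\alpha-1)}$ and $\leg{-D}{p}^{n(\nu-\alpha)}$ simplify to powers of $p$ times $\leg{-1}{p}^{n/2}\inv_p(V)$ and the like), each Gauss sum becomes an explicit power of $p$, independent of the residue $a'$ up to the split/inert sign. The inner sum $\sum_{a', p \nmid a'} e(-a'm/p^{\nu-\alpha})$ is then a Ramanujan-type sum that vanishes unless $\nu - \alpha \le \beta + 1$ and equals $p^{\nu-\alpha-1}(p-1)$ or $-p^{\nu-\alpha-1}$ depending on whether $\nu - \alpha \le \beta$ or $\nu - \alpha = \beta + 1$. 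This yields a closed form for $N_m(p^\nu)$ as an explicit piecewise function of $\nu$, $\beta$, and the sign $\leg{-1}{p}^{n/2}\inv_p(V)$ in the ramified case.

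Next I would substitute these values of $N_m(p^\nu)$ into \eqref{eq:lp}: compute $\sum_{\nu=0}^\infty N_m(p^\nu) p^{-\nu s}$ as a finite geometric-type sum (the $N_m(p^\nu)$ stabilize or vanish for $\nu$ large relative to $\beta$, so only finitely many terms and one tail geometric series appear), and then multiply by $(1 - p^{-s+2n-1})$. In the unramified case $p \nmid D$ this should telescope to $(1 - p^{n-s-1})\sum_{\gamma=0}^\beta p^{(n-s)\gamma}$ after a routine manipulation. In the ramified case $p \mid D$, the extra factor $p^{n/2}$ and the sign from $\inv_p(V)$ enter each term beyond $\nu = 1$, producing the stated formula with the leading piece $1 - \leg{-1}{p}^{n/2}\inv_p(V) p^{n/2} p^{n-1-s}$ plus the geometric sum over $\gamma = 1, \dots, \beta$ carrying the same sign and $p^{n/2}$. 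Throughout I would keep track of the normalizing factor $\zeta(s - 2n+1)^{-1}$ implicitly built into \eqref{eq:lp}, but since that factor is exactly $(1 - p^{-s+2n-1})$ locally, it is already accounted for.

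The main obstacle I expect is bookkeeping the ramified case $p \mid D$ cleanly: there the Gauss sum formula from Lemma \ref{lem:gauss2} splits into the three regimes $\alpha = \nu$, $\alpha < \nu$ with $p \mid D$, and one must carefully handle the boundary value $\nu = \alpha + 1$ (where the exponent $n(\nu - \alpha - 1) = 0$ kills the $\leg{D'}{p}$ factor) separately from $\nu \ge \alpha + 2$, and then reconcile this with the vanishing/nonvanishing pattern of the Ramanujan sum. Getting the sign $\leg{-1}{p}^{n/2}$ to emerge correctly requires combining $\epsilon_p^n$ with $\leg{-D}{p}$ and $\leg{D'}{p}$ factors and invoking that $\leg{-D}{p} = \leg{-1}{p}\leg{D'}{p}$ when $p \mid D$ (using $D = pD'$ and reciprocity for $\leg{p}{D'}$); this is the kind of quadratic-symbol juggling where an error in a single Legendre symbol propagates. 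Once the Gauss sums are correctly assembled, the remaining geometric series manipulations are mechanical. A useful sanity check at the end: specializing $s$ so that the Euler product recovers the known quadratic $L$-value predicted by (3.23) of \cite{BrKue} for $p \nmid d_\mu^2 m |L'/L|$, and comparing with the abstract functional equation, pins down any stray sign.
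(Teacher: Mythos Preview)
Your proposal is correct and follows essentially the same approach as the paper: compute $N_m(p^\nu)$ by Fourier inversion using the Gauss sum of Lemma~\ref{lem:gauss2}, evaluate the resulting Ramanujan sums, and sum the series in \eqref{eq:lp}. The obstacle you anticipate in the ramified case is milder than you fear: since $n$ is even, the factors $\leg{a'}{p}^n$, $\leg{D'}{p}^{n(\nu-\alpha-1)}$, and $\leg{-D}{p}^{n(\nu-\alpha)}$ are all identically $1$, and $\epsilon_p^n = (\epsilon_p^2)^{n/2} = \leg{-1}{p}^{n/2}$ directly, so no quadratic-reciprocity juggling or boundary splitting at $\nu=\alpha+1$ is needed.
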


\begin{proof}
We compute the representation number $N_m(p^\nu)$ using the identity
\begin{align*}
N_m(p^\nu) &= \frac{1}{p^{\nu}} \sum_{a \in \Z/p^\nu\Z }  G_L(a,p^\nu) e\Big(-\frac{am}{p^\nu}\Big)\\
&=  \frac{1}{p^{\nu}} \sum_{\alpha=0}^\nu
 \sum_{ a' \in (\Z/ p^{\nu-\alpha} \Z)^\times}  G_L(p^\alpha a',p^\nu) e \Big(-\frac{a'm}{p^{\nu-\alpha}}  \Big).
\end{align*}
As $n$ is even,  Lemma \ref{lem:gauss2} implies
\[
G_L(a,p^\nu)= \begin{cases}
p^{2n\nu} &\text{if $\alpha=\nu$}\\
p^{n\alpha+n\nu} &\text{if $\alpha <\nu$ and $p\nmid D$ }\\
\leg{-1}{p}^{n/2} \inv_p(V) p^{n\alpha+n\nu+\frac{n}{2}} &\text{if $\alpha <\nu$ and $p\mid D$.}
\end{cases}
\]
If $p\nmid D$ we obtain
\begin{align*}
N_m(p^\nu) 
&= p^{n\nu-\nu}\sum_{\alpha=0}^{\nu}  p^{n\alpha}
  \sum_{a' \in  (\Z/ p^{\nu-\alpha} \Z)^\times } e \Big(-\frac{a'm}{p^{\nu-\alpha}} \Big)   \\
	&= p^{n\nu-\nu}\sum_{\alpha=0}^{\nu}  p^{n\alpha}
  \sum_{d\mid  \gcd(p^{\nu-\alpha},m)}   \mu\Big(\frac{p^{\nu-\alpha}}{d}\Big)  d   \\
&=p^{n\nu-\nu}\sum_{\alpha=0}^{\nu}  p^{n\alpha}
 \sum_{\gamma=0}^{\min(\nu-\alpha,\beta)}     \mu\big(p^{\nu-\alpha-\gamma}\big) p^\gamma   .
%
\end{align*}
Here we have used the evaluation of the Ramanujan sum. 
Consequently, the Euler factor \eqref{eq:lp} is
\begin{align*}
L^{(p)}_{m}(p^{-s})&= (1-p^{2n-s-1})
\sum_{\nu=0}^\infty N_{m}(p^\nu) p^{-\nu s}\\
&=(1-p^{2n-s-1})
\sum_{\gamma=0}^{\beta} \sum_{\alpha=0}^{\infty} \sum_{\nu=\alpha+\gamma}^\infty p^{n\alpha+\gamma} \mu(p^{\nu-\alpha-\gamma})p^{(n-1-s)\nu}\\
&=(1-p^{n-s-1})\sum_{\gamma=0}^{\beta} p^{(n-s)\gamma}.
\end{align*}

In the case $p\mid D$  we find
\begin{align*}
N_m(p^\nu) 
&= p^{2n\nu-\nu}+\leg{-1}{p}^{n/2} \inv_p(V)p^{n\nu-\nu+\frac{n}{2}}\sum_{\alpha=0}^{\nu-1}  p^{n\alpha}  
\sum_{a'\in (\Z / p^{\nu-\alpha} \Z)^\times } e \Big(-\frac{a'm}{p^{\nu-\alpha}} \Big) \\
&=
p^{2n\nu-\nu}+\leg{-1}{p}^{n/2} \inv_p(V)p^{n\nu-\nu+\frac{n}{2}}\sum_{\alpha=0}^{\nu-1}  p^{n\alpha} 
\sum_{d\mid \gcd(p^{\nu-\alpha},m)}   \mu \Big(\frac{p^{\nu-\alpha}}{d} \Big) d \\
&=
p^{2n\nu-\nu}+\leg{-1}{p}^{n/2} \inv_p(V)p^{n\nu-\nu+\frac{n}{2}}\sum_{\alpha=0}^{\nu-1}  p^{n\alpha} \sum_{\gamma=0}^{\min(\nu-\alpha,\beta)} \mu \big(p^{\nu-\alpha-\gamma}\big) p^\gamma.
\end{align*}
Consequently,  the Euler factor $L^{(p)}_{m}(p^{-s})$ of \eqref{eq:lp} is 
\begin{align*}\lefteqn{ 
(1-p^{2n-s-1})\sum_{\nu=0}^\infty N_{m}(p^\nu) p^{-\nu s} } \\
&=1+\leg{-1}{p}^{\frac{n}{2}} \inv_p(V)p^{\frac{n}{2}}(1-p^{2n-s-1})\sum_{\nu=1}^\infty p^{(n-1-s)\nu}
\sum_{\alpha=0}^{\nu-1}  \sum_{\gamma=0}^{\min(\nu-\alpha,\beta)}p^{n\alpha+\gamma}  \mu(p^{\nu-\alpha-\gamma}) \\
&=1+\leg{-1}{p}^{\frac{n}{2}} \inv_p(V)p^{\frac{n}{2}}(1-p^{2n-s-1})
\sum_{\gamma=0}^{\beta}\sum_{\alpha=0}^{\infty}\sum_{\substack{\nu\geq \alpha+\gamma\\ \nu\geq \alpha +1}}p^{(n-1-s)\nu} p^{n\alpha+\gamma}  \mu(p^{\nu-\alpha-\gamma}).
\end{align*}
The contribution of $\gamma=0$ to the latter sum is given by 
\begin{align*}
(1-p^{2n-s-1})\sum_{\alpha=0}^{\infty}\sum_{\substack{  \nu\geq \alpha +1}}p^{(n-1-s)\nu} p^{n\alpha}  \mu(p^{\nu-\alpha})=-p^{n-1-s}.
\end{align*}
The contribution of $\gamma\geq 1$ is equal to
\begin{align*}
&(1-p^{2n-s-1})\sum_{\gamma=1}^{\beta}\sum_{\alpha=0}^{\infty}\sum_{\substack{  \nu\geq \alpha +\gamma}}p^{n\alpha+\gamma} p^{(n-1-s)\nu}  \mu(p^{\nu-\alpha-\gamma})\\
&=(1-p^{2n-s-1})\sum_{\gamma=1}^{\beta}\sum_{\alpha=0}^{\infty} p^{n\alpha+\gamma} \left(p^{(n-1-s)(\alpha+\gamma)} -  p^{(n-1-s)(\alpha+\gamma+1)}\right)\\
&=\left(1 -  p^{n-1-s}\right)\sum_{\gamma=1}^{\beta} p^{(n-s)\gamma} .
\end{align*}
Putting the terms together, we find
\begin{align*}
L^{(p)}_{m}(p^{-s})&= 1-\leg{-1}{p}^{\frac{n}{2}} \inv_p(V)p^{\frac{n}{2}}p^{n-1-s}   \\
& \quad + \leg{-1}{p}^{\frac{n}{2}} \inv_p(V)p^{\frac{n}{2}}\left(1 -  p^{n-1-s}\right)\sum_{\gamma=1}^{\beta} p^{(n-s)\gamma} .
\end{align*}
This concludes the proof of the proposition.
\end{proof}

\begin{proposition}
\label{prop:lp2}
Assume $n$ is odd, and $m\in \Z$ is nonzero.
Fix a prime $p$, and factor $m=p^\beta m'$ with  $\gcd(m',p)=1$.
If $p\nmid D$ then
\[
L^{(p)}_{m}(p^{-s})= 
\left(1-\leg{-D}{p}p^{n-s-1}\right)\sum_{\gamma=0}^{\beta} \leg{-D}{p}^\gamma p^{(n-s)\gamma}.
\]
If $p\mid D$ then
\[
L^{(p)}_{m}(p^{-s})= 1+\inv_p(V)\leg{-1}{p}^{\frac{n-1}{2}}\leg{D'}{p}^\beta\leg{m'}{p} p^{
(\beta+3/2)n-1/2-(\beta+1)s},
\]
where we have set $D'=D/p$.
\end{proposition}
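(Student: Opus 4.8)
The plan is to mirror the proof of Proposition \ref{prop:lp1}, carrying out the same Fourier-analytic computation but now with $n$ odd, so that the quadratic character $\leg{-D}{p}$ and the extra quadratic Gauss sums survive rather than cancel. First I would start from the finite Fourier expansion
\[
N_m(p^\nu)= \frac{1}{p^{\nu}}\sum_{\alpha=0}^{\nu}\sum_{a'\in(\Z/p^{\nu-\alpha}\Z)^\times} G_L(p^\alpha a',p^\nu)\, e\!\Big(-\frac{a'm}{p^{\nu-\alpha}}\Big),
\]
and substitute the values of $G_L(a,p^\nu)$ from Lemma \ref{lem:gauss2}. With $n$ odd the formulas retain a Legendre symbol $\leg{a'}{p}^n=\leg{a'}{p}$ (and $\leg{D'}{p}^{n(\nu-\alpha-1)}$, $\leg{-D}{p}^{n(\nu-\alpha)}$) that did not appear in the even case. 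Consequently the character sums over $a'\in(\Z/p^{\nu-\alpha}\Z)^\times$ are no longer plain Ramanujan sums but twisted (Gauss/Salié-type) sums $\sum_{a'} \leg{a'}{p} e(-a'm/p^{\nu-\alpha})$; the plan is to evaluate these, distinguishing $p\nmid D$ from $p\mid D$ and tracking the $p$-adic valuation $\beta$ of $m$.

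For $p\nmid D$: here $G_L(p^\alpha a',p^\nu)=p^{n\alpha+n\nu}\leg{-D}{p}^{n(\nu-\alpha)}=p^{n\alpha+n\nu}\leg{-D}{p}^{\nu-\alpha}$ (using $n$ odd), which is independent of $a'$, so the $a'$-sum is again a Ramanujan sum $c_{p^{\nu-\alpha}}(m)=\sum_{d\mid\gcd(p^{\nu-\alpha},m)}\mu(p^{\nu-\alpha}/d)d$. I would then assemble
\[
L^{(p)}_m(p^{-s})=(1-p^{2n-s-1})\sum_{\nu\ge0}N_m(p^\nu)p^{-\nu s},
\]
reindex by $\gamma=\min$ of the divisor exponent, sum the resulting geometric series in $\alpha$ and $\nu$, and watch the sign $\leg{-D}{p}^{\nu-\alpha}$ survive into each term, collapsing to $\big(1-\leg{-D}{p}p^{n-s-1}\big)\sum_{\gamma=0}^\beta\leg{-D}{p}^\gamma p^{(n-s)\gamma}$ exactly as in the even case but character-twisted. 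This part is essentially bookkeeping once the even-case computation is in hand.

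For $p\mid D$: now $G_L(p^\alpha a',p^\nu)=\eps_p^n\inv_p(V)p^{n\alpha+n\nu+n/2}\leg{a'}{p}\leg{D'}{p}^{n(\nu-\alpha-1)}$ when $\alpha<\nu$, and $p^{2n\nu}$ when $\alpha=\nu$. The $\alpha=\nu$ term contributes $p^{2n\nu}$ with no $a'$-sum; for $\alpha<\nu$ the inner sum becomes $\leg{D'}{p}^{n(\nu-\alpha-1)}\sum_{a'\in(\Z/p^{\nu-\alpha}\Z)^\times}\leg{a'}{p}e(-a'm/p^{\nu-\alpha})$. The main obstacle — and the one genuinely new phenomenon relative to Proposition \ref{prop:lp1} — is the evaluation of this twisted character sum: it is nonzero only for the single value $\nu-\alpha=\beta+1$ (one more than $\ord_p m$), where it equals $p^\beta$ times a quadratic Gauss sum $\sum_{u\bmod p}\leg{u}{p}e(-m'u/p)=\leg{-m'}{p}\eps_p\sqrt p$ (so one picks up $\leg{m'}{p}$ and another $\eps_p$). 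Handling the half-integral powers of $p$ and the product $\eps_p^{n+1}\leg{-1}{p}^{\ldots}$ correctly — using $\eps_p^2=\leg{-1}{p}$ to turn $\eps_p^{n+1}$ into $\leg{-1}{p}^{(n+1)/2}$, and keeping careful count of $\leg{-1}{p}$ versus $\leg{D'}{p}$, $\leg{m'}{p}$ — is where the argument needs attention; the Siegel--Weil/incoherence normalizations play no role here, it is purely a Gauss-sum identity. Collecting the $\alpha=\nu$ geometric series (which contributes the ``$1$'') and the single surviving $\alpha<\nu$ term, and simplifying the exponent of $p$ to $(\beta+3/2)n-1/2-(\beta+1)s$, yields the stated formula. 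I expect the exponent arithmetic and the sign consolidation into $\inv_p(V)\leg{-1}{p}^{(n-1)/2}\leg{D'}{p}^\beta\leg{m'}{p}$ to be the fiddliest bookkeeping, but no conceptual difficulty beyond the twisted Gauss sum evaluation.
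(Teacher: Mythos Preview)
Your proposal is correct and follows essentially the same route as the paper: both start from the finite Fourier inversion for $N_m(p^\nu)$, insert the Gauss-sum values from Lemma~\ref{lem:gauss2}, reduce the $p\nmid D$ case to Ramanujan sums with the surviving character twist $\leg{-D}{p}^{\nu-\alpha}$, and in the $p\mid D$ case isolate the unique nonvanishing level $\nu-\alpha=\beta+1$ via the twisted Gauss sum $\sum_{a'}\leg{a'}{p}e(-a'm/p^{\nu-\alpha})=p^{\nu-\alpha-1/2}\epsilon_p\leg{-m'}{p}$, then consolidate signs using $\epsilon_p^2=\leg{-1}{p}$. The only point to watch is exactly the one you flag: the exponent bookkeeping and the conversion $\epsilon_p^{n+1}\leg{-m'}{p}=\leg{-1}{p}^{(n-1)/2}\leg{m'}{p}$, which the paper handles in the same way.
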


\begin{proof}
We argue in the same way as in the proof of Proposition \ref{prop:lp1}, using the identity
\begin{align*}
N_m(p^\nu) &= \frac{1}{p^{\nu}} \sum_{a \in \Z/ p^\nu\Z }  G_L(a,p^\nu) e \Big(-\frac{am}{p^\nu} \Big)
\end{align*}
and the value of the Gauss sum $G_L(a,p^\nu)$ computed in Lemma \ref{lem:gauss2}. 
If we write $a=p^\alpha a'$ with $a'$ coprime to $p$, we have 
\[
G_L(a,p^\nu)= \begin{cases}
p^{2n\nu}  &\text{if $\alpha=\nu$ }\\
p^{n\alpha+n\nu}\leg{-D}{p}^{\nu-\alpha}  &\text{if $\alpha <\nu$ and $p\nmid D$ }\\
\epsilon_p^n \inv_p(V) p^{n\alpha+n\nu+\frac{n}{2}}\leg{a'}{p}\leg{D'}{p}^{\nu-\alpha-1} &\text{if $\alpha <\nu$ and $p\mid D$.}
\end{cases}
\]

If $p\nmid D$ we obtain
\begin{align*}
N_m(p^\nu) &= p^{n\nu-\nu}\sum_{\alpha=0}^{\nu}  p^{n\alpha}\leg{-D}{p}^{\nu-\alpha}
  \sum_{a'  \in  (\Z /  p^{\nu-\alpha} \Z)^\times } e \Big(-\frac{a'm}{p^{\nu-\alpha}} \Big)\\
	&= p^{n\nu-\nu}\sum_{\alpha=0}^{\nu}  p^{n\alpha}\leg{-D}{p}^{\nu-\alpha}
  \sum_{d\mid  \gcd(p^{\nu-\alpha},m)}\mu \Big(\frac{p^{\nu-\alpha}}{d} \Big) d  \\
&=p^{n\nu-\nu}\sum_{\alpha=0}^{\nu}  p^{n\alpha} \leg{-D}{p}^{\nu-\alpha}\sum_{\gamma=0}^{\min(\nu-\alpha,\beta)} \mu\big(p^{\nu-\alpha-\gamma}\big) p^\gamma .
%
\end{align*}
Here we have used the evaluation of the Ramanujan sum. 
Hence,  the Euler factor \eqref{eq:lp} is
\begin{align*}
L^{(p)}_{m}(p^{-s})&= (1-p^{2n-s-1})
\sum_{\nu=0}^\infty N_{m}(p^\nu) p^{-\nu s}\\
&=(1-p^{2n-s-1})
\sum_{\gamma=0}^{\beta} \sum_{\alpha=0}^{\infty} \sum_{\nu=\alpha+\gamma}^\infty p^{n\alpha+\gamma} \leg{-D}{p}^{\nu-\alpha}\mu(p^{\nu-\alpha-\gamma})p^{(n-1-s)\nu}\\
&=\left(1-\leg{-D}{p}p^{n-s-1}\right)\sum_{\gamma=0}^{\beta} \leg{-D}{p}^\gamma p^{(n-s)\gamma}.
\end{align*}

When  $p\mid D$ we find
\begin{align*}
N_m(p^\nu)   = p^{2n\nu-\nu} 
&+\epsilon_p^n\inv_p(V)p^{n\nu-\nu+\frac{n}{2}}\\
&\phantom{+}{}\times\sum_{\alpha=0}^{\nu-1} \leg{D'}{p}^{\nu-\alpha-1}  p^{n\alpha} \sum_{a'  \in  (\Z /  p^{\nu-\alpha} \Z)^\times } \leg{a'}{p} e \Big(-\frac{a'm}{p^{\nu-\alpha}}  \Big).
\end{align*}
The latter Gauss sum is equal to $p^{\nu-\alpha-\frac{1}{2}}\epsilon_p \leg{-m'}{p}$ if $\beta=\nu-\alpha-1$ and is zero otherwise. Inserting this we find that $N_m(p^\nu)$ is equal to 
\begin{align*}
p^{2n\nu-\nu} 
& \cdot \begin{cases}
1+\inv_p(V)\leg{-1}{p}^{\frac{n+1}{2}}\leg{D'}{p}^\beta\leg{-m'}{p} p^{(1-n)(\beta+1/2)}  &\text{if $\nu>\beta$} \\[1ex]
1 &\text{if $\nu\leq \beta$,}
\end{cases}
\end{align*}
and hence $L^{(p)}_{m}(p^{-s})$ is equal to 
\begin{align*} \lefteqn{
(1-p^{2n-s-1})\sum_{\nu=0}^\infty N_{m}(p^\nu) p^{-\nu s} }\\
&=1+\inv_p(V)\leg{-1}{p}^{\frac{n+1}{2}}\leg{D'}{p}^\beta\leg{-m'}{p} p^{(1-n)(\beta+1/2)} p^{(2n-1-s)(\beta+1)}\\
&=1+\inv_p(V)\leg{-1}{p}^{\frac{n-1}{2}}\leg{D'}{p}^\beta\leg{m'}{p} p^{
(\beta+3/2)n-1/2-(\beta+1)s}.
\end{align*}
This concludes the proof of the proposition.
\end{proof}

Recall the function $\mathbf{a}_n(s)$ of \eqref{a_k}, and abbreviate $s_0=(n-1)/2$.
 We now express the coefficients $B(m,0,s)$ in terms of  
\[
\mathbf{a}_n(s-s_0) =  
 \frac { D^{n/2} \Gamma \left( s + \frac{n}{2} + \frac{1}{2}  \right)  L\left(2s +1,\eps^n\right) }   {2^n \pi^{  s + \frac{n}{2} + \frac{1}{2}  } } .
\]

\begin{corollary}
\label{cor:eis2}
Assume that $n\ge 2$ is even.
If $m>0$ then
\[
B(m,0,s)= -
\frac{  m^{s+s_0}  }{ \mathbf{a}_n(s-s_0)    } 
\cdot \prod_{p\nmid D}\sum_{\gamma=0}^{v_p(m)} p^{-2s\gamma}\cdot
 \prod_{p\mid D} \frac{L^{(p)}_{m}(p^{-2s-n})}{1-p^{-2s-1}}.
\]
If  $m>0$ is prime to $D$ then
\[
B(m,0,s)=-\frac{ m^{s+s_0}   \sigma_{-2s}(m) }{  \mathbf{a}_n(s-s_0)    } 
\cdot
 \prod_{p\mid D} \frac{1-\leg{-1}{p}^{\frac{n}{2}} \inv_p(V)p^{\frac{n}{2}-1-2s} }{1-p^{-2s-1}},
\]
where $\sigma_s(m)=\sum_{d\mid m} d^s$ is the usual divisor sum. 
\end{corollary}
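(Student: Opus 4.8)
The plan is to feed the explicit Euler factors from Proposition~\ref{prop:lp1} into the general formula of Proposition~\ref{prop:eis1}, after matching the archimedian constants with $\mathbf{a}_n(s-s_0)$. First I record the constants. Since $L$ is self-dual under the hermitian form and $D$ is odd, we have $L' = \mathfrak{d}_\kk^{-1}L$ as noted above, and since $\norm(\mathfrak{d}_\kk) = D$ this gives $|L'/L| = D^n$, hence $\sqrt{|L'/L|} = D^{n/2}$. Substituting into Proposition~\ref{prop:eis1} for $m > 0$ and using that $s_0 = (n-1)/2$, so that $2(s - s_0) + n = 2s+1$, the definition \eqref{a_k} (with the convention $L(\,\cdot\,,\eps^n) = \zeta(\,\cdot\,)$ for $n$ even) reads
\[
\mathbf{a}_n(s - s_0) = \frac{D^{n/2}\,\Gamma\!\left(s + \frac{n}{2} + \frac{1}{2}\right)\zeta(2s+1)}{2^n\pi^{s + \frac{n}{2} + \frac{1}{2}}},
\]
and combining this with Proposition~\ref{prop:eis1} and the Euler product $\zeta(2s+1) = \prod_p (1 - p^{-2s-1})^{-1}$ turns the formula of Proposition~\ref{prop:eis1} into
\[
B(m,0,s) = -\frac{m^{s + \frac{n}{2} - \frac{1}{2}}}{\mathbf{a}_n(s - s_0)}\prod_p\frac{L^{(p)}_m(p^{-2s-n})}{1 - p^{-2s-1}}.
\]

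Next I would substitute Proposition~\ref{prop:lp1}, making the change of variable $p^{-s}\mapsto p^{-2s-n}$ inside the polynomials there, so that $p^{n-1-s}\mapsto p^{-2s-1}$ and $p^{n-s}\mapsto p^{-2s}$. For $p\nmid D$ the Euler factor becomes $(1 - p^{-2s-1})\sum_{\gamma=0}^{v_p(m)}p^{-2s\gamma}$, so after dividing by $1 - p^{-2s-1}$ only $\sum_{\gamma=0}^{v_p(m)}p^{-2s\gamma}$ survives; for $p\mid D$ one leaves the ratio as it stands. This gives the first displayed formula of the corollary. For the second formula, assume $\gcd(m,D) = 1$. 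Then $v_p(m) = 0$ for every $p\mid D$, so the sum $\sum_{\gamma=1}^{v_p(m)}$ in Proposition~\ref{prop:lp1} is empty and $L^{(p)}_m(p^{-2s-n}) = 1 - \leg{-1}{p}^{n/2}\inv_p(V)\,p^{\frac{n}{2} - 1 - 2s}$; meanwhile, since every prime dividing $m$ avoids $D$, the remaining product may be extended to a product over all primes and telescopes via $\prod_p\sum_{\gamma=0}^{v_p(m)}p^{-2s\gamma} = \sum_{d\mid m}d^{-2s} = \sigma_{-2s}(m)$, yielding the stated expression.

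The argument is pure bookkeeping, and I do not anticipate a genuine obstacle; the only points needing a little care are the evaluation $\sqrt{|L'/L|} = D^{n/2}$ — which is exactly where self-duality of $L$ and the oddness of $D$ enter — and keeping the substitution $p^{-s}\mapsto p^{-2s-n}$ in the polynomials $L^{(p)}_m$ of Proposition~\ref{prop:lp1} consistent, together with the convention $L(\,\cdot\,,\eps^n) = \zeta(\,\cdot\,)$ for even $n$ built into \eqref{a_k}.
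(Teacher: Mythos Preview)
Your proposal is correct and follows exactly the same approach as the paper, which simply says ``Combine Propositions~\ref{prop:eis1} and~\ref{prop:lp1}''; you have merely spelled out the bookkeeping in detail, including the identification $\sqrt{|L'/L|}=D^{n/2}$ and the substitution $p^{-s}\mapsto p^{-2s-n}$ that the paper leaves implicit.
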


\begin{proof}
Combine Propositions \ref{prop:eis1} and \ref{prop:lp1}.  
\end{proof}

\begin{corollary}
\label{cor:eisodd2}
Assume that $n\ge 1$ is odd. 
If $m>0$ then 
\begin{align*}
B(m,0,s)  =  -\frac{ m^{s+ s_0}}{  \mathbf{a}_n(s-s_0) }   
\cdot
 \prod_{p\nmid D}\sum_{\gamma=0}^{v_p(m)} \eps(p)^\gamma p^{-2s\gamma}\cdot
\prod_{p\mid D} L^{(p)}_{m}(p^{-2s-n}).
\end{align*}
If $m>0$ is  prime to $D$, then 
\begin{align*}
B(m,0,s)  =-\frac{ m^{s+ s_0}\sigma_{-2s,\eps}(m)}{ \mathbf{a}_n(s-s_0) }  
\cdot \prod_{p\mid D} 
\left( 1+   \leg{-1}{p}^{\frac{n-1}{2}} \!\leg{m}{p}   \inv_p(V)   p^{\frac{n-1}{2}-2s} \right),
\end{align*}
where $\sigma_{s,\eps}(m)=\sum_{d\mid m} \eps(d) d^s$ is  the usual divisor sum. 
\end{corollary}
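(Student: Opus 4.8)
The plan is to combine Proposition~\ref{prop:eis1} with the explicit Euler factor computation in Proposition~\ref{prop:lp2}, exactly paralleling the even case treated in Corollary~\ref{cor:eis2}. First I would specialize Proposition~\ref{prop:eis1} to $\mu=0$: since $L$ is self-dual under the hermitian form, $L'=\mathfrak{d}_\kk^{-1}L$ and $|L'/L| = D^n$, so $\sqrt{|L'/L|} = D^{n/2}$. For $m>0$ this gives
\[
B(m,0,s)=-\frac{2^n \pi^{s+\frac{n}{2}+\frac{1}{2}}}{D^{n/2}}\cdot \frac{m^{s+\frac{n}{2}-\frac{1}{2}}}{\Gamma(s+\frac{n}{2}+\frac{1}{2})}\prod_p L^{(p)}_m(p^{-2s-n}).
\]
Then I would recall the definition
\[
\mathbf{a}_n(s-s_0)= \frac{D^{n/2}\,\Gamma(s+\tfrac{n}{2}+\tfrac{1}{2})\,L(2s+1,\eps^n)}{2^n\pi^{s+\frac{n}{2}+\frac{1}{2}}},
\]
so that the prefactor $-2^n\pi^{s+\frac{n}{2}+\frac{1}{2}}/(D^{n/2}\Gamma(s+\frac{n}{2}+\frac{1}{2}))$ is exactly $-L(2s+1,\eps^n)/\mathbf{a}_n(s-s_0)$. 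Since $n$ is odd, $\eps^n=\eps$, and I would write $L(2s+1,\eps)=\prod_p (1-\eps(p)p^{-2s-1})^{-1}$, distributing these local factors into the Euler product over $p$.

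Next I would plug in Proposition~\ref{prop:lp2} with the substitution $p^{-s}\mapsto p^{-2s-n}$, i.e.\ replace the formal variable so that $p^{n-s-1}$ becomes $p^{n-(2s+n)-1}=p^{-2s-1}$ and $p^{(n-s)\gamma}$ becomes $p^{-2s\gamma}$. For $p\nmid D$ this yields
\[
(1-\eps(p)p^{-2s-1})^{-1}\cdot L^{(p)}_m(p^{-2s-n}) = (1-\eps(p)p^{-2s-1})^{-1}\Big(1-\leg{-D}{p}p^{-2s-1}\Big)\sum_{\gamma=0}^{v_p(m)}\leg{-D}{p}^\gamma p^{-2s\gamma},
\]
and since $\eps(p)=\leg{-D}{p}$ for $p\nmid D$ (the Kronecker symbol definition of $\eps$), the two linear factors cancel and only $\sum_{\gamma=0}^{v_p(m)}\eps(p)^\gamma p^{-2s\gamma}$ survives. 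This produces the first displayed formula. For the second displayed formula I would restrict to $m$ prime to $D$, so that each $p\nmid D$ has $v_p(m)=\beta$ arbitrary but the product $\prod_{p\nmid D}\sum_{\gamma=0}^{v_p(m)}\eps(p)^\gamma p^{-2s\gamma}$ telescopes into the twisted divisor sum $\sigma_{-2s,\eps}(m)=\sum_{d\mid m}\eps(d)d^{-2s}$ (standard multiplicativity). Meanwhile for $p\mid D$ we have $\beta=v_p(m)=0$, so Proposition~\ref{prop:lp2} gives $L^{(p)}_m(p^{-2s-n}) = 1+\inv_p(V)\leg{-1}{p}^{(n-1)/2}\leg{m'}{p}p^{(3/2)n-1/2-s}$ with the exponent of $p$ being $(\beta+3/2)n - 1/2 - (\beta+1)s$ evaluated at $\beta=0$ and with $p^{-s}$ replaced by $p^{-(2s+n)}$; carefully tracking the substitution, $p^{3n/2-1/2}\cdot p^{-(2s+n)} = p^{n/2-1/2-2s}$, which matches $p^{(n-1)/2-2s}$ in the claim, and $m'\equiv m\pmod p$ since $p\mid D$ but $p\nmid m$.

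The main obstacle — though it is bookkeeping rather than conceptual — will be keeping the two different normalizations of the $L$-factor variable straight: Proposition~\ref{prop:lp2} is stated in a variable I will call $T=p^{-s}$, but it enters Proposition~\ref{prop:eis1} through $L^{(p)}_m(p^{-2s-n})$, so every exponent must be re-expressed after the substitution $s\mapsto 2s+n$, and one must verify that the archimedean factor $\Gamma(s+\frac{n}{2}+\frac{1}{2})$ and the power $m^{s+\frac{n}{2}-\frac{1}{2}}$ land correctly inside $\mathbf{a}_n(s-s_0)$. A secondary point is the identification $\eps(p)=\leg{-D}{p}$ for odd $p\nmid D$, and the observation that the quadratic character $\chi_F$ attached to $\Q(\sqrt{(-1)^n|L'/L|}) = \Q(\sqrt{(-1)^n D^n}) = \Q(\sqrt{-D})$ when $n$ is odd, which is exactly $\kk$, so $\chi_F=\eps$; this is what makes the Euler product collapse to the twisted divisor sum and guarantees convergence at $s_0$. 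Once these normalizations are pinned down, both formulas follow by assembling the local factors, so I would present the proof simply as ``Combine Propositions~\ref{prop:eis1} and~\ref{prop:lp2}'' with a sentence indicating the variable substitution and the cancellation of Euler factors at $p\nmid D$.
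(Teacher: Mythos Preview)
Your proposal is correct and follows exactly the paper's approach: the paper's proof is the single line ``Combine Propositions~\ref{prop:eis1} and~\ref{prop:lp2},'' and you have simply unpacked the bookkeeping (the identification $|L'/L|=D^n$, the substitution $s\mapsto 2s+n$ in the Euler factors, the cancellation at $p\nmid D$ via $\eps(p)=\leg{-D}{p}$, and the specialization $\beta=0$ for $p\mid D$ when $\gcd(m,D)=1$). Your exponent tracking is correct, and the one point you leave implicit---that for $p\mid D$ the $L$-function Euler factor is trivial since $\eps(p)=0$---is harmless.
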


\begin{proof}
Combine  Propositions   \ref{prop:eis1} and  \ref{prop:lp2}.
\end{proof}

\begin{remark}
If $n=1$, then 
$s_0=0$ is the center of symmetry of the functional equation of the incoherent Eisenstein series $E_L(\tau,s,1)$. Hence $E_L(\tau,s,1)$ vanishes identically (to odd order) at $s=0$. For all  positive $m$ this implies  that the coefficients $B(m,\mu,s_0)$ also vanish. Consequently, in view of 
Proposition \ref{prop:eis1}, there must be at least one prime $p<\infty$ for which the local factor $L^{(p)}_m(p^{-1})$ is zero. According to Proposition \ref{prop:lp2}, this happens exactly when 
\[
\inv_p(V)= -\begin{cases}\leg{-D}{p}^\beta,&\text{if $p\nmid D$,}\\[1ex]
\leg{D'}{p}^\beta\leg{m'}{p},&\text{if $p\mid D$,}
\end{cases}
\]
where we have used the notation of the cited proposition. In terms of the local Hilbert symbol this is equivalent to the condition $\inv_p(V)= -(-D,m)_p$, which means that $V_p$ does not represent $m$.

The derivatives $B'(m,0,s_0)$ of the coefficients can be computed as follows. 
Let $\mathbb{V}$ be the incoherent hermitian space over $\A_\kk$ whose non-archimedian contribution is given by $V\otimes_\Q \A_f$ and whose archimedian contribution is $V\otimes_\Q \R$ with the positive definite hermitian form $-\langle \cdot,\cdot \rangle$. Following \cite{Ku:Annals} we define the `Diff set' associated with $\mathbb{V}$ and $m\in \Q^\times$ by 
\[
\operatorname{Diff}(\mathbb{V},m)=\{ p\leq \infty \mid \;\text{$\mathbb{V}_p$ does not represent $m$}\}.
\]
This is a finite set of places of $\Q$ of odd cardinality. When $m>0$ it only consists of finite primes. The above argument shows that $L^{(p)}_m(p^{-1})=0$ if and only if $p\in \operatorname{Diff}(\mathbb{V},m)$. Hence $B'(m,0,s_0)\neq 0$ if and only if $ \operatorname{Diff}(\mathbb{V},m)$ consists of a single prime $q$. By Proposition \ref{prop:eis1} we find for $m\in \Z^+$ 
in this case that
\[
B'(m,0,0)= -\frac{2\pi}{\sqrt{D}}\cdot \prod_{\substack{\text{$p$ prime}\\ p\neq q}} L_m^{(p)} (p^{-1}) \cdot \frac{d}{ds} L_m^{(q)} (q^{-2s-1}) \big|_{s=0} .
\]
This expression  can be evaluated explicitly by means of Proposition \ref{prop:lp2}. For instance, when $(m,D)=1$ and $m=q^\beta m'$ with $m'$ coprime to $q$, we see that $\inv_q(V)=1$, the prime $q$ must be inert in $\kk$, and $\beta$ must be odd. Under these conditions we find that 
\[
B'(m,0,0)= -\frac{2^{o(D)+1}\pi\,\sigma_{0,\eps}(m')}{\sqrt{D}L(\eps,1)}  \left(\beta+1\right)\cdot \log(q),
\]
where $o(D)$ denotes the number of prime factors of $D$. 
\end{remark}

Regardless of whether $n$ is even or odd, we have the  following lower bound for the coefficients of the Eisenstein series $E_L(\tau,s_0,n)$.
 
\begin{corollary}
\label{cor:eisgrowth}
Assume that $n \geq 2$.   
There is a constant $C>0$, depending  only  on $L$,  such that 
\[
-B(m,0,s_0)> C\cdot m^{n-1}
\]
for all $m\in \Z^+$.
\end{corollary}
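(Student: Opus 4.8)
The plan is to prove Corollary \ref{cor:eisgrowth} by combining the explicit formulas for $B(m,0,s_0)$ coming from Corollaries \ref{cor:eis2} and \ref{cor:eisodd2} with elementary estimates. First I would specialize those corollaries to $s = s_0 = (n-1)/2$, so that $m^{s_0 + n/2 - 1/2} = m^{n-1}$ appears as the leading power. Thus it suffices to show that the remaining factor—a product of Euler factors over all primes divided by $\mathbf{a}_n(0)$—is bounded below by a positive constant $C$ independent of $m$. Note $\mathbf{a}_n(0)$ is a fixed positive real number depending only on $L$ (through $D$ and $n$), so it can be absorbed.

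Next I would handle the two products separately. For primes $p \mid D$ there are only finitely many factors, each a fixed nonzero real number (by the explicit formulas in Propositions \ref{prop:lp1} and \ref{prop:lp2} evaluated at $s = s_0$), so their product contributes a nonzero constant; one must check it has the correct sign so that the overall expression $-B(m,0,s_0)$ is positive, which follows from the sign analysis already noted after Proposition \ref{prop:eis1} (the coefficient $B(m,0,s_0)$ is nonpositive for $m > 0$). For primes $p \nmid D$, the relevant factor is of the form $\prod_{p \nmid D} \sum_{\gamma=0}^{v_p(m)} \eps(p)^\gamma p^{-2s_0 \gamma}$ (or the version without $\eps$ when $n$ is even). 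Since $2s_0 = n-1 \ge 1$, each such finite geometric-type sum lies in an interval bounded away from $0$: indeed $\sum_{\gamma=0}^{k} (\pm p^{-(n-1)})^\gamma \ge 1 - p^{-(n-1)} \ge 1 - p^{-1}$, and $\prod_p (1 - p^{-1})$ does not converge to a positive limit, so a little more care is needed.

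The key point to get a \emph{uniform} positive lower bound is that only primes $p$ with $v_p(m) \ge 1$ contribute a factor different from $1$. For such primes the partial sum $\sum_{\gamma=0}^{v_p(m)} \eps(p)^\gamma p^{-(n-1)\gamma}$ differs from the full Euler factor $(1 - \eps(p) p^{-(n-1)})^{-1}$ by a tail of size $O(p^{-(n-1)(v_p(m)+1)})$, which is summable; hence the product over $p \mid m$ is bounded below by (a positive multiple of) $\prod_p (1-\eps(p)p^{-(n-1)})^{-1} \cdot \prod_{p}(1 + O(p^{-2}))^{\pm 1}$, and since $n - 1 \ge 1$ the relevant Dirichlet $L$-value $L(n-1,\eps)$ (or $\zeta(n-1)$ if $n$ is even, which when $n=2$ is $\zeta(1)$—this needs the $p\mid m$ restriction to avoid divergence) is a positive finite number. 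So the product over $p \nmid D$ dividing $m$ is bounded below by a constant; the factors for $p \nmid Dm$ are exactly $1$. The main obstacle is precisely this uniformity issue when $n = 2$, where the relevant sum $\sum_p$ over all primes would diverge: one must exploit that only the primes dividing $m$ enter, so the product is always a \emph{finite} product whose value exceeds the corresponding value of the convergent-in-$s>1$ Euler product evaluated at the boundary, uniformly in $m$. I would make this rigorous via the bound $\sum_{\gamma=0}^{k} t^\gamma \ge \frac{1}{1-t} - \frac{t^{k+1}}{1-t} \ge \frac{1-|t|}{1-t}\cdot\frac{1}{1}$ type estimates together with $\prod_p (1 + O(p^{-2}))$ convergence, yielding the desired $C > 0$ depending only on $L$.
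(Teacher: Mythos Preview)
Your overall approach matches the paper's: specialize Corollaries~\ref{cor:eis2} and~\ref{cor:eisodd2} at $s=s_0$ and bound the Euler factors. The paper says no more than this. However, two points in your write-up need correction.

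\textbf{The factors at $p\mid D$ are not constants.} You assert that each factor at $p\mid D$ is ``a fixed nonzero real number,'' but Propositions~\ref{prop:lp1} and~\ref{prop:lp2} show $L^{(p)}_{m}(p^{-(2n-1)})$ depends on $\beta=v_p(m)$. What you actually need is a uniform positive lower bound over all $\beta\ge 0$. For $n$ odd, Proposition~\ref{prop:lp2} gives $L^{(p)}_{m}(p^{-(2n-1)})=1\pm p^{-(n-1)(\beta+1/2)}\ge 1-p^{-(n-1)/2}$, which is fine since $n\ge 3$. For $n$ even, Proposition~\ref{prop:lp1} at $s=2n-1$ gives
\[
L^{(p)}_{m}(p^{-(2n-1)})=1-\epsilon_p\,p^{-n/2}+\epsilon_p\,p^{n/2}(1-p^{-n})\sum_{\gamma=1}^{\beta}p^{-(n-1)\gamma},
\]
with $\epsilon_p=\leg{-1}{p}^{n/2}\inv_p(V)$. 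When $\epsilon_p=+1$ this is $\ge 1-p^{-n/2}$. When $\epsilon_p=-1$ one bounds the geometric tail by $p^{-(n-1)}/(1-p^{-(n-1)})$; for $n\ge 4$ the resulting expression stays bounded away from $0$ (e.g.\ $\ge 1-p^{1-n/2}\cdot\frac{1-p^{-n}}{1-p^{-(n-1)}}>0$). So the argument goes through, but it requires this computation, not the claim that the factors are constant. (For $n=2$ with $\epsilon_p=-1$ the factor actually equals $(1+p^{-1})p^{-\beta}$, which tends to $0$; the corollary is only invoked in the paper for $n>2$.)

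\textbf{The worry at $p\nmid D$ is misplaced.} You flag the divergence of $\prod_p(1-p^{-1})$, but this case never occurs. For $n$ even there is no $\eps(p)$ and each sum $\sum_{\gamma=0}^{v_p(m)}p^{-(n-1)\gamma}\ge 1$, so the product is $\ge 1$. For $n$ odd the signs appear, but then $n\ge 3$ forces $n-1\ge 2$, so each factor is $\ge 1-p^{-(n-1)}\ge 1-p^{-2}$ and $\prod_p(1-p^{-2})=\zeta(2)^{-1}$ converges. The elaborate tail-comparison argument you sketch is unnecessary.
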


\begin{proof}
This follows  easily from   Corollary \ref{cor:eis2} and Corollary \ref{cor:eisodd2}.
\end{proof}

 
\section{Decomposing the Eisenstein series}
\label{ss:eis decomp}
 

As in the previous section,  we assume  the $\co_\kk$-lattice $L\subset V$ is self-dual  under the hermitian form, and that $D=- \mathrm{disc}(\kk)$ is odd.  We also assume that $V$ has dimension $n\ge 2$.  

Consider again  the  holomorphic Eisenstein series
\[
E_L(\tau,s_0,n) = \sum_{ m \ge 0}  \sum_{ \mu \in L'/L} B(m,\mu,s_0) q^m \varphi_\mu
\]
valued in $S_L=\C[L'/L]$  obtained by evaluating \eqref{eq:fouriereis} at $s_0=(n-1)/2$.
Applying the linear functional $S_L \to \C$ sending $\varphi_0\mapsto 1$ and $\varphi_\mu\mapsto 0$ for $\mu\neq 0$ yields an Eisenstein series
\[
\sum_{ m \ge 0 } B(m, 0 ,s_0)  \cdot q^m  \in M_n(\Gamma_0(D) , \eps^n ), 
\]
which can  be decomposed as a linear combination of  classical Eisenstein series indexed by the cusps
\[
\infty_r=  \frac{r}{D} \in \Gamma_0(D) \backslash \mathbb{P}^1(\Q)
\]
as $r\mid D$ varies.  In this section we make this decomposition explicit.

 For any divisor $r\mid D$ set $r'=D/r$.  Our assumption that $D$ is odd implies that the quadratic character $\eps : (\Z/D\Z)^\times \to \{ \pm 1\}$ determined by $\kk$ is 
 \[
 \eps(a) =  \leg{a}{D}.
 \]
Hence we may  factor  $\eps = \eps_r \cdot \eps_{r'}$ with
\[
\eps_r(a) = \left( \frac{a}{r} \right) \quad \mbox{and} \quad \eps_{r'} (a) = \left( \frac{a}{r'} \right).
\]
Define the quadratic Gauss sum
\[
\tau(\eps_r) =  \sum_{  a \in (\Z/r\Z)^\times } \eps_r(a) e^{2\pi i a/r} = \begin{cases}
\sqrt{r}  & \mbox{if } r\equiv 1\pmod{4} \\
i\sqrt{r} & \mbox{if } r \equiv 3\pmod{4},
\end{cases}
\]
and similarly with $r$ replaced by $r'$.

\begin{proposition}\label{prop:eisenstein formulas}
For every divisor  $r\mid D$ there is an Eisenstein series 
 \[
E_r (\tau)=  \sum_{m \ge 0} e_r(m) \cdot q^m \in M_n(\Gamma_0(D),\eps^n)
\]
whose Fourier coefficients are as  follows.
The constant term  is 
\[
e_r(0)= \begin{cases} 
 1 & \mbox{if }r=1 \\
 0 &  \mbox{otherwise.}
 \end{cases}
\]
If $n$ is even the coefficients indexed by $m>0$ are 
\[
e_r(m)   =   
  \frac{  r^{n/2} (-2\pi i )^n    }{ D^n  \Gamma(n)  L_D( n ,\eps^n ) } 
  \sum_{ \substack{   c\mid m \\ c>0  \\ \gcd( m /c, r) =1   } }
   c^{n-1}   \sum_{ d \mid \gcd( c , r'  ) } d \mu( r' / d) .
\]
If $n$ is odd the coefficients indexed by $m>0$ are 
\[
 e_r(m)   =    
 \eps_r(r')   \frac{ r^{n/2} (-2\pi i )^n  \tau(\eps_{r'})    }{  D^n   \Gamma(n) L_D( n ,\eps^n ) } 
  \sum_{ \substack{  c\mid m \\ c>0  \\ \gcd( m /c, r) =1   } }
 \eps_r( m/c)   \eps_{r'}(c) \cdot c^{n-1}.
\]
In both formulas $L_D(s,\eps^n)$ is the Dirichlet $L$-function with Euler factors at all $\ell\mid D$ removed.
\end{proposition}

 \begin{proof}
For each $s,t\in \Z/D\Z$ define an Eisenstein series
    \[
 G_{(s,t)}(z) = \sum_{  \substack{   c,d \in \Z  \\  (c,d) \neq (0,0) \\ (c,d)  \equiv (s,t)  \pmod{D}  } }    (   cz + d)^{-n}  .
 \]
 Theorem 7.1.3 of \cite{miyake} implies that
  \[
 E_r(z)   =
 \frac{   \eps^n_r(r')  }{  2 r^{n/2}  L_D( n ,\eps^n ) }   
 \sum_{ s,t  \in \Z/D\Z  }   \eps_r^n(  s )  \eps_{r'}^n(  t)   G_{ (  s,t   )  }  ( r' z )
 \]
has the desired Fourier expansion.
 \end{proof}


\begin{remark}\label{rem:each cusp}
The Eisenstein series of Proposition \ref{prop:eisenstein formulas} agree with the Eisenstein series denoted the same way in \S 4.4 of \cite{BHKRY-1}.  In particular, $E_r$ is nonvanishing at the cusp $\infty_r= r/D$, and vanishes at all other cusps.
\end{remark}

\begin{proposition}\label{prop:multi eisenstein}
We have the equality 
\[
\sum_{r\mid D} \gamma_r E_r(\tau)   = \sum_{m\ge 0} B(m,0,s_0)  \cdot q^m 
\]
of forms in $M_n(\Gamma_0(D) , \eps^n)$
Here for a prime $\ell \mid D$ we have set
 \[
\gamma_\ell =   (D,\ell)_\ell^n \cdot \mathrm{inv}_\ell(V)
\cdot \begin{cases}
1 & \mbox{if } \ell \equiv 1\pmod{4} \\
i^{-n} & \mbox{if } \ell \equiv 3 \pmod{4} ,
\end{cases}
\]
where    $(-,-)_\ell$ is the usual Hilbert symbol at $\ell$, and for a divisor $r\mid D$   we have set  
\[
\gamma_r = \prod_{\ell\mid r}\gamma_\ell .
\]
\end{proposition}

\begin{proof}
We use the $\C$-bilinear pairing $\langle \cdot,\cdot \rangle$ on $S_L$ to write
\[
\sum_{m\ge 0} B(m,0,s_0)  \cdot q^m =\langle E_L,\varphi_0\rangle.
\] 
On the other hand, using \eqref{eq:eisl} and the scalar Eisenstein series 
\[
E=\sum_{\gamma\in \Gamma_\infty\bs \Gamma_0(D)} \eps^n(d)\cdot (1\mid_n\gamma)
\]
of weight $n$ for the cusp $\infty$ in $M_n(\Gamma_0(D),\eps^n)$, we obtain 
\begin{align*}
E_L=E_L(\tau,s_0,n) &= \sum_{\gamma\in \Gamma_\infty\backslash \Gamma}
   (c\tau+d)^{-n}\cdot \bar\omega_L(\gamma)^{-1}\varphi_0\\
	&= \sum_{\gamma\in \Gamma_0(D)\backslash \Gamma} (E\mid_n\gamma) 
   \cdot \bar\omega_L(\gamma)^{-1}\varphi_0.
\end{align*}
Hence 
\begin{align*}
\langle E_L,\varphi_0\rangle = \sum_{\gamma\in \Gamma_0(D)\backslash \Gamma} (E\mid_n\gamma) 
   \cdot\langle  \bar\omega_L(\gamma)^{-1}\varphi_0,\varphi_0\rangle .
\end{align*}
We compute this finite sum by means of  the left coset decomposition 
\[
\Gamma = \bigsqcup_{r\mid D}\bigsqcup_{c\in \Z/r\Z} \Gamma_0(D)R_r \zxz{1}{c}{0}{1}  
,
\]
see e.g.~equation (8.2.1) in \cite{BHKRY-1}. Using the notation of \cite{BHKRY-1} we find
\begin{align*}
\langle E_L,\varphi_0\rangle &= \sum_{\gamma\in \Gamma_0(D)\backslash \Gamma} (E\mid_n\gamma) 
   \cdot\langle  \varphi_0,\omega_L(\gamma)\varphi_0\rangle \\
&= \sum_{r\mid D} \sum_{c\in \Z/r\Z}   (E\mid_n R_r \zxz{1}{c}{0}{1} )\cdot\langle  \varphi_0,\omega_L(R_r) \omega(\kzxz{1}{c}{0}{1})  \varphi_0\rangle\\
& =\sum_{r\mid D} \sum_{c\in \Z/r\Z}   (E\mid_n W_r \zxz{1}{c}{0}{r} )\cdot\langle  \varphi_0,\omega_L(R_r)   \varphi_0\rangle.
\end{align*}
In terms of the $U_r$ operators on $M_n(\Gamma_0(D),\eps^n)$, 
given by $\sum_{n} a(n)q^n\mid_n U_r = \sum_{n} a(rn)q^n$, we may rewrite this as 
\begin{align*}
\langle E_L,\varphi_0\rangle & = \sum_{r\mid D} \langle  \varphi_0,\omega_L(R_r)   \varphi_0\rangle\cdot r^{1-n/2}(E\mid_n W_r \mid_n U_r).
\end{align*}
By means of equation (8.2.3) of  \cite{BHKRY-1}, we obtain for the matrix coefficients
\begin{align*}
\langle  \varphi_0,\omega_L(R_r)   \varphi_0\rangle = \eps_{r'}^n(\alpha)\eps_r^n(-\beta)\gamma_r r^{-n/2} ,
\end{align*}
where we have written $rr'=D$. 
Inserting the definition 
\[
E_r= \eps_r^n(-\beta)\eps_{r'}^n(\alpha r) (E \mid_n W_r)
\]
of the Eisenstein series for the cusp $\infty_r= r/D$, we find 
\begin{align*}
\langle E_L,\varphi_0\rangle & = \sum_{r\mid D} \eps_{r'}^n( r) \gamma_r r^{1-n}(E_r\mid_n U_r) .
\end{align*}
It follows from the Fourier expansion of $E_r$ given in Proposition \ref{prop:eisenstein formulas} that 
\[
E_r\mid_n U_r = \eps_{r'}^n( r)r^{n-1}E_r.
\] 
Inserting this, we finally obtain 
\begin{align*}
\langle E_L,\varphi_0\rangle & = \sum_{r\mid D}  \gamma_r E_r ,
\end{align*}
concluding the proof of the proposition.
\end{proof}

The following fact about the coefficients of the Eisenstein series of Proposition \ref{prop:eisenstein formulas} will be needed much later, in the proof of Lemma \ref{lem:third volume lemma}.

\begin{proposition}\label{prop:eisenstein twist}
For any positive $m\in \Z$ with  $\gcd(m,D)=1$, we have
\begin{align*}
\sum_{r\mid D}  \gamma_r e_r(m) \log(D/r)
=
 \left(   \sum_{\ell \mid D}    \frac{  \log(\ell )  }{  1+\beta_\ell \cdot  \leg{m}{\ell}^n }   \right) \sum_{r\mid D}  \gamma_r e_r(m).
 \end{align*}
Here both sums over $r\mid D$ are over all positive divisors of $D$, the sum over $\ell \mid D$ is over only the prime divisors, $\gamma_r$ is as in Proposition \ref{prop:multi eisenstein}, and we abbreviate
\[
\beta_\ell  = (-1)^{n+1} \inv_\ell(V)  \cdot \begin{cases}
   \leg{-1}{\ell}^{\frac{n}{2} }        \ell^{ \frac{n}{2} }  &  \mbox{if $n$ is even} \\[2ex]
\leg{-1}{\ell}^{\frac{n-1}{2} }     \ell ^{ \frac{n-1}{2}  } &  \mbox{if $n$ is odd.}
\end{cases}.
\]
\end{proposition}

\begin{proof}
Directly from the formulas of Proposition \ref{prop:eisenstein formulas}, one can see that there is a factorization
 \[
\gamma_r e_r(m) = c(m)    f_r(m) 
\]
in which  $c(m)$ is some function independent of $r$, and 
\[
f_r(m) =  \leg{m}{r}^n  \cdot   \prod_{\ell \mid r} \beta_\ell.
\]
The relation $f_r(m) = \prod_{\ell \mid r} f_\ell(m)$ implies the combinatorial  identity
\[
  \sum_{r\mid D}   f_r(m) \log(D/r)     
 = 
  \left(   \sum_{\ell \mid D}    \frac{  \log(\ell )  }{1+f_\ell(m) }   \right)    \sum_{r \mid D} f_r(m)  ,
\]
and multiplying both sides by $c(m)$ proves the claim.
\end{proof}


\chapter{Automorphic Green functions}
\label{s:green integrals}


As in Chapter \ref{s:eisenstein}, we fix  a $\kk$-hermitian space $V$ of signature $(n-1,1)$ with $n \ge 1$.
We recall the family of special divisors on the complex Shimura variety associated to the unitary group  $\Uni(V)$, and derive a geometric variant of the Siegel-Weil formula.
Using this, we compute the integrals of  automorphic Green functions for the special divisors.


\section{A complex Shimura variety}
\label{ss:u(v) shimura}


The  fixed embedding $\kk \subset \C$  identifies $\kk\otimes_\Q\R \iso \C$, and  allows us to distinguish between the two  orthogonal idempotents $\epsilon, \overline{\epsilon} \in \kk\otimes_\Q \C$, which we label in such a way that 
 \[
( \alpha \otimes 1) \epsilon = (1\otimes \alpha) \epsilon ,\qquad 
(\alpha \otimes 1) \overline{\epsilon} =  (1\otimes \overline{\alpha}  )  \overline{\epsilon} 
 \]
for all $\alpha\in \kk$.

Viewing $V(\R)=V\otimes_\Q\R$  as $\C$-hermitian space of signature $(n-1,1)$,  define a hermitian symmetric domain\footnote{We take this opportunity to point out a misstatement in (2.1.2) of \cite{BHKRY-1}:  the word ``planes" should be replaced by ``lines" in the definition of the hermitian symmetric domain.}
\begin{equation}\label{hermitian model}
\mathcal{D} = \{  \mbox{negative definite complex lines } z \subset V(\R) \}.
\end{equation}
The natural map
 \[
 V(\R) \hookrightarrow V(\C) = \epsilon V(\C) \oplus \overline{\epsilon} V(\C) \map{\mathrm{proj}}  \epsilon V(\C)
 \]
 is a $\C$-linear isomorphism, and identifies 
 \begin{equation}\label{bilinear model}
\mathcal{D} = \{   z \in   \mathbb{P}( \epsilon V(\C))   : [ z , \overline{z} ]  <0 \}.
\end{equation}
 Here we have endowed  $V(\C) =V\otimes_\Q\C$ with the $\C$-bilinear extension of the $\Q$-bilinear form $[-,-]$ of \eqref{Q bilinear}, and $z\mapsto \overline{z}$ is the complex conjugation on the second factor in the tensor product.  
Under either interpretation, there is an evident action of the real points of 
\[
H=\Uni(V)
\]
 on $\mathcal{D}$ by holomorphic automorphisms.

Fix  an $\co_\kk$-lattice $L\subset V$ on which the hermitian form is $\co_\kk$-valued.
Choose a compact open subgroup   $K\subset H(\A_f)$  that stabilizes $L$, and hence acts on the finite dimensional vector space $S_L = \C [ L' / L]$ of \S \ref{ss:lattice eisenstein}.
This choice determines a complex Shimura variety
\begin{equation}
\label{eq:X}
\mathrm{Sh}_K(H,\mathcal{D})=H(\Q)\backslash \mathcal{D}\times H(\A_f)/K ,
\end{equation}
which we view as a complex orbifold of dimension $n-1$.
If we write $H(\A_f)= \bigsqcup_j H(\Q)h_j K$ as a finite disjoint union, then
\begin{equation}\label{X components}
\mathrm{Sh}_K(H,\mathcal{D}) =\bigsqcup_j  ( H(\Q)\cap K_j) \backslash \mathcal{D},
\end{equation}
where $K_j = h_j K  h_j^{-1}$ stabilizes the lattice $L_j = h_j L$.

The Shimura variety  carries a family of special divisors  
\[
Z(m) \in \mathrm{Div}(  \mathrm{Sh}_K(H,\mathcal{D}) )
\]
indexed by positive integers $m$, which 
we describe in terms of the decomposition \eqref{X components}.  
For each $x\in V$ with $\langle x,x\rangle >0$, define an analytic divisor
\[
\mathcal{D}(x) = \{ z\in \mathcal{D} :  x \perp z  \}
\]
where $\perp$ means orthogonal with respect to the hermitian or bilinear form, depending on whether the reader prefers the model \eqref{hermitian model} or \eqref{bilinear model}.
The pullback of  $Z(m)$ via  the uniformization
\[
\mathcal{D} \to ( H(\Q)\cap K_j) \backslash \mathcal{D} \subset \mathrm{Sh}_K(H,\mathcal{D})
\]
is the locally finite analytic divisor 
\[
\sum_{ \substack{ x \in  L_j  \\  \langle x, x\rangle =m  } } \mathcal{D}(x) \in \mathrm{Div}(\mathcal{D}).
\]

More generally,  given a positive $m\in \Q$ and a $K$-fixed function $\varphi \in S_L$,  there is a special divisor 
\[
Z(m,\varphi) \in \mathrm{Div}_\C(  \mathrm{Sh}_K(H,\mathcal{D}) )
\]
whose pullback via the above uniformization is 
\[
\sum_{ \mu \in L '/L  }  \varphi(\mu) 
\sum_{ \substack{ x \in \mu_j+L_j   \\  \langle x, x\rangle =m  } }  
\mathcal{D}(x) \in \mathrm{Div}_\C (\mathcal{D}),
\]
 where  $\mu_j$ is the image of $\mu$ under $h_j : L'/L \to L_j'/L_j$.
 Taking $\varphi=\varphi_0$ to be the characteristic function of $0\in L'/L$ recovers $Z(m)$.

Let $\taut$ be the tautological line bundle over $\mathcal{D}$ whose fiber over a point $z\in \mathcal{D}$ is the complex line $\taut_z=z \subset V(\R)$.  
Denote by  $\widehat{\taut}$ the tautological  line bundle endowed with the hermitian metric  $\|\cdot \|$ defined as follows:  if we use the interpretation \eqref{hermitian model} to view $z\in \mathcal{D}$ as a line in $V(\R)$, then 
\[
\| s\|^2 = - \frac{ \langle s,s\rangle}{4\pi e^\gamma}
\]
 for every $s\in \taut_z$.  If we instead use \eqref{bilinear model} to view $z$ as a line in $\epsilon V(\C)$, the metric is the same, but with $\langle-,-\rangle$ replaced by $[-,-]$.  
 The Chern form  $\chern(\widehat{\taut})$ is an $H(\R)$-invariant K\"ahler form on $\mathcal{D}$.
 The metrized tautological bundle descends to $\mathrm{Sh}_K(H,\mathcal{D})$, and its holomorphic sections are hermitian modular forms of weight $1$.  As in the introduction, we abbreviate
\[
\vol_\C( \widehat{\taut}) = \int_{\mathrm{Sh}_K(H,\mathcal{D})}  \chern(\widehat{\taut})^{n-1}.
\]

In the sections below we will recall the construction of Green functions for certain linear combinations of special  divisors,  and  compute their integrals with respect to the volume form $\chern(\widehat{\taut})^{n-1}$.

\begin{remark}\label{rem:strong complex volume}
Exactly as in the case of orthogonal Shimura varieties \cite{Ku:Integrals}, one can show that
\[
  \vol_\C( \widehat{\taut} )^{-1}    \int_{Z(m,\varphi)}  \chern(\widehat{\taut})^{n-2} = 
  -  \sum_{ \mu \in L'/L } \varphi(\mu) B(m,\mu,s_0).
\]
On the right hand side the notation is that of \S \ref{ss:coefficients}.
\end{remark}


\section{A geometric variant of the Siegel-Weil formula}


For $z\in \mathcal{D}$ and $x\in V(\R)$, let 
\[
\varphi_\infty(z,x)= \exp\big(-2\pi \langle x_{z^\perp}, x_{z^\perp}\rangle +2\pi   \langle x_{z}, x_{z}\rangle \big)
\]
be the Gaussian.  Here we are using the model \eqref{hermitian model}, so that $z$ determines an orthogonal decomposition
$
V(\R) = z^\perp \oplus z,
$
and the vectors $x_{z^\perp}$ and $x_z$ are the projections of $x$ to the two summands.

As in \S \ref{ss:seesaw}, let $G\iso \Uni(1,1)$ be the quasi-split unitary group over $\Q$, so that $\SL_2\iso G' \subset G$.
Fix a base point $z_0\in \mathcal{D}$ and consider the  Schwartz function 
\[
\varphi_\infty(x) \define \varphi_\infty(z_0,x) \in S(V(\R)).
\]
 It has weight $n-2$ under the action of the Weil representation (\S \ref{ss:siegel-weil}) of $\SL_2(\R)$.  In other words, using the notation of \eqref{underline k}, 
\begin{align*}
\omega(k)(\varphi_\infty)=\underline k^{n-2} \varphi_\infty
\end{align*}
for all $k\in \SO(2,\R)$.

A choice of   $\varphi_f \in S(V(\A_f))$  determines a Schwartz function $\varphi_\infty \otimes \varphi_f \in S(V(\A))$.
Applying the construction \eqref{theta kernel} yields a  corresponding theta function $\theta( g,h, \varphi_\infty \otimes \varphi_f)$ of the variables $(g,h) \in G(\A) \times H(\A)$.  Restricting the first variable to 
$g\in \SL_2(\R)  \subset G(\A)$, we obtain the  Siegel theta function 
\[
\theta(g  ,h,\varphi_\infty\otimes \varphi_f)
= \sum_{x\in V(\Q)} \omega(g,1) \varphi_\infty(z_0,h_\infty^{-1} x )\cdot \varphi_f(h_f^{-1}x),
\]
where $h=h_\infty h_f\in H(\A)$.

We may view this theta function as a function of the variables  
\[
(\tau, z , h_f) \in \H \times \mathcal{D}  \times H(\A_f)
\]
as follows. Let $g_\tau\in \SL_2(\R)$ be as in \eqref{gtau}, and  choose an  $h_\infty\in H(\R)$ with $h_\infty z_0= z$, 
so that $\varphi_\infty (z_0,h_\infty^{-1} x)= \varphi_\infty (z, x)$.  Now set
\begin{align}
\label{eq:siegel2}
\theta(\tau,z,h_f,\varphi_f)&=v^{1- \frac{n}{2} }
\theta(g_\tau,h_\infty h_f, \varphi_\infty\otimes\varphi_f)\\
\nonumber
&= v \sum_{x\in V(\Q)} e\big(\langle x_{z^\perp}, x_{z^\perp}\rangle \tau-  \langle x_{z}, x_{z}\rangle\bar\tau\big)\cdot \varphi_f(h_f^{-1}x).
\end{align}
In the variable $\tau$, we have the transformation law 
\begin{align}
\label{eq:thetatr}
\theta(\gamma\tau, z,h_f ,\varphi_f)=(c\tau +d)^{n-2}\theta(\tau,z,h_f, \omega(\gamma)^{-1}\varphi_f)
\end{align}
for $\gamma\in \Gamma$. 
In the variable $z$, we have the transformation law 
\begin{align}
\label{eq:thetatr2}
\theta(\tau,\delta z,\delta h_f ,\varphi_f)=\theta(\tau,z,h_f, \varphi_f).
\end{align}
for  $\delta \in H(\Q)$,

Now suppose that $\varphi_f \in S_L = \C[L'/L]$ is fixed by the action of $K$.
Using \eqref{eq:siegel2} and \eqref{eq:thetatr2}, we see that   the  Siegel theta function \eqref{eq:siegel2} 
descends to a function on  $ \mathcal{H}\times \mathrm{Sh}_K(H,\mathcal{D})$.    We are interested in the geometric theta integral
\begin{align}
\label{eq:intgeo}
\mathcal{I}(\tau,\varphi_f)
&=  \vol_\C( \widehat{\taut}  )^{-1} \int_{\mathrm{Sh}_K(H,\mathcal{D})} 
 \theta(\tau,z,h_f, \varphi_f)\,\chern(\widehat{\taut})^{n-1}
\end{align}
as a function of $\tau \in \H$.

%
%

\begin{proposition}
\label{lem:swgeo}
Assume that $n>2$ or that $V$ is anisotropic. 
Recalling the notation of \eqref{eq:kappa} and \eqref{eq:eisl},  we have 
\[
\kappa\cdot \mathcal{I}(\tau,\varphi_f) = 
\left\langle E_L(\tau, s_0,n-2),\varphi_f\right\rangle,
\]
where the pairing on the right is \eqref{S_L pairing}.
\end{proposition}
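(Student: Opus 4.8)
The plan is to reduce the geometric theta integral \eqref{eq:intgeo} to the adelic integral appearing in the Siegel--Weil formula (Theorem \ref{thm:sw}), applied to the \emph{coherent} hermitian space $V$ and the Eisenstein series of weight $n-2$. First I would fix the Schwartz function: at the archimedian place take the Gaussian $\varphi_\infty = \varphi_\infty(z_0,\cdot)$ of weight $n-2$ attached to the base point $z_0$, and at the finite places take $\varphi_f \in S_L$ fixed by $K$. By definition of $\lambda$ in \S\ref{ss:siegel-weil}, the associated standard section $\lambda(\varphi_\infty \otimes \varphi_f)$ of $I(s,\chi)$ has weight $n-2$ at infinity (its weight matches that of $\varphi_\infty$ under the Weil representation), so Lemma \ref{lem:eis1} applies with $\ell = n-2$ and $s_0 = (n-1)/2$. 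Comparing the classical shape of $E(g_\tau, s_0, \lambda(\varphi))$ from Lemma \ref{lem:eis1} with the definition \eqref{eq:eisl} of $E_L(\tau, s, \ell)$ and the pairing \eqref{S_L pairing}, one identifies $j(g_\tau,i)^{n-2} E(g_\tau, s_0, \lambda(\varphi_\infty\otimes\varphi_f)) = \langle E_L(\tau, s_0, n-2), \varphi_f\rangle$ up to the automorphy factor that will be absorbed by the $v^{1-n/2}$ normalization in \eqref{eq:siegel2}; this is a bookkeeping check on normalizations (weights, the $v^{(\ell/2)}$ versus $\operatorname{Im}(\gamma\tau)^{(1-\ell)/2}$ factors, and the passage from $\Phi_f(\gamma)$ to $\bar\omega_L(\gamma)^{-1}\varphi_0$ paired against $\varphi_f$).

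Next I would handle the geometric (theta-integral) side. The Siegel--Weil formula (Theorem \ref{thm:sw}), with $\vol(H(\Q)\backslash H(\A)) = 1$ and the constant $\kappa$ of \eqref{eq:kappa}, gives
\[
E(g, \lambda(\varphi_\infty\otimes\varphi_f), s_0) = \kappa \int_{H(\Q)\backslash H(\A)} \theta(g, h, \varphi_\infty\otimes\varphi_f)\, dh.
\]
Specializing $g = g_\tau \in \SL_2(\R) \subset G(\A)$ and using the definition \eqref{eq:siegel2} of the normalized Siegel theta function $\theta(\tau, z, h_f, \varphi_f)$ (which differs from $\theta(g_\tau, h_\infty h_f, \varphi_\infty\otimes\varphi_f)$ by the scalar $v^{1-n/2}$ and the choice $h_\infty z_0 = z$), the task becomes identifying the adelic integral over $H(\Q)\backslash H(\A)$ with the orbifold integral $\int_{\mathrm{Sh}_K(H,\mathcal{D})} \theta(\tau,z,h_f,\varphi_f)\, \chern(\widehat{\taut})^{n-1}$, suitably normalized. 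This is where the K\"ahler form enters: the invariant volume form on $\mathcal{D}$ underlying $\chern(\widehat{\taut})^{n-1}$, pushed through the double-coset description \eqref{eq:X}, matches (after normalizing by $\vol_\C(\widehat{\taut})$) the Tamagawa-type measure $dh$ on $H(\Q)\backslash H(\A)$ restricted to the part that survives integration of a $K$-invariant function — the integral over the compact fibers $K$ and over $H(\R)_{z_0}$ (the stabilizer of $z_0$, a compact-mod-center group) contributing only normalization constants. Because $\theta(\tau, z, h_f, \varphi_f)$ depends on $h_\infty$ only through $z = h_\infty z_0$ and is $K$-invariant in $h_f$, the adelic integral collapses to the orbifold integral over $\mathrm{Sh}_K(H,\mathcal{D})$ against the invariant volume form, and dividing both sides by $\vol_\C(\widehat{\taut})$ produces exactly $\mathcal{I}(\tau, \varphi_f)$ on the right and $\vol_\C(\widehat{\taut})^{-1} E(g_\tau, \lambda(\varphi), s_0)$ on the left.

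Combining the two sides, $\kappa \cdot \mathcal{I}(\tau, \varphi_f)$ equals $\vol_\C(\widehat{\taut})^{-1}$ times the (volume-weighted) Eisenstein value, which by the first paragraph is $\langle E_L(\tau, s_0, n-2), \varphi_f\rangle$; the volume factors are arranged to cancel precisely because the Siegel--Weil normalization $\vol(H(\Q)\backslash H(\A)) = 1$ forces the geometric measure to be normalized by its own total volume $\vol_\C(\widehat{\taut})$. The hypothesis $n > 2$ or $V$ anisotropic is exactly what is needed to invoke Theorem \ref{thm:sw}, and it also guarantees (via Lemma \ref{lem:eisl}) that $E_L(\tau, s_0, n-2)$ is the coherent Eisenstein series that genuinely arises from $V$.

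I expect the main obstacle to be the \textbf{matching of measures}: verifying that the orbifold integral $\int_{\mathrm{Sh}_K(H,\mathcal{D})}(-)\,\chern(\widehat{\taut})^{n-1}$, divided by $\vol_\C(\widehat{\taut})$, coincides with the adelic integral $\int_{H(\Q)\backslash H(\A)}(-)\,dh$ under the Siegel--Weil normalization $\vol(H(\Q)\backslash H(\A))=1$, for $K$-invariant and $z_0$-radial integrands. This requires carefully tracking how $dh$ decomposes through $H(\Q)\backslash H(\A)/K \times_{H(\R)_{z_0}} \mathcal{D}$, identifying the invariant form on $\mathcal{D}$ coming from $\chern(\widehat{\taut})$ with the one coming from $dh_\infty$ up to a constant, and checking that this constant is absorbed into the ratio because it is the same constant that appears when one computes $\vol_\C(\widehat{\taut})$ itself. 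A secondary, more clerical difficulty is keeping the weight-$(n-2)$ automorphy factors and the various $v$-powers straight when passing between the adelic Eisenstein series $E(g_\tau, \cdot)$, the classical vector-valued $E_L(\tau, \cdot, n-2)$, and the normalized theta function $\theta(\tau, z, h_f, \varphi_f)$.
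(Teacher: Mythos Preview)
Your proposal is correct and follows essentially the same route as the paper's proof: apply the Siegel--Weil formula (Theorem \ref{thm:sw}) to $\varphi_\infty\otimes\varphi_f$, convert the adelic theta integral over $H(\Q)\backslash H(\A)$ into the geometric integral over $\mathrm{Sh}_K(H,\mathcal{D})$ by matching measures, and then identify the resulting Eisenstein series with $\langle E_L(\tau,s_0,n-2),\varphi_f\rangle$ via Lemma \ref{lem:eis1}. The paper makes the measure-matching step concrete by fixing normalizations so that $\vol_\C(\widehat{\taut}) = \vol(K)^{-1}$, and carries out the identification $\Phi_f(\gamma) = (\omega_L(\gamma)\varphi_f)(0) = \langle\bar\omega_L(\gamma)^{-1}\varphi_0,\varphi_f\rangle$ explicitly; your outline anticipates both of these points correctly.
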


\begin{proof}
We rewrite the integral in \eqref{eq:intgeo} as an integral over the quotient $H(\Q)\backslash H(\A)$. 
To this end,  we normalize the Haar measure on the compact subgroup $H(\R)_{z_0}\subset H(\R)$ so that the volume is $1$. 
Then we normalize the Haar measure $dh_\infty$ on $H(\R)$ so that the induced measure on the quotient 
$H(\R)/H(\R)_{z_0}\cong \mathcal{D}$ agrees with the measure defined by $\chern(\widehat{\taut})^{n-1}$. 
Finally, we normalize the Haar measure $dh_f$ on $H(\A_f)$ so that $\vol(H(\Q)\backslash H(\A))=1$.
Setting
\[
\vol(K) = \int_K dh_f,
\]
we then  have the identities
\[
\vol_\C(  \widehat{\taut} )  = \int_{   H(\Q) \backslash H(\A) / H(\R)_{z_0} K  }  dh_\infty dh_f = \vol(K)^{-1}.
\]

Using \eqref{eq:siegel2} we obtain  
\begin{align*}
\mathcal{I}(\tau,\varphi) &= \vol(K)\int_{\mathrm{Sh}_K(H,\mathcal{D})} \theta(\tau,z, h_f,\varphi_f)\, \chern(\widehat{\taut})^{n-1}  \\
&= 
v^{1-\frac{n}{2}}\int_{H(\Q)\backslash H(\A)} \theta(g_\tau,h_\infty h_f,\varphi_\infty\otimes\varphi_f)\, dh_\infty dh_f,
\end{align*}
and applying the Siegel-Weil formula of  Theorem \ref{thm:sw} shows
\begin{align*}
\kappa\cdot \mathcal{I}(\tau,\varphi) &=v^{1-\frac{n}{2}}
 E(g_\tau,\lambda(\varphi_\infty\otimes \varphi_f),s_0).
\end{align*}

As $\varphi_\infty$ has weight $n-2$,  Lemma \ref{lem:eis1} implies
\begin{align*}\lefteqn{
E(g_\tau,\lambda(\varphi_\infty\otimes \varphi_f),s_0)  } \\
&=v^{\frac{n}{2}-1} \sum_{\gamma\in \Gamma_\infty\backslash \Gamma}
 \operatorname{Im}(\gamma \tau)^{s_0+\frac{1}{2}-\frac{n-2}{2}}  (c\tau+d)^{2-n}\cdot (\omega_L(\gamma)\varphi_f)(0) \\
\nonumber 
&=v^{\frac{n}{2}-1} \sum_{\gamma\in \Gamma_\infty\backslash \Gamma} 
\operatorname{Im}(\gamma \tau)^{s_0+\frac{1}{2}-\frac{n-2}{2}}  (c\tau+d)^{2-n}\cdot (\omega_L^\vee(\gamma)^{-1}\delta_0)(\varphi_f) .
\end{align*}
Here $\delta_0\in S_L^\vee$ denotes the functional  $\varphi_f\mapsto \varphi_f(0)$.
As the Weil representation is unitary,  we have 
\[
 (\omega_L^\vee(\gamma)^{-1}\delta_0)(\varphi_f) = \langle \bar \omega_L(\gamma)^{-1}\varphi_0, \varphi_f\rangle.
\]
To complete the proof, substitute this equality into the final expression above, and recall the definition of $E_L(\tau,s,n-2)$ from \eqref{eq:eisl}.
\end{proof}


\section{Automorphic Green functions}
\label{ss:green functions}



Let $\sigma$ be a finite dimensional representation of $\Gamma=\SL_2(\Z)$
on a complex vector space $V_\sigma$, and assume that $\sigma$  factors through a finite quotient.
In our applications, $\sigma$ will be  the Weil representation 
\[
\omega_L:\Gamma\to \Aut(S_L)
\]
of \eqref{eq:ol},  or its complex conjugate, or its dual.

For $k\in \Z$,  denote by $H_k(\sigma)$ the vector space of (weak) harmonic Maass forms of weight $k$ for the group $\Gamma$ with representation $\sigma$ as in \cite{BY1}. 
We write   
\[
S_k(\sigma) \subset  M_k(\sigma)   \subset M_k^!(\sigma)
\]
  for the spaces of cusp forms,  holomorphic modular forms, and weakly holomorphic modular forms, respectively.

A harmonic Maass form $f\in H_k(\sigma)$ has a Fourier expansion 
\begin{align}
\label{eq:fourierf}
f(\tau)=\sum_{\substack{m\in \Q\\ m\gg -\infty }} c^+(m) q^m
+\sum_{\substack{m\in \Q \\ m<0 } } c^-(m) \Gamma(1-k, 4\pi |m| v) q^m
\end{align}
with coefficients $c^\pm(m)\in V_\sigma$. 
Here $\tau=i+i v \in \H$,  $q=e^{2\pi i\tau}$, and 
\[
\Gamma(s,x)=\int_x^\infty e^{-t}t^{s-1}dt
\]
 is the incomplete gamma function. 
The coefficients are supported on rational numbers with uniformly bounded denominators. 
The first summand on the right hand side of \eqref{eq:fourierf} is denoted by $f^+$ and is called the holomorphic part of $f$, the second summand is denoted by $f^-$ and is called the non-holomorphic part. 
%

As in \cite{BF}, there is a  surjective conjugate-linear differential operator 
\begin{equation}\label{xi}
\xi_k: H_k(\omega_L)\to S_{2-k}(\bar\omega_L)
\end{equation}
defined by 
\[
\xi_k(f)(\tau)=2iv^{k} \overline{\frac{\partial f}{\partial \bar\tau}}.
\]
Its kernel is   $M^!_{k}(\omega_L)$.

Suppose  $f\in H_{2-n}(\omega_L)$ is $K$-fixed  with Fourier coefficients  $c^\pm(m)\in S_L$.
Define the special divisor associated to $f$ by 
\begin{align}
\label{eq:zf}
Z(f)= \sum_{m>0}\sum_{\mu \in L'/L} c^+(-m,\mu) Z(m, \varphi_\mu)\in \Div_\C(\mathrm{Sh}_K(H,\mathcal{D})).
\end{align}
Here $c^\pm(m,\mu)\in \C$ is the value of $c^{\pm}(m)$ at  $\mu\in L'/L$, and 
$\varphi_\mu$ is  the characteristic function of the coset $\mu+\hat L$.

We recall from \S 3  of  \cite{BHY} the construction of an automorphic Green function for $Z(f)$.  
Using \eqref{eq:siegel2}, define an $S_L$-valued Siegel theta function 
\begin{equation}\label{lattice theta}
\theta_L(\tau,z,h)=\sum_{\mu \in L'/L} \theta(\tau,z,h,\varphi_\mu)\varphi_\mu
\end{equation}
for $\tau \in \H$, $z\in \mathcal{D}$, and $h\in H(\A_f)$.
The transformation law \eqref{eq:thetatr} implies that it defines a non-holomorphic modular form for $\Gamma$ of weight $n-2$ with representation $\bar \omega_L$,  
and  so, using the notation \eqref{S_L pairing},  the pairing $\langle f, \theta_L(\tau,z,h)\rangle $ is  $\Gamma$-invariant as a  function of $\tau \in \H$.

Following \cite{Bo1} and \cite{BF}, we consider the regularized theta lift
\begin{align} 
\label{eq:AutoGreen}
\Phi(z,h,f)= \int_{\Gamma\backslash \H}^{\reg}
\langle f,\theta_L(\tau,z,h)\rangle \, d\mu(\tau)
\end{align}
of $f$, where $d\mu(\tau)= \frac{du\,dv}{v^2}$ is the invariant measure.
Here the integral is regularized as in \cite{Bo1} and \cite{BHY}.
The main properties of 
\[
\Phi(f) = \Phi(z,h,f) ,
\]
viewed as a function on $\mathrm{Sh}_K(H,\mathcal{D})$,  are summarized in the following theorem, see Theorem~3.3.1 and Proposition~3.3.4 of \cite{BHY}.

\begin{theorem}
The Green function $\Phi(z,h,f)$ is smooth on $\mathrm{Sh}_K(H,\mathcal{D})\smallsetminus Z(f)$, and  has logarithmic singularities along $Z(f)$.
It is integrable over $\mathrm{Sh}_K(H,\mathcal{D})$,  with respect to the invariant measure descended from $\mathcal{D}$, if $n>2$ or if $V$ is anisotropic. The corresponding current on smooth compactly supported $(n-1,n-1)$-forms on $\mathrm{Sh}_K(H,\mathcal{D})$ satisfies the Green equation
\[
dd^c [\Phi(f)] +\delta_{Z(f)} = [dd^c \Phi(f)].
\]
\end{theorem}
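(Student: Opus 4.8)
The plan is to follow the standard machinery of regularized theta lifts (Borcherds, Bruinier, Bruinier--Funke, Kudla); indeed all three assertions are established in essentially this generality in \cite{BHY}, so the argument will consist of transporting Theorem~3.3.1 and Proposition~3.3.4 of loc.\ cit.\ to the present notation. There are three things to check: (i) smoothness of $\Phi(f)$ on the complement of $Z(f)$, together with the precise form of its singularity along $Z(f)$; (ii) the Green equation as an identity of currents; (iii) integrability of $\Phi(f)$ over $\mathrm{Sh}_K(H,\mathcal{D})$.

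For (i), I would work on the uniformization $\mathcal{D}$ and fix a base point $z_0$. The regularized integral $\int_{\Gamma\backslash\H}^{\reg}$ is Borcherds' truncation: integrate over $\mathcal{F}_T=\{\tau\in\mathcal{F}:v\le T\}$ and extract the constant term of the resulting function of $T$ as $T\to\infty$. For fixed $z\notin Z(f)$ I would prove convergence by unfolding against $\Gamma_\infty\backslash\Gamma$ and estimating the Fourier expansion \eqref{eq:fourierf} of $f$ against the exponential decay of the nonconstant Fourier modes of $\theta_L(\tau,z,h)$; the only source of divergence is a vector $x\in\mu_j+L_j$ with $\langle x,x\rangle>0$ lying in $z^\perp$, i.e.\ the locus $Z(f)$ itself. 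Near a point of $Z(f)$ I would isolate the finitely many such $x$ and compute their contribution to the integral explicitly, as in \cite{Br1} and \cite{BF}: it is a finite sum of terms $-\log|\,\cdot\,|^2$ in local equations for the $\mathcal{D}(x)$, plus a smooth remainder. Smoothness of the rest follows since the integrand is real-analytic in $z$ and the convergence is locally uniform, so one may differentiate under the integral sign.

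For (ii), the essential input is the differential equation satisfied by $\theta_L$ in the variable $z$: applying $dd^c_z$ produces, up to elementary factors, the image of $\theta_L$ under a weight-raising operator in $\tau$ followed by $v^{-2}\partial_{\bar\tau}$ --- equivalently, the Kudla--Millson-type eigenfunction relation for the Siegel theta kernel. I would pair this with $f$, move $dd^c_z$ inside the regularized integral, use the $\Gamma$-invariance of $\langle f,\theta_L\rangle$, and integrate by parts on $\mathcal{F}_T$ via Stokes. The horocycle boundary term at $v=T$ survives in the limit and, after inserting Fourier expansions, yields exactly the smooth form $[dd^c\Phi(f)]$; the defect of $\Phi(f)$ along $Z(f)$ contributes $\delta_{Z(f)}$ by Poincar\'e--Lelong applied to the local expression from step (i). Combining the two gives the stated identity.

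For (iii), if $V$ is anisotropic then $\mathrm{Sh}_K(H,\mathcal{D})$ is compact, so the $L^1_{\mathrm{loc}}$ bound from (i) already suffices. If $V$ is isotropic and $n>2$, I would bound the growth of $\Phi(f)$ toward the rational boundary by computing its constant term near each cusp (again via the unfolding of step (i)) and checking that the growth is at most logarithmic in the boundary coordinate --- hence integrable against $\chern(\widehat{\taut})^{n-1}$ exactly when $n>2$, which explains the hypothesis. I expect the main obstacle to be step (ii): justifying the interchange of $dd^c_z$ with the truncate-then-extract-constant-term regularization, and accounting for all the boundary contributions in the Stokes computation on the noncompact domain $\mathcal{F}_T$, where the non-holomorphic part $f^-$ of $f$ must be tracked carefully.
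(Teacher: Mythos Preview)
Your proposal is correct and takes essentially the same approach as the paper: the paper gives no proof at all beyond the sentence ``see Theorem~3.3.1 and Proposition~3.3.4 of \cite{BHY}'' placed just before the theorem, and you cite precisely the same two results from \cite{BHY}. Your additional sketch of how those results are actually proved (via Borcherds' truncation, unfolding, the Kudla--Millson eigenfunction relation, and Stokes on $\mathcal{F}_T$) goes beyond what the paper does and is accurate in outline.
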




\section{Integrals of Green functions}


In this section we assume that either $n>2$, or that $n\ge 1$ and $V$ is anisotropic.
 Choose  an $S_L$-valued harmonic Maass form
 \[
 f\in H_{2-n}(\omega_L)
 \]
  fixed by the action of the compact open subgroup $K \subset H(\A_f)$ on $S_L$.

The main result of this section expresses the integral
\begin{align}
\label{eq:greenint}
\mathcal{I}(f) =  \vol_\C(\widehat{\taut})^{-1} \int_{\mathrm{Sh}_K(H,\mathcal{D})} \Phi(f)  \chern( \widehat{\taut})^{n-1}
\end{align}
 in terms of the derivatives  $B'(m,\mu,s_0)$ at  $s_0=(n-1)/2$  of the coefficients   \eqref{eq:coeff} of $E_L (\tau,s,n)$.
 The proof follows the argument of \cite{Ku:Integrals} in the orthogonal case, which is based on the Siegel-Weil formula.  In fact, we slightly generalize this argument by allowing harmonic Maass forms for the inputs of the regularized theta lift. 
Let  $\kappa \in \{ 1,2\}$ be as in  \eqref{eq:kappa}.

\begin{lemma}
\label{lem:int}
We have
\begin{align*}
\mathcal{I}(f) &=\lim_{T\to \infty}\left(\frac{1}{\kappa}
\int_{\mathcal{F}_T} \left\langle f , \,E_L(\tau,s_0,n-2)\right\rangle \, d\mu(\tau) -c^+(0,0) \log(T)\right).
\end{align*}
Here $\mathcal{F}_T=\{\tau\in \mathcal{F} :  \operatorname{Im}(\tau) \leq T\}$ is the truncation at height $T$ of the standard fundamental domain $\mathcal{F}$ for the action of $\Gamma$  on the complex upper half plane.
\end{lemma}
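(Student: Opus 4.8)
The plan is to unfold the definition of the automorphic Green function, interchange the order of integration, and reduce everything to the geometric Siegel--Weil formula of Proposition~\ref{lem:swgeo}. For $T>0$ set
\[
\Phi_T(f)(z,h)=\int_{\mathcal{F}_T}\langle f,\theta_L(\tau,z,h)\rangle\,d\mu(\tau),
\]
which is finite for every $(z,h)$ away from $Z(f)$. By construction of the regularized theta lift in \cite{Bo1} and \cite{BHY}, the constant coefficient (in $u$) of $\langle f,\theta_L(\tau,z,h)\rangle$ grows like $c^+(0,0)\cdot v$ as $v\to\infty$, uniformly in $(z,h)$: the factor $v$ is contributed by the $x=0$ term of $\theta_L$, whose $\varphi_0$-component is $v$, and $c^+(0,0)$ is the value at the cusp of the holomorphic part of $f$. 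Hence
\[
\Phi_T(f)(z,h)=\Phi(f)(z,h)+c^+(0,0)\log(T)+o(1)\qquad(T\to\infty),
\]
the error holding locally uniformly on $\mathrm{Sh}_K(H,\mathcal{D})\smallsetminus Z(f)$ and, near $Z(f)$, being dominated by the logarithmic singularity of $\Phi(f)$ plus an integrable function.

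Next I would evaluate the inner integral. Writing $f=\sum_\mu f_\mu\varphi_\mu$, using $\theta_L=\sum_\mu\theta(\cdot,\varphi_\mu)\varphi_\mu$ and that $\{\varphi_\mu\}$ is self-dual for the pairing \eqref{S_L pairing}, Proposition~\ref{lem:swgeo} gives, for each fixed $\tau$,
\[
\vol_\C(\widehat{\taut})^{-1}\int_{\mathrm{Sh}_K(H,\mathcal{D})}\langle f,\theta_L(\tau,z,h)\rangle\,\chern(\widehat{\taut})^{n-1}=\frac{1}{\kappa}\,\big\langle f,E_L(\tau,s_0,n-2)\big\rangle.
\]
Since $\mathcal{F}_T$ is relatively compact modulo $\Gamma_\infty$ and $\langle f,\theta_L\rangle$ is locally integrable on $\mathcal{F}_T\times\mathrm{Sh}_K(H,\mathcal{D})$ --- here one invokes the integrability properties of $\Phi(f)$ recalled above and, when $V$ is isotropic, the hypothesis $n>2$ to handle the non-compactness of $\mathrm{Sh}_K(H,\mathcal{D})$ --- Fubini's theorem applies and, combined with the previous display, yields
\[
\vol_\C(\widehat{\taut})^{-1}\int_{\mathrm{Sh}_K(H,\mathcal{D})}\Phi_T(f)\,\chern(\widehat{\taut})^{n-1}=\frac{1}{\kappa}\int_{\mathcal{F}_T}\big\langle f,E_L(\tau,s_0,n-2)\big\rangle\,d\mu(\tau).
\]

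Finally I would pass to the limit $T\to\infty$. Using the asymptotics of the first paragraph together with the normalization $\vol_\C(\widehat{\taut})^{-1}\int_{\mathrm{Sh}_K(H,\mathcal{D})}\chern(\widehat{\taut})^{n-1}=1$ and dominated convergence, the left-hand side above equals $\mathcal{I}(f)+c^+(0,0)\log(T)+o(1)$; equating with the right-hand side and rearranging gives exactly the asserted identity. (When $\kappa=2$, i.e.\ $n=1$, one checks moreover that $c^+(0,0)=0$, so the two occurrences of $\log(T)$ still cancel.) The main obstacle is precisely the interchange of the two integrations and the passage to the limit: one must justify Fubini and dominated convergence in the presence of the logarithmic singularities of $\Phi(f)$ along $Z(f)$ and, in the isotropic case, at the boundary of $\mathrm{Sh}_K(H,\mathcal{D})$, controlling the tail of the $\tau$-integral uniformly --- this is where the integrability of $\Phi(f)$, the growth bounds on $\theta_L$, and the asymptotics of the Fourier coefficients of $E_L$ from Lemma~\ref{lem:wasymptotic} enter, following the argument of Kudla \cite{Ku:Integrals}.
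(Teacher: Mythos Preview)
Your proposal is correct and follows essentially the same route as the paper: write $\Phi(f)$ as the limit of the truncated integral $\Phi_T(f)$ minus $c^+(0,0)\log(T)$, interchange the order of integration using the compactness of $\mathcal{F}_T$, and then apply the geometric Siegel--Weil formula (Proposition~\ref{lem:swgeo}) to the inner integral over $\mathrm{Sh}_K(H,\mathcal{D})$. The paper cites Proposition~2.5 of \cite{Ku:Integrals} for the pointwise asymptotic and is otherwise identical in structure; you are slightly more careful than the paper in flagging the dominated convergence step.

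One small correction: your final parenthetical claim that $c^+(0,0)=0$ when $n=1$ is both unnecessary and false in general. There is no mismatch to repair: your own computation already gives
\[
\vol_\C(\widehat{\taut})^{-1}\int_{\mathrm{Sh}_K(H,\mathcal{D})}\Phi_T(f)\,\chern(\widehat{\taut})^{n-1}
=\mathcal{I}(f)+c^+(0,0)\log(T)+o(1)
=\frac{1}{\kappa}\int_{\mathcal{F}_T}\langle f,E_L(\tau,s_0,n-2)\rangle\,d\mu(\tau),
\]
so rearranging yields the stated formula directly, for any value of $\kappa$ and without any constraint on $c^+(0,0)$. Simply delete that sentence.
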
 

\begin{proof}
Exactly as in  Proposition 2.5 of \cite{Ku:Integrals},  on $\mathrm{Sh}_K(H,\mathcal{D})\smallsetminus Z(f)$ we have the equality
\[
\Phi(z,h,f)= \lim_{T\to\infty}\left(\int_{\mathcal{F}_T} \langle f, \theta_L(\tau,z,h)\rangle \, d\mu(\tau) -c^+(0,0) \log (T) \right).
\]
The  integral $\mathcal{I}(f)$ is therefore given by the limit as $T\to \infty$ of 
\begin{align*}
  \vol_\C(\widehat{\taut})^{-1}
 \int_{\mathrm{Sh}_K(H,\mathcal{D})} 
   \int_{\mathcal{F}_T} \langle f, \theta_L(\tau,z,h)\rangle \, d\mu(\tau) \chern(\widehat{\taut})^{n-1} -c^+(0,0) \log(T) .
\end{align*}
As $\mathcal{F}_T$ is compact, we may interchange the order of integration.
Inserting \eqref{lattice theta} and  \eqref{eq:intgeo}, we find that $\mathcal{I}(f)$ is  the limit as $T\to \infty$ of 
\[
  \vol_\C(\widehat{\taut})^{-1}
\int_{\mathcal{F}_T} \left\langle f, \int_{\mathrm{Sh}_K(H,\mathcal{D})} 
\theta_L(\tau,z,h) \chern(\widehat{\taut})^{n-1}  \right\rangle \, d\mu(\tau) -c^+(0,0) \log(T)  ,
\]
which we rewrite as
\[
\int_{\mathcal{F}_T} \sum_{ \mu \in L' / L }  f(\tau)(\mu) \cdot \mathcal{I}(\tau,\varphi_\mu)  \, d\mu(\tau) -c^+(0,0) \log(T)  .
\]
Applying Proposition~\ref{lem:swgeo} completes the proof.
\end{proof}

\begin{theorem}
\label{thm:int}
 If $n>2$, or if $n=2$ and $V$ is anisotropic,  
 the integral \eqref{eq:greenint} satisfies
\[
\mathcal{I}(f)=\sum_{\mu\in L'/L}\sum_{\substack{m\in \Q \\ m>0}} c^+(-m,\mu)B'(m,\mu,s_0).
\]
If $n=1$,  the integral \eqref{eq:greenint} satisfies
\[
\mathcal{I}(f)=\frac{1}{2}\sum_{\mu\in L'/L}\sum_{\substack{m\in \Q  \\ m\geq 0}} c^+(-m,\mu)B'(m,\mu,s_0).
\]
\end{theorem}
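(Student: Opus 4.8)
The plan is to combine Lemma \ref{lem:int} with the explicit Fourier expansions of $f$ and $E_L(\tau,s,n-2)$, unfolding the truncated integral over $\mathcal{F}_T$ against the cusp at $\infty$ and passing to the limit. First I would insert the Fourier expansion \eqref{eq:fourierf} of the harmonic Maass form $f\in H_{2-n}(\omega_L)$ and the Fourier expansion \eqref{eq:fouriereis} of $E_L(\tau,s_0,n-2)$ into the pairing $\langle f,E_L(\tau,s_0,n-2)\rangle$, and then integrate term-by-term over $\mathcal{F}_T$. Because the pairing \eqref{S_L pairing} is $\Gamma$-invariant, one can apply the usual unfolding trick (as in \cite{Ku:Integrals}): only the constant terms in the $q$-expansion (with respect to the variable $u=\operatorname{Re}(\tau)$) survive the $u$-integration over $[0,1]$, so the integral reduces to a single integral over $v\in[\sqrt{3}/2,T]$ of a sum of products of Fourier coefficients of $f$ and of $E_L$, against $dv/v^2$.

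Next I would split the resulting $v$-integral according to the sign of the Fourier index $m$. For $m>0$ the relevant combination involves $c^+(-m,\mu)$ paired with the coefficient $B(m,\mu;s_0,v)=B(m,\mu,s_0)\mathcal{W}_m(s_0,v)$ of $E_L(\tau,s_0,n-2)$ — but here the key point is a Maass-operator identity: by Lemma \ref{lem:eisl}, $E_L(\tau,s_0,n-2) = L_n(E_L'(\tau,s_0,n))$, so the coefficients of $E_L(\tau,s_0,n-2)$ are obtained by applying the lowering operator to the derivative coefficients $B'(m,\mu;s_0,v)$ of the incoherent Eisenstein series. This is what converts the naive integral into one whose limit is governed by the $B'(m,\mu,s_0)$ rather than the $B(m,\mu,s_0)$ themselves. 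I would use integration by parts / the explicit action of $L_n=-2iv^2\partial_{\bar\tau}$ on Fourier coefficients, together with the asymptotic estimates in Lemma \ref{lem:wasymptotic} and Lemma \ref{lem:coeffasy}, to show that the $m>0$ terms contribute exactly $\sum_{\mu}\sum_{m>0} c^+(-m,\mu)B'(m,\mu,s_0)$ in the limit. The $m<0$ terms, involving $c^-(m,\mu)$ and $\Gamma(1-(2-n),4\pi|m|v)$ against negatively-indexed Eisenstein coefficients, decay exponentially by the asymptotics in Lemma \ref{lem:wasymptotic}, hence contribute nothing; and I would check that the non-holomorphic part $f^-$ contributes nothing either, again by exponential decay. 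The $m=0$ term is exactly the piece matched against the subtracted $c^+(0,0)\log(T)$ in Lemma \ref{lem:int}: by Lemma \ref{lem:coeffasy} the $v\to\infty$ behaviour of $B'(0,0;s_0,v)$ is $\kappa\log v$ (plus, when $n=1$, the constant $B'(0,0,s_0)$), so the divergent parts cancel, leaving no contribution when $n>1$ and the extra $\tfrac12 c^+(0,0)B'(0,0,s_0)$ when $n=1$; together with the factor $1/\kappa$ and $\kappa=2$ for $n=1$ this explains the $\tfrac12$ in the second formula.

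The main obstacle I expect is making the interchange of limit, integration over $\mathcal{F}_T$, and the infinite Fourier sums rigorous, and correctly tracking the Maass-operator bookkeeping: one must justify that the lowering operator identity $E_L(\tau,s_0,n-2)=L_n E_L'(\tau,s_0,n)$ can be used under the integral sign after unfolding, and that the boundary terms arising from integrating $v^2\partial_v$ against $dv/v^2$ on $[\sqrt3/2,T]$ produce precisely the coefficients $B'(m,\mu,s_0)$ in the limit $T\to\infty$. The convergence is delicate because $f$ is only a harmonic Maass form, not holomorphic, so one cannot naively drop $f^-$ without the growth estimates; and the coefficients of $f$ grow, so one needs the lower bound of Corollary \ref{cor:eisgrowth} (or rather its analogue for weight $n-2$) to guarantee absolute convergence of $\sum_m c^+(-m,\mu)B'(m,\mu,s_0)$. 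Once these analytic points are handled, the rest is the routine unfolding computation already carried out in \cite{Ku:Integrals}, adapted to the present normalization of the Whittaker functions and the pairing \eqref{S_L pairing}.
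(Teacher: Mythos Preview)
Your overall architecture matches the paper's: start from Lemma \ref{lem:int}, invoke the identity $E_L(\tau,s_0,n-2)=L_n E_L'(\tau,s_0,n)$ from Lemma \ref{lem:eisl}, and then pass to the limit using the asymptotics of Lemma \ref{lem:coeffasy}. However, two steps in your execution are not right, and the second one is a genuine gap.

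First, the ``unfolding'' framing is misleading. There is nothing left to unfold in $A_T(f)=\int_{\mathcal{F}_T}\langle f,E_L(\tau,s_0,n-2)\rangle\,d\mu(\tau)$: the integrand is already $\Gamma$-invariant, and $\mathcal{F}_T$ is \emph{not} the rectangle $[0,1]\times[\sqrt3/2,T]$, so you cannot simply ``integrate $u$ over $[0,1]$'' and reduce to a $v$-integral. The paper instead writes $E_L(\tau,s_0,n-2)\,d\mu(\tau)=-\bar\partial E_L'(\tau,s_0,n)\,d\tau$ and applies Stokes' theorem on $\mathcal{F}_T$. The curved and vertical boundary pieces of $\mathcal{F}_T$ cancel in pairs by $\Gamma$-invariance, leaving only the horizontal segment at height $T$; this is what produces the clean line integral $\int_0^1\langle f(u+iT),E_L'(u+iT,s_0,n)\rangle\,du$, into which one then inserts the Fourier expansions.

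Second, and more seriously, your claim that ``the non-holomorphic part $f^-$ contributes nothing \dots\ by exponential decay'' misidentifies the mechanism. After Stokes there is a leftover \emph{area} integral
\[
-\int_{\mathcal{F}_T}\big\langle\overline{\xi_{2-n}(f)},\,E_L'(\tau,s_0,n)\big\rangle\,v^n\,d\mu(\tau),
\]
coming from $(\bar\partial f)\,d\tau=-(L_{2-n}f)\,d\mu(\tau)$. This term does not vanish by any decay estimate; rather, as $T\to\infty$ it converges to the Petersson inner product of the cusp form $\xi_{2-n}(f)\in S_n(\bar\omega_L)$ against the Eisenstein series $E_L'(\tau,s_0,n)$, and it is zero precisely because cusp forms are orthogonal to Eisenstein series. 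This orthogonality is the essential input that your sketch does not supply, and without it the contribution of $f^-$ cannot be disposed of. (Exponential decay \emph{does} enter, but only later, to kill the boundary term $\sum_{m>0}c^-(-m,\mu)\,\Gamma(n-1,4\pi m T)\,B'(m,\mu;s_0,T)$ at $v=T$.)
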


\begin{proof}
Abbreviate
\[
A_T(f) =  \int_{\mathcal{F}_T} \left\langle f , \,E_L(\tau,s_0,n-2)\right\rangle \, d\mu(\tau),
\]
so that Lemma \ref{lem:int}  becomes
\begin{align}
\label{eq:if1}
\mathcal{I}(f) &=\lim_{T\to \infty}\left(\frac{A_T(f)}{\kappa}  -c^+(0,0) \log(T) \right).
\end{align}

Using Lemma \ref{lem:eisl} we see that
\[
E_L(\tau,s_0,n-2)\,d\mu(\tau)=\big(L_n E_L'(\tau,s_0,n)\big) \, d\mu(\tau) = -\bar\partial E_L'(\tau,s_0,n)\, d\tau ,
\]
and so 
\begin{align*}
A_T(f)
&=- \int_{\mathcal{F}_T} \left\langle f , \,\bar\partial E_L'(\tau,s_0,n)  \right\rangle d\tau\\
&=- \int_{\mathcal{F}_T} d\left(\left\langle f , \, E_L'(\tau,s_0,n)\,  \right\rangle  d\tau\right)+\int_{\mathcal{F}_T} \left\langle (\bar\partial f ), \, E_L'(\tau,s_0,n)  \right\rangle d\tau .
\end{align*}
Applying  Stokes' theorem to the first term,  and 
\[
(\bar \partial f) d\tau = -(L_{2-n}f) \,d\mu(\tau)
\]
 to the second,  we obtain
\begin{align*}
A_T(f)&=- \int_{\partial \mathcal{F}_T} \left\langle f , \, E_L'(\tau,s_0,n)\,  \right\rangle  d\tau - \int_{\mathcal{F}_T} \left\langle  L_{2-n} f , \, E_L'(\tau,s_0,n)  \right\rangle d\mu(\tau)\\
&=\int_{u=0}^1\left\langle f(u+iT ), \, E_L'(u+iT,s_0,n)\,  \right\rangle  du  \\
& \quad - \int_{\mathcal{F}_T} \left\langle  \overline{\xi_{2-n}(f)} , \, E_L'(\tau,s_0,n)  \right\rangle v^n d\mu(\tau),
\end{align*}
where $\xi_{2-n}$ is the differential operator \eqref{xi}.
Inserting the Fourier expansions \eqref{eq:fourierf} and \eqref{eq:fouriereis} of $f$ and the Eisenstein series yields
\begin{align*}
A_T(f) 
&=   \sum_{\substack{\mu\in L'/L\\ m\in \Q}} c^+(-m,\mu)B'(m,\mu;s_0,T)  \\
& \quad +\sum_{\substack{\mu\in L'/L\\ m\in \Q^+}} c^-(-m,\mu)\Gamma(n-1,4\pi |m| T) B'(m,\mu;s_0,T)  \\
&\quad - \int_{\mathcal{F}_T} \left\langle  \overline{\xi_{2-n}(f)} , \, E_L'(\tau,s_0,n)  \right\rangle v^n d\mu(\tau).
\end{align*}
The exponential decay of the incomplete gamma function and the polynomial growth of the coefficients $c^-(m,\mu)$ imply that the second term goes to zero in the limit $T\to \infty$. The third term converges to the Petersson scalar product of $\xi_{2-n}(f)$ and $E_L'(\tau,s_0,n)$. But since $ \xi_{2-n}(f)$ is cuspidal and hence orthogonal to Eisenstein series, this Petersson scalar product vanishes.
Inserting this into \eqref{eq:if1}, we find
\begin{align*}
\label{eq:if2}
\mathcal{I}(f) &=\lim_{T\to \infty}\bigg(\frac{1}{\kappa}
\sum_{\substack{\mu\in L'/L\\ m\in \Q  }} c^+(-m,\mu)B'(m,\mu;s_0,T)
-c^+(0,0) \log (T) \bigg).
\end{align*}

The exponential decay of the coefficients $B'(m,\mu;s_0,T)$ in \eqref{eq:coeff} for $m<0$ and the subexponential growth of the coefficients $c^+(-m,\mu)$ imply that the contribution of all $m<0$ in the above term vanishes. 
If $n>1$ we obtain by virtue of Lemma~\ref{lem:coeffasy} that 
\begin{align*}
\mathcal{I}(f)
&=\frac{1}{\kappa}
\sum_{\substack{\mu\in L'/L\\ m> 0}} c^+(-m,\mu)B'(m,\mu,s_0),
\end{align*}
as desired.
When $n=1$, again by Lemma~\ref{lem:coeffasy}, there is an additional  contribution to the sum from $m=0$.
\end{proof}


\chapter{Integral models and arithmetic intersection theory}
\label{s:integral models}


We recall the integral models of $\mathrm{GU}(n-1,1)$ Shimura varieties constructed by Pappas and Kr\"amer,  and the arithmetic intersection theory of Gillet-Soul\'e and Burgos-Kramer-K\"uhn on their toroidal compactifications.

From here until the end of the paper,  we assume $D=-\mathrm{disc}(\kk)$ is odd.


\section{Moduli problems}
\label{ss:basic moduli}


For any $n\ge 1$,  work of Pappas \cite{pappas} and Kr\"amer \cite{kramer}, as summarized in \S 2.3  of \cite{BHKRY-1},   provides us with a regular and flat  $\co_\kk$-stack
\[
\mathcal{M}_{(n-1,1)} \to \Spec(\co_\kk) 
\]
with reduced fibers.  For an  $\co_\kk$-scheme $S$, the objects of the groupoid $\mathcal{M}_{(n-1,1)}(S)$  are quadruples $(A,\iota,\psi,\mathcal{F})$ in which
\begin{itemize}
\item
$A \to S$ is an abelian scheme of dimension $n$,   
\item
$\iota : \co_\kk \to \End(A)$ is an $\co_\kk$-action, 
\item
 $\psi : A \to A^\vee$ is a principal polarization whose induceded Rosati involution   restricts to complex conjugation on the image of $\iota$, 
 \item
 $\mathcal{F} \subset \Lie(A)$ is  an $\co_\kk$-stable hyperplane\footnote{That is to say, an $\co_S$-module local direct summand     of rank $n-1$}
 satisfying  the signature $(n-1,1)$ condition of Kr\"amer \cite{kramer}: the actions of $\co_\kk$  on $\mathcal{F}$  and on $\Lie(A)/\mathcal{F}$ are through the structure morphism $i_S:\co_\kk \to \co_S$ and its complex conjugate, respectively. 
  \end{itemize}

\begin{remark}
The stack $\mathcal{M}_{(n-1,1)}$ is denoted $\mathcal{M}^\Kra_{(n-1,1)}$ in \cite{BHKRY-1}.
\end{remark}

\begin{remark}
One does not know a good theory of integral models $\mathcal{M}_{(n-1,1)}$ if $2$ is ramified in $\kk$, and 
this lack of knowledge is the main reason for restricting to quadratic imaginary fields of odd discriminant.
\end{remark}

\begin{definition}\label{def:relevant}
A finite dimensional $\kk$-hermitian space $W$ is \emph{relevant} if it admits a self-dual $\co_\kk$-lattice $\mathfrak{a} \subset W$.
Two relevant hermitian spaces $W$ and $W'$ are \emph{strictly similar} if there is a $\kk$-linear isomorphism $W\iso W'$ identifying the hermitian forms up to scaling by a positive rational number.
\end{definition}

\begin{proposition}\label{prop:jacobowitz}
Let $o(D)$ denote the number of prime divisors of $D$.
\begin{enumerate}
\item
Any two  self-dual $\co_\kk$-lattices in a relevant $\kk$-hermitian space  are isometric everywhere locally.

\item
There are $2^{o(D) -1}$ isomorphism classes of relevant hermitian spaces of any fixed signature $(r,s)$.
If $r+s$ is odd all  lie in the same  strict similarity class, but if $r+s$ is even all lie in different strict similarity classes.
\end{enumerate}
\end{proposition}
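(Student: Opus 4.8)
The plan is to reduce both assertions to the classification of hermitian lattices and spaces over the completions $\kk_p=\kk\otimes_\Q\Q_p$ (due to Jacobowitz), and then to assemble the global statement from the Hasse principle for $\kk$-hermitian spaces. I would first record three local inputs. For $p\nmid D$ the extension $\kk_p/\Q_p$ is split or unramified, a hermitian $\kk_p$-space admits a self-dual (unimodular) $\co_{\kk,p}$-lattice if and only if it is split, and then such a lattice of a given rank is unique up to isometry. For $p\mid D$ --- so $p$ is odd, as $D$ is --- the extension $\kk_p/\Q_p$ is tamely ramified; since $2$ is a unit, every unimodular $\co_{\kk,p}$-lattice diagonalizes as $\langle a_1\rangle\perp\cdots\perp\langle a_n\rangle$ with $a_i\in\Z_p^\times$, and its isometry class is determined by the class of $a_1\cdots a_n$ modulo $\norm(\co_{\kk,p}^\times)$. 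Finally, the local--global classification: a $\kk$-hermitian space is determined up to isometry by its dimension, its archimedean signature, and the system $(\inv_p)_{p<\infty}$ of its finite local invariants, subject to $\inv_p=1$ at split primes and to the product formula $\prod_v\inv_v=1$, and every admissible such system is realized.

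To prove (1), I would note that inside a fixed $\kk_p$-space with $p\mid D$ the quantity $a_1\cdots a_n$ attached to a unimodular lattice represents the discriminant of that space, so its class modulo norms is forced; hence any two unimodular $\co_{\kk,p}$-lattices lying in one $\kk_p$-space are isometric, and together with the first local input this shows that the local isometry type of a self-dual $\co_\kk$-lattice in a relevant $W$ depends only on $W$.

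For (2), I would first check that relevant $W$ of a fixed signature $(r,s)$ correspond bijectively to systems $(\inv_p)_{p\mid D}\in\{\pm1\}^{o(D)}$. Indeed, relevance forces $\inv_\ell(W)=1$ at all finite $\ell\nmid D$ (noted already before \eqref{signs 1}); conversely a $\kk$-hermitian space of signature $(r,s)$ with $\inv_\ell=1$ at all finite $\ell\nmid D$ is relevant, since it is locally split at those $\ell$, both $\kk_\ell$-spaces of dimension $n$ carry unimodular lattices when $\ell\mid D$ (e.g.\ $\langle\delta\rangle\perp\langle1\rangle\perp\cdots\perp\langle1\rangle$ for $\delta\in\Z_\ell^\times$ of either square class), and a standard gluing argument then produces a global $\co_\kk$-lattice that is self-dual at every prime, hence self-dual. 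Since the remaining invariants $(\inv_p)_{p\mid D}$ must have product $\inv_\infty(W)$, a value pinned by the signature via the product formula, there are exactly $2^{o(D)-1}$ of them. Next I would use that scaling the hermitian form on $W$ by $c\in\Q^\times$ multiplies each $\inv_v$ by $(c,-D)_v^{\,n}$. When $n=r+s$ is even this factor is the square of a sign and hence $1$ everywhere, so scaling changes no isometry invariant (and for $c>0$ fixes the signature); thus strictly similar relevant spaces are isometric, and the $2^{o(D)-1}$ isomorphism classes become $2^{o(D)-1}$ distinct strict similarity classes. When $n$ is odd the factor is $(c,-D)_v$, and it then suffices, given relevant $W,W'$ of signature $(r,s)$, to find $c\in\Q^\times_{>0}$ with $(c,-D)_p=\inv_p(W')\inv_p(W)$ for all $p\mid D$ and $(c,-D)_v=1$ at every other place, $\infty$ included; the $c$-scaling of $W$ then agrees with $W'$ in dimension, signature, and all local invariants, hence is isometric to $W'$, and the two are strictly similar.

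The hard part will be this last existence statement, which is the realization theorem for a finite system of local Hilbert symbols: signs $\epsilon_v$, almost all equal to $1$, occur as $((c,-D)_v)_v$ for some $c\in\Q^\times$ exactly when $\prod_v\epsilon_v=1$ and $\epsilon_v=1$ wherever $-D$ is a square in $\Q_v$. Both conditions hold in our situation: the only places where $\epsilon_v=-1$ are primes $p\mid D$, at which $\kk_p/\Q_p$ is ramified and $-D$ is not a square, while $\prod_{p\mid D}\inv_p(W')\inv_p(W)=1$ because each of $\prod_{p\mid D}\inv_p(W)$ and $\prod_{p\mid D}\inv_p(W')$ equals $\inv_\infty(W)$ by the product formula (its specialization to signature $(n-1,1)$ being \eqref{signs 1}). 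The $c$ so produced has $(c,-D)_\infty=1$, which together with $-D<0$ forces $c>0$, as required. No analytic input is involved; the only delicate point is the reciprocity bookkeeping at the primes dividing $D$, which --- along with all the local classification statements above --- is provided by Jacobowitz's analysis of hermitian forms over local fields.
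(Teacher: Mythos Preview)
Your argument is correct and follows the same underlying approach as the paper, which simply cites Jacobowitz for part (1) and Lemma~2.11 of \cite{KRunitaryII} for part (2); you have unpacked those citations into the local classification of unimodular hermitian lattices, the Hasse principle, and the realization of Hilbert-symbol patterns. The only points worth tightening are terminological: your use of ``split'' for a hermitian $\kk_p$-space at $p\nmid D$ should be read as $\inv_p=1$, and the ``standard gluing argument'' producing a global self-dual lattice is the usual local-global correspondence for lattices in a fixed hermitian space---neither affects the validity of the proof.
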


\begin{proof}
The first claim is a theorem of Jacobowitz \cite{jacobowitz}, which uses our assumption that $D$ is odd.  The second claim is Lemma 2.11 of  \cite{KRunitaryII}.
\end{proof}

As in Proposition 2.12(i) of \cite{KRunitaryII},  there is a decomposition
\begin{equation}\label{relevant decomp}
\mathcal{M}_{(n-1,1)} = \bigsqcup_W \mathcal{M}_W,
\end{equation}
where the disjoint union is over the strict similarity classes of relevant hermitian spaces $W$ of signature $(n-1,1)$,
and the generic fiber of each $\mathcal{M}_W$ is a Shimura variety of type $\mathrm{GU}(W)$.
When $n$ is odd,    Proposition \ref{prop:jacobowitz} implies that the disjoint union has a single term.
When $n$ is even, the disjoint union is over the $2^{o(D)-1}$ isomorphism classes of relevant $\kk$-hermitian spaces.

Denote by 
\[
\mathcal{M}_{(1,0)} \to \Spec(\co_\kk)
\]
 the moduli stack of elliptic curves with CM by $\co_\kk$.  More precisely, the functor of points assigns to an $\co_\kk$-scheme $S$ the groupoid $\mathcal{M}_{(1,0)}(S)$ whose objects are pairs $(A_0,\iota_0 )$ in which  
\begin{itemize}
\item
$A_0 \to S$ is an elliptic curve,   
\item
$\iota_0 : \co_\kk \to \End(A_0)$ is an $\co_\kk$-action satisfying the signature $(1,0)$ condition: the induced action of $\co_\kk$ on $\Lie(A_0)$  is through the  structure morphism $i_S: \co_\kk \to \co_S$.
\end{itemize}
 As $A_0$ is an elliptic curve, there is a  unique principal polarization
\begin{itemize}
\item
 $\psi_0 : A_0 \to A_0^\vee$,
 \end{itemize}
and the induced Rosati involution on $\End(A_0)$ restricts to complex conjugation on the image of $\iota_0$.  By Proposition 5.1 of \cite{KRYtiny} or  Proposition 2.1.2 of  \cite{howard-unitary-II},  $\mathcal{M}_{(1,0)}$ is  finite \'etale over $\co_\kk$.

\begin{remark}
For ease of notation, we usually write objects of $\mathcal{M}_{(1,0)}$ simply as $A_0$,  suppressing the remaining data from the notation.  Similarly, objects of $\mathcal{M}_{(n-1,1)}$ will usually be written simply as  $A$.
\end{remark}

When $n=1$, the data of $\psi$ and $\mathcal{F}$ in the moduli problem  $\mathcal{M}_{ (n-1,1) }$ are uniquely determined, and  $\mathcal{M}_{ (0,1) }$  classifies pairs $(A,\iota)$ over $\co_\kk$-schemes $S$ in which $A\to S$ is an elliptic curve and $\iota : \co_\kk \to \End(A)$ is an action such that the induced action of  $\co_\kk$ on the   $\co_S$-module  $\Lie(A)$ is through the conjugate of the structure morphism $\co_\kk \to \co_S$.  Replacing $\iota$ by its conjugate therefore defines a canonical isomorphism
\[
 \mathcal{M}_{ (0,1) } \iso \mathcal{M}_{(1,0)}.
\]

In particular, there is a decomposition of $\mathcal{M}_{(1,0)}$ analogous to \eqref{relevant decomp}, but this statement is vacuous, as the decomposition has a single term by Proposition \ref{prop:jacobowitz}.
If  $W_0$ denotes the unique, up to strict similarly, relevant $\kk$-hermitian space of signature $(1,0)$,  it will be convenient to set
\[
 \mathcal{M}_{W_0} = \mathcal{M}_{(1,0)} .
\]

Fix relevant $\kk$-hermitian spaces $(W_0,h_0)$ and $(W,h)$ of signatures $(1,0)$ and $(n-1,1)$, and 
self-dual $\co_\kk$-lattices $\mathfrak{a}_0 \subset W_0$ and $\mathfrak{a} \subset W$.
  The hermitian forms induce alternating $\Q$-bilinear forms
\begin{equation}\label{symplectic}
e_0 ( a, b ) = \mathrm{Tr}_{\kk/\Q} \left(  \frac{h_0(a,b) }{ \sqrt{-D} }  \right) ,\qquad e ( a , b ) = \mathrm{Tr}_{\kk/\Q} \left(  \frac{h(a,b) }{ \sqrt{-D} }  \right)
\end{equation}
on $W_0$ and $W$, and $\mathfrak{a}_0$ and $\mathfrak{a}$ are self-dual with respect to these forms.

\begin{definition}\label{def:kramer-pappas level}
Suppose we are given an integer $N\ge 1$ and an $\co_\kk$-scheme $S$ with $N\in \co_S^\times$.
A  \emph{level $N$-structure} on $A_0 \in \mathcal{M}_{W_0}(S)$ is a pair $(\eta_0, \xi_0)$ consisting of isomorphisms
\[
\eta_0: A_0[N] \iso  \underline{\mathfrak{a}_0 / N \mathfrak{a}_0} ,\qquad \xi_0 : \mu_N \iso \underline{\Z/N\Z} 
\]
of \'etale sheaves on $S$ that identify the Weil pairing on $A_0[N]$ induced by the principal polarization with the pairing on $\mathfrak{a}_0/N\mathfrak{a}_0$ induced by $e_0$.
Similarly, a  \emph{level $N$-structure} on $A \in \mathcal{M}_{W}(S)$ is a pair $(\eta, \xi)$ consisting of isomorphisms
\[
\eta: A[N] \iso  \underline{\mathfrak{a} / N \mathfrak{a}} ,\qquad \xi : \mu_N \iso \underline{\Z/N\Z} 
\]
 that identify the Weil pairing on $A[N]$ with the alternating pairing on $\mathfrak{a}/N\mathfrak{a}$.
\end{definition}

Over $\co_\kk[1/N]$, we obtain finite  \'etale covers
\begin{equation}\label{level covers}
\mathcal{M}_{W_0}(N) \to \mathcal{M}_{W_0/\co_\kk[1/N]} , \qquad 
\mathcal{M}_{W}(N) \to \mathcal{M}_{W/\co_\kk[1/N]} 
\end{equation}
by adding level structures to the moduli problems.   By the first claim of Proposition \ref{prop:jacobowitz}, these are independent of the choices of $\mathfrak{a}_0$ and $\mathfrak{a}$.


\section{The exceptional divisor}


Now assume $n\ge 2$.
We recall some more constructions from \S 2.3  of \cite{BHKRY-1}, following work of Pappas \cite{pappas} and Kr\"amer \cite{kramer}.

There is a normal and flat $\co_\kk$-stack
\[
\mathcal{M}^\Pap_{(n-1,1)} \to \Spec(\co_\kk)
\]
defined as the moduli space of triples  $(A,\iota,\psi)$ whose components are  as in the definition of $\mathcal{M}_{(n-1,1)}$.  However,   instead of including a hyperplane $\mathcal{F} \subset \Lie(A)$ as part of the moduli problem, we demand that the action of $\co_\kk$ on $\Lie(A)$ satisfy a Kottwitz-style signature $(n-1,1)$ condition, and the wedge conditions of Pappas. We refer the reader to  \S 2.3  of \cite{BHKRY-1} for the precise definitions.

%

Forgetting the hyperplane $\mathcal{F} \subset \Lie(A)$ defines a  morphism
\begin{equation}\label{kramer-pappas}
\mathcal{M}_{(n-1,1)} \to \mathcal{M}^\Pap_{(n-1,1)},
\end{equation}
which can be realized as a blow-up.  To interpret it as such, define the  \emph{singular locus} 
\begin{equation}\label{singular}
\mathrm{Sing}_{(n-1,1)} \subset \mathcal{M}^\Pap_{(n-1,1)},
\end{equation}
 as the reduced locus of nonsmooth points.
 It  is a  proper  $\co_\kk$-stack of  dimension $0$, supported in characteristics dividing $D$.

The morphism \eqref{kramer-pappas} can be identified with the blow-up of the singular locus, and so there is a canonical cartesian diagram
 \begin{equation}\label{basic exceptional}
\xymatrix{
{\mathrm{Exc}_{(n-1,1)} }  \ar[r] \ar[d]  &  { \mathcal{M}_{(n-1,1)} } \ar[d] \\
{\mathrm{Sing}_{(n-1,1)} } \ar[r]  & { \mathcal{M}^\Pap_{(n-1,1)} }
}
\end{equation}
in which the upper left corner is the  \emph{exceptional divisor}.
In particular,  \eqref{kramer-pappas} is relatively representable and projective, and restricts to an isomorphism
\[
\mathcal{M}_{(n-1,1)} \smallsetminus  \mathrm{Exc}_{(n-1,1)}
\iso \mathcal{M}^\Pap_{(n-1,1)} \smallsetminus \mathrm{Sing}_{(n-1,1)} .
\]

\begin{remark}\label{rem:singular description}
 If $\mathfrak{p} \subset \co_\kk$ is a prime  above $p\mid D$ with residue field $\F_\mathfrak{p}$,   the $\F_\mathfrak{p}^\alg$-points of the singular locus are those 
\[
A\in \mathcal{M}^\Pap_{(n-1,1)} ( \F_\mathfrak{p}^\alg  ) 
\]
 for which the action of $\co_\kk$ on the $\F_\mathfrak{p}^\alg$-vector space $\Lie(A)$ is through the reduction map  $\co_\kk \to \F_\mathfrak{p}^\alg$.
 At such a point,   \emph{any} hyperplane $\mathcal{F} \subset \Lie(A)$ is $\co_\kk$-stable and satisfies Kr\"amer's signature condition, and the fiber over $A$ of the left vertical morphism in \eqref{basic exceptional} is the projective space parametrizing all such $\mathcal{F}$.
\end{remark}


\section{Toroidal compactification}
\label{ss:toroidal}


We turn to the study of compactifications of $\mathcal{M}_{(n-1,1)}$.
When $n=1$ this stack is finite (hence proper)  over $\co_\kk$, and so we set 
\[
\bar{\mathcal{M}}_{(0,1)} = \mathcal{M}_{(0,1)}.
\] 
For the rest of this section, assume $n\ge 2$.

One finds in \S 2 of  \cite{howard-unitary-II} the construction of a toroidal compactification
\[
\mathcal{M}_{(n-1,1)}  \subset  \bar{\mathcal{M}}_{(n-1,1) },
\]
obtained by imitating the constructions of Faltings and Chai \cite{faltings-chai}.  If one works over $\co_\kk[1/D]$, this is a  very special case of the constructions of  Lan \cite{lan}. 
The compactification is regular, and is smooth outside  the exceptional divisor of \eqref{basic exceptional}.

 The universal abelian scheme  $A \to \mathcal{M}_{(n-1,1)}$ extends uniquely to a semi-abelian scheme 
 \[
\bar{A} \to \bar{\mathcal{M}}_{(n-1,1)}
\]
with $\co_\kk$-action.
 At a geometric point of the boundary, this semi-abelian scheme is an extension of an abelian variety  by a torus of rank two.   
 The universal hyperplane $\mathcal{F} \subset \Lie(A)$ over $\mathcal{M}_{(n-1,1)}$ extends uniquely to an $\co_\kk$-stable hyperplane
\begin{equation}\label{hyperplane extension}
\bar{\mathcal{F}} \subset \Lie(\bar{A}),
\end{equation}
which again  satisfies Kr\"amer's signature $(n-1,1)$ condition.

There is a similar toroidal compactification of $\mathcal{M}_{(n-1,1)}^\Pap$ (now only normal instead of regular), and  a Cartesian diagram
 \begin{equation}\label{basic exceptional compact}
\xymatrix{
{\mathrm{Exc}_{(n-1,1)} }  \ar[r] \ar[d]  &  { \bar{\mathcal{M}}_{(n-1,1)} } \ar[d] \\
{\mathrm{Sing}_{(n-1,1)} } \ar[r]  & { \bar{\mathcal{M}}^\Pap_{(n-1,1)} }
}
\end{equation}
extending  \eqref{basic exceptional}.
The vertical arrow on the right is the blow-up along the closed immersion in the bottom row.

\begin{remark}\label{rem:no cone}
For general Shimura varieties, a toroidal compactification depends on a choice of rational polyhedral cone decomposition of a convex cone sitting inside the real points of a $\Q$-vector space.  
For our $\mathrm{GU}(n-1,1)$ Shimura varieties, the vector space in question is one-dimensional.  
Hence the rational polyhedral cone decomposition is uniquely determined, and the toroidal compactifications are canonical.
\end{remark}

Let $\mathcal{A}_n$ be the moduli stack of principally polarized abelian schemes of dimension $n$.
There are canonical morphisms
\begin{equation}\label{to siegel}
\mathcal{M}_{(n-1,1)} \map{\eqref{kramer-pappas}} \mathcal{M}_{(n-1,1)}^\Pap \to \mathcal{A}_{n/\co_\kk},
\end{equation}
in which the first arrow is projective, and the second is  finite.

\begin{lemma}\label{lem:normalized compactification}
If $\bar{\mathcal{A}}_n$ is any choice of smooth Faltings-Chai toroidal compactification,  
$\bar{\mathcal{M}}^\Pap_{(n-1,1)}$ is canonically identified with the normalization of 
 \begin{equation}\label{pap-siegel}
 \mathcal{M}_{(n-1,1)}^\Pap \to \bar{\mathcal{A}}_{n/\co_\kk}.
 \end{equation}
 See \cite[\href{https://stacks.math.columbia.edu/tag/0BAK}{Tag 0BAK}]{stacks-project} for normalization.
\end{lemma}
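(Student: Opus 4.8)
The plan is to construct a canonical morphism $\bar\pi\colon \bar{\mathcal{M}}^\Pap_{(n-1,1)} \to \bar{\mathcal{A}}_{n/\co_\kk}$ extending \eqref{pap-siegel}, to verify that $\bar\pi$ is finite, and then to identify $\bar{\mathcal{M}}^\Pap_{(n-1,1)}$ with the normalization of $\bar{\mathcal{A}}_{n/\co_\kk}$ in $\mathcal{M}^\Pap_{(n-1,1)}$ by a soft argument with relative normalization. The geometric input lies entirely in the first two steps; the last step is formal.

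First I would recall, from the Faltings--Chai-style construction of the toroidal compactification in \cite{howard-unitary-II} (and from \cite{lan} over $\co_\kk[1/D]$), that the finite morphism $\mathcal{M}^\Pap_{(n-1,1)}\to \mathcal{A}_{n/\co_\kk}$ extends to the desired morphism $\bar\pi$ of proper $\co_\kk$-stacks. The point, as in Remark \ref{rem:no cone}, is that the boundary of $\bar{\mathcal{M}}^\Pap_{(n-1,1)}$ is governed by the one-dimensional cone of $\co_\kk$-valued positive definite hermitian forms on a rank one $\co_\kk$-module; under the trace form this is a ray inside the cone of positive semidefinite symmetric forms on a rank two $\Z$-module that governs the relevant (rank two toric part) boundary charts of any smooth Faltings--Chai compactification $\bar{\mathcal{A}}_n$. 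A ray lies inside the cones of any fan with that support, so the toroidal boundary charts of $\bar{\mathcal{M}}^\Pap_{(n-1,1)}$ map compatibly to those of $\bar{\mathcal{A}}_n$; this both produces $\bar\pi$ and makes the statement independent of the choice of $\bar{\mathcal{A}}_n$.

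Next I would check finiteness of $\bar\pi$. It is proper, being a morphism of $\co_\kk$-stacks proper over $\co_\kk$ with separated target. It is quasi-finite: over the dense open substack $\mathcal{M}^\Pap_{(n-1,1)}$ this is the finiteness of the second arrow in \eqref{to siegel}, and over the boundary it follows from the explicit toroidal charts above together with the finiteness over $\co_\kk$ of the lower-dimensional unitary (and CM) moduli stacks attached to the boundary strata, which are again finite over the corresponding strata of $\bar{\mathcal{A}}_n$. A proper quasi-finite morphism is finite. To finish, work on a connected component $\mathcal{C}$ of $\bar{\mathcal{M}}^\Pap_{(n-1,1)}$, which is integral and normal since $\bar{\mathcal{M}}^\Pap_{(n-1,1)}$ is normal; the open substack $U=\mathcal{C}\cap \mathcal{M}^\Pap_{(n-1,1)}$ is dense with the same function field. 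Since $\bar\pi|_\mathcal{C}$ is finite, hence integral, transitivity of the relative normalization (\S 29.53 of \cite{stacks-project}) identifies the normalization of $\bar{\mathcal{A}}_{n/\co_\kk}$ in $U$ with the normalization of $\mathcal{C}$ in $U$, and the latter is $\mathcal{C}$ itself because $\mathcal{C}$ is normal. Ranging over the finitely many components yields the asserted identification, canonically through $\bar\pi$ and the open immersion $\mathcal{M}^\Pap_{(n-1,1)}\hookrightarrow \bar{\mathcal{M}}^\Pap_{(n-1,1)}$.

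I expect the main obstacle to be the first two steps -- extracting from \cite{howard-unitary-II} and \cite{lan} the existence of $\bar\pi$ and, in particular, its quasi-finiteness along the boundary, i.e.\ the explicit matching of the toroidal boundary charts of the unitary model with those of the Siegel moduli space. Once this is available, the identification with the normalization is purely formal.
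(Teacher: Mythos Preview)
Your approach is sound in outline but takes a genuinely different route from the paper's proof. You construct the extended morphism $\bar\pi$ by toroidal functoriality and then argue it is finite; once that is in hand, Step~3 (identifying a finite morphism from a normal source with the relative normalization) is indeed purely formal and correct as you wrote it.

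The paper avoids the construction of $\bar\pi$ and the finiteness check entirely. Instead it uses a universal property: by \cite{howard-unitary-II}, the semi-abelian extension $\bar A \to \bar{\mathcal{M}}^\Pap_{(n-1,1)}$ satisfies a N\'eron-type extension property (any morphism from the generic point of an irreducible normal scheme $S$ to $\mathcal{M}^\Pap_{(n-1,1)}$ extends uniquely to $S \to \bar{\mathcal{M}}^\Pap_{(n-1,1)}$ whenever the pulled-back abelian scheme extends to a semi-abelian scheme over $S$). On the other hand, Theorem~11.4 of \cite{lan-ramified} shows that the compactification defined as the normalization of \eqref{pap-siegel} carries a semi-abelian scheme satisfying the \emph{same} universal property. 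Two objects representing the same functor are canonically isomorphic, and the lemma follows.

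What each approach buys: the paper's argument is shorter and sidesteps the boundary analysis you flag as the main obstacle; it reduces everything to two black-box citations. Your approach is more hands-on and would in principle make the identification explicit at the level of boundary charts, but the quasi-finiteness of $\bar\pi$ along the boundary, while true, requires real work (matching the one-dimensional unitary toric chart against the three-dimensional Siegel toric chart, together with the lower-dimensional Levi moduli), and you have correctly identified this as the point where your sketch is incomplete. If you want to pursue your route, that is the step to flesh out; alternatively, the universal-property argument closes the gap without it.
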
 

\begin{proof}
By  \cite{howard-unitary-II},  the universal abelian scheme $A \to \mathcal{M}^\Pap_{(n-1,1)}$  extends (necessarily uniquely) to a semi-abelian scheme $\bar{A}$ over the compactification.
This extension satisfies a universal  property: Suppose $S$ is an irreducible normal scheme with generic point $\eta \to  S$, and we are given a morphism 
\[
\eta \to  \mathcal{M}^\Pap_{(n-1,1)}. 
\]
If the pullback $A_\eta$  extends to a semi-abelian scheme over $S$,  then this extension is the pullback of 
$\bar{A}$ via  a (necessarily unique) morphism 
\[
S \to \bar{\mathcal{M}}^\Pap_{(n-1,1)}
\]
restricting to  the given morphism at the generic point.

This extension  property is analogous to Theorem IV.5.7(5) of \cite{faltings-chai}, but the statement is  simplified by  the uniqueness of the rational polyhedral cone decomposition of Remark \ref{rem:no cone}.
If one works over  $\co_\kk[1/D]$, the extension property is  a special case of  Theorem 6.4.1.1(6) of \cite{lan}.  The proof over $\co_\kk$ is exactly the same. 

On the other hand, Theorem 11.4 of \cite{lan-ramified} shows that the compactification of $\mathcal{M}^\mathrm{Pap}_{(n-1,1)}$ defined by normalization  also carries a semi-abelian scheme satisfying the same universal  property, and the lemma follows.
\end{proof}

Fix an $\mathcal{M}_W$ as in \eqref{relevant decomp}.  
We   need compactifications of the \'etale covers
\[
\mathcal{M}_{W}(N) \to \mathcal{M}_{W/\co_\kk[1/N]} 
\]
of \eqref{level covers}.
Rather than repeat the constructions of \cite{howard-unitary-II} with level structure, 
we will  appeal to the results of \cite{lan-ramified} and \cite{mp}, in which integral models and compactifications of general Hodge-type Shimura varieties are constructed  as normalizations of Faltings-Chai compactifications of Siegel moduli spaces.
These results cannot be applied directly to the covers above, as the failure of the first arrow in \eqref{to siegel} to be finite (it contracts the entire exceptional divisor to a $0$-dimensional substack) implies that Lemma \ref{lem:normalized compactification}  is false if $\mathcal{M}^\Pap_{(n-1,1)}$ is replaced by  $\mathcal{M}_{(n-1,1)}$.  Thus the proof of Proposition \ref{prop:full compactification} below requires the roundabout step of  first compactifying covers of  $\mathcal{M}^\Pap_{(n-1,1)}$.

For any $N\ge 1$,  define  $ \bar{\mathcal{M}}_W(N)$ as the  normalization of  
\[
\mathcal{M}_W(N)  \to \bar{\mathcal{M}}_{(n-1,1)/\co_\kk[1/N]}.
\]

\begin{proposition}\label{prop:full compactification}
The stack $\bar{\mathcal{M}}_W(N)$ is regular, flat, and proper over $\co_\kk[1/N]$, and is a projective scheme if $N\ge 3$. 
 It is smooth away from its exceptional divisor
 \[
 \mathrm{Exc}_{(n-1,1)} \times_{ \bar{\mathcal{M}}_{(n-1,1)} }  \bar{\mathcal{M}}_W(N).
 \]
 In particular, it is smooth  in a neighborhood of its boundary, which is a Cartier divisor  smooth over $\co_\kk[1/N]$.  
\end{proposition}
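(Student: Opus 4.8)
The plan is to reduce the statement to the Pappas model, where the analogous assertions are accessible because the toroidal compactification of $\mathcal{M}^{\Pap}_{(n-1,1)}$ is a normalization of a Faltings--Chai compactification of Siegel space (Lemma~\ref{lem:normalized compactification}), so that the results of Lan \cite{lan-ramified} and Madapusi Pera \cite{mp} apply. One then recovers $\bar{\mathcal{M}}_W(N)$ from the Pappas compactification by a single blow-up which, near the only delicate locus, is trivial in the \'etale topology.

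\emph{Step 1: the Pappas model with level structure.} Let $\mathcal{M}^{\Pap}_W \subseteq \mathcal{M}^{\Pap}_{(n-1,1)}$ be the open and closed substack corresponding to $\mathcal{M}_W$ under the blow-up \eqref{kramer-pappas}. Adding full level-$N$ structure in the sense of Definition~\ref{def:kramer-pappas level} (to triples $(A,\iota,\psi)$, the hyperplane playing no role) yields a finite \'etale cover $\mathcal{M}^{\Pap}_W(N) \to \mathcal{M}^{\Pap}_{W/\co_\kk[1/N]}$, and I would define $\bar{\mathcal{M}}^{\Pap}_W(N)$ as the normalization of the composite morphism from $\mathcal{M}^{\Pap}_W(N)$ to a smooth Faltings--Chai toroidal compactification of a Siegel moduli space carrying enough auxiliary level structure to be a smooth projective scheme, where the composite is the level cover followed by the finite morphism \eqref{to siegel}. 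Functoriality of normalization and Lemma~\ref{lem:normalized compactification} then give a finite morphism $\bar{\mathcal{M}}^{\Pap}_W(N) \to \bar{\mathcal{M}}^{\Pap}_{(n-1,1)/\co_\kk[1/N]}$ extending the level cover. The substantive content of this step is that \cite{lan-ramified} and \cite{mp} apply to the above normalization and give: $\bar{\mathcal{M}}^{\Pap}_W(N)$ is normal, flat and proper over $\co_\kk[1/N]$, and a projective scheme once $N \ge 3$; the universal semi-abelian scheme with $\co_\kk$-action extends over it; and its boundary is a Cartier divisor, smooth over $\co_\kk[1/N]$, in a neighbourhood of which $\bar{\mathcal{M}}^{\Pap}_W(N)$ is smooth over $\co_\kk[1/N]$. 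The geometric point that makes the boundary smooth is that the singular locus $\mathrm{Sing}_{(n-1,1)}$ is zero-dimensional and lies in the interior; hence its preimage in $\bar{\mathcal{M}}^{\Pap}_W(N)$ is disjoint from the boundary, and $\bar{\mathcal{M}}^{\Pap}_W(N)$ is smooth over $\co_\kk[1/N]$ away from this preimage.

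\emph{Step 2: passing to the Kr\"amer model.} By \eqref{basic exceptional compact}, $\bar{\mathcal{M}}_{(n-1,1)} \to \bar{\mathcal{M}}^{\Pap}_{(n-1,1)}$ is the blow-up along $\mathrm{Sing}_{(n-1,1)}$, an isomorphism over the boundary and over $\mathcal{M}^{\Pap}_{(n-1,1)} \smallsetminus \mathrm{Sing}_{(n-1,1)}$. I would set $\widetilde{\mathcal{M}}$ to be the blow-up of $\bar{\mathcal{M}}^{\Pap}_W(N)$ along the preimage of $\mathrm{Sing}_{(n-1,1)}$. By the universal property of blow-ups, $\widetilde{\mathcal{M}}$ carries a canonical morphism to $\bar{\mathcal{M}}_{(n-1,1)/\co_\kk[1/N]}$ which restricts to the level cover over the complement of the exceptional and boundary divisors. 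Over a neighbourhood of $\mathrm{Sing}_{(n-1,1)}$ the finite morphism $\bar{\mathcal{M}}^{\Pap}_W(N) \to \bar{\mathcal{M}}^{\Pap}_{(n-1,1)}$ is \'etale, so blowing up commutes with this base change and $\widetilde{\mathcal{M}} \to \bar{\mathcal{M}}_{(n-1,1)}$ is \'etale-locally identified there with the finite \'etale base change of the level cover; elsewhere $\widetilde{\mathcal{M}}$ coincides with $\bar{\mathcal{M}}^{\Pap}_W(N)$, which is finite over $\bar{\mathcal{M}}^{\Pap}_{(n-1,1)} = \bar{\mathcal{M}}_{(n-1,1)}$ there. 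Hence $\widetilde{\mathcal{M}} \to \bar{\mathcal{M}}_{(n-1,1)/\co_\kk[1/N]}$ is finite. Moreover $\widetilde{\mathcal{M}}$ is regular: near its exceptional divisor it is \'etale-locally the regular stack $\bar{\mathcal{M}}_{(n-1,1)}$ of \cite{howard-unitary-II}, while away from the exceptional divisor it equals $\bar{\mathcal{M}}^{\Pap}_W(N)$ minus the preimage of $\mathrm{Sing}_{(n-1,1)}$, which is smooth over $\co_\kk[1/N]$ by Step~1. Being normal, finite over $\bar{\mathcal{M}}_{(n-1,1)/\co_\kk[1/N]}$, and agreeing with $\mathcal{M}_W(N)$ over the complement of the boundary, $\widetilde{\mathcal{M}}$ is canonically the normalization, so $\widetilde{\mathcal{M}} = \bar{\mathcal{M}}_W(N)$.

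\emph{Step 3: conclusions and the main obstacle.} Properness of $\bar{\mathcal{M}}_W(N)$ over $\co_\kk[1/N]$, and projectivity as a scheme for $N \ge 3$, follow from Step~1 since the blow-up morphism is projective and the blow-up of a projective scheme along a closed subscheme is projective. Flatness over $\co_\kk[1/N]$ follows from regularity together with the fact that every irreducible component of $\bar{\mathcal{M}}_W(N)$ dominates $\Spec \co_\kk[1/N]$, the components being those of the flat cover $\mathcal{M}_W(N)$ of \eqref{level covers}. The exceptional divisor of $\bar{\mathcal{M}}_W(N) = \widetilde{\mathcal{M}}$ is by construction the preimage of $\mathrm{Exc}_{(n-1,1)}$, that is, $\mathrm{Exc}_{(n-1,1)} \times_{\bar{\mathcal{M}}_{(n-1,1)}} \bar{\mathcal{M}}_W(N)$, away from which $\bar{\mathcal{M}}_W(N)$ is smooth over $\co_\kk[1/N]$ by Step~2; and since the blow-up is an isomorphism near the boundary, the boundary of $\bar{\mathcal{M}}_W(N)$ coincides with that of $\bar{\mathcal{M}}^{\Pap}_W(N)$ and is therefore a Cartier divisor smooth over $\co_\kk[1/N]$. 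I expect the main obstacle to be Step~1: one must check carefully that the compactification machinery of \cite{lan-ramified} and \cite{mp} genuinely applies to the normalization defining $\bar{\mathcal{M}}^{\Pap}_W(N)$ at the primes dividing $D$, which are not inverted, and that it yields both the extended semi-abelian scheme and a boundary that is a smooth Cartier divisor. Granting this, Step~2 is essentially formal, the only subtle locus being disjoint from the boundary and understood \'etale-locally via the blow-up of \eqref{basic exceptional compact}.
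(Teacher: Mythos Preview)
Your proposal is correct and follows essentially the same strategy as the paper's proof: compactify the Pappas model with level structure by normalizing inside a Faltings--Chai Siegel compactification and invoke \cite{lan-ramified}, \cite{mp} for its properties, then identify $\bar{\mathcal{M}}_W(N)$ with the blow-up of $\bar{\mathcal{M}}^{\Pap}_W(N)$ along its zero-dimensional singular locus. The paper fills in the two points you flag as obstacles with slightly more detail---it explains that the boundary is a smooth Cartier divisor because in the $\mathrm{GU}(n-1,1)$ case each boundary stratum is already a divisor (so smoothness of strata gives smoothness of the whole boundary), and it obtains projectivity for $N\ge 3$ via an explicit finite map $\bar{\mathcal{M}}^{\Pap}_W(N)\to\bar{\mathcal{A}}_n(N)$ to a projective Siegel scheme rather than citing it as a black box.
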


\begin{proof}

There is a decomposition
\[
\mathcal{M}^\Pap_{(n-1,1)} = \bigsqcup_W \mathcal{M}^\Pap_W,
\]
exactly as in \eqref{relevant decomp}.  We may add $N\ge 1$ level structure, exactly as in Definition \ref{def:kramer-pappas level},  to obtain a finite \'etale cover 
\[
\mathcal{M}^\Pap_W (N) \to \mathcal{M}^\Pap_{W/\co_\kk[1/N]}. 
\]
This cover has a  compactification $\bar{\mathcal{M}}^\Pap_W (N)$ defined as the normalization of 
\[
\mathcal{M}^\Pap_W (N) \to  \bar{\mathcal{M}}^\Pap_{(n-1,1) / \co_\kk[1/N] },
\]
which, by Lemma \ref{lem:normalized compactification},  is the same   as the normalization of 
\begin{equation}\label{pap-siegel level}
\mathcal{M}^\Pap_W (N) \to   \bar{\mathcal{A}}_{n/\co_\kk[1/N]}.
\end{equation}

The properness,  flatness, and normality of $\bar{\mathcal{M}}^\Pap_W (N)$  follow  from its construction as the  normalization of a proper, flat, and smooth $\co_\kk[1/N]$-stack.
It is not regular (as this is not true even of its interior).  However, its singular locus 
\[
\mathrm{Sing}_W(N) = \mathrm{Sing}_{(n-1,1)} \times_{ \mathcal{M}^\Pap_{(n-1,1) }} \mathcal{M}^\Pap_W(N)
\]
 is $0$-dimensional and proper over $\co_\kk[1/N]$, and the smoothness of
\[
\mathcal{M}^\Pap_W (N) \smallsetminus \mathrm{Sing}_W(N)
\]
implies the smoothness of 
\[
\bar{\mathcal{M}}^\Pap_W (N) \smallsetminus \mathrm{Sing}_W(N)  .
\]
This is a consequence of the following principle, found  in \S 14 of \cite{lan-ramified} and Theorem 1 of \cite{mp}:  if $U  \subset \bar{\mathcal{M}}^\Pap_W (N)$ is an  open neighborhood of the boundary $\partial\bar{\mathcal{M}}^\Pap_W (N)$, then the singularities of  $U$ are no worse than the singularities of $U \smallsetminus \partial\bar{\mathcal{M}}^\Pap_W (N)$.

By the same principle (and  its proof), the smoothness of $\bar{\mathcal{M}}^\Pap_W (N)$ near the boundary implies the smoothness of the boundary  divisor itself.  This is actually a particular feature of our $\mathrm{GU}(W)$ Shimura varieties.  For more  general  Shimura varieties the boundary has a canonical stratification by locally closed substacks (see Theorem 4.1.5 of \cite{mp} or \S 9 of \cite{lan-ramified}), and the above cited principle tells us that each \emph{stratum} is smooth.  In our special case each boundary stratum is a divisor, and is simply  a union of connected components of $\partial\bar{\mathcal{M}}^\Pap_W (N)$.  Hence the entire boundary  is  a Cartier divisor smooth over $\co_\kk[1/N]$.

Using the characterization of \eqref{basic exceptional compact} as a blow-up,  one may identify $\bar{\mathcal{M}}_W(N)$  with the blow-up of $\bar{\mathcal{M}}^\Pap_W (N)$ along its singular locus.  As the singular locus does not meet the boundary, it follows that $\bar{\mathcal{M}}_W(N)$ is itself proper and flat with boundary a smooth Cartier divisor.  Moreover, this shows that $\bar{\mathcal{M}}_W(N)$ is smooth away from the exceptional divisor of the blow-up, and in particular in an open neighborhood of its boundary.  As the interior is regular, being  \'etale over the regular stack $\mathcal{M}_W$,  we find that $\bar{\mathcal{M}}_W(N)$ is regular.

%

Finally, when $N\ge 3$ we claim that both the source and target of 
\[
\bar{\mathcal{M}}_W(N) \to \bar{\mathcal{M}}^\Pap_W(N)
\]
 are  projective schemes.  As this morphism is a blow-up, it suffices to prove this for the target.
 For this, we use the results of Chapter V.5 of \cite{faltings-chai}.  Adding principal level structure yields a finite \'etale cover 
 \[
  \mathcal{A}_n(N) \to \mathcal{A}_{n/\Z[1/N]},
 \]
  and the  compactification  $\bar{\mathcal{A}}_n$ may be chosen so that \eqref{pap-siegel level} factors as
 \[
 \mathcal{M}^\Pap_W (N) \to   \bar{\mathcal{A}}_n(N)  \to  \bar{\mathcal{A}}_{n/\co_\kk[1/N]},
 \]
where   $\bar{\mathcal{A}}_n(N)$ is the smooth projective scheme obtained by normalizing
\[
 \mathcal{A}_n(N)  \to  \bar{\mathcal{A}}_{n/\co_\kk[1/N]}.
\]
This realizes $\bar{\mathcal{M}}^\Pap_W (N)$ as the normalization of the first arrow in the composition, and hence provides us with a finite and relatively representable morphism
\[
\bar{\mathcal{M}}^\Pap_W (N) \to  \bar{\mathcal{A}}_n(N).
\]
As the target is a projective scheme, so is the source.
\end{proof}

\begin{remark}
One could avoid the constructions of \cite{howard-unitary-II} and \cite{lan} altogether, and define
$\bar{\mathcal{M}}_{(n-1,1)}$ from the start as a blow-up of the normalization of \eqref{pap-siegel}.
All properties that we need to know about this compactification could then be deduced from \cite{lan-ramified} and \cite{mp}, except for the extension \eqref{hyperplane extension} of the universal hyperplane across the boundary (which is needed in the proof of Proposition \ref{prop:taut-hodge1}).
For this one  needs  detailed information about the local  charts near the boundary,  as found  in \cite{howard-unitary-II}.
\end{remark}

\section{Arithmetic Chow groups}
\label{ss:chow}


We summarize the arithmetic intersection theory of 
Gillet-Soul\'e \cite{gillet-soule90, SABK} and  Burgos-Kramer-K\"uhn \cite{BBK,BKK}
for the  Shimura varieties  $\mathcal{M}_W$ of \eqref{relevant decomp},
 mostly for the purposes of fixing normalizations of arithmetic degrees, heights, and volumes.

As usual,  denote by $\widehat{\mathrm{Pic}}(\mathcal{M}_W)$ the group of isomorphism classes of hermitian line bundles on $\mathcal{M}_W$, and similarly for the toroidal compactification $\bar{\mathcal{M}}_W$.
    In order to account for metrics that do not extend smoothly across the boundary, when we work on the compactification $\bar{\mathcal{M}}_W$ we must relax the notion of a hermitian metric.

    Accordingly, denote by $\widehat{\Pic}( \bar{\mathcal{M}}_W, \mathscr{D}_\BKK)$ the group of isomorphism classes of pre-log singular hermitian line bundles on $\bar{\mathcal{M}}_W$, in the sense of  Definition 1.20 of \cite{BBK}.
Exactly as in Proposition 5.2.1 of \cite{howard-volumes-I}, the natural  maps
\begin{equation}\label{pre-log injection}
\widehat{\Pic}( \bar{\mathcal{M}}_W ) \to \widehat{\Pic}( \bar{\mathcal{M}}_W, \mathscr{D}_\BKK ) \to \widehat{\Pic}( \mathcal{M}_W)
\end{equation}
are injective.

Fix an integer $N\ge 3$, so that the toroidal compactification  $\bar{\mathcal{M}}_W(N)$ of Proposition \ref{prop:full compactification} is a regular scheme of dimension $n$, projective and flat over $\co_\kk[1/N]$.
For $0\le d \le n$,  we have a homomorphism of   arithmetic Chow groups 
\begin{equation}\label{chow morphism}
\widehat{\CH}^d_\GS ( \bar{\mathcal{M}}_W(N) ) 
\to \widehat{\CH}^d (\bar{\mathcal{M}}_W(N) , \mathscr{D}_\BKK )
\end{equation}
from Theorem 6.23 and  (7.49) of \cite{BKK}.

The domain of \eqref{chow morphism} is the arithmetic Chow group of Gillet-Soul\'e \cite{gillet-soule90, SABK}, defined as the rational equivalence classes of arithmetic cycles $(\mathcal{Z}, g )$ in which $\mathcal{Z}$ is a codimension $d$ cycle on $\bar{\mathcal{M}}_W(N)$, and $g$ is a $(d-1,d-1)$ current on $\bar{\mathcal{M}}_W(N)(\C)$ satisfying the Green equation 
\[
dd^c g + \delta_\mathcal{Z} = [\omega ]
\]
of currents   for some smooth $(d,d)$ form $\omega$.

The codomain of \eqref{chow morphism} is the pre-log-log arithmetic Chow group of Burgos-Kramer-K\"uhn \cite{BBK,BKK}.  It is defined as rational equivalence classes of arithmetic cycles $(\mathcal{Z}, \mathfrak{g})$
where $\mathcal{Z}$ is as above, and $\mathfrak{g}$ is as in Definition 1.15 of \cite{BBK}.
For our purposes it is enough to know that  $\mathfrak{g}=(\omega,g)$ is a pair consisting of a  
$(d-1,d-1)$ current on $\bar{\mathcal{M}}_W(N)(\C)$ and a smooth $(d,d)$ form $\omega$ on the interior 
$\mathcal{M}_W(N)(\C)$, again related by the above Green equation (by Proposition 7.44 of \cite{BKK}).  
There are further constraints: The form $\omega$ is required to have pre-log-log growth along the boundary of the toroidal compactification (which implies local integrability on the compactification, so that the Green equation makes sense).
The current $g$ is required to be represented by a smooth form defined on the complement of some codimension $d$ cycle on $\mathcal{M}_W(N)(\C)$, again satisfying certain growth conditions.  
Moreover, the relation imposed between $g$ and $\omega$ is stronger than the Green equation.  Nevertheless, the object $\mathfrak{g}$ is completely determined by the current $g$, so we usually just write $(\mathcal{Z},g)$ for the arithmetic cycle.

Whenever $N \mid N'$ the  map
$
\bar{\mathcal{M}}_W(N') \to \bar{\mathcal{M}}_W(N)
$
induces a pullback 
\[
\widehat{\CH}^d (\bar{\mathcal{M}}_W(N) , \mathscr{D}_\BKK ) 
\to \widehat{\CH}^d (\bar{\mathcal{M}}_W(N') , \mathscr{D}_\BKK ),
\]
allowing us to define, exactly as in \S 6.3 of \cite{BKK}, 
\[
\widehat{\CH}^d (\bar{\mathcal{M}}_W , \mathscr{D}_\BKK )
= \mil_{N\ge 3}  \widehat{\CH}^d (\bar{\mathcal{M}}_W(N) , \mathscr{D}_\BKK ).
\]
There is an \emph{arithmetic intersection pairing}
\[
\widehat{\CH}^{d_1} (\bar{\mathcal{M}}_W , \mathscr{D}_\BKK ) \otimes \widehat{\CH}^{d_2} (\bar{\mathcal{M}}_W , \mathscr{D}_\BKK )
 \to \widehat{\CH}^{d_1+d_2} (\bar{\mathcal{M}}_W , \mathscr{D}_\BKK )_\Q,
\]
an  \emph{arithmetic Chern class map}  
\[
\widehat{\Pic}( \bar{\mathcal{M}}_W, \mathscr{D}_\BKK )  \to  \widehat{\CH}^1 (\bar{\mathcal{M}}_W , \mathscr{D}_\BKK ),
\]
and an  \emph{arithmetic degree}
\[
\widehat{\deg} : \widehat{\CH}^n(\bar{\mathcal{M}}_W , \mathscr{D}_\BKK ) \to \R 
\]
induced by the analogous structures on the arithmetic Chow group of each scheme $\bar{\mathcal{M}}_W(N)$ with $N\ge 3$.

\begin{remark}
The conventions for delta currents, currents associated to  smooth forms,  and Green forms used  in  \cite{BKK} and \cite{BBK}   differ from those of Gillet-Soul\'e \cite{gillet-soule90, SABK} by powers of $2$ and  $2\pi i$.
We will always use the conventions of Gillet-Soul\'e.
 For example, the arithmetic Chern class map sends a pre-log singular hermitian line bundle $\widehat{\taut}$ to the arithmetic divisor
\[
\widehat{\mathrm{div}}(s) = ( \mathrm{div}(s) ,  - \log\| s\|^2  ) 
\]
for any nonzero rational section $s$.
\end{remark}

To make explicit the normalization of the arithmetic degree, first note that for each $N \ge 3$, 
the  finite \'etale cover $\mathcal{M}_W(N) \to \mathcal{M}_{W/\co_\kk[1/N]}$ has constant fiber degree, in the sense that there is a $d_N\in \Z$ satisfying
\begin{equation}\label{level degree}
   \frac{d_N}{ \# \Aut(x)    }  
  =  \#  \{  \mbox{geometric points}\ y \to \mathcal{M}_W(N)\ \mbox{above}\ x\} 
  \end{equation}
for every geometric point    $x\to \mathcal{M}_{W/\co_\kk[1/N]}$.
Now suppose we have a class 
\[
\widehat{\mathcal{Z}}=( \mathcal{Z}_N , g_N )_{N\ge 3} \in
  \widehat{\CH}^n (\bar{\mathcal{M}}_W  , \mathscr{D}_\BKK ) .
\]
If we write  each  
 $
 \mathcal{Z}_N = \sum_i m_{N,i} \mathcal{Z}_{N,i}
 $
  as a $\Z$-linear combination of $0$-dimensional irreducible closed subschemes on $\bar{\mathcal{M}}_W(N)$, 
 then the real number
 \[
  \widehat{\deg}_N ( \mathcal{Z}_N, g_N ) = 
 \sum_i  m_{N,i}   \cdot   \# \mathcal{Z}_{N,i}( \F^\alg_{i})   \cdot   \log (  \#  \F_{i} )  
 +   \int_{ \mathcal{M}_W(N)(\C) } g_N,
 \]
 where  $\F_{i}$   is the residue field of the unique prime of $\co_\kk[1/N]$ below  $\mathcal{Z}_{N,i}$,
 is well-defined up to adding a $\Q$-linear combination of $\{ \log(p) : p \mid N\}$, and 
\begin{equation}\label{degree normalization}
 \widehat{\deg} (\widehat{\mathcal{Z}}) = \frac{1}{d_N}   \cdot \widehat{\deg}_N ( \mathcal{Z}_N, g_N )
\end{equation}
up to the same ambiguity.

\begin{remark}\label{rem:no half}
There is no $1/2$ in front of the integral, in apparent disagreement with \S 3.4.3 of \cite{gillet-soule90}. 
In fact there is no disagreement.  For us  $\mathcal{M}_W(N)(\C)$ means the set of all 
morphisms $\Spec(\C) \to \mathcal{M}_W(N)$ as $\kk$-schemes, whereas Gillet-Soul\'e would take morphisms as $\Q$-schemes.   
Thus for Gillet-Soul\'e the domain of integration would be our $\mathcal{M}_W(N)(\C)$  \emph{together with its complex conjugate}, yielding an integral  twice as big as  ours.
\end{remark}

We will also need the \emph{complex degree} 
\[
\deg_\C : \widehat{\CH}^{n-1}( \bar{\mathcal{M}}_W ,\mathscr{D}_\BKK )   \to \Q
\]
sending a class
\[
\widehat{\mathcal{Z}}=( \mathcal{Z}_N , g_N )_{N\ge 3} \in
  \widehat{\CH}^{n-1} (\bar{\mathcal{M}}_W  , \mathscr{D}_\BKK ) 
\]
 to
\[
\deg_\C ( \widehat{\mathcal{Z}}) =  \frac{1}{d_N} \cdot \deg( \mathcal{Z}_{N/\C}) ,
\]
where the degree on the right hand side is the usual degree of a $0$-cycle on the complex fiber $\bar{\mathcal{M}}_W(N)_{/\C}$.
Equivalently, 
\begin{equation}\label{chern degree}
\deg_\C ( \widehat{\mathcal{Z}}) = 
 \frac{1}{d_N} \cdot \int_{\mathcal{M}_W(N) (\C)  } \omega_N 
\end{equation}
is the integral of the $(n-1,n-1)$ form determined by the Green equation.
\[
dd^c g_N + \delta_{\mathcal{Z}_N} = [\omega_N ].
\]


\section{Volumes and heights}
\label{ss:volumes}


For a pre-log singular hermitian line bundle  
\[
\widehat{\taut } \in  \widehat{\Pic}(\bar{\mathcal{M}}_W , \mathscr{D}_\BKK ),
\]
define the \emph{complex volume}
\[
\vol_\C( \widehat{\taut } )  = \deg_\C(  \widehat{\taut}^{n-1} ) 
\]
by applying the complex degree of \S \ref{ss:chow} to the $(n-1)$-fold iterated arithmetic intersection of  the arithmetic Chern class 
\[
\widehat{\taut} \in  \widehat{\CH}^1 (\bar{\mathcal{M}}_W , \mathscr{D}_\BKK ) .
\]
Of course this only depends on the restriction of $\widehat{\taut}$ to a line bundle on the complex fiber $\bar{\mathcal{M}}_{W/\C}$.
Similarly,  define the  \emph{arithmetic volume}  
\[
\widehat{\vol}( \widehat{\taut} )  
=  \widehat{\deg}(   \widehat{\taut}^n )  
\]
 using the iterated arithmetic intersection and  arithmetic degree of  \S \ref{ss:chow}.

 \begin{remark}\label{rem:chern form}
The pre-log singularity of the metric on $\widehat{\taut}$ implies the integrability of its Chern form:   the smooth $(1,1)$ form on $\mathcal{M}_W(\C)$ defined locally by 
\[
\chern(\widehat{\omega}^\mathrm{Hdg}_{A / \mathcal{M}_W} ) 
=  \frac{1}{2\pi i} \partial \overline{\partial} \log \|s\|^2
= - d d^c  \log \|s\|^2 
\]
for any nonzero  holomorphic  section $s$. 
We will frequently use the equivalent characterization of the complex volume 
\[
\vol_\C( \widehat{\taut } )   =  \int_{ \mathcal{M}_W(\C) } \chern ( \widehat{\taut })^{n-1}
\]
 as  the (orbifold) integral of its top exterior power, which follows from \eqref{chern degree}.
\end{remark}

Let $\mathcal{Z}$ be a  divisor on $\bar{\mathcal{M}}_W$, and assume that $\mathcal{Z}$  intersects the boundary 
 $\partial \bar{\mathcal{M}}_W$  properly in the generic fiber.
 As in the discussion following Theorem 7.58  of  \cite{BKK}, one can then define the \emph{arithmetic height}    (or \emph{Faltings height})   \emph{of $\mathcal{Z}$ with respect to $\widehat{\taut}$}, denoted
 \[
  \mathrm{ht}_{\widehat{\taut}}(\mathcal{Z})   \in \R.
  \]
 To make it explicit,  fix $N\ge 3$.  Choose a pair $(\mathcal{Y},  g_\mathcal{Y})$ representing the iterated intersection
\[
 \widehat{\taut}^{n-1} \in  \widehat{\CH}^{n-1}(\bar{\mathcal{M}}_W(N), \mathscr{D}_\BKK ),
\]
and  do this in such a way  that  the supports of $\mathcal{Z}_N$ and  $\mathcal{Y}$ do not intersect in the generic fiber, where $\mathcal{Z}_N$ is the pullback of $\mathcal{Z}$ via 
\[
\bar{\mathcal{M}}_W(N) \to \bar{\mathcal{M}}_{W/\co_\kk[1/N]}.
\]
Now  form the intersection 
\[
 \mathcal{Z}_N \cdot \mathcal{Y} \in \CH^n_{  \mathcal{Z}_N  \cap  \mathcal{Y} } (\bar{\mathcal{M}}_W(N))_\Q 
\]
 in the Chow group with support along  the closed subset $\mathrm{Sppt}(\mathcal{Z}_N) \cap \mathrm{Sppt}(\mathcal{Y})$.
As this closed subset  is supported in finitely many nonzero characteristics, there is a natural change-of-support map
\[
\CH^n_{  \mathcal{Z}  \cap  \mathcal{Y} } (\bar{\mathcal{M}}_W(N) )  \to 
\bigoplus_{  \substack{ \mathfrak{p} \subset \co_\kk   \\   \mathfrak{p} \nmid N\co_\kk }   }
\CH^n_\mathfrak{p}  (\bar{\mathcal{M}}_W (N) )
\]
where $\CH^n_\mathfrak{p}  (\bar{\mathcal{M}}_W(N) )$ is the Chow group with support in the mod $\mathfrak{p}$ fiber of $\bar{\mathcal{M}}_W (N)$.
For each $\mathfrak{p}$ there is a degree map
\begin{equation}\label{local degree}
\deg_\mathfrak{p} : \CH^n_\mathfrak{p}  (\bar{\mathcal{M}}_W (N) ) \to \Z
\end{equation}
sending a reduced and irreducible codimension $n$ subscheme (a.k.a., a closed point)  $\mathcal{C}  \subset \bar{\mathcal{M}}_W (N)$ supported at $\mathfrak{p}$ to 
\[
\deg_\mathfrak{p} (\mathcal{C}) = \# \mathcal{C}( \F_\mathfrak{p}^\alg) 
\]
where $\F_\mathfrak{p} = \co_\kk/\mathfrak{p}$.
The arithmetic height  satisfies 
\begin{equation}\label{height normalization}
d_N\cdot \mathrm{ht}_{\widehat{\taut}}(\mathcal{Z})  = 
\sum_{  \substack{ \mathfrak{p} \subset \co_\kk   \\   \mathfrak{p} \nmid N\co_\kk }   }
\deg_\mathfrak{p} ( \mathcal{Z}_N \cdot \mathcal{Y} ) \cdot \log(\#\F_\mathfrak{p})
+  \int_{ \mathcal{Z}_N(\C) } g_\mathcal{Y} 
\end{equation}
up to a $\Q$-linear combination of $\{ \log(p) : p\mid N\}$, where $d_N$ is as in  \eqref{level degree}.

   A choice of pre-log singular hermitian metric on the line bundle $\co(\mathcal{Z})$ determines a class
\[
\widehat{\mathcal{Z}} \in \widehat{\mathrm{Pic}}(\bar{\mathcal{M}}_W, \mathscr{D}_\BKK).
\]
If we view  the constant function $1$ as a global section of  $\co(\mathcal{Z})$  and set $g_\mathcal{Z}= - \log\|1\|^2$, 
the arithmetic Chern class of this hermitian line bundle is
\[
( \mathcal{Z} , g_\mathcal{Z} ) \in \widehat{\CH}^1( \bar{\mathcal{M}}_W, \mathscr{D}_\BKK) ,
\]
and Proposition 7.56 of \cite{BKK} gives the relation
\begin{equation}\label{height degree}
\widehat{\deg}(   \widehat{\mathcal{Z}}  \cdot  \widehat{\taut}^{n-1}   )
= \mathrm{ht}_{\widehat{\taut}}(\mathcal{Z}) + \int_{ \mathcal{M}_W(\C) }  g_\mathcal{Z} \cdot \chern(\widehat{\taut})^{n-1} .
\end{equation}

From the point of view of arithmetic volumes and heights, some hermitian line bundles are essentially indistinguishable.

\begin{definition}\label{def:numerical}
We say that   
\[
\widehat{\taut}_1 , \widehat{\taut}_2   \in   \widehat{\Pic}(\mathcal{M}_W  )
\]
 are \emph{numerically equivalent} if the difference
$\widehat{\mathcal{E}} =   \widehat{\taut}_1 - \widehat{\taut}_2$ lies in the subgroup 
\begin{equation}\label{pre-log pic}
\widehat{\Pic}( \bar{\mathcal{M}}_W, \mathscr{D}_\BKK )  \subset \widehat{\Pic}(\mathcal{M}_W  )
\end{equation}
of pre-log singular hermitian line bundles from \eqref{pre-log injection},   and satisfies 
$
\widehat{\deg}( \widehat{\mathcal{Z}} \cdot \widehat{\mathcal{E}} ) =0 
$
for all $\widehat{\mathcal{Z}} \in \widehat{\CH}^{n-1}(\bar{\mathcal{M}}_W,\mathscr{D}_\BKK ).$
\end{definition}

\begin{lemma}\label{lem:numerical basics}
If $\widehat{\taut}_1$ and  $\widehat{\taut}_2$ are numerically equivalent, then their Chern forms are equal.  If we add the assumption that both lie in the subgroup \eqref{pre-log pic} then
\begin{equation}\label{trivial shift}
\widehat{\vol}( \widehat{\taut_1} ) = \widehat{\vol}( \widehat{\taut}_2 )
\quad\mbox{and}\quad
\mathrm{ht}_{ \widehat{\taut}_1  } (\mathcal{Z}) 
= \mathrm{ht}_{ \widehat{\taut}_2 }  (\mathcal{Z})
\end{equation}
for any divisor $\mathcal{Z}$ as above.
\end{lemma}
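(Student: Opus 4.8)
The plan is to unwind Definition~\ref{def:numerical} and exploit the bilinearity of the arithmetic intersection pairing. Write $\widehat{\mathcal{E}} = \widehat{\mathcal{L}}_1 - \widehat{\mathcal{L}}_2 \in \widehat{\Pic}(\bar{\mathcal{M}}_W,\mathscr{D}_\BKK)$, which by hypothesis satisfies $\widehat{\deg}(\widehat{\mathcal{Z}}\cdot\widehat{\mathcal{E}}) = 0$ for every $\widehat{\mathcal{Z}}\in\widehat{\CH}^{n-1}(\bar{\mathcal{M}}_W,\mathscr{D}_\BKK)$.

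\emph{Equality of Chern forms.} First I would show $\chern(\widehat{\mathcal{L}}_1) = \chern(\widehat{\mathcal{L}}_2)$, equivalently $\chern(\widehat{\mathcal{E}}) = 0$. The Chern form of $\widehat{\mathcal{E}}$ is a smooth real $(1,1)$-form $\omega$ on $\mathcal{M}_W(\C)$, and for any smooth test class it contributes via integration against $\chern$-forms. Concretely, take $\widehat{\mathcal{Z}} = \widehat{\mathcal{E}}\cdot(\text{ample auxiliary class})^{n-2}$ — or, more cleanly, use that $\widehat{\deg}(\widehat{\mathcal{E}}^n) = 0$ and the analogous vanishing for all mixed powers, combined with the fact that the archimedean component of $\widehat{\mathcal{E}}^n$ includes the term $\int_{\mathcal{M}_W(\C)} \omega^n$. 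A cleaner route: since $\widehat{\mathcal{E}}$ pairs to zero against \emph{every} class in $\widehat{\CH}^{n-1}$, in particular against the image of $\widehat{\CH}^{n-2}\otimes\widehat{\mathcal{E}}$, one deduces $\widehat{\deg}(\widehat{\mathcal{E}}^2\cdot\widehat{\mathcal{Y}}) = 0$ for all $\widehat{\mathcal{Y}}\in\widehat{\CH}^{n-2}$; iterating, all self-intersections vanish. Then the standard positivity/Hodge-index-type argument — or simply the observation that the purely archimedean class $(0,\omega\wedge(\text{Kähler})^{n-2})$ must be orthogonal to a positive class — forces $\omega = 0$ pointwise. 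I would phrase this using the Kähler form $\chern(\widehat{\taut})$ (or $\chern(\widehat{\omega}^{\mathrm{Hdg}})$) as the auxiliary positive class: $\int_{\mathcal{M}_W(\C)} \omega \wedge \chern(\widehat{\taut})^{n-2}\wedge(\text{something})$ vanishes, and running over enough test forms built from $\widehat{\CH}^{n-1}$ pins down $\omega$.

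\emph{Equality of volumes and heights.} Given $\chern(\widehat{\mathcal{L}}_1) = \chern(\widehat{\mathcal{L}}_2)$, expand $\widehat{\vol}(\widehat{\mathcal{L}}_1) = \widehat{\deg}(\widehat{\mathcal{L}}_1^n) = \widehat{\deg}((\widehat{\mathcal{L}}_2 + \widehat{\mathcal{E}})^n)$ by multilinearity into $\widehat{\deg}(\widehat{\mathcal{L}}_2^n)$ plus a sum of terms each containing at least one factor of $\widehat{\mathcal{E}}$. Each such term is $\widehat{\deg}(\widehat{\mathcal{E}}\cdot\widehat{\mathcal{Z}})$ for an appropriate $\widehat{\mathcal{Z}}\in\widehat{\CH}^{n-1}(\bar{\mathcal{M}}_W,\mathscr{D}_\BKK)$ (a product of copies of $\widehat{\mathcal{L}}_1$ and $\widehat{\mathcal{L}}_2$), hence vanishes by the defining property of numerical equivalence. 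This gives the first equality in \eqref{trivial shift}. For the height equality, I would invoke \eqref{height degree}: choosing pre-log singular metrics making $\widehat{\mathcal{Z}}$ a class in $\widehat{\Pic}(\bar{\mathcal{M}}_W,\mathscr{D}_\BKK)$, one has $\mathrm{ht}_{\widehat{\mathcal{L}}_i}(\mathcal{Z}) = \widehat{\deg}(\widehat{\mathcal{Z}}\cdot\widehat{\mathcal{L}}_i^{n-1}) - \int_{\mathcal{M}_W(\C)} g_\mathcal{Z}\cdot\chern(\widehat{\mathcal{L}}_i)^{n-1}$. The integral terms agree because the Chern forms agree (just shown), and the arithmetic degrees agree by the same multilinear expansion of $\widehat{\mathcal{L}}_1^{n-1} = (\widehat{\mathcal{L}}_2+\widehat{\mathcal{E}})^{n-1}$: every cross term is $\widehat{\deg}(\widehat{\mathcal{E}}\cdot(\cdots))$ with $(\cdots)\in\widehat{\CH}^{n-1}$, hence zero.

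\emph{Main obstacle.} The delicate point is the first step: deducing $\chern(\widehat{\mathcal{E}}) = 0$ as a \emph{pointwise} identity of forms from the \emph{integral} vanishing against all of $\widehat{\CH}^{n-1}$. One must produce enough arithmetic cycle classes whose archimedean components localize the test form sufficiently — e.g. using that special divisors $Z(m)$ (or their arithmetic lifts) together with powers of $\widehat{\taut}$ span a large enough subspace, or appealing to a Hodge-index statement in the arithmetic Chow group as in \cite{BKK}. If a clean reference for "numerically trivial $\Rightarrow$ Chern form trivial" is available in the pre-log setting of \cite{BBK,BKK}, the cleanest exposition is to cite it; otherwise the argument via vanishing of all self-intersections $\widehat{\deg}(\widehat{\mathcal{E}}^k\cdot\widehat{\mathcal{Y}})$ together with positivity of $\chern(\widehat{\taut})$ does the job, and that is the route I would write out.
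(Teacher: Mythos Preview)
Your expansion for the volume and the height via \eqref{height degree} is exactly the paper's argument: once the Chern forms agree, every cross term in $(\widehat{\mathcal{L}}_2+\widehat{\mathcal{E}})^n$ (respectively $(\widehat{\mathcal{L}}_2+\widehat{\mathcal{E}})^{n-1}$) contains a factor $\widehat{\mathcal{E}}$ and so dies against the defining property of numerical equivalence.

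The gap is in your first step. You flag the deduction of $\chern(\widehat{\mathcal{E}})=0$ as the ``delicate point'' and reach for Hodge-index/positivity arguments, iterated self-intersections, or special divisors to manufacture enough test classes. None of this is needed, and the routes you sketch are not actually completed. The point you are missing is that $\widehat{\CH}^{n-1}(\bar{\mathcal{M}}_W,\mathscr{D}_\BKK)$ already contains the purely archimedean classes $\widehat{\mathcal{Z}}=(0,g)$ for \emph{arbitrary} smooth $(n-2,n-2)$-forms $g$ on $\bar{\mathcal{M}}_W(\C)$. For such a class the intersection formula gives
\[
0=\widehat{\deg}(\widehat{\mathcal{Z}}\cdot\widehat{\mathcal{E}})=\int_{\mathcal{M}_W(\C)}\chern(\widehat{\mathcal{E}})\wedge g,
\]
and since $g$ is arbitrary this forces $\chern(\widehat{\mathcal{E}})=0$ pointwise by the usual non-degeneracy of the pairing between smooth forms. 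That is the entire argument in the paper; there is no obstacle.
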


\begin{proof}
Set  $\widehat{\mathcal{E}} =  \widehat{\taut}_1 - \widehat{\taut}_2$.
If  $g$ is any smooth $(n-2,n-2)$  form on $\bar{\mathcal{M}}_W(\C)$, the arithmetic cycle class
\[
\widehat{\mathcal{Z}} = ( 0, g) \in\widehat{\CH}^{n-1}(\bar{\mathcal{M}}_W , \mathscr{D}_\BKK)
\]
satisfies
\[
0 = \widehat{\deg}( \widehat{\mathcal{Z}} \cdot \widehat{\mathcal{E}} )
=  \int_{\mathcal{M}_W(\C)}    \chern( \widehat{\mathcal{E}}  )  \wedge g.
\]
As this holds for all choices of $g$, we must have $ \chern( \widehat{\mathcal{E}}  )=0$.  
This proves the desired equality of Chern forms.
The first equality of \eqref{trivial shift} is clear from 
\[
\widehat{\vol}( \widehat{\taut_1} ) 
= \sum_{i=0}^n \widehat{\deg} ( \widehat{\taut}_2^{n-i} \cdot \widehat{\mathcal{E}}^i ), 
\]
as every term with $i>0$ vanishes. The second  equality follows from \eqref{height degree} and the equality of Chern forms already proved.
\end{proof}

If $\mathcal{Z}$ is a  Cartier divisor on $\bar{\mathcal{M}}_W$ supported in nonzero characteristics,  the line bundle
 $\co( \mathcal{Z} )$   is canonically trivialized in the generic fiber by the constant function $1$. 
  For any real number $c$, denote by
 \begin{equation}\label{constant metrics}
 (\mathcal{Z} ,c)  \in    \widehat{\Pic}( \bar{\mathcal{M}}_W )
 \end{equation}
 the line bundle $\co(\mathcal{Z})$  endowed with the constant metric characterized by  $-\log \| 1\|^2 =   c$.
 In particular, adding $(0,c)$ to a hermitian line bundle just rescales $\| \cdot\|^2$  by $e^{-c}$.  The following lemma describes the effect of this on arithmetic volumes.

\begin{lemma}\label{lem:trivial volume shift}
For  any $\widehat{\taut} \in \widehat{\Pic}( \bar{\mathcal{M}}_W, \mathscr{D}_\BKK )$ and  any constant $c\in \R$,  we have 
\[
\widehat{\vol}\big( \widehat{\taut} + (0 ,c ) \big) =
\widehat{\vol}(  \widehat{\taut} )  + n c   \int_{\mathcal{M}_W(\C) }  \chern(\widehat{\taut})^{n-1}  ,
\]
where $ \chern(\widehat{\taut})$ is the Chern form of Remark \ref{rem:chern form}.
\end{lemma}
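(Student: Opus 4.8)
The plan is to work in the arithmetic Chow ring $\widehat{\CH}^\bullet(\bar{\mathcal{M}}_W,\mathscr{D}_\BKK)$ and to expand the $n$-fold self-intersection
\[
\widehat{\vol}\big(\widehat{\mathcal{L}}+(0,c)\big)=\widehat{\deg}\Big(\big(\widehat{\mathcal{L}}+(0,c)\big)^n\Big)
=\sum_{k=0}^n\binom{n}{k}\,\widehat{\deg}\big(\widehat{\mathcal{L}}^{\,n-k}\cdot(0,c)^k\big),
\]
the point being that all but two terms vanish. Recall from the discussion of \eqref{constant metrics} that passing from $\widehat{\mathcal{L}}$ to $\widehat{\mathcal{L}}+(0,c)$ leaves the underlying line bundle unchanged and only rescales $\|\cdot\|^2$ by $e^{-c}$; hence for any nonzero rational section $s$ the class $(0,c)$ in $\widehat{\CH}^1(\bar{\mathcal{M}}_W,\mathscr{D}_\BKK)$ is represented by the arithmetic cycle $(0,c)$ with empty support and \emph{constant} Green function $c$. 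In particular it is the image $a(c)$ of $c$ under the natural ($\R$-linear) map from Green objects to arithmetic cycle classes, and a constant function is trivially pre-log singular, so no issue arises at the boundary.

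First I would use two standard formal properties of the map $a$, valid in the Burgos--Kramer--K\"uhn theory exactly as in Gillet--Soul\'e: for any class $\widehat{z}$ one has $a(\eta)\cdot\widehat{z}=a\big(\eta\wedge\omega(\widehat{z})\big)$, where $\omega(\widehat z)$ denotes the attached curvature form; and $\widehat{\deg}\circ a=\int_{\mathcal{M}_W(\C)}$ with the no-$\tfrac12$ normalization of Remark \ref{rem:no half}. Since $\omega(a(c))=dd^c c=0$, the first property gives $a(c)^2=a\big(c\wedge\omega(a(c))\big)=0$, so the binomial sum collapses to
\[
\big(\widehat{\mathcal{L}}+a(c)\big)^n=\widehat{\mathcal{L}}^{\,n}+n\,\widehat{\mathcal{L}}^{\,n-1}\cdot a(c)
=\widehat{\mathcal{L}}^{\,n}+n\,a\big(c\cdot\omega(\widehat{\mathcal{L}}^{\,n-1})\big).
\]
Here $\omega(\widehat{\mathcal{L}}^{\,n-1})=\chern(\widehat{\mathcal{L}})^{n-1}$ is the $(n-1)$-st power of the Chern form \eqref{chern form} (integrable by the pre-log singularity of the metric). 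Applying $\widehat{\deg}$ and the second property then yields
\[
\widehat{\vol}\big(\widehat{\mathcal{L}}+(0,c)\big)=\widehat{\vol}(\widehat{\mathcal{L}})+n c\int_{\mathcal{M}_W(\C)}\chern(\widehat{\mathcal{L}})^{n-1},
\]
as desired.

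The only point requiring care — and the part I expect to be the main obstacle — is justifying the two formal properties above in the pre-log-log arithmetic Chow groups of \cite{BKK} rather than in the classical Gillet--Soul\'e groups; this comes down to unwinding the $*$-product of Green objects in the special case where one factor has empty cycle and a $dd^c$-closed (indeed constant, hence smooth) Green form. If one prefers to avoid invoking these properties abstractly, the same conclusion can be reached by a direct computation on a regular finite-level model $\bar{\mathcal{M}}_W(N)$ with $N\ge 3$: choose a representative $(\mathcal{Y},g_{\mathcal{Y}})$ of $\widehat{\mathcal{L}}^{\,n-1}$ with $\omega(\mathcal{Y},g_{\mathcal{Y}})=\chern(\widehat{\mathcal{L}})^{n-1}$, compute $(\mathcal{Y},g_{\mathcal{Y}})*(0,c)=(0,\,c\cdot\delta_{\mathcal{Y}})$ from the $*$-product formula, observe that in $\widehat{\CH}^n$ this equals $(0,\,c\cdot\chern(\widehat{\mathcal{L}})^{n-1})$ because their difference is $(0,\,dd^c(-c\,g_{\mathcal{Y}}))$, and finally apply the explicit formula \eqref{degree normalization} for $\widehat{\deg}$, whose cycle contribution vanishes, leaving $\tfrac1{d_N}\int_{\mathcal{M}_W(N)(\C)}c\cdot\chern(\widehat{\mathcal{L}})^{n-1}=c\int_{\mathcal{M}_W(\C)}\chern(\widehat{\mathcal{L}})^{n-1}$. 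Either route is routine once the bookkeeping of the $*$-product is set up, so the substance of the lemma is just the vanishing $(0,c)^2=0$ together with the normalization of the arithmetic degree on classes supported purely in the archimedean fiber.
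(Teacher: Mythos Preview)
Your argument is correct and matches the paper's proof essentially line for line: both use $(0,c)\cdot(0,c)=0$ to collapse the binomial expansion to $\widehat{\mathcal{L}}^{\,n}+n\,\widehat{\mathcal{L}}^{\,n-1}\cdot(0,c)$, identify $\widehat{\mathcal{L}}^{\,n-1}\cdot(0,c)=(0,\,c\cdot\chern(\widehat{\mathcal{L}})^{n-1})$, and then take the arithmetic degree. The paper states the last identification as the outcome of a ``trivial induction,'' whereas you phrase it via the formal properties of the map $a$ (and sketch a direct $*$-product verification); these are the same computation.
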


\begin{proof}
A trivial induction argument shows that 
\[
\widehat{\taut}^{k-1} \cdot (0 , c )   =  ( 0  , c  \cdot  \chern(\widehat{\taut})^{k-1} )
\in \widehat{\CH}^k( \bar{\mathcal{M}}_W ,\mathscr{D}_\BKK)
\]
 for all $k>0$.  As $(0,c) \cdot (0,c) =0$  we  deduce 
\begin{align*}
\big( \widehat{\taut} + (0 ,c ) \big)^k 
& = \widehat{\taut} ^k +  k \widehat{\taut}^{k-1} \cdot (0 , c )   \\
& = \widehat{\taut} ^k + k ( 0  ,  c  \cdot  \chern(\widehat{\taut})^{k-1} ).
\end{align*}
Setting $k=n$ and taking the arithmetic degree  completes the proof.
\end{proof}


\chapter{A Shimura variety and its line bundles}
\label{s:special shimura bundles}


For the rest of this paper we fix a $\kk$-hermitian space $V$ of signature $(n-1,1)$, with $n\ge 1$, and assume that $V$ admits a self-dual $\co_\kk$-lattice \eqref{self-dual lattice}.  The hermitian form on $V$ is denoted $\langle-,-\rangle$.
We will associate to $V$ an $n$-dimensional open and closed substack  
\[
\mathcal{S}_V \subset \mathcal{M}_{(1,0)} \times_{\co_\kk} \mathcal{M}_{(n-1,1)},
\]
construct some interesting hermitian line bundles on it, and explain the relations between them.


\section{A special Shimura variety}
\label{ss:special shimura}


To attach a Shimura variety to our fixed  $V$,  choose  relevant (Definition \ref{def:relevant})  hermitian spaces  $(W_0,h_0)$ and $(W,h)$ of  signatures $(1,0)$ and $(n-1,1)$, respectively,  in such a way that  
\begin{equation}\label{hermitian hom}
V \iso \Hom_\kk(W_0 , W) 
\end{equation}
as $\kk$-hermitian spaces, where the hermitian form  $\langle-  , - \rangle$ on the right  is defined by the relation
\begin{equation}\label{basic hom hermitian}
\langle x,y\rangle  \cdot h_0(w_0,w'_0 ) = h( x(w_0) , y(w'_0) )
\end{equation}
for all $w_0 , w_0'\in W_0$ and $x,y\in V$.

\begin{remark}
Such $W_0$ and $W$ always exist, and in fact there is a distinguished choice: take $W_0=\kk$ with its norm form, and $W=V$.  Any other choice of $W_0$ will be strictly similar to this distinguished choice (this is a trivial case of Proposition \ref{prop:jacobowitz}), and the relation \eqref{hermitian hom} then forces $W$ to be strictly similar to $V$.  In particular, the Shimura varieties 
\[
\mathcal{M}_{W_0} \iso \mathcal{M}_{(1,0)} \quad \mbox{and} \quad 
\mathcal{M}_{W} \iso \mathcal{M}_{V}
\]
depend only on $V$, and not on the particular   $W_0$ and $W$ chosen in \eqref{hermitian hom}.
\end{remark}

 As  in \S 2.2 of   \cite{KRunitaryII}, if  $S$ is a connected $\co_\kk$-scheme  and 
\[
(A_0,A) \in   \mathcal{M}_{W_0}(S)  \times \mathcal{M}_{W}  (S), 
\]
 then $\Hom_{\co_\kk}(A_0,A)$ carries a positive definite  hermitian form 
\begin{equation}\label{KR hermitian}
\langle x,y\rangle =    \psi_0^{-1} \circ y^\vee \circ \psi\circ  x   \in \End_{\co_\kk}(A_0) \iso \co_\kk,
\end{equation}
where $\psi_0 :A_0 \to A_0^\vee$ and $\psi:A\to A^\vee$ are the principal polarizations.

As in \S 2.3 of \cite{BHKRY-1},  there is an open and closed substack 
\begin{equation}\label{moduli inclusion}
\mathcal{S}_V \subset \mathcal{M}_{W_0} \times_{\co_\kk} \mathcal{M}_{W}
\end{equation}
  characterized by  its  points valued in  algebraically closed fields,  which are those pairs
\[
(A_0,A)\in   \mathcal{M}_{W_0} (F) \times  \mathcal{M}_{W} (F)
\]
for which there exists an isometry 
\[
V\otimes \Q_\ell \iso  \Hom_{\co_\kk} \big( T_\ell(A_0)   , T_\ell(A)  \big)  \otimes \Q_\ell 
\]
of $\kk_\ell$-hermitian spaces for every $\ell\neq \mathrm{char}(F) $. 
Here the hermitian form on the right is defined by imitating the construction \eqref{KR hermitian} on the level of $\ell$-adic Tate modules.
  When $F=\C$ this is equivalent (by the Hasse principle for hermitian spaces) to the existence of an isometry of $\kk$-hermitian spaces
  \[
V \iso \Hom_{\kk} \big( H_1(A_0 , \Q)  , H_1(A  , \Q) \big) ,
\]
where the hermitian form on the right hand side is again defined by imitating the construction \eqref{KR hermitian} in Betti homology.


\begin{remark}\label{rem:projection fiber}
The projection 
$
\mathcal{S}_V \to \mathcal{M}_W
$
is a finite \'etale surjection, and the fiber  over a geometric point $x \in  \mathcal{M}_W(F)$ satisfies
\[
\sum_{  y \in \mathcal{S}_{V,x}(F)   } \frac{1}{ |\Aut(y) | }  = \frac{ |\mathrm{CL}(\kk) |} {  | \co_\kk^\times|} 
\cdot \begin{cases}
1 & \mbox{if $n$ is even} \\
2^{1-o(D)}  & \mbox{if $n$ is odd}.
\end{cases}
\]
\end{remark}

\begin{remark}\label{rem:honest shimura}
The   stack $\mathcal{S}_V$ is denoted $\mathcal{S}_\Kra$    in \cite{BHKRY-1}.  
 Later we will want to vary $V$, and so we have  included it in the notation to avoid confusion.
As explained in [loc.~cit.], the generic fiber of  $\mathcal{S}_V$ is a Shimura variety 
for the subgroup $G \subset \mathrm{GU}(W_0) \times \mathrm{GU}(W)$ of pairs $(g_0,g)$ for which the similitude factors of the two components are equal.
\end{remark}

 \begin{definition}
 Let $N$ be a positive integer, and let $S$ be an $\co_\kk$-scheme with $N\in \co_S^\times$.
 A \emph{level $N$-structure} on a pair $(A_0,A) \in \mathcal{S}_V(S)$ consists of  level $N$-structures $(\eta_0,\xi_0)$ and $(\eta,\xi)$ on $A_0$ and $A$, in the sense of Definition \ref{def:kramer-pappas level},  such   that $ \xi_0 =   \xi$.
  \end{definition}

Adding level structure to pairs $(A_0,A)$ defines  a finite \'etale cover
\[
 \mathcal{S}_V(N) \to  \mathcal{S}_{ V /\co_\kk[1/N]  }, 
\]
and  \eqref{moduli inclusion} lifts to a canonical open and closed immersion
\[
\mathcal{S}_V (N) \subset \mathcal{M}_{W_0} (N) \times_{\co_\kk[1/N]} \mathcal{M}_W (N).
\]
Define a toroidal compactification
\begin{equation}\label{compact inclusion}
\bar{\mathcal{S}}_V (N) \subset \mathcal{M}_{W_0} (N) \times_{\co_\kk[1/N]}  \bar{\mathcal{M}}_W (N)
\end{equation}
as the Zariski closure of $\mathcal{S}_V(N)$.  
When $N=1$ we abbreviate 
\[
\bar{\mathcal{S}}_V = \bar{\mathcal{S}}_V(1).
\]
Note that  $\bar{\mathcal{S}}_V (N)$ is also characterized as the normalization of 
\[
\mathcal{S}_V (N) \to \bar{\mathcal{M}}_{W/ \co_\kk[1/N]}.
\]

\begin{remark}\label{rem:special compact level}
As \eqref{compact inclusion} is an open and closed immersion, and $\mathcal{M}_{W_0} (N)$ is finite \'etale over $\co_\kk[1/N]$, the compactification $\bar{\mathcal{S}}_V (N)$ inherits all the nice properties of $\bar{\mathcal{M}}_W (N)$.  In particular, Proposition \ref{prop:full compactification} holds word-for-word with $\mathcal{M}_W$ replaced by $\mathcal{S}_V$, and the same is true of the entire discussion of arithmetic intersection theory in \S \ref{ss:chow} and \S \ref{ss:volumes}.
\end{remark}

\begin{definition}\label{def:special exceptional}
If $n\ge 2$,  define the \emph{exceptional divisor} 
\[
\mathrm{Exc}_V \subset \mathcal{S}_V
\]
 as the pullback of the exceptional divisor \eqref{basic exceptional} via $ \mathcal{S}_V  \to \mathcal{M}_{(n-1,1)}$.  Equivalently, it is defined by the cartesian diagram
\begin{equation}\label{special exceptional}
\xymatrix{
{\mathrm{Exc}_V }  \ar[r] \ar[d]  &  { \mathcal{S}_V } \ar[d] \\
{\mathrm{Sing}_{(n-1,1)} } \ar[r]  & { \mathcal{M}^\Pap_{(n-1,1)} }.
}
\end{equation}
\end{definition}

\begin{definition}[{\cite{KRunitaryII}}]
\label{def:KR}
 For any positive $m\in \Z$, the \emph{Kudla-Rapoport divisor}   $\mathcal{Z}_V(m)$  is the $\co_\kk$-stack classifying triples $(A_0,A,x)$ consisting of a pair  
\[
(A_0,A) \in \mathcal{S}_V(S)
\]
 and an 
$
x\in \Hom_{\co_\kk}(A_0,A)
$
satisfying $\langle x,x\rangle=m$.
\end{definition}

The natural forgetful morphism 
\[
\mathcal{Z}_V(m) \to \mathcal{S}_V
\]
is finite and unramified, with image of codimension $1$. 
Denote by 
\[
\bar{\mathcal{Z}}_V(m)\to \bar{\mathcal{S}}_V
\]
 the normalization of $\mathcal{Z}_V(m) \to \bar{\mathcal{S}}_V$. 
 
 \begin{remark}\label{rem:divisor closure}
 The morphism $\mathcal{Z}_V(m) \to \mathcal{S}_V$ determines a Weil divisor on $\mathcal{S}_V$ in the usual way:
each irreducible component of  $\mathcal{Z}_V(m)$ contributes an irreducible component to the divisor, counted with multiplicity equal to the length of the local ring of its generic point.  
By abuse of notation, we denote this Weil divisor again by $\mathcal{Z}_V(m)$.
Similarly there is a divisor determined by $\bar{\mathcal{Z}}_V(m)$, denoted the same way; it agrees with the Zariski closure in $\bar{\mathcal{S}}_V$ of the divisor $\mathcal{Z}_V(m)$.
\end{remark}

Loosely speaking,  each Kudla-Rapoport divisor  is a union of  unitary Shimura varieties associated to  $\kk$-hermitian spaces of signature $(n-2,1)$.  In  Chapter \ref{s:KR divisors} we will  make this more precise, at least  when $m$ is a prime split in $\kk$.


\section{Construction of hermitian line bundles}
\label{ss:hermitian bundles}


The construction \eqref{hodge metric} associates to the universal CM elliptic curve $A_0 \to \mathcal{M}_{W_0} = \mathcal{M}_{(1,0)}$ its metrized Hodge bundle
\begin{equation}\label{elliptic hodge}
\widehat{\omega}^\mathrm{Hdg}_{A_0 / \mathcal{M}_{W_0} } \in \widehat{\Pic}(\mathcal{M}_{W_0} ) .
\end{equation}
Similarly, the universal $A  \to \mathcal{M}_{W}$  determines a metrized Hodge bundle
\begin{equation}\label{basic hodge}
\widehat{\omega}^\mathrm{Hdg}_{A / \mathcal{M}_W } \in \widehat{\Pic}(\mathcal{M}_W ) .
\end{equation}

Pulling back the universal objects via projection to the two factors in  \eqref{moduli inclusion} yields a universal pair $(A_0,A)$ of polarized abelian schemes (of  relative dimensions $1$ and $n$)  over  $\mathcal{S}_V$.
As in \S 2.4 of \cite{BHKRY-1} there is a  \emph{metrized line bundle of modular forms}
\begin{equation}\label{metrized taut}
\widehat{\taut}_V \in \widehat{\Pic}(\mathcal{S}_V).
\end{equation}
The line bundle underlying \eqref{metrized taut} has  inverse
\begin{equation}\label{dual taut}
\taut_V^{-1} =   \Lie(A_0)  \otimes   \Lie(A) / \mathcal{F} ,
\end{equation}
where $\mathcal{F} \subset \Lie(A)$ is the universal hyperplane satisfying Kr\"amer's signature condition (\S \ref{ss:basic moduli}).
  The hermitian metric  is defined as in \S 7.2 of  \cite{BHKRY-1}: 
if we use  Proposition 2.4.2 of \cite{BHKRY-1} to identify
 \begin{equation}\label{taut realization}
 \taut_{V,z}  \subset  \Hom_{\kk} ( H_1(A_{0,z} ,\Q) , H_1(A_z,\Q) )  \otimes_\Q\C \iso V\otimes_\Q\C
 \end{equation}
at a complex point $z\in \mathcal{S}_V(\C)$, the line  $\taut_{V,z}$ is   isotropic  with respect to  the $\C$-bilinear extension of the $\Q$-bilinear form  $ [ x,y] = \mathrm{Tr}_{\kk/\Q} \langle x,y\rangle$
on $V$, and 
\begin{equation}\label{taut metric}
\| s \|^2 = - \frac{ [s,\overline{s}] }{ 4\pi e^\gamma } 
\end{equation}
for any   $s\in \taut_{V,z}$.
   Note that our  $\widehat{\taut}_V$ is denoted $\widehat{\bm{\omega}}$ in  \cite{BHKRY-1, BHKRY-2}.

\begin{proposition}\label{prop:taut-hodge1}
The  hermitian line bundles   \eqref{basic hodge} and  \eqref{metrized taut}  lie in the subgroups
\begin{align*}
\widehat{\Pic}( \bar{\mathcal{M}}_W, \mathscr{D}_\BKK )   \subset \widehat{\Pic}(\mathcal{M}_W  ) 
\quad \mbox{and} \quad 
\widehat{\Pic}( \bar{\mathcal{S}}_V, \mathscr{D}_\BKK )   \subset \widehat{\Pic}(\mathcal{S}_V  ),
\end{align*}
respectively, of \eqref{pre-log injection}.
\end{proposition}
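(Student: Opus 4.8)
The plan is to show, for each of the two bundles, that (i) the line bundle on the interior extends to a line bundle on the toroidal compactification, and (ii) the given hermitian metric is pre-log singular along the boundary, in the sense of Definition 1.20 of \cite{BBK}; once both hold, passing to the level-$N$ covers with $N\ge 3$ (where the compactifications are projective schemes) exhibits the metrized bundle as an element of $\widehat{\Pic}(\bar{\mathcal{M}}_W,\mathscr{D}_\BKK)$, respectively $\widehat{\Pic}(\bar{\mathcal{S}}_V,\mathscr{D}_\BKK)$, i.e.\ in the image of the left-hand arrow of \eqref{pre-log injection}. Since $\mathcal{M}_{W_0}$ is finite \'etale over $\co_\kk$ and \eqref{compact inclusion} is an open and closed immersion, every assertion for $\bar{\mathcal{S}}_V$ reduces to the corresponding one on $\bar{\mathcal{M}}_W$ (for the factor coming from $\omega^{\mathrm{Hdg}}_{A/\mathcal{M}_W}$, together with the extension \eqref{hyperplane extension} of the universal hyperplane in the $\taut_V$ case). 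For part (i): the semi-abelian scheme $\bar{A}\to\bar{\mathcal{M}}_W$ extending the universal abelian scheme has Lie algebra locally free of rank $n$, so $\bigwedge^{n}\Lie(\bar{A})^\vee$ is a line bundle on $\bar{\mathcal{M}}_W$ restricting to $\omega^{\mathrm{Hdg}}_{A/\mathcal{M}_W}$; likewise $\taut_V^{-1}$ of \eqref{dual taut} extends to $\Lie(\bar{A}_0)\otimes\big(\Lie(\bar{A})/\bar{\mathcal{F}}\big)$ by \eqref{hyperplane extension} (using $\Lie(A_0)=\Lie(\bar{A}_0)$, as $A_0$ is already proper). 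Injectivity of \eqref{pre-log injection} then makes both extensions canonical, so only (ii) remains.

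For the Hodge metric I would argue analytically in a neighborhood of the boundary, using the explicit boundary charts of \cite{howard-unitary-II} (equivalently, the Faltings--Chai/Mumford degeneration data): there $\bar{A}$ is an extension of an abelian scheme of dimension $n-2$ by a torus of rank two, with a \emph{single} degeneration parameter $q$, the polyhedral cone being one-dimensional (Remark \ref{rem:no cone}). A local frame of $\bigwedge^{n}\Lie(\bar{A})^\vee$ is, up to a nowhere-vanishing holomorphic unit, $\frac{dt_1}{t_1}\wedge\frac{dt_2}{t_2}\wedge\Omega_B$ for toric coordinates $t_1,t_2$ and a local frame $\Omega_B$ of the Hodge bundle of the abelian part, and the period integral \eqref{hodge metric} of such a frame is a positive power of $-\log|q|$ times a function bounded above and below away from $0$. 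Hence $\log\|s\|^2=c\log(-\log|q|)+O(1)$ with $c>0$, with matching log-log estimates on its derivatives, which is exactly the pre-log singular condition. Alternatively, \eqref{to siegel} and Lemma \ref{lem:normalized compactification} furnish a morphism $\bar{\mathcal{M}}_W\to\bar{\mathcal{A}}_{n/\co_\kk}$ carrying boundary into boundary and pulling back the metrized Hodge bundle of $\bar{\mathcal{A}}_n$ to $\widehat{\omega}^{\mathrm{Hdg}}_{A/\mathcal{M}_W}$; as the pre-log singular condition is stable under such pullbacks, this reduces the claim to the standard case of a smooth toroidal compactification of $\mathcal{A}_n$.

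For $\widehat{\taut}_V$ the cleanest route is to compare with the tautological bundle $\widehat{\taut}$ on the domain $\mathcal{D}$ of \S\ref{ss:u(v) shimura}. Under the identification \eqref{taut realization} the metric \eqref{taut metric} is the Petersson-type metric $\|s\|^2=-[s,\bar{s}]/(4\pi e^\gamma)$ on sections of the automorphic line bundle of weight one, and near a cusp of $\bar{\mathcal{S}}_V(\C)$ a local frame corresponds to an isotropic vector $s_z\in V\otimes_\Q\C$ for which $-[s_z,\bar{s}_z]$ vanishes to logarithmic order in $q$; equivalently, under \eqref{hyperplane extension} the line $\taut_{V,z}$ degenerates into the $\Lie$ of the toric part of $\bar{A}$, with natural frame $\frac{dt}{t}$. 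Thus again $\log\|s\|^2=\log(-\log|q|)+O(1)$, with the corresponding estimates on derivatives, and the metric is pre-log singular.

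The main obstacle is step (ii): carrying out the boundary analysis carefully enough to pin down the precise logarithmic vanishing of the period integral in \eqref{hodge metric} (respectively of $-[s_z,\bar{s}_z]$), and to check that all the higher derivatives entering Definition 1.20 of \cite{BBK} obey the required pre-log-log bounds. If one instead takes the reduction-to-$\mathcal{A}_n$ route for the Hodge bundle, the point to verify is that the boundary of $\bar{\mathcal{M}}_W$ is genuinely carried into the boundary of $\bar{\mathcal{A}}_n$ and that the Hodge bundle together with its metric \eqref{hodge metric} is pulled back on the nose; both follow from Lemma \ref{lem:normalized compactification} and functoriality of the Hodge bundle, but need to be spelled out.
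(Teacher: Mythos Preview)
Your outline is sound and would lead to a proof, but it takes a substantially more hands-on route than the paper. You propose to verify the pre-log singular condition either by an explicit boundary computation of the period integral (and of $-[s_z,\bar{s}_z]$ for $\widehat{\taut}_V$), or by pulling back from a smooth compactification of $\mathcal{A}_n$; you correctly flag that the delicate part is checking all the derivative bounds in Definition 1.20 of \cite{BBK}. The paper sidesteps this entirely: it invokes Theorem 6.16 of \cite{BKK2}, a general result asserting that the natural $L^2$-metrics on automorphic vector bundles over toroidal compactifications of Shimura varieties are log-singular, and then cites Proposition 3.2 of \cite{freixas} to pass from ``log-singular'' in the sense of \cite{BKK2} to ``pre-log singular'' in the sense of \cite{BBK} for line bundles. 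Both $\widehat{\omega}^{\mathrm{Hdg}}_{A/\mathcal{M}_W}$ and $\widehat{\taut}_V$ are automorphic line bundles with their canonical metrics, so the two citations handle both cases at once. Your approach has the merit of being self-contained and of making the boundary asymptotics explicit; the paper's approach trades that transparency for brevity, and avoids having to verify the higher-derivative estimates by hand.
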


\begin{proof}
The extension  to $\bar{\mathcal{M}}_W$ of the line bundle 
\[
\omega^\mathrm{Hdg}_{A / \mathcal{M}_W } \iso \det( \Lie(A) )^{-1}
\]
 underlying \eqref{basic hodge}  is part of \eqref{hyperplane extension}.
 The fact that the hermitian metric has a pre-log singularity along the boundary is a special case of 
 Theorem 6.16 of \cite{BKK2}.
 Note that this also uses   Proposition 3.2 of \cite{freixas}, which shows  that any rank one log-singular hermitian vector bundle in the sense of \cite{BKK2} is also a  pre-log-singular hermitian line bundle in the sense of Definition 1.20 of \cite{BBK}.
This proves the claim for  \eqref{basic hodge}, and the proof for \eqref{metrized taut} is the same.
\end{proof}

Pulling back the hermitian line bundles \eqref{elliptic hodge} and \eqref{basic hodge} via projection to the two  factors  in \eqref{compact inclusion}, we obtain three hermitian line bundles 
\begin{equation}\label{three bundles}
\widehat{\omega}^\mathrm{Hdg}_{A_0 / \mathcal{S}_V },\,
\widehat{\omega}^\mathrm{Hdg}_{A / \mathcal{S}_V },\,
\widehat{\taut}_V
  \in  \widehat{\Pic}( \bar{\mathcal{S}}_V, \mathscr{D}_\BKK  ).
\end{equation} 
In the remaining sections of this chapter, we will make explicit the relations between them. 
The reader may wish to skip directly to Theorem \ref{thm:taut-hodge compare} for the main results.


\section{An application of the Chowla-Selberg formula}


We will prove that, up to numerical equivalence (Definition \ref{def:numerical}), the first line bundle in \eqref{three bundles} is  just the trivial line bundle with a constant metric.  
The constant defining the metric is an interesting quantity in its own right.
Recall  the Chowla-Selberg formula: the Faltings height,  normalized as in \S 5.2 of \cite{BHKRY-2},
of any elliptic curve with CM by $\co_\kk$ is 
\begin{equation}\label{faltings}
h^\mathrm{Falt}_\kk   = 
- \frac{1}{2} \frac{L'(0,\eps)}{L(0,\eps)}   - \frac{1}{4}   \log(4\pi^2 D )  .
\end{equation}

\begin{proposition}\label{prop:easy numerical}
The metrized line bundles
\[
\widehat{\omega}^\mathrm{Hdg}_{A_0 / \mathcal{S}_V }  \in \widehat{\Pic}(\mathcal{S}_V )
\quad \mbox{and}\quad   ( 0 , C_1) \in  \widehat{\Pic}(\mathcal{S}_V )
\]
are  numerically equivalent, where $C_1 = \log(2\pi) + 2 h^\mathrm{Falt}_\kk$, and we are using the notation of  \eqref{constant metrics}. 
\end{proposition}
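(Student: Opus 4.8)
The goal is to show that $\widehat{\omega}^\mathrm{Hdg}_{A_0/\mathcal{S}_V}$ differs from the trivial bundle with constant metric $(0,C_1)$ by a numerically trivial class. The strategy has two ingredients: first, control the underlying line bundle (ignoring the metric) on the integral model, and second, pin down the metric by a transcendental computation on the complex fiber.

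\emph{Step 1: The underlying line bundle is numerically trivial.} The Hodge bundle $\omega^\mathrm{Hdg}_{A_0/\mathcal{S}_V}$ is pulled back from $\mathcal{M}_{W_0} = \mathcal{M}_{(1,0)}$ via the projection $\mathcal{S}_V \to \mathcal{M}_{W_0}$, and $\mathcal{M}_{W_0}$ is finite \'etale over $\Spec(\co_\kk)$ by the results recalled in \S\ref{ss:basic moduli}. Hence $\omega^\mathrm{Hdg}_{A_0/\mathcal{M}_{W_0}}$ is (after passing to a suitable level cover, or working in the Picard group with $\Q$-coefficients) a torsion line bundle on the zero-dimensional base; in any case, its pullback to $\bar{\mathcal{S}}_V$ intersects trivially against any codimension $n-1$ cycle, because such an intersection factors through the Chow group of the finite base $\Spec(\co_\kk)$ in a way that kills the geometrically trivial Hodge class. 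More precisely, one checks that for any $\widehat{\mathcal{Z}} \in \widehat{\CH}^{n-1}(\bar{\mathcal{S}}_V,\mathscr{D}_\BKK)$ the arithmetic intersection $\widehat{\deg}(\widehat{\mathcal{Z}} \cdot \widehat{\mathcal{E}})$ depends only on the pushforward of $\widehat{\mathcal{Z}}$ to $\mathcal{M}_{W_0}$, which lives in $\widehat{\CH}^0$ of a finite base, and there the class of $\omega^\mathrm{Hdg}_{A_0/\mathcal{M}_{W_0}}$ shifted by the appropriate constant metric becomes trivial. This is where one invokes the precise value of the Faltings height: the Chowla--Selberg formula \eqref{faltings} computes exactly the arithmetic degree of $\widehat{\omega}^\mathrm{Hdg}_{A_0/\mathcal{M}_{W_0}}$ on $\mathcal{M}_{W_0}$, normalized as $h^\mathrm{Falt}_\kk$, and the constant $C_1 = \log(2\pi) + 2h^\mathrm{Falt}_\kk$ is rigged so that $\widehat{\omega}^\mathrm{Hdg}_{A_0/\mathcal{M}_{W_0}} - (0,C_1)$ has arithmetic degree zero on $\mathcal{M}_{W_0}$.

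\emph{Step 2: Reconcile the metric normalization.} The factor $\log(2\pi)$ in $C_1$ comes from comparing the metric \eqref{hodge metric} used to define $\widehat{\omega}^\mathrm{Hdg}$ in this paper (which has a $(2\pi i)^{\dim A_0} = 2\pi i$ normalization since $\dim A_0 = 1$) with the normalization of the Faltings height in \S 5.2 of \cite{BHKRY-2}. One writes out the period integral $\int_{A_{0,z}(\C)} s_z \wedge \bar s_z$ explicitly for an elliptic curve with CM by $\co_\kk$, relates it to the standard period lattice, and reads off the constant discrepancy; this is a routine but bookkeeping-heavy identification, and the $\log(2\pi)$ is precisely the correction needed so that $\widehat{\deg}$ of the shifted bundle vanishes. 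Since the underlying line bundle has a Chern form that vanishes (the Hodge bundle on a zero-dimensional stack carries the zero curvature form, and this pulls back to zero on $\mathcal{S}_V$), Lemma \ref{lem:numerical basics} then immediately gives that the Chern forms agree, and combined with Step 1 we conclude numerical equivalence as in Definition \ref{def:numerical}.

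\emph{Expected main obstacle.} The genuinely delicate point is Step 1: making rigorous the claim that the pullback of a torsion (or degree-controlled) hermitian line bundle from the finite base $\mathcal{M}_{W_0}$ to the $n$-dimensional $\bar{\mathcal{S}}_V$ has trivial arithmetic intersection against \emph{all} of $\widehat{\CH}^{n-1}(\bar{\mathcal{S}}_V,\mathscr{D}_\BKK)$. One must use the projection formula for the finite \'etale (hence proper, flat) morphism $\mathcal{S}_V \to \mathcal{M}_{W_0}$, push the codimension $n-1$ cycle down to a codimension-zero class on $\mathcal{M}_{W_0}$, and then use that on the finite base the relevant arithmetic Picard group is exhausted by constant metrics up to the Chowla--Selberg computation. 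Care is needed with the boundary and the exceptional divisor $\mathrm{Exc}_V$, but since $\widehat{\omega}^\mathrm{Hdg}_{A_0/\mathcal{S}_V}$ is pulled back from the base it is automatically trivial on every fiber of $\mathcal{S}_V \to \mathcal{M}_{W_0}$, in particular on $\mathrm{Exc}_V$, so no correction terms from the exceptional divisor intrude.
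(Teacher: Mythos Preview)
Your strategy is essentially the same as the paper's---reduce to the zero-dimensional base $\mathcal{M}_{W_0}$ and invoke Chowla--Selberg---but there is a genuine gap in how you handle the intersection pairing against arbitrary $\widehat{\mathcal{Z}}$.

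The difficulty is that $\widehat{\mathcal{E}} = \widehat{\omega}^\mathrm{Hdg}_{A_0/\mathcal{M}_{W_0}} - (0,C_1)$ is \emph{not} the trivial class in $\widehat{\Pic}$ of the base, even after tensoring with $\Q$.  After passing to a level $N\ge 3$ so that everything is a scheme, and after replacing the line bundle by a power so that it admits a nowhere-vanishing section $s$, the class is represented by $(0,-\log\|s\|^2)$ with $-\log\|s\|^2$ merely \emph{locally constant} on the disconnected set $\mathcal{M}_{W_0}(N)(\C)$.  The Faltings height only tells you that the \emph{average} of $-\log\|s\|^2$ over each $\Gal(\overline{\kk}/\kk)$-orbit of complex points equals $dC_1$, not that it is constant.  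So your assertion that ``the class \ldots becomes trivial'' on the base is false as stated, and the projection formula alone does not finish: $\pi_*\widehat{\mathcal{Z}}$ lands in $\widehat{\CH}^0$ of a disconnected scheme, and pairing with a class that has degree zero in total but nonzero degree on individual components need not vanish.

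The paper closes this gap by computing the intersection directly: since the Chern form of $(0,-\log\|s\|^2)$ vanishes, the $*$-product formula gives
\[
d\cdot\widehat{\deg}_N\big((\mathcal{Z}_N,g_N)\cdot\widehat{\omega}^\mathrm{Hdg}_{A_0/\mathcal{S}_V(N)}\big)=-\sum_{x\in\mathcal{Z}_N(\C)}\log\|s_x\|^2,
\]
and then the key step is that $\mathcal{Z}_N$ is defined over $\kk$, so its complex points form a union of Galois orbits, over each of which $-\log\|s\|^2$ averages to $dC_1$.  This Galois-equivariance argument is the substance you are missing; it is not the same as the projection formula and cannot be replaced by the claim that $\widehat{\mathcal{E}}$ vanishes on the base.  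Your ``expected main obstacle'' paragraph correctly flags the right spot, but the proposed resolution (that the arithmetic Picard group of the base is ``exhausted by constant metrics'') is exactly what fails.
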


\begin{proof}
Fix an $N\ge 3$.  As $\mathcal{M}_{W_0}(N)$ is finite \'etale over $\co_\kk[1/N]$, there is an isomorphism 
\[
\mathcal{M}_{W_0}(N) \iso \bigsqcup_i  \mathcal{X}_i 
\]
with each $\mathcal{X}_i \iso \Spec(\co_{\kk_i} [ 1/N])$ for a finite field extension $\kk_i/\kk$ unramified outside $N$.
For each $\mathcal{X}_i$, consider the metrized Hodge bundle
\[
\widehat{\omega}^\mathrm{Hdg}_{A_0/\mathcal{X}_i} \in \widehat{\Pic}(\mathcal{X}_i ) 
\]
of the universal $A_0 \to \mathcal{X}_i$.  The underlying line bundle can be identified with an element in the ideal class group of $\co_{\kk_i}[1/N]$, and hence some power of it admits a trivializing section
\[
s_i \in H^0\big( \mathcal{X}_i , ( \omega^\mathrm{Hdg}_{A_0/\mathcal{X}_i})^{\otimes d_i} \big)
\iso 
H^0( A_0 , \Omega^{\otimes d_i} _{ A_0 / \mathcal{X}_i } ).
\]
Comparing \eqref{hodge metric} with the definition of the Faltings height (as in \S 5.2 of \cite{BHKRY-2}) shows that 
\begin{align*}
 \frac{ -1}{  \# \mathcal{X}_i(\C) }  \sum_{ x\in \mathcal{X}_i(\C) } \log\| s_{i,x}\|^2
 & = d_i  \cdot C_1,
\end{align*}
up to a $\Q$-linear combination of $\{ \log(p) : p\mid N\}$.

Letting $d$ be the least common multiple of all $d_i$'s, we see that 
\[
\big( \widehat{\omega}^\mathrm{Hdg}_{A_0/\mathcal{S}_V(N)} \big)^{\otimes d} \in \widehat{\Pic}( \bar{\mathcal{S}}_V(N)  )
\]
admits a trivializing section $s$  with  $-\log\| s\|^2$  constant on every connected component of $\bar{\mathcal{S}}_V(N)(\C)$, and such that  the average value of $-\log\| s\|^2$ over any $\Aut(\C/\kk)$-orbit of  components is $d\cdot C_1$,
 up to a $\Q$-linear combination of $\{ \log(p) : p\mid N\}$.

  If we represent 
\[
 d \cdot \widehat{\omega}^\mathrm{Hdg}_{A_0/\mathcal{S}_V(N)}  \in \widehat{\CH}^1(  \bar{\mathcal{S}}_V(N)  )
\]
 by the arithmetic divisor
$
\widehat{\mathrm{div}}(s)  = ( 0 , - \log\| s\|^2),
$
then for any 
\[
 (\mathcal{Z}_N , g_N) \in \widehat{\CH}^{n-1}( \bar{\mathcal{S}}_V(N)  ) 
\]
the vanishing of the Chern form of $-\log\| s\|^2$ implies  the $*$-product formula
\[
 [ - \log\| s\|^2] * g_N =      - \log\| s\|^2  \wedge \delta_{\mathcal{Z}_N},
 \]
which implies the intersection formula 
\[
d \cdot \widehat{\deg}_N\big(  (\mathcal{Z}_N , g_N) \cdot
 \widehat{\omega}^\mathrm{Hdg} _{A_0/\mathcal{S}_V(N)} \big) 
 = 
- \sum_{ x \in \mathcal{Z}_N(\C)  } \log\| s_x\|^2  
\]
up to a $\Q$-linear combination of $\{ \log(p) : p \mid N\}$. 

Let   $L\subset \C$ be a finite Galois extension of $\kk$ large enough that all complex points of $\mathcal{Z}_N$ are defined over $L$, and rewrite the equality above as  
\[
d \cdot \widehat{\deg}_N \big(   (\mathcal{Z}_N , g_N) \cdot
 \widehat{\omega}^\mathrm{Hdg}_{A_0/\mathcal{S}_V(N)}   \big) 
  = 
 \frac{-1}{ [ L:\kk] }  \sum_{  \substack{ x \in \mathcal{Z}_N(L)  \\  \sigma \in \Gal(L/\kk)  }  }   \log\| s_{x^\sigma} \|^2.
 \]
The  right hand side is  $d C_1 \cdot \# \mathcal{Z}_N(\C)$, and hence
 \[
 \widehat{\deg}_N \big(  (\mathcal{Z}_N , g_N) \cdot \widehat{\omega}^\mathrm{Hdg}_{A_0/\mathcal{S}_V(N)}   \big)  
 = C_1 \cdot \# \mathcal{Z}_N(\C) 
 =   \widehat{\deg}_N  \big(   (\mathcal{Z}_N , g_N) \cdot (0,C_1)  \big)  
 \]
 up to a $\Q$-linear combination of $\{ \log(p) : p \mid N\}$. 
Varying  $N$ shows that
\[
\widehat{\deg}\big(  \widehat{\mathcal{Z}} \cdot \widehat{\omega}^\mathrm{Hdg} _{A_0/\mathcal{S}_V}   \big) 
=
 \widehat{\deg}\big(  \widehat{\mathcal{Z}} \cdot (0,C_1)  \big)  
\]
for every  $\widehat{\mathcal{Z}}   \in \widehat{\CH}^{n-1}( \bar{\mathcal{S}}_V  )$,
and the claim follows.
\end{proof}


\section{Another hermitian line bundle}


In order to relate the  hermitian line bundles of \eqref{three bundles}, we recall from  \S 5.1 of  \cite{BHKRY-2} a fourth hermitian line bundle.
Denote by 
\[
H^1_\mathrm{dR}(A) = \mathbb{R}^1\pi_* \Omega^\bullet_{A/\mathcal{S}_V}
\]
  the first relative algebraic de Rham cohomology of $\pi: A\to \mathcal{S}_V$,  a rank $2n$  vector bundle on $\mathcal{S}_V$.
The action of $\co_\kk$ on $A$ induces an action on  $H_1^\mathrm{dR}(A) = H^1_\mathrm{dR}(A)^\vee$, which is locally free of rank $n$ over $\co_\kk \otimes_\Z \co_S$. 
If 
\[
H_1^\mathrm{dR}(A)  \to    \mathcal{V}  
\]
denotes the    largest quotient  on which the action of $\co_\kk$ is through the structure morphism $\co_\kk \to \co_{\mathcal{S}_V}$, then $\mathcal{V}$ is a rank $n$ vector bundle on $\mathcal{S}_V$, equipped with 
a    morphism
\[
\det(\mathcal{V})^{-1} \to \bigwedge\nolimits^n H^1_{\mathrm{dR}}(A) 
\to H^n_{\mathrm{dR}}(A) .
\]
Given a complex point $z\in \mathcal{S}_V(\C)$ and a vector
$
s_z \in \det(\mathcal{V}_z)^{-1},
$
we view $s_z$ as an element of $H^n(A_z,\C)$, and define 
\[
\| s_z \|^2 = \left|      \int_{A_z (\C)} s_z \wedge \overline{s}_z  \right| .
\]
This defines a hermitian metric on $\det(\mathcal{V})^{-1}$, and hence we obtain a hermitian line bundle
\begin{equation}\label{det bundle}
\det(\mathcal{V})    \in     \widehat{\Pic}(\mathcal{S}_V)  .
\end{equation}

\begin{proposition} \label{prop:full bundle compare}
Assume $n\ge 2$.
Recalling the exceptional divisor of Definition \ref{def:special exceptional} and the notation  \eqref{constant metrics},  we have the equality
\[
2  \widehat{\taut}_V     
 =  \widehat{\omega}^\mathrm{Hdg}_{ A / \mathcal{S}_V}     
 +  2  \widehat{\omega}^\mathrm{Hdg}_{ A_0 / \mathcal{S}_V}  
 +  \det( \mathcal{V})    +   ( \mathrm{Exc}_V , C_2)  
\]
in $\widehat{\Pic}(\mathcal{S}_V)$, where 
\[
C_2 =  2  \log\left(   \frac{2 e^\gamma}{ D}  \right)   + (2-n)  \log(2\pi).
\]
In particular,  $\det(\mathcal{V}) \in \widehat{\Pic}( \bar{\mathcal{S}}_V, \mathscr{D}_\BKK )$ by  \eqref{three bundles}.
\end{proposition}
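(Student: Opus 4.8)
The identity to be established is an equality in $\widehat{\Pic}(\mathcal{S}_V)$, so it suffices to check it (a) at the level of the underlying line bundles, and (b) at the level of metrics at complex points. For the algebraic part, I would work over $\co_\kk$ and exploit the description \eqref{dual taut} of $\taut_V^{-1} = \Lie(A_0) \otimes \Lie(A)/\mathcal{F}$. The plan is to relate $\Lie(A)/\mathcal{F}$ to $\det(\mathcal{V})$ and to $\omega^\mathrm{Hdg}_{A/\mathcal{S}_V} = \det(\Lie(A))^{-1}$. The key observation is that $\mathcal{V}$ is the quotient of $H_1^\mathrm{dR}(A)$ on which $\co_\kk$ acts through the structure morphism, and the Hodge filtration gives an exact sequence involving $\Lie(A)$ and $\omega_{A}$; combining the $\co_\kk$-isotypic decompositions of these with Kr\"amer's signature condition (which pins down $\mathcal{F}\subset\Lie(A)$ as the rank $n-1$ piece) should produce, after taking determinants, an isomorphism of the form $\taut_V^{\otimes 2} \iso \omega^\mathrm{Hdg}_{A/\mathcal{S}_V} \otimes (\omega^\mathrm{Hdg}_{A_0/\mathcal{S}_V})^{\otimes 2} \otimes \det(\mathcal{V}) \otimes \co(\mathrm{Exc}_V)$. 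The appearance of $\co(\mathrm{Exc}_V)$ is exactly where the blow-up \eqref{basic exceptional} enters: over the singular locus the hyperplane $\mathcal{F}$ degenerates, and the discrepancy between the ``naive'' line bundle and the Kr\"amer one is measured by the exceptional divisor. I would extract this from the local charts and the computations already recorded in \cite{BHKRY-1} (in particular its \S 2.4 and \S 7.2), rather than redo them.

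For the metric comparison, I would pass to a complex point $z\in\mathcal{S}_V(\C)$ and use the uniformization \eqref{taut realization}, which identifies $\taut_{V,z}$ with an isotropic line in $V\otimes_\Q\C$, equipped with the metric \eqref{taut metric} involving the factor $-[s,\bar s]/(4\pi e^\gamma)$. On the other side, the metrics on $\omega^\mathrm{Hdg}_{A/\mathcal{S}_V}$, $\omega^\mathrm{Hdg}_{A_0/\mathcal{S}_V}$, and $\det(\mathcal{V})$ are all defined by period integrals: $\|s_z\|^2 = |(2\pi i)^{-\dim}\int s_z\wedge\bar s_z|$ for the Hodge bundles (with $\dim = n$ and $\dim = 1$ respectively), and $\|s_z\|^2 = |\int s_z\wedge\bar s_z|$ for $\det(\mathcal{V})$ (note the absence of the $(2\pi i)$ normalization here, which will contribute to $C_2$). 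The plan is to write all four metrics in terms of the hermitian form on $V\otimes_\Q\C$ via the comparison isomorphism between Betti/deRham realizations and \eqref{basic hom hermitian}, track every power of $2\pi$, $2\pi i$, $D$ (the discriminant enters through the symplectic form \eqref{symplectic} relating the hermitian form to the polarization pairing), and $e^\gamma$, and check that the residual constant is exactly $C_2 = 2\log(2e^\gamma/D) + (2-n)\log(2\pi)$. This is the kind of bookkeeping already carried out in \S 5 of \cite{BHKRY-2}, so I would cite the relevant period computations there and just assemble them.

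\textbf{Main obstacle.} The serious point is the algebraic identity with the precise exceptional-divisor correction term, i.e.\ getting the coefficient of $\mathrm{Exc}_V$ to be exactly $1$ (and the constant $C_2$ to have no hidden dependence on the components of $\mathrm{Exc}_V$). This requires a careful local analysis near the singular locus $\mathrm{Sing}_{(n-1,1)}$ of $\mathcal{M}^\Pap_{(n-1,1)}$: one must understand how the tautological bundle $\taut_V$, defined via the Kr\"amer hyperplane $\mathcal{F}$, compares on the blow-up \eqref{basic exceptional} with the bundle $\det(\mathcal{V})$, which is defined purely in terms of deRham cohomology and makes sense already on $\mathcal{M}^\Pap$. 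The discrepancy is supported on $\mathrm{Exc}_V$, and computing it amounts to understanding the Pappas wedge conditions and Kr\"amer's resolution in local coordinates. I expect this to reduce to a statement already present (at least implicitly) in \cite{BHKRY-1}; the work is in locating it and matching normalizations. Once the algebraic identity is in hand, the metric comparison is a finite --- if tedious --- computation with periods, and the final assertion $\det(\mathcal{V})\in\widehat{\Pic}(\bar{\mathcal{S}}_V,\mathscr{D}_\BKK)$ follows immediately, since the other three terms in the displayed identity already lie in that subgroup by \eqref{three bundles} and $(\mathrm{Exc}_V, C_2)$ lies in $\widehat{\Pic}(\bar{\mathcal{S}}_V)\subset\widehat{\Pic}(\bar{\mathcal{S}}_V,\mathscr{D}_\BKK)$ because $\mathrm{Exc}_V$ does not meet the boundary (Proposition \ref{prop:full compactification}).
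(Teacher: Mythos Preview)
Your proposal is correct and takes essentially the same approach as the paper: the paper's proof is a single sentence citing Proposition~5.1.2 of \cite{BHKRY-2} (noting only that the Hodge metric there differs from \eqref{hodge metric} by a power of $2\pi$), and your plan correctly identifies that the algebraic identity and the period computations are already carried out in \S 5 of \cite{BHKRY-2}. Your elaboration of how that cited result is proved---splitting into the underlying line-bundle identity (where the exceptional divisor arises from comparing the Kr\"amer and Pappas models) and the metric bookkeeping at complex points---accurately describes the content of the cited proposition, but the paper itself does not reproduce any of it.
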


\begin{proof}
This is a restatement of Proposition 5.1.2 of \cite{BHKRY-2}, keeping in mind that the metric on
\[
\det(\Lie(A))^{-1} = \omega^\mathrm{Hdg}_{A / \mathcal{S}_V}
\]
used in [loc.~cit.] differs from \eqref{hodge metric} by a power of $2\pi$.
\end{proof}

It is an observation of Gross \cite{gross} that the hermitian line bundle $\det(\mathcal{V})$ 
behaves in the generic fiber, for all arithmetic purposes, like the trivial bundle with a constant metric. 
 This observation was extended to integral models in 
\S 5.3 of \cite{BHKRY-2}, whose results are the  basis of the following proposition.

\begin{proposition}\label{prop:gross numerical}
The Chern form of   $\det(\mathcal{V})$ is identically $0$.
If  $n>2$ then, up to numerical equivalence, 
\[
\det(\mathcal{V})  =   ( 0 , C_3 ), 
\]
 where 
\[
C_3 = (4-2n) h_\kk^\mathrm{Falt} + \log( 4\pi^2 D).
\]
 \end{proposition}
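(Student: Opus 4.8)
The plan is to extract the proposition from the integral-model analysis of $\det(\mathcal{V})$ carried out in \S 5.3 of \cite{BHKRY-2}, combined with a near-verbatim repetition of the averaging argument used in the proof of Proposition \ref{prop:easy numerical}.

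\textbf{Vanishing of the Chern form.} Gross's observation is that over $\C$ the polarized $\co_\kk$-Hodge structure $\det_{\co_\kk} H^1(A_z,\Q)$ depends only on $\kk$ and on the parity of $n$, so that its periods are locally constant as $z$ varies over $\mathcal{S}_V(\C)$. As in \S 5.3 of \cite{BHKRY-2}, this says that $\det(\mathcal{V})$, after rescaling the metric by the constant implicit in $C_3$, is the pullback under the projection $\mathcal{S}_V(\C)\to\mathcal{M}_{(1,0)}(\C)$ of a hermitian line bundle on the $0$-dimensional orbifold $\mathcal{M}_{(1,0)}(\C)$. A metric pulled back from a $0$-dimensional base is constant on connected components, so $\chern(\det(\mathcal{V})) = -dd^c\log\|s\|^2 = 0$ for every local holomorphic section $s$.

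\textbf{Numerical equivalence.} Put $\widehat{\mathcal{E}} = \det(\mathcal{V}) - (0,C_3)$. The last assertion of Proposition \ref{prop:full bundle compare} gives $\det(\mathcal{V})\in\widehat{\Pic}(\bar{\mathcal{S}}_V,\mathscr{D}_\BKK)$, and $(0,C_3)\in\widehat{\Pic}(\bar{\mathcal{S}}_V)$ trivially, so $\widehat{\mathcal{E}}$ lies in the subgroup of \eqref{pre-log injection}; it remains to check $\widehat{\deg}(\widehat{\mathcal{Z}}\cdot\widehat{\mathcal{E}})=0$ for every $\widehat{\mathcal{Z}}\in\widehat{\CH}^{n-1}(\bar{\mathcal{S}}_V,\mathscr{D}_\BKK)$. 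Fix $N\ge 3$ and work on $\bar{\mathcal{S}}_V(N)$. The main step, and the place where the hypothesis $n>2$ is used, is the integral refinement of \S 5.3 of \cite{BHKRY-2}: there is a positive integer $d$ and a rational section $s$ of $\det(\mathcal{V})^{\otimes d}$ over $\bar{\mathcal{S}}_V(N)$ whose divisor is supported in characteristics dividing $N$, such that $-\log\|s\|^2$ is constant on every connected component of $\bar{\mathcal{S}}_V(N)(\C)$ and has average value $d\cdot C_3$ over every $\Gal(\overline{\kk}/\kk)$-orbit of components, up to a $\Q$-linear combination of $\{\log p : p\mid N\}$. Concretely this identifies a power of $\det(\mathcal{V})$ with the pullback of a power of the Hodge bundle of the universal CM elliptic curve (up to a constant metric shift), whose periods are then evaluated by the Chowla--Selberg formula \eqref{faltings}; the CM period is essentially a multiple of $\sqrt{D}$, which produces the $\log(D)$ term inside $C_3$. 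Granting this, the argument of Proposition \ref{prop:easy numerical} goes through unchanged: represent $d\cdot\det(\mathcal{V})\in\widehat{\CH}^1(\bar{\mathcal{S}}_V(N))$ by $\widehat{\mathrm{div}}(s)=(\mathrm{div}(s),-\log\|s\|^2)$; for a representative $(\mathcal{Z}_N,g_N)$ of the pullback of $\widehat{\mathcal{Z}}$, the vanishing of the Chern form of $-\log\|s\|^2$ yields the $*$-product identity $[-\log\|s\|^2]*g_N = -\log\|s\|^2\wedge\delta_{\mathcal{Z}_N}$, hence $\widehat{\deg}_N\big((\mathcal{Z}_N,g_N)\cdot d\cdot\det(\mathcal{V})\big) = -\sum_{x\in\mathcal{Z}_N(\C)}\log\|s_x\|^2$ up to a $\Q$-linear combination of $\{\log p : p\mid N\}$. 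Spreading the complex points of $\mathcal{Z}_N$ over a finite Galois extension of $\kk$ and inserting the average value, the right side is $d\cdot C_3\cdot\#\mathcal{Z}_N(\C) = \widehat{\deg}_N\big((\mathcal{Z}_N,g_N)\cdot d\cdot(0,C_3)\big)$, up to the same ambiguity. Dividing by $d_N\cdot d$ and letting $N$ vary removes the ambiguity and gives $\widehat{\deg}(\widehat{\mathcal{Z}}\cdot\det(\mathcal{V})) = \widehat{\deg}(\widehat{\mathcal{Z}}\cdot(0,C_3))$, i.e. $\widehat{\deg}(\widehat{\mathcal{Z}}\cdot\widehat{\mathcal{E}})=0$.

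The main obstacle is entirely in the structural input: one needs not only Gross's generic-fiber statement but the stronger fact that, over $\bar{\mathcal{S}}_V(N)$, a power of $\det(\mathcal{V})$ differs from a pullback of a Hodge bundle from $\mathcal{M}_{(1,0)}$ only by a constant metric shift and by vertical divisors at primes dividing $N$, so that the ramified primes $p\mid D$ contribute only through the metric and not through extra vertical cycles. Proving this requires controlling $\mathcal{V}$, and in particular $\det(\mathcal{V})$, on the fibers of $\bar{\mathcal{S}}_V$ at $p\mid D$, which is precisely the analysis of \S 5.3 of \cite{BHKRY-2} and the source of the hypothesis $n>2$ (the case $n=2$ being treated by other means, via \cite{howard-volumes-I}).
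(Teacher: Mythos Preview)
Your overall strategy matches the paper's: invoke \S 5.3 of \cite{BHKRY-2} for a trivializing section of a power of $\det(\mathcal{V})$, then rerun the Galois-averaging argument of Proposition~\ref{prop:easy numerical}. But your framing of both key inputs as a ``pullback from $\mathcal{M}_{(1,0)}$'' is inaccurate, and you have elided the step that actually produces the constant $C_3$.

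For the Chern form: $\det(\mathcal{V})$ is built from $A$, not $A_0$, and is not a pullback along $\mathcal{S}_V\to\mathcal{M}_{(1,0)}$ even as a line bundle. The paper instead cites Gross \cite{gross} directly: $H^0(\mathcal{S}_V(N)_{/\C},\det(\mathcal{V}))$ is one-dimensional, and the norm of any nonzero global section is locally constant on $\mathcal{S}_V(N)(\C)$; this is what kills the Chern form. For the numerical equivalence: \S 5.3 of \cite{BHKRY-2} does not identify $\det(\mathcal{V})$ with a pullback of the elliptic Hodge bundle. Theorem~5.3.1 of \cite{BHKRY-2} (this is where $n>2$ enters, via geometric normality of the fibers of $\mathcal{M}^{\Pap}_{(n-1,1)}$) furnishes a nowhere-vanishing section $t$ over $\mathcal{S}_V(N)_{/\co_{\kk'}[1/N]}$ for a finite extension $\kk'/\kk$; the product of its $\Gal(\kk'/\kk)$-conjugates gives the section $s$ with $\mathrm{div}(s)=0$ over $\co_\kk[1/N]$. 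To extract $C_3$, the paper then chooses on each connected component $X\subset\mathcal{S}_V(N)_{/\C}$ a point $x\in X(L)$ for which $A$ has complex multiplication and everywhere good reduction outside $N$, and applies Corollary~5.3.2 and Proposition~5.3.3 of \cite{BHKRY-2} at the resulting integral point $\underline{x}:\Spec(\co_L[1/N])\to\mathcal{S}_V(N)$ to compute the Galois-averaged $-\log\|s\|^2$ as $dC_3$. This CM-point evaluation is the substantive input you have glossed over.
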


\begin{proof}
Fix an integer  $N\ge 3$.  
As in  Theorem 1 of  \cite{gross},  the $\C$-vector space $H^0 (  \mathcal{S}_V(N)_{/\C}  , \det(\mathcal{V}) )$ has dimension $1$,  and the norm of  any  global  section  is a locally constant function on     $\mathcal{S}_V(N)(\C)$. The vanishing of the Chern form follows.
Moreover, one can choose a nonzero   global section $t$  defined over a finite  Galois extension $\kk'/\kk$, 
and then for every $\kk$-algebra embedding 
$\sigma:\kk' \to \C$ the norm   $\| t^\sigma  \|$ must again be  locally constant.

If $n>2$ then\footnote{The assumption $n>2$ is mistakenly omitted in Theorem 5.3.1 of \cite{BHKRY-2}, whose proof requires that  $\mathcal{M}^\Pap_{(n-1,1)}$ has geometrically normal fibers.
The normality is a theorem of  Pappas  when $n>2$, but is false when $n=2$.}   
Theorem 5.3.1 of \cite{BHKRY-2} allows us to  choose $t$  so that it extends to 
 a  nowhere vanishing section 
\[
t  \in H^0 \big(  \mathcal{S}_V(N)_{/\co_{\kk'}[1/N] }  , \det(\mathcal{V}) \big) .
\]
Setting $d=[\kk':\kk]$ and taking the  tensor product of all $\Gal(\kk'/\kk)$-conjugates of $t$, we obtain a section 
\[
s \in H^0( \mathcal{S}_V(N) ,  \det(\mathcal{V})^{\otimes d}) 
\]
such that $\mathrm{div}(s) =0$, and such that  $-\log\| s\|$  is  locally constant.
Let
\[
c_X =  -\log\| s\|^2
\]
denote its  value on the connected component  $X \subset \mathcal{S}_V(N)_{/\C}$.

Fix a finite Galois extension  $L/\kk$ contained $\C$ large enough that every component 
$X \subset \mathcal{S}_V(N)_{/\C}$ is defined over $L$.
We may further enlarge $L$ to assume that each $X$ admits an $L$-point 
\[
x \in X(L) \subset \mathcal{S}_V(N)(L)
\]
that extends to 
\[
\underline{x}  : \Spec(  \co_L[1/N] )  \to  \mathcal{S}_V(N).  
\]
 For example, start by fixing a complex point  $x\in X(\C)$ corresponding to a pair $(A_0,A)$ for which $A$ has complex multiplication.  Then  enlarge $L$ so that both $A_0$ and $A$ (along with their level $N$-structures)  are defined over $L$ and have everywhere good reduction.

Applying Corollary 5.3.2 and Proposition 5.3.3 of \cite{BHKRY-2}  to $\underline{x}$, we find 
\[
\frac{-1}{ [ L:\kk] }  \sum_{ \sigma \in \Gal(L/\kk) }  \log\| s_{x^\sigma} \|^2 
=  d  C_3
\]
up to a $\Q$-linear combination of $\{ \log(p) : p \mid N\}$, and hence
\[
\frac{1}{ [ L:\kk] }  \sum_{ \sigma \in \Gal(L/\kk) }  c_{X^\sigma} =  d  C_3
\]
up to the same ambiguity.

We have now shown that the average value of $-\log\|s\|^2$ over any $\Aut(\C/\kk)$ orbit of connected components  in $\mathcal{S}_V(N)(\C)$ is $d C_3$,
up to a $\Q$-linear combination of $\{ \log(p) : p \mid N\}$.  The  proposition follows from this, by the same argument used in the proof of Proposition \ref{prop:easy numerical}.
 \end{proof}


\section{Comparison of hermitian line bundles}


We now come to the main results of Chapter \ref{s:special shimura bundles}.

The exceptional divisor $\mathrm{Exc}_V \subset \bar{\mathcal{S}}_V$  of Definition \ref{def:special exceptional} is a recurring nuisance, 
 in part because it has nontrivial arithmetic intersection with  $\widehat{\taut}_V$.
 This will be explored more fully in \S \ref{ss:exceptional volume} below.  
 The following proposition allows us to avoid this nuisance by slightly modifiying $\widehat{\taut}_V$, and also clarifies the relation between the second and third line bundles in \eqref{three bundles}.

 \begin{theorem}\label{thm:taut-hodge compare}
 Assume $n\ge 2$.
 Recalling  the notation \eqref{constant metrics}, the hermitian line bundle 
\begin{equation}\label{Kbun}
\widehat{\tautmod}_V \define 2 \widehat{\taut}_V - (\mathrm{Exc}_V,0)  \in \widehat{\Pic}( \bar{\mathcal{S}}_V, \mathscr{D}_\BKK ) 
\end{equation}
enjoys the following properties.
\begin{enumerate}
\item
Every irreducible component $E\subset\mathrm{Exc}_V$   satisfies 
\[
(E,0) \cdot \widehat{\tautmod}_V =0
\]
in $ \widehat{\CH}^2( \bar{\mathcal{S}}_V , \mathscr{D}_\BKK )_\Q$, as well as the height relation
\[
 \mathrm{ht}_{\widehat{\tautmod}_V  } (E)   =0 .
\]
The same equalities hold if we replace $\widehat{\tautmod}_V$ with $\widehat{\omega}^\mathrm{Hdg}_{ A / \mathcal{S}_V}$ or  $\widehat{\omega}^\mathrm{Hdg}_{ A_0 / \mathcal{S}_V}$.
\item
We have the equality of Chern forms 
\[
\chern(   \widehat{\omega}^\mathrm{Hdg}_{ A / \mathcal{S}_V}   ) =2 \chern( \widehat{\taut}_V ) =
\chern( \widehat{\tautmod}_V ) .
\]
\item
If $n>2$ then, up to numerical equivalence,  
\[
 \widehat{\tautmod}_V
 =
\widehat{\omega}^\mathrm{Hdg}_{ A / \mathcal{S}_V}   +    ( 0  ,  C_0(n) ) 
\in \widehat{\mathrm{Pic}}(\mathcal{S}_V) ,
\]
 where $C_0(n)$ is the constant of Theorem \ref{thm:intro main}.  In particular, up to numerical equivalence,
 \[
 2 \widehat{\taut}_V
 =
 \widehat{\omega}^\mathrm{Hdg}_{ A / \mathcal{S}_V}   +    ( \mathrm{Exc}_V  ,  C_0(n) ) .
 \]
\end{enumerate}
 \end{theorem}

\begin{proof}
For (1),  the key observation is that  the abelian scheme $A \to \mathcal{S}_V$ is a pullback  via the vertical arrow on the right in \eqref{special exceptional}.
In particular,  $\omega^\mathrm{Hdg}_{A/\mathcal{S}_V}$ is isomorphic to the pullback of 
\[
 \omega^\mathrm{Hdg}_{A/\mathcal{M}^\Pap_{(n-1,1)}} \in \Pic ( \mathcal{M}^\Pap_{(n-1,1)}  ) .
\]
  If  $\mathcal{M}^\Pap_{(n-1,1)}$ were a scheme we could  trivialize this latter line bundle  over a Zariski open neighborhood of  the ($0$-dimensional) singular locus.  
  This would pull back to a trivialization of $\omega^\mathrm{Hdg}_{A/ \mathcal{S}_V}$  over  an open neighborhood of   $\mathrm{Exc}_V$, and in particular over an open neighborhood of 
any irreducible component $E\subset \mathrm{Exc}_V$.  
  
   To account for the stackiness, simply fix an  integer $N\ge 3$
  and apply the same reasoning with level $N$-structure  to see that 
 $\omega^\mathrm{Hdg}_{A/\mathcal{S}_V(N)}$ is trivial in some Zariski open neighborhood of 
\[
E(N)  = E \times_{ \mathcal{S}_V} \mathcal{S}_V(N).
\]
This implies  the arithmetic intersection formula
$
( E(N) , 0 )  \cdot  \widehat{\omega}^\mathrm{Hdg}_{A/\mathcal{S}_V(N)} =0 
$
 and varying $N$ proves 
\[
( E , 0 )  \cdot  \widehat{\omega}^\mathrm{Hdg}_{A/\mathcal{S}_V} =0 .
\]
The line bundle \eqref{det bundle}  is also a pullback via the vertical arrow on the right in \eqref{special exceptional},   hence the same argument  shows 
\[
(E,0) \cdot \det(\mathcal{V}) =0.
\]

 The proof of Proposition \ref{prop:easy numerical} shows that, for any $N\ge 3$,
there is a positive multiple of
\[
\widehat{\omega}^\mathrm{Hdg}_{A_0/\mathcal{S}_V(N)} 
\in \widehat{\CH}^1( \bar{ \mathcal{S}} _V (N) , \mathscr{D}_\BKK ) 
\]
that can be represented by a purely archimedean  arithmetic divisor $(0,g)$.  
Any such arithmetic divisor satisfies 
$
(E(N) ,0 ) \cdot (0,g) =0,
$
and  varying $N$ shows that 
\[
(E ,0 ) \cdot \widehat{\omega}^\mathrm{Hdg}_{ A_0 / \mathcal{S}_V} =0.
\]

Rewriting the  relation of Proposition \ref{prop:full bundle compare} as
\begin{equation}\label{Kalt}
  \widehat{\tautmod}_V   
 =  \widehat{\omega}^\mathrm{Hdg}_{ A / \mathcal{S}_V}     
 +  2  \widehat{\omega}^\mathrm{Hdg}_{ A_0 / \mathcal{S}_V}  
 +  \det( \mathcal{V})    +   ( 0 , C_2)  ,
\end{equation}
we have shown that the right hand side  has trivial arithmetic intersection with $(E,0)$, and hence so does the left hand side.  This proves the first equality of (1).  The second  is a formal consequence of this and  \eqref{height degree}.

 The first equality of (2) follows  from \eqref{Kalt}, 
 as the Chern forms of the final three terms on the right vanish by    Lemma \ref{lem:numerical basics}, Proposition \ref{prop:easy numerical}, and  Proposition \ref{prop:gross numerical}.
 The second equality of (2) is clear from the definitions.

Claim (3) also follows from \eqref{Kalt}, using  Proposition  \ref{prop:easy numerical},  Proposition \ref{prop:gross numerical},   and the equality
 $ C_0 (n) = 2C_1 +  C_2+ C_3$.
\end{proof}

\begin{remark}\label{rem:taut-hodge volume}
If $n>2$ then part (3) of Theorem \ref{thm:taut-hodge compare} implies
\begin{align*}
2^n \cdot  \widehat{\mathrm{vol}} (  \widehat{\tautmod}_V   )
& = \widehat{\mathrm{vol}} \big(  \widehat{\omega}^\mathrm{Hdg}_{ A / \mathcal{S}_V}   
 +    ( 0 ,  C_0(n)  ) \big)  \\
& = 
\widehat{\mathrm{vol}} \big(  \widehat{\omega}^\mathrm{Hdg}_{ A / \mathcal{S}_V}   \big) +  n C_0(n)    \int_{\mathcal{S}_V(\C) }  \chern( \widehat{\omega}^\mathrm{Hdg}_{ A / \mathcal{S}_V}    )^{n-1}
\end{align*}
where we have used  Lemma \ref{lem:numerical basics} for the first equality, and Lemma \ref{lem:trivial volume shift} for the second.
 In Proposition \ref{prop:taut-hodge strong volume} we will show that this equality also holds when $n=2$.
\end{remark}

The final claim of Theorem \ref{thm:taut-hodge compare} has the following  variant.

\begin{proposition}\label{prop:simple taut-hodge}
Fix an $N\ge 3$.
There is a positive integer $r$ such that the $r^\mathrm{th}$ power of $\mathcal{L}_V^{\otimes 2}$ becomes isomorphic to the $r^\mathrm{th}$ power of 
$
 \omega^\mathrm{Hdg}_{ A / \mathcal{S}_V}
 \otimes \co(\mathrm{Exc}_V)
$
 after pulling both line bundles back  via the morphism 
\[
\bar{\mathcal{S}}_V(N) \to \bar{\mathcal{S}}_{V/\co_\kk[1/N]}.
\]
\end{proposition}

\begin{proof}
We begin with the equality 
\[
2  \widehat{\taut}_V     
 =  \widehat{\omega}^\mathrm{Hdg}_{ A / \mathcal{S}_V}     
 +  2  \widehat{\omega}^\mathrm{Hdg}_{ A_0 / \mathcal{S}_V}  
 +  \det( \mathcal{V})    +   ( \mathrm{Exc}_V , C_2)  
\]
of metrized line bundles in $\widehat{\Pic}( \bar{\mathcal{S}}_V, \mathscr{D}_\BKK )$ from   Proposition \ref{prop:full bundle compare}.  In particular, after forgetting the metrics we obtain the same equality in $\Pic( \bar{\mathcal{S}}_V)$.

After pullback to $\bar{\mathcal{S}}_V(N)$,  the proof of Proposition \ref{prop:easy numerical} shows that some power of 
$\omega^\mathrm{Hdg}_{ A_0 / \mathcal{S}_V}$ is trivial.  Similarly, the proof of Proposition \ref{prop:gross numerical} shows that some power of $ \det( \mathcal{V})$ admits a trivializing section $s$ over the interior $\mathcal{S}_V(N)$ such that $\| s\|$ is a locally constant function on the complex fiber.  This constancy of the norm implies that $s$, when viewed as a rational section on the toroidal compactification $\bar{\mathcal{S}}_V(N)$, still has no zeros or poles; the argument for this is exactly as in the proof of Proposition 5.2.1 of \cite{howard-volumes-I}, and relies on the pre-log singularity of the metric on $\det(\mathscr{V})$ proved in Proposition \ref{prop:full bundle compare}.
Thus some power of $\det(\mathcal{V})$ is trivial, and the claim follows.
\end{proof}


\section{Volume of the exceptional divisor}
\label{ss:exceptional volume}


In this section we assume $n \ge 2$, and fix an irreducible ($=$ connected) component $E$ of the  exceptional divisor  $\mathrm{Exc}_V \subset \bar{\mathcal{S}}_V$ of Definition \ref{def:special exceptional}.  
 Recalling the notation \eqref{constant metrics}, our goal is to compute  the arithmetic volume of 
\[
(E,0) \in \widehat{\Pic}( \bar{\mathcal{S}}_V ).
\]

By definition of the exceptional divisor, there is a commutative diagram with cartesian squares
\begin{equation}\label{singular point}
\xymatrix{
{E} \ar[r] \ar[d] & { e } \ar[d] \\
{\mathrm{Exc}}_V  \ar[r] \ar[d] & {  \mathcal{M}_{(1,0)} \times_{\co_\kk} \mathrm{Sing}_{(n-1,1)} }  \ar[d] \\
{\mathcal{S}_V}  \ar[r]  & {  \mathcal{M}_{(1,0)} \times_{\co_\kk} \mathcal{M}^\Pap_{(n-1,1)}  }  
}
\end{equation}
in which $e$ is a connected component of the $0$-dimensional reduced and irreducible $\co_\kk$-stack 
$\mathcal{M}_{(1,0)} \times_{\co_\kk} \mathrm{Sing}_{(n-1,1)}$.  In particular, $e$ is supported in a single characteristic $p\mid D$,  and admits  a presentation as a stack quotient 
\[
e \iso \Delta \backslash \Spec(\F_\mathfrak{p}'),
\]
 in which $\mathfrak{p} \subset \co_\kk$ is the prime above $p$,   $\F_\mathfrak{p}'$ is a finite extension of  $\F_\mathfrak{p}=\co_\kk/\mathfrak{p}$, and $\Delta$ is a finite group acting on  $\F_\mathfrak{p}'$. 
Define a rational number
\[
m_E \define \sum_{ z\in e(\F_\mathfrak{p}^\alg) } \frac{ 1 }{ |\Aut(z)|} = \frac{[  \F_\mathfrak{p}' : \F_\mathfrak{p}   ]  }{ |\Delta| }    .
\]

\begin{proposition}\label{prop:projective intersection}
The iterated intersection
\[
\widehat{\taut}_V^{-1} \cdots  \widehat{\taut}_V^{-1}  \cdot (E,0) \in \widehat{\CH}^n(\bar{\mathcal{S}}_V , \mathscr{D}_\BKK)
\]
has arithmetic degree $\widehat{m}_E = m_E \log(p)$
\end{proposition}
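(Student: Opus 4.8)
The plan is to reduce the iterated self-intersection to a computation on the exceptional divisor $E$ itself, using the fact (established in part (1) of Theorem \ref{thm:taut-hodge compare}) that $\widehat{\tautmod}_V=2\widehat{\taut}_V-(\mathrm{Exc}_V,0)$ has trivial arithmetic intersection with $(E,0)$. First I would observe that, since $E$ is a connected component of $\mathrm{Exc}_V$ disjoint from the other components, the restriction of $\co(\mathrm{Exc}_V)$ to $E$ agrees with $\co(E)|_E$, which by the blow-up description \eqref{basic exceptional} (or \eqref{special exceptional}) is the tautological quotient bundle $\co(-1)$ on the projective space fiber $E \to e$. More precisely, recalling Remark \ref{rem:singular description}, the fiber of $\mathrm{Exc}_{(n-1,1)}\to\mathrm{Sing}_{(n-1,1)}$ over a geometric point is the projective space $\mathbb{P}(\Lie(A))$ of hyperplanes $\mathcal{F}\subset\Lie(A)$, of dimension $n-2$; pulling back along $\mathcal{S}_V\to\mathcal{M}_{(n-1,1)}$ does not change this, so $E\to e$ is a $\mathbb{P}^{n-2}$-bundle (étale-locally trivial), and $\co(E)|_E\cong\co_{\mathbb{P}^{n-2}}(-1)$ in each fiber.

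Next I would unwind $\widehat{\tautmod}_V\cdot(E,0)=0$ to identify $\widehat{\taut}_V|_E$. Since $(\mathrm{Exc}_V,0)|_E$ restricts to $(E,0)|_E$, the vanishing $2\widehat{\taut}_V\cdot(E,0)=(E,0)\cdot(E,0)$ in $\widehat{\mathrm{CH}}^2(\bar{\mathcal{S}}_V,\mathscr{D}_\BKK)_\Q$ says, after restriction to $E$ and using the excess/self-intersection formula, that $2\widehat{\taut}_V|_E$ equals $\co(E)|_E=\co_{\mathbb{P}^{n-2}}(-1)$ (with the induced metric, which plays no role for a $0$-dimensional base). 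Equivalently $\widehat{\taut}_V^{-1}|_E$ is, up to $2$-torsion and metric data irrelevant at the level of arithmetic degree of a top-degree cycle on a $\mathbb{P}^{n-2}$-bundle over a $0$-dimensional stack, the hyperplane class $\co_{\mathbb{P}^{n-2}}(1)$. Therefore
\[
\widehat{\taut}_V^{-1}\cdots\widehat{\taut}_V^{-1}\cdot(E,0)
= c_1\big(\co_{\mathbb{P}^{n-2}}(1)\big)^{n-2}\cdot[E]
\]
in $\mathrm{CH}^n(\bar{\mathcal{S}}_V)$ (all the archimedean contributions vanish because $E$ is a vertical cycle in characteristic $p$, so the Green currents are supported away from the complex fiber and the $*$-products are purely geometric). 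The $(n-2)$-fold power of the hyperplane class on each $\mathbb{P}^{n-2}$ fiber is the class of a single $\F_\mathfrak{p}^\alg$-point of that fiber, so this cycle is rationally equivalent to $[e]$, viewed as a $0$-cycle on $\bar{\mathcal{S}}_V$ supported at $\mathfrak{p}$.

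Finally I would apply the normalization of the arithmetic degree from \S\ref{ss:chow}, in particular \eqref{degree normalization} together with the presentation $e\cong\Delta\backslash\Spec(\F_\mathfrak{p}')$: the arithmetic degree of $[e]$ is $m_E\log(\#\F_\mathfrak{p})=m_E\log(p)$, since $\mathfrak{p}$ lies over $p$ and $\#\F_\mathfrak{p}=p$ (as $p\mid D$ is ramified or inert with prime residue field $\F_p$—in the ramified case $\F_\mathfrak{p}=\F_p$, and $p\mid D$ forces ramification, so indeed $\#\F_\mathfrak{p}=p$). This yields $\widehat{m}_E=m_E\log(p)$, as claimed. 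The main obstacle I anticipate is making the second step rigorous: carefully justifying that $\widehat{\taut}_V|_E$ really is (a $2$-torsion twist of) the tautological line bundle of the $\mathbb{P}^{n-2}$-bundle, i.e.\ tracking the blow-up geometry of \eqref{basic exceptional} precisely enough through the open immersion $\mathcal{S}_V\subset\mathcal{M}_{(1,0)}\times_{\co_\kk}\mathcal{M}_{(n-1,1)}$ and matching the normalization of $\co(E)$ with the exceptional bundle of Kr\"amer's blow-up—this requires the explicit local structure of \eqref{basic exceptional} (from \cite{pappas}, \cite{kramer}, and \S2.3 of \cite{BHKRY-1}) rather than a soft argument.
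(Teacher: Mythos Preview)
There are two concrete errors that break your argument.

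First, a dimension miscount: $\Lie(A)$ has rank $n$ (the universal $A \to \mathcal{M}_{(n-1,1)}$ has relative dimension $n$), so the projective space of hyperplanes $\mathcal{F} \subset \Lie(A)$ is $\mathbb{P}^{n-1}$, not $\mathbb{P}^{n-2}$. Correspondingly the restriction to $E$ involves an $(n-1)$-fold self-intersection of $\taut_V^{-1}|_E$, not an $(n-2)$-fold one; with your count the iterated product would land in the wrong Chow group.

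Second, and more seriously, the claim $\co(E)|_E \cong \co_{\mathbb{P}}(-1)$ is false. That formula holds for the blow-up of a \emph{smooth} center, but $\mathrm{Sing}_{(n-1,1)}$ is the singular locus of $\mathcal{M}^\Pap_{(n-1,1)}$, and Kr\"amer's resolution is not the blow-up of a regular ambient along a regular center. In fact the normal bundle is $\co(E)|_E \cong \co(-2)$: once one knows $\taut_V^{-1}|_E \cong \co(1)$ (as the paper establishes directly), the relation $2\widehat{\taut}_V \cdot (E,0) = (E,0)^2$ from Theorem \ref{thm:taut-hodge compare}(1) forces $\co(E)|_E = 2\taut_V|_E = \co(-2)$. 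Your ``up to $2$-torsion'' step therefore cannot be made rigorous: from $2\taut_V|_E = \co(-1)$ you would need $\taut_V^{-1}|_E$ to be a square root of $\co(1)$, and the $(n-1)$-fold self-intersection would be off by a factor of $2^{n-1}$ from the claimed value. The obstacle you flag at the end is real, but it is not a matter of bookkeeping through the open immersion; it is that the blow-up geometry genuinely does not give $\co(-1)$.

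The paper's route avoids all of this by going straight to formula \eqref{dual taut}. After passing to a level-$N$ cover to kill automorphisms and trivializing the line $\Lie(A_0)$ over the $0$-dimensional base $e(N)$, one has $\taut_V^{-1}|_{E'} = \Lie(A)/\mathcal{F}$ on each connected component $E'$, which is by definition the tautological quotient bundle $\co(1)$ on the $\mathbb{P}^{n-1}$ of hyperplanes in $\Lie(A)$. Its $(n-1)$-fold self-intersection is the class of any section of $E(N)\to e(N)$, and the arithmetic degree is read off from the stacky point count defining $m_E$. No appeal to Theorem \ref{thm:taut-hodge compare} or to the normal bundle of $E$ is needed.
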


\begin{proof}
After Remark \ref{rem:singular description},  one might expect that $E$ is isomorphic to the projective space of hyperplanes in an $n$-dimensional vector space.   
In particular, there should be a universal hyperplane $\mathcal{F} \subset \co_E^n$, with the property that the restriction of  $\taut_V^{-1}$ to $E$ is isomorphic to  $\co_E^n /\mathcal{F}$.

The obstruction to this being true is due  to stacky issues, which can be removed by fixing an integer $N\ge 3$ prime to $p$ and adding level $N$-structure to the universal pair $(A_0,A)$ over $e$. 
Let $e(N) \to e$ be the scheme classifying such level structures, and consider the cartesian diagram
 (this is the definition of the upper left corner)
\[
\xymatrix{ 
{ E(N) } \ar[r]\ar[d]  & {  e(N) }  \ar[d] \\
{ E } \ar[r] & { e .}
}
\]
The scheme $e(N)$, being a reduced scheme finite over $\F_\mathfrak{p}$,  is a disjoint union of finitely many spectra of finite extensions of $\F_\mathfrak{p}$.

Fix a connected component $E' \subset E(N)$, and let 
\[
\Spec(\F_\mathfrak{p}') = e' \subset e(N)
\]
 be the connected component  below it. 
As in Remark \ref{rem:singular description}, $E'$ is precisely the projective space over  $e'$ classifying hyperplanes  $ \mathcal{F} \subset \Lie(A|_{e'})$.
Moreover,  after fixing a trivialization  $\Lie(A_0|_{e'}) \iso \F_\mathfrak{p}'$, the isomorphism  \eqref{dual taut} identifies  
\[
\taut_V^{-1} |_{E'} =      \Lie(A|_{E'}) / \mathcal{F}  \in \Pic(E') \iso \CH^1(E') 
\]
where $\mathcal{F} \subset \Lie(A|_{E'})$ is the universal hyperplane. 
 A routine exercise then shows that the iterated intersection
\[
( \taut_V^{-1} |_{E'}) \cdots (\taut_V^{-1} |_{E'}) \in \CH^{n-1}(E') \iso \Z
\]
is  represented by the cycle  class of  any $\F_\mathfrak{p}'$-valued point of $E'$.
In other words, the cycle class of any section to $E' \to e'$.

Now allow  the connected component $E' \subset E(N)$ to vary.  If we fix 
any section to $E(N) \to e(N)$, and use it to view $e(N)$ as a $0$-cycle on $E(N)$, then 
\[
e(N) = ( \taut_V^{-1} |_{E(N)}) \cdots (\taut_V^{-1} |_{E(N)}) \in \CH^{n-1}(E(N)) .
\]
This implies  the arithmetic intersection formula
\[
( e(N) , 0 ) = \widehat{\taut}_V^{-1} \cdots  \widehat{\taut}_V^{-1} \cdot ( E(N) ,  0 ) 
\in \widehat{\CH}^n( \bar{\mathcal{S}}_V(N) , \mathscr{D}_\BKK )_\Q.
\]

Finally, recalling \eqref{degree normalization}, we deduce 
\[
\widehat{\deg}  \big(  \widehat{\taut}_V^{-1} \cdots  \widehat{\taut}_V^{-1} \cdot ( E ,  0 )  \big) 
= 
 \frac{ \# e(N) (\F_\mathfrak{p}^\alg) }{d_N}  \log(p) 
 = 
   \sum_{ z\in e(\F_\mathfrak{p}^\alg) } \frac{\log(p)}{ |\Aut(z)|}
\]
up to a $\Q$-linear combination of $\{ \log(p) : p\mid N\}$.  Varying $N$ completes the proof.
\end{proof}

\begin{corollary}\label{cor:exceptional volume}
The hermitian line bundle $(E,0) \in \widehat{\Pic}(\bar{\mathcal{S}}_V )$ has arithmetic volume
\[
\widehat{\vol} ( E,0) = (-2)^{n-1} \cdot  \widehat{m}_E .
\]
\end{corollary}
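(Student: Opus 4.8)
The plan is to reduce the computation to two facts already established: part (1) of Theorem \ref{thm:taut-hodge compare}, which gives $(E,0)\cdot\widehat{\tautmod}_V=0$ for every irreducible component $E\subset\mathrm{Exc}_V$, and Proposition \ref{prop:projective intersection}, which computes the $(n-1)$-fold intersection of $(E,0)$ with $\widehat{\taut}_V^{-1}$. First I would record an elementary self-intersection identity. Because $\mathrm{Exc}_V$ is a reduced Cartier divisor (a disjoint union of projective-space bundles over the $0$-dimensional singular locus, by \eqref{basic exceptional} and Remark \ref{rem:singular description}), and its irreducible components are connected components, hence pairwise disjoint, the restriction of the Cartier divisor $\mathrm{Exc}_V$ to a neighbourhood of $E$ is exactly $E$, taken with multiplicity one. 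Consequently $(E,0)\cdot(\mathrm{Exc}_V,0)=(E,0)\cdot(E,0)$ in $\widehat{\CH}^2(\bar{\mathcal{S}}_V,\mathscr{D}_\BKK)_\Q$, the contributions of the other components vanishing as their supports are disjoint from that of $E$. Substituting $\widehat{\tautmod}_V=2\widehat{\taut}_V-(\mathrm{Exc}_V,0)$ into $(E,0)\cdot\widehat{\tautmod}_V=0$ then yields
\[
(E,0)\cdot(E,0)=2\,(E,0)\cdot\widehat{\taut}_V .
\]

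Next I would run an immediate induction on $k$ to obtain $(E,0)^k=2^{k-1}\,(E,0)\cdot\widehat{\taut}_V^{k-1}$ in $\widehat{\CH}^k(\bar{\mathcal{S}}_V,\mathscr{D}_\BKK)_\Q$ for $1\le k\le n$: the case $k=1$ is trivial, and multiplying the case $k$ by $(E,0)$ and inserting the displayed identity gives the case $k+1$. Taking $k=n$ and applying $\widehat{\deg}$ gives
\[
\widehat{\vol}(E,0)=\widehat{\deg}\big((E,0)^n\big)=2^{n-1}\,\widehat{\deg}\big((E,0)\cdot\widehat{\taut}_V^{n-1}\big).
\]
Since $\widehat{\taut}_V^{-1}$ represents $-\widehat{\taut}_V$ in the arithmetic Chow ring, the product of $n-1$ copies of $\widehat{\taut}_V^{-1}$ equals $(-1)^{n-1}\widehat{\taut}_V^{n-1}$, so Proposition \ref{prop:projective intersection} reads $(-1)^{n-1}\widehat{\deg}\big((E,0)\cdot\widehat{\taut}_V^{n-1}\big)=\widehat{m}_E$. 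Combining the last two displays gives $\widehat{\vol}(E,0)=2^{n-1}(-1)^{n-1}\widehat{m}_E=(-2)^{n-1}\widehat{m}_E$, as claimed; note the argument works uniformly for all $n\ge 2$, including $n=2$.

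The only step that requires genuine care — and the one I expect to be the main (if minor) obstacle — is the identity $(E,0)\cdot(\mathrm{Exc}_V,0)=(E,0)^2$: one must know that in a neighbourhood of $E$ the Cartier divisor $\mathrm{Exc}_V$ is $E$ with multiplicity one, and that intersecting arithmetic cycles supported on disjoint vertical loci produces zero in $\widehat{\CH}^\bullet(\bar{\mathcal{S}}_V,\mathscr{D}_\BKK)$ (the relevant $*$-products of delta-type Green currents supported on disjoint subvarieties vanish, and the geometric intersection is empty). Both points are consequences of the blow-up description of $\mathrm{Exc}_V$ in \eqref{basic exceptional} (which is preserved under the finite \'etale pullback $\mathcal{S}_V\to\mathcal{M}_{(n-1,1)}$) together with the definition of the Burgos--Kramer--K\"uhn intersection pairing; everything else in the proof is formal manipulation in the commutative ring $\widehat{\CH}^\bullet(\bar{\mathcal{S}}_V,\mathscr{D}_\BKK)_\Q$ plus the unwinding of signs relating $\widehat{\taut}_V$ and $\widehat{\taut}_V^{-1}$.
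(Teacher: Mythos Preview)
Your proof is correct and follows essentially the same approach as the paper: derive $(E,0)^2 = 2\,\widehat{\taut}_V\cdot(E,0)$ from Theorem~\ref{thm:taut-hodge compare}(1) (via the identity $(E,0)\cdot(\mathrm{Exc}_V,0)=(E,0)^2$), iterate to $(E,0)^n = 2^{n-1}\,\widehat{\taut}_V^{n-1}\cdot(E,0)$, and conclude with Proposition~\ref{prop:projective intersection}. Your write-up is more explicit than the paper's about the disjointness justification and the sign bookkeeping between $\widehat{\taut}_V$ and $\widehat{\taut}_V^{-1}$, but the argument is the same.
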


\begin{proof}
Part (1) of Theorem \ref{thm:taut-hodge compare} implies the second equality in  
\[
(E,0)\cdot (E,0)  = (\mathrm{Exc}_V,0) \cdot (E,0) = 2 \widehat{\taut}_V  \cdot (E,0) 
 \in \widehat{\CH}^2(\bar{\mathcal{S}}_V , \mathscr{D}_\BKK)_\Q.
\]
This implies the iterated  intersection formula
\[
 (E,0)\cdots (E,0) 
 =  2^{n-1} \widehat{\taut}_V \cdots \widehat{\taut}_V   \cdot (E,0) 
  \in \widehat{\CH}^n(\bar{\mathcal{S}}_V , \mathscr{D}_\BKK)_\Q  ,
\]
and  the claim follows using  Proposition \ref{prop:projective intersection}.
\end{proof}


\chapter{Kudla-Rapoport divisors at split primes}
\label{s:KR divisors}


Let $\mathcal{S}_V$ be the Shimura variety \eqref{moduli inclusion} associated to a $\kk$-hermitian space  $V$  of signature $(n-1,1)$ containing a self-dual lattice.  
Throughout this chapter we assume $n>2$.

Fix a prime $p$ split in $\kk$,  and factor 
\[
p\co_\kk = \mathfrak{p} \overline{\mathfrak{p}}.
\]
Our goal  is to explain how the Kudla-Rapoport divisor  $\mathcal{Z}_V(p) \to \mathcal{S}_V$ of Definition \ref{def:KR}  is  related to  a Shimura variety $\mathcal{S}_{V^\flat}$   with $V^\flat$ a hermitian space of signature $(n-2,1)$, whose isometry class depends on $p$. 
The assumption that $p$ is split in $\kk$ is needed to guarantee that  $V^\flat$, like the initial choice of $V$, contains a self-dual $\co_\kk$-lattice.  Thus all the properties we have stated for $\mathcal{S}_V$   also hold for  this   $\mathcal{S}_{V^\flat}$ of one dimension lower.  As explained at the end of \S \ref{ss:intro generating}, this will be crucial for our later inductive arguments.


\section{Statement of the results}
\label{ss:split divisors main}


Let $V^\flat$  denote the $\kk$-hermitian space of signature $(n-2,1)$ whose local invariants satisfy
\begin{equation}\label{Vflat}
\mathrm{inv}_\ell(V^\flat)  = ( p, -D)_\ell \cdot \mathrm{inv}_\ell(V)  
\end{equation}
for all places $\ell \le \infty$.  
Using standard formulas for the Hilbert symbol, this is equivalent to 
\[
\mathrm{inv}_\ell(V^\flat)  = \leg{p}{\ell} \cdot \mathrm{inv}_\ell(V)  .
\]
Equivalently,  $V^\flat$ is the orthogonal complement to the $\kk$-span of any 
 $x\in V$ of hermitian norm $\langle x,x\rangle=p$.  
 Obviously $V^\flat$ depends on the prime $p$ fixed above, but this prime will not change and we suppress the dependence from the notation.  Our assumption that $p$ is split is essential for the following lemma.

\begin{lemma}\label{lem:lower hermitian}
The hermitian space $V^\flat$  admits a self-dual $\co_\kk$-lattice (so is relevant in the sense of Definition \ref{def:relevant}).
\end{lemma}

\begin{proof}
Suppose $\ell$ is a rational prime unramified in $\kk$.  If $\ell \neq p$ then $p\in \Z_\ell^\times$, and hence is a norm from the unramified extension $\kk_\ell$.  If $\ell =p$ then $p$ is  again a norm from $\kk_\ell \iso \Q_p \times \Q_p$.    In either case, $(p,-D)_\ell=1$, and so  the local invariants of $V$ and $V^\flat$ agree at all unramified primes.

In general, a $\kk$-hermitian space  admits a self-dual $\co_\kk$-lattice if and only  if it has local invariant $1$ at every prime  unramified  in $\kk$.  
As $V$ satisfies this condition by hypothesis, so does $V^\flat$.
\end{proof}

 Lemma \ref{lem:lower hermitian} allows us to form the $\co_\kk$-stacks
\[
\mathcal{S}_{V^\flat} \subset  \bar{\mathcal{S}}_{V^\flat} \subset \mathcal{M}_{(1,0)} \times_{\co_\kk} \bar{\mathcal{M}}_{(n-2,1)} 
\]
 analogous to 
 \[
 \mathcal{S}_V \subset \bar{\mathcal{S}}_V\subset \mathcal{M}_{(1,0)} \times_{\co_\kk} \bar{\mathcal{M}}_{(n-1,1)}, 
 \]
 but in one dimension lower.  The stack $\mathcal{S}_{V^\flat}$ is  endowed with its own  hermitian line bundle $\widehat{\taut}_{V^\flat}$  as in \eqref{metrized taut},  its own exceptional divisor $\mathrm{Exc}_{V^\flat}$  as in Definition \ref{def:special exceptional}, and its own universal pair $(A_0^\flat,A^\flat)$ of polarized abelian schemes with $\co_\kk$-actions.  
 
   The  following theorems, whose proofs will occupy the remainder of Chapter \ref{s:KR divisors}, lie at the core of the inductive arguments of Chapter \ref{s:volumes}.  In both theorems,  $\bar{\mathcal{Z}}_V(p)$ is the divisor on $\bar{\mathcal{S}}_V$ of Remark \ref{rem:divisor closure}, and the arithmetic heights and volumes are those of  \S \ref{ss:volumes}.

\begin{theorem}\label{thm:height descent 1}
If we set
\[
\widehat{\tautmod}_V  = 2 \widehat{\taut}_V - ( \mathrm{Exc}_V , 0) \in \widehat{\Pic} ( \bar{\mathcal{S}}_V , \mathscr{D}_\BKK )
\]
as in Theorem \ref{thm:taut-hodge compare},  and similarly with $V$ replaced by $V^\flat$, then
\[
 \int_{ \mathcal{Z}_V(p) (\C)  }  \chern( \widehat{\tautmod}_V)^{n-2}  
   =  ( p^{n-1}+1)  \vol_\C( \widehat{\tautmod}_{V^\flat} ).
\]
Moreover, there is a rational number $a(p)$  such that 
\[
 \frac{  \mathrm{ht}_{ \widehat{\tautmod}_V }  (\bar{\mathcal{Z}}_V(p))  } {    p^{n-1}+1    } 
 =    \widehat{\vol} (\widehat{\tautmod}_{V^\flat} )  + a(p) \log(p) .
\]
\end{theorem}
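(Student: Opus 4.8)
The plan is to realize $\mathcal{Z}_V(p)$, up to controlled error, as (a cover of) the lower-dimensional Shimura variety $\mathcal{S}_{V'}$. Concretely, a point of $\mathcal{Z}_V(p)$ over an $\co_\kk$-scheme $S$ is a triple $(A_0,A,x)$ with $x\in\Hom_{\co_\kk}(A_0,A)$ of hermitian norm $p$. Since $p$ is split, $A_0[\mathfrak p^\infty]$ and $A_0[\overline{\mathfrak p}^\infty]$ are $\co_\kk$-linearly independent $p$-divisible groups, and the quasi-isogeny $x$ together with the self-dual lattice structure produces from $(A_0,A)$ a new pair $(A_0',A')$ with $A'$ of signature $(n-2,1)$: informally, $A'$ is cut out of $A$ by the image of $x$. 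The first step is therefore to make this construction functorial and prove it gives a morphism $\mathcal{Z}_V(p)\to\mathcal{S}_{V'}$ (more precisely, a morphism from each irreducible component, or from a suitable finite cover), identifying the source as a $\mathbb{P}^0$-type or étale cover of $\mathcal{S}_{V'}$ away from characteristics dividing $pD$. This is essentially the content to be extracted from the Kudla-Rapoport moduli descriptions and from \cite{BHKRY-1}; the key bookkeeping is matching polarizations, $\co_\kk$-actions, Krämer hyperplanes, and level structures on both sides.

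Granting such a comparison, the complex-volume identity follows by pulling back Chern forms: on the complex fiber $\mathcal{Z}_V(p)(\C)$ is a disjoint union of components each finite étale over $\mathcal{S}_{V'}(\C)$, and $\chern(\widehat{\tautmod}_V)$ restricts to $\chern(\widehat{\tautmod}_{V'})$ under the comparison (because $\widehat{\taut}_V$ restricts to $\widehat{\taut}_{V'}$, up to a metric that does not affect the Chern form, and the exceptional divisors match), so $\int_{\mathcal{Z}_V(p)(\C)}\chern(\widehat{\tautmod}_V)^{n-2}$ equals the generic degree of the cover times $\vol_\C(\widehat{\tautmod}_{V'})$. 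That degree is the number of norm-$p$ vectors weighting, i.e.\ the local representation density, which in the split case is exactly $p^{n-1}+1$; this can be read off from the coefficient $-B(p,0,s_0)$ computed in \S\ref{ss:coefficients} (Corollaries \ref{cor:eis2} and \ref{cor:eisodd2} give $\sigma_{-2s_0}(p)$-type factors evaluating to $1+p^{n-1}$). One should double-check the normalization against Remark \ref{rem:projection fiber} so that stacky automorphism factors are absorbed correctly.

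For the arithmetic-height statement, the plan is to combine the moduli comparison with \eqref{height degree}. Using that $\widehat{\tautmod}_V$ has trivial intersection with every component of $\mathrm{Exc}_V$ (Theorem \ref{thm:taut-hodge compare}(1)) and that $A\to\mathcal{S}_V$ is pulled back from $\mathcal{M}^\Pap_{(n-1,1)}$, the height $\mathrm{ht}_{\widehat{\tautmod}_V}(\bar{\mathcal{Z}}_V(p))$ is computed by intersecting $\widehat{\tautmod}_V^{\,n-2}$ with $(\bar{\mathcal Z}_V(p),g)$ and reduces, via the comparison isomorphism, to the arithmetic self-intersection $\widehat{\vol}(\widehat{\tautmod}_{V'})=\widehat{\deg}(\widehat{\tautmod}_{V'}^{\,n-1})$ on $\bar{\mathcal{S}}_{V'}$, scaled by the same degree $p^{n-1}+1$. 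The discrepancy between the Zariski closure $\bar{\mathcal Z}_V(p)$ inside $\bar{\mathcal S}_V$ and the image of $\bar{\mathcal S}_{V'}$ — which lives entirely in fibers over primes dividing $pD$ (bad reduction of the unitary groups plus the prime $p$ itself), together with differences at the exceptional divisors and boundary — is absorbed into the term $a(p)\log(p)$; that this error is a rational multiple of $\log(p)$ uses that outside $pD$ the comparison is an isomorphism over $\co_\kk[1/pD]$ and that intersections supported over $D$ against $\widehat{\tautmod}_V$ contribute nothing beyond such rational multiples of logs of bad primes.

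The main obstacle I anticipate is the last point: controlling the vertical error divisors in characteristics dividing $pD$. At $\ell\mid D$ the integral models are genuinely singular (Krämer's resolution, the exceptional divisor), so the Zariski closure of $\mathcal Z_V(p)$ need not be the naive image of $\mathcal S_{V'}$, and one must show the difference is a vertical cycle in characteristics dividing $pD$ whose arithmetic intersection with $\widehat{\tautmod}_V^{\,n-2}$ is a rational multiple of $\log(p)$ alone (not $\log$ of a bad prime dividing $D$) — this is where the explicit description of $\mathcal Z_V(p)$'s vertical components from \cite{BHKRY-1}, and the vanishing of $\widehat{\tautmod}_V$ against $\mathrm{Exc}_V$, must be used carefully. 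Isolating precisely which contributions are forced into $a(p)\log(p)$ versus which vanish is the delicate heart of the argument.
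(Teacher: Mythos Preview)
Your overall strategy is right, but you are missing the key structural insight that makes the argument work, and without it your ``morphism $\mathcal{Z}_V(p)\to\mathcal{S}_{V'}$'' remains too vague to control.

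The paper does not construct a single comparison map. Instead, working over $\co_\kk[1/p]$, it decomposes
\[
\mathcal{Z}_V(p)_{/\co_\kk[1/p]} = \mathcal{U}_0 \sqcup \mathcal{U}_\mathfrak{p} \sqcup \mathcal{U}_{\overline{\mathfrak{p}}}
\]
according to whether $\ker(x)$ is trivial, $A_0[\mathfrak{p}]$, or $A_0[\overline{\mathfrak{p}}]$. The three pieces are related to $\mathcal{S}_{V'}$ in \emph{different} ways: $\mathcal{U}_\mathfrak{p}$ and $\mathcal{U}_{\overline{\mathfrak{p}}}$ each receive a closed immersion from $\mathcal{S}_{V'/\co_\kk[1/p]}$ itself (degree~$1$), while $\mathcal{U}_0$ receives a closed immersion from an auxiliary stack $\mathcal{T}_{V'}$ which is finite \'etale over $\mathcal{S}_{V'}$ of degree $p^{n-1}-1$ (it parametrizes an extra datum $t:A_0'[\mathfrak{p}]\hookrightarrow A'[\mathfrak{p}]$). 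The degree $p^{n-1}+1 = (p^{n-1}-1)+1+1$ is thus obtained \emph{geometrically}, not by reading Eisenstein coefficients; your proposed route through $B(p,0,s_0)$ would be circular in the context of the paper, since those coefficients are ultimately interpreted via exactly this geometric count.

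Your concern about primes dividing $D$ is overstated, and this is precisely why the theorem is phrased for $\widehat{\tautmod}_V$ rather than $\widehat{\taut}_V$. The closed immersions above are only isomorphisms onto the \emph{non-exceptional} loci $\mathcal{U}_\square^\nonexc$; the discrepancy between $\mathcal{Z}_V(p)$ and the image of $\mathcal{Z}_V^\dagger(p)=\mathcal{T}_{V'}\sqcup\mathcal{S}_{V'}\sqcup\mathcal{S}_{V'}$ is a divisor $E$ supported entirely on $\mathrm{Exc}_V$. But then $\mathrm{ht}_{\widehat{\tautmod}_V}(E)=0$ on the nose by Theorem~\ref{thm:taut-hodge compare}(1), so there is no residual contribution from characteristics dividing $D$ at all. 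The only ambiguity is the $\Q\log(p)$ coming from working over $\co_\kk[1/p]$. You should not expect to need the vertical-component computations from \cite{BHKRY-1} here; those are for divisors of Borcherds products, not for Kudla--Rapoport divisors at split primes.
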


\begin{remark}
The proof of Theorem \ref{thm:height descent 1}  gives no hint as to the value of the rational number $a(p)$.
It  will be determined later in Theorem \ref{thm:K volume}.
\end{remark}

\begin{theorem}\label{thm:height descent 2}
There is a  rational number $b(p)$ such that 
\begin{align*}
\frac{ \mathrm{ht}_{\widehat{\omega}^\mathrm{Hdg}_{A/\mathcal{S}_V}}  (\bar{\mathcal{Z}}_V(p))  }{  p^{n-1}+1   } 
 & =     \widehat{\vol} ( \widehat{\omega}^\mathrm{Hdg}_{A^\flat/\mathcal{S}_{V^\flat}} )   + b(p) \log(p)   \\
& \quad   +  (1-n)  \left(  \frac{L'(0,\eps)}{L(0,\eps)}   +\frac{\log(D)}{2}  \right) 
\vol_\C(   \widehat{\omega}^\mathrm{Hdg}_{A^\flat/ \mathcal{S}_{V^\flat}}  )  .
\end{align*}
\end{theorem}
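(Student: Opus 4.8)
The plan is to deduce Theorem \ref{thm:height descent 2} from Theorem \ref{thm:height descent 1} by translating heights and volumes of $\widehat{\tautmod}$ into heights and volumes of the metrized Hodge bundle, using the comparison \eqref{Kalt} of Theorem \ref{thm:taut-hodge compare}, together with the Chowla--Selberg computations (Propositions \ref{prop:easy numerical} and \ref{prop:gross numerical}) that identify $\widehat{\omega}^\mathrm{Hdg}_{A_0/\mathcal{S}_V}$ and $\det(\mathcal{V})$ with trivial bundles carrying constant metrics, up to numerical equivalence. Concretely, write
\[
\widehat{\tautmod}_V = \widehat{\omega}^\mathrm{Hdg}_{A/\mathcal{S}_V} + (0, 2C_1) + (0,C_3) + (0,C_2) + \widehat{\mathcal{E}}_V
\]
where $\widehat{\mathcal{E}}_V$ is numerically trivial (this holds for $n>2$, which is our standing assumption here); and similarly for $V'$, with the constants $C_1, C_3, C_2$ replaced by their $(n-1,1)\rightsquigarrow(n-2,1)$ analogues. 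I would first record the precise dependence of $C_2$ and $C_3$ on $n$: from Propositions \ref{prop:full bundle compare} and \ref{prop:gross numerical}, $C_2 + C_3 = 2\log(2e^\gamma/D) + (2-n)\log(2\pi) + (4-2n)h_\kk^\mathrm{Falt} + \log(4\pi^2 D)$, so that $C_2(n) + C_3(n) - C_2(n-1) - C_3(n-1) = -\log(2\pi) - 2h_\kk^\mathrm{Falt}$, and adding $2C_1 = 2\log(2\pi) + 4h_\kk^\mathrm{Falt}$ on each side, the total constant attached to $\widehat{\tautmod}$ minus $\widehat{\omega}^\mathrm{Hdg}$ changes by $+\log(2\pi) + 2h_\kk^\mathrm{Falt}$ in going down one dimension. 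By the Chowla--Selberg formula \eqref{faltings}, $\log(2\pi) + 2h_\kk^\mathrm{Falt} = \log(2\pi) - \tfrac12\tfrac{L'(0,\eps)}{L(0,\eps)} - \tfrac12\log(2\pi\sqrt{D})$; this is where the term $(1-n)\big(\tfrac{L'(0,\eps)}{L(0,\eps)} + \tfrac{\log D}{2}\big)$ will ultimately come from, once we account for the $(n-1)$-fold self-intersection.

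The second step is to convert the two assertions of Theorem \ref{thm:height descent 1} into assertions about $\widehat{\omega}^\mathrm{Hdg}$. For the volume on the left: since adding $(0,c)$ shifts Chern forms not at all (Lemma \ref{lem:numerical basics}), and since $\widehat{\mathcal{E}}_V$ is numerically trivial hence has vanishing Chern form, we get $\chern(\widehat{\tautmod}_V) = \chern(\widehat{\omega}^\mathrm{Hdg}_{A/\mathcal{S}_V})$ exactly (this is also part (2) of Theorem \ref{thm:taut-hodge compare}), so the integral over $\mathcal{Z}_V(p)(\C)$ is literally unchanged, and $\vol_\C(\widehat{\tautmod}_{V'}) = \vol_\C(\widehat{\omega}^\mathrm{Hdg}_{A'/\mathcal{S}_{V'}})$. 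Hence the first displayed identity in Theorem \ref{thm:height descent 1} is exactly the one we want, rephrased. For the height: use the elementary expansion of an $(n-1)$-fold self-intersection height under $\widehat{\mathcal{L}} \mapsto \widehat{\mathcal{L}} + (0,c) + \widehat{\mathcal{E}}$ — numerical triviality of $\widehat{\mathcal{E}}$ and the height-invariance in Lemma \ref{lem:numerical basics} kill all $\widehat{\mathcal{E}}$-terms, and $(0,c)\cdot(0,c)=0$ collapses the rest, giving
\[
\mathrm{ht}_{\widehat{\tautmod}_V}(\bar{\mathcal{Z}}_V(p)) = \mathrm{ht}_{\widehat{\omega}^\mathrm{Hdg}_{A/\mathcal{S}_V}}(\bar{\mathcal{Z}}_V(p)) + c_V\!\cdot\!(n-1)\!\int_{\mathcal{Z}_V(p)(\C)}\!\chern(\widehat{\omega}^\mathrm{Hdg}_{A/\mathcal{S}_V})^{n-2},
\]
where $c_V = 2C_1 + C_2 + C_3$, and similarly for the volume on the right of Theorem \ref{thm:height descent 1}, $\widehat{\vol}(\widehat{\tautmod}_{V'}) = \widehat{\vol}(\widehat{\omega}^\mathrm{Hdg}_{A'/\mathcal{S}_{V'}}) + c_{V'}\cdot n\int_{\mathcal{S}_{V'}(\C)}\chern(\widehat{\omega}^\mathrm{Hdg}_{A'/\mathcal{S}_{V'}})^{n-1}$, where $\dim\mathcal{S}_{V'} = n-1$. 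Now divide the height identity of Theorem \ref{thm:height descent 1} by $p^{n-1}+1$, substitute, and use the volume identity from Theorem \ref{thm:height descent 1} (which we just showed equals $(p^{n-1}+1)\vol_\C(\widehat{\omega}^\mathrm{Hdg}_{A'/\mathcal{S}_{V'}})$) to cancel the $\int_{\mathcal{Z}_V(p)(\C)}$ factor against $p^{n-1}+1$.

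After collecting terms, the coefficient of $\vol_\C(\widehat{\omega}^\mathrm{Hdg}_{A'/\mathcal{S}_{V'}})$ on the right-hand side is $(n-1)(c_V - c_{V'}) = (n-1)\big(C_2(n)+C_3(n)-C_2(n-1)-C_3(n-1)\big) = (n-1)\big(-\log(2\pi) - 2h_\kk^\mathrm{Falt}\big)$, wait — one must be careful about the factor-of-$n$ versus $n-1$ bookkeeping between the $(n-1)$-fold height intersection on $\mathcal{S}_V$ and the $n$-fold volume intersection on $\mathcal{S}_{V'}$; I expect these to reconcile precisely because $\dim\mathcal{S}_{V'} = \dim\mathcal{S}_V - 1 = n-1$, so the self-intersection powers match up after dividing by $p^{n-1}+1$, leaving a single clean coefficient $(1-n)$ times $\log(2\pi)+2h_\kk^\mathrm{Falt} = -\tfrac12\tfrac{L'(0,\eps)}{L(0,\eps)} - \tfrac14\log(4\pi^2 D) + \log(2\pi) = -\tfrac12\tfrac{L'(0,\eps)}{L(0,\eps)} - \tfrac14\log(D) + \tfrac12\log(2\pi)$; matching this against the target $(1-n)\big(\tfrac{L'(0,\eps)}{L(0,\eps)}+\tfrac{\log D}{2}\big)$ forces an overall normalization check that I should carry out explicitly with the constants $C_1,C_2,C_3$ rather than trust a sign. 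All stray $\log(p)$ and $\log(2\pi)$ contributions that don't fit are absorbed into the rational multiple $b(p)\log(p)$ — here one uses that the difference of the two $\int\chern^{\bullet}$ normalizations and the leftover $\log(2\pi)$ terms are rational combinations of $\log$s of bad primes and of $\log(2\pi)$, and the $\log(2\pi)$ pieces are forced to cancel because both sides of the asserted identity are, by construction, independent of archimedean normalization ambiguities beyond what is displayed. \textbf{The main obstacle} will be precisely this constant-chasing: keeping straight which constants carry an explicit $n$-dependence (only $C_2$ and $C_3$ do, via $C_2 = 2\log(2e^\gamma/D)+(2-n)\log(2\pi)$ and $C_3 = (4-2n)h_\kk^\mathrm{Falt}+\log(4\pi^2D)$), correctly propagating them through an $(n-1)$-fold self-intersection, and verifying that what remains after dividing by $p^{n-1}+1$ is exactly $(1-n)\big(\tfrac{L'(0,\eps)}{L(0,\eps)}+\tfrac{\log D}{2}\big)\vol_\C + b(p)\log(p)$ with no residual archimedean term — this requires the Chowla--Selberg input \eqref{faltings} at exactly the right place and a final sanity check via Lemma \ref{lem:trivial volume shift}.
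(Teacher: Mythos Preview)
Your approach differs from the paper's and has a genuine gap at $n=3$. You rely on the numerical equivalence $\widehat{\tautmod}_{V'}=\widehat{\omega}^\mathrm{Hdg}_{A'/\mathcal{S}_{V'}}+(0,C_0(n-1))+\widehat{\mathcal{E}}_{V'}$ from Theorem~\ref{thm:taut-hodge compare}(3), applied on the lower-dimensional $\mathcal{S}_{V'}$. But that clause needs the ambient dimension to exceed~$2$: it rests on Proposition~\ref{prop:gross numerical}, whose footnote explains that the numerical triviality of $\det(\mathcal{V})$ requires geometric normality of the fibers of $\mathcal{M}^\Pap$, which fails in signature $(1,1)$. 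When $n=3$ the space $V'$ has exactly that signature, so you cannot invoke this on $\mathcal{S}_{V'}$. The paper's later Proposition~\ref{prop:taut-hodge strong volume} does extend the volume identity to the $n=2$ case, but its proof \emph{uses} Theorem~\ref{thm:height descent 2}, so appealing to it here would be circular.

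The paper instead bypasses $\widehat{\tautmod}$ entirely on the Hodge side. The geometric input is that under the correspondence \eqref{compact divisor correspondence} the pullback of $\widehat{\omega}^\mathrm{Hdg}_{A/\mathcal{S}_V}$ agrees in $\widehat{\Pic}_\Q$ with the pullback of $\widehat{\omega}^\mathrm{Hdg}_{A_0'/\mathcal{S}_{V'}}+\widehat{\omega}^\mathrm{Hdg}_{A'/\mathcal{S}_{V'}}$ (Propositions~\ref{prop:i_p pullbacks} and~\ref{prop:i_0 pullbacks}). Feeding this directly into Lemma~\ref{lem:formal height induction}, and killing the exceptional-divisor term via Theorem~\ref{thm:taut-hodge compare}(1), yields
\[
\frac{\mathrm{ht}_{\widehat{\omega}^\mathrm{Hdg}_{A/\mathcal{S}_V}}(\bar{\mathcal{Z}}_V(p))}{p^{n-1}+1}
=\widehat{\vol}\bigl(\widehat{\omega}^\mathrm{Hdg}_{A_0'/\mathcal{S}_{V'}}+\widehat{\omega}^\mathrm{Hdg}_{A'/\mathcal{S}_{V'}}\bigr)+b(p)\log p.
\]
Now only Proposition~\ref{prop:easy numerical} (Chowla--Selberg, valid with no dimension restriction) and Lemma~\ref{lem:trivial volume shift} are needed to replace $\widehat{\omega}^\mathrm{Hdg}_{A_0'/\mathcal{S}_{V'}}$ by $(0,C_1)$ and shift it across. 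Since $C_1=\log(2\pi)+2h_\kk^\mathrm{Falt}=-\tfrac{L'(0,\eps)}{L(0,\eps)}-\tfrac{\log D}{2}$ \emph{exactly} (your worry about residual $\log(2\pi)$ terms is unfounded), the coefficient $(n-1)C_1=(1-n)\bigl(\tfrac{L'(0,\eps)}{L(0,\eps)}+\tfrac{\log D}{2}\bigr)$ falls out immediately. For $n>3$ your route does go through once the $(n{-}1)$-versus-$n$ bookkeeping is straightened out, but it is more circuitous and needs Proposition~\ref{prop:gross numerical} on both $V$ and $V'$, whereas the paper's argument needs it on neither.
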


The proofs are rather long, so we summarize now the key steps.
 The central idea  is to make precise the  impressionistic relation
\[
\mathcal{Z}_V(p)  ``="  (p^{n-1}+1)  \cdot \mathcal{S}_{V^\flat}   ,
\]
by  decomposing
\begin{equation}\label{flavor decomp}
\mathcal{Z}_V(p)_{/\co_\kk[1/p]}  = 
\mathcal{U}_0  \sqcup   \mathcal{U}_{\mathfrak{p}}  \sqcup   \mathcal{U}_{\overline{\mathfrak{p}}}
\end{equation}
as a disjoint union of open and closed substacks (Proposition \ref{prop:split divisor flavors} below).
The stacks on the right hand side are related to $\mathcal{S}_{V^\flat}$ by  closed immersions
\[
i_\mathfrak{p} :  \mathcal{S}_{V^\flat / \co_\kk[1/p]}   \to \mathcal{U}_{\mathfrak{p}}  ,\qquad
i_{\overline{\mathfrak{p}}} :  \mathcal{S}_{V^\flat / \co_\kk[1/p]}   \to   \mathcal{U}_{\overline{\mathfrak{p}}},
\]
and  by a diagram
\[
\xymatrix{
 {  \mathcal{S}_{V^\flat / \co_\kk[1/p]}   }   & { \mathcal{T}_{V^\flat} } \ar[l] \ar[r]^{i_0} &  { \mathcal{U}_0   }  
 }
\]
in which the leftward arrow is a finite \'etale surjection of degree $p^{n-1}-1$, and the rightward arrow is a closed immersion.
See  \eqref{tau cover} for the definition of the stack $ \mathcal{T}_{V^\flat} $.

  Recalling the exceptional locus of Definition \ref{def:special exceptional},
denote by 
\[
 \mathcal{S}^{\nonexc}_V =  \mathcal{S}_V \smallsetminus \mathrm{Exc}_V
 \]
the (open) nonexceptional locus of $\mathcal{S}_V$,  set
\[
\mathcal{Z}^{\nonexc}_{V}(p) =  \mathcal{Z}_{V}(p)  \times_{\mathcal{S}_{V}} \mathcal{S}^\nonexc_V ,
\]
and make the same definitions with $V$ replaced by $V^\flat$.
Set 
\[
\mathcal{U}^\nonexc_\square =  \mathcal{U}_\square \cap \mathcal{Z}^\nonexc_V(p)
\]
for $\square \in \{ 0, \mathfrak{p} ,\overline{\mathfrak{p}} \}$, and 
\[
\mathcal{T}^\nonexc_{V^\flat}  = \mathcal{T}_{V^\flat} \times_{\mathcal{S}_{V^\flat}} \mathcal{S}_{V^\flat}^\nonexc .
\]

We will show that the closed immersions $i_0$, $i_\mathfrak{p}$, and $i_{\overline{\mathfrak{p}}}$ are close to being isomorphisms.  More precisely,     $i_0$  restricts to an isomorphism
\[
i_0 : \mathcal{T}^\nonexc_{V^\flat}    \iso \mathcal{U}^\nonexc_0,
\]
while 
$i_\mathfrak{p}$ and $i_{\overline{\mathfrak{p}}}$ restrict to isomorphisms
\[
i_\mathfrak{p} :  \mathcal{S}^\nonexc_{V^\flat / \co_\kk[1/p]}   \iso \mathcal{U}^\nonexc_{\mathfrak{p}} 
 \quad \mbox{and}\quad
i_{\overline{\mathfrak{p}}} :  \mathcal{S}^\nonexc_{V^\flat / \co_\kk[1/p]}   \iso   \mathcal{U}^\nonexc_{\overline{\mathfrak{p}}}.
\]
After taking  compactifications into account,  both theorems above will follow easily from these isomorphisms.
Note that it suffices to work only over the nonexceptional locus, as part (1) of Theorem \ref{thm:taut-hodge compare} guarantees that the hermitian line bundles in questions have trivial arithmetic intersection with all components of the exceptional divisor.
This is why we work with $\widehat{\tautmod}_V$ in Theorem \ref{thm:height descent 1} instead of $\widehat{\taut}_V$.

%


\section{Decomposing the Kudla-Rapoport divisor}


The decomposition \eqref{flavor decomp} is a geometric reflection of a result of  linear algebra.

As in  \eqref{hermitian hom},  choose an isomorphism
 \[
 V \iso \Hom_\kk(W_0,W)
 \]
 in which $(W_0,h_0)$ and $(W,h)$ are  relevant hermitian spaces of signatures $(1,0)$ and $(n-1,1)$. 
 Fix self-dual $\co_\kk$-lattices $\mathfrak{a}_0 \subset W_0$ and $\mathfrak{a} \subset W$.
Any  vector
\[
x\in \Hom_{\co_\kk}(\mathfrak{a}_0, \mathfrak{a}) \subset V
\]
 with $\langle x,x\rangle =p$  determines an orthogonal decomposition 
\[
W = \tilde{W}_0 \oplus W^\flat 
\]
with  $\tilde{W}_0 = x(W_0)$, and a corresponding decomposition
\[
V = \kk x \oplus V^\flat
\]
with $V^\flat=  \Hom_\kk(W_0 ,W^\flat)$.

\begin{lemma}\label{lem:lattice trifurcation}
 The $\co_\kk$-lattice
 $
 \tilde{\mathfrak{a}}_0=\mathfrak{a} \cap \tilde{W}_0
 $
satisfies exactly one of
\[
\tilde{\mathfrak{a}}_0 = x(\mathfrak{a}_0) ,\qquad \tilde{\mathfrak{a}}_0 =\mathfrak{p}^{-1} x(\mathfrak{a}_0), \qquad 
\tilde{\mathfrak{a}}_0=\overline{\mathfrak{p}}^{-1} x(\mathfrak{a}_0).
\]
\end{lemma}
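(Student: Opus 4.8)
The statement is purely local at the prime $p$, so the plan is to base-change everything to $\Z_p$ (equivalently, to work with $\mathfrak{a}_{0,p}$, $\mathfrak{a}_p$, $\tilde{\mathfrak{a}}_{0,p}$) and then analyze the rank-one $\co_{\kk,p}$-lattice $\tilde{\mathfrak{a}}_{0,p} = \mathfrak{a}_p \cap \tilde W_{0,p}$ directly. First I would record the obvious inclusion $x(\mathfrak{a}_{0,p}) \subset \tilde{\mathfrak{a}}_{0,p}$, which holds because $x$ carries the self-dual lattice $\mathfrak{a}_{0,p}$ into $\mathfrak{a}_p$ by hypothesis and lands in $\tilde W_{0,p}$ by construction. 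Since $\tilde W_{0,p}$ is a $\kk_p$-line, $\tilde{\mathfrak{a}}_{0,p}$ and $x(\mathfrak{a}_{0,p})$ differ by a fractional $\co_{\kk,p}$-ideal, and because $p$ is split we have $\co_{\kk,p} \iso \Z_p \times \Z_p$ with exactly the ideals generated by $\mathfrak{p}^i \overline{\mathfrak{p}}^j$; so the content of the lemma is that the ``elementary divisor'' of $x(\mathfrak{a}_{0,p})$ inside $\tilde{\mathfrak{a}}_{0,p}$ is one of $1$, $\mathfrak{p}^{-1}$, $\overline{\mathfrak{p}}^{-1}$, i.e. that $\tilde{\mathfrak{a}}_{0,p}/x(\mathfrak{a}_{0,p})$ is killed by $p$ and is not all of $\mathfrak{p}^{-1}\overline{\mathfrak{p}}^{-1}x(\mathfrak{a}_{0,p})/x(\mathfrak{a}_{0,p})$.

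The key computational input is the interaction between the hermitian forms and self-duality. Pick a generator $e_0$ of $\mathfrak{a}_{0,p}$; self-duality of $\mathfrak{a}_{0,p}$ means $h_0(e_0,e_0)$ is a unit in $\co_{\kk,p}$, which we may normalize to $1$ after scaling. Then \eqref{basic hom hermitian} gives $h(x(e_0),x(e_0)) = \langle x,x\rangle \cdot h_0(e_0,e_0) = p$. Now if $f_0$ is a generator of $\tilde{\mathfrak{a}}_{0,p}$, write $x(e_0) = \lambda f_0$ with $\lambda \in \co_{\kk,p}$; then $h(f_0,f_0) = p / (\lambda \bar\lambda) = p/\norm(\lambda)$. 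Self-duality of $\mathfrak{a}_p$ forces $h(f_0,f_0) \in \co_{\kk,p}$ (the orthogonal decomposition $\mathfrak{a}_p = \tilde{\mathfrak{a}}_{0,p} \oplus (\mathfrak{a}_p \cap W'_p)$ need not hold on the nose, but $f_0 \in \mathfrak{a}_p = \mathfrak{a}_p^\vee$ still gives $h(f_0, \mathfrak{a}_p) \subset \co_{\kk,p}$, hence $h(f_0,f_0) \in \co_{\kk,p}$). So $\norm(\lambda) \mid p$ in $\Z_p$, giving $v_\mathfrak{p}(\lambda) + v_{\overline{\mathfrak{p}}}(\lambda) \le 1$, i.e. $\lambda$ is a unit times one of $1$, $\varpi_\mathfrak{p}$, $\varpi_{\overline{\mathfrak{p}}}$. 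These three cases correspond exactly to $\tilde{\mathfrak{a}}_{0,p} = x(\mathfrak{a}_{0,p})$, $\mathfrak{p}^{-1}x(\mathfrak{a}_{0,p})$, $\overline{\mathfrak{p}}^{-1}x(\mathfrak{a}_{0,p})$ respectively, and they are plainly mutually exclusive.

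At primes $\ell \ne p$ the same scaling argument shows $\tilde{\mathfrak{a}}_{0,\ell} = x(\mathfrak{a}_{0,\ell})$ (there $p$ is a unit, so $\norm(\lambda)$ is a unit, forcing $\lambda$ to be a unit), and away from $p$ the ideals $\mathfrak{p}^{-1}$, $\overline{\mathfrak{p}}^{-1}$ are trivial anyway; combining the local statements gives the global one. The main obstacle I anticipate is bookkeeping the self-duality hypotheses correctly: one must be careful that ``$\mathfrak{a}$ self-dual'' is taken with respect to the correct pairing (the hermitian form $h$, or equivalently the alternating form $e$ of \eqref{symplectic}, which are self-dual simultaneously since $D$ is coprime to $p$ and generates $\mathfrak{d}_\kk$), and that we are using self-duality of $\mathfrak{a}_0$ to normalize $h_0(e_0,e_0)$ to a unit. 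Everything else is elementary $p$-adic linear algebra, using crucially that $p$ splits so that $\co_{\kk,p}$ is a product of two DVRs and the norm form on fractional ideals is transparent.
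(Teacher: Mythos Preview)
Your proof is correct and rests on the same idea as the paper's: use self-duality to trap $\tilde{\mathfrak{a}}_0$ strictly between $x(\mathfrak{a}_0)$ and $p^{-1}x(\mathfrak{a}_0)$, then enumerate the $\co_\kk$-stable intermediate lattices. The only difference is in packaging: the paper works globally with the alternating forms $e_0,e$ of \eqref{symplectic} (noting $e|_{\tilde W_0}=p\cdot e_0$, so $e_0(p\tilde{\mathfrak{a}}_0,\mathfrak{a}_0)\subset e(\mathfrak{a},\mathfrak{a})\subset\Z$ gives $p\tilde{\mathfrak{a}}_0\subset\mathfrak{a}_0$, and a short contradiction rules out equality), whereas you localize at $p$, choose generators, and read off the bound $\norm(\lambda)\mid p$ directly from $h(f_0,f_0)=p/\norm(\lambda)\in\co_{\kk,p}$. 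Since $e$ and $h$ determine each other these are two phrasings of the same computation; your version is slightly more explicit, the paper's avoids choosing generators.
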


\begin{proof}
Use $x$ to identify $W_0 = \tilde{W}_0$, and hence $\mathfrak{a}_0 \subset \tilde{\mathfrak{a}}_0 \subset \mathfrak{a}$.
Recalling the symplectic forms  
\[
e_0: W_0 \times W_0 \to \Q ,\qquad e: W \times W \to \Q
\]  
 of   \eqref{symplectic}, the relation  \eqref{basic hom hermitian}  implies that  $e|_{W_0} = p \cdot e_0$.  
 The inclusion
\[
e_0 ( p \tilde{\mathfrak{a}}_0 , \mathfrak{a}_0) 
= e ( \tilde{\mathfrak{a}}_0 , \mathfrak{a}_0) \subset e( \mathfrak{a} , \mathfrak{a}) =\Z,
\]
together with the self-duality of $\mathfrak{a}_0$ under $e_0$, 
shows that $p \tilde{\mathfrak{a}}_0 \subset \mathfrak{a}_0$.   If equality held we would have 
\[
e_0( \mathfrak{a}_0 , \mathfrak{a}_0 )  = p^{-1}  e(  \mathfrak{a}_0 , \mathfrak{a}_0) 
 \subset p e(  \tilde{\mathfrak{a}}_0,\tilde{\mathfrak{a}}_0)\subset p e(\mathfrak{a},\mathfrak{a}) \subset  p \Z,
\]
contradicting  $\mathfrak{a}_0$ being self-dual under $e_0$.

 As $\tilde{\mathfrak{a}}_0$ is $\co_\kk$-stable with
$\mathfrak{a}_0 \subset \tilde{\mathfrak{a}}_0 \subsetneq p^{-1} \mathfrak{a}_0$, it is 
$\mathfrak{a}_0$, $\mathfrak{p}^{-1} \mathfrak{a}_0$, or $\overline{\mathfrak{p}}^{-1} \mathfrak{a}_0$.
\end{proof}

\begin{proposition}\label{prop:split divisor flavors}
Let  $(A_0,A,x)$ be the universal triple over $\mathcal{Z}_V(p)_{/\co_\kk[1/p]}$, so that 
$
x\in \Hom_{\co_\kk}(A_0,A) 
$
 satisfies $\langle x,x\rangle =p$.  
There is a decomposition \eqref{flavor decomp} into open and closed substacks in which 
\begin{itemize}
\item
$\mathcal{U}_0$ is the locus of points where $\ker(x)=0$,
\item
$\mathcal{U}_\mathfrak{p}$ is the locus of points where $\ker(x) = A_0[\mathfrak{p}]$,
\item
$ \mathcal{U}_{\overline{\mathfrak{p}}}$ is the locus of points where $\ker(x) = A_0[ \overline{\mathfrak{p}}]$.
\end{itemize}
\end{proposition}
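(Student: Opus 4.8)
Proof plan for Proposition \ref{prop:split divisor flavors}.

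The plan is to reduce the statement to the pointwise linear-algebra fact established in Lemma \ref{lem:lattice trifurcation}, applied to the $p$-divisible groups (equivalently, the Tate modules away from $p$) of the universal objects. First I would observe that since $p$ is split in $\kk$, over $\co_\kk[1/p]$ the morphism $x\colon A_0 \to A$ of abelian schemes with $\co_\kk$-action has the property that $\co_\kk$ acts on $A_0$, and the kernel $\ker(x)$ is a finite flat $\co_\kk$-stable subgroup scheme of $A_0$ annihilated by $\langle x,x\rangle = p$ (because $\bar x \circ x$ is multiplication by $\langle x,x\rangle$ up to the polarizations, by \eqref{KR hermitian}). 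Since $p$ is invertible on the base, $\ker(x) \subset A_0[p]$ is étale-locally constant, and $A_0[p] = A_0[\mathfrak{p}] \oplus A_0[\overline{\mathfrak{p}}]$ as $\co_\kk$-modules, with each summand locally isomorphic to $\underline{\Z/p\Z}$ (as $A_0$ has signature $(1,0)$, its $\mathfrak p$-torsion is étale of rank $1$). Hence the $\co_\kk$-stable subgroups of $A_0[p]$ are exactly $0$, $A_0[\mathfrak p]$, $A_0[\overline{\mathfrak p}]$, and $A_0[p]$. The latter is excluded because $x$ is nonzero (indeed $\langle x,x\rangle = p \neq 0$ forces $x$ to be an isogeny onto its image, so $\ker x$ cannot be all of $A_0[p]$; equivalently $x$ is injective on $\mathrm{Lie}$ up to the quotient). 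This yields the three cases, each defining a locally closed — in fact open and closed — condition.

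Next I would check that each of the three conditions is open and closed, giving the decomposition \eqref{flavor decomp}. The function sending a geometric point to the isomorphism type of $\ker(x)$ as a finite group scheme (equivalently, its order, which is $1$ in the first case and $p$ in the other two, together with which of $A_0[\mathfrak p]$, $A_0[\overline{\mathfrak p}]$ it equals) is locally constant on $\mathcal{Z}_V(p)_{/\co_\kk[1/p]}$: the order of a finite flat group scheme is locally constant, and distinguishing $A_0[\mathfrak p]$ from $A_0[\overline{\mathfrak p}]$ is detected by the $\co_\kk$-action, which varies continuously. Concretely, $\mathcal{U}_0$ is the locus where $x$ is a closed immersion (equivalently, where the finite flat group scheme $\ker(x)$ has rank $1$), which is open; its complement splits further into $\mathcal{U}_\mathfrak p$ and $\mathcal{U}_{\overline{\mathfrak p}}$ according to which of the two rank-one subgroups $\ker(x)$ equals, and these are complementary open conditions. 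So all three are open and closed, and they are pairwise disjoint and cover $\mathcal{Z}_V(p)_{/\co_\kk[1/p]}$ because Lemma \ref{lem:lattice trifurcation} shows the three cases are exhaustive and mutually exclusive on geometric points (one passes between the lattice-theoretic statement and the group-scheme statement via $\mathfrak a_0 = H_1$ of $A_0$, resp. Tate modules, and the identification $\tilde{\mathfrak a}_0 = \mathfrak a \cap \tilde W_0$ corresponds to $x(A_0)$ being saturated in $A$ or not, i.e. to $\ker x$ being trivial or of order $p$).

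I expect the main obstacle to be the bookkeeping translating Lemma \ref{lem:lattice trifurcation}, which is phrased in terms of $\co_\kk$-lattices and the symplectic pairings $e_0, e$, into the statement about $\ker(x)$ on the level of abelian schemes: one must identify, over $\C$ and then over a general base via the Tate modules and the de Rham/crystalline realizations, the lattice $\tilde{\mathfrak a}_0 = \mathfrak a \cap x(W_0)$ with the "saturation" of $x(A_0)$ inside $A$, and read off that $\tilde{\mathfrak a}_0 / x(\mathfrak a_0)$ — which is $0$, $\co_\kk/\mathfrak p$, or $\co_\kk/\overline{\mathfrak p}$ by the lemma — is exactly the Cartier dual of $\ker(x)$ (up to the polarization). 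Once this dictionary is set up, the three cases of the lemma match the three cases of the proposition verbatim. The openness/closedness assertions are then formal consequences of the constancy of orders of finite flat group schemes and of the $\co_\kk$-module structure in families, so no further difficulty arises there.
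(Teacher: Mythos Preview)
Your plan is essentially the paper's own argument: show $\ker(x)\subset A_0[p]$, observe both are finite \'etale over $\co_\kk[1/p]$ so the three loci are open and closed, and check exhaustiveness on geometric points via Lemma~\ref{lem:lattice trifurcation} applied to $p$-adic Tate modules. The paper makes the last step precise with the snake lemma, obtaining $\ker(x_s)\cong \tilde T_0/T_0$ directly (not its Cartier dual, as you wrote), and then invokes the lemma.

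There is one genuine slip. Your exclusion of the case $\ker(x)=A_0[p]$ is not justified by the sentence you wrote: ``$x$ is an isogeny onto its image'' is perfectly compatible with $\ker(x)=A_0[p]$, since that would just make the isogeny onto the image have degree $p^2$. The correct argument is: if $\ker(x)=A_0[p]$ then $x$ factors as $x=y\circ[p]$ for some $y\in\Hom_{\co_\kk}(A_0,A)$, and then $p=\langle x,x\rangle=p^2\langle y,y\rangle$ with $\langle y,y\rangle$ a nonnegative integer, which is impossible. Alternatively, and this is what the paper does, Lemma~\ref{lem:lattice trifurcation} already rules out $\tilde T_0=p^{-1}T_0$; note that its proof uses the self-duality of the ambient lattice $\mathfrak a$ (equivalently, the principal polarization on $A$), not merely that $x$ is nonzero. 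With this correction your proposal is fine.
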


\begin{proof}
Recalling \eqref{KR hermitian},  the relation $\langle x,x\rangle =p$ implies that  multiplication-by-$p$ factors as 
\[
A_0 \map{x} A \iso A^\vee \map{x^\vee} A_0^\vee \iso A_0,
\]
 and so  $\ker(x) \subset A_0[p]$.   Both of these group schemes are finite \'etale over $\mathcal{Z}_V(p)_{/\co_\kk[1/p]}$, which implies that each of $\mathcal{U}_0$, $\mathcal{U}_\mathfrak{p}$, and $\mathcal{U}_{\overline{\mathfrak{p}}}$ is open and closed. 
It only remains to prove that every geometric point $s \to \mathcal{Z}_V(p)_{/\co_\kk[1/p]}$ is contained in one of them.

Abbreviate $T_0= T_p(A_{0 s})$ and $T=T_p(A_s)$.
Applying the snake lemma to the diagram
\[
\xymatrix{
0 \ar[r]  & {T_0 } \ar[r] \ar[d]_{x_s} & { T_0\otimes \Q_p } \ar[r] \ar[d]_{x_s} & { A_{0 s}[p^\infty]}  \ar[r] \ar[d]_{x_s} & 0  \\
 0 \ar[r]  & {T  } \ar[r]  & { T \otimes \Q_p } \ar[r]   & { A_{s}[p^\infty]}  \ar[r] & 0  ,
}
\]
 and using the vertical arrow on the left   to identify $T_0 \subset T$, we find that 
\[
\ker(x_s) \iso \tilde{T}_0 / T_0,
\]
where $\tilde{T}_0 = T \cap  T_{0 \Q_p}$.

After fixing an isomorphism $\Z_p \iso \Z_p(1)$ of \'etale sheaves on $s$,  there are unique $\co_{\kk,p}$-valued hermitian forms $h_0$ and $h$ on $T_0$ and $T$, respectively, related to the  Weil pairings $e_0$ and $e$  by \eqref{symplectic}.  Thus we may apply  Lemma \ref{lem:lattice trifurcation} with $\mathfrak{a}_0$ and $\mathfrak{a}$ replaced by $T_0$ and $T$, to see that $\tilde{T}_0$ must be one of $T_0$, $\mathfrak{p}^{-1} T_0$, or $\overline{\mathfrak{p}}^{-1} T_0$.  These three cases correspond to $\ker(x_s)$ being  trivial,  $A_{0s}[\mathfrak{p}]$, or $A_{0s}[\overline{\mathfrak{p}}]$.
\end{proof}


\section{Analysis of $\mathcal{U}_\mathfrak{p}$}


In this section we study the structure of the substack $\mathcal{U}_\mathfrak{p}$  of \eqref{flavor decomp},
and make explicit its relation to $\mathcal{S}_{V^\flat}$. 
The analogous analysis of $\mathcal{U}_{\overline{\mathfrak{p}}}$ is obtained by replacing $\mathfrak{p}$ by $\overline{\mathfrak{p}}$ everywhere.

Return to the situation of Lemma \ref{lem:lattice trifurcation}, so that  
$x\in \Hom_{\co_\kk}(\mathfrak{a}_0,\mathfrak{a})$ with $\langle x,x\rangle =p$ determines an orthogonal decomposition
\[
W = \tilde{W}_0 \oplus W^\flat .
\]
Set $\tilde{\mathfrak{a}}_0 = \mathfrak{a}\cap \tilde{W}_0$.

\begin{lemma}\label{lem:pi linear algebra}
  If  $\tilde{\mathfrak{a}}_0 =\mathfrak{p}^{-1} x(\mathfrak{a}_0)$,  there is an orthogonal  decomposition 
\[
\mathfrak{a} = \tilde{\mathfrak{a}}_0 \oplus \mathfrak{a}^\flat \subset W
\]
in which $\mathfrak{a}^\flat = \mathfrak{a} \cap W^\flat$.
\end{lemma}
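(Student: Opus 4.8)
The statement is purely a claim about $\co_\kk$-lattices inside the hermitian space $W = \tilde W_0 \oplus W'$, so the plan is to argue locally and reduce to the prime $p$. First I would observe that away from $p$ there is nothing to prove: since $\mathfrak{a}$ is self-dual and $\tilde{\mathfrak{a}}_0 = \mathfrak{p}^{-1}x(\mathfrak{a}_0)$ differs from $x(\mathfrak{a}_0)$ only at $\mathfrak{p}$, for every prime $\ell \neq p$ we have $\mathfrak{a}_\ell = (\tilde{\mathfrak{a}}_0)_\ell \oplus \mathfrak{a}'_\ell$ already, because at such primes $W_\ell = \tilde W_{0,\ell} \oplus W'_\ell$ is an orthogonal sum of $\co_{\kk,\ell}$-self-dual pieces and $\mathfrak{a}_\ell$, being self-dual, must be the orthogonal sum of its intersections with the two summands. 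Thus the whole content is at $p$, i.e.\ I must show $\mathfrak{a}_p = (\tilde{\mathfrak{a}}_0)_p \oplus \mathfrak{a}'_p$ inside $W_p$.

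Next, at $p$ I would use that $p$ splits, $p\co_{\kk,p} = \mathfrak{p}\co_{\kk,p} \times \overline{\mathfrak{p}}\co_{\kk,p}$, so $\co_{\kk,p} \iso \Z_p \times \Z_p$ and every $\co_{\kk,p}$-module decomposes under the two idempotents; the hermitian form pairs the $\mathfrak{p}$-part with the $\overline{\mathfrak{p}}$-part, and self-duality of an $\co_{\kk,p}$-lattice becomes the statement that its $\mathfrak{p}$-component and $\overline{\mathfrak{p}}$-component are $\Z_p$-dual to each other under the induced perfect pairing. Concretely, write $e$ for the $\mathfrak{p}$-idempotent; then $e\mathfrak{a}_p \subset eW_p$ is a $\Z_p$-lattice, $\bar e\mathfrak{a}_p$ is its dual, and similarly for $\tilde{\mathfrak{a}}_0$ and $\mathfrak{a}'$. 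So it suffices to prove the decomposition on the $\mathfrak{p}$-side, $e\mathfrak{a}_p = e\tilde{\mathfrak{a}}_{0,p} \oplus e\mathfrak{a}'_p$; the $\overline{\mathfrak{p}}$-side then follows by taking $\Z_p$-duals (duality turns a direct sum of mutually orthogonal summands into the direct sum of the dual summands).

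For the $\mathfrak{p}$-side I would argue by a length/index count, exactly in the spirit of the proof of Lemma~\ref{lem:lattice trifurcation}. We have containments $e x(\mathfrak{a}_0)_p \oplus e\mathfrak{a}'_p \subset e\mathfrak{a}_p$ and $e\tilde{\mathfrak{a}}_{0,p} \oplus e\mathfrak{a}'_p \subset e\mathfrak{a}_p$ (the first because $\mathfrak{a}$ is an $\co_\kk$-lattice containing both $x(\mathfrak{a}_0)$ and $\mathfrak{a}'$, the second because $\tilde{\mathfrak{a}}_0 = \mathfrak{a}\cap\tilde W_0 \subset \mathfrak{a}$). Since $\tilde{\mathfrak{a}}_0 = \mathfrak{p}^{-1}x(\mathfrak{a}_0)$, the lattice $e\tilde{\mathfrak{a}}_{0,p}$ has $\Z_p$-colength one less than $ex(\mathfrak{a}_0)_p$ inside $eW_0$. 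On the other hand, from $p\tilde{\mathfrak{a}}_0 \subset \mathfrak{a}_0$ and the self-duality statements (the same inequalities that appear in the proof of Lemma~\ref{lem:lattice trifurcation}, pushed through the perfect pairing between $e\mathfrak{a}_p$ and $\bar e\mathfrak{a}_p$) one computes that the colength of $e x(\mathfrak{a}_0)_p \oplus e\mathfrak{a}'_p$ in $e\mathfrak{a}_p$ is exactly $1$, which forces $e\tilde{\mathfrak{a}}_{0,p} \oplus e\mathfrak{a}'_p$ to fill up all of $e\mathfrak{a}_p$. The main obstacle is this bookkeeping of indices: one must correctly relate the discriminant of $\mathfrak{a}$ (which is a $p$-adic unit, by self-duality) to the discriminants of $x(\mathfrak{a}_0)$ (which has $\ord_p$ equal to $\ord_p\langle x,x\rangle = 1$) and of $\mathfrak{a}'$, and check that the single "unit of colength" is accounted for precisely by passing from $x(\mathfrak{a}_0)$ to $\mathfrak{p}^{-1}x(\mathfrak{a}_0)$ and not to $\overline{\mathfrak{p}}^{-1}x(\mathfrak{a}_0)$ — but that last point is exactly the hypothesis $\tilde{\mathfrak{a}}_0 = \mathfrak{p}^{-1}x(\mathfrak{a}_0)$, so the asymmetry is built in. Once the colength count closes, orthogonality of the decomposition is automatic since $\tilde W_0 \perp W'$ and $\mathfrak{a}' = \mathfrak{a}\cap W'$ by definition.
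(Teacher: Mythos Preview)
Your plan can be made to work, but it is considerably more elaborate than the paper's argument, and the detour at $p$ is unnecessary. The paper's proof is essentially one line: after identifying $W_0=\tilde W_0$ via $x$, the hypothesis $\tilde{\mathfrak a}_0=\mathfrak p^{-1}\mathfrak a_0$ forces $\tilde{\mathfrak a}_0$ to be \emph{self-dual} under $h|_{\tilde W_0}=p\,h_0$ (check: the dual of $\mathfrak p^{-1}\mathfrak a_0$ under $p\,h_0$ is $\overline{\mathfrak p^{-1}}\cdot p^{-1}\cdot\mathfrak a_0^\vee=\mathfrak p^{-1}\mathfrak a_0$). Once that is known, the splitting $\mathfrak a=\tilde{\mathfrak a}_0\oplus(\mathfrak a\cap W')$ is the standard fact that a self-dual lattice in an orthogonal sum splits off any self-dual sublattice sitting in one summand: for $a\in\mathfrak a$ the projection to $\tilde W_0$ pairs integrally with $\tilde{\mathfrak a}_0$, hence lies in $\tilde{\mathfrak a}_0^\vee=\tilde{\mathfrak a}_0\subset\mathfrak a$, so the $W'$-component lies in $\mathfrak a$ as well.

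You actually invoke this very principle at primes $\ell\neq p$, but then abandon it at $p$ in favor of an index count with idempotents. The point you are missing is that the self-duality of $\tilde{\mathfrak a}_0$ holds \emph{globally}, including at $p$; this is precisely where the asymmetric hypothesis $\tilde{\mathfrak a}_0=\mathfrak p^{-1}x(\mathfrak a_0)$ (rather than $\overline{\mathfrak p}^{-1}x(\mathfrak a_0)$) does its work. Your colength bookkeeping at $p$ is not carried out in detail, and if you try to pin down ``the colength of $e\,x(\mathfrak a_0)_p\oplus e\,\mathfrak a'_p$ in $e\,\mathfrak a_p$ is exactly $1$'' you will find that the cleanest way is to compute $\pi(\mathfrak a_p)=(\tilde{\mathfrak a}_{0,p})^\vee$ and compare it to $\tilde{\mathfrak a}_{0,p}$, which is again the self-duality observation. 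So your route is correct in outline but circles back to the paper's one-step argument.
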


\begin{proof}
If we use $x$ to identify $W_0 = \tilde{W}_0$,  the  assumption  $\tilde{\mathfrak{a}}_0=\mathfrak{p}^{-1}\mathfrak{a}_0$ implies that  $\tilde{\mathfrak{a}}_0$ is self-dual with respect to the hermitian form $h|_{ \tilde{W}_0 }= p h_0$.  The  desired decomposition  then  follows by elementary linear algebra.
\end{proof}

The lemma suggests that if $(A_0,A,x) \in \mathcal{U}_\mathfrak{p}(S)$ for  an $\co_\kk[1/p]$-scheme $S$,  then $x:A_0 \to A$ should determine an $\co_\kk$-linear splitting
  \begin{equation}\label{pi product}
  A =  \tilde{A}_0 \times A^\flat 
  \end{equation}
of principally polarized abelian schemes.   Indeed, this is the case.  
If we set
\[
\tilde{A}_0 =  A_0 / A_0[\mathfrak{p}]
\]
and recall that $\ker(x) = A_0[\mathfrak{p}]$,  the morphism $x :A_0 \to A$ factors  as
  \[
 A_0 \to \tilde{A}_0 \map{y} A
 \]
 for some $y\in \Hom_{\co_\kk}(\tilde{A}_0, A) $ satisfying   $\langle y,y\rangle =1$.
  In other words,  the composition 
  \[
\tilde{A}_0 \map{y} A \iso A^\vee \map{y^\vee} \tilde{A}_0^\vee \iso \tilde{A}_0
  \]
  is the identity.  This implies that the composition 
  \[
  A \iso A^\vee \map{y^\vee} \tilde{A}_0^\vee \iso \tilde{A}_0 \map{y} A
  \]
  is a Rosati-fixed idempotent  in $\End_{\co_\kk}(A)$,  and $A$ admits  a unique  splitting \eqref{pi product}
  of principally polarized abelian schemes over $S$ such that this idempotent is the projection to the first factor.

Apply the above construction to the universal triple $(A_0,A,x)$ over  $\mathcal{U}_\mathfrak{p}$, and 
recall that $A$ comes equipped with an $\co_\kk$-stable  hyperplane  $\mathcal{F} \subset \Lie(A)$ satisfying Kr\"amer's signature condition.
Using the  decomposition
\[
  \Lie(A) =  \Lie(\tilde{A}_0) \oplus \Lie(A^\flat)
\]
 of vector bundles on $S$, denote by 
$
 \mathcal{U}_\mathfrak{p}^\dagger \subset \mathcal{U}_\mathfrak{p}
 $
 the largest  closed substack over which    $\Lie(\tilde{A}_0 ) \subset \mathcal{F}$.

\begin{proposition}\label{prop:i_p construction}
There is a canonical isomorphism
\[
i_\mathfrak{p} : \mathcal{S}_{V^\flat/\co_\kk[1/p]} \iso \mathcal{U}_\mathfrak{p}^\dagger.
\]
\end{proposition}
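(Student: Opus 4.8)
The plan is to construct the isomorphism $i_\mathfrak{p}$ by exhibiting a morphism in each direction and checking they are mutually inverse, working functorially over $\co_\kk[1/p]$-schemes $S$. First I would produce the map $i_\mathfrak{p} : \mathcal{S}_{V'/\co_\kk[1/p]} \to \mathcal{U}_\mathfrak{p}^\dagger$. Given a point $(A_0', A') \in \mathcal{S}_{V'}(S)$ (with its principal polarization, $\co_\kk$-action, and Kr\"amer hyperplane $\mathcal{F}' \subset \Lie(A')$), set $A = A_0' \times A'$ with the product polarization and the diagonal $\co_\kk$-action, let $A_0 = A_0'$, and let $\mathcal{F} = \Lie(A_0') \oplus \mathcal{F}' \subset \Lie(A_0') \oplus \Lie(A') = \Lie(A)$. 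One must check that $\mathcal{F}$ is an $\co_\kk$-stable hyperplane satisfying Kr\"amer's signature $(n-1,1)$ condition: this is immediate because $\Lie(A_0')$ has signature $(1,0)$ and contributes only to the "$(n-1)$-part" of $\mathcal{F}$, while the signature condition on $\mathcal{F}'$ carries over verbatim. Then I take $x : A_0 = A_0' \to A$ to be the composite of the canonical isogeny $A_0' \to A_0'/A_0'[\mathfrak{p}]$ with the inclusion of the first factor $A_0'/A_0'[\mathfrak{p}] \hookrightarrow A$; one checks $\langle x, x\rangle = p$ using \eqref{KR hermitian} and the fact that $A_0' \to A_0'/A_0'[\mathfrak{p}]$ has degree $p$ (this is where the relation $h|_{\tilde W_0} = p h_0$ from Lemma \ref{lem:pi linear algebra} and the identity $e|_{W_0} = p \cdot e_0$ from the proof of Lemma \ref{lem:lattice trifurcation} enter). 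By construction $\ker(x) = A_0'[\mathfrak{p}]$, so $(A_0, A, x) \in \mathcal{U}_\mathfrak{p}(S)$, and since $\Lie(\tilde{A}_0) = \Lie(A_0'/A_0'[\mathfrak{p}])$ is (under the canonical identification coming from the isogeny being étale away from $p$, hence an isomorphism on Lie algebras over $S$) precisely the first summand $\Lie(A_0') \subset \mathcal{F}$, the triple lands in $\mathcal{U}_\mathfrak{p}^\dagger$. Finally, one verifies the point lands in the correct open-and-closed component indexed by $V'$: the relevant hermitian space attached to $(A_0, A)$ is $\Hom_\kk(W_0, \tilde W_0) \oplus \Hom_\kk(W_0, W') = \kk x \oplus V'$, which has the prescribed local invariants \eqref{V'}.

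For the inverse, I would run the splitting construction already described in the text: given $(A_0, A, x) \in \mathcal{U}_\mathfrak{p}^\dagger(S)$, the discussion preceding the proposition produces $\tilde{A}_0 = A_0/A_0[\mathfrak{p}]$, a vector $y \in \Hom_{\co_\kk}(\tilde{A}_0, A)$ with $\langle y,y\rangle = 1$, a Rosati-fixed idempotent in $\End_{\co_\kk}(A)$, and hence the splitting $A = \tilde{A}_0 \times A'$ of principally polarized abelian schemes with $\co_\kk$-action \eqref{pi product}. The defining condition $\Lie(\tilde{A}_0) \subset \mathcal{F}$ of $\mathcal{U}_\mathfrak{p}^\dagger$ guarantees that $\mathcal{F}' := \mathcal{F} \cap \Lie(A') = \mathcal{F}/\Lie(\tilde{A}_0)$ is a rank $n-2$ local direct summand of $\Lie(A')$, and one checks it satisfies Kr\"amer's signature $(n-2,1)$ condition (the $\co_\kk$-action on $\Lie(\tilde{A}_0)$ is through the structure morphism, so removing it leaves the signature condition on the quotient intact). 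Thus $(A_0', A') := (A_0, (A', \mathcal{F}'))$ — or rather, one should recover $A_0'$ as $A_0$ itself, noting that $\tilde A_0 \to A_0$ is not what we want; care is needed here, and I would take $A_0' = A_0$ with its given data. The pair $(A_0, A')$ lies in $\mathcal{M}_{(1,0)} \times_{\co_\kk} \mathcal{M}_{(n-2,1)}$, and one checks it lies in the substack $\mathcal{S}_{V'}$ by computing the hermitian space $\Hom_\kk(H_1(A_0), H_1(A'))$ (or its $\ell$-adic analogues) and matching invariants — again this is \eqref{V'}.

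That the two constructions are mutually inverse is then a matter of unwinding: starting from $(A_0', A')$, forming $A = A_0' \times A'$, passing to $\tilde A_0 = A_0'/A_0'[\mathfrak{p}]$, and splitting off $A'$ recovers $A'$ on the nose (the idempotent is exactly projection to the second factor), and the hyperplane $\mathcal{F}' = \mathcal{F} \cap \Lie(A')$ is the original one; conversely starting from $(A_0, A, x)$ and reconstructing $A = \tilde A_0 \times A'$, the map $x$ factors through $\tilde A_0 \hookrightarrow A$ by the very definition of $\tilde A_0 = A_0/\ker(x)$, so we recover $x$ up to the canonical identifications. Both directions are morphisms of stacks because every construction (quotient by a finite flat subgroup, image of an idempotent, intersection of $\mathcal{F}$ with a direct summand) commutes with base change in $S$.

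The main obstacle I anticipate is \emph{not} the abstract bijection but the bookkeeping of polarizations and the Kr\"amer hyperplane under the splitting: one must be careful that the product polarization on $A_0' \times A'$ is genuinely principal and that its Rosati involution restricts to complex conjugation on $\co_\kk$ (straightforward, since it does so on each factor), and, more delicately, that the condition "$\Lie(\tilde A_0) \subset \mathcal{F}$" cutting out $\mathcal{U}_\mathfrak{p}^\dagger$ is exactly what is needed to make $\mathcal{F}$ decompose compatibly with the product — i.e. that $\mathcal{F}$ is automatically of the form $\Lie(\tilde A_0) \oplus \mathcal{F}'$ rather than some "skew" hyperplane. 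This follows because $\mathcal{F}$ contains the rank-one summand $\Lie(\tilde A_0)$, which is a direct summand of $\Lie(A)$, so the quotient $\mathcal{F}/\Lie(\tilde A_0)$ is a local direct summand of $\Lie(A)/\Lie(\tilde A_0) = \Lie(A')$ of the right rank; but verifying this cleanly over a general base, together with the signature condition, is the one place where genuine (if routine) care is required. A secondary point is keeping straight that the isogeny $A_0 \to \tilde A_0$ is invisible on Lie algebras (being étale over $\co_\kk[1/p]$), so that all the Lie-algebra conditions match up between $A_0$ and $\tilde A_0$; this is why the statement is over $\co_\kk[1/p]$ and not over $\co_\kk$.
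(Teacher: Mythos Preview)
Your overall strategy---construct mutually inverse morphisms using the product splitting \eqref{pi product}, with the inverse extracting $A'$ via the Rosati-fixed idempotent and setting $\mathcal{F}' = \mathcal{F}/\Lie(\tilde A_0)$---is exactly the paper's, and your description of the inverse direction is correct.

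There is, however, a genuine error in your forward construction.  You set $A = A_0' \times A'$, but then speak of ``the inclusion of the first factor $A_0'/A_0'[\mathfrak{p}] \hookrightarrow A$,'' which does not exist with that choice of $A$.  The paper instead takes $\tilde A_0 = A_0'/A_0'[\mathfrak{p}]$ and defines $A = \tilde A_0 \times A'$, with $x$ the composition $A_0' \to \tilde A_0 \hookrightarrow A$.  This is not a cosmetic difference: with $A = A_0' \times A'$ and $x$ the honest inclusion of the first factor one gets $\langle x,x\rangle = 1$ rather than $p$, and more to the point the pair $(A_0', A_0'\times A')$ need not lie in $\mathcal{S}_V$ at all.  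The associated hermitian space is $\Hom_\kk(W_0, W_0 \oplus W') \cong \langle 1\rangle \oplus V'$, whereas $V \cong \langle p\rangle \oplus V'$, and these have different local invariants at primes $\ell \mid D$ whenever $(p,-D)_\ell = -1$.  Your own inverse construction (which correctly produces $A = \tilde A_0 \times A'$) would also fail to undo this map.

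Your later remarks about $\tilde W_0$ and about the \'etale isogeny $A_0' \to \tilde A_0$ being invisible on Lie algebras suggest you had the correct picture $A = \tilde A_0 \times A'$ in mind and simply misstated the definition.  Once you replace $A_0'$ by $\tilde A_0$ in the first factor, the identification $\Lie(\tilde A_0) \cong \Lie(A_0')$ you invoke becomes legitimate, the hyperplane $\mathcal{F} = \Lie(\tilde A_0) \oplus \mathcal{F}'$ is well-defined inside $\Lie(A)$, the hermitian-space check goes through as you sketched, and the two constructions are visibly inverse, exactly as in the paper.
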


\begin{proof}
As above, let $S$ be an $\co_\kk[1/p]$-scheme.
Given a point $(A_0,A,x) \in \mathcal{U}_\mathfrak{p}^\dagger(S)$,  the splitting \eqref{pi product} determines 
\[
 A^\flat \in \mathcal{M}_{( n-2,1)}(S),
\]
where we have endowed  $A^\flat$ with the $\co_\kk$-stable hyperplane
\[
\mathcal{F}^\flat = \mathcal{F} / \Lie(\tilde{A}_0 )  \subset \Lie(A)/  \Lie(\tilde{A}_0 ) \iso \Lie(A^\flat).
\]
satisfying Kr\"amer's condition.  
The pair $(A_0,A^\flat)$ defines an $S$-point of 
\[
\mathcal{S}_{V^\flat} \subset \mathcal{M}_{(1,0)} \times_{\co_\kk} \mathcal{M}_{( n-2,1)},
\]
and we have now constructed a morphism 
 \begin{equation}\label{i_p inverse}
 \mathcal{U}^\dagger_\mathfrak{p} \map{  (A_0,A,x) \to (A_0,A^\flat)}  \mathcal{S}_{V^\flat/\co_\kk[1/p]}.
 \end{equation}
 
Conversely, start with an $S$-point 
\[
(A^\flat_0 , A^\flat) \in \mathcal{S}_{V^\flat}(S) .
\]
First define  elliptic curves $A_0 = A_0^\flat$ and  $\tilde{A}_0 = A_0 /A_0[\mathfrak{p}]$.
Then define an abelian scheme $A$ by \eqref{pi product}, and endow $A$ with  its product principal polarization and product $\co_\kk$-action.  
 Recalling that $A^\flat$ comes equipped with a hyperplane $\mathcal{F}^\flat \subset \Lie(A^\flat)$ satisfying Kr\"amer's  condition, we endow $A$ with the hyperplane
\[
  \mathcal{F} = \Lie( \tilde{A}_0 ) \oplus \mathcal{F}^\flat \subset \Lie(A).
\]
 It is easy to check that $A$, with its extra data, defines an $S$-point of $\mathcal{M}_{(n-1,1)}$, and that 
 $(A_0,A)$ defines an $S$-point of the open and closed substack
  \[
  \mathcal{S}_V \subset \mathcal{M}_{(1,0)} \times_{\co_\kk} \mathcal{M}_{(n-2,1)}.
\]
If we define  $x\in \Hom_{\co_\kk}(A_0,A)$  as the composition
\[
A_0 \to \tilde{A}_0  \hookrightarrow \tilde{A}_0 \times A^\flat =A,
\]
where the first arrow is the quotient map,  then   $\langle x ,x \rangle =p$ and $\ker(x) =A_0[\mathfrak{p}]$.
The triple   $(A_0,A,x)$ defines an  $S$-point of $\mathcal{U}^\dagger_\mathfrak{p}$, and the morphism
\[
\mathcal{S}_{V^\flat/\co_\kk[1/p]} \map{ (A^\flat_0 , A^\flat)  \to  (A_0,A,x)  }    \mathcal{U}^\dagger_\mathfrak{p} 
\]
is inverse to \eqref{i_p inverse}.
\end{proof}

Proposition \ref{prop:i_p construction} gives us a  commutative diagram
\begin{equation}\label{i_p pullback}
\xymatrix{
{ \mathcal{S}_{V^\flat/\co_\kk[1/p] } }   \ar[r]\ar[d] & { \mathcal{S}_{V/\co_\kk[1/p] } }  \ar[d] \\
{ \big(  \mathcal{M}_{(1,0)} \times_{\co_\kk} \mathcal{M}^\Pap_{ (n-2,1) }  \big)_{/\co_\kk[1/p]} } \ar[r] & { \big(    \mathcal{M}_{(1,0)} \times_{\co_\kk} \mathcal{M}^\Pap_{ (n-1,1) }  \big)_{/\co_\kk[1/p]}    }
}
\end{equation}
in which the top horizontal arrow is the composition
\[
\mathcal{S}_{V^\flat/\co_\kk[1/p]} \map{i_\mathfrak{p}} \mathcal{U}^\dagger_\mathfrak{p} \hookrightarrow   \mathcal{Z}_V(p)_{/\co_\kk[1/p]}  \to \mathcal{S}_{V/\co_\kk[1/p]} ,
\]
and the bottom horizontal arrow sends  $(A_0^\flat , A^\flat) \mapsto (A_0,A)$, where 
\begin{equation}\label{i_p pullback explicit}
A_0 = A_0^\flat , \quad \tilde{A}_0 = A_0 / A_0[\mathfrak{p}] , \quad A = \tilde{A}_0 \times A^\flat.
\end{equation}
Both horizontal arrows are finite and unramified.

\begin{proposition}\label{prop:i_p pullbacks}
The homomorphism
\[
 \widehat{\Pic}  (    \mathcal{S}_{V/\co_\kk[1/p]}  )_\Q  \to  \widehat{\Pic}  (    \mathcal{S}_{V^\flat/\co_\kk[1/p]}  )_\Q
\]
induced by  \eqref{i_p pullback} sends 
\[
\widehat{\omega}^\mathrm{Hdg}_{A/ \mathcal{S}_V } \mapsto \widehat{\omega}^\mathrm{Hdg}_{A^\flat_0/ \mathcal{S}_{V^\flat} } +  \widehat{\omega}^\mathrm{Hdg}_{A^\flat/ \mathcal{S}_{V^\flat} } ,
\]
where  $(A_0^\flat , A^\flat)$ is the universal pair over $\mathcal{S}_{V^\flat}$.  The same map also sends
\[
\widehat{\taut}_V \mapsto \widehat{\taut}_{V^\flat}  
\quad \mbox{and}\quad
 (\mathrm{Exc}_V,0) \mapsto (\mathrm{Exc}_{V^\flat} ,0 ).
\]
\end{proposition}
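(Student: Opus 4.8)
The strategy is to chase the formulas \eqref{i_p pullback explicit} describing the bottom arrow of \eqref{i_p pullback} through the definitions of the three hermitian line bundles on $\mathcal{S}_V$, namely \eqref{basic hodge}, \eqref{metrized taut}, and the exceptional divisor of Definition \ref{def:special exceptional}. Since all three line bundles on $\mathcal{S}_V$ are pulled back from $\mathcal{M}_{(1,0)} \times_{\co_\kk} \mathcal{M}^\Pap_{(n-1,1)}$ (or, in the case of $\widehat{\taut}_V$ and $\mathrm{Exc}_V$, from the blow-up $\mathcal{M}_{(n-1,1)}$), it suffices to understand how each is affected by the correspondence $(A_0',A') \mapsto (A_0,A)$ with $A_0 = A_0'$, $\tilde A_0 = A_0/A_0[\mathfrak{p}]$, $A = \tilde A_0 \times A'$. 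First I would handle the Hodge bundle: from $A = \tilde A_0 \times A'$ one gets $\Lie(A) = \Lie(\tilde A_0) \oplus \Lie(A')$, hence $\omega^\mathrm{Hdg}_{A/\mathcal{S}_V}$ pulls back to $\omega^\mathrm{Hdg}_{\tilde A_0/\mathcal{S}_{V'}} \otimes \omega^\mathrm{Hdg}_{A'/\mathcal{S}_{V'}}$. The $\mathfrak{p}$-isogeny $A_0' = A_0 \to \tilde A_0$ is étale (we are over $\co_\kk[1/p]$, and $p$ is split, so $A_0[\mathfrak p]$ is étale of order $p$), so it induces an isomorphism on Lie algebras and hence on Hodge bundles; one must also check it is an isometry for the metric \eqref{hodge metric}, which follows because an étale isogeny induces an isomorphism on $H^0(\Omega^{\dim})$ compatible with the period integrals — here the degree being prime to the relevant quantity, or more simply the fact that pullback of a top form along an isogeny of degree $d$ scales the integral by $d$, must be tracked; since $\widehat{\omega}^\mathrm{Hdg}$ lives in $\widehat{\Pic}(\ \cdot\ )_\Q$ this rational discrepancy is harmless, but it is cleanest to note that $A_0 \to \tilde A_0$ being étale of degree $p \in \co_\kk[1/p]^\times$ the discrepancy $\log p$ is itself rationally trivial after localization — in any case the metrics agree up to $\Q\log p$, which vanishes in $\widehat{\Pic}(\mathcal{S}_{V'/\co_\kk[1/p]})_\Q$. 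This gives $\widehat{\omega}^\mathrm{Hdg}_{A/\mathcal{S}_V} \mapsto \widehat{\omega}^\mathrm{Hdg}_{A_0'/\mathcal{S}_{V'}} + \widehat{\omega}^\mathrm{Hdg}_{A'/\mathcal{S}_{V'}}$.

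Next I would treat $\widehat{\taut}_V$. By \eqref{dual taut}, $\taut_V^{-1} = \Lie(A_0) \otimes \Lie(A)/\mathcal{F}$, and under \eqref{i_p pullback explicit} together with the hyperplane $\mathcal{F} = \Lie(\tilde A_0) \oplus \mathcal{F}'$ constructed in the proof of Proposition \ref{prop:i_p construction}, we get $\Lie(A)/\mathcal{F} \iso \Lie(A')/\mathcal{F}'$ canonically, while $\Lie(A_0) = \Lie(A_0') $ (the identity, as $A_0 = A_0'$ — note here it really is $A_0'$ and not $\tilde A_0$ that enters the definition of $\taut$, so no isogeny appears). Hence the underlying line bundle $\taut_V^{-1}$ pulls back to $\Lie(A_0') \otimes \Lie(A')/\mathcal{F}' = \taut_{V'}^{-1}$. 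For the metric \eqref{taut metric}, one uses the realization \eqref{taut realization} at a complex point: the isometry $V \iso \Hom_\kk(W_0,W)$ restricts to $V' \iso \Hom_\kk(W_0,W')$ compatibly with the orthogonal decomposition $V = \kk x \oplus V'$ coming from the chosen $x$, and the inclusion $\taut_{V',z} \subset V' \otimes \C \subset V \otimes \C$ identifies the two tautological lines together with the bilinear forms $[\cdot,\cdot]$ (which restrict compatibly). So the metrics match exactly, giving $\widehat{\taut}_V \mapsto \widehat{\taut}_{V'}$.

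Finally, the exceptional divisor: $\mathrm{Exc}_V$ is by Definition \ref{def:special exceptional} the pullback of $\mathrm{Sing}_{(n-1,1)} \subset \mathcal{M}^\Pap_{(n-1,1)}$, and the point of the diagram \eqref{i_p pullback} is precisely that the bottom arrow $\mathcal{M}_{(1,0)} \times \mathcal{M}^\Pap_{(n-2,1)} \to \mathcal{M}_{(1,0)} \times \mathcal{M}^\Pap_{(n-1,1)}$, $(A_0',A') \mapsto (A_0, A = \tilde A_0 \times A')$, carries the singular locus to the singular locus and is transverse to it; concretely, by Remark \ref{rem:singular description} a geometric point of $\mathrm{Sing}_{(n-1,1)}$ in characteristic $p'\mid D$ is one where $\co_\kk$ acts on $\Lie(A)$ through the residue map, and under $\Lie(A) = \Lie(\tilde A_0) \oplus \Lie(A')$ — with $\tilde A_0$ having signature $(1,0)$, so $\co_\kk$ always acts on $\Lie(\tilde A_0)$ through the structure map, which in residue characteristic $p'$ equals the residue map — this happens exactly when the same condition holds for $A'$, i.e.\ at points of $\mathrm{Sing}_{(n-2,1)}$. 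One checks the scheme-theoretic preimage is reduced (equivalently, that the pullback of the Cartier divisor $\mathrm{Exc}_V$ along the étale-in-a-neighborhood map equals $\mathrm{Exc}_{V'}$), which follows from the transversality together with the blow-up descriptions \eqref{basic exceptional}; this yields $(\mathrm{Exc}_V,0) \mapsto (\mathrm{Exc}_{V'},0)$ with trivial Green datum, since $0$ pulls back to $0$. The main obstacle is the bookkeeping at the exceptional locus — verifying that the pullback of $\mathrm{Exc}_V$ is reduced and equals $\mathrm{Exc}_{V'}$ (rather than some multiple or a larger divisor) requires matching the local blow-up charts, and is the one place where one must genuinely use that $A_0 \to \tilde A_0$ is étale and that the splitting $A = \tilde A_0 \times A'$ is compatible with the wedge/Kr\"amer conditions defining $\mathrm{Sing}$; the Hodge and tautological computations, by contrast, are essentially formal once the splitting of Lie algebras is in hand.
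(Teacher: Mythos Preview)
Your proposal is correct and follows essentially the same approach as the paper for all three bundles. The one simplification you are missing concerns what you flag as the main obstacle: since both $\mathrm{Sing}_{(n-1,1)}$ and $\mathrm{Sing}_{(n-2,1)}$ are reduced $0$-dimensional $\co_\kk$-stacks and the bottom arrow in \eqref{i_p pullback} is finite and unramified, the scheme-theoretic pullback of the former is automatically reduced, so the geometric-point check via Remark \ref{rem:singular description} already suffices and no blow-up or transversality analysis is needed.
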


\begin{proof}
It follows from \eqref{i_p pullback explicit} that the top horizontal arrow in \eqref{i_p pullback}  sends
\[
\widehat{\omega}^\mathrm{Hdg}_{A/ \mathcal{S}_V } \mapsto
 \widehat{\omega}^\mathrm{Hdg}_{\tilde{A}_0/ \mathcal{S}_{V^\flat} } 
 +  \widehat{\omega}^\mathrm{Hdg}_{A^\flat/ \mathcal{S}_{V^\flat} }.
\]
As we have inverted $p$ on the base, the degree $p$ quotient map $A_0^\flat \to \tilde{A}_0$ induces an isomorphism on Lie algebras, and hence an isomorphism 
\[
\omega^\mathrm{Hdg}_{\tilde{A}_0/ \mathcal{S}_{V^\flat} }  \iso  \omega^\mathrm{Hdg}_{ A^\flat_0/ \mathcal{S}_{V^\flat} } .
\]
This isomorphism does not respect the metrics  \eqref{hodge metric}, but one can easily check that 
\[
\widehat{\omega}^\mathrm{Hdg}_{\tilde{A}_0/ \mathcal{S}_{V^\flat} }  
=   
\widehat{\omega}^\mathrm{Hdg}_{ A^\flat_0/ \mathcal{S}_{V^\flat} } + (0 , - \log(p) )
\]
as elements of $\widehat{\Pic}( \mathcal{S}_{V^\flat/\co_\kk[1/p]})$.  The correction term $(0 , - \log(p) )$ is torsion in the arithmetic Picard group, as 
\[
2 ( 0 , -\log(p) ) = (\mathrm{div}(p) , - \log(p^2) ) = 0,
\]
and the first claim follows.
 
 For the second claim, let $S$ be an $\co_\kk[1/p]$-scheme, fix an $S$-point
 $(A_0^\flat , A^\flat) \in \mathcal{S}_{V^\flat}(S)$, and let $(A_0,A) \in \mathcal{S}_V(S)$ be its image under the top horizontal arrow in \eqref{i_p pullback}.
 Tracing through the construction of $i_\mathfrak{p}$ in Proposition \ref{prop:i_p construction}, we see that 
\[
\Lie(A) / \mathcal{F} \iso  \Lie(A^\flat) / \mathcal{F}^\flat.
\]
  It follows immediately from this and \eqref{dual taut} that $\taut_V|_S \iso \taut_{V^\flat}|_S$. 
At a complex point $s \in S(\C)$ we have a decomposition 
\[
H_1(A_s , \Q) = H_1( \tilde{A}_{0s} ,\Q) \oplus H_1( A_s^\flat , \Q),
\]
which induces an isometric embedding
\[
\Hom_\kk( H_1(A_{0s} ,\Q )  , H_1( A_s^\flat , \Q) ) \subset \Hom_\kk( H_1(A_{0s} ,\Q )  , H_1( A_s , \Q) )
\]
of $\kk$-hermitian spaces.  Recalling \eqref{taut realization}, this isometric embedding restricts to the isomorphism
$\taut_{V^\flat,s} \iso \taut_{V,s}$ just constructed, which therefore preserves the metrics defined by \eqref{taut metric}. 

Finally, we show that under the top horizontal arrow in \eqref{i_p pullback}, the exceptional divisor $\mathrm{Exc}_V \subset \mathcal{S}_V$ pulls back to the exceptional divisor $\mathrm{Exc}_{V^\flat} \subset \mathcal{S}_{V^\flat}$.
These  divisors are, by Definition \ref{def:special exceptional}, the pullbacks of the singular loci 
\begin{align}
\mathcal{M}_{(1,0)} \times_{\co_\kk} \mathrm{Sing}_{ (n-1,1) } 
& \subset \mathcal{M}_{(1,0)} \times_{\co_\kk} \mathcal{M}^\Pap_{ (n-1,1) }  \label{first singular} \\
\mathcal{M}_{(1,0)} \times_{\co_\kk}  \mathrm{Sing}_{ (n-2,1) }
& \subset \mathcal{M}_{(1,0)} \times_{\co_\kk} \mathcal{M}^\Pap_{ (n-2,1) } \label{second singular}
\end{align}
of \eqref{singular} under the vertical arrows in \eqref{i_p pullback}, and so it suffices to prove that the first singular locus  \eqref{first singular} pulls back to the second singular locus  \eqref{second singular} under the bottom horizontal arrow in \eqref{i_p pullback}. 
Moreover,  as each of \eqref{first singular} and \eqref{second singular} is a reduced $\co_\kk$-stack of dimension $0$, and as   the bottom horizontal arrow in \eqref{i_p pullback} is finite and unramified, it suffices to check this on the level of geometric points.
This is an easy exercise in linear algebra, using the characterization of the singular locus found in 
Remark \ref{rem:singular description}. 
%
\end{proof}

\begin{proposition}\label{prop:i_p nonexceptional}
The nonexceptional locus $ \mathcal{U}_\mathfrak{p}^\nonexc \subset  \mathcal{U}_\mathfrak{p}$ satisfies
\begin{equation*}
 \mathcal{U}_\mathfrak{p}^\nonexc \subset \mathcal{U}_\mathfrak{p}^\dagger,
\end{equation*}
and the  isomorphism   of Proposition \ref{prop:i_p construction}  restricts to an isomorphism 
\[
\mathcal{S}^\nonexc_{V^\flat / \co_\kk[1/p] }    \iso
 \mathcal{U}^\nonexc_{\mathfrak{p}}  .
\]
\end{proposition}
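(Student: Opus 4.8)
The statement has two parts: the set-theoretic inclusion $\mathcal{U}_\mathfrak{p}^\nonexc \subset \mathcal{U}_\mathfrak{p}^\dagger$, and the claim that $i_\mathfrak{p}$ restricts to an isomorphism on nonexceptional loci. Given Proposition \ref{prop:i_p construction}, the second part is almost formal once the first is established: $i_\mathfrak{p}$ is already an isomorphism $\mathcal{S}_{V'/\co_\kk[1/p]} \iso \mathcal{U}_\mathfrak{p}^\dagger$, so it suffices to check that this isomorphism identifies the exceptional locus $\mathrm{Exc}_{V'}$ (inside $\mathcal{S}_{V'}$) with $\mathcal{U}_\mathfrak{p} \cap \mathrm{Exc}_V$ (inside $\mathcal{U}_\mathfrak{p}^\dagger$); this identification is exactly the last claim of Proposition \ref{prop:i_p pullbacks}, which says that under the top horizontal arrow of \eqref{i_p pullback} the exceptional divisor $\mathrm{Exc}_V$ pulls back to $\mathrm{Exc}_{V'}$. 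Restricting to complements of these loci then gives the asserted isomorphism $\mathcal{S}^\nonexc_{V'/\co_\kk[1/p]} \iso \mathcal{U}^\nonexc_\mathfrak{p}$. So the real content is the inclusion $\mathcal{U}_\mathfrak{p}^\nonexc \subset \mathcal{U}_\mathfrak{p}^\dagger$.

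\textbf{The inclusion.} Since $\mathcal{U}_\mathfrak{p}^\dagger$ is a closed substack of $\mathcal{U}_\mathfrak{p}$ and $\mathcal{U}_\mathfrak{p}^\nonexc$ is (open in $\mathcal{U}_\mathfrak{p}$ and) reduced, it is enough to check the inclusion on geometric points, i.e.\ to show that if $(A_0, A, x)$ is an $\F$-valued point of $\mathcal{U}_\mathfrak{p}$ (so $\F$ is an algebraically closed field of characteristic $\neq p$) lying outside $\mathrm{Exc}_V$, then the Kr\"amer hyperplane $\mathcal{F} \subset \Lie(A)$ contains $\Lie(\tilde{A}_0)$, where $\tilde{A}_0 = A_0/A_0[\mathfrak{p}]$ and $A \iso \tilde{A}_0 \times A'$ is the splitting produced above. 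The point $(A_0, A)$ is nonexceptional means, by Definition \ref{def:special exceptional} and Remark \ref{rem:singular description}, that $A$ does not lie over the singular locus $\mathrm{Sing}_{(n-1,1)}$, i.e.\ the $\co_\kk$-action on $\Lie(A)$ is \emph{not} through a single character $\co_\kk \to \F$: equivalently $\Lie(A)$ has both the ``holomorphic'' and ``antiholomorphic'' $\co_\kk$-eigencomponents nontrivial. (In char $0$ this is automatic; the content is in characteristics $\mathfrak{p}' \mid D$.) Now $\Lie(\tilde{A}_0) \iso \Lie(A_0)$ is one-dimensional, and by the signature $(1,0)$ condition on $\mathcal{M}_{(1,0)}$ the $\co_\kk$-action on it is through the structure morphism $i_S : \co_\kk \to \F$. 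On the other hand the quotient $\Lie(A)/\mathcal{F}$ is, by Kr\"amer's condition, one-dimensional with $\co_\kk$ acting through $\bar i_S$ (the conjugate structure morphism). When the residue characteristic $\ell$ of $\F$ is split or inert in $\kk$, or is $0$, the two characters $i_S$ and $\bar i_S$ are distinct, so the line $\Lie(\tilde{A}_0)$, on which $\co_\kk$ acts through $i_S$, cannot surject onto $\Lie(A)/\mathcal{F}$, forcing $\Lie(\tilde{A}_0) \subset \mathcal{F}$; this handles all residue characteristics not dividing $D$ (which is all that occurs over $\co_\kk[1/p]$ away from $D$, and $p$ is split so $p \nmid D$). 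When $\ell \mid D$ is ramified, $i_S$ and $\bar i_S$ coincide, and the argument needs the nonexceptionality hypothesis: if $\Lie(\tilde{A}_0) \not\subset \mathcal{F}$ then $\Lie(\tilde{A}_0)$ maps isomorphically onto $\Lie(A)/\mathcal{F}$, so $\co_\kk$ acts on $\Lie(A)/\mathcal{F}$ through $i_S = \bar i_S$; combined with $\co_\kk$ acting on $\mathcal{F}$ through $i_S$, this would mean $\co_\kk$ acts on all of $\Lie(A)$ through the single character $i_S$, i.e.\ $(A_0, A) \in \mathrm{Exc}_V$, a contradiction. Either way $\Lie(\tilde{A}_0) \subset \mathcal{F}$, which is the defining condition of $\mathcal{U}_\mathfrak{p}^\dagger$.

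\textbf{Assembling.} With the inclusion in hand, restrict the isomorphism $i_\mathfrak{p} : \mathcal{S}_{V'/\co_\kk[1/p]} \iso \mathcal{U}_\mathfrak{p}^\dagger$ of Proposition \ref{prop:i_p construction} to the open substack $\mathcal{U}_\mathfrak{p}^\nonexc \subset \mathcal{U}_\mathfrak{p}^\dagger$; its preimage is an open substack of $\mathcal{S}_{V'/\co_\kk[1/p]}$, and by the exceptional-divisor compatibility in Proposition \ref{prop:i_p pullbacks} this preimage is exactly the complement of $\mathrm{Exc}_{V'}$, namely $\mathcal{S}_{V'/\co_\kk[1/p]}^\nonexc$. (One should note that $\mathcal{U}_\mathfrak{p}^\nonexc$ is \emph{open and closed} in $\mathcal{U}_\mathfrak{p}^\dagger$: it is open by definition, and closed because its complement $\mathcal{U}_\mathfrak{p} \cap \mathrm{Exc}_V$ is a divisor, but in fact one only needs openness together with the identification of complements.) This gives the desired isomorphism $\mathcal{S}^\nonexc_{V'/\co_\kk[1/p]} \iso \mathcal{U}^\nonexc_\mathfrak{p}$.

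\textbf{Main obstacle.} The delicate point is the case of residue characteristics $\ell \mid D$, where $i_S$ and $\bar i_S$ coincide and one genuinely needs the nonexceptionality hypothesis to conclude $\Lie(\tilde A_0) \subset \mathcal F$; one must be careful that over $\co_\kk[1/p]$ ramified primes other than $p$ still occur, so this case cannot simply be discarded. A secondary subtlety is checking that $\mathcal{U}_\mathfrak{p}^\nonexc$, being reduced, really does inject into geometric points of the closed substack $\mathcal{U}_\mathfrak{p}^\dagger$ — i.e.\ that a reduced substack of $\mathcal{U}_\mathfrak{p}$ all of whose geometric points lie in the closed substack $\mathcal{U}_\mathfrak{p}^\dagger$ is contained in $\mathcal{U}_\mathfrak{p}^\dagger$ — which is standard but should be invoked explicitly.
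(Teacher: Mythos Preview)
Your proof of the second claim is the same as the paper's: once the inclusion is established, the exceptional-divisor compatibility of Proposition~\ref{prop:i_p pullbacks} identifies complements, and you are done.

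For the inclusion $\mathcal{U}_\mathfrak{p}^\nonexc \subset \mathcal{U}_\mathfrak{p}^\dagger$, however, your approach differs from the paper's, and the difference matters. You reduce to geometric points and argue case by case on the residue characteristic. Your geometric-point argument is correct, including the ramified case (the splitting $\Lie(A)=\Lie(\tilde A_0)\oplus\mathcal{F}$ is a decomposition of $\co_\kk$-submodules, so the action really is through the single character $i_S$, forcing the point into $\mathrm{Exc}_V$). The problem is the reduction itself: passing from ``all geometric points of $\mathcal{U}_\mathfrak{p}^\nonexc$ lie in $\mathcal{U}_\mathfrak{p}^\dagger$'' to the scheme-theoretic inclusion requires $\mathcal{U}_\mathfrak{p}^\nonexc$ to be reduced. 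You flag this as a secondary subtlety but do not prove it, and it is not obvious: $\mathcal{Z}_V(p)\to\mathcal{S}_V$ is finite unramified, hence \'etale-locally a closed immersion into a regular stack, but closed subschemes of regular schemes need not be reduced. Equivalently, the section $f:\Lie(\tilde A_0)\to\Lie(A)/\mathcal{F}$ of a line bundle on $\mathcal{U}_\mathfrak{p}^\nonexc$ could in principle be a nonzero nilpotent section, vanishing at every geometric point without being zero.

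The paper avoids this entirely by working over an arbitrary $S$-point of $\mathcal{U}_\mathfrak{p}^\nonexc$ and invoking Kr\"amer's result (cited as Theorem~2.3.3 of \cite{BHKRY-1}): over the nonexceptional locus the hyperplane is \emph{determined} by the $\co_\kk$-action as $\mathcal{F}=\ker\big(\overline{\epsilon}_S:\Lie(A)\to\Lie(A)\big)$, where $\overline{\epsilon}_S=\overline{\pi}\otimes 1-1\otimes i_S(\overline{\pi})$. Since $\co_\kk$ acts on $\Lie(\tilde A_0)$ through $i_S$, the operator $\overline{\epsilon}_S$ annihilates $\Lie(\tilde A_0)$, so $\Lie(\tilde A_0)\subset\mathcal{F}$ scheme-theoretically. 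This is a one-line argument once the Kr\"amer characterization is in hand, and it needs no reducedness hypothesis. Your approach trades away this citation for a more elementary signature argument, but the price is the unproved reducedness step; to close the gap you would either need to establish that $\mathcal{U}_\mathfrak{p}^\nonexc$ is reduced, or rework your argument to show directly that the section $f$ vanishes over $\mathcal{U}_\mathfrak{p}^\nonexc$ (which is essentially what the Kr\"amer characterization provides).
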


\begin{proof}
Suppose $S$ is an $\co_\kk[1/p]$-scheme, 
\[
(A_0,A,x) \in \mathcal{U}_\mathfrak{p}^\nonexc(S),
\]
and let $A^\flat$ be the  abelian scheme  of \eqref{pi product}.
In particular, \[\Lie(A) = \Lie( \tilde{A}_0 ) \oplus \Lie(A^\flat).\]

The natural map  
\[
\mathcal{U}_\mathfrak{p}^\nonexc \to  \mathcal{M}_{(n-1,1)} \smallsetminus \mathrm{Exc}_{(n-1,1)}
\]
endows $A$ with a hyperplane $\mathcal{F} \subset \Lie(A)$, and results of Kr\"amer, as summarized in Theorem 2.3.3 of \cite{BHKRY-1}, show that this hyperplane is actually determined by the $\co_\kk$-action on $A$ using the following recipe: 
If we fix  a $\pi \in \co_\kk$ such that $\co_\kk= \Z[\pi]$, and let
\[
\overline{\epsilon}_S = \overline{\pi} \otimes 1 -  1\otimes i_S(\overline{\pi}) \in \co_\kk \otimes_\Z \co_S,
\]
where $i_S : \co_\kk \to \co_S$ is the structure map,  then 
\[
\mathcal{F} = \mathrm{ker}( \overline{\epsilon}_S : \Lie(A) \to \Lie(A) ).
\]

On the other hand,  the action of $\co_\kk$ on the Lie algebra of $A_0$ is through the structure morphism $i_S$, and so the same is true of  $\tilde{A}_0 = A_0 / A_0[\mathfrak{p}]$.
This implies that $\overline{\epsilon}_S$ annihilates $ \Lie( \tilde{A}_0 )$, and hence
$
 \Lie( \tilde{A}_0 ) \subset \mathcal{F}.
$
Having shown 
\[
(A_0,A,x) \in \mathcal{U}_\mathfrak{p}^\dagger(S),
\]
 the first claim of the proposition is proved.
The second claim follows from the first and Proposition \ref{prop:i_p pullbacks}.
\end{proof}

\begin{proposition}\label{prop:i_p toroidal}
The closed immersion $i_\mathfrak{p} : \mathcal{S}_{V^\flat/\co_\kk[1/p]} \to \mathcal{U}_\mathfrak{p}$ of Proposition \ref{prop:i_p construction}  extends uniquely to a proper morphism 
\[
\bar{\mathcal{S}}_{V^\flat/\co_\kk[1/p] } \to  \bar{\mathcal{U}}_\mathfrak{p},
\]
where the codomain is defined as the Zariski closure of 
\[
 \mathcal{U}_\mathfrak{p} \subset  \bar{\mathcal{Z}}_V(p)_{/\co_\kk[1/p]}.
\]
or, equivalently, as the normalization of 
$ \mathcal{U}_\mathfrak{p} \to \bar{\mathcal{S}}_{V/\co_\kk[1/p]}.$
\end{proposition}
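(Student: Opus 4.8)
The plan is to prove this by a standard valuative-criterion argument, transporting the extension problem from the compactifications of $\mathcal{S}_{V'}$ and $\mathcal{U}_\mathfrak{p}$ to the level of the ambient Siegel-type moduli spaces, where the universal property of Faltings--Chai compactifications (as recorded in Lemma \ref{lem:normalized compactification}) can be invoked. First I would recall that $\bar{\mathcal{S}}_{V'/\co_\kk[1/p]}$ is defined as the Zariski closure of $\mathcal{S}_{V'/\co_\kk[1/p]}$ inside $\mathcal{M}_{(1,0)} \times_{\co_\kk} \bar{\mathcal{M}}_{(n-2,1)}$, and is normal; similarly $\bar{\mathcal{U}}_\mathfrak{p}$ is normal by construction. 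Since $\mathcal{S}_{V'/\co_\kk[1/p]}$ is dense in $\bar{\mathcal{S}}_{V'/\co_\kk[1/p]}$ and $\bar{\mathcal{U}}_\mathfrak{p}$ is separated, any extension of $i_\mathfrak{p}$ is automatically unique; the content is existence and properness.

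For existence, the idea is to produce the extended morphism by its effect on the universal objects. Recall from \eqref{i_p pullback explicit} that on $\mathcal{S}_{V'/\co_\kk[1/p]}$ the morphism $i_\mathfrak{p}$ sends $(A_0',A')$ to the triple with $A_0 = A_0'$, $\tilde{A}_0 = A_0/A_0[\mathfrak{p}]$, and $A = \tilde{A}_0 \times A'$, together with $x : A_0 \to \tilde{A}_0 \hookrightarrow A$. Over $\bar{\mathcal{S}}_{V'/\co_\kk[1/p]}$ there is a semi-abelian scheme $\bar{A}'$ extending $A'$; one forms $\bar{A} = \tilde{A}_0 \times \bar{A}'$ (the CM elliptic curve $A_0$ and its isogenous quotient $\tilde{A}_0$ are already proper over $\mathcal{M}_{(1,0)}$), which is a semi-abelian scheme extending $A$, equipped with its product polarization, product $\co_\kk$-action, and the hyperplane $\bar{\mathcal{F}} = \Lie(\tilde{A}_0) \oplus \bar{\mathcal{F}}'$ extending the Kr\"amer hyperplane via \eqref{hyperplane extension} applied to $\bar{\mathcal{M}}_{(n-2,1)}$. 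By the extension property in Lemma \ref{lem:normalized compactification} (applied to the Pappas-type compactification and then lifted through the blow-up \eqref{basic exceptional compact}, using that $\bar{\mathcal{S}}_{V'/\co_\kk[1/p]}$ is normal and that $A'$ extends to a semi-abelian scheme over it), this data is pulled back from a unique morphism $\bar{\mathcal{S}}_{V'/\co_\kk[1/p]} \to \bar{\mathcal{M}}_{(n-1,1)/\co_\kk[1/p]}$, and together with $A_0'$ this gives a morphism to $\mathcal{M}_{(1,0)} \times_{\co_\kk} \bar{\mathcal{M}}_{(n-1,1)}$. The homomorphism $x$ likewise extends by rigidity of homomorphisms of semi-abelian schemes over a normal base, so the extended morphism lands in the closure $\bar{\mathcal{Z}}_V(p)_{/\co_\kk[1/p]}$; since it restricts to $i_\mathfrak{p}$ on the dense open, its image lies in the closure $\bar{\mathcal{U}}_\mathfrak{p}$ of $\mathcal{U}_\mathfrak{p}$. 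Finally, properness of the extended morphism follows because $\bar{\mathcal{S}}_{V'/\co_\kk[1/p]}$ is proper over $\co_\kk[1/p]$ (Proposition \ref{prop:full compactification} as extended in Remark \ref{rem:special compact level}) and $\bar{\mathcal{U}}_\mathfrak{p}$ is separated over $\co_\kk[1/p]$, so a morphism between them over $\co_\kk[1/p]$ is automatically proper.

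The main obstacle I expect is verifying cleanly that the semi-abelian extension $\bar{A} = \tilde{A}_0 \times \bar{A}'$ over the boundary of $\bar{\mathcal{S}}_{V'/\co_\kk[1/p]}$ really is the pullback of the universal semi-abelian scheme over $\bar{\mathcal{M}}_{(n-1,1)}$, i.e. that the decomposition $A = \tilde A_0 \times A'$ of polarized abelian schemes genuinely propagates to a decomposition of the degenerating structures (Raynaud extensions, toric ranks, etc.) at the cusps. This is really the assertion that the rank-two torus part of the degeneration of $\bar{A}$ comes entirely from the $\bar{A}'$ factor, with $\tilde A_0$ contributing nothing; it is true because $\tilde A_0$ is a proper abelian scheme, but making it rigorous requires either the explicit local charts near the boundary from \cite{howard-unitary-II} or an appeal to the universal extension property of Lemma \ref{lem:normalized compactification} at the level of $\mathcal{M}^\Pap$ (which cleanly sidesteps the chart computations). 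I would take the latter route: phrase the extension purely in terms of the semi-abelian scheme and its polarization, $\co_\kk$-action, and hyperplane, feed it to the universal property, and thereby avoid any direct analysis of degeneration data. The equivalence with the normalization description of $\bar{\mathcal{U}}_\mathfrak{p}$ stated in the proposition is then immediate from the universal property of normalization, since $\bar{\mathcal{S}}_{V'/\co_\kk[1/p]}$ is normal and the extended morphism dominates $\mathcal{U}_\mathfrak{p}$.
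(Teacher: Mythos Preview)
Your overall strategy matches the paper's: extend $A = \tilde{A}_0 \times A'$ to the semi-abelian scheme $\tilde{A}_0 \times \bar{A}'$, invoke the extension property underlying Lemma~\ref{lem:normalized compactification} to obtain a morphism to the compactified Pappas model, and then lift to $\bar{\mathcal{U}}_\mathfrak{p}$ using normality. Two steps in your execution are imprecise, and the paper handles them differently.

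First, ``lifting through the blow-up \eqref{basic exceptional compact} using normality'' is not justified: normality of the source does not by itself produce lifts through blow-ups, and $\bar{\mathcal{M}}_{(n-1,1)}$ has no moduli interpretation over the boundary into which your hyperplane $\bar{\mathcal{F}} = \Lie(\tilde A_0)\oplus\bar{\mathcal{F}}'$ can be fed. The paper sidesteps this by first restricting to the nonexceptional locus $\bar{\mathcal{S}}_{V'}^\nonexc$ (legitimate because $\mathrm{Exc}_{V'}$ does not meet the boundary, so one glues with the already-defined $i_\mathfrak{p}$ on the interior afterwards). The image of $\bar{\mathcal{S}}_{V'}^\nonexc$ in $\bar{\mathcal{M}}^\Pap_{(n-1,1)}$ then avoids $\mathrm{Sing}_{(n-1,1)}$, over whose complement the blow-down is an isomorphism, so no lifting is needed.

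Second, extending $x$ by rigidity does not give a morphism to $\bar{\mathcal{Z}}_V(p)$: that stack is a normalization, not a moduli space of triples $(A_0,A,x)$ over the boundary, so possessing the extended triple is not the same as possessing a map into it. The paper instead produces only the morphism $\bar{\mathcal{S}}_{V'}^\nonexc \to \bar{\mathcal{S}}_V$ and then lifts it to $\bar{\mathcal{U}}_\mathfrak{p}$ by a direct argument: the map $\bar{\mathcal{U}}_\mathfrak{p}\to\bar{\mathcal{S}}_V$ is integral (hence affine), so locally one has rings $A\to B$ with $B$ integral over $A$, a normal domain $\bar R'$ over $A$, and a factorization $B\to R'$ through the fraction field of $\bar R'$; integrality forces $B\to\bar R'$. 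This is exactly the universal property of normalization you invoke in your last sentence, so your argument is salvageable, but the detour through extending $x$ should be dropped.
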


\begin{proof}
For ease of notation, we omit all subscripts $\co_\kk[1/p]$ in the proof.

As the exceptional locus of $\bar{\mathcal{S}}_{V^\flat}$ does not meet the boundary, it suffices to construct the extension over its complement
\[
\bar{\mathcal{S}}_{V^\flat}^\nonexc = \bar{\mathcal{S}}_{V^\flat}  \smallsetminus \mathrm{Exc}_{V^\flat} .
\]
Consider the universal pair $(A_0,A^\flat)$ over $\mathcal{S}_{V^\flat}^\nonexc$,  and let
\[
x: A_0 \to A =  \tilde{A}_0 \times A^\flat
\]
be as in the proof of Proposition \ref{prop:i_p construction}.   In other words,  the triple $(A_0,A,x)$ over $\mathcal{S}_{V^\flat}^\nonexc$ determines the isomorphism
$
i_\mathfrak{p} :  \mathcal{S}_{V^\flat}^\nonexc \iso  \mathcal{U}_\mathfrak{p}^\nonexc.
$
of Proposition \ref{prop:i_p nonexceptional}.

Consider the open immersions
\[
\bar{\mathcal{S}}_{V^\flat}^\nonexc \subset 
\bar{\mathcal{S}}_{V^\flat}  \stackrel{\eqref{compact inclusion} }{\subset} \mathcal{M}_{ (1,0 )}   \times_{\co_\kk}  \bar{\mathcal{M}}_{(n-2,1)} .
\]
As in the discussion of \S \ref{ss:toroidal}, the universal abelian scheme over $\mathcal{M}_{(n-2,1)}$ extends uniquely to a semi-abelian scheme over $\bar{\mathcal{M}}_{(n-2,1)}$.  
Pulling this back via projection to the second factor in the fiber product, and then restricting to $\bar{\mathcal{S}}_{V^\flat}^\nonexc$, yields a semi-abelian scheme over  $\bar{\mathcal{S}}_{V^\flat}^\nonexc $  extending  $A^\flat$.
Similarly,  the elliptic curve  $A_0$    extends to an elliptic curve over $\bar{\mathcal{S}}_{V^\flat}^\nonexc$, obtained as a  pullback from the first factor in the fiber product above.
The quotient $\tilde{A}_0 = A_0 /A_0[\mathfrak{p}]$  therefore also extends to an elliptic curve over $\bar{\mathcal{S}}_{V^\flat}^\nonexc$.

The product $A=  \tilde{A}_0 \times A^\flat$  therefore extends to a semi-abelian scheme over $\bar{\mathcal{S}}_{V^\flat}^\nonexc$,  and  the extension property used  in the proof of Lemma \ref{lem:normalized compactification} provides us with a morphism
\[
i_\mathfrak{p} :  \bar{\mathcal{S}}_{V^\flat}^\nonexc \to  \mathcal{M}_{(1,0)} \times_{\co_\kk} \bar{\mathcal{M}}^\Pap_{(n-1,1)}
\]
taking values in the open subscheme
\[
\mathcal{M}_{(1,0)} \times_{\co_\kk} \big( \bar{\mathcal{M}}^\Pap_{(n-1,1)} \smallsetminus \mathrm{Sing}_{(n-1,1)} \big)
\iso 
\mathcal{M}_{(1,0)} \times_{\co_\kk} \big( \bar{\mathcal{M}}_{(n-1,1)} \smallsetminus \mathrm{Exc}_{(n-1,1)} \big).
\]
Thus we obtain  a morphism
\[
i_\mathfrak{p} :  \bar{\mathcal{S}}_{V^\flat}^\nonexc \to  \mathcal{M}_{(1,0)} \times_{\co_\kk}  \bar{\mathcal{M}}_{(n-1,1)} ,
\]
 taking values in the open and closed substack $\bar{\mathcal{S}}_V$.

We now have a commutative diagram of solid arrows
\[
\xymatrix{
{ \mathcal{S}_{V^\flat}^\nonexc  } \ar[r]^{i_\mathfrak{p}}  \ar[d]  & { \bar{\mathcal{U}}_\mathfrak{p}   } \ar[d]  \\ 
{ \bar{\mathcal{S}}_{V^\flat}^\nonexc  }  \ar[r] \ar@{.>}[ur]&   { \bar{\mathcal{S}}_V  } 
}
\]
in which the vertical arrow on the right is integral, and hence affine, by its construction as a normalization; see \cite[\href{https://stacks.math.columbia.edu/tag/035I}{Tag 035I}]{stacks-project}.
To complete the proof of the proposition, it suffices to show that there is a unique dotted arrow making the diagram commute.
This immediately reduces to the corresponding claim for affine schemes, in which 
 we are given homomorphisms of rings 
\[
\xymatrix{
{ R^\flat  }   & { B } \ar[l] \ar@{.>}[dl]  \\ 
{ \bar{R}^\flat } \ar[u]  &   { A  } \ar[l] \ar[u] 
}
\]
with $B$ integral over $A$,  and $\bar{R}^\flat \subset R^\flat$ is an inclusion of normal domains with the same field of fractions.
The image of any $b\in B$ under $B \to R^\flat$ is integral over  $\bar{R}^\flat$, hence contained in $\bar{R}^\flat$.
Thus there is a unique dotted arrow making the diagram commute.
\end{proof}


\section{Two lemmas on abelian schemes}


For use in the next section, we prove  two lemmas on abelian schemes.
The first is a criterion for determining when a polarization descends to a quotient.
The second is a criterion for the existence of an extension as a semi-abelian scheme.

\begin{lemma}\label{lem:polarization descent}
Suppose  $f : X\to Y$ is an isogeny of abelian schemes over a Noetherian scheme $S$, and $\psi_X : X\to X^\vee$ is a polarization whose degree $d$ is invertible in $\co_S$.    The following are equivalent.
\begin{enumerate}
\item
The kernel of $f$ is contained in $\mathrm{ker}(\psi_X)$, and is totally isotropic with respect to the 
Weil pairing
\[
e : \mathrm{ker}(\psi_X)  \times \mathrm{ker}(\psi_X) \to \mu_d.
\]
\item
There exists a (necessarily unique) polarization $\psi_Y : Y \to Y^\vee$ making the diagram 
\[
\xymatrix{
{  X } \ar[r]^{\psi_X}  \ar[d]_f &  { X^\vee }  \\
 { Y } \ar[r]_{\psi_Y} & { Y^\vee } \ar[u]_{f^\vee} 
}
\]
commute.
\end{enumerate}
\end{lemma}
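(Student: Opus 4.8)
\textbf{Proof proposal for Lemma \ref{lem:polarization descent}.}

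The plan is to reduce the problem to the well-known correspondence, in the theory of polarizations, between polarizations on $Y$ pulled back to $X$ and polarizations on $X$ whose kernel contains $\ker(f)$ isotropically. Concretely, a polarization $\psi_Y : Y \to Y^\vee$ making the diagram commute is the same datum as $f^\vee \circ \psi_Y \circ f = \psi_X$ after noting that $f^\vee \circ \psi_Y \circ f$ is the pullback $f^*\psi_Y$; so I must characterize which $\psi_X$ arise this way. First I would recall that for an isogeny $f : X \to Y$ with Cartier dual $f^\vee : Y^\vee \to X^\vee$, there is a canonical perfect pairing between $\ker(f)$ and $\ker(f^\vee)$, and that a homomorphism $\lambda : X \to X^\vee$ factors as $f^\vee \circ \mu \circ f$ for some $\mu : Y \to Y^\vee$ precisely when $\ker(f) \subseteq \ker(\lambda)$ \emph{and} $\lambda$ carries $\ker(f)$ into $\ker(f^\vee)^{\perp}$ under this duality — equivalently, $\lambda(\ker f) \subseteq \ker(f^\vee)$ inside $X^\vee$, i.e. $\lambda$ restricted to $\ker(f)$ is annihilated after composing with $f^\vee$. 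Since $\lambda = \psi_X$ already has $\ker(f) \subseteq \ker(\psi_X)$ by hypothesis (1), the factorization exists iff $\psi_X(\ker f) \subseteq \ker(f^\vee)$; and the uniqueness of $\mu = \psi_Y$ follows from $f$ being an epimorphism in the category of fppf sheaves.

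The key step is then to translate the condition ``$\psi_X(\ker f) \subseteq \ker(f^\vee)$'' into the isotropy statement in (1), and this is exactly where the Weil pairing on $\ker(\psi_X)$ enters. Because $\deg(\psi_X) = d$ is invertible on $S$, the group scheme $\ker(\psi_X)$ is finite \'etale of order $d^2$, and the polarization $\psi_X$ induces the nondegenerate alternating Weil pairing $e : \ker(\psi_X) \times \ker(\psi_X) \to \mu_d$. I would identify $\ker(f) \subseteq \ker(\psi_X)$, and observe that under the canonical duality $\ker(\psi_X) \cong \ker(\psi_X)^\vee$ supplied by $e$, the subgroup $\psi_X(\ker f) \subseteq \ker(\psi_X^\vee) = \ker(\psi_X)^\vee$ is precisely the annihilator $\ker(f)^{\perp}$ of $\ker(f)$ for the pairing $e$ — this is a formal consequence of how $e$ is built from $\psi_X$ and the canonical biduality pairing. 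Meanwhile $\ker(f^\vee) \subseteq X^\vee$, viewed inside $\ker(\psi_X)^\vee$ via $\psi_X$, is exactly the submodule dual to $\ker(f)$, i.e. it corresponds under $e$ to $\ker(f)$ itself. Hence $\psi_X(\ker f) \subseteq \ker(f^\vee)$ becomes $\ker(f)^\perp \supseteq \ker(f)$, which is literally the statement that $\ker(f)$ is totally isotropic for $e$. Assembling these identifications gives the equivalence of (1) and (2).

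I expect the main obstacle to be bookkeeping the various canonical identifications correctly and uniformly over the base $S$: the pairing between $\ker(f)$ and $\ker(f^\vee)$, the Weil pairing on $\ker(\psi_X)$, and the biduality isomorphism $X \cong X^{\vee\vee}$ all need to be pinned down with consistent sign conventions so that ``annihilator'' really does match ``isotropic,'' and one must check that the perfectness and \'etaleness used hold in families (which is where invertibility of $d$ is essential — it rules out inseparable phenomena and guarantees the pairings are perfect). A clean way to finesse this is to pass to geometric fibers and invoke faithfully flat descent together with rigidity for the uniqueness of $\psi_Y$: over an algebraically closed field the statement is classical (Mumford, \emph{Abelian Varieties}, or Milne's notes on polarizations), and \'etaleness of all kernels lets one spread the fiberwise factorization out over $S$. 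I would structure the write-up so that the formal input (factorization of homomorphisms through an isogeny) is isolated as one paragraph, the duality dictionary as a second, and the fiberwise-to-family descent as a short third.
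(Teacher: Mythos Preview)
The paper does not actually give a proof: the entire argument reads ``This is routine.\ See, for example, Proposition 10.4 of \cite{polishchuk}.''  Your outline is exactly the standard argument behind that citation, so in substance you are doing what the paper defers to the reference.

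One phrasing to correct before writing it up.  Once you assume $\ker f \subseteq \ker \psi_X$, the literal image $\psi_X(\ker f)$ is zero, and $\ker(f^\vee)$ lives in $Y^\vee$, not $X^\vee$; so the condition ``$\psi_X(\ker f) \subseteq \ker(f^\vee)$ inside $X^\vee$'' does not typecheck.  What you actually need is this: use symmetry of $\psi_X$ to factor $\psi_X = f^\vee \circ \nu'$ with $\nu' : X \to Y^\vee$ (dualize $\psi_X = \nu \circ f$ and apply $\psi_X^\vee = \psi_X$); then $\nu'|_{\ker f}$ lands in $\ker(f^\vee) \cong (\ker f)^D$, giving a pairing on $\ker f$ which one checks is the restriction of the Weil pairing $e$.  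Its vanishing is the isotropy condition, and its vanishing is equivalent to $\nu'$ factoring through $f$, hence to the existence of $\psi_Y$.  You should also record explicitly that the resulting $\psi_Y$ is a \emph{polarization}, not merely a symmetric homomorphism: symmetry follows from that of $\psi_X$ together with uniqueness of the factorization, and positivity because pullback by an isogeny reflects ampleness on geometric fibers.
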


\begin{proof}
This is routine.
See, for example, Proposition 10.4 of \cite{polishchuk}.
\end{proof}

\begin{lemma}\label{lem:semiextension}
Suppose $S$ is a normal Noetherian  scheme, $U\subset S$ is a dense open subscheme, and 
 $f : X \to Y$ is an isogeny of abelian schemes over $U$ whose degree $d$ is invertible in $\co_S$.
If $X$ extends to a semi-abelian scheme  over $S$ then so does $Y$, and both extensions are unique.
\end{lemma}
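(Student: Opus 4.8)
The plan is to reduce the statement to a standard fact about N\'eron models via the dual isogeny. Uniqueness of a semi-abelian extension of either $X$ or $Y$ is immediate: a semi-abelian scheme over a normal Noetherian base is determined by its restriction to a dense open (this is part of the theory of N\'eron models, e.g.\ Chapter I of \cite{faltings-chai}, or can be cited from the general uniqueness of relative group scheme extensions used already in the proof of Lemma~\ref{lem:normalized compactification}). So the content is the existence of a semi-abelian model of $Y$ given one of $X$.

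First I would dualize. The isogeny $f : X \to Y$ over $U$ has a dual isogeny $f^\vee : Y^\vee \to X^\vee$, and since $X$ extends to a semi-abelian scheme $\mathcal{X}$ over $S$, so does $X^\vee$: one takes the connected component of the N\'eron model, or invokes the fact that the dual of a semi-abelian scheme is again semi-abelian (duality in the semi-abelian category, valid over any base). Call this extension $\mathcal{X}^\vee$. Now $Y^\vee$ is an abelian scheme over $U$ equipped with the isogeny $f^\vee$ to $X^\vee$, whose kernel $K = \ker(f^\vee) \subset Y^\vee[d]$ is a finite \'etale group scheme over $U$ (finite \'etale because $d \in \co_S^\times$). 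Since $S$ is normal and Noetherian and $U \subset S$ is dense open, $K$ extends uniquely to a finite \'etale group scheme $\mathcal{K}$ over $S$ (spread out the finite \'etale cover over the generic points and use normality to extend across codimension-one points and then everywhere; alternatively cite the standard extension of finite \'etale covers over normal bases). Moreover $\mathcal{K}$ embeds as a closed subgroup scheme of $\mathcal{X}^\vee[d]$: the embedding $K \hookrightarrow X^\vee$ over $U$ extends to a morphism $\mathcal{K} \to \mathcal{X}^\vee$ because $\mathcal{K}$ is finite over the normal base $S$ and the target is separated, and this extension is a closed immersion since it is so over the dense open $U$ and $\mathcal{K}$ is finite over $S$.

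Then I would form the quotient. The fppf quotient $\mathcal{Y}^\vee \define \mathcal{X}^\vee / \mathcal{K}$ exists as an algebraic space, and in fact as a semi-abelian scheme over $S$: quotients of semi-abelian schemes by finite flat subgroup schemes are semi-abelian (the torus part quotients to a torus, the abelian part to an abelian scheme, and the extension structure is preserved). Its restriction to $U$ is $X^\vee / K \cong Y^\vee$ via $f^\vee$, so $\mathcal{Y}^\vee$ is a semi-abelian extension of $Y^\vee$ over $S$. Finally, dualize once more: $\mathcal{Y} \define (\mathcal{Y}^\vee)^\vee$ is a semi-abelian scheme over $S$ restricting to $(Y^\vee)^\vee \cong Y$ on $U$, which is the desired extension. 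The uniqueness claim for both extensions follows as noted above.

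The main obstacle is making precise the claim that the relevant quotient of semi-abelian schemes by a finite \'etale subgroup scheme is again semi-abelian over the \emph{whole} base $S$, including the boundary locus $S \smallsetminus U$ where the torus ranks may jump; one must check that the quotient map does not destroy the semi-abelian structure, and that the identity component behaves well. I expect this to be handled cleanly by working with N\'eron models: $\mathcal{X}^\vee$ is the identity component of the N\'eron model of $X^\vee$ (using that $X^\vee$ has semi-abelian reduction, which is exactly the hypothesis that $X$, hence $X^\vee$, extends to a semi-abelian scheme), and the finite \'etale quotient of a N\'eron model is again computed by the N\'eron model of the quotient; one then takes identity components. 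All of this is standard once phrased correctly, so the proof should be short, with the references to \cite{faltings-chai} and \cite{stacks-project} doing the heavy lifting.
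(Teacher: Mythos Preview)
Your detour through duality introduces several genuine errors, and in fact the direct approach (no duality) is both simpler and is what the paper does.

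The most serious problem is a direction error. You set $K = \ker(f^\vee)$ with $f^\vee : Y^\vee \to X^\vee$, so $K \subset Y^\vee$ and $X^\vee \cong Y^\vee / K$. But then you write ``the embedding $K \hookrightarrow X^\vee$'' and form $\mathcal{Y}^\vee = \mathcal{X}^\vee/\mathcal{K}$, claiming its restriction to $U$ is $Y^\vee$. This is backwards: $K$ lives in $Y^\vee$, not $X^\vee$, and quotienting $Y^\vee$ by $K$ gives $X^\vee$, not the other way around. So even granting every other step, your construction recovers $X^\vee$ (which you already had) rather than $Y^\vee$.

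There are secondary gaps as well. The assertion that ``the dual of a semi-abelian scheme is again semi-abelian'' is false: the correct dual of a semi-abelian variety is a $1$-motive, not a semi-abelian variety. N\'eron models require a Dedekind base, not an arbitrary normal Noetherian $S$, so that route is unavailable here. And finite \'etale covers of $U$ do \emph{not} in general extend to finite \'etale covers of a normal $S$ (think of the double cover of $\mathbb{A}^1 \smallsetminus \{0\}$), so your abstract extension of $\mathcal{K}$ is not justified.

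The fix is to drop duality entirely and work with $\ker(f) \subset X$, which already sits inside something defined over all of $S$, namely $X^*[d]$. The paper shows $X^*[d] \to S$ is quasi-finite \'etale (using $d \in \co_S^\times$), hence normal; the Zariski closure $\ker(f)^* \subset X^*[d]$ of $\ker(f)$ is then a union of connected components, hence open and closed, hence itself quasi-finite \'etale over $S$. The fppf quotient $Y^* = X^*/\ker(f)^*$ is the desired semi-abelian extension of $Y$ (citing \cite{moret-bailly}). No dualizing, no N\'eron models, no abstract extension of \'etale covers.
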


\begin{proof}
The uniqueness claim follows from Proposition I.2.7 of \cite{faltings-chai}, so we only need to prove the existence of a semi-abelian extension of $Y$.

Denote by $X^* \to S$  the semi-abelian extension of $X$.  The morphism
$f : X \to Y$, as well as the multiplication-by-$d$-maps on $X$ and $X^*$, are quasi-finite and \'etale by Lemma 2 of Chapter 7.3 of \cite{BLR}.  In particular, both the composition 
$
\mathrm{ker}(f) \to X[d] \to U
$ 
and  the second arrow and the composition are   \'etale morphisms.   By \cite[\href{https://stacks.math.columbia.edu/tag/02GW}{Tag 02GW}]{stacks-project}, the first arrow is also \'etale.
As the first arrow is   a closed immersion, we deduce that  $\mathrm{ker}(f) \subset X[d]$ is a union of connected components.

We have assumed that $S$ is normal, so the same is true of the \'etale $S$-scheme  $X^*[d]$.   
As the connected components of a normal scheme are the same as its irreducible components, it follows that the Zariski closure of $\mathrm{ker}(f)$ in $X^*[d]$  is  an open and closed subscheme $\mathrm{ker}(f)^* \subset X^*[d]$.  In particular  $\mathrm{ker}(f)^*\to S$ is quasi-finite and  \'etale.

By Lemma IV.7.1.2 of \cite{moret-bailly},  the fppf quotient  $Y^* = X^*/\mathrm{ker}(f)^*$  provides  a semi-abelian extension of $Y$ to $S$.
\end{proof}


\section{Analysis of $\mathcal{U}_0$}


In this section we study the substack $\mathcal{U}_0$ of \eqref{flavor decomp}, and make explicit its relation with $\mathcal{S}_{V^\flat}$.

Return to the situation of Lemma \ref{lem:lattice trifurcation}, so that  
$x\in \Hom_{\co_\kk}(\mathfrak{a}_0,\mathfrak{a})$ with $\langle x,x\rangle =p$ determines an orthogonal decomposition
\begin{equation}\label{W split}
W = \tilde{W}_0 \oplus W^\flat .
\end{equation}
Set $\tilde{\mathfrak{a}}_0 = \mathfrak{a}\cap \tilde{W}_0$.

\begin{lemma}\label{lem:0 linear algebra}
Assume that  $\tilde{\mathfrak{a}}_0 = x(\mathfrak{a}_0)$.
If we set  $\mathfrak{b}  = \mathfrak{a} \cap W^\flat$ and let $\mathfrak{b}^\vee \subset W^\flat $ be its dual lattice with respect to $h|_{ W^\flat }$,  the $\co_\kk$-lattice
\[
\mathfrak{a}^\flat=  \mathfrak{p} \cdot  \mathfrak{b}^\vee + \mathfrak{b} ,
\]
  is self-dual  with respect to $h|_{W^\flat}$.  Moreover, there are inclusions of $\co_\kk$-lattices
\begin{equation}\label{lattice correspondence}
\tilde{\mathfrak{a}}_0 \oplus \mathfrak{a}^\flat  
\stackrel{p}{\supset}  \tilde{\mathfrak{a}}_0 \oplus \mathfrak{b} 
 \stackrel{p^2}{\subset}  \mathfrak{a}
\end{equation}
in $W$ of the indicated indices, and a  canonical  $\co_\kk$-linear injection
\begin{equation}\label{lattice t}
\mathfrak{p}^{-1} \tilde{\mathfrak{a}}_0/ \tilde{\mathfrak{a}}_0 \iso  \mathfrak{p}^{-1} \mathfrak{b} / \mathfrak{b}  \map{t} \mathfrak{p}^{-1} \mathfrak{a}^\flat/ \mathfrak{a}^\flat .
\end{equation}
\end{lemma}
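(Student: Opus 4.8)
The plan is to work entirely with the $\co_\kk$-lattices inside the fixed space $W$, using the orthogonal decomposition \eqref{W split} and the hypothesis $\tilde{\mathfrak{a}}_0 = x(\mathfrak{a}_0)$, which (via $x$ identifying $W_0 = \tilde W_0$) says that $\mathfrak{a}_0$ is self-dual under $h_0$, i.e. $\tilde{\mathfrak{a}}_0$ is self-dual under $h|_{\tilde W_0} = p\, h_0$. First I would localize at $p$: away from $p$ the lattices $\mathfrak{a}' = p\,\mathfrak{b}^\vee + \mathfrak{b}$ and $\mathfrak{b}$ coincide, and since $\mathfrak{a}$ is self-dual under $h$ everywhere locally (it is self-dual by hypothesis, or at least so at primes $\ell \neq p$ after intersecting with the split-off summand $W'$ one sees $\mathfrak{b}_\ell$ is self-dual there), all three lattices in \eqref{lattice correspondence} agree at $\ell \neq p$ and the claimed indices and the injection \eqref{lattice t} are trivial there. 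So the whole lemma reduces to a statement about $\co_{\kk,p}$-lattices, where $\co_{\kk,p} \iso \Z_p \times \Z_p$ and $\mathfrak p$, $\overline{\mathfrak p}$ are the two factors. Because $D$ is odd, $p \nmid D$, so $W_p$ has a self-dual lattice and the hermitian form on $W'_p$ has an orthogonal $\co_{\kk,p}$-basis.

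Next, I would make the local structure explicit. Choose an orthogonal basis $e_1,\dots,e_{n-1}$ of $W'_p$ adapted to $\mathfrak b_p$: since $\mathfrak b = \mathfrak a \cap W'$ and $\mathfrak a$ is ``$p$-modular-ish'' on $\tilde W_0$ and self-dual on the orthogonal complement of $\tilde{\mathfrak a}_0$ inside $\mathfrak a$, one finds (this is the only place real computation enters) that, up to units, the Gram matrix of $\mathfrak b_p$ has all diagonal entries in $\Z_p^\times$ except possibly one which lies in $p\Z_p$ — indeed, the discriminant jump between $\tilde{\mathfrak a}_0$ (which is $p$-modular of rank one) and a self-dual lattice forces exactly one ``$p$'' to appear. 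Concretely $\mathfrak b_p = \co_{\kk,p} e_1 \oplus \cdots \oplus \co_{\kk,p} e_{n-2} \oplus \co_{\kk,p} (p e_{n-1})$ with $e_1,\dots,e_{n-1}$ self-dual. Then $\mathfrak b_p^\vee = \co_{\kk,p} e_1 \oplus \cdots \oplus \co_{\kk,p} e_{n-2} \oplus \co_{\kk,p}(p^{-1} e_{n-1})$, so $p\,\mathfrak b_p^\vee + \mathfrak b_p = \co_{\kk,p} e_1 \oplus \cdots \oplus \co_{\kk,p} e_{n-2} \oplus \co_{\kk,p} e_{n-1}$, which is manifestly self-dual under $h|_{W'}$. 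That proves the self-duality of $\mathfrak a'$. For the indices: $[\mathfrak a' : \mathfrak b] = [\co_{\kk,p} e_{n-1} : \co_{\kk,p} p e_{n-1}] = |\co_{\kk,p}/p\co_{\kk,p}| = p^2$, hence $[\tilde{\mathfrak a}_0 \oplus \mathfrak a' : \tilde{\mathfrak a}_0 \oplus \mathfrak b] = p^2$ — wait, one must recheck: the stated index in \eqref{lattice correspondence} is $p$, so in fact the correct local picture must give $[\mathfrak a':\mathfrak b]=p$, meaning the jump $\mathfrak a \to \mathfrak b$ is by a prime-to-one-of-$\mathfrak p,\overline{\mathfrak p}$ rank-one step; I will track the $\co_{\kk,p}=\Z_p\times\Z_p$ decomposition carefully and use that $\tilde{\mathfrak a}_0 = \mathfrak a\cap \tilde W_0$ being exactly $x(\mathfrak a_0)$ (not $\mathfrak p^{-1}$ or $\overline{\mathfrak p}^{-1}$ of it, by Lemma \ref{lem:lattice trifurcation}) pins down that $\mathfrak b_p$ differs from a self-dual lattice by a single $\mathfrak p$-step \emph{and} a single $\overline{\mathfrak p}$-step in a correlated way, giving $[\mathfrak a':\mathfrak b]=p$ and $[\mathfrak a : \tilde{\mathfrak a}_0\oplus\mathfrak b]=p^2$. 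Getting these two index bookkeeping statements exactly right, consistently with the self-duality of both $\tilde{\mathfrak a}_0$ (under $p h_0$) and $\mathfrak a'$ (under $h|_{W'}$) and with the global self-duality of $\mathfrak a$, is the main obstacle — it is pure $\Z_p\times\Z_p$-module linear algebra but must be done without sign or index errors.

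Finally, for the canonical injection \eqref{lattice t}: the first identification $\mathfrak p^{-1}\tilde{\mathfrak a}_0/\tilde{\mathfrak a}_0 \iso \mathfrak p^{-1}\mathfrak b/\mathfrak b$ is not canonical on the nose but becomes canonical once one uses the pairing — I would instead argue that the hermitian form $h$ on $W$ induces, via the self-duality of $\mathfrak a$ (resp. $\mathfrak a'$), perfect pairings on the quotients $\mathfrak p^{-1}\mathfrak a/\mathfrak a$ (resp. $\mathfrak p^{-1}\mathfrak a'/\mathfrak a'$) with values in $\mathfrak p^{-1}\co_\kk/\co_\kk$, and both $\tilde{\mathfrak a}_0$ and $\mathfrak a'$ sit inside $\mathfrak a$ with controlled quotients by \eqref{lattice correspondence}. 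The composite $\mathfrak p^{-1}\tilde{\mathfrak a}_0 \hookrightarrow \mathfrak p^{-1}\mathfrak a \twoheadrightarrow \mathfrak p^{-1}\mathfrak a/\mathfrak a$ and the analogous map for $\mathfrak a'$ both land (after the index count) in the $\mathfrak p$-torsion, which is a length-one $\co_\kk$-module, and comparing them through $\mathfrak b$ — which embeds into both $\tilde{\mathfrak a}_0^\perp$-side and is visibly the ``shared'' lattice in \eqref{lattice correspondence} — produces the canonical $t$. Concretely: $\mathfrak p^{-1}\mathfrak b/\mathfrak b \to \mathfrak p^{-1}\mathfrak a'/\mathfrak a'$ is the obvious map induced by $\mathfrak b \subset \mathfrak a'$, and one checks locally at $p$ (everywhere else it is an isomorphism of zero modules) that it is injective because $\mathfrak b_p$ and $\mathfrak a'_p$ agree at the $\overline{\mathfrak p}$-factor and differ by a single $\mathfrak p$-step at the $\mathfrak p$-factor, exactly matching the rank-one $\mathfrak p$-torsion. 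I expect $t$ to then be characterized intrinsically by compatibility with $h$, which is what ``canonical'' should mean and which I would state as part of the construction. I would close the argument by noting that all constructions are Galois- and $\co_\kk$-equivariant, so they descend/globalize from the local picture at $p$ to the stated lattices in $W$.
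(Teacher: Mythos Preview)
Your proposal has a genuine gap: the explicit local model you write down for $\mathfrak{b}_p$ is wrong, and this is why your index computation gives $p^2$ instead of $p$. If $\mathfrak{b}_p = \co_{\kk,p}e_1\oplus\cdots\oplus\co_{\kk,p}(pe_{n-1})$ with the $e_i$ self-dual, then $\mathfrak{b}_p^\vee/\mathfrak{b}_p \cong \co_{\kk,p}/p^2\co_{\kk,p}$, which has order $p^4$; but the correct answer is $\mathfrak{b}^\vee/\mathfrak{b}\cong \co_\kk/p\co_\kk$, of order $p^2$. You notice the discrepancy but do not repair it; the repair is not just bookkeeping, because your ansatz forces the whole $p$-defect into a single $\co_{\kk,p}$-basis vector, whereas in reality the defect is split between the $\mathfrak p$- and $\overline{\mathfrak p}$-components in a way that cannot be captured by a scalar $p$ on one basis vector.

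The paper avoids coordinates entirely by using the following structural observation, which is the idea you are missing. Since $\mathfrak a$ is self-dual in $W=\tilde W_0\oplus W'$ and $\tilde{\mathfrak a}_0=\mathfrak a\cap\tilde W_0$, $\mathfrak b=\mathfrak a\cap W'$, the two orthogonal projections induce isomorphisms
\[
\mathfrak a/(\tilde{\mathfrak a}_0\oplus\mathfrak b)\ \iso\ \tilde{\mathfrak a}_0^{\vee}/\tilde{\mathfrak a}_0,
\qquad
\mathfrak a/(\tilde{\mathfrak a}_0\oplus\mathfrak b)\ \iso\ \mathfrak b^\vee/\mathfrak b,
\]
where $\tilde{\mathfrak a}_0^{\vee}=p^{-1}\tilde{\mathfrak a}_0$ is the dual under $h|_{\tilde W_0}=p\,h_0$. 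Composing gives a \emph{canonical} $\co_\kk$-linear, form-preserving isomorphism $p^{-1}\tilde{\mathfrak a}_0/\tilde{\mathfrak a}_0\iso\mathfrak b^\vee/\mathfrak b$. This one line yields everything: $\mathfrak b^\vee/\mathfrak b\cong\co_\kk/p\co_\kk$, hence $\mathfrak p\cdot(\mathfrak b^\vee/\mathfrak b)$ is maximal isotropic and $\mathfrak a'=\mathfrak p\mathfrak b^\vee+\mathfrak b$ is self-dual; $[\mathfrak a:\tilde{\mathfrak a}_0\oplus\mathfrak b]=|p^{-1}\tilde{\mathfrak a}_0/\tilde{\mathfrak a}_0|=p^2$ and $[\mathfrak a':\mathfrak b]=|\mathfrak p/p\co_\kk|=p$. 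For the map $t$, restrict the canonical isomorphism above to $\mathfrak p$-torsion to get $\mathfrak p^{-1}\tilde{\mathfrak a}_0/\tilde{\mathfrak a}_0\iso\mathfrak p^{-1}\mathfrak b/\mathfrak b$, and observe that $\mathfrak b\subset\mathfrak a'$ is an isomorphism locally at $\mathfrak p$ (the cokernel is $\overline{\mathfrak p}$-primary), so it induces $\mathfrak p^{-1}\mathfrak b/\mathfrak b\iso\mathfrak p^{-1}\mathfrak a'/\mathfrak a'$. This is the ``canonical'' you were looking for: it comes from the lattice $\mathfrak a$ itself via the projections, not from an abstract pairing argument.
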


\begin{proof}
Throughout the proof, we use  $x$ to identify $W_0 = \tilde{W}_0$, so that $h|_{W_0} = p h_0$.
The projections to the two factors in \eqref{W split} induce  isomorphisms
\[
\mathfrak{a} / ( \mathfrak{a}_0 \oplus \mathfrak{b} )  \to \mathfrak{a}_0^\vee / \mathfrak{a}_0,
\qquad 
\mathfrak{a} /(  \mathfrak{a}_0 \oplus \mathfrak{b} )  \to \mathfrak{b}^\vee / \mathfrak{b},
\]
where  $\mathfrak{a}_0^\vee= p^{-1}  \mathfrak{a}_0$ is the dual lattice of $\mathfrak{a}_0$ with respect to $h|_{W_0}$.
 Composing these isomorphisms yields an $\co_\kk$-linear isomorphism
\begin{equation}\label{p trivialization}
p^{-1} \mathfrak{a}_0 / \mathfrak{a}_0 \iso \mathfrak{b}^\vee / \mathfrak{b}
\end{equation}
 respecting the $p^{-1}\co_\kk/\co_\kk$-valued hermitian forms induced  by $h|_{W_0}= ph_0$ and $h|_{W^\flat}$.
It follows  that $\mathfrak{p} \cdot (  \mathfrak{b}^\vee / \mathfrak{b} )  \subset \mathfrak{b}^\vee / \mathfrak{b}$
is maximal isotropic with respect to $h|_{W^\flat}$,  and so $\mathfrak{a}^\flat$
is self-dual under $h|_{W^\flat}$.

Consider the  inclusions  $\mathfrak{b} \subset  \mathfrak{a}^\flat \subset \mathfrak{b}^\vee$ of $\co_\kk$-modules.
The first  is an isomorphism everywhere locally except at $\overline{\mathfrak{p}}$, while the second is an isomorphism    everywhere locally except at $\mathfrak{p}$.  
In particular, the first  induces an isomorphism
\[
\mathfrak{p}^{-1} \mathfrak{b} / \mathfrak{b} \iso \mathfrak{p}^{-1} \mathfrak{a}^\flat/\mathfrak{a}^\flat .
\]
Restricting  \eqref{p trivialization} to an isomorphism of $\mathfrak{p}$-torsion submodules  defines  \eqref{lattice t}.
The inclusions of \eqref{lattice correspondence}, with the indicated indices,  are clear from what we have said.
\end{proof}

Just as Lemma \ref{lem:0 linear algebra} is  more complicated than Lemma \ref{lem:pi linear algebra}, the analysis of $\mathcal{U}_0$ is  more complicated than that of $\mathcal{U}_\mathfrak{p}$.

Let $S$ be an $\co_\kk[1/p]$-scheme.
Given  Lemma \ref{lem:0 linear algebra}, one expects  that   any  $S$-point $(A_0,A,x) \in \mathcal{U}_0(S)$  should determine a diagram of  abelian schemes with $\co_\kk$-actions
\begin{equation}\label{abelian correspondence}
\xymatrix{
 { \tilde{A}_0 \times A^\flat } &  &{ \tilde{A}_0 \times B}  \ar[ll]_{\deg = p} \ar[rr]^{\deg = p^2} &  & { A}
}
\end{equation}
 in which the arrows are $\co_\kk$-linear isogenies of the indicated degrees.
 There should also be a canonical $\co_\kk$-linear closed immersion
\begin{equation}\label{t construct}
\tilde{A}_0[\mathfrak{p}] \iso B[\mathfrak{p}]  \map{t} A^\flat[ \mathfrak{p} ],
\end{equation}
and the morphism $x:A_0 \to A$ should factor as
\[
A_0 \iso \tilde{A}_0 \hookrightarrow \tilde{A}_0 \times B \to A.
\]

Moreover,  these abelian schemes should come with polarizations with the following properties.
 The elliptic curve $\tilde{A}_0$ is equipped with its unique polarization of degree $p^2$, 
 $B$ is equipped with a polarization of degree $p^2$, and  $A^\flat$ is equipped with a principal polarization.
The pullback  of the product polarization on $\tilde{A}_0 \times A^\flat$ via the leftwards arrow in \eqref{abelian correspondence} is the product polarization on $\tilde{A}_0 \times B$, and similarly for the  the pullback of the principal  polarization on $A$  via the rightward arrow.

Here is how to construct the data \eqref{abelian correspondence} and \eqref{t construct}  from $(A_0,A,x)$.
 As in the proof of Proposition \ref{prop:split divisor flavors},  at any geometric point $s \to S$ we  may apply Lemma   \ref{lem:0 linear algebra} with $\mathfrak{a}_0$ and $\mathfrak{a}$ replaced by the  $p$-adic Tate modules of $A_{0s}$ and $A_s$ to obtain $\co_\kk$-lattices 
\begin{equation}\label{tate correspondence}
T_p( \tilde{A}_{0s}) \oplus T_p( A_s^\flat)  \supset T_p( \tilde{A}_{0s}) \oplus T_p(B_s)  \subset T_p(A_s).
\end{equation}
analogous to \eqref{lattice correspondence}, along with an $\co_\kk$-linear injection
\begin{equation}\label{tate-level t}
\mathfrak{p}^{-1} T_p(\tilde{A}_{0s})/T_p(\tilde{A}_{0s})  \iso \mathfrak{p}^{-1} T_p(B_s)/T_p(B_s)  
 \map{t}  \mathfrak{p}^{-1} T_p(A_s^\flat)/T_p(A_s^\flat).
\end{equation}
 To be clear, we have not yet constructed  the abelian schemes $\tilde{A}_0$, $A^\flat$, and $B$, only  lattices in $T_p(A_s)[1/p]$ that we choose to call  $T_p(\tilde{A}_s)$,  $T_p(A_s^\flat)$,  and $T_p(B_s)$.  
 However, as $p\in \co_S^\times$ and both inclusions in \eqref{tate correspondence} have finite index, there are abelian schemes $C$ and $D$ over $S$ with $\co_\kk$-actions and $\co_\kk$-linear  isogenies
\[
\xymatrix{
{  D }  &  {  C    }  \ar[r]^{p^2} \ar[l]_{p} &  { A   } 
}
\]
of the indicated degrees that identify 
\begin{equation}\label{tate-level decomp}
T_p(C_s) = T_p(\tilde{A}_{0s}) \oplus T_p(B_s) ,\qquad T_p(D_s) = T_p(\tilde{A}_{0s}) \oplus T_p(A_s^\flat).
\end{equation}

  The condition that $\langle x,x\rangle =p$ implies that 
 \[
 A_0 \map{x} A \iso A^\vee \map{x^\vee} A_0^\vee \iso A_0
 \]
 is multiplication by $p$, which implies that the composition
 \[
 A \iso A^\vee \map{x^\vee} A_0^\vee\iso A_0 \map{x} A
 \]
 is a Rosati-fixed element $ \alpha \in \End_{\co_\kk}(A)[1/p]$ such that $\alpha\circ \alpha= p\alpha$.
In particular, we obtain an idempotent
\[
p^{-1}\alpha \in  \End_{\co_\kk}(A)[1/p]\iso  \End_{\co_\kk}(C)[1/p]\iso  \End_{\co_\kk}(D)[1/p],
\]
whose induced actions on $T_p(C_s)[1/p]$ and $T_p(D_s)[1/p]$ are just the projections to the first summands in \eqref{tate-level decomp}.  It follows that we in fact have
\[
p^{-1}\alpha \in      \End_{\co_\kk}(C) \quad \mbox{and}\quad  p^{-1}\alpha \in  \End_{\co_\kk}(D).
\]
These idempotents are the projections to the first factors for unique decompositions
\[
C= \tilde{A}_0 \times B ,\qquad  D=\tilde{A}_0 \times A^\flat
\]
of abelian schemes with $\co_\kk$-actions, and the image of $x$ under
\[
 \Hom_{\co_\kk}(A_0 , A) [1/p] \iso \Hom_{\co_\kk}(A_0 , C)[1/p] \to \Hom_{\co_\kk}(A_0 , \tilde{A}_0)[1/p]
\]
is an isomorphism  $A_0 \iso \tilde{A}_0$. 
 In particular, we now have $\co_\kk$-linear isogenies \eqref{abelian correspondence}
such that taking $p$-adic Tate modules  recovers \eqref{tate correspondence}, and \eqref{tate-level t} induces 
the closed immersion \eqref{t construct}.

It remains to construct the polarizations on $\tilde{A}_0$, $B$, and $A^\flat$.
 If we pull-back the principal polarization on $A$ via $\tilde{A}_0 \times B \to A$, we obtain a  polarization on $\tilde{A}_0 \times B$ of degree $p^4$.   
As the idempotent 
\[
p^{-1}\alpha \in      \End_{\co_\kk}(C) = \End_{\co_\kk}(  \tilde{A}_0 \times B ) 
\]
 constructed above is Rosati-fixed,  this pullback polarization  splits as the product of  polarizations 
 \begin{equation}\label{split polarizations}
 \tilde{A}_0 \to \tilde{A}_0^\vee \quad  \mbox{and}\quad    B\to B^\vee .
 \end{equation}

We claim that the kernels of both polarizations in \eqref{split polarizations} are isomorphic, \'etale locally on $S$, to the constant $\co_\kk$-module scheme $\co_\kk / (p)$.  It suffices to check this at a fixed geometric point $s \to S$, where the inclusions of $p$-adic Tate modules \eqref{tate correspondence} behave in exactly the same way as the $\co_\kk$-lattices in \eqref{lattice correspondence}.
For example, in the proof of Lemma \ref{lem:0 linear algebra} we saw that there
 are canonical isomorphisms
\[
 \mathfrak{b}^\vee / \mathfrak{b} \iso  \mathfrak{a} / ( \widetilde{\mathfrak{a}}_0 \oplus\mathfrak{b})  
 \iso  \tilde{\mathfrak{a}}_0^\vee / \tilde{\mathfrak{a}}_0  \iso  
 p^{-1} \tilde{\mathfrak{a}}_0 / \tilde{\mathfrak{a}}_0
\]
(recall that in the proof of the lemma we identified $\mathfrak{a}_0=x(\mathfrak{a}_0) = \tilde{\mathfrak{a}}_0$),
and hence there are analogous isomorphisms
\[
\frac{ T_p(B_s^\vee)  }{ T_p( B_s)}  \iso  \frac{ T_p(A_s) }{ T_p( \tilde{A}_{0,s} ) \times T_p(B_s)   } 
 \iso  \frac{   T_p(A_{0_s}^\vee) }{  T_p(A_{0,s}) }  \iso  \frac{  p^{-1} T_p(A_{0,s}) }{  T_p(A_{0,s}) } .
 \]
The rightmost quotient is obviously isomorphic to $\co_\kk / (p)$, and hence the fibers at $s$ of the kernels of \eqref{split polarizations} are both isomorphic to $\co_\kk/(p)$.

Next we claim that we have  equalities 
 \[
 \ker(B\to A^\flat)  = \mathfrak{p} \cdot \ker( B\to B^\vee ) = \ker( B\to B^\vee )[\overline{\mathfrak{p}}] 
 \]
of subgroup schemes of $\ker(B\to B^\vee )$, all maximal isotropic under the Weil pairing.
   The second equality is clear, as we have just shown that $\ker(B\to B^\vee )$ is \'etale locally isomorphic to $\co_\kk/(p)$.  
 As above, the first equality and the maximal isotropy can be checked on the level of $p$-adic Tate modules at a  geometric point $s \to S$, where they follow from the fact that our abelian schemes were constructed so that their  $p$-adic Tate modules have the same linear-algebraic properties as the $\co_\kk$-lattices in 
 Lemma \ref{lem:0 linear algebra}.  More precisely, by construction the lattice
  \[
  T_p(A_s^\flat)  = \mathfrak{p} T_p( B_s^\vee) + T_p(B_s)
 \]
 is  self-dual  under the Weil pairing on $T_p(B_s) \otimes \Q_p$, and it follows that 
 \[
  \ker(B_s\to A_s^\flat)  =
 \frac{ T_p(A_s^\flat) }{ T_p(B_s) } =  \mathfrak{p} \cdot  \frac{ T_p( B_s^\vee )  }{ T_p(B_s) }
 = \mathfrak{p} \cdot \ker( B_s\to B_s^\vee ) 
 \]
 is a maximal isotropic submodule of $\ker(B_s\to B_s^\vee ) = T_p( B_s^\vee ) /  T_p(B_s)$.

By Lemma \ref{lem:polarization descent},  the maximal isotropy of the subgroup scheme
\[
\ker(B\to A^\flat)  \subset \ker( B\to B^\vee ) 
\]
under the Weil pairing implies that the polarization
$B\to B^\vee$  descends   to a principal polarization on $A^\flat$.
 This completes the construction of the abelian schemes \eqref{abelian correspondence} with all of their expected extra structure.
 
Now apply this construction to the universal triple $(A_0,A,x)$ over $\mathcal{U}_0$.
 As $p\in \co_{\mathcal{U}_0}^\times$, the $p$-power isogenies \eqref{abelian correspondence} induce an isomorphism
 \begin{equation}\label{i_0 Lie}
 \Lie(A) \iso \Lie( \tilde{A}_0) \times \Lie(A^\flat)
 \end{equation}
 of rank $n$ vector bundles on $\mathcal{U}_0$. 
Recalling that $A$ comes equipped with an $\co_\kk$-stable  hyperplane $\mathcal{F} \subset\Lie(A)$, denote by 
\[
\mathcal{U}_0^\dagger \subset  \mathcal{U}_0
\]
  the largest closed substack over which  $\Lie(\tilde{A}_0) \subset \mathcal{F}$

Define a finite \'etale cover 
\begin{equation}\label{tau cover}
 \mathcal{T}_{V^\flat}   \to \mathcal{S}_{V^\flat/\co_\kk[1/p]}
\end{equation}
of degree $p^{n-1}-1$ as follows:
for any $\co_\kk[1/p]$-scheme $S$, let  $\mathcal{T}_{V^\flat}(S)$ be the groupoid of triples $(A^\flat_0,A^\flat,t)$ in which 
\[
(A^\flat_0,A^\flat) \in \mathcal{S}_{V^\flat}(S) \quad 
\mbox{and} \quad  
t : A^\flat_0[ \mathfrak{p} ] \to A^\flat[\mathfrak{p}]
\]
 is an $\co_\kk$-linear closed immersion of  group schemes over $S$.

\begin{proposition}\label{prop:i_0 construction}
There is a canonical isomorphism $i_0 :  \mathcal{T}_{V^\flat}  \iso \mathcal{U}_0^\dagger$.
\end{proposition}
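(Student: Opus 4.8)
The plan is to construct a morphism $\mathcal{U}_0^\dagger \to \mathcal{T}_{V'}$ and a morphism $\mathcal{T}_{V'} \to \mathcal{U}_0^\dagger$ and check they are mutually inverse, exactly in the spirit of the proof of Proposition \ref{prop:i_p construction}, but now carrying along the extra datum $t$ of the $\co_\kk$-linear closed immersion of $\mathfrak{p}$-torsion. In one direction, let $S$ be an $\co_\kk[1/p]$-scheme and $(A_0,A,x) \in \mathcal{U}_0^\dagger(S)$. The discussion preceding the proposition already produces from $(A_0,A,x)$ the abelian schemes $\tilde A_0$, $A'$, and $B$ sitting in the isogeny diagram \eqref{abelian correspondence}, the polarization data making $A'$ principally polarized (via Lemma \ref{lem:polarization descent}), and the canonical $\co_\kk$-linear closed immersion $t : \tilde A_0[\mathfrak{p}] \to A'[\mathfrak{p}]$ of \eqref{t construct}. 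Using $x$ to identify $A_0 \iso \tilde A_0$, the pair $(A_0, A')$ defines an $S$-point of $\mathcal{M}_{(1,0)} \times_{\co_\kk} \mathcal{M}_{(n-2,1)}$, where $A'$ is endowed with the hyperplane $\mathcal{F}' = \mathcal{F}/\Lie(\tilde A_0) \subset \Lie(A)/\Lie(\tilde A_0) \iso \Lie(A')$ — this quotient makes sense precisely because we are on $\mathcal{U}_0^\dagger$, where $\Lie(\tilde A_0) \subset \mathcal{F}$, and one must check $\mathcal{F}'$ satisfies Kr\"amer's signature condition (it is $\co_\kk$-stable and the induced actions on $\mathcal{F}'$ and $\Lie(A')/\mathcal{F}'$ are of the required types since those properties are inherited from $\mathcal{F}$). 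One then checks, by comparing $\ell$-adic (and $p$-adic, using the isogenies) Tate modules as in Proposition \ref{prop:split divisor flavors}, that $(A_0, A')$ lands in the open and closed substack $\mathcal{S}_{V'}$, and that $t$ is an $\co_\kk$-linear closed immersion $A_0[\mathfrak{p}] \to A'[\mathfrak{p}]$; this yields the point $(A_0, A', t) \in \mathcal{T}_{V'}(S)$.

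In the other direction, given $(A_0', A', t) \in \mathcal{T}_{V'}(S)$, set $A_0 = A_0'$, $\tilde A_0 = A_0$ (with its degree-$p^2$ polarization), and $B = \tilde A_0$. The closed immersion $t : \tilde A_0[\mathfrak{p}] \to A'[\mathfrak{p}]$ singles out a finite flat $\co_\kk$-stable subgroup scheme of $\tilde A_0 \times A'$, namely the ``graph'' of $t$ twisted appropriately so as to match the index data of \eqref{lattice correspondence}; quotienting $\tilde A_0 \times A'$ by the image of this subgroup (after first passing through $\tilde A_0 \times B \to \tilde A_0 \times A'$) produces an abelian scheme $A$ with an $\co_\kk$-action and, via Lemma \ref{lem:polarization descent} applied to the composite isogeny, a principal polarization. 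One endows $A$ with the hyperplane $\mathcal{F} = \Lie(\tilde A_0) \oplus \mathcal{F}' \subset \Lie(A) \iso \Lie(\tilde A_0) \oplus \Lie(A')$, which visibly lies in $\mathcal{U}_0^\dagger$ once $A$ is seen to lie in $\mathcal{U}_0$, and one defines $x : A_0 \to A$ as the composite $A_0 \iso \tilde A_0 \hookrightarrow \tilde A_0 \times B \to A$; a Tate-module computation gives $\langle x,x\rangle = p$ and $\ker(x) = 0$, so $(A_0, A, x) \in \mathcal{U}_0^\dagger(S)$. Functoriality of both constructions in $S$ is immediate, and checking that the two functors are mutually inverse is a matter of unwinding the definitions and confirming that the recipe of \eqref{tate correspondence}--\eqref{tate-level t} and the recipe just described undo one another at the level of $p$-adic Tate modules (where everything becomes the linear algebra of Lemma \ref{lem:0 linear algebra}) and at all other primes (where all the isogenies in sight are isomorphisms).

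The main obstacle I anticipate is the bookkeeping with polarizations and the verification that the object called $A$ really does satisfy the wedge/signature conditions defining a point of $\mathcal{M}_{(n-1,1)}$, and moreover that it lands in the \emph{specific} open and closed substack $\mathcal{S}_V$ cut out by a local isometry condition on hermitian spaces. The polarization descent is handled cleanly by Lemma \ref{lem:polarization descent} once one identifies the relevant kernel as totally isotropic under the Weil pairing, but pinning down that isotropy — and the isomorphism type $\co_\kk/(p)$ of the kernels of $\tilde A_0 \to \tilde A_0^\vee$ and $B \to B^\vee$ — requires a careful analysis of the Weil pairing on $p$-adic Tate modules under the splitting \eqref{tate-level decomp}, which is exactly where the self-duality statement of Lemma \ref{lem:0 linear algebra} (that $\mathfrak{a}' = \mathfrak{p}\,\mathfrak{b}^\vee + \mathfrak{b}$ is self-dual under $h|_{W'}$) gets used. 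The signature condition on $A$, on the other hand, follows formally: on $\mathcal{U}_0 \subset \mathcal{Z}_V(p) \to \mathcal{S}_V$ the abelian scheme already satisfies it, and the closed substack $\mathcal{U}_0^\dagger$ is defined so that the hyperplane $\mathcal{F}$ decomposes compatibly, so the signature/wedge conditions for $A$ reduce to those for $A'$ (which hold since $(A_0',A') \in \mathcal{S}_{V'}$) together with the signature $(1,0)$ condition for $\tilde A_0$. I would also double-check that the degree $p^{n-1}-1$ of the cover $\mathcal{T}_{V'} \to \mathcal{S}_{V'}$ is consistent with the count of $\co_\kk$-linear closed immersions $A_0'[\mathfrak{p}] \to A'[\mathfrak{p}]$, i.e.\ of $\F_p$-lines inside the $(n-1)$-dimensional $\F_p$-vector space $A'[\mathfrak{p}]$ — there are $(p^{n-1}-1)/(p-1)$ such lines, but an immersion is a line together with a choice of isomorphism onto it, giving $(p-1)\cdot (p^{n-1}-1)/(p-1) = p^{n-1}-1$, matching the asserted degree and serving as a useful sanity check that the moduli problem defining $\mathcal{T}_{V'}$ has been set up correctly.
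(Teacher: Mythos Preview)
Your forward direction $\mathcal{U}_0^\dagger \to \mathcal{T}_{V'}$ is essentially the paper's, and your instincts about what needs to be checked (polarization descent via Lemma \ref{lem:polarization descent}, landing in $\mathcal{S}_{V'}$, Kr\"amer's condition on $\mathcal{F}'$) are sound.

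The inverse construction, however, has a genuine gap. You write ``set \ldots $B = \tilde A_0$'', but this cannot be right: in the diagram \eqref{abelian correspondence} the abelian scheme $B$ has relative dimension $n-1$, not $1$. Consequently your ``graph of $t$'' quotient of $\tilde A_0 \times A'$ does not produce the correct $A$. The product polarization on $\tilde A_0 \times A'$ has degree $p^2$ with kernel $\tilde A_0[p] \times \{0\}$; any maximal isotropic subgroup lies entirely in the first factor, so no graph-type subgroup involving $A'$ can descend that polarization to a principal one. The parenthetical ``after first passing through $\tilde A_0 \times B \to \tilde A_0 \times A'$'' suggests you sense an intermediate object is needed, but you have not said how to build it.

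The missing idea, which is the heart of the paper's inverse, is to \emph{reconstruct $B$ from $(A',t)$}: set $B^\vee = A'/\operatorname{Im}(t)$ and let $B$ be its dual. Dualizing the quotient map gives an isogeny $B \to A'$ of degree $p$, and the composition $B \to A' \iso (A')^\vee \to B^\vee$ is a polarization of degree $p^2$ whose kernel splits as $\ker(B\to B^\vee)[\mathfrak{p}] \times \ker(B\to B^\vee)[\overline{\mathfrak{p}}]$, each factor of order $p$. Now $\tilde A_0 \times B$ carries a product polarization of degree $p^4$; the datum $t$ extends uniquely to an $\co_\kk$-linear isomorphism $\tilde A_0[p] \iso \ker(B \to B^\vee)$ compatible with Weil pairings, and the antidiagonal $\Delta = \{(-a_0, t(a_0))\} \subset \tilde A_0 \times B$ is a totally isotropic subgroup of order $p^2$. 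Then $A = (\tilde A_0 \times B)/\Delta$ is principally polarized by Lemma \ref{lem:polarization descent}, and one finishes exactly as you describe: $\mathcal{F} = \Lie(\tilde A_0) \oplus \mathcal{F}'$, $x$ the composite $A_0 = \tilde A_0 \hookrightarrow \tilde A_0 \times B \to A$, and $\ker(x)=0$ with $\langle x,x\rangle = p$. Your sanity check on the degree $p^{n-1}-1$ of $\mathcal{T}_{V'} \to \mathcal{S}_{V'}$ is correct and worth keeping.
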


\begin{proof}
The morphism $\mathcal{U}_0^\dagger \to  \mathcal{T}_{V^\flat}$ is essentially described above.
Suppose $S$ is an $\co_\kk[1/p]$-scheme and 
\[
(A_0,A,x) \in \mathcal{U}_0^\dagger(S).
\]
We can use the isomorphism \eqref{i_0 Lie} to endow the abelian scheme 
 $A^\flat$ of \eqref{abelian correspondence} with the  rank one local direct summand
 \[
 \mathcal{F} ^\flat = \mathcal{F} / \Lie( \tilde{A}_0) \subset \Lie(A) /\Lie( \tilde{A}_0) =  \Lie(A^\flat) 
 \]
 to obtain a point $A^\flat \in \mathcal{M}_{(n-2,1)}(S)$.  Setting $A_0^\flat=\tilde{A}_0$,
 this defines a morphism 
 \[
 \mathcal{U}_0^\dagger \map{  (A_0,A,x) \mapsto (A^\flat_0,A^\flat)  }  \mathcal{S}_{V^\flat / \co_\kk[1/p] },
  \]
and  the  closed immersion  \eqref{t construct} defines the desired lift to
  $\mathcal{U}_0^\dagger \to  \mathcal{T}_{V^\flat}$.

Now we construct the inverse.
Start with  a point
 $
(A^\flat_0, A^\flat , t) \in \mathcal{T}_{V^\flat}(S).
 $
 Denote by $B$ the abelian scheme dual to 
 \[
 B^\vee = A^\flat / \mathrm{Im}(t).
 \]
Using the principal polarization to identify $A^\flat$ with its dual, the quotient map $A^\flat \to B^\vee$ dualizes to a morphism $B \to A^\flat$, and  the composition 
\[
B\to A^\flat \to B^\vee
\]
is a degree $p^2$ polarization  (it is the pullback via $B \to A^\flat$ of the principal polarization on $A^\flat$).
It has  the property that each factor on the right hand side of 
\[
\ker( B \to B^\vee ) = \ker(B \to B^\vee ) [\mathfrak{p}]  \times \ker(B \to B^\vee ) [ \overline{\mathfrak{p}} ]
\]
has order $p$, and  the second factor is the kernel of $B\to A^\flat$.  In particular, the induced map
$B[\mathfrak{p}] \to A^\flat[\mathfrak{p}]$ is an isomorphism, and the closed immersion 
\[
A^\flat_0[\mathfrak{p}] \map{t} A^\flat[\mathfrak{p}] \iso  B[\mathfrak{p}]
\]
has image $\ker(B \to B^\vee) [\mathfrak{p}] $.
Setting  $\tilde{A}_0 = A^\flat_0$, the resulting   isomorphism 
\[
\tilde{A}_0[\mathfrak{p}] \iso \ker(B \to B^\vee) [\mathfrak{p}]
\]
 admits a  unique $\co_\kk$-linear extension to an isomorphism
\[
t : \tilde{A}_0[ p ] \iso \ker(B \to B^\vee)
\]
 identifying the Weil pairings on source and target.

The antidiagonal subgroup
\[
\Delta \define \tilde{A}_0[p]\map{  a_0 \mapsto ( - a_0 , t (a_0) ) }  \tilde{A}_0 \times B.
\]
  is totally isotropic under the Weil pairing induced by the product polarization (where $\tilde{A}_0$ is given the polarization of degree $p^2$), which therefore descends, by Lemma \ref{lem:polarization descent},  to a principal polarization on the quotient 
\[
A = (\tilde{A}_0\times B)/\Delta.
\]
As  $p \in \co_S^\times$,  there is an  induced  isomorphism of Lie algebras \eqref{i_0 Lie}, 
which allows us to define a corank one local direct summand $\mathcal{F} \subset \Lie(A)$ as the product
\[
\mathcal{F}  = \Lie(\tilde{A}_0) \times \mathcal{F}^\flat.
\]
This defines  a point $A \in \mathcal{M}_{(n-1,1)}(S)$.  Setting $A_0 = \tilde{A}_0$,  the composition 
\[
A_0=\tilde{A}_0 \hookrightarrow  \tilde{A}_0 \times B \to A
\]
defines  $x \in \Hom_{\co_\kk}(A_0,A)$ with $\langle x,x\rangle=p$

The above construction  determines  a morphism
\[
\mathcal{T}_{V^\flat} \map{ (A^\flat_0,A^\flat,t) \to (A_0,A,x)    }  \mathcal{Z}_V(p)
\]
taking values in the open substack $\mathcal{U}_0^\dagger$,  which is inverse to the map $\mathcal{U}_0^\dagger \to \mathcal{T}_{V^\flat}$ constructed above.
\end{proof}

Proposition \ref{prop:i_0 construction} gives us morphisms
\begin{equation}\label{i_0 pullback}
\xymatrix{
{  \mathcal{S}_{V^\flat/\co_\kk[1/p] } }    
& & { \mathcal{T}_{V^\flat} }  \ar[rr]       \ar[ll]_{\eqref{tau cover}}
& & { \mathcal{S}_{V/\co_\kk[1/p] } }
}
\end{equation}
in which the  arrow on the right is the (finite and unramified) composition
\[
{\mathcal{T}_{V^\flat} } \map{i_0 } \mathcal{U}^\dagger_0 \hookrightarrow   \mathcal{Z}_V(p)_{/\co_\kk[1/p]}  \to \mathcal{S}_{V/\co_\kk[1/p]} ,
\]
sending  $(A_0^\flat , A^\flat , t ) \mapsto (A_0,A)$, where $A_0=A_0^\flat$, and $A$ is related to $A^\flat$ by a diagram \eqref{abelian correspondence}.

\begin{proposition}\label{prop:i_0 pullbacks}
The hermitian line bundles
\[
\widehat{\omega}^\mathrm{Hdg}_{A_0^\flat/ \mathcal{S}_{V^\flat}} 
  + \widehat{\omega}^\mathrm{Hdg}_{A^\flat/ \mathcal{S}_{V^\flat}} 
     \in   \widehat{\Pic}(  \mathcal{S}_{V^\flat } )
\]
and
$
\widehat{\omega}^\mathrm{Hdg}_{A/ \mathcal{S}_{V}}     \in   \widehat{\Pic}(  \mathcal{S}_{V } )
$
have the same images under the homomorphisms  
\[
\xymatrix{
{ \widehat{\Pic}( \mathcal{S}_{V^\flat}   ) }  \ar[r] &  {  \widehat{\Pic} (    \mathcal{T}_{V^\flat} ) _\Q }&  {  \widehat{\Pic}(  \mathcal{S}_{V } ) }  \ar[l] 
}
\]
induced by \eqref{i_0 pullback}.  
The same is true of 
$(\mathrm{Exc}_{V^\flat} ,0 )$ and $(\mathrm{Exc}_V,0)$, and of $\widehat{\taut}_{V^\flat}$ and $\widehat{\taut}_V$.
\end{proposition}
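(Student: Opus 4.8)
The plan is to prove the statement exactly as in the case of $\mathcal{U}_\mathfrak{p}$ treated in Proposition \ref{prop:i_p pullbacks}, replacing the product decomposition $A = \tilde A_0 \times A'$ there by the chain of $p$-power isogenies \eqref{abelian correspondence} here. Fix an $\co_\kk[1/p]$-scheme $S$ and a point $(A_0',A',t)\in \mathcal{T}_{V'}(S)$, and let $(A_0,A)\in \mathcal{S}_V(S)$ be its image under the rightmost arrow in \eqref{i_0 pullback}. The three claims are statements about line bundles on $\mathcal{S}_{V'}$ and $\mathcal{S}_V$ pulled back to $\mathcal{T}_{V'}$, so it suffices to check them after this pullback; since $\mathcal{T}_{V'}$ is reduced (being finite \'etale over the regular stack $\mathcal{S}_{V'/\co_\kk[1/p]}$), we may work with the universal objects over $\mathcal{T}_{V'}$ directly.

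First I would treat the Hodge bundle. The diagram \eqref{abelian correspondence} consists of $\co_\kk$-linear isogenies of degree prime to the residue characteristics (all degrees are powers of $p$, and $p$ is inverted), so each induces an isomorphism on Lie algebras, giving \eqref{i_0 Lie}, hence a canonical isomorphism
\[
\omega^\mathrm{Hdg}_{A/\mathcal{T}_{V'}} \iso \omega^\mathrm{Hdg}_{\tilde A_0/\mathcal{T}_{V'}} \otimes \omega^\mathrm{Hdg}_{A'/\mathcal{T}_{V'}}
\]
of line bundles. Since $\tilde A_0 = A_0' = A_0$ as elliptic curves (the isogeny defining $\tilde A_0$ from $A_0$ is the identity here, unlike in the $\mathcal{U}_\mathfrak{p}$ case where it had degree $p$), this isomorphism identifies the underlying line bundles in the asserted way with no correction term. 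It remains to see that it is an isometry for the metrics \eqref{hodge metric}: at a complex point, the isogenies in \eqref{abelian correspondence} induce isomorphisms on $H^1_\mathrm{dR}$ (equivalently on singular cohomology with $\C$-coefficients) compatible with the integration pairing up to the (prime-to-$p$, hence here trivial-up-to-the-relevant-scaling) degree, and one checks as in the proof of Proposition \ref{prop:i_p pullbacks} that the resulting factor is a power of $p$; but since the correction $(0,c\log p)$ is torsion in $\widehat{\Pic}$, the equality holds in $\widehat{\Pic}(\mathcal{T}_{V'})_\Q$. This gives the first claim.

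For $\widehat{\taut}$ and $(\mathrm{Exc},0)$ I would argue as in Proposition \ref{prop:i_p pullbacks}. The isomorphism \eqref{i_0 Lie} identifies $\mathcal{F}$ with $\Lie(\tilde A_0)\times \mathcal{F}'$, whence $\Lie(A)/\mathcal{F} \iso \Lie(A')/\mathcal{F}'$ and, using $\Lie(A_0)\iso \Lie(A_0')$, the formula \eqref{dual taut} gives $\taut_V \iso \taut_{V'}$ on $\mathcal{T}_{V'}$; at a complex point this isomorphism is the restriction of the isometric embedding of $\kk$-hermitian spaces $\Hom_\kk(H_1(A_{0s}),H_1(A_s'))\hookrightarrow \Hom_\kk(H_1(A_{0s}),H_1(A_s))$ induced by the isogeny $\tilde A_0\times B\to A$ (which is a quasi-isometry, exactly as the isogenies are quasi-isometries on Tate modules in Lemma \ref{lem:0 linear algebra}), so it preserves the metric \eqref{taut metric}. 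Finally, as in the last paragraph of the proof of Proposition \ref{prop:i_p pullbacks}, the exceptional divisors are pullbacks of the singular loci \eqref{first singular}, \eqref{second singular} under the vertical maps, so it suffices to check that the bottom horizontal map (the rule $(A_0',A')\mapsto (A_0,A)$ of \eqref{i_0 pullback}, composed with $\mathcal{T}_{V'}\to \mathcal{S}_{V'}$) carries one singular locus onto the other on geometric points; this is again an elementary linear-algebra check using the characterization of the singular locus in Remark \ref{rem:singular description}, since $\Lie(A)\iso \Lie(\tilde A_0)\times \Lie(A')$ and the $\co_\kk$-action on $\Lie(A)$ factors through the structure map if and only if it does so on $\Lie(A')$ (the factor $\Lie(\tilde A_0)\iso\Lie(A_0)$ always having that property).

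The main obstacle I anticipate is not any single step but the bookkeeping in the metric comparison for the Hodge bundle: one must verify carefully that the power of $p$ produced by comparing integration pairings under the $p^2$- and $p$-isogenies in \eqref{abelian correspondence} cancels out (or is absorbed into the $\Q$-coefficients and the torsion ambiguity), so that no spurious $\log p$ term survives — in contrast to the $\mathcal{U}_\mathfrak{p}$ case, where such a term did appear (from $\tilde A_0 = A_0/A_0[\mathfrak{p}]$) and had to be dealt with via its being torsion in $\widehat{\Pic}$. Everything else is a routine transcription of the arguments already carried out for $\mathcal{U}_\mathfrak{p}$.
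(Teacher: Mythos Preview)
Your proposal is correct and follows essentially the same approach as the paper. The paper's proof is terser: it simply observes that the construction of Proposition~\ref{prop:i_0 construction} yields a quasi-isogeny $f\in\Hom(A_0'\times A',A)[1/p]$ of degree exactly $p$, whence $\widehat{\omega}^\mathrm{Hdg}_{A/\mathcal{T}_{V'}} = \widehat{\omega}^\mathrm{Hdg}_{A_0'/\mathcal{T}_{V'}} + \widehat{\omega}^\mathrm{Hdg}_{A'/\mathcal{T}_{V'}} + (0,-\log p)$ with the last term torsion, and then defers the remaining two claims to Proposition~\ref{prop:i_p pullbacks}. Your version unpacks the same argument via the isogenies in \eqref{abelian correspondence}; the only imprecision is that the $\log p$ correction does \emph{not} cancel (the net degree is $p^2/p = p$, not $1$), but as you already note, any integer multiple of $(0,\log p)$ is torsion in $\widehat{\Pic}(\mathcal{T}_{V'})$, so this is harmless.
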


\begin{proof}
Denote by $(A^\flat_0,A^\flat)$  the pullback of the universal object via the left arrow in \eqref{i_0 pullback}, and by $(A_0,A)$  the pullback of the universal object via the right arrow in \eqref{i_0 pullback}.
The isomorphism of Proposition \ref{prop:i_0 construction}, and hence the right arrow in \eqref{i_0 pullback}, is constructed in such a way that these pullbacks are related by isogenies \eqref{abelian correspondence} of abelian schemes over $\mathcal{T}_{V^\flat}$.  In particular, there is a  quasi-isogeny
 \[
 f \in \Hom( A^\flat_0 \times A^\flat , A)[1/p]
 \]
 of degree $\deg(f)=p$.
As in the  proof of  Proposition \ref{prop:i_p pullbacks}, this implies that
\[
\widehat{\omega}^\mathrm{Hdg}_{A/ \mathcal{T}_{V^\flat}}
=
\widehat{\omega}^\mathrm{Hdg}_{ A^\flat_0 / \mathcal{T}_{V^\flat}}
+
\widehat{\omega}^\mathrm{Hdg}_{ A^\flat / \mathcal{T}_{V^\flat}}   + (0 , -\log(p))  
\]
as elements of $\widehat{\Pic}(\mathcal{T}_{V^\flat} )$, and  the term $(0,-\log(p))$ is torsion. 
The first claim follows  from this.
The remaining claims are essentially the same as in Proposition \ref{prop:i_p pullbacks}, and we leave the details to the reader.
\end{proof}

\begin{proposition}\label{prop:i_0 nonexceptional}
The nonexceptional locus $ \mathcal{U}_0^\nonexc \subset  \mathcal{U}_0$ satisfies
\[
 \mathcal{U}_0^\nonexc \subset \mathcal{U}_0^\dagger.
\]
Moreover, the  morphism of Proposition \ref{prop:i_0 construction}  restricts to an isomorphism 
\[
\mathcal{S}^\nonexc_{V^\flat / \co_\kk[1/p] }    \iso
 \mathcal{U}^\nonexc_0  .
\]
\end{proposition}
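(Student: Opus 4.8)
The plan is to mimic the proof of Proposition \ref{prop:i_p nonexceptional} essentially verbatim; the one new feature is that here $A$ is no longer an honest product, so the product decomposition used there must be replaced by the chain of $p$-power isogenies \eqref{abelian correspondence} together with the $\co_\kk$-equivariant identification \eqref{i_0 Lie} of Lie algebras it induces. Since the isogenies in \eqref{abelian correspondence} have $p$-power degree and we are working over an $\co_\kk[1/p]$-base, they are isomorphisms on Lie algebras, so \eqref{i_0 Lie} is a genuine $\co_\kk$-linear isomorphism.

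First I would establish the inclusion $\mathcal{U}_0^\nonexc \subset \mathcal{U}_0^\dagger$. Given an $\co_\kk[1/p]$-scheme $S$ and a point $(A_0,A,x) \in \mathcal{U}_0^\nonexc(S)$, the construction preceding Proposition \ref{prop:i_0 construction} furnishes $\co_\kk$-linear abelian schemes $\tilde{A}_0, B, A'$ over $S$ fitting into \eqref{abelian correspondence}, and via \eqref{i_0 Lie} we regard $\Lie(\tilde{A}_0)$ as an $\co_\kk$-stable subbundle of $\Lie(A)$. Because $x$ identifies $\tilde{A}_0$ with $A_0 \in \mathcal{M}_{(1,0)}(S)$, the action of $\co_\kk$ on $\Lie(\tilde{A}_0)$ is through the structure morphism $i_S \colon \co_\kk \to \co_S$. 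On the nonexceptional locus, Kr\"amer's description of the universal hyperplane (Theorem 2.3.3 of \cite{BHKRY-1}), applied exactly as in the proof of Proposition \ref{prop:i_p nonexceptional}, shows that $\mathcal{F} = \ker(\overline{\epsilon}_S \colon \Lie(A) \to \Lie(A))$ with $\overline{\epsilon}_S = \overline{\pi} \otimes 1 - 1 \otimes i_S(\overline{\pi})$ for any $\pi \in \co_\kk$ with $\co_\kk = \Z[\pi]$. As $\overline{\epsilon}_S$ annihilates every $\co_\kk$-module on which $\co_\kk$ acts through $i_S$, it kills $\Lie(\tilde{A}_0)$, so $\Lie(\tilde{A}_0) \subset \mathcal{F}$ and $(A_0,A,x) \in \mathcal{U}_0^\dagger(S)$.

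The second assertion then follows formally: the inclusion just proved, together with $\mathcal{U}_0^\dagger \subset \mathcal{U}_0$, gives $\mathcal{U}_0^\nonexc = \mathcal{U}_0^\dagger \smallsetminus \mathrm{Exc}_V$, and under the isomorphism $i_0 \colon \mathcal{T}_{V'} \iso \mathcal{U}_0^\dagger$ of Proposition \ref{prop:i_0 construction} the exceptional locus $\mathrm{Exc}_V \cap \mathcal{U}_0^\dagger$ corresponds to the pullback of $\mathrm{Exc}_{V'}$ to $\mathcal{T}_{V'}$ --- this is part of Proposition \ref{prop:i_0 pullbacks}, reduced as there to a check on geometric points via Remark \ref{rem:singular description}, both loci being reduced and $0$-dimensional and the maps in \eqref{i_0 pullback} finite and unramified. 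Deleting the exceptional loci from source and target of $i_0$ then yields the asserted isomorphism from the nonexceptional locus of $\mathcal{T}_{V'}$ onto $\mathcal{U}_0^\nonexc$. I expect the only genuinely substantive point --- the same one as in the $\mathcal{U}_\mathfrak{p}$ case --- to be verifying that the composite $\mathcal{U}_0^\nonexc \to \mathcal{M}_{(n-1,1)} \smallsetminus \mathrm{Exc}_{(n-1,1)}$ really does pull the universal hyperplane back to the $\mathcal{F}$ used to define $\mathcal{U}_0^\dagger$, and that the subbundle $\Lie(\tilde{A}_0) \subset \Lie(A)$ coming from \eqref{i_0 Lie} is the $\co_\kk$-equivariant one; both are immediate from the $\co_\kk$-linearity of the isogenies in \eqref{abelian correspondence}, so no new difficulty arises.
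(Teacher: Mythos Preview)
Your proof is correct and follows precisely the approach the paper intends: the paper's own proof simply states that the argument is essentially the same as Proposition \ref{prop:i_p nonexceptional} and leaves the details to the reader, and you have supplied exactly those details, correctly identifying the one modification needed (the $\co_\kk$-equivariant Lie-algebra splitting now comes from the $p$-power isogenies \eqref{abelian correspondence} via \eqref{i_0 Lie} rather than from an honest product decomposition). Note that, consistent with the outline in \S\ref{ss:split divisors main} and with the domain of $i_0$ in Proposition \ref{prop:i_0 construction}, the displayed isomorphism should read $\mathcal{T}^\nonexc_{V'} \iso \mathcal{U}^\nonexc_0$; your argument correctly establishes this version.
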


\begin{proof}
The proof is essentially the same as Proposition \ref{prop:i_p nonexceptional}, and the details are left to the reader.
\end{proof}

\begin{proposition}\label{prop:i_0 toroidal}
Denote by $\bar{\mathcal{T}}_{V^\flat}$  the normalization of  $ \mathcal{T}_{V^\flat} \to \bar{\mathcal{S}}_{V^\flat/\co_\kk[1/p] }$.
The closed immersion $i_0:\mathcal{T}_{V^\flat} \to \mathcal{U}_0$ of Proposition \ref{prop:i_0 construction} extends to a proper morphism 
\[
\bar{\mathcal{T}}_{V^\flat} \to  \bar{\mathcal{U}}_0,
\]
where the codomain is defined as the Zariski closure of 
\[
 \mathcal{U}_0 \subset \bar{\mathcal{Z}}_V(p)_{/\co_\kk[1/p]},
\]
or, equivalently, as the normalization of 
$ \mathcal{U}_0 \to \bar{\mathcal{S}}_{V/\co_\kk[1/p]}$.
\end{proposition}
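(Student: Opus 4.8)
The strategy is to mirror exactly the proof of Proposition \ref{prop:i_p toroidal}, replacing the closed immersion $i_\mathfrak{p}$ by $i_0$, the stack $\mathcal{S}_{V'}$ by $\mathcal{T}_{V'}$, and the abelian scheme $A$ (which there was the product $\tilde A_0\times A'$) by the abelian scheme $A$ appearing in the diagram \eqref{abelian correspondence}. The only genuinely new ingredient is that the construction of $A$ from $(A_0',A',t)$ involves $p$-power isogenies and the taking of a quotient by the antidiagonal $\Delta$, so one must check that all of this propagates across the boundary; this is where Lemma \ref{lem:semiextension} enters, and it is the reason that lemma was isolated. Throughout, as in Proposition \ref{prop:i_p toroidal}, I would suppress the subscripts $\co_\kk[1/p]$.

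\textbf{Key steps.} First, since the exceptional locus of $\bar{\mathcal{S}}_{V'}$ (and hence of $\bar{\mathcal{T}}_{V'}$, which lies over it via the finite \'etale map $\bar{\mathcal{T}}_{V'}\to\bar{\mathcal{S}}_{V'}$) does not meet the boundary, it suffices to construct the extension over the complement $\bar{\mathcal{T}}_{V'}^\nonexc=\bar{\mathcal{T}}_{V'}\smallsetminus\mathrm{Exc}_{V'}$ of the exceptional locus. Over this locus one has the universal data $(A_0',A',t)$ with $A_0'$ an elliptic curve and $A'$ an abelian scheme; by \S\ref{ss:toroidal}, $A'$ extends to a semi-abelian scheme over $\bar{\mathcal{S}}_{V'}^\nonexc$, and so does $A_0'$ (it extends to an elliptic curve). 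Second, I would recall from the construction of $i_0$ (in the proof of Proposition \ref{prop:i_0 construction}) that $A$ is obtained from $A_0'\times A'$ by a chain of $\co_\kk$-linear isogenies of $p$-power degree --- explicitly, $A'_0\times A'$ receives a quasi-isogeny of degree $p$ to $A$, or more concretely $A=(\tilde A_0\times B)/\Delta$ with $B$ isogenous to $A'$ by a degree-$p^2$ isogeny and $\tilde A_0=A_0'$. Since $\tilde A_0\times B$ is built from $A_0'$ and (an isogeny twist of) $A'$, both of which extend to semi-abelian schemes, and since all relevant degrees are prime to $p$ --- invertible on the base --- Lemma \ref{lem:semiextension} shows that $B$, then $\tilde A_0\times B$, then the quotient $A=(\tilde A_0\times B)/\Delta$ all extend to semi-abelian schemes over $\bar{\mathcal{T}}_{V'}^\nonexc$; one needs $\bar{\mathcal{T}}_{V'}^\nonexc$ to be normal, which it is, being the normalization of a normal stack. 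Third, the extension property of $\bar{\mathcal{M}}^\Pap_{(n-1,1)}$ used in the proof of Lemma \ref{lem:normalized compactification} then yields a morphism $\bar{\mathcal{T}}_{V'}^\nonexc\to\mathcal{M}_{(1,0)}\times_{\co_\kk}\bar{\mathcal{M}}^\Pap_{(n-1,1)}$, which, since $A$ has potentially good reduction and $\mathrm{Sing}_{(n-1,1)}$ does not meet the boundary, lands in the open locus where $\bar{\mathcal{M}}^\Pap_{(n-1,1)}\iso\bar{\mathcal{M}}_{(n-1,1)}$, and hence (checking it meets the correct open-and-closed component, which can be done on the interior where it is already known) in $\bar{\mathcal{S}}_V$. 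Fourth and finally, exactly as in Proposition \ref{prop:i_p toroidal}, one has a commutative square of solid arrows
\[
\xymatrix{
{ \mathcal{T}_{V'} } \ar[r]^{i_0}  \ar[d]  & { \bar{\mathcal{U}}_0   } \ar[d]  \\
{ \bar{\mathcal{T}}_{V'}  }  \ar[r] \ar@{.>}[ur]&   { \bar{\mathcal{S}}_V  }
}
\]
with the right vertical arrow integral (hence affine), being a normalization; the existence and uniqueness of the dotted filling reduces, via affine charts, to the elementary commutative-algebra statement that an element of a ring $B$ integral over $A$, mapped into a normal domain $R'$ containing a subring $\bar R'$ with the same fraction field and receiving $A$, automatically lands in $\bar R'$. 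This produces the desired proper morphism $\bar{\mathcal{T}}_{V'}\to\bar{\mathcal{U}}_0$ and also gives the stated equivalent description of $\bar{\mathcal{U}}_0$ as the normalization of $\mathcal{U}_0\to\bar{\mathcal{S}}_V$.

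\textbf{Main obstacle.} The delicate point, and the only place where the argument differs substantively from Proposition \ref{prop:i_p toroidal}, is verifying that the quotient construction $A=(\tilde A_0\times B)/\Delta$ survives degeneration, i.e.\ that the antidiagonal subgroup scheme $\Delta$ (built from the level datum $t$) extends to a flat closed subgroup scheme of $(\tilde A_0\times B)^*[p]$ over the compactification, so that the fppf quotient is again semi-abelian. This is precisely the content of the last paragraph of the proof of Lemma \ref{lem:semiextension} --- the Zariski closure of $\ker(f)$ in the \'etale, normal group scheme $X^*[d]$ is open and closed --- and once one recognizes $\Delta$ as such a kernel (for the isogeny $\tilde A_0\times B\to A$) the obstacle dissolves. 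I do not expect any further difficulty; the boundary combinatorics are trivial here (one-dimensional cone, no choices) and the component-matching is automatic by density of the interior.
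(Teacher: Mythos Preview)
Your proposal is correct and follows exactly the paper's approach: extend $A_0$ and $A'$ to semi-abelian schemes over $\bar{\mathcal{T}}_{V'}$, apply Lemma \ref{lem:semiextension} along the isogenies \eqref{abelian correspondence} to extend $A$, and then repeat the normalization/lifting argument of Proposition \ref{prop:i_p toroidal} verbatim. One small slip: the relevant isogeny degrees are powers of $p$, not prime to $p$ --- what you need (and what you immediately say) is that they are invertible on the base $\co_\kk[1/p]$, which is the hypothesis of Lemma \ref{lem:semiextension}.
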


\begin{proof}
Consider the universal triple $(A_0,A^\flat,t)$ over $\mathcal{T}_{V^\flat}$, and let  $(A_0, A)$ be the pullback of the universal pair  over $\mathcal{S}_V$ via the composition 
\[
\mathcal{T}_{V^\flat} \map{i_0} \mathcal{U}_0  \to  \mathcal{S}_{V} .
\]
As in the proof of Proposition \ref{prop:i_p toroidal},
we  know that  $A^\flat$ extends to a semi-abelian scheme over $\bar{\mathcal{T}}_{V^\flat}$, obtained as a pullback via 
\[
\bar{\mathcal{T}}_{V^\flat} \to \bar{\mathcal{S}}_{V^\flat} \to \bar{\mathcal{M}}_{(n-2,1)},
\]
and that $A_0$ extends to an elliptic curve over $\bar{\mathcal{T}}_{V^\flat}$, obtained as a pullback via 
\[
\bar{\mathcal{T}}_{V^\flat} \to \bar{\mathcal{S}}_{V^\flat} \to \mathcal{M}_{(1,0)},
\]
Using Lemma \ref{lem:semiextension} and the isogenies \eqref{abelian correspondence},   we deduce that $A$  also extends to  a semi-abelian scheme  over $\bar{\mathcal{T}}_{V^\flat}$.  With this extension in hand, the proof is essentially identical to that of Proposition \ref{prop:i_p toroidal}.
\end{proof}


\section{Proofs of Theorems 6.1.2 and 6.1.4}


In this section we prove Theorems \ref{thm:height descent 1} and  \ref{thm:height descent 2}.
Define an $\co_\kk[1/p]$-stack
\[
\mathcal{Z}^\dagger_V(p)  = \mathcal{T}_{V^\flat} \sqcup  \mathcal{S}_{V^\flat / \co_\kk[1/p] } \sqcup  \mathcal{S}_{V^\flat /\co_\kk[1/p]}  .
\]
 Propositions \ref{prop:i_p construction} and \ref{prop:i_0 construction} provide us with a canonical  closed immersion
 \[
\mathcal{Z}^\dagger_V(p)  
\map{i = i_0 \sqcup i_\mathfrak{p}  \sqcup i_{\overline{\mathfrak{p}} }}    \mathcal{U}_0 \sqcup \mathcal{U}_\mathfrak{p} \sqcup \mathcal{U}_{\overline{\mathfrak{p}}} = \mathcal{Z}_V(p)_{ / \co_\kk[1/p] } 
\]
with image the closed substack $ \mathcal{U}^\dagger_0 \sqcup \mathcal{U}^\dagger_\mathfrak{p} \sqcup \mathcal{U}^\dagger_{\overline{\mathfrak{p}}}$.  Hence we have a diagram
\begin{equation}\label{divisor correspondence}
 \xymatrix{
 { \mathcal{Z}^\dagger_V(p) } \ar[rr]^{i}   \ar[d]_\alpha  & & {      \mathcal{Z}_V(p)_{ / \co_\kk[1/p] } }  \ar[d]^\beta  \\
{ \mathcal{S}_{V^\flat/\co_\kk[1/p]} }   & & { \mathcal{S}_{V/\co_\kk[1/p] } } 
}
\end{equation}
in  which $\alpha$ is a finite \'etale surjection of degree $p^{n-1}+1$, and $\beta$ is finite.
The following lemma shows that $i$ is very close to being an isomorphism.

\begin{lemma}\label{lem:exceptional KR error}
As divisors on $ \mathcal{S}_{V/\co_\kk[1/p] }$, we have
\[
 \mathcal{Z}_V(p)_{ / \co_\kk[1/p] }  = \mathcal{Z}^\dagger_V(p)  + E,
\]
where $\mathcal{Z}^\dagger_V(p)$ has no irreducible components contained in the exceptional divisor $\mathrm{Exc}_{V/\co_\kk[1/p]} \subset \mathcal{S}_{V/\co_\kk[1/p]}$ of Definition \ref{def:special exceptional}, and   $E$ is supported entirely on the exceptional divisor.
\end{lemma}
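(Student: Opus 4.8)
The plan is to compare the Kudla-Rapoport divisor with the closed substack $\mathcal{Z}^\dagger_V(p)$ produced by the closed immersion $i$, working componentwise over $\mathcal{S}_{V/\co_\kk[1/p]}$. First I would recall that, by definition, $\mathcal{Z}_V(p)$ is the image of the finite unramified morphism from the moduli stack of triples $(A_0,A,x)$ with $\langle x,x\rangle=p$, counted with the appropriate multiplicities (lengths of local rings of generic points). Over $\co_\kk[1/p]$ we have the decomposition \eqref{flavor decomp} into the open and closed substacks $\mathcal{U}_0$, $\mathcal{U}_\mathfrak{p}$, $\mathcal{U}_{\overline{\mathfrak{p}}}$ of Proposition \ref{prop:split divisor flavors}, and $i$ identifies $\mathcal{Z}^\dagger_V(p)$ with the closed substack $\mathcal{U}_0^\dagger \sqcup \mathcal{U}_\mathfrak{p}^\dagger \sqcup \mathcal{U}_{\overline{\mathfrak{p}}}^\dagger$. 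The point is then to understand the complement $\mathcal{U}_\square \smallsetminus \mathcal{U}_\square^\dagger$ for $\square\in\{0,\mathfrak{p},\overline{\mathfrak{p}}\}$.

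The key input is Propositions \ref{prop:i_p nonexceptional} and \ref{prop:i_0 nonexceptional}, which assert $\mathcal{U}_\square^\nonexc \subset \mathcal{U}_\square^\dagger$. In other words, the locus where the Kr\"amer hyperplane $\mathcal{F}$ fails to contain $\Lie(\tilde{A}_0)$ — which is precisely $\mathcal{U}_\square \smallsetminus \mathcal{U}_\square^\dagger$ — is contained in the exceptional locus $\mathrm{Exc}_V \cap \mathcal{Z}_V(p)$. So as a closed subset, $\mathrm{Sppt}(\mathcal{U}_\square) \smallsetminus \mathrm{Sppt}(\mathcal{U}_\square^\dagger)$ is supported on $\mathrm{Exc}_{V/\co_\kk[1/p]}$. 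Next I would argue at the level of divisors: write $\mathcal{Z}_V(p)_{/\co_\kk[1/p]} = \sum_j m_j \mathcal{Z}_j$ as a sum over irreducible components with their multiplicities. Each $\mathcal{Z}_j$ is either not contained in $\mathrm{Exc}_{V/\co_\kk[1/p]}$ — in which case its generic point lies in the nonexceptional locus, hence in $\mathcal{U}_\square^\dagger$ for the relevant flavor $\square$, so $\mathcal{Z}_j$ is a component of $\mathcal{Z}^\dagger_V(p)$ — or it is contained in $\mathrm{Exc}_{V/\co_\kk[1/p]}$. Collect the first type into $\mathcal{Z}^\dagger_V(p)$ and the second type into $E$. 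To see that the components not inside the exceptional divisor occur in $\mathcal{Z}_V(p)$ and in $\mathcal{Z}^\dagger_V(p)$ with the \emph{same} multiplicity, I would use that $i$ restricts to an isomorphism $\mathcal{Z}^{\dagger,\nonexc}_V(p) \iso \mathcal{Z}^\nonexc_V(p)$ on the nonexceptional loci (the combination of the three isomorphisms $i_0$, $i_\mathfrak{p}$, $i_{\overline{\mathfrak{p}}}$ restricted to $\nonexc$), so the local rings at the generic points of these components coincide, and hence the lengths agree.

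The main obstacle I anticipate is the bookkeeping of multiplicities: one must be careful that "take Zariski closure / pass to normalization" in the definitions of $\bar{\mathcal{Z}}_V(p)$ versus the decomposition into $\mathcal{U}_\square$ does not introduce spurious factors, and that the three pieces $\mathcal{U}_0, \mathcal{U}_\mathfrak{p}, \mathcal{U}_{\overline{\mathfrak{p}}}$ — which are genuinely open \emph{and} closed in $\mathcal{Z}_V(p)_{/\co_\kk[1/p]}$ — really do partition the components without overlap. But this is exactly guaranteed by Proposition \ref{prop:split divisor flavors}, which exhibits \eqref{flavor decomp} as a disjoint union of open and closed substacks, so no component is double-counted. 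Once the partition is in place, the statement of the lemma is just the tautology that every component either avoids $\mathrm{Exc}_{V/\co_\kk[1/p]}$ (and then belongs to $\mathcal{Z}^\dagger_V(p)$ with its full multiplicity, by the isomorphism on nonexceptional loci) or is contained in $\mathrm{Exc}_{V/\co_\kk[1/p]}$ (and then is absorbed into $E$ by definition). Assembling these observations gives the claimed equality $\mathcal{Z}_V(p)_{/\co_\kk[1/p]} = \mathcal{Z}^\dagger_V(p) + E$ with $E$ supported on the exceptional divisor.
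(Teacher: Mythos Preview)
Your approach is essentially the same as the paper's, and the main ingredients are all present. There is, however, one small but genuine step missing. You correctly use Propositions~\ref{prop:i_p nonexceptional} and \ref{prop:i_0 nonexceptional} to see that $i$ restricts to an isomorphism on nonexceptional loci, and you correctly conclude that every non-exceptional component of $\mathcal{Z}_V(p)_{/\co_\kk[1/p]}$ is a component of $\mathcal{Z}^\dagger_V(p)$ with matching multiplicity. But when you then ``collect the first type into $\mathcal{Z}^\dagger_V(p)$'', you are implicitly asserting the converse: that every irreducible component of $\mathcal{Z}^\dagger_V(p)$ (as a divisor on $\mathcal{S}_{V/\co_\kk[1/p]}$) is non-exceptional. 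This is precisely one of the assertions of the lemma, and it does not follow from the isomorphism on nonexceptional loci alone, since a priori the image of $i$ could have components lying entirely inside $\mathrm{Exc}_{V/\co_\kk[1/p]}$.

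The paper fills this gap with a one-line observation: the morphism $\alpha : \mathcal{Z}^\dagger_V(p) \to \mathcal{S}_{V'/\co_\kk[1/p]}$ is finite \'etale, so every irreducible component of the source surjects onto an irreducible component of the target and hence cannot map into the proper closed substack $\mathrm{Exc}_{V'}$. Combined with the compatibility $(\beta\circ i)^*\mathrm{Exc}_V = \alpha^*\mathrm{Exc}_{V'}$ from Propositions~\ref{prop:i_p pullbacks} and \ref{prop:i_0 pullbacks}, this shows $\mathcal{Z}^\dagger_V(p)$ has no exceptional components. Once you add this step, your argument is complete and coincides with the paper's.
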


\begin{proof}
The \emph{exceptional locus} of $  \mathcal{Z}^\dagger_V(p)$ is the preimage of the exceptional divisor under $\alpha$, and  an  irreducible component of it is \emph{exceptional}  if it is contained in the exceptional locus.
Similarly, the \emph{exceptional locus} of $  \mathcal{Z}_V(p)_{ / \co_\kk[1/p] }$ is the preimage of the exceptional divisor under $\beta$, and  an irreducible component of it  is \emph{exceptional} if it is contained in the exceptional locus.

The final claims of Propositions \ref{prop:i_p nonexceptional} and \ref{prop:i_0 nonexceptional} show that $i$  restricts to an isomorphism between the non-exceptional loci of the source and target.  In particular, it establishes a bijection between the  generic points of non-exceptional irreducible components, and corresponding non-exceptional generic have local rings of the same (finite) length.

However, the morphism $\alpha$ is finite \'etale, so no irreducible component of the source can map into the exceptional divisor of the target.  
Thus the  closed immersion $i$ actually establishes a bijection between  irreducible components of the source and  non-exceptional irreducible components of the target, in a way that matches up their multiplicities.  The claim follows immediately.
\end{proof}

We want to construct a compactification of $\mathcal{Z}_V^\dagger(p)$ with sufficiently nice properties that we can form  its pre-log-log arithmetic Chow groups  as in \S \ref{ss:chow}.  To this end, we define
\begin{equation}\label{dagger KR}
 \bar{\mathcal{Z}}^\dagger_V(p) =  \bar{\mathcal{T}}_{V^\flat} \sqcup  \bar{\mathcal{S}}_{V^\flat / \co_\kk[1/p] } \sqcup  \bar{\mathcal{S}}_{V^\flat /\co_\kk[1/p]},
\end{equation}
where $\bar{\mathcal{T}}_{V^\flat}$ was defined in Proposition \ref{prop:i_0 toroidal}.
Equivalently,  $ \bar{\mathcal{Z}}^\dagger_V(p) $ is  the  normalization of 
\[
\alpha: \mathcal{Z}^\dagger_V(p) \to \bar{\mathcal{S}}_{V^\flat/\co_\kk[1/p] }.
\]
 Propositions \ref{prop:i_p toroidal} and \ref{prop:i_0 toroidal} imply that the diagram \eqref{divisor correspondence} extends  to 
\begin{equation}\label{compact divisor correspondence}
 \xymatrix{
 {  \bar{\mathcal{Z}}^\dagger_V(p) } \ar[rr]^i  \ar[d]_\alpha   & & {  \bar{\mathcal{Z}}_V(p)_{ / \co_\kk[1/p] } }  \ar[d]^\beta   \\
{  \bar{\mathcal{S}}_{V^\flat/\co_\kk[1/p]} }  &  & { \bar{\mathcal{S}}_{V/\co_\kk[1/p] } },
}
\end{equation}

For any $N \ge 1$ prime to $p$,  we can add level structure to  $\bar{\mathcal{T}}_{V^\flat}$  by  defining
\[
\mathcal{T}_{V^\flat}(N) = \mathcal{T}_{V^\flat} \times_{\mathcal{S}_{V^\flat}} \mathcal{S}_{V^\flat}(N),
\]   
and letting   $\bar{\mathcal{T}}_{V^\flat}(N)$ be  the normalization of 
\[
\mathcal{T}_{V^\flat}(N) \to  \bar{\mathcal{T}}_{V^\flat  / \co_\kk[1/N]}.
\]
Equivalently, this is the normalization of
\[
\mathcal{T}_{V^\flat}(N) \to  \bar{\mathcal{S}}_{V^\flat}(N)_{ / \co_\kk[ \frac{1}{Np} ]}.
\]

\begin{lemma}\label{lem:T compact}
The stack $\bar{\mathcal{T}}_{V^\flat}(N)$ is regular,  flat, and proper over $\co_\kk[\frac{1}{Np} ]$, and is a projective scheme if $N\ge 3$.  It is smooth in a neighborhood of its boundary 
\[
 \partial \bar{\mathcal{T}}_{V^\flat}(N) = \bar{\mathcal{T}}_{V^\flat}(N) \smallsetminus \mathcal{T}_{V^\flat}(N) , 
\]
 which is a Cartier divisor smooth over $\co_\kk[\frac{1}{Np}]$.
\end{lemma}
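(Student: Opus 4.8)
\textbf{Proposal for the proof of Lemma \ref{lem:T compact}.}
The plan is to deduce every assertion from the corresponding known facts about $\bar{\mathcal{S}}_{V'}(N)$ together with the \'etale-descent behaviour of the finite \'etale cover $\mathcal{T}_{V'}(N) \to \mathcal{S}_{V'}(N)_{/\co_\kk[\frac{1}{Np}]}$ of degree $p^{n-1}-1$. First I would recall that, by Remark \ref{rem:special compact level}, the compactification $\bar{\mathcal{S}}_{V'}(N)$ satisfies the conclusion of Proposition \ref{prop:full compactification}: for $N\ge 3$ it is a regular projective scheme flat over $\co_\kk[1/N]$, smooth in a neighbourhood of its boundary, with boundary a Cartier divisor smooth over $\co_\kk[1/N]$, and for general $N$ the same holds with ``projective scheme'' weakened to ``proper stack''. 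Base-changing to $\co_\kk[\frac{1}{Np}]$ preserves all of this. The interior $\mathcal{T}_{V'}(N)$ is then finite \'etale over the regular stack $\mathcal{S}_{V'}(N)_{/\co_\kk[\frac{1}{Np}]}$, hence is itself regular and flat over $\co_\kk[\frac{1}{Np}]$.

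Next I would treat the behaviour near the boundary, which is the crux of the argument. The cover $\mathcal{T}_{V'}(N) \to \mathcal{S}_{V'}(N)$ is defined by adding the datum of an $\co_\kk$-linear closed immersion $t : A_0'[\mathfrak{p}] \to A'[\mathfrak{p}]$, and over the locus where $A_0'$ has good reduction and $A'$ degenerates to a semiabelian scheme the $\mathfrak{p}$-torsion group schemes $A_0'[\mathfrak{p}]$ and $A'[\mathfrak{p}]$ remain finite \'etale (since $p \in \co_S^\times$), so this extra datum continues to define a finite \'etale cover over a neighbourhood of the boundary $\partial\bar{\mathcal{S}}_{V'}(N)$. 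Concretely, I would argue that the normalization $\bar{\mathcal{T}}_{V'}(N)$ of $\mathcal{T}_{V'}(N)$ in $\bar{\mathcal{S}}_{V'}(N)_{/\co_\kk[\frac{1}{Np}]}$ agrees, over an open neighbourhood $U$ of $\partial\bar{\mathcal{S}}_{V'}(N)$ on which $\bar{\mathcal{S}}_{V'}(N)$ is smooth, with the finite \'etale cover of $U$ classifying closed immersions $t$ for the semiabelian $A'$; one can see this because the semiabelian $A'[\mathfrak{p}]$ over $U$ is finite \'etale, $\bar{\mathcal{S}}_{V'}(N)$ is normal near the boundary, and normalization commutes with restriction to \'etale neighbourhoods (Lemma 29.53 of \cite{stacks-project}). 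Being finite \'etale over the smooth scheme $U$, this cover is smooth over $\co_\kk[\frac{1}{Np}]$ in a neighbourhood of the boundary, and the preimage of the smooth Cartier divisor $\partial\bar{\mathcal{S}}_{V'}(N)\cap U$ under a finite \'etale map is again a smooth Cartier divisor; this preimage is exactly $\partial\bar{\mathcal{T}}_{V'}(N)$, since the interior $\mathcal{T}_{V'}(N)$ is precisely the preimage of $\mathcal{S}_{V'}(N)$.

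Finally, properness of $\bar{\mathcal{T}}_{V'}(N) \to \co_\kk[\frac{1}{Np}]$ follows because $\bar{\mathcal{T}}_{V'}(N) \to \bar{\mathcal{S}}_{V'}(N)_{/\co_\kk[\frac{1}{Np}]}$ is finite (being a normalization along a finite morphism) and the target is proper over $\co_\kk[\frac{1}{Np}]$; when $N\ge 3$ the target is moreover a projective scheme, and a finite morphism to a projective scheme has projective source, so $\bar{\mathcal{T}}_{V'}(N)$ is a projective scheme. Regularity of the whole stack follows by combining regularity of the interior (shown above) with smoothness near the boundary (also shown above), since these two open substacks cover $\bar{\mathcal{T}}_{V'}(N)$. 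I expect the main obstacle to be the careful verification that normalization interacts well with the \'etale cover near the boundary — i.e., that $\bar{\mathcal{T}}_{V'}(N)$ really is finite \'etale over a neighbourhood of $\partial\bar{\mathcal{S}}_{V'}(N)$ rather than merely normal — but this is exactly the ``singularities no worse near the boundary'' principle of \S 14 of \cite{lan-ramified} and Theorem 1 of \cite{mp} applied through the finite cover, combined with the fact that the level-$\mathfrak{p}$-torsion data extends \'etale-ly because $p$ is invertible on the base.
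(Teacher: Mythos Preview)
Your argument has a genuine gap in the treatment of the boundary. You claim that because $p$ is invertible on the base, the group scheme $\bar{A}'[\mathfrak{p}]$ remains finite \'etale over a neighborhood of $\partial\bar{\mathcal{S}}_{V'}(N)$, and hence the moduli problem for $t$ extends to a finite \'etale cover there. This is not correct: for the semi-abelian extension $\bar{A}'$ of \S\ref{ss:toroidal}, multiplication by $p$ is \'etale (since $p\in\co_S^\times$) but not finite, because the fiber degree drops from $p^{2(n-1)}$ on the interior to $p^{2(n-1)-2}$ over boundary points, where the toric part has rank two. Hence $\bar{A}'[\mathfrak{p}]$ is only quasi-finite \'etale over $\bar{\mathcal{S}}_{V'}(N)$, and the naive moduli of $\co_\kk$-linear closed immersions $A_0'[\mathfrak{p}]\hookrightarrow\bar{A}'[\mathfrak{p}]$ has strictly smaller degree over the boundary than the degree $p^{n-1}-1$ over the interior. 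It follows that the normalization $\bar{\mathcal{T}}_{V'}(N)\to\bar{\mathcal{S}}_{V'}(N)$ cannot agree with this moduli problem near the boundary and need not be \'etale there, so you cannot deduce smoothness of $\bar{\mathcal{T}}_{V'}(N)$ or of its boundary divisor from the corresponding facts about $\bar{\mathcal{S}}_{V'}(N)$. Your invocation of the ``singularities no worse'' principle from \cite{lan-ramified} and \cite{mp} at the end does not repair this, because you are applying it through the map to $\bar{\mathcal{S}}_{V'}(N)$ rather than in the setting those results address.

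The paper circumvents this by treating $\mathcal{T}_{V'}(N)$ as a Shimura variety in its own right: its generic fiber arises from the same reductive group $G'$ as $\mathcal{S}_{V'}$, with the compact open subgroup shrunk at $p$. One then passes to the Pappas-style blow-down $\mathcal{T}^\Pap_{V'}(N)$, which is finite \'etale over $\mathcal{S}^\Pap_{V'}(N)$, and defines $\bar{\mathcal{T}}^\Pap_{V'}(N)$ as the normalization along the composite map to a Faltings--Chai compactification $\bar{\mathcal{A}}$ of the Siegel space, exactly as in the proof of Proposition~\ref{prop:full compactification}. The machinery of \cite{lan-ramified} and \cite{mp} then applies directly to $\bar{\mathcal{T}}^\Pap_{V'}(N)$ and yields flatness, properness, projectivity for $N\ge 3$, a smooth Cartier boundary divisor, and smoothness away from the $0$-dimensional nonsmooth locus of the interior; blowing up that locus produces $\bar{\mathcal{T}}_{V'}(N)$ with all the stated properties. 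The essential point is that \cite{lan-ramified} and \cite{mp} must be invoked for $\mathcal{T}_{V'}(N)$ itself via the Siegel embedding, not bootstrapped from $\bar{\mathcal{S}}_{V'}(N)$.
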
 

\begin{proof}
The proof is similar to that of Proposition \ref{prop:full compactification}, and is again based on the results of \cite{lan-ramified} and \cite{mp}. 

Recalling from Remark \ref{rem:honest shimura} that the generic fiber of $\mathcal{S}_{V^\flat}$ is the Shimura variety associated to a reductive group $G^\flat$ and a certain compact open subgroup of $G^\flat(\A_f)$,  one can easily check that the generic fiber of the finite \'etale cover 
\begin{equation}\label{alt level}
\mathcal{T}_{V^\flat}(N) \to \mathcal{S}_{V^\flat}(N)_{ /\co_\kk[ \frac{1}{Np} ]}
\end{equation}
is obtained by to passing to two smaller  compact open subgroups.

Recall from \S \ref{ss:special shimura} that $\mathcal{S}_{V^\flat}(N)$ is constructed as an open and closed substack
\[
\mathcal{S}_{V^\flat}(N) \subset \mathcal{M}_{W_0}(N) \times_{\co_\kk[1/N]}  \mathcal{M}_{W^\flat}(N).
\]
In this construction, we may replace $  \mathcal{M}_{W^\flat}(N)$ with the stack $\mathcal{M}^\Pap_{W^\flat}(N)$ appearing in the proof of Proposition \ref{prop:full compactification} to obtain a blow-down 
\[
\mathcal{S}^\Pap_{V^\flat}(N) \subset \mathcal{M}_{W_0}(N) \times_{\co_\kk[1/N]}  \mathcal{M}^\Pap_{W^\flat}(N).
\]
Repeating the construction of \eqref{alt level} with $\mathcal{S}_{V^\flat}(N)$ replaced by $\mathcal{S}^\Pap_{V^\flat}(N)$ yields a  finite \'etale cover 
\[
\mathcal{T}^\Pap_{V^\flat}(N) \to \mathcal{S}^\Pap_{V^\flat}(N)_{ /\co_\kk[ \frac{1}{Np} ]},
\]
and $\mathcal{T}_{V^\flat}(N)$ is the blow-up of the normal stack $\mathcal{T}^\Pap_{V^\flat}(N)$  along its  proper  and $0$-dimensional locus of nonsmooth points.

As in the proof of Proposition \ref{prop:full compactification}, we now have morphisms
\[
\mathcal{T}^\Pap_{V^\flat}(N) \to  \mathcal{S}^\Pap_{V^\flat }(N)_{/\co_\kk[ \frac{1}{Np} ]}
 \to \mathcal{M}^\Pap_{W^\flat}(N)_{/\co_\kk[ \frac{1}{Np} ]}
\to \bar{\mathcal{A}}_{/\co_\kk[ \frac{1}{Np} ]},
\]
and we may define $\bar{\mathcal{T}}^\Pap_{V^\flat}(N)$ as the normalization  of the composition. 
We may now apply the results of \cite{lan-ramified} and \cite{mp} to deduce properties of  $\bar{\mathcal{T}}^\Pap_{V^\flat}(N)$  from properties of its interior.  
Arguing exactly as  in the proof of Proposition \ref{prop:full compactification}, this compactification is normal and flat, and is a projective scheme if $N\ge 3$. 
Its boundary is a smooth Cartier divisor.  
The non-smooth locus has dimension $0$, and does not meet the boundary.  As $\bar{\mathcal{T}}_{V^\flat}(N)$ can be identified with the blow-up of $\bar{\mathcal{T}}^\Pap_{V^\flat}(N)$ along its nonsmooth locus, it has all the same properties, and is also regular (being smooth near the boundary and regular in the interior).
\end{proof}

Lemma \ref{lem:T compact} and Remark \ref{rem:special compact level} provide us with an  $\co_\kk[ \frac{1}{Np} ]$-stack
\[
 \bar{\mathcal{Z}}^\dagger_V(p,N) 
 \define  \bar{\mathcal{T}}_{V^\flat}(N) \sqcup 
  \bar{\mathcal{S}}_{V^\flat}(N)_{ / \co_\kk[\frac{1}{Np}] } \sqcup  \bar{\mathcal{S}}_{V^\flat}(N)_{/\co_\kk[ \frac{1}{Np} ] } ,
\]
 with all the nice properties enumerated in Proposition \ref{prop:full compactification}.  Thus  \eqref{dagger KR} has its own theory of pre-log singular hermitian line bundles and Burgos-Kramer-K\"uhn arithmetic Chow groups
\[
\widehat{\CH}^d (  \bar{\mathcal{Z}}^\dagger_V(p) , \mathscr{D}_\BKK )
= \mil_{ \substack{ N\ge 3 \\ p \nmid N} }  \widehat{\CH}^d (  \bar{\mathcal{Z}}^\dagger_V(p,N) , \mathscr{D}_\BKK ) , 
\]
 exactly as in \S \ref{ss:chow}.   These Chow groups include   notions of arithmetic heights and volumes as in  \S \ref{ss:volumes},  taking  values in the abelian group $\R/ \Q\log(p)$.

One can repeat the construction of the diagram \eqref{compact divisor correspondence} with level structures, and so obtain pullbacks
\[
\xymatrix{
 {  \widehat{\CH}^d(  \bar{\mathcal{S}}_{V^\flat} ,\mathscr{D}_\BKK  )   }   \ar[rr]^{\alpha^*} & &
  {   \widehat{\CH}^d(  \bar{\mathcal{Z}}^\dagger_V(p) ,\mathscr{D}_\BKK  )   }    & &
     {  \widehat{\CH}^d(  \bar{\mathcal{S}}_V ,\mathscr{D}_\BKK  )   }  \ar[ll]_{(\beta \circ i)^*} 
 }
\]
and
\begin{equation}\label{second hecke pic}
\xymatrix{
 {  \widehat{\Pic}(  \bar{\mathcal{S}}_{V^\flat} ,\mathscr{D}_\BKK  )   }   \ar[rr]^{\alpha^*} &  &  
 {   \widehat{\Pic}(  \bar{\mathcal{Z}}^\dagger_V(p) ,\mathscr{D}_\BKK  )   }  & & 
  {  \widehat{\Pic}(  \bar{\mathcal{S}}_V ,\mathscr{D}_\BKK  )   }  \ar[ll]_{(\beta \circ i)^*} 
   }
\end{equation}

\begin{lemma}\label{lem:formal height induction}
If two pre-log singular hermitian line bundles
\[
\widehat{\mathcal{P}}^\flat \in \widehat{\Pic}( \bar{\mathcal{S}}_{V^\flat} , \mathscr{D}_\BKK)
\quad \mbox{and}\quad
\widehat{\mathcal{P}} \in \widehat{\Pic}( \bar{\mathcal{S}}_{V} , \mathscr{D}_\BKK)
\]
have the same pullback to $ \widehat{\Pic}(  \bar{\mathcal{Z}}^\dagger_V(p) ,\mathscr{D}_\BKK  ) _\Q$   then 
\[
 \int_{ \mathcal{Z}_V(p) (\C)  }  \chern( \widehat{\mathcal{P}})^{n-2}  
   =  ( p^{n-1}+1)  \int_{  \mathcal{S}_{V^\flat} (\C)  }   \chern( \widehat{\mathcal{P}}^\flat  )^{n-2}.
\]
Moreover, there is an $a(p) \in \Q$ such that 
\[
\mathrm{ht}_{\widehat{\mathcal{P}}}  (\bar{\mathcal{Z}}_V(p))
=
(p^{n-1}+1 ) \cdot  \widehat{\vol}(   \widehat{\mathcal{P}}^\flat  )  +  \mathrm{ht}_{\widehat{\mathcal{P}}}  (E) + a(p) \log(p) ,
\]
where $E$ is the divisor of Lemma \ref{lem:exceptional KR error}.
\end{lemma}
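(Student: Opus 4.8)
The plan is to deduce both formulas directly from the compatibility diagram \eqref{compact divisor correspondence} together with the projection formula for arithmetic intersections under the finite morphisms $\alpha$ and $\beta\circ i$. First I would establish the complex-volume identity. Since $\chern(\widehat{\mathcal{P}})$ and $\chern(\widehat{\mathcal{P}}')$ are determined by the underlying hermitian metrics, and the two bundles have the same pullback along $\beta\circ i$ and along $\alpha$ (after tensoring with $\Q$, but Chern forms are insensitive to the torsion ambiguity), the pullback of $\chern(\widehat{\mathcal{P}})$ to $\mathcal{Z}_V(p)(\C)$ agrees with the pullback of $\chern(\widehat{\mathcal{P}}')$ to $\bar{\mathcal{Z}}^\dagger_V(p)(\C)$. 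The map $i$ on complex points is a closed immersion that, by Propositions \ref{prop:i_p nonexceptional} and \ref{prop:i_0 nonexceptional}, is an isomorphism onto $\mathcal{Z}^\dagger_V(p)(\C)$ (there is no exceptional divisor in the generic fiber), while $\alpha$ on complex points is a finite \'etale cover of degree $p^{n-1}+1$. Integrating the $(n-2)$-nd power of the Chern form and using these two facts gives
\[
\int_{\mathcal{Z}_V(p)(\C)}\chern(\widehat{\mathcal{P}})^{n-2}
=\int_{\mathcal{Z}^\dagger_V(p)(\C)}\chern(\widehat{\mathcal{P}})^{n-2}
=(p^{n-1}+1)\int_{\mathcal{S}_{V'}(\C)}\chern(\widehat{\mathcal{P}}')^{n-2},
\]
which is the first assertion.

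Next I would treat the arithmetic height. Fix a level $N\ge 3$ prime to $p$ and work on the regular projective schemes $\bar{\mathcal{S}}_V(N)$, $\bar{\mathcal{S}}_{V'}(N)$, $\bar{\mathcal{T}}_{V'}(N)$ of Proposition \ref{prop:full compactification} and Lemma \ref{lem:T compact}, everything over $\co_\kk[\tfrac{1}{Np}]$ so that all identities are valid up to $\Q\log(p)$ and $\Q$-combinations of $\{\log q: q\mid N\}$ (the latter are absorbed into the definition of the height, the former into $a(p)$). Represent $\widehat{\mathcal{P}}^{\,n-2}$ by a Green pair $(\mathcal{Y},g_\mathcal{Y})$ on $\bar{\mathcal{S}}_V(N)$ in general position relative to the generic fiber of $\bar{\mathcal{Z}}_V(p)$. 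By Lemma \ref{lem:exceptional KR error} we split $\mathcal{Z}_V(p)=\mathcal{Z}^\dagger_V(p)+E$; by additivity of heights,
\[
\mathrm{ht}_{\widehat{\mathcal{P}}}(\bar{\mathcal{Z}}_V(p))
=\mathrm{ht}_{\widehat{\mathcal{P}}}(\bar{\mathcal{Z}}^\dagger_V(p))
+\mathrm{ht}_{\widehat{\mathcal{P}}}(E).
\]
For the first term, pull the intersection product $(\mathcal{Y},g_\mathcal{Y})\cdot\widehat{\mathcal{P}}$ back along the finite morphism $\beta\circ i:\bar{\mathcal{Z}}^\dagger_V(p)(N)\to\bar{\mathcal{S}}_V(N)$ and use that $(\beta\circ i)^*\widehat{\mathcal{P}}=\alpha^*\widehat{\mathcal{P}}'$ up to torsion. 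Then push forward along the finite \'etale surjection $\alpha$ of degree $p^{n-1}+1$: the projection formula identifies the resulting arithmetic degree with $(p^{n-1}+1)$ times the arithmetic degree computing $\widehat{\vol}(\widehat{\mathcal{P}}')=\widehat{\deg}\big((\widehat{\mathcal{P}}')^{\,n-1}\big)$ on $\bar{\mathcal{S}}_{V'}(N)$. The finite-degree bookkeeping at the archimedean place is exactly the complex-volume statement already proved, and at the finite places it is the statement that $\alpha$ multiplies lengths of local rings by $1$ (\'etale) while multiplying the number of geometric points by $p^{n-1}+1$. Collecting terms and letting $N$ vary absorbs the ambiguities into $a(p)\in\Q$ and yields
\[
\mathrm{ht}_{\widehat{\mathcal{P}}}(\bar{\mathcal{Z}}_V(p))
=(p^{n-1}+1)\,\widehat{\vol}(\widehat{\mathcal{P}}')
+\mathrm{ht}_{\widehat{\mathcal{P}}}(E)+a(p)\log(p).
\]

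The main obstacle I anticipate is the careful handling of the generic-position hypothesis needed to compute the arithmetic height of $\bar{\mathcal{Z}}^\dagger_V(p)$ via the pullback–pushforward argument: one must choose the representative $(\mathcal{Y},g_\mathcal{Y})$ of $\widehat{\mathcal{P}}^{\,n-2}$ so that $\mathcal{Y}$ meets $\bar{\mathcal{Z}}_V(p)$ properly \emph{and} so that its pullback along $\beta\circ i$ meets the boundary $\partial\bar{\mathcal{Z}}^\dagger_V(p)$ properly in the generic fiber, which is where Lemma \ref{lem:T compact} (smoothness of the boundary of $\bar{\mathcal{T}}_{V'}$) is essential. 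A secondary technical point is keeping track of the torsion discrepancies between $(\beta\circ i)^*\widehat{\mathcal{P}}$ and $\alpha^*\widehat{\mathcal{P}}'$ coming from the $(0,-\log p)$ correction terms in Propositions \ref{prop:i_p pullbacks} and \ref{prop:i_0 pullbacks}; these are $2$-torsion in the arithmetic Picard group, hence contribute only to $a(p)\log(p)$, but this needs to be stated cleanly. Everything else — additivity of heights over the decomposition of Lemma \ref{lem:exceptional KR error}, the projection formula, and the identification of $\widehat{\deg}\big((\widehat{\mathcal{P}}')^{\,n-1}\big)$ with $\widehat{\vol}(\widehat{\mathcal{P}}')$ — is formal.
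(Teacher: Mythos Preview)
Your approach is correct and essentially the same as the paper's. The paper compresses your height argument into the single chain
\[
\mathrm{ht}_{\widehat{\mathcal{P}}}(\bar{\mathcal{Z}}^\dagger_V(p))
=\widehat{\vol}\big((\beta\circ i)^*\widehat{\mathcal{P}}\big)
=\widehat{\vol}\big(\alpha^*\widehat{\mathcal{P}}'\big)
=(p^{n-1}+1)\,\widehat{\vol}(\widehat{\mathcal{P}}')
\]
in $\R/\Q\log(p)$, obtained by unwinding the definitions in \cite{BKK}, and then combines this with Lemma \ref{lem:exceptional KR error} exactly as you do. Two minor points: you should represent $\widehat{\mathcal{P}}^{\,n-1}$ (not $\widehat{\mathcal{P}}^{\,n-2}$) by a Green pair when computing the height of a divisor on the $n$-dimensional $\bar{\mathcal{S}}_V$, and your worry about torsion discrepancies is moot here since the hypothesis is that the pullbacks agree in $\widehat{\Pic}_\Q$ already.
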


\begin{proof}
As in the proof of Lemma \ref{lem:exceptional KR error}, the closed immersion $i$ in \eqref{divisor correspondence}
restricts to an isomorphism of non-exceptional loci, and hence induces an isomorphism in the generic fiber. 
Thus 
\begin{align*}
 \int_{ \mathcal{Z}_V(p) (\C)  } \beta^* \chern( \widehat{\mathcal{P}})^{n-2} 
 & = 
\int_{ \mathcal{Z}^\dagger_V(p) (\C)  }  (\beta \circ i)^*  \chern( \widehat{\mathcal{P}})^{n-2}  \\
&   =   \int_{ \mathcal{Z}^\dagger_V(p) (\C)  }   \alpha^* \chern( \widehat{\mathcal{P}}^\flat  )^{n-2} \\
& =  ( p^{n-1}+1)  \int_{  \mathcal{S}_{V^\flat} (\C)  }   \chern( \widehat{\mathcal{P}}^\flat  )^{n-2} .
\end{align*}
The second claim follows from Lemma \ref{lem:exceptional KR error} and the equalities
\[
\mathrm{ht}_{ \widehat{\mathcal{P}}}  (\bar{\mathcal{Z}}^\dagger_V(p))
=
\widehat{\vol}(   (\beta \circ i)^*\widehat{\mathcal{P}}  )
=
\widehat{\vol}(   \alpha^*\widehat{\mathcal{P}}^\flat  ) 
=
(p^{n-1}+1 ) \cdot  \widehat{\vol}(   \widehat{\mathcal{P}}^\flat  ) 
\]
in $\R/\Q\log(p)$, where the  first and last equalities are obtained directly by unpacking the definitions of pullbacks, heights, and arithmetic intersections in \cite{BKK}, and using the fact that $\alpha$ is (away from the boundary) a finite \'etale surjection of degree $p^{n-1}+1$.
%
%
%
%
\end{proof}

\begin{proof}[Proof of Theorems \ref{thm:height descent 1} and \ref{thm:height descent 2}]

The hermitian line bundles
\[
\widehat{\tautmod}_{V}    \in \widehat{\Pic}( \bar{\mathcal{S}}_{V} , \mathscr{D}_\BKK)
\subset   \widehat{\Pic}(\mathcal{S}_{V} )
\]
and
\[
\widehat{\tautmod}_{V^\flat}    \in \widehat{\Pic}( \bar{\mathcal{S}}_{V^\flat} , \mathscr{D}_\BKK)
\subset   \widehat{\Pic}(\mathcal{S}_{V^\flat} ) 
\]
of  Theorem \ref{thm:taut-hodge compare} have the same images, at least up to torsion,  under the pullback maps
\[
\xymatrix{
 {  \widehat{\Pic}(   \mathcal{S}_{V^\flat}   )   }   \ar[rr]^{\alpha^*} &  &  
 {   \widehat{\Pic}(  \mathcal{Z}^\dagger_V(p)  )   }  & & 
  {  \widehat{\Pic}(  \mathcal{S}_V   )   }  \ar[ll]_{(\beta \circ i)^*} 
   }
\]
induced by \eqref{divisor correspondence}. 
Indeed,  it  suffices to verify this on each of the open and closed substacks in the decomposition 
\[
\mathcal{Z}^\dagger_V(p)  = \mathcal{T}_{V^\flat} \sqcup  \mathcal{S}_{V^\flat / \co_\kk[1/p] } \sqcup  \mathcal{S}_{V^\flat /\co_\kk[1/p]}  .
\]
On the substack $ \mathcal{T}_{V^\flat}$ this is the content of Proposition \ref{prop:i_0 pullbacks}.  On the substacks $\mathcal{S}_{V^\flat / \co_\kk[1/p] }$ this is the content of 
Proposition \ref{prop:i_p pullbacks}.

It now follows from the injectivity of 
\[
\widehat{\Pic}(  \bar{\mathcal{Z}}^\dagger_V(p) ,\mathscr{D}_\BKK  )   
\to \widehat{\Pic}(  \mathcal{Z}^\dagger_V(p)   )  
\]
that these hermitian line bundles also have the same image, up to torsion,  under the maps of  \eqref{second hecke pic}.
In other words, the hypotheses of   Lemma \ref{lem:formal height induction} are satisfied by $\widehat{\tautmod}_{V^\flat}$ and $\widehat{\tautmod}_{V}$, proving    the equality
\[
 \int_{ \mathcal{Z}_V(p) (\C)  }  \chern( \widehat{\tautmod}_V)^{n-2}  
   =  ( p^{n-1}+1)  \int_{  \mathcal{S}_{V^\flat} (\C)  }   \chern( \widehat{\tautmod}_{V^\flat}  )^{n-2},
\]
and the equality
\[
 \mathrm{ht}_{ \widehat{\tautmod}_V }(\bar{\mathcal{Z}}_V(p)) 
 = 
( p^{n-1}+1 )  \widehat{\vol} (\widehat{\tautmod}_{V^\flat} ) 
+   \mathrm{ht}_{ \widehat{\tautmod}_V }(E)
\]
up to a rational multiple of $\log(p)$.    The second term on the right vanishes by claim (1) of Theorem \ref{thm:taut-hodge compare}, completing the proof of Theorem  \ref{thm:height descent 1}.

Similarly,  Propositions \ref{prop:i_p pullbacks} and \ref{prop:i_0 pullbacks} show  that  the hypotheses of Lemma \ref{lem:formal height induction} are satisfied by 
 \[
 \widehat{\mathcal{P}}^\flat = \widehat{\omega}^\mathrm{Hdg}_{A_0^\flat/ \mathcal{S}_{V^\flat}} 
  + \widehat{\omega}^\mathrm{Hdg}_{A^\flat/ \mathcal{S}_{V^\flat}} 
 \quad \mbox{and} \quad
  \widehat{\mathcal{P}} = \widehat{\omega}^\mathrm{Hdg}_{A/\mathcal{S}_V} ,
 \]
 and so
 \[
\mathrm{ht}_{ \widehat{\omega}^\mathrm{Hdg}_{A/\mathcal{S}_V}  }  (\bar{\mathcal{Z}}_V(p))
=
(p^{n-1}+1 ) \cdot  \widehat{\vol}(   \widehat{\omega}^\mathrm{Hdg}_{A_0^\flat/ \mathcal{S}_{V^\flat}} 
  + \widehat{\omega}^\mathrm{Hdg}_{A^\flat/ \mathcal{S}_{V^\flat}}    )  +  \mathrm{ht}_{ \widehat{\omega}^\mathrm{Hdg}_{A/\mathcal{S}_V}  }  (E) 
  \]
 up to a rational multiple of $\log(p)$.  Once again,  the second term on the right vanishes by claim (1) of Theorem \ref{thm:taut-hodge compare}.  
For the first term on the right,  Proposition \ref{prop:easy numerical} and  Lemmas  \ref{lem:numerical basics} and \ref{lem:trivial volume shift} imply
\begin{align*}
\widehat{\vol}(  \widehat{\omega}^\mathrm{Hdg}_{A_0^\flat/ \mathcal{S}_{V^\flat}} 
  + \widehat{\omega}^\mathrm{Hdg}_{A^\flat/ \mathcal{S}_{V^\flat}}    ) 
  & = 
  \widehat{\vol}(   (0 , C_1)  + 
  \widehat{\omega}^\mathrm{Hdg}_{A^\flat/ \mathcal{S}_{V^\flat}}   ) \\
&=   \widehat{\vol}(    \widehat{\omega}^\mathrm{Hdg}_{A^\flat/ \mathcal{S}_{V^\flat}}  )
+ (n-1) C_1 \int_{\mathcal{S}_{V^\flat}(\C) } \chern(  \widehat{\omega}^\mathrm{Hdg}_{A^\flat/ \mathcal{S}_{V^\flat}}  )^{n-2} ,
\end{align*}
where
\[
C_1 
= \log(2\pi) + 2h^\mathrm{Falt}_\kk   
=  -  \frac{L'(0,\eps)}{L(0,\eps)}   - \frac{\log(D)}{2}   .
\]
This proves Theorem \ref{thm:height descent 2}.
\end{proof}


\section{Comparing volumes}


Recall from Remark \ref{rem:taut-hodge volume} that when $V$ has signature $(n-1,1)$ with $n>2$ (an assumption in force throughout Chapter \ref{s:KR divisors})  there is an explicit relation between the arithmetic volumes of $\widehat{\omega}^\mathrm{Hdg}_{ A / \mathcal{S}_V}$  and 
$\widehat{\tautmod}_V$.

The restriction to $n>2$, which originated in Proposition \ref{prop:gross numerical}, can now be removed using 
Theorems \ref{thm:height descent 1} and  \ref{thm:height descent 2}.

\begin{proposition}\label{prop:taut-hodge strong volume}
The arithmetic volume relation
\[
 \widehat{\mathrm{vol}} (  \widehat{\tautmod}_V   )
= 
\widehat{\mathrm{vol}} (  \widehat{\omega}^\mathrm{Hdg}_{ A / \mathcal{S}_V}    ) 
  +  n C_0(n)   \vol_\C(   \widehat{\omega}^\mathrm{Hdg}_{ A / \mathcal{S}_V}    ),
\]
known by  Remark \ref{rem:taut-hodge volume} when $n>2$, also holds for $n=2$.
Here $C_0(n)$ is the constant of Theorem \ref{thm:intro main}.
\end{proposition}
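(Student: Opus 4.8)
For $n>2$ the statement is precisely the content of Remark \ref{rem:taut-hodge volume}, which is itself an immediate consequence of part (3) of Theorem \ref{thm:taut-hodge compare} together with Lemma \ref{lem:numerical basics} and Lemma \ref{lem:trivial volume shift}. So the only work is to prove the identity when $n=2$, where part (3) of Theorem \ref{thm:taut-hodge compare} is unavailable (its proof invoked the normality of $\mathcal{M}^\Pap_{(n-1,1)}$, which fails for $n=2$). The plan is therefore to establish the $n=2$ case directly, and the key point is that when $n=2$ the Shimura variety $\mathcal{S}_V$ is a (disjoint union of) arithmetic curve(s), so the arithmetic volume is just a degree of a hermitian line bundle on a curve, and everything can be made completely explicit.

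First I would record what the various pieces of Theorem \ref{thm:taut-hodge compare} still give us when $n=2$: parts (1) and (2) hold for all $n\ge 2$, so we still have the Chern-form equality $\chern(\widehat{\omega}^\mathrm{Hdg}_{A/\mathcal{S}_V}) = \chern(\widehat{\tautmod}_V)$, hence
\[
\vol_\C(\widehat{\tautmod}_V) = \vol_\C(\widehat{\omega}^\mathrm{Hdg}_{A/\mathcal{S}_V}),
\]
and we still have the decomposition \eqref{Kalt},
\[
\widehat{\tautmod}_V = \widehat{\omega}^\mathrm{Hdg}_{A/\mathcal{S}_V} + 2\widehat{\omega}^\mathrm{Hdg}_{A_0/\mathcal{S}_V} + \det(\mathcal{V}) + (0,C_2),
\]
valid in $\widehat{\Pic}(\mathcal{S}_V)$. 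Since $n=2$, taking arithmetic volumes means applying $\widehat{\deg}$ to the arithmetic Chern classes, which are now codimension-one classes on a two-dimensional regular arithmetic stack; the iterated self-intersection is a genuine intersection pairing of two line bundles. I would expand $\widehat{\vol}(\widehat{\tautmod}_V) = \widehat{\deg}(\widehat{\tautmod}_V^{\,2})$ using \eqref{Kalt} bilinearly. The cross terms involving $\det(\mathcal{V})$ and $\widehat{\omega}^\mathrm{Hdg}_{A_0/\mathcal{S}_V}$ are controlled by Propositions \ref{prop:easy numerical} and \ref{prop:gross numerical}: each of those line bundles is, up to a class with vanishing Chern form and (after passing to a power and a Galois-averaged section) represented by a purely archimedean arithmetic divisor $(0,g)$ with $g$ locally constant on $\mathcal{S}_V(\C)$. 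The caveat is that Proposition \ref{prop:gross numerical} as stated requires $n>2$; but the \emph{vanishing of the Chern form of $\det(\mathcal{V})$} (the first sentence of that proposition, which rests only on Gross's Theorem 1 of \cite{gross}) holds for all $n\ge 1$, and that is all one needs for the volume bookkeeping below. So I would separate out the genuinely number-theoretic input (the value of the archimedean constant $C_3$) from the formal input (vanishing Chern forms), using only the latter.

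With the Chern forms of $\det(\mathcal{V})$, $\widehat{\omega}^\mathrm{Hdg}_{A_0/\mathcal{S}_V}$, and the difference $\widehat{\tautmod}_V - \widehat{\omega}^\mathrm{Hdg}_{A/\mathcal{S}_V}$ all vanishing, the pairing collapses: for any hermitian line bundle $\widehat{\mathcal{E}}$ on the arithmetic surface with $\chern(\widehat{\mathcal{E}})=0$ and any $\widehat{\mathcal{M}}$ one has $\widehat{\deg}(\widehat{\mathcal{E}}\cdot\widehat{\mathcal{E}})=0$ and $\widehat{\deg}(\widehat{\mathcal{E}}\cdot\widehat{\mathcal{M}})$ depends only on the archimedean datum of $\widehat{\mathcal{E}}$ and the degree datum of $\widehat{\mathcal{M}}$ in the generic fiber. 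Concretely, writing $\widehat{\mathcal{E}} = \widehat{\tautmod}_V - \widehat{\omega}^\mathrm{Hdg}_{A/\mathcal{S}_V} = 2\widehat{\omega}^\mathrm{Hdg}_{A_0/\mathcal{S}_V} + \det(\mathcal{V}) + (0,C_2)$, I would invoke Proposition \ref{prop:easy numerical} to replace $2\widehat{\omega}^\mathrm{Hdg}_{A_0/\mathcal{S}_V}$ by $(0,2C_1)$ up to numerical equivalence, and observe that $\det(\mathcal{V})$, having vanishing Chern form and a nowhere-vanishing locally constant-norm section over some component-wise model (the argument of Gross, reproduced in the proof of Proposition \ref{prop:gross numerical}, applies over $\C$ and over a suitable integral model for $n=2$ as well — one may alternatively absorb its constant into the statement), contributes $(0,C_3)$ up to numerical equivalence. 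Hence $\widehat{\mathcal{E}}$ is numerically equivalent to $(0, 2C_1 + C_2 + C_3) = (0, C_0(n))$ by the identity $C_0(n) = 2C_1 + C_2 + C_3$ recorded in the proof of Theorem \ref{thm:taut-hodge compare}. Then
\[
\widehat{\tautmod}_V = \widehat{\omega}^\mathrm{Hdg}_{A/\mathcal{S}_V} + (0, C_0(n))
\]
up to numerical equivalence, and Lemma \ref{lem:numerical basics} together with Lemma \ref{lem:trivial volume shift} gives
\[
\widehat{\vol}(\widehat{\tautmod}_V) = \widehat{\vol}\big(\widehat{\omega}^\mathrm{Hdg}_{A/\mathcal{S}_V} + (0,C_0(n))\big) = \widehat{\vol}(\widehat{\omega}^\mathrm{Hdg}_{A/\mathcal{S}_V}) + n C_0(n)\int_{\mathcal{S}_V(\C)}\chern(\widehat{\omega}^\mathrm{Hdg}_{A/\mathcal{S}_V})^{n-1},
\]
which is the asserted formula (here $n=2$ but the displayed identity is the same for all $n\ge 2$, unifying with Remark \ref{rem:taut-hodge volume}).

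\textbf{Main obstacle.} The subtle point is the $n=2$ case of the claim "$\det(\mathcal{V})$ is numerically equivalent to $(0,C_3)$", since Theorem 5.3.1 of \cite{BHKRY-2} (existence of a nowhere-vanishing integral section of $\det(\mathcal{V})$) is stated with the hypothesis $n>2$ because it uses geometric normality of $\mathcal{M}^\Pap_{(n-1,1)}$. I expect this to be the real content to be checked: one must either (a) verify that for $n=2$ one can still produce, perhaps after a further blow-up or after restricting to the regular scheme $\bar{\mathcal{S}}_V(N)$ for $N\ge 3$, a rational section of a power of $\det(\mathcal{V})$ whose divisor is supported in the exceptional fibers over $p\mid D$ together with the constant archimedean data, so that its contribution to the self-intersection is a known $\Q$-combination of $\log(p)$'s and $C_3$; or (b) bypass $\det(\mathcal{V})$ entirely and instead deduce the $n=2$ identity from the $n=2$ base case of Theorem \ref{thm:intro main} proved in \cite{howard-volumes-I} (which handles modular and quaternionic Shimura curves directly, via K\"uhn--Kramer and Kudla--Rapoport--Yang), matching the two sides numerically. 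Approach (b) is cleaner and is probably what the authors intend: they would compute $\widehat{\vol}(\widehat{\omega}^\mathrm{Hdg}_{A/\mathcal{S}_V})$ and $\widehat{\vol}(\widehat{\taut}_V)$ independently in the curve case and check the displayed relation. Either way, the resolution is local at the primes $p\mid D$ and purely a matter of carefully tracking the archimedean constant and the (irrelevant, because $\widehat{\tautmod}_V$ has trivial intersection with exceptional components) vertical contributions.
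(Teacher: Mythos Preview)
Your treatment of the case $n>2$ matches the paper exactly. For $n=2$, however, the paper takes a completely different route from either of your suggestions, and your approach (a) has a genuine gap while (b), though plausible, is not what the authors do.

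The paper's argument for $n=2$ is a \emph{descending induction}: embed $V$ into a signature $(2,1)$ hermitian space $V^\flat$ by choosing a split prime $p$ and letting $V^\flat$ have local invariants $\inv_\ell(V^\flat) = (p,-D)_\ell \cdot \inv_\ell(V)$, so that the pair $(V^\flat,V)$ plays the role of $(V,V')$ in \S\ref{s:KR divisors}. One then computes the quantity
\[
(p+1)H_p \define \mathrm{ht}_{\widehat{\tautmod}_{V^\flat}}(\bar{\mathcal{Z}}_{V^\flat}(p)) - \mathrm{ht}_{\widehat{\omega}^\mathrm{Hdg}_{A^\flat/\mathcal{S}_{V^\flat}}}(\bar{\mathcal{Z}}_{V^\flat}(p))
\]
in two different ways. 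First, since $\dim V^\flat = 3 > 2$, part (3) of Theorem \ref{thm:taut-hodge compare} applies to $V^\flat$, and combined with Theorem \ref{thm:height descent 1} this gives $H_p = 2C_0(3)\vol_\C(\widehat{\omega}^\mathrm{Hdg}_{A/\mathcal{S}_V})$. Second, Theorems \ref{thm:height descent 1} and \ref{thm:height descent 2} express $H_p$ (up to $\Q\log p$) as $\widehat{\vol}(\widehat{\tautmod}_V) - \widehat{\vol}(\widehat{\omega}^\mathrm{Hdg}_{A/\mathcal{S}_V}) + 2(C_0(3)-C_0(2))\vol_\C(\widehat{\omega}^\mathrm{Hdg}_{A/\mathcal{S}_V})$. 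Equating and varying $p$ kills the $\Q\log p$ ambiguity and yields the proposition. The key point is that this never touches the problematic $n=2$ case of Proposition \ref{prop:gross numerical}.

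Your approach (a) simply asserts that $\det(\mathcal{V})$ is numerically equivalent to $(0,C_3)$ for $n=2$, but as you yourself note, this is exactly the content of Proposition \ref{prop:gross numerical}, whose proof genuinely fails for $n=2$ (the footnote in the paper flags this). Vanishing of the Chern form alone is not enough: you need the nowhere-vanishing integral section to control the finite-place contributions, and without normality of $\mathcal{M}^\Pap_{(1,1)}$ you cannot produce it by the cited argument. Your approach (b) could in principle work, but note that in the paper's logical order, Corollary \ref{cor:base case} (which computes $\widehat{\vol}(\widehat{\tautmod}_V)$ for $n=2$) \emph{uses} Proposition \ref{prop:taut-hodge strong volume}; so to avoid circularity you would need an independent computation of $\widehat{\vol}(\widehat{\tautmod}_V)$ or $\widehat{\vol}(\widehat{\taut}_V)$ for $n=2$, which is not directly supplied by \cite{howard-volumes-I} (that reference computes the Hodge bundle volume, not the tautological one).
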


\begin{proof}
Assume that $V$ has signature $(1,1)$, and  fix a prime $p$ split in $\kk$.

Let $V^\sharp$ be the $\kk$-hermitian space of signature $(2,1)$ whose local invariants satisfy
\[
\mathrm{inv}_\ell(V)  = ( p, -D)_\ell \cdot \mathrm{inv}_\ell(V^\sharp)  
\]
for all places $\ell \le \infty$,  so that the discussion of \S \ref{ss:split divisors main} applies  with the pair $(V^\sharp,V)$ replacing $(V,V^\flat)$.   Define  $H_p \in \R$ by the relation 
\[
(p + 1) H_p = 
 \mathrm{ht}_{ \widehat{\tautmod}_{V^\sharp} }  (\bar{\mathcal{Z}}_{V^\sharp}(p)) 
  -  \mathrm{ht}_{   \widehat{\omega}^\mathrm{Hdg}_{ A ^\sharp/ \mathcal{S}_{V^\sharp}}    } (\bar{\mathcal{Z}}_{V^\sharp}(p))  .
\]

On the one  hand, part (3) of Theorem \ref{thm:taut-hodge compare} and  Lemma \ref{lem:numerical basics}  imply 
\begin{equation}\label{some height}
\mathrm{ht}_{  \widehat{\tautmod}_{V^\sharp}   }  (\bar{\mathcal{Z}}_{V^\sharp}(p))    = 
\mathrm{ht}_{   \widehat{\omega}^\mathrm{Hdg}_{ A^\sharp / \mathcal{S}_{V^\sharp}} + (0,C_0(3))     } 
 (\bar{\mathcal{Z}}_{V^\sharp}(p))    .
\end{equation}
In the codimension $2$ arithmetic Chow group of $\bar{\mathcal{S}}_{V^\sharp}$ we have the relation
\[
\Big(\widehat{\omega}^\mathrm{Hdg}_{ A^\sharp / \mathcal{S}_{V^\sharp}}  + (0 , C_0 (3) ) \Big)^{2}
 = \Big( \widehat{\omega}^\mathrm{Hdg}_{ A^\sharp / \mathcal{S}_{V^\sharp}} \Big)^{2}  
 +      \Big( 0  ,  2 C_0(3)   \chern(  \widehat{\omega}^\mathrm{Hdg}_{ A^\sharp / \mathcal{S}_{V^\sharp}} ) \Big), 
\]
as in the proof of Lemma \ref{lem:trivial volume shift}. 
Directly from the definition of arithmetic height in \S \ref{ss:volumes},  it follows that  the right hand side of \eqref{some height} is equal to 
\begin{align*}
\mathrm{ht}_{   \widehat{\omega}^\mathrm{Hdg}_{ A^\sharp / \mathcal{S}_{V^\sharp}}      }  (\bar{\mathcal{Z}}_{V^\sharp}(p)) 
+ 2 C_0(3) \int_{\mathcal{Z}_{V^\sharp}(p)(\C) }    \chern(  \widehat{\omega}^\mathrm{Hdg}_{ A^\sharp / \mathcal{S}_{V^\sharp}} ) .
\end{align*}
Combining the  first claim of Theorem \ref{thm:height descent 1} with the equality of Chern forms of Theorem \ref{thm:taut-hodge compare},  we find  
 \[
\int_{\mathcal{Z}_{V^\sharp}(p)(\C) }    \chern(  \widehat{\omega}^\mathrm{Hdg}_{ A^\sharp / \mathcal{S}_{V^\sharp}} )
=
( p+1 ) \vol_\C(  \widehat{\omega}^\mathrm{Hdg}_{ A / \mathcal{S}_V} ) .
 \]
Putting all of this together allows us to rewrite  \eqref{some height} as
\[
\mathrm{ht}_{  \widehat{\tautmod}_{V^\sharp}   }  (\bar{\mathcal{Z}}_{V^\sharp}(p))    = 
\mathrm{ht}_{   \widehat{\omega}^\mathrm{Hdg}_{ A^\sharp / \mathcal{S}_{V^\sharp}}      }  (\bar{\mathcal{Z}}_{V^\sharp}(p)) 
+ 2 C_0(3) ( p+1 ) \vol_\C(  \widehat{\omega}^\mathrm{Hdg}_{ A / \mathcal{S}_V} ) ,
\]
which is equivalent to 
 \begin{equation}\label{second Hp}
H_p =   2 C_0(3)    \vol_\C(  \widehat{\omega}^\mathrm{Hdg}_{ A / \mathcal{S}_V} ) .
\end{equation}

On the other hand,  the height formulas of Theorems \ref{thm:height descent 1} and  \ref{thm:height descent 2}  imply 
\begin{align*}
H_p   
& =   \widehat{\vol} (\widehat{\tautmod}_{V} )  - \widehat{\vol} ( \widehat{\omega}^\mathrm{Hdg}_{A/\mathcal{S}_{V}} )  
+2 \left(  \frac{L'(0,\eps)}{L(0,\eps)}   +\frac{\log(D)}{2}  \right) 
\vol_\C(   \widehat{\omega}^\mathrm{Hdg}_{A / \mathcal{S}_{V}}  )   \\
& =  \widehat{\vol} (\widehat{\tautmod}_{V} )  - \widehat{\vol} ( \widehat{\omega}^\mathrm{Hdg}_{A/\mathcal{S}_{V}} )   +   2  (   C_0(3) -  C_0(2) ) \vol_\C(   \widehat{\omega}^\mathrm{Hdg}_{A/ \mathcal{S}_{V}}  )   
\end{align*}
up to a rational multiple of $\log(p)$, and combining this with \eqref{second Hp} proves
\[
  \widehat{\vol} (\widehat{\tautmod}_{V} )  -
   \widehat{\vol} ( \widehat{\omega}^\mathrm{Hdg}_{A/\mathcal{S}_{V}} )  
 - 2    C_0(2)  \vol_\C(  \widehat{\omega}^\mathrm{Hdg}_{A/ \mathcal{S}_{V}}  )
 \in \Q \log(p).
\] 
 The only way this  can hold for all primes $p$ split in $\kk$ is if this real number is $0$, completing the proof when $n=2$.
 \end{proof}


\chapter{The generating series of arithmetic divisors}
\label{s:borcherds}


We continue to work with the Shimura variety $\mathcal{S}_V$ of \eqref{moduli inclusion}
associated to a $\kk$-hermitian space $V$ of signature $(n-1,1)$ containing a self-dual $\co_\kk$-lattice.
Throughout this chapter we assume $n>2$.

After explaining the connection between the complex orbifold $\mathcal{S}_V(\C)$ and the Shimura variety \eqref{eq:X} associated to the unitary group $\Uni(V)$, we will use the results of \S \ref{ss:green functions}  to construct Green functions for  the Kudla-Rapoport divisors  $\mathcal{Z}_V(m) \to \mathcal{S}_V$.  We then recall from \cite{BHKRY-1} the modularity of a generating series of arithmetic divisors, in a slightly more refined form than it is stated there.


\section{Arithmetic Kudla-Rapoport divisors}
\label{ss:green construction}



We want to construct Green functions for the special divisors $\mathcal{Z}_V(m) \to \mathcal{S}_V$ using the machinery of regularized theta lifts from \S \ref{ss:green functions}.  In order to do this, we must make explicit the relation between the complex orbifold $\mathcal{S}_V(\C)$ and the  Shimura variety \eqref{eq:X}.

Recall from  \S \ref{ss:special shimura} that in the construction of $\mathcal{S}_V$ we fixed an isomorphism
\[
V\iso \Hom_\kk(W_0,W),
\]
where $W_0$ and $W$ are $\kk$-hermitian spaces of signatures $(1,0)$ and $(n-1,1)$, respectively.
The subgroup  $G\subset \mathrm{GU}(W_0) \times \mathrm{GU}(W)$ of  Remark \ref{rem:honest shimura} acts on both $W_0$ and $W$ via unitary similitudes.   The group $G$ also acts on $V$, defining a surjective homomorphism
\[
G \to H=\Uni(V),
\]
whose  kernel is the central $\mathrm{Res}_{\kk/\Q} \mathbb{G}_m$ in $G$.

Fix self-dual $\co_\kk$-lattices
$\mathfrak{a}_0 \subset W_0$ and $\mathfrak{a} \subset W$ as in \S \ref{ss:basic moduli}, and define a self-dual $\co_\kk$-lattice
\[
L = \Hom_{\co_\kk}(\mathfrak{a}_0 , \mathfrak{a}) \subset V.
\]
Denote by $K \subset G(\A_f)$ the largest compact open subgroup fixing the lattices $\mathfrak{a}_0$ and $\mathfrak{a}$.
By abuse of notation, denote in the same way  the image of $K$ under $G(\A_f) \to H(\A_f)$.
We now have a commutative diagram
\begin{equation}\label{cover}
\xymatrix{
{  \mathcal{S}_V(\C) }  \ar[r]  \ar@{=}[d] & {   \mathrm{Sh}_K(H,\mathcal{D})   } \ar@{=}[d]^{  \eqref{eq:X}  }  \\
 {G(\Q) \backslash \mathcal{D} \times G(\A_f) / K    }    \ar[r]   &    {  H(\Q)\backslash \mathcal{D}\times H(\A_f)/K  } ,
}
\end{equation}
in which the identification on the left is that of  \S 2 of \cite{BHKRY-1}.

 Let $H^\infty_{2-n}(D,\eps^n)$ denote the space of $\C$-valued harmonic
Maass forms $f$ of weight $2-n$, level  $\Gamma_0(D)$,  and character $\eps^n$ such that
\begin{itemize}
\item 
$f$ is bounded at all cusps of $\Gamma_0(D)$ different from the cusp $\infty$,
\item
$f$ has polynomial growth at $\infty$, in sense that there is a
\[
P_f = \sum_{m\leq 0} c^+(m)q^m \in \C[q^{-1}]
\]
such that $f-P_f=o(1)$ as $q$ goes to $0$.
\end{itemize}
Such a harmonic Maass form has a Fourier expansion analogous to \eqref{eq:fourierf} with Fourier coefficients $c^\pm(m)\in \C$.

Fix an  $f\in H_{2-n}^\infty(D,\eps^n)$ with Fourier coefficients $c^\pm(m)$.
This form  can be lifted to a vector valued harmonic Maass form, in the sense of \S \ref{ss:green functions},   by setting
\begin{align}
\label{eq:lift}
\tilde f=\sum_{\gamma\in \Gamma_0(D)\backslash \SL_2(\Z)} (f|_{2-n} \gamma) (\omega_L(\gamma)^{-1}\varphi_0)
\in H_{2-n}(\omega_L) ,
\end{align}
where $\varphi_0 \in S_L = \C[L'/L]$ is the characteristic function of $0 \in L'/L$.
We denote the Fourier coefficients of $\tilde f$ by $\tilde c^\pm(m,\mu)$ for $\mu\in L'/L$ and $m\in \Q$. 
The coefficients of $\tilde{f}$ can be computed in terms of the coefficients of $f$, and for $m<0$ we have 
\begin{align}
\label{eq:coeffrel}
\tilde c^{+}(m,\mu) =\begin{cases}
c^+(m) & \text{if $\mu=0$}\\
0&\text{if $\mu\neq 0$}
\end{cases}
\end{align}
as in  Proposition 6.1.2 of  \cite{BHKRY-1} or  \S 5 of \cite{Sch}.

Under the covering map \eqref{cover},  the divisors $Z(m)$ and the hermitian line bundle $\widehat{\taut}$ defined in \S \ref{ss:u(v) shimura} pull pack to the  divisors $\mathcal{Z}_V(m)$ and $\widehat{\taut}_V$ defined in \S \ref{ss:special shimura}.   
This allows us to  apply the construction \eqref{eq:AutoGreen} to the vector valued form \eqref{eq:lift} to obtain a Green function 
$\Phi(z,h,\tilde{f})$ for the analytic divisor 
\[
 \sum_{m>0} c^+(-m) Z(m)  \in \mathrm{Div}_\C(  \mathrm{Sh}_K(H,\mathcal{D})  ) 
\]
of \eqref{eq:zf}, and then pull it back  to  a Green function $\Phi_V(f)$ for the divisor
\[
 \sum_{m>0} c^+(-m) \mathcal{Z}_V(m) \in \mathrm{Div}_\C (  \mathcal{S}_V ) .
\]

We will apply this construction for particular choices of $f$.
As in \S 7.3 of \cite{BHKRY-1}, for every $m>0$ there is a unique $f_m(\tau) \in H^\infty_{2-n}(D,\eps^n)$ with $q$-expansion
\[
f_m(\tau) = q^{-m} + O(1).
\]
For any  $f\in H_{2-n}^\infty(D,\eps^n)$ with Fourier coefficients $c^\pm(m)$, we then have 
\[
f(\tau) = \sum_{m>0} c^+(-m)  f_m( \tau ).
\]
Applying the above constructions to these particular harmonic forms yield Green functions 
\[
\Phi_V(m) \define \Phi_V(f_m)
\]
for the divisors $\mathcal{Z}_V(m)$ on $\mathcal{S}_V$.

Recall  from Remark \ref{rem:divisor closure} that $\bar{\mathcal{Z}}_V(m)$ is the Zariski closure of $\mathcal{Z}_V(m)$ in the toroidal compactification $\bar{\mathcal{S}}_V$ of \S \ref{ss:special shimura}.
 The following proposition shows  that one can add boundary components to $\bar{\mathcal{Z}}_V(m)$ in order to obtain a class in the arithmetic Chow group (\S \ref{ss:chow}) of $\bar{\mathcal{S}}_V$.
 We will see in Theorem \ref{thm:no boundary heights} below that these boundary components are (for our purposes) negligible.

\begin{proposition}\label{prop:arithmetic divisor}
Fix an integer $m>0$, and  recall the divisor $\bar{\mathcal{Z}}_V(m)$ on $\bar{\mathcal{S}}_V$ from Remark \ref{rem:divisor closure}.  
There is a  unique divisor  $\mathcal{B}_V(m)$ on  $\bar{\mathcal{S}}_V$ that is supported on the boundary and satisfies
\[
\widehat{\mathcal{Z}}_V^\mathrm{tot} (m)  \define (  \bar{\mathcal{Z}}_V(m) + \mathcal{B}_V(m) , \Phi_V(m) )
 \in  \widehat{\CH}^1(\bar{\mathcal{S}}_V , \mathscr{D}_\BKK).
\]
\end{proposition}

\begin{proof}
This follows from the analysis of $\Phi_V(m)$ near the boundary of $\bar{\mathcal{S}}_V$ carried out in \cite{BHY};
see Theorem 7.2.3 of  \cite{BHKRY-1}.  
A precise description of the boundary divisor $\mathcal{B}_V(m)$ is given by  (5.3.3) of  \cite{BHKRY-1}. 
\end{proof}

\begin{remark}\label{rem:S_V green integral}
As we assume  $n>2$, Theorem \ref{thm:int} and \eqref{eq:coeffrel} imply the integral formula
\[
 \int_{  \mathcal{S}_V(\C)  } \Phi_V (m)   \chern( \widehat{\taut}_V )^{n-1}
 =  \vol_\C(\widehat{\taut}_V )  B'(m, 0 ,s_0),
\]
where $B'(m,0,s_0)$ is the derivative at $s_0=(n-1)/2$ of the Eisenstein series coefficient of \eqref{eq:coeff}, at $\mu=0$,  determined by the self-dual lattice $L\subset V$.
\end{remark}


\section{The modularity theorem}
\label{ss:BHKRYmodularity}


Proposition \ref{prop:arithmetic divisor} defines  arithmetic divisors
\[
\widehat{\mathcal{Z}}_V^\mathrm{tot} (m) \in  \widehat{\CH}^1( \bar{\mathcal{S}}_V ,\mathscr{D}_\BKK )
\]
 for all $m>0$.  We extend the definition to $m=0$ by setting
\[
\widehat{\mathcal{Z}}_V^\mathrm{tot} (0) =  \widehat{\taut}_V^{-1} + ( \mathrm{Exc}_V  , -\log(D)) ,
\]
where the second term is the exceptional divisor $\mathrm{Exc}_V \subset \bar{\mathcal{S}}_V$ of Definition \ref{def:special exceptional} endowed with the constant Green function $-\log(D)$.
The following is the main result of \cite{BHKRY-1}.

\begin{theorem}[Theorem 7.3.1 of \cite{BHKRY-1}] \label{thm:BHKRY}
The generating series 
\begin{equation}\label{BHKRYseries}
\widehat{\phi}_V(\tau) = 
\sum_{m\ge 0}  \widehat{\mathcal{Z}}_V^\mathrm{tot} (m)  \cdot  q^m  \in \widehat{\CH}^1( \bar{\mathcal{S}}_V ,\mathscr{D}_\BKK ) [[q]] 
\end{equation}
is the $q$-expansion of a modular form in $M_n(\Gamma_0(D) , \eps^n) \otimes \widehat{\CH}^1( \bar{\mathcal{S}}_V ,\mathscr{D}_\BKK )$.
\end{theorem}

A close examination of the proof of  Theorem  \ref{thm:BHKRY} reveals more information 
 about the modular form \eqref{BHKRYseries}.   It admits  a natural decomposition
\begin{equation}\label{BHKRY decomp}
\widehat{\phi}_V=\widehat{\phi}^\mathrm{eis}_V  +  \widehat{\phi}^\mathrm{exc}_V +  \widehat{\phi}^\mathrm{cusp}_V 
\end{equation}
as a sum of three forms.  This decomposition will play an important role in \S \ref{ss:arithmetic intersections}, and so the remainder of this section is devoted to describing the three terms on the right.

Recall from  Proposition \ref{prop:eisenstein formulas} the Eisenstein series 
\[
E_r(\tau) = \sum_{m\ge 0} e_r(m) q^m \in M_n(\Gamma_0(D) , \eps^n)
\]
indexed by a  positive divisor $r\mid D$.  
In Proposition \ref{prop:multi eisenstein} we defined, for every divisor $r \mid D$,  a constant 
$\gamma_r \in \{ \pm 1, \pm i\}$.  Using these,  define the \emph{Eisenstein part} of \eqref{BHKRYseries} by 
\[
\widehat{\phi}^\mathrm{eis}_V (\tau) 
= \sum_{r\mid D} \gamma_r E_r (\tau) \cdot \left( \widehat{\mathcal{Z}}_V^\mathrm{tot} (0)
 - \frac{1}{2} ( \mathrm{Exc}_V ,0) + \sum_{\ell\mid r} (  \bar{\mathcal{S}}_{V/ \F_\mathfrak{l}} ,0 )  \right) ,
\]
where  $\bar{\mathcal{S}}_{V/ \F_\mathfrak{l}}$ is the special fiber of $\bar{\mathcal{S}}_V$ at the unique prime  $\mathfrak{l} \subset \co_K$  above $\ell\mid r$.
We may view $\ell \in \kk^\times$ as a rational section of  the trivial  hermitian line bundle on $\bar{\mathcal{S}}_V$, and doing so shows that the class 
\[
( \mathrm{div}(\ell) , - \log |\ell|^2 ) \in \widehat{\Pic}( \overline{\mathcal{S}}_V)_\Q
\]
is trivial; here we are using the notation \eqref{constant metrics}.
 This implies $(  \bar{\mathcal{S}}_{V/ \F_\mathfrak{l}}   ,  0    ) =  (0  ,  \log( \ell )  )$ in the arithmetic Chow group, and hence 
\begin{equation}\label{eis part}
\widehat{\phi}^\mathrm{eis}_V (\tau) 
= \sum_{r\mid D} \gamma_r E_r (\tau)  \left(  \widehat{\taut}_V^{-1} 
 + \frac{1}{2} 
  \big( \mathrm{Exc}_V   ,  -  2 \log(D/r) \big) \right) .  
\end{equation}

The \emph{exceptional part} of \eqref{BHKRYseries}  is 
\begin{equation}\label{exceptional generating}
\widehat{\phi}^\mathrm{exc}_V  (\tau) = \frac{1}{2}  \sum_{E \subset \mathrm{Exc}_V} \vartheta_E(\tau) \cdot (E,0) ,
\end{equation}
where  the sum extends over the irreducible components $E \subset  \mathrm{Exc}_V$ of the exceptional divisor on $\mathcal{S}_V$, and  the scalar valued theta series $\vartheta_E(\tau)$   counts vectors in a positive definite rank $n$ hermitian lattice determined by the component $E$.  More precisely, recall that the top horizontal arrow in \eqref{singular point} collapses $E$ to a connected $0$-dimensional closed substack 
\[
e \subset  \mathcal{M}_{(1,0)} \times_{\co_\kk} \mathcal{M}^\Pap_{(n-1,1)} .
\]
The hermitian lattice associated to $E$ is then defined by
\[
L_E = \Hom_{\co_\kk}(A_{0,s} ,A_s) 
\]
for any geometric point $s \to e$, as in  \S 2.6 of \cite{BHKRY-1}, and the $m^\mathrm{th}$ coefficient in the $q$-expansion of $\vartheta_E(\tau)$ is the number of vector in $L_E$ of hermitian norm $m$.

Having defined all but one of the four modular forms appearing in \eqref{BHKRY decomp}, we complete the definitions by setting 
\begin{equation}\label{cuspidal generating}
\widehat{\phi}^\mathrm{cusp}_V
 = \widehat{\phi}_V-\widehat{\phi}^\mathrm{eis}_V  -  \widehat{\phi}^\mathrm{exc}_V .
\end{equation}
The following result justifies calling this the \emph{cuspidal part} of \eqref{BHKRYseries}.

\begin{proposition}
The generating series \eqref{cuspidal generating} is cuspidal, in the sense that it lies in the subspace
\[
S_n(\Gamma_0(D) , \eps^n) \otimes \widehat{\CH}^1( \bar{\mathcal{S}}_V ,\mathscr{D}_\BKK )
\subset
M_n(\Gamma_0(D) , \eps^n) \otimes \widehat{\CH}^1( \bar{\mathcal{S}}_V ,\mathscr{D}_\BKK ).
\]
\end{proposition}

\begin{proof}
The proposition is implicit in the proof of Theorem 7.3.1 of \cite{BHKRY-1}.  The essential point is that Theorem 4.2.3 of \cite{BHKRY-1},  a variant of a result of Borcherds, gives a criterion for determining not just when a formal $q$-expansion is modular, but when it lies in the subspace 
\[
M^\infty_n(\Gamma_0(D) , \eps^n)  \subset M_n(\Gamma_0(D) , \eps^n) 
\]
of forms that vanish at every cusp except possibly  $\infty$.

In the proof of  Theorem 7.3.1 of \cite{BHKRY-1}, this criterion is used to show that the formal generating series
\[
\widehat{\phi}_V - \widehat{\phi}^\mathrm{exc}_V
- 
 \sum_{   \substack{   r\mid D  \\ r >1 } } 
  \gamma_r E_r (\tau) \cdot \left( \widehat{\mathcal{Z}}_V^\mathrm{tot} (0)
 - \frac{1}{2} ( \mathrm{Exc}_V ,0) + \sum_{\ell\mid r} (  \bar{\mathcal{S}}_{V/ \F_\mathfrak{l}} ,0 )  \right) 
 \]
 lies in 
 \[
 M^\infty_n(\Gamma_0(D) , \eps^n)  \otimes \widehat{\CH}^1( \bar{\mathcal{S}}_V ,\mathscr{D}_\BKK ) 
 \]
(which immediately implies that $\widehat{\phi}_V$ is modular).
 This is spelled out more directly in  the proof of Theorem 7.1.5 of \cite{BHKRY-1}, but for the  generating series,  valued in the usual Chow group of $\bar{\mathcal{S}}_V$,  obtained from $\widehat{\phi}_V$  by forgetting the Green functions.

In other words,
\[
 \widehat{\phi}^\mathrm{cusp}_V (\tau)   +  
E_1 (\tau) \cdot \left( \widehat{\mathcal{Z}}_V^\mathrm{tot} (0)
 - \frac{1}{2} ( \mathrm{Exc}_V ,0)  \right)
\]
 vanishes at every cusp except possibly $\infty$.  
 The Eisenstein series   $E_1(\tau)$  has this same property (Remark \ref{rem:each cusp}), and therefore so also does $\widehat{\phi}^\mathrm{cusp}_V (\tau)$.
On the other hand, one can check directly (using  Proposition \ref{prop:eisenstein formulas} for the Eisenstein part) that the constant term in the  $q$-expansion of \eqref{cuspidal generating} also vanishes, proving its cuspidality.
\end{proof}


\section{Heights of arithmetic divisors}


Recall from \eqref{height degree} that if the divisor $\mathcal{Z}$ underlying 
\[
\widehat{\mathcal{Z}} = ( \mathcal{Z} , g_\mathcal{Z} ) \in \widehat{\CH}^1( \bar{\mathcal{S}}_V, \mathscr{D}_\BKK) 
\]
meets the boundary of $\bar{\mathcal{S}}_V$ properly in the generic fiber, then 
\[
\widehat{\deg}(   \widehat{\mathcal{Z}}  \cdot  \widehat{\taut}_V^{n-1}   )
= \mathrm{ht}_{\widehat{\taut}_V }(\mathcal{Z}) + \int_{ \mathcal{S}_V(\C) }  g_\mathcal{Z} \cdot \chern(\widehat{\taut}_V )^{n-1} .
\]
We cannot apply this directly to the arithmetic divisor of Proposition \ref{prop:arithmetic divisor}, because the underlying divisor  $\bar{\mathcal{Z}}_V(m)+\mathcal{B}_V(m)$ typically contains boundary components with nonzero multiplicities.
The following result, as in  \S 3.2 of \cite{BBK} or  \S 3.4 of \cite{hormann},  shows that even when $\mathcal{B}_V(m) \neq 0$ these  boundary components are negligible for our purposes.

\begin{theorem}\label{thm:no boundary heights}
The arithmetic divisor of Proposition \ref{prop:arithmetic divisor} satisfies
\[
\widehat{\deg} \big( \widehat{\mathcal{Z}}_V^\mathrm{tot} (m)  \cdot \widehat{\taut}_V^{n-1}  \big)
=
\mathrm{ht}_{\widehat{\taut}_V}  (  \bar{\mathcal{Z}}_V (m) ) 
+ \int_{\mathcal{S}_V(\C)}  \Phi_V(m)  \chern( \widehat{\taut}_V)^{n-1}   .
\]
The same relation holds if $\widehat{\taut}_V$ is replaced by $\widehat{\omega}^\mathrm{Hdg}_{A/\mathcal{S}_V}$ or the hermitian line bundle $\widehat{\tautmod}_V$ of Theorem \ref{thm:taut-hodge compare}.
\end{theorem}

The proof of Theorem \ref{thm:no boundary heights}  requires quite a bit of preparation, and will occupy the remainder of this section.

Fix an $N \ge 3$, and recall from \S \ref{ss:special shimura} the finite morphism of $\co_\kk[1/N]$-schemes 
\[
\bar{\mathcal{S}}_V(N) \to \bar{\mathcal{S}}_{V/\co_\kk[1/N]} 
\]
obtained by adding level structure and compactifying.
To ease notation, the pullback of the special divisor $\bar{\mathcal{Z}}_V(m)$ via this map is denoted the same way, and similarly for $\mathcal{L}_V$ and $\omega^\mathrm{Hdg}_{A/\mathcal{S}_V}$.

\begin{proposition}\label{prop:BBsections}
There is a positive integer $k$ and sections
\[
f_1,\dots,f_{n-1} \in H^0 \big(  \bar{\mathcal{S}}_V(N) , \omega^{\mathrm{Hdg}, \otimes k}_{A/\mathcal{S}_V} \big)
\]
with the following properties:
\begin{enumerate}
\item
All possible intersections of 
\[
\bar{\mathcal{Z}}_V(m), \dv(f_1),\dots , \dv(f_{n-1})
\]
are proper.
\item  In the generic fiber, the divisor $\dv(f_1)$ is disjoint from the boundary. 
\item The codimension 2 cycle
$
\dv(f_1)\cap \dv(f_2)
$
is disjoint from the boundary.
\end{enumerate}
\end{proposition}

\begin{proof}
This uses the theory of Baily-Borel (or minimal) compactifications of integral models developed in \S 5 of \cite{mp}.   As noted after Lemma \ref{lem:normalized compactification}, the compactifications of integral models of Hodge type Shimura varieties found in \cite{mp} are not obtained by imitating the constructions of Faltings-Chai in the Siegel case.  
Rather, they are defined by normalizing the natural morphisms from integral models of open Shimura varieties to Faltings-Chai compactifications of Siegel moduli spaces.  Because of Lemma \ref{lem:normalized compactification}, all of the results proved in \cite{mp} can be applied directly to  the integral models    
\[
\mathcal{M}^\Pap_W(N) \subset \bar{\mathcal{M}}^\Pap_W(N)
\]
used  in the proof of Proposition \ref{prop:full compactification}.

As in \S 5.2.1 of \cite{mp}, the universal abelian scheme $A \to \mathcal{M}^\Pap_W(N)$ extends uniquely to a semi-abelian scheme over its toroidal compactification, and the inverse of the determinant of its Lie algebra defines an extension of the Hodge bundle 
\[
\omega \define \omega^{\mathrm{Hdg}}_{A/ \mathcal{M}^\Pap_W(N)}
\]
 to a line bundle on the toroidal compactification.  
 This allows us to define the Baily-Borel compactification
\[
\mathcal{M}^\Pap_W(N)^{\mathrm{min}}
= \mathrm{Proj} \Big(  
\bigoplus_{r \ge 0} H^0  (  \bar{\mathcal{M}}^\Pap_W(N)   ,
  \omega^{\otimes r}   )
\Big) .
\]

Theorem 5.2.11 of \cite{mp} provides us with a morphism 
\[
\bar{\mathcal{M}}^\Pap_W(N) \to 
\mathcal{M}^\Pap_W(N)^{\mathrm{min}}
\]
of projective $\co_\kk[1/N]$-schemes such that the composition 
\[
\mathcal{M}^\Pap_W(N) \to
\bar{\mathcal{M}}^\Pap_W(N) \to 
\mathcal{M}^\Pap_W(N)^{\mathrm{min}}
\]
is an open immersion whose complement (the Baily-Borel boundary) is  flat over $\co_\kk[1/N]$.
In particular, as the classical  Baily-Borel compactification of the complex fiber of $\mathcal{M}^\Pap_W(N)$ has dimension $0$, the boundary of the  integral Baily-Borel compactification has dimension $1$.

Also by Theorem 5.2.11 of \cite{mp},  the Hodge bundle $\omega$ on $\bar{\mathcal{M}}_W^\Pap(N)$ is the pullback of a uniquely determined ample line bundle on the Baily-Borel compactification (denoted the same way), whose pullback via the composition
\[
\bar{\mathcal{S}}_V(N) \map{\eqref{compact inclusion}} 
 \bar{\mathcal{M}}_W(N)
 \to
 \bar{\mathcal{M}}^\Pap_W(N)
 \to 
\mathcal{M}^\Pap_W(N)^{\mathrm{min}}
\]
agrees with  $\omega^\mathrm{Hdg}_{A/\mathcal{S}_V}$.

Fix a positive integer $k$ large enough that $\omega^k$ defines a closed immersion of $\mathcal{S}^\Pap_W(N)^\mathrm{min}$ into projective space, and choose a hyperplane section 
\[
s_1  \in H^0 (  \mathcal{M}^\Pap_W(N)^\mathrm{min} , \omega^{\otimes k} )
\] 
whose divisor does not contain any of the (finitely many) boundary points of the generic fiber of $ \mathcal{S}^\Pap_W(N)^\mathrm{min}$, 
and does not contain the image of any generic point of the divisor 
$
\bar{\mathcal{Z}}_V(m) \subset \bar{\mathcal{S}}_V(N).
$
By construction, the divisor of the pullback 
\[
f_1 \in H^0 ( \bar{\mathcal{S}}_V(N) , \omega^{\mathrm{Hdg}, \otimes k }_{A/\mathcal{S}_V} ) 
\]
of $s_1$ does not meet the toroidal boundary in the generic fiber, and intersects $\bar{\mathcal{Z}}_V(m)$ properly.

Now choose a hyperplane section 
\[
s_2  \in H^0 (  \mathcal{M}^\Pap_W(N)^\mathrm{min} , \omega^{\otimes k} )
\] 
whose divisor does not contain any point of the intersection $\mathrm{div}(f_1)$ with the Baily-Borel boundary (this intersection is $0$-dimensional, because the boundary is $1$-dimensional), and does not contain the image of any generic point of  $\mathrm{div}(f_1)$, $\bar{\mathcal{Z}}_V(m)$, or their intersection.  Now define
\[
f_2 \in H^0 ( \bar{\mathcal{S}}_V(N) , \omega^{\mathrm{Hdg}, \otimes k}_{A/\mathcal{S}_V} ) 
\]
as the pullback of $s_2$, and continue  in this fashion.
\end{proof}

\begin{corollary}\label{cor:SBBsections}
There is a positive integer $k$ and sections
\[
F_1,\dots,F_{n-1} \in H^0 \big(  \bar{\mathcal{S}}_V(N) ,  \taut_V ^{\otimes k} \big)
\]
with the following properties:
\begin{enumerate}
\item
On the complement of the exceptional divisor
\[
\mathrm{Exc}_V(N) \define \bar{\mathcal{S}}_V(N) \times_{\bar{\mathcal{S}}_V} \mathrm{Exc}_V \subset\bar{\mathcal{S}}_V(N),
\]
  all possible intersections of 
\[
\bar{\mathcal{Z}}_V(m), \dv(F_1),\dots , \dv(F_{n-1})
\]
 are proper.  In particular, this   holds on  the generic fiber of $\bar{\mathcal{S}}_V(N)$.
 \item  
 In the generic fiber, the divisor $\dv(F_1)$ is disjoint from the boundary. 
\item 
The codimension 2 cycle
$
\dv(F_1)\cap \dv(F_2)
$
is disjoint from the boundary.
\end{enumerate}
\end{corollary}

\begin{proof}
This is clear from Propositions \ref{prop:simple taut-hodge} and  \ref{prop:BBsections}, and the fact that the exceptional divisor $\mathrm{Exc}_V \subset \bar{\mathcal{S}}_V$ has empty generic fiber and does not intersect the boundary.
\end{proof}

\begin{proof}[Proof of Theorem \ref{thm:no boundary heights}] 
Fix sections $F_1,\ldots, F_{n-1}$ as in Corollary \ref{cor:SBBsections}.  Each determines an arithmetic divisor
\[
(\mathcal{Z}_i , g_i)  \define ( \dv(F_i) , - \log \| F_i\|^2 )
\]
representing  $k\cdot \widehat{\taut}_V \in \widehat{\CH}^1( \bar{\mathcal{S}}_V(N), \mathscr{D}_\BKK)$, and satisifying the Green equation
\[
dd^c [g_i] + \delta_{\mathcal{Z}_i} = k  [  \chern( \widehat{\taut}_V)].
\]
As the divisors $\mathcal{Z}_1,\ldots, \mathcal{Z}_{n-1}$ intersect properly in the generic fiber, a direct examination of the construction of the arithmetic intersection pairing in \S 4.3 of \cite{BKK} shows that the iterated intersection
\[
k^{n-1} \widehat{\taut}_V^{n-1} = (\mathcal{Z}_1 , g_1) \cdots (\mathcal{Z}_{n-1} , g_{n-1}) \in  \widehat{\CH}^{n-1}( \bar{\mathcal{S}}_V (N), \mathscr{D}_\BKK)
\]
is represented by a codimension $n-1$ arithmetic cycle 
\[
 \widehat{\mathcal{Y}} = (\mathcal{Y} , g_\mathcal{Y}),
\]
 in which the support of $\mathcal{Y}$ is contained in the naive scheme-theoretic intersection $\mathcal{Z}_1 \cap \cdots \cap \mathcal{Z}_{n-1}$, and
\[
g_\mathcal{Y} = g_1* \cdots * g_{n-1}
\] 
is the iterated star product.
(Really, this means the iterated star product of the pre-log-log Green objects
\[
\frakg_i= (k\chern( \widehat{\taut}_V), g_i) 
\in \widehat{H}^2_{\calD_{\mathrm{pre}}, \mathcal{Z}_i}(\bar\calS_V(N)(\C),1),
\]
in the sense of \cite{BBK,BKK}, but we continue to suppress this extra notational complexity.)

Our choice of sections $F_i$ guarantees that   $\mathcal{Y}$ does not intersect the boundary of $\bar{\mathcal{S}}_V(N)$, and does not intersect  $\bar{\mathcal{Z}}_V(m)$  in the generic fiber.
Thus 
\begin{align}& 
k^{n-1}\widehat{\deg}_N \big( \widehat{\mathcal{Z}}_V^\mathrm{tot} (m)  \cdot \widehat{\taut}_V^{n-1}  \big)  \nonumber  \\
& = \widehat{\deg}_N \big( \widehat{\mathcal{Z}}_V^\mathrm{tot} (m)  \cdot \widehat \calY\big) \nonumber  \\
&=  \sum_{      \substack{ \mathfrak{p} \subset \co_\kk         \\   \mathfrak{p} \nmid N \co_\kk   } }   
\deg_\mathfrak{p} (\bar{\mathcal{Z}}_V (m) \cdot \calY) \cdot \log (\# \co_\kk/ \mathfrak{p})  
 + \int_{\bar\calS_V(N)(\C)} \Phi_V(m)  * g_\calY. \label{no boundary key}
\end{align}
Here we are working in the arithmetic Chow group of the $\co_\kk[1/N]$-scheme $\bar{\mathcal{S}}_V(N)$, as in \S \ref{ss:chow}, so these equalities are understood up to a $\Q$-linear combination of $\{ \log(p) : p\mid N\}$.   The degree $\deg_\mathfrak{p}$ is that of \eqref{local degree}, applied to the $0$-cycle
\[
\bar{\mathcal{Z}}_V (m) \cdot \calY \in \mathrm{CH}^n_{ \bar{\mathcal{Z}}_V(m) \cap \mathcal{Y} } ( \bar{\mathcal{S}}_V(N) )
\]
as in the discussion of \S \ref{ss:volumes}. 

The integral on the right hand side of \eqref{no boundary key} is examined in Proposition~\ref{prop:star} below.  Substituting the expression found there into \eqref{no boundary key}, and comparing with \eqref{height normalization}, we find the equality stated in Theorem \ref{thm:no boundary heights} holds up to a $\Q$-linear combination of $\{ \log(p) : p\mid N\}$.  The theorem follows by allowing $N\ge 3$ to vary.
\end{proof}

It remains to prove the formula for the integral in \eqref{no boundary key} used above.

\begin{proposition}
\label{prop:star}
Continuing with the notation in the above proof, we have
\[
\int_{\bar\calS_V(N)(\C)} \Phi_V(m) * g_\calY = k^{n-1} \int_{\mathcal{S}_V(N)(\C)}  \Phi_V(m)  \chern( \widehat{\taut}_V)^{n-1} + \int_{\mathcal{Z}_V(m)(\C)} g_\calY.
\]
Here, continuing with our abuse of notation, the final integral is really over the pullback of $\mathcal{Z}_V(m)(\C)$ via
$\mathcal{S}_V(N)(\C) \to \mathcal{S}_V(\C)$.
\end{proposition}

\begin{proof}
We argue  as in the proof of Theorem 3.3 of \cite{BBK}.   See also Theorem 3.4.3 of \cite{hormann}. 
Abbreviate 
\[
\Omega=\chern( \widehat{\taut}_V),
\]
and denote by  $D=\partial \bar{\mathcal{S}}_V(N)(\C)$ the toroidal boundary divisor. 
For $\eps>0$ denote by $B_\eps(D)$ an $\eps$-neighborhood of $D$. 

As in the proof above, the iterated arithmetic intersection 
\[
 \widehat{\mathcal{Z}} \define (\mathcal{Z}_2 , g_2) \cdots (\mathcal{Z}_{n-1} , g_{n-1}) \in  \widehat{\CH}^{n-2}( \bar{\mathcal{S}}_V (N), \mathscr{D}_\BKK)
\]
is represented by a codimension $n-2$ arithmetic cycle 
\[
 \widehat{\mathcal{Z}} = (\mathcal{Z} , g_\mathcal{Z}),
\]
 in which the support of $\mathcal{Z}$ is contained in the naive scheme-theoretic intersection $\mathcal{Z}_2 \cap \cdots \cap \mathcal{Z}_{n-1}$, and the iterated star product
\[
g_\mathcal{Z} = g_2* \cdots * g_{n-1}
\] 
is a $(n-3,n-3)$-form with a logarithmic singularity along $\calZ(\C)$ and a pre-log-log singularity along $D$, satisfying the Green equation
\[
dd^c [ g_\mathcal{Z} ] + \delta_\mathcal{Z} = k^{n-2} [ \Omega^{n-2} ].
\]

Recall from \cite[Section 1.3]{BBK} that the star product 
\[
g_\calY =g_1 * g_\calZ
\]
can be represented as follows: Arguing as in \cite[p.~362]{Bu2} there exists an embedded resolution of singularities of $\mathcal{Z}_1(\C)\cup \calZ(\C)$,  denoted 
\[
\pi: \tilde X\to \bar\calS_V(N)(\C),
\]
which factors through embedded resolutions of $\mathcal{Z}_1(\C)$, $\calZ(\C)$, and $\mathcal{Z}_1(\C)\cup \calZ(\C)$, and such that the strict transforms of $\mathcal{Z}_1(\C)$ and $\calZ(\C)$ are disjoint. By the above assumptions on the $F_i$, the strict transform of $\mathcal{Z}_1(\C)$ is also disjoint from $\pi^{-1}(D)$. 
Let $\sigma_\calZ,1-\sigma_\calZ$ be a partition of the unity on $\tilde X$ for which $\sigma_\calZ =1$ in a neighborhood of the strict transform of $\calZ(\C)\cup D$ and $\sigma_\calZ =0$ in a neighborhood of the strict transform  of $\mathcal{Z}_1(\C)$.
Then, according to identity (1.5) of  \cite{BBK}, 
\[
g_\mathcal{Y} = 
k^{n-2}  \sigma_\calZ g_1\wedge \Omega^{n-2} + (dd^c(1-\sigma_\calZ)g_1)\wedge g_\calZ.
 \]
In particular, if $\eps$ is sufficiently small then on $B_\eps(D)$ we have
\begin{align}
\label{eq:gbound}
g_\calY = 
k^{n-2}g_1\wedge \Omega^{n-2}.
\end{align}

We may now apply 
\cite[Theorem 1.14]{BBK} with $y=\calZ_V^\mathrm{tot}(m)(\C)$, $g_y=\Phi_V(m)$, $z=\calY$, and $g_z= g_\calY$
to see that 
\begin{align*}
\int_{\bar\calS_V(N)(\C)} &\Phi_V(m) * g_\calY 
= 
\int_{\calZ_V(m)(\C)} g_\calY  + 
 k^{n-1} \int_{\bar{\mathcal{S}}_V(N)(\C)}  \Phi_V(m)  \Omega^{n-1} \\
&-\lim_{\eps\to 0} \int_{\partial B_\eps(D)} (g_\calY\wedge d^c  \Phi_V(m)- \Phi_V(m)\wedge d^c g_\calY)  .
\end{align*}
By \eqref{eq:gbound}  the limit on the right hand side is equal to 
\[
k^{n-2}\lim_{\eps\to 0}\bigg(
\int_{\partial B_\eps(D)} g_1\wedge \Omega^{n-2}\wedge d^c  \Phi_V(m)- \Phi_V(m)\wedge d^c g_1\wedge \Omega^{n-2}\bigg) ,
\]
which vanishes by Lemma \ref{lem:bound} below.
\end{proof}

\begin{lemma}
\label{lem:bound}
If  $F_1$ is  any meromorphic section of the line bundle $\taut_V^{\otimes k}$ on $\bar{\mathcal{S}}_V(N)(\C)$,  whose divisor is disjoint from the boundary, then
\begin{align*}
\lim_{\eps\to 0} &\int_{\partial B_\eps(D)} \log\|F_1\|\wedge \Omega^{n-2}\wedge d^c  \Phi_V(m)= 0, \\
\lim_{\eps\to 0} &\int_{\partial B_\eps(D)} \Phi_V(m)\wedge d^c \log\|F_1\|\wedge \Omega^{n-2}=0.
\end{align*}
\end{lemma}

Before we prove this lemma, we need to recall from  Section~4.3 of \cite{BHY} the complex analytic description of the boundary of $\bar \calS_V(\C)$.
Using \eqref{cover}, it suffices to do this for the boundary of  $\mathrm{Sh}_K(H,\mathcal{D})$. By \eqref{X components} the variety  $\mathrm{Sh}_K(H,\mathcal{D})$ is a finite disjoint union of connected Shimura varieties of the form $X_\Gamma=\Gamma\bs \calD$, where $\Gamma\subset H=\Uni(V)$ is an arithmetic subgroup stabilizing a self-dual hermitian lattice in $V$. 
Hence it suffices to consider the boundary of the canonical toroidal compactification $\bar X_\Gamma$ of $X_\Gamma$. 

The boundary of $\bar X_\Gamma$ is a smooth divisor. Its connected components correspond to $\Gamma$-orbits of isotropic lines in $ V$.
Fix an isotropic line $I\subset V$. Let $\ell\in I$ be a generator, and let
 $\widetilde\ell\in V$ be isotropic such that $\langle \ell,\widetilde\ell\rangle=1$.
The orthogonal complement
$W= \ell^\perp \cap \widetilde\ell{}^\perp$
is a positive definite hermitian space over $\kk$ of dimension $n-2$, and we have 
\[
V=W\oplus \kk\ell\oplus \kk\widetilde\ell.
\]

Every  $z\in \calD$, viewed as a negative definite $\C$-line in $V(\R)$ as in  \eqref{hermitian model}, 
has a unique basis vector of the form
\[
\frakz+\tau \sqrt{-D}\ell + \widetilde\ell
\]
with $\frakz\in W(\R)$ and $\tau\in \C$.
 The condition that $z$ is negative definite  is equivalent to the real number
\[
\norm(\frakz,\tau) \define  -\langle(\frakz,\tau),(\frakz,\tau) \rangle
 =2\sqrt{D} \cdot \mathrm{Im} (\tau)-\langle \frakz,\frakz\rangle
\]
being positive. Consequently, $\calD$ is isomorphic to
\[
\calH_{\ell,\widetilde\ell}
\define 
\{  (\frakz,\tau)\in W(\R) \times \H : \; 2\sqrt{D} \cdot \mathrm{Im}
(\tau) >\langle \frakz,\frakz\rangle
\}.
\]
The Chern form $\Omega$ is given in these coordinates by 
\begin{align}
\label{eq:Om}
\Omega=-dd^c\log \norm(\frakz,\tau).
\end{align}

For $\eps> 0$ we define
\begin{align*}
U_\eps(\ell) &= \left\{z\in \calD:\; -\frac{\langle z,z\rangle}{|\langle z,\ell\rangle|^2}>\frac{1}{\eps} \right\} \\
&\cong \{(\frakz,\tau)\in  \calH_{\ell,\widetilde\ell}: \; \norm(\frakz,\tau)>1/\eps\}.
\end{align*}
The stabilizer $\Uni(V)_\ell$ of $I=\kk\ell$ acts on this subset.
Let $\Gamma_\ell=\Gamma\cap\Uni(V)_\ell$.
If $\eps $ is sufficiently small,
then
\[
\Gamma_\ell \bs U_\eps(\ell) \longrightarrow X_\Gamma
\]
is an open immersion.
The center of $\Uni(V)_\ell$ 
is given by the subgroup of translations $T_a$ for $a\in \Q$, where
\[
T_a(x)=x +a\langle x,\ell\rangle \sqrt{-D}\ell
\]
for $x\in V$. The action of these translations on $\calH_{\ell,\widetilde\ell}$ is given by
$T_a(\frakz,\tau)= (\frakz,\tau+a).
$
The center of $\Gamma_\ell$
is of the form
\[
\Gamma_{\ell,T}= \{ T_a: a\in r\Z\}
\]
for a unique $r\in \Q_{>0}$.
If we put $q_r=e^{2\pi i \tau/r}$, then $(\frakz,\tau)\mapsto (\frakz,q_r)$ defines an isomorphism from $\Gamma_{\ell,T}\bs U_\eps(\ell)$ to
\[
V_\eps(\ell)=\left\{ (\frakz,q_r)\in \C^{n-2}\times \C: \; 0<|q_r|<\exp\left(-\tfrac{\pi}{r\sqrt{D}}(\langle\frakz,\frakz\rangle+1/\eps)\right)\right\}.
\]
Adding the origin to this punctured disc bundle yields the disc bundle
\[
\widetilde V_\eps(\ell)= \left\{ (\frakz,q_r)\in \C^{n-2}\times \C : \; |q_r|<\exp\left(-\tfrac{\pi}{r\sqrt{D}}(\langle\frakz,\frakz\rangle+1/\eps)\right)\right\}.
\]
The action of $\Gamma_\ell/\Gamma_{\ell,T}$ on $V_\eps(\ell)$ extends to a properly discontinuous action on $\widetilde V_\eps(\ell)$, which leaves the boundary divisor $q_r=0$ invariant.
We obtain an open immersion of orbifolds
\[
\Gamma_\ell\bs U_\eps(\ell) \longrightarrow \left(\Gamma_\ell/\Gamma_{\ell,T}\right)\bs \widetilde V_\eps(\ell).
\]
It can be used to glue the right hand side to $X_\Gamma$ to obtain a partial compactification near the cusp $I$. The family of these open immersions for small $\eps>0$ defines a base of open neighborhoods of the boundary divisor $D_I$ associated with $I$.

Any boundary point $z\in D_I$ has a representative of the form $(\frakz_0,0)\in \widetilde V_{\eps}(\ell)$. The images of the sets 
\begin{align}
\label{boundary-nbhd}
B_{\delta}(\frakz_0,0)= \left\{ (\frakz,q_r)\in \widetilde V_\eps(\ell):  \langle \frakz-\frakz_0,  \frakz-\frakz_0\rangle <\delta,\;|q_r|<\delta\right\}.
\end{align}
in $\bar X_\Gamma$ for small $\delta>0$ define a base of open neighborhoods of the boundary  point.

\begin{proof}[Proof of Lemma \ref{lem:bound}]
In view of the above local description of the boundary, it suffices to show that for every isotropic line $I\subset V$ and for a small neighborhood  
\[
Y=B_{\delta}(\frakz_0,0)
\]
of any boundary point $(\frakz_0,0)$ of $D_I$ we have 
\begin{align}
\label{eq:bound1}
\lim_{\eps\to 0} &\int_{\partial B_\eps(D)\cap Y} \log\|F_1\|\wedge \Omega^{n-2}\wedge d^c  \Phi_V(m)= 0, \\
\label{eq:bound2}
\lim_{\eps\to 0} &\int_{\partial B_\eps(D)\cap Y} \Phi_V(m)\wedge d^c \log\|F_1\|\wedge \Omega^{n-2}=0.
\end{align}
Using the coordinates of \eqref{boundary-nbhd}, we have 
\[
\norm(\frakz,\tau)= -\frac{\sqrt{D} r}{\pi }\log|q_r| -\langle\frakz,\frakz\rangle.
\]
By \eqref{eq:Om} we find
\begin{align*}
\Omega 
&=  \frac{1}{\norm(\frakz,\tau)^2} (d\norm(\frakz,\tau))\wedge (d^c\norm(\frakz,\tau))
+\frac{1}{\norm(\frakz,\tau)} dd^c\langle\frakz,\frakz\rangle .
\end{align*}
If we write in addition 
$q_r=e^{2\pi i \tau/r}=t\cdot e^{i\rho}$ in polar coordinates with $t\geq 0$ and $\rho\in [0,2\pi)$, and employ the identities $d\log |q_r|= \frac{dt}{t}$ and $d^c\log |q_r|= \frac{1}{4\pi} d\rho$, we obtain 
\begin{align*}
\Omega &= \frac{1}{\norm(\frakz,\tau)^2} \left( -\frac{\sqrt{D} r}{\pi} \frac{dt}{t} -d\langle\frakz,\frakz\rangle \right)\wedge  \left( -\frac{\sqrt{D}r}{4\pi^2} d\rho  -d^c\langle\frakz,\frakz\rangle \right)\\
&\phantom{=}{}+\frac{1}{\norm(\frakz,\tau)} dd^c\langle\frakz,\frakz\rangle .
\end{align*}
Hence the restriction of $\Omega$ to $\partial B_\eps(D)\cap Y$ is given by 
\begin{align*}
\Omega \mid_{\partial B_\eps(D)\cap Y}&= 
\frac{1}{\norm(\frakz,\tau)^2} d\langle\frakz,\frakz\rangle \wedge  \left( \frac{\sqrt{D}r}{4\pi^2} d\rho  +d^c\langle\frakz,\frakz\rangle \right)+\frac{1}{\norm(\frakz,\tau)} dd^c\langle\frakz,\frakz\rangle .
\end{align*}
The differential form $d^c\log\| F_1\|$ is the sum of a smooth $1$-form on $Y$ 
and a constant multiple of $\frac{1}{\norm(\frakz,\tau)}d^c\norm(\frakz,\tau)$. This implies that 
\begin{align*}
\Omega^{n-2}\wedge d^c\log\| F_1\|\mid_{\partial B_\eps(D)\cap Y} = \frac{h(t,\rho,\frakz)}{|\log t|^{n-1}} d\rho \wedge d\frakz_1 \dots   d\frakz_{n-2}\wedge d\bar\frakz_1 \dots   d\bar\frakz_{n-2},
\end{align*}
where $h$ is a continuous function in $t\in [0,1)$, $\rho\in [0,2\pi]$ and $\frakz$, which is smooth for $t>0$.

On the other hand, according to \cite[Theorem~4.10]{BHY}, the restriction of the  Green function $\Phi_V(m)$ to $Y$ is the sum of a continuous function and constant multiples of the functions 
\[
\log t,\qquad  \log|\log t|, \qquad \sum_{\substack{x\in S_m }} \log|\langle x, \mathfrak z +\tilde \ell\rangle |,
\]
where $S_m$ denotes the finite set
\[
S_m = \{ x\in L\cap \ell^\perp : \; \langle x,\frakz_0+\tilde \ell\rangle =0, \langle x, x\rangle =m\}.
\]
This implies that 
\begin{align*}
\int_{\partial B_\eps(D)\cap Y} \Phi_V(m)\wedge d^c \log\|F_1\|\wedge \Omega^{n-2}\leq C \frac{1}{|\log \eps|^{n-2}}
\end{align*}
for some positive constant $C$. 
Since $n\geq 3$, we find that the limit in \eqref{eq:bound2} vanishes.

For the limit in \eqref{eq:bound1} we may argue similarly. Here \cite[Theorem~4.10]{BHY} implies that the restriction of $d^c \Phi_V(m)$ to $Y$ is the sum of a $1$-form with log-log growth along $D\cap Y$ and constant multiples of the differentials  
\[
d\rho ,\qquad  \frac{d\rho}{ |\log t|}, \qquad \sum_{\substack{x\in S_m }} d^c \log|\langle x, \frakz +\tilde \ell\rangle |.
\]
On the other hand, $\log\|F_1\|$ is the sum of a continuous function and $\log|\log t|$.
Putting the terms together, we obtain 
\begin{align*}
\int_{\partial B_\eps(D)\cap Y} \log\|F_1\|\wedge \Omega^{n-2}\wedge d^c  \Phi_V(m)\leq C' \frac{\log|\log \eps|}{|\log \eps|^{n-2}}
\end{align*}
for some positive constant $C'$.
Again the limit as $\eps\to 0$ vanishes.
\end{proof}

Lemma \ref{lem:bound} completes the proof of Proposition \ref{prop:star}, which completes the proof of Theorem \ref{thm:no boundary heights}.


\chapter{The volume calculations}
\label{s:volumes}


This chapter contains our main results.
We continue to work with the Shimura variety $\mathcal{S}_V$ of \eqref{moduli inclusion} defined by a $\kk$-hermitian space $V$ of signature $(n-1, 1)$  with $n\ge 1$, containing a self-dual $\co_\kk$-lattice $L\subset V$.
We maintain the assumption that $D$ is odd imposed since Chapter \ref{s:integral models}.

We will  use induction on $n$ to compute the complex and arithmetic volumes  (as defined in \S \ref{ss:volumes}) of the hermitian line bundle 
\[
\widehat{\tautmod}_V =  2 \widehat{\taut}_V - (\mathrm{Exc}_V,0)   \in  \widehat{\Pic} ( \bar{\mathcal{S}}_V , \mathscr{D}_\BKK ) 
\]
 of Theorem \ref{thm:taut-hodge compare}.  All results stated in the introduction will follow as consequences of this calculation.



\section{Volumes of Shimura curves}


First we consider the case $n=2$, so that  $V$ has signature $(1,1)$ and $\bar{\mathcal{S}}_V(\C)$ has dimension one.
The following proposition is a consequence of the main result of \cite{howard-volumes-I}, which is itself a consequence of the
calculation of arithmetic volumes of modular curves and quaternion Shimura curves due to  K\"uhn \cite{kuhn},  Bost (unpublished),  and Kudla-Rapoport-Yang \cite{KRY}.

\begin{proposition}\label{prop:base case}
Suppose $n=2$.
The metrized Hodge bundle of $A\to \mathcal{S}_V$ has complex volume
 \[
\vol_\C(  \widehat{\omega}^\mathrm{Hdg}_{ A / \mathcal{S}_V }  ) 
=  \frac{| \mathrm{CL}(\kk) |^2    }{ 2^{ o(D) -1}  \cdot  12   \cdot  | \co_\kk^\times|^2  }    \prod_{\ell \mid D   } (1 + \ell^* ) 
\]
 and arithmetic volume
\[
\widehat{\vol} (   \widehat{\omega}^\mathrm{Hdg}_{ A / \mathcal{S}_V }  ) 
 =    
\left(
- 2 -  \frac{4\zeta'(-1)}{\zeta(-1)} 
-  \sum_{  \ell \mid D    } \frac{ 1- \ell^*  }{ 1+  \ell^* } \cdot \log(\ell)
\right)  \vol_\C(  \widehat{\omega}^\mathrm{Hdg}_{ A / \mathcal{S}_V }  ) ,
\]
where $\mathrm{CL}(\kk)$ is the class group of $\kk$, $o(D)$ is the number of prime divisors of $D$, and we abbreviate 
\[
\ell^*   = \leg{-1}{\ell}   \mathrm{inv}_\ell(V)\ell.
\]
\end{proposition}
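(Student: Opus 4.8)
The statement for $n=2$ should be deduced from the results of \cite{howard-volumes-I}, which reduces the computation of the complex and arithmetic volumes of $\widehat{\omega}^\mathrm{Hdg}_{A/\mathcal{S}_V}$ to the known calculations of arithmetic volumes of modular and quaternionic Shimura curves due to K\"uhn \cite{kuhn}, Bost, and Kudla-Rapoport-Yang \cite{KRY}. The first task is to identify the connected components of $\mathcal{S}_V(\C)$ explicitly. By Remark \ref{rem:honest shimura} the generic fiber of $\mathcal{S}_V$ is a Shimura variety for the group $G \subset \mathrm{GU}(W_0)\times\mathrm{GU}(W)$, and when $n=2$ the exceptional divisor is nonempty only at primes $\ell\mid D$; away from those primes $\mathcal{S}_V$ is a (twisted, disconnected) quaternionic Shimura curve, since $\mathrm{GU}(1,1)$ is closely related to a form of $\mathrm{GL}_2$. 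The precise dictionary between $\mathcal{S}_V$ and the arithmetic curves appearing in \cite{kuhn} and \cite{KRY} is worked out in \cite{howard-volumes-I}; I would cite the relevant theorem there verbatim, taking care to match normalizations of the metric \eqref{hodge metric} (which differs from the metric on $\det(\Lie A)^{-1}$ used in various references by a power of $2\pi$, exactly as noted after Proposition \ref{prop:full bundle compare} and in Proposition \ref{prop:gross numerical}).

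Next I would assemble the numerical factors. The factor $|\mathrm{CL}(\kk)|^2 / (2^{o(D)-1}|\co_\kk^\times|^2)$ comes from counting connected components: the first factor $\mathcal{M}_{W_0} = \mathcal{M}_{(1,0)}$ contributes a class-group worth of components (with automorphism weight $|\co_\kk^\times|$), as does the second factor $\mathcal{M}_W$ for signature $(1,1)$, while the $2^{o(D)-1}$ accounts for the number of strict similarity/isometry classes of relevant hermitian spaces in the decomposition \eqref{relevant decomp}; compare Remark \ref{rem:projection fiber}. The $1/12$ and the $\zeta'(-1)/\zeta(-1)$ are the signature of the classical K\"uhn--Kramer formula for the arithmetic self-intersection of the metrized Hodge bundle on the modular curve (weight-one forms), transported through the covering/descent of \cite{howard-volumes-I}. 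The product $\prod_{\ell\mid D}(1+\ell^*)$ over $\ell^* = \left(\tfrac{-1}{\ell}\right)\mathrm{inv}_\ell(V)\,\ell$ reflects the bad-reduction behavior at ramified primes of $\kk$, and the associated logarithmic correction $-\sum_{\ell\mid D}\tfrac{1-\ell^*}{1+\ell^*}\log(\ell)$ arises from the vertical components of the relevant divisors at $\ell\mid D$; these match the analogous $\beta_\ell$-type contributions appearing in Proposition \ref{prop:awesome B} and are computed in \cite{howard-volumes-I}, ultimately from the local structure of $\mathcal{M}^\Pap_{(1,1)}$ at $\ell$ together with the quaternion-algebra invariants.

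The main obstacle is bookkeeping rather than conceptual: one must carefully reconcile three sets of normalizations --- the metric \eqref{hodge metric} here, the metric conventions in \cite{howard-volumes-I}, and those in the original sources \cite{kuhn, KRY} --- and correctly account for orbifold automorphism factors (the factors of $|\co_\kk^\times|$ and the stacky $\tfrac12$'s discussed in Remark \ref{rem:no half}) when passing between the various curves and their covers. A secondary subtlety is ensuring the formula is uniform across all $2^{o(D)-1}$ components indexed by the relevant hermitian spaces $V$ of signature $(1,1)$: the dependence on $V$ enters only through the invariants $\mathrm{inv}_\ell(V)$ for $\ell\mid D$, packaged in $\ell^*$, and one must check that \cite{howard-volumes-I} indeed produces exactly this dependence. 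Once these normalizations are pinned down, the proposition is a direct quotation of \cite{howard-volumes-I} specialized to the case at hand; I would state it as such, with the numerical identifications above spelled out in a short paragraph so the reader can see how the stated closed form emerges.
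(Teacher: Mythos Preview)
Your overall strategy matches the paper's: quote the volume formulas for the $\Z$-model $\mathcal{X}_W$ from \cite{howard-volumes-I} (Theorem~A there), then pull back along the proper morphism $\varphi:\bar{\mathcal{S}}_V \to \bar{\mathcal{X}}_{W/\co_\kk}$, which away from the exceptional locus is finite \'etale of degree $|\mathrm{CL}(\kk)|/|\co_\kk^\times|$ by Remark~\ref{rem:projection fiber}. Two of your numerical attributions, however, are off and would derail the bookkeeping if carried through as written.

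First, the factor $2^{o(D)-1}$ does not count strict similarity classes. For $n=2$ even, $\mathcal{S}_V$ maps to a \emph{single} $\mathcal{M}_W$, and the $2^{o(D)-1}$ is already part of the volume formula for that one curve $\mathcal{X}_W$ in \cite{howard-volumes-I}. One copy of $|\mathrm{CL}(\kk)|/|\co_\kk^\times|$ is built into $\vol_\C(\widehat{\omega}^\mathrm{Hdg}_{A/\mathcal{X}_W})$; the second copy is exactly $\deg(\varphi)$.

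Second, and more important, the doubling that turns the ratio
\[
-1-\frac{2\zeta'(-1)}{\zeta(-1)}-\frac{1}{2}\sum_{\ell\mid D}\frac{1-\ell^*}{1+\ell^*}\log(\ell)
\]
from \cite{howard-volumes-I} into the stated $-2-\tfrac{4\zeta'(-1)}{\zeta(-1)}-\sum_\ell\cdots$ is not a stacky automorphism factor. It arises because the degree-$2$ base change $\mathcal{X}_{W/\co_\kk}\to\mathcal{X}_W$ multiplies arithmetic volumes by $2$, while under the convention of Remark~\ref{rem:no half} (complex points are taken as $\kk$-morphisms, not $\Q$-morphisms) it induces a bijection on $\C$-points and leaves complex volumes unchanged. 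That asymmetry is the whole content of the remark; your description of it as ``stacky $\tfrac12$'s'' misidentifies the mechanism, and without isolating this point you would not recover the correct constants in the arithmetic volume formula.
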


\begin{proof}
As in \eqref{moduli inclusion}, we realize
\[
\mathcal{S}_V \subset \mathcal{M}_{W_0}  \times_{\co_\kk} \mathcal{M}_{W}
\]
as a union of connected components, where $W_0$ and $W$ have signatures $(1,0)$ and  $(1,1)$, respectively.

The  $\mathrm{GU}(W)$ Shimura datum defining the generic fibers of  $\mathcal{M}^\Pap_W$ and $\mathcal{M}_W$ actually has reflex field $\Q$ rather than $\kk$, and in   \S 4 of \cite{howard-volumes-I} one finds the definition of a regular and flat  Deligne-Mumford stack 
\[
\mathcal{X}_W \to \Spec(\Z)
\]
 such that  $\mathcal{M}_W^\Pap \iso  \mathcal{X}_{W/\co_\kk}$, and such that the universal abelian surface over $\mathcal{M}_W^\Pap$ descends to  $A\to \mathcal{X}_W$.  The stack $\mathcal{X}_W$ has its own canonical toroidal compactification and its own theory of arithmetic Chow groups, and Theorem A of \cite{howard-volumes-I} implies
 \begin{equation}\label{old curve 1}
 \vol_\C(  \widehat{\omega}^\mathrm{Hdg}_{A / \mathcal{X}_W}   )=  \frac{| \mathrm{CL}(\kk) |    }{ 2^{ o(D) -1} \cdot 12   \cdot  | \co_\kk^\times| }    \prod_{\ell \mid D  } (1 + \ell^* )
 \end{equation}
 and 
 \begin{equation}\label{old curve 2}
\widehat{\vol} (  \widehat{\omega}^\mathrm{Hdg}_{A / \mathcal{X}_W}     ) 
 =    
\left(
- 1-  \frac{2\zeta'(-1)}{\zeta(-1)} 
-  \frac{1}{2} \sum_{  \ell \mid  D   } \frac{ 1- \ell^*  }{ 1+  \ell^* } \cdot \log(\ell)
\right)  \vol_\C(  \widehat{\omega}^\mathrm{Hdg}_{A / \mathcal{X}_W}   )  .
\end{equation}

The composition 
 \[
 \mathcal{S}_V \to \mathcal{M}_W \to \mathcal{M}_W^\Pap \iso  \mathcal{X}_{W/\co_\kk}
 \]
 extends to  a proper morphism
\[
\varphi : \bar{\mathcal{S}}_V \to \bar{\mathcal{X}}_{ V / \co_\kk } ,
\]
which, using Remark \ref{rem:projection fiber},   restricts to a   finite \'etale surjection of degree 
\[
\deg(\varphi) = |\mathrm{CL}(\kk)| / | \co_\kk^\times|
\]
from $\bar{\mathcal{S}}_V\smallsetminus \mathrm{Exc}_V$  to the smooth locus of  $\bar{\mathcal{X}}_{ V / \co_\kk }$.

If $\widehat{\taut}$ is any pre-log singular hermitian line bundle on $\bar{\mathcal{X}}_V$, and $\widehat{\taut}_{/\co_\kk}$ is its base change to $\bar{\mathcal{X}}_{V / \co_\kk}$, then unpacking the definitions shows that\footnote{The factor of $2$ appears in the second equality because it is the degree of the finite flat morphism $\mathcal{X}_{W/\co_\kk} \to \mathcal{X}_W$. It does not appear in the first equality because, under the conventions of Remark \ref{rem:no half}, this morphism induces an isomorphism on complex points.}
\[
 \vol_\C (   \varphi^*\widehat{\taut}_{/\co_\kk}  )
   = 
  \deg(\varphi)  \cdot \vol_\C (  \widehat{\taut}   )
\]
and
\[
   \widehat{\vol} (   \varphi^*\widehat{\taut}_{/\co_\kk}  )
   = 
 2 \cdot  \deg(\varphi)  \cdot \widehat{\vol} (  \widehat{\taut}   ).
\]
The proposition  follows by applying these  to the hermitian line bundles
\[
\widehat{\taut} =  \widehat{\omega}^\mathrm{Hdg}_{A / \mathcal{X}_V} , \qquad 
\varphi^*\widehat{\taut}_{/\co_\kk} =  \widehat{\omega}^\mathrm{Hdg}_{A / \mathcal{S}_V}
\]
and using \eqref{old curve 1} and \eqref{old curve 2}.
\end{proof}

 We now express the complex and arithmetic volumes of $\widehat{\tautmod}_V$ in terms of the  meromorphic function $\mathbf{A}_V(s)$ defined by \eqref{A_V}.  This will form the base case of the inductive proofs of Theorems \ref{thm:degree induction} and \ref{thm:K volume}  below.

\begin{corollary}\label{cor:base case}
Suppose $n=2$.
The hermitian line bundle \eqref{Kbun}  has complex volume
\[
\vol_\C(  \widehat{\tautmod}_V )   = 
\frac{ | \mathrm{CL}(\kk) |    }{ 2^{  o(D) -2  }   | \co_\kk^\times|   }    \cdot \mathbf{A}_V(0) ,
\]
and arithmetic volume
\[
 \widehat{\mathrm{vol}} (  \widehat{\tautmod}_V   )
 =
 \left(   2 \frac{\mathbf{A}'_V (0) }{\mathbf{A}_V (0)}     +   \log(D)   \right)    \vol_\C(  \widehat{\tautmod}_V )  .
 \]
\end{corollary}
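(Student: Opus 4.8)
The strategy is to reduce everything to Proposition \ref{prop:base case}, which gives the complex and arithmetic volumes of $\widehat{\omega}^\mathrm{Hdg}_{A/\mathcal{S}_V}$ when $n=2$, and then to compare $\widehat{\tautmod}_V$ with $\widehat{\omega}^\mathrm{Hdg}_{A/\mathcal{S}_V}$ using Proposition \ref{prop:taut-hodge strong volume}. The purely computational heart of the argument is to check that, for $n=2$, the numerical data appearing in Proposition \ref{prop:base case} matches the data packaged into the function $\mathbf{A}_V(s)$ of \eqref{A_V}.

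\textbf{Step 1: the complex volume.} First I would evaluate $\mathbf{A}_V(0)$ explicitly for $n=2$. By \eqref{A_V} and \eqref{a_k} we have
\[
\mathbf{A}_V(0) = \mathbf{a}_1(0)\mathbf{a}_2(0) \cdot \prod_{\ell\mid D}\Big(1 + \leg{-1}{\ell}\,\inv_\ell(V)\,\ell^{-1}\Big),
\]
and one computes $\mathbf{a}_1(0) = \tfrac{D^{1/2}\Gamma(1)L(1,\eps)}{2\pi}$ and $\mathbf{a}_2(0) = \tfrac{D\,\Gamma(2)\zeta(2)}{4\pi^2} = \tfrac{D}{24}$, where I use $L(1,\eps) = \tfrac{2\pi h_\kk}{w_\kk\sqrt{D}}$ (the class number formula, with $h_\kk = |\mathrm{CL}(\kk)|$ and $w_\kk = |\co_\kk^\times|$). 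This gives $\mathbf{a}_1(0)\mathbf{a}_2(0) = \tfrac{h_\kk}{24\,w_\kk}$. On the other hand the local factor can be rewritten as $1 + \leg{-1}{\ell}\inv_\ell(V)\ell^{-1} = \ell^{-1}(1+\ell^*)$ with $\ell^* = \leg{-1}{\ell}\inv_\ell(V)\ell$; since $D = \prod_{\ell\mid D}\ell$, the product over $\ell\mid D$ equals $D^{-1}\prod_{\ell\mid D}(1+\ell^*)$. Combining, $\mathbf{A}_V(0) = \tfrac{h_\kk}{24\,w_\kk}\cdot D^{-1}\prod_{\ell\mid D}(1+\ell^*)$. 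Plugging this into the claimed formula for $\vol_\C(\widehat{\tautmod}_V)$ and comparing with the complex volume of $\widehat{\omega}^\mathrm{Hdg}_{A/\mathcal{S}_V}$ in Proposition \ref{prop:base case}, I would check that the factors of $2^{o(D)}$, $h_\kk$, $w_\kk$ and $12$ cancel correctly. Independently, the equality of Chern forms in part (2) of Theorem \ref{thm:taut-hodge compare} gives $\vol_\C(\widehat{\tautmod}_V) = \vol_\C(\widehat{\omega}^\mathrm{Hdg}_{A/\mathcal{S}_V})$, so the comparison is really just an identity of constants; the main thing to be careful about is the power of $2\pi$ relating the two metrics (as flagged in the proof of Proposition \ref{prop:full bundle compare}), but since the Chern forms literally agree this drops out of the top self-intersection.

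\textbf{Step 2: the arithmetic volume.} By Proposition \ref{prop:taut-hodge strong volume}, $\widehat{\vol}(\widehat{\tautmod}_V) = \widehat{\vol}(\widehat{\omega}^\mathrm{Hdg}_{A/\mathcal{S}_V}) + nC_0(n)\vol_\C(\widehat{\omega}^\mathrm{Hdg}_{A/\mathcal{S}_V})$ with $n=2$, so using Step 1 (which identifies the complex volumes) it remains to verify the scalar identity
\[
\Big(-2 - \tfrac{4\zeta'(-1)}{\zeta(-1)} - \sum_{\ell\mid D}\tfrac{1-\ell^*}{1+\ell^*}\log(\ell)\Big) + 2\,C_0(2) = 2\,\frac{\mathbf{A}_V'(0)}{\mathbf{A}_V(0)} + \log(D).
\]
Here $C_0(2) = 2\log\!\big(\tfrac{4\pi e^\gamma}{\sqrt{D}}\big) - 2\big(\tfrac{L'(0,\eps)}{L(0,\eps)} + \tfrac{\log D}{2}\big)$. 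To compute the right-hand side I would logarithmically differentiate the factorization $\mathbf{A}_V(s) = \mathbf{a}_1(s)\mathbf{a}_2(s)\prod_{\ell\mid D}(1 + \leg{-1}{\ell}\inv_\ell(V)\ell^{-s-1})$. The logarithmic derivative of $\mathbf{a}_1$ at $0$ contributes $\tfrac12\log D + \psi(1) + 2\tfrac{L'(1,\eps)}{L(1,\eps)} - \log\pi$ (where $\psi$ is the digamma function, $\psi(1) = -\gamma$), that of $\mathbf{a}_2$ at $0$ contributes $\tfrac12\log D + \psi(2) + 2\tfrac{\zeta'(2)}{\zeta(2)} - \log\pi$, and each local factor contributes $-\tfrac{\ell^*\log\ell}{1+\ell^*}$, so $\sum_\ell$ of these equals $-\sum_{\ell\mid D}\tfrac{\ell^*}{1+\ell^*}\log\ell$. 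Then I would use the functional equations: $\zeta(2)$ relates to $\zeta(-1)$, giving $\tfrac{\zeta'(2)}{\zeta(2)} = -\tfrac{\zeta'(-1)}{\zeta(-1)} + \text{(elementary terms involving }\log 2\pi, \gamma\text{)}$, and similarly $\tfrac{L'(1,\eps)}{L(1,\eps)}$ relates to $\tfrac{L'(0,\eps)}{L(0,\eps)}$ via the functional equation for $L(s,\eps)$ (using $\eps$ odd, conductor $D$). After substituting, collecting the $\log\pi$, $\log 2$, $\gamma$, $\log D$ terms and the $\psi(1), \psi(2)$ values, and noting the elementary identity $1 - \tfrac{1-\ell^*}{1+\ell^*} = \tfrac{2\ell^*}{1+\ell^*}$ which reconciles the two forms of the sum over $\ell\mid D$, the two sides should match.

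\textbf{Main obstacle.} The genuine difficulty is purely bookkeeping: tracking the exact constants (powers of $2$, $\pi$, $e^\gamma$, $\sqrt{D}$) through the functional equations of $\zeta$ and $L(\cdot,\eps)$ and through the definitions of $\mathbf{a}_k$, $C_0(2)$, and the Chowla–Selberg formula \eqref{faltings} for $h^\mathrm{Falt}_\kk$ hidden inside $C_0$. There is no conceptual obstruction — everything reduces to Propositions \ref{prop:base case} and \ref{prop:taut-hodge strong volume} — but the constant-chasing must be done with care, and it is worth double-checking the normalization $C_0(n) = 2C_1 + C_2 + C_3$ from the proof of Theorem \ref{thm:taut-hodge compare} against the definitions of $C_1, C_2, C_3$ at $n=2$ to be sure the $n$-dependent terms in $C_0(n)$ specialize correctly.
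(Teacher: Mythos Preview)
Your approach is exactly the paper's: use the equality of Chern forms from Theorem \ref{thm:taut-hodge compare} to identify the complex volumes of $\widehat{\tautmod}_V$ and $\widehat{\omega}^\mathrm{Hdg}_{A/\mathcal{S}_V}$, compute $\mathbf{A}_V(0)$ explicitly, and then combine Proposition \ref{prop:base case} with Proposition \ref{prop:taut-hodge strong volume} and a logarithmic-derivative computation of $\mathbf{A}_V'(0)/\mathbf{A}_V(0)$.

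Two specific slips to fix in Step 1. First, the termwise identity you write,
\[
1 + \leg{-1}{\ell}\inv_\ell(V)\,\ell^{-1} \;=\; \ell^{-1}(1+\ell^*),
\]
is false whenever $\leg{-1}{\ell}\inv_\ell(V)=-1$ (the two sides then differ by a sign). What is true is the \emph{product} identity $\prod_{\ell\mid D}\big(1+\leg{-1}{\ell}\inv_\ell(V)\ell^{-1}\big) = D^{-1}\prod_{\ell\mid D}(1+\ell^*)$, and this requires the global sign relation $\prod_{\ell\mid D}\leg{-1}{\ell}\inv_\ell(V)=1$ coming from \eqref{signs 1} together with $D\equiv 3\pmod 4$; the paper invokes this explicitly at the start of its proof. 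Second, you have dropped a factor of $D$: one has $\mathbf{a}_1(0)=h_\kk/w_\kk$ and $\mathbf{a}_2(0)=D/24$, so $\mathbf{a}_1(0)\mathbf{a}_2(0)=\tfrac{h_\kk D}{24\,w_\kk}$, not $\tfrac{h_\kk}{24\,w_\kk}$. This extra $D$ is precisely what the $D^{-1}$ from the local product cancels, yielding the correct $\mathbf{A}_V(0)=\tfrac{h_\kk}{24\,w_\kk}\prod_{\ell\mid D}(1+\ell^*)$.

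For Step 2 your plan is fine; the paper streamlines the functional-equation bookkeeping by using the closed formula \eqref{a dlogs} for $\mathbf{a}_k'(0)/\mathbf{a}_k(0)$, which packages the $\log\pi$, $\log D$, and digamma terms once and for all and makes the verification of the scalar identity a short computation.
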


\begin{proof}
Replacing $W$ by $V$ in  \eqref{signs 1}, and using  $D \equiv 3\pmod{4}$,  we 
find
\[
 \prod_{\ell \mid D}  \leg{-1}{\ell} \inv_\ell(V)   = 1.
\]
Recalling \eqref{A_V}, this  proves the second equality in 
\begin{align*}
\mathbf{A}_V(0) 
  & =   
  \frac { D^{\frac{3}{2}}   L(1,\eps) \zeta(2) }{8 \pi^3  }
   \prod_{\ell \mid D} \left(  1 +   \leg{-1}{\ell}   \mathrm{inv}_\ell(V)\ell^{-1}   \right)     \\
   & =   
  \frac { D^{\frac{1}{2}}   L(1,\eps) \zeta(2) }{8 \pi^3  }
   \prod_{\ell \mid D} \left(  1  + \ell^*   \right)     \\
   & =     \frac{|\mathrm{CL}(\kk)|}{ 24 \cdot  |\co_\kk^\times|}
 \prod_{\ell \mid D} \left(  1 +  \ell^* \right).
\end{align*}
The final equality follows from $\zeta(2) = \pi^2/6$ and Dirichlet's class number formula
\[ 
D^{\frac{1}{2}}   L(1,\eps)  =2 \pi     | \mathrm{CL}(\kk) |  /   |\co_\kk^\times|.
\]
Comparing  this with Proposition \ref{prop:base case}, and using  the equality of Chern forms of Theorem  \eqref{thm:taut-hodge compare}, shows
\[
\vol_\C( \widehat{\tautmod}_V  ) =
\vol_\C( \widehat{\omega}^\mathrm{Hdg}_{ A / \mathcal{S}_V }  ) = 
\frac{ | \mathrm{CL}(\kk) |    }{ 2^{  o(D) -2  }   | \co_\kk^\times|   }    \cdot \mathbf{A}_V(0) .
\]

For the second claim, taking the logarithmic derivative of \eqref{A_V}  yields
\begin{align*}
\frac{\mathbf{A}'_V (0) }{\mathbf{A}_V (0)}  
 & = \frac{\mathbf{a}'_1 (0) }{\mathbf{a}_1 (0)} + \frac{\mathbf{a}'_2 (0) }{\mathbf{a}_2 (0)}
 - \sum_{\ell \mid D} \frac{   \log(\ell) }{  1+ \ell^* }  \\
  & = \frac{\mathbf{a}'_1 (0) }{\mathbf{a}_1 (0)} + \frac{\mathbf{a}'_2 (0) }{\mathbf{a}_2 (0)}
 -  \frac{1}{2}   \log (D)  -  \frac{1}{2} \sum_{\ell \mid D}  \left( \frac{  1- \ell^*  }{  1+ \ell^* }  \right) \log(\ell) .
\end{align*}
One can use the functional equation of $L(s,\eps^k)$ to see that
 \begin{equation}\label{a dlogs}
\frac{\mathbf{a}'_k (0) }{\mathbf{a}_k (0)}
=   - 2 \frac{L'(1-k,\eps^k) } { L(1- k ,\eps^k ) }  - \frac{\Gamma'(k)}{\Gamma(k)}  + \log \left( \frac{4\pi}{D} \right) 
+ (-1)^k \log(D)   
\end{equation}
for all $k\ge 1$.  Recalling the constant $C_0(2)$ of Theorem \ref{thm:intro main}, and using the well-known formulas $\Gamma'(1) / \Gamma(1) = -\gamma$ and $\Gamma'(2) / \Gamma(2) = 1-\gamma$, we find
\[
\frac{\mathbf{a}'_1 (0) }{\mathbf{a}_1 (0)} + \frac{\mathbf{a}'_2 (0) }{\mathbf{a}_2 (0)}
=
-1+  C_0(2) - 2 \frac{\zeta'(-1) } { \zeta(-1  ) }   .
\]
Putting this all together gives
\begin{align*}  
  2   \frac{\mathbf{A}'_V (0) }{\mathbf{A}_V (0)} - 2 C_0(2)  +    \log \left( D  \right) 
=  -2   - 4 \frac{\zeta'(-1) } { \zeta(-1) }   
 -      \sum_{\ell \mid D}    \left(    \frac{1- \ell^*} { 1+ \ell^*}   \right)    \log(\ell)  ,
  \end{align*}
and comparing with Proposition \ref{prop:base case} shows that
\[
\widehat{\vol} (   \widehat{\omega}^\mathrm{Hdg}_{ A / \mathcal{S}_V }  ) 
 =  
 \left(   2 \frac{\mathbf{A}'_V (0) }{\mathbf{A}_V (0)}     - 2C_0(2)   +   \log(D)   \right)    \vol_\C( \widehat{\omega}^\mathrm{Hdg}_{ A / \mathcal{S}_V } )  .
\]
The second claim follows from this using Proposition \ref{prop:taut-hodge strong volume} and the equality of Chern forms of Theorem \ref{thm:taut-hodge compare}.
\end{proof}


\section{The complex volume formula}


We now use induction on $n\ge 2$ to compute the complex volumes of $\widehat{\taut}_V$ and $\widehat{\tautmod}_V$ on the Shimura variety $\bar{\mathcal{S}}_V$ associated to a relevant hermitian space $V$ of signature $(n-1,1)$.

Taking arithmetic intersection against the fixed arithmetic cycle class
\[
\widehat{\taut}_V^{n-2} \in \widehat{\CH}^{n-2}( \bar{\mathcal{S}}_V ,\mathscr{D}_\BKK )_\Q
\]
and then applying the complex degree of \S \ref{ss:volumes} defines a linear functional
\[
\widehat{\CH}^1( \bar{\mathcal{S}}_V ,\mathscr{D}_\BKK )  \to  \Q,
\]
denoted 
\begin{equation}\label{complex vol functional}
\widehat{\mathcal{Z}} \mapsto \deg_\C( \widehat{\mathcal{Z}} \cdot \widehat{\taut}_V^{n-2} )
\end{equation}
and called  \emph{complex intersection against $\widehat{\taut}_V^{n-2}$}.  
Of course this only depends on the underlying divisor $\mathcal{Z}$ and line bundle $\taut_V$ on $\bar{\mathcal{S}}_V$, and in fact only on their restrictions to the complex fiber.

Assuming that $n>2$ (an assumption imposed throughout Chapter \ref{s:borcherds}), we may apply \eqref{complex vol functional} coefficient-by-coefficient to the generating series of special divisors \eqref{BHKRYseries}.  This yields a modular form
\[
 \deg_\C(   \widehat{\phi}_V (\tau) \cdot\widehat{\taut}_V^{n-2} )
=\sum_{m\ge 0}  \deg_\C(   \widehat{\mathcal{Z}}_V^\mathrm{tot} (m)\cdot \widehat{\taut}_V^{n-2} )  \cdot  q^m 
\in M_n(\Gamma_0(D) , \eps^n)
\]
with rational coefficients,
and similarly for the other forms appearing in the decomposition 
\[
\widehat{\phi}_V(\tau)
=\widehat{\phi}^\mathrm{eis}_V(\tau)  +  \widehat{\phi}^\mathrm{exc}_V (\tau)+  \widehat{\phi}^\mathrm{cusp}_V (\tau)
\]
of  \eqref{BHKRY decomp}.  Note that we have the obvious equality
\begin{equation}\label{complex exc vanish}
  \deg_\C(   \widehat{\phi}^\mathrm{exc}_V (\tau) \cdot \widehat{\taut}_V^{n-2} ) =0, 
\end{equation}
as the coefficients of the generating series  \eqref{exceptional generating} lie in the kernel of 
\[
\widehat{\CH}^{1}( \bar{\mathcal{S}}_V ,\mathscr{D}_\BKK )  \to 
\CH^{1}( \bar{\mathcal{S}}_{V/\C}  )  .
\]

\begin{proposition}\label{prop:complex expansions}
For $n> 2$, we have the  equalities of modular forms
\begin{align*}
 \deg_\C(   \widehat{\phi}_V (\tau) \cdot \widehat{\taut}_V^{n-2} )
& =
- \vol_\C( \widehat{\taut}_{V}  )+
\sum_{m>0}
 \left(   \int_{ \mathcal{Z}_V(m)(\C) }    \chern(\widehat{\taut}_{V} )^{n-2}  \right)\cdot q^m \\
\deg_\C (   \widehat{\phi}^\mathrm{eis}_V (\tau) \cdot \widehat{\taut}_V^{n-2} )
& = - \mathrm{vol}_\C(\widehat{\taut}_V )  \sum_{r\mid D} \gamma_r E_r(\tau) .
\end{align*}
\end{proposition}

\begin{proof}
The second claim follows easily from \eqref{eis part} and the definitions.

For the first claim, we compare the two modular forms coefficient-by-coefficient.  
As the equality of constant terms is obvious,  it suffices to prove that 
\begin{equation}\label{complex coefficient match}
\deg_\C ( \widehat{\mathcal{Z}}_V^\mathrm{tot} (m)  \cdot  \widehat{\taut}_V^{n-2}  ) 
= \int_{ \mathcal{Z}_V(m)(\C) }    \chern(\widehat{\taut}_{V} )^{n-2}
\end{equation}
for all $m>0$.  The left hand side is equal to the image of 
\[
 ( \bar{\mathcal{Z}}_V  (m) + \mathcal{B}_V(m) )    \cdot \taut_V^{n-2} 
 \in \mathrm{CH}^{n-1}( \bar{\mathcal{S}}_V ) 
\]
under the composition
\begin{equation}\label{generic chow degree}
 \mathrm{CH}^{n-1}( \bar{\mathcal{S}}_V ) 
 \to  \mathrm{CH}^{n-1}( \bar{\mathcal{S}}_{V/\C} )  \map{\deg} \Q
\end{equation}
of the restriction to the complex fiber and the usual degree of a $0$-cycle.
As in the proof of Theorem \ref{thm:no boundary heights}, one may represent the cycle class $\taut_V^{n-2} \in \mathrm{CH}^{n-2}(\bar{\mathcal{S}}_V)$ by a $1$-cycle  that does not meet the boundary in the generic fiber, and hence the left hand side of \eqref{complex coefficient match} is equal to the image of 
\[
\bar{\mathcal{Z}}_V  (m)  \cdot \taut_V^{n-2} 
 \in \mathrm{CH}^{n-1}( \bar{\mathcal{S}}_V ) 
\]
under the composition \eqref{generic chow degree}.

Fix a smooth (on all of $\bar{\mathcal{S}}_V(\C)$, not just the interior) metric on $\widehat{\taut}_V$, and let $\varphi$ be the Chern form of this metric.  The exterior power 
\[
  \varphi^{n-2}  \in H^{2n-4}(\bar{\mathcal{S}}_V(\C) , \C)
\]
agrees with  the cycle class of $\mathcal{L}_V^{n-2}$.  
As the second arrow in \eqref{generic chow degree} factors through the cycle class map to cohomology,  it follows that 
\[
\deg_\C ( \widehat{\mathcal{Z}}_V^\mathrm{tot} (m)  \cdot  \widehat{\taut}_V^{n-2}  ) 
= \int_{ \mathcal{Z}_V(m)(\C) }    \varphi^{n-2}.
\]

We cannot apply this directly to $\varphi =   \chern(\widehat{\taut}_{V} )$ because the metric on
$\widehat{\taut}_{V}$ does not extend smoothly across the boundary, but it is easy to see that one can write
\[
\varphi^{n-2} = \chern(\widehat{\taut}_{V} )^{n-2} + d\alpha
\]
for a pre-log-log form $\alpha$ on $\mathcal{S}_V(\C)$.  
 Propositions 7.6 and 7.12 of \cite{BKK} imply that the pullback of $\alpha$ to $\mathcal{Z}_V(m)(\C)$ is locally integrable on the closure $\bar{\mathcal{Z}}_V(m)(\C)$, and satisfies the equality of currents $[d\alpha]=d[\alpha]$.  Evaluating  both sides of this equation at the constant function $1$ yields
\[
\int_{ \mathcal{Z}_V(m)(\C) }  d\alpha =0,
\]
completing the proof.
 \end{proof}

Recall from \S \ref{ss:self-dual coefficients} and \S \ref{ss:eis decomp}  the  holomorphic Eisenstein series
\[
\sum_{ m \ge 0 } B(m, 0 ,s_0) \cdot  q^m  \in M_n(\Gamma_0(D) , \eps^n )
\]
 associated  to the self-dual hermitian lattice $L\subset V$.

\begin{proposition}\label{prop:complex divisor-eisenstein}
Assume $n>2$. 
We have the equality of modular forms
\[
 \deg_\C(   \widehat{\phi}_V (\tau) \cdot \widehat{\taut}_V^{n-2} )
=  \deg_\C (   \widehat{\phi}^\mathrm{eis}_V (\tau) \cdot \widehat{\taut}_V^{n-2} ).
\]
Equivalently, given \eqref{complex exc vanish},
\[
 \deg_\C (   \widehat{\phi}^\mathrm{cusp}_V (\tau) \cdot \widehat{\taut}_V^{n-2} )  =0 .
 \]
\end{proposition}

\begin{proof}
Given Proposition  \ref{prop:complex expansions} and the equality
\[
\sum_{r\mid D} \gamma_r E_r(\tau)   = \sum_{m\ge 0} B(m,0,s_0)  \cdot q^m 
\]
of Proposition \ref{prop:multi eisenstein}, the first claim is just a restatement of the equality
\[
 \int_{ \mathcal{Z}_V(m)(\C) }    \chern(\widehat{\taut}_{V} )^{n-2}  
  = -  \mathrm{vol}_\C(\widehat{\taut}_V )    B(m,0,s_0)
\]
of Remark \ref{rem:strong complex volume}.
Here we are using the discussion of \S \ref{ss:green construction}, and especially the diagram \eqref{cover}, to relate the  Shimura variety for the unitary group $H=\mathrm{U}(V)$ used throughout \S \ref{ss:u(v) shimura}  to the complex orbifold $\mathcal{S}_V(\C)$ currently under consideration.
\end{proof}

\begin{theorem}\label{thm:degree induction}
If $n\ge 2$,  the complex volumes of \eqref{metrized taut}  and \eqref{Kbun}  are
\[
\vol_\C(  \widehat{\taut}_V)  = 
\frac{|\mathrm{CL} (\kk)|}{   2^{ o(D) -1 }  |\co_\kk^\times|  } \cdot    \mathbf{A}_V (0)
\]
and
\[
\vol_\C(  \widehat{\tautmod}_V )  = 
\frac{|\mathrm{CL} (\kk)|}{    2^{o(D) - n }    |\co_\kk^\times|     } \cdot  \mathbf{A}_V (0) .
\]
\end{theorem}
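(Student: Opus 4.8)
The plan is to prove the formula for $\vol_\C(\widehat{\taut}_V)$ by induction on $n\ge 2$, and then deduce the formula for $\vol_\C(\widehat{\tautmod}_V)$ for free: by part (2) of Theorem \ref{thm:taut-hodge compare} the Chern forms satisfy $\chern(\widehat{\tautmod}_V)=2\chern(\widehat{\taut}_V)$, so $\vol_\C(\widehat{\tautmod}_V)=2^{n-1}\vol_\C(\widehat{\taut}_V)$, and $2^{n-1}/2^{o(D)-1}=1/2^{o(D)-n}$. The positivity assertion is then immediate: $|\mathrm{CL}(\kk)|$ and $|\co_\kk^\times|$ are positive integers, $2^{o(D)-1}$ and $2^{o(D)-n}$ are positive rationals, and $\mathbf{A}_V(0)$ is a positive rational by the discussion following \eqref{A_V} applied to the relevant hermitian space $W=V$.

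The base case $n=2$ is precisely Corollary \ref{cor:base case}. For the inductive step, assume $n>2$ and that the theorem holds in dimension $n-1$. First I would choose, via Theorem \ref{thm:good borcherds}, a weakly holomorphic form $f=\sum_m c(m)q^m\in M_{2-n}^{!,\infty}(D,\eps^n)$ associated to the infinite set $\mathscr{A}=\{\text{primes }p\equiv 1\pmod D\}$ (every such $p$ is split in $\kk$, as $\eps(p)=\eps(1)=1$). Lemma \ref{lem:cohomological divisor} then gives
\[
\vol_\C(\widehat{\taut}_V)=\frac{1}{k(f)}\sum_{p\in\mathscr{A}}c(-p)\int_{\mathcal{Z}_V(p)(\C)}\chern(\widehat{\taut}_V)^{n-2}.
\]
Using $\chern(\widehat{\taut}_V)^{n-2}=2^{-(n-2)}\chern(\widehat{\tautmod}_V)^{n-2}$ together with the first identity of Theorem \ref{thm:height descent 1}, each integral equals $2^{-(n-2)}(p^{n-1}+1)\vol_\C(\widehat{\tautmod}_{V'})$, where $V'$ is the hermitian space of signature $(n-2,1)$ attached to $(V,p)$ by \eqref{V'}.

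Next I would apply the inductive hypothesis to $V'$, which by Lemma \ref{lem:lower hermitian} contains a self-dual lattice, obtaining $\vol_\C(\widehat{\tautmod}_{V'})=\frac{|\mathrm{CL}(\kk)|}{2^{o(D)-n+1}|\co_\kk^\times|}\mathbf{A}_{V'}(0)$. The crucial elementary point is the identity $\mathbf{A}_{V'}(0)=\mathbf{A}_V(0)/\mathbf{b}_{V,n}(0)$. Indeed, \eqref{V'} gives $\mathrm{inv}_\ell(V')=(p,-D)_\ell\,\mathrm{inv}_\ell(V)$, and for $\ell\mid D$ — hence $\ell$ odd, as $D$ is odd — the congruence $p\equiv 1\pmod D$ forces $p$ to be a square in $\Z_\ell^\times$, so $(p,-D)_\ell=1$ and $\mathrm{inv}_\ell(V')=\mathrm{inv}_\ell(V)$ for all $\ell\mid D$. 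Since the functions $\mathbf{b}_{\bullet,k}$ depend on the hermitian space only through its local invariants at the primes dividing $D$, this yields $\mathbf{b}_{V',k}=\mathbf{b}_{V,k}$ for $1\le k\le n-1$, whence $\mathbf{A}_{V'}(0)=\prod_{k=1}^{n-1}\mathbf{b}_{V',k}(0)=\prod_{k=1}^{n-1}\mathbf{b}_{V,k}(0)=\mathbf{A}_V(0)/\mathbf{b}_{V,n}(0)$ by the factorization \eqref{A_V factorization}. Substituting everything back in,
\[
\vol_\C(\widehat{\taut}_V)=\frac{1}{k(f)}\cdot\frac{1}{2^{n-2}}\Big(\sum_{p\in\mathscr{A}}c(-p)(p^{n-1}+1)\Big)\cdot\frac{|\mathrm{CL}(\kk)|}{2^{o(D)-n+1}|\co_\kk^\times|}\cdot\frac{\mathbf{A}_V(0)}{\mathbf{b}_{V,n}(0)},
\]
and the weight formula $k(f)=\mathbf{b}_{V,n}(0)^{-1}\sum_{p\in\mathscr{A}}c(-p)(p^{n-1}+1)$ of Proposition \ref{prop:awesome B} makes the first three factors collapse to $\mathbf{b}_{V,n}(0)\cdot 2^{-(n-2)}$, giving $\vol_\C(\widehat{\taut}_V)=\frac{|\mathrm{CL}(\kk)|\,\mathbf{A}_V(0)}{2^{\,n-2+o(D)-n+1}|\co_\kk^\times|}=\frac{|\mathrm{CL}(\kk)|\,\mathbf{A}_V(0)}{2^{o(D)-1}|\co_\kk^\times|}$, completing the induction.

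The proof is essentially a matter of assembling results already in place; all of the analytic and geometric content sits in Lemma \ref{lem:cohomological divisor}, Theorem \ref{thm:height descent 1}, and Proposition \ref{prop:awesome B}. The only step demanding care is the bookkeeping in the inductive step: tracking the power of $2$ that relates $\widehat{\taut}_V$ to $\widehat{\tautmod}_V$ through their Chern forms, verifying the identity $\mathbf{A}_{V'}(0)=\mathbf{A}_V(0)/\mathbf{b}_{V,n}(0)$ (which is precisely the reason $\mathscr{A}$ is restricted to primes congruent to $1$ modulo $D$), and carrying out the cancellation of $k(f)$ against the sum over $p\in\mathscr{A}$ via Proposition \ref{prop:awesome B}.
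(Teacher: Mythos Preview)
Your proof is correct and follows essentially the same route as the paper: induction on $n$ with base case Corollary \ref{cor:base case}, the inductive step combining Lemma \ref{lem:cohomological divisor}, the first identity of Theorem \ref{thm:height descent 1}, the weight formula of Proposition \ref{prop:awesome B}, and the factorization \eqref{A_V factorization}. The only cosmetic difference is that the paper works directly with $\widehat{\taut}_V$ throughout (invoking the equality of Chern forms once to restate Theorem \ref{thm:height descent 1} as $\int_{\mathcal{Z}_V(p)(\C)}\chern(\widehat{\taut}_V)^{n-2}=(p^{n-1}+1)\vol_\C(\widehat{\taut}_{V'})$), whereas you pass to $\widehat{\tautmod}$ and carry the powers of $2$ explicitly; and the paper defines $V'$ once as having the same local invariants as $V$ and then checks that this agrees with \eqref{V'} for every $p\in\mathscr{A}$, while you let $V'$ depend on $p$ a priori and verify that $\mathbf{A}_{V'}(0)$ is independent of $p$.
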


\begin{proof}
The two claims are equivalent, by the equality of Chern forms of Theorem \ref{thm:taut-hodge compare}, 
 and both hold when $n=2$ by Corollary \ref{cor:base case}.  It therefore  it suffices to assume that  $n>2$, and that the  first equality holds for all relevant (Definition \ref{def:relevant}) hermitian spaces $V^\flat$ of signature $(n-2,1)$.

Let $p$ be a prime split in $\kk$, and let $V^\flat$ be the hermitian space of signature $(n-2,1)$ defined by   \eqref{Vflat}. 
Using the first claim of Theorem \ref{thm:height descent 1} and the equality of Chern forms of Theorem \ref{thm:taut-hodge compare}, we find 
\[
 \int_{ \mathcal{Z}_V(p) (\C)  }  \chern(\widehat{\taut}_V)^{n-2}    
 =  ( p^{n-1}+1)  \cdot  \vol_\C( \widehat{\taut}_{V^\flat} )  .
\]
On the other hand, we have the equality 
\[
 \int_{ \mathcal{Z}_V(p)(\C) }    \chern(\widehat{\taut}_{V} )^{n-2}  
  = -  B(p,0,s_0) \cdot  \mathrm{vol}_\C(\widehat{\taut}_V )   
\]
from the proof of Proposition \ref{prop:complex divisor-eisenstein}.
Directly from the   definition \eqref{A_V} and the explicit formulas of Corollaries \ref{cor:eis2} and \ref{cor:eisodd2},  we find that 
\[
\frac{ B(p,0,s_0)  }{ p^{n-1} +1 }    =     -  \frac{\mathbf{A}_{V^\flat} ( 0 )    }{ \mathbf{A}_V( 0 ) }   .
\]

Combining the three formulas in the previous paragraph, one obtains
\[
 \vol_\C( \widehat{\taut}_{V^\flat} )  
=  \frac{\mathbf{A}_{V^\flat} ( 0 )    }{ \mathbf{A}_V( 0 ) } \cdot    \mathrm{vol}_\C(\widehat{\taut}_V )  .
\]
The theorem follows from this and  our  induction hypothesis 
\[
   \vol_\C( \widehat{\taut}_{V^\flat} )  
    =  \frac{|\mathrm{CL} (\kk)|   }{   2^{ o(D) -1 }  |\co_\kk^\times|  }  \cdot   \mathbf{A}_{V^\flat}(0)  . \qedhere
\]
\end{proof}

\begin{remark}
Theorem \ref{thm:degree induction} is also true when $n=1$.
 In this case  
 \[
 \mathcal{S}_V(\C) \subset \mathcal{M}_{1,0}(\C) \times \mathcal{M}_{0,1}(\C)
 \]
  is a complex orbifold of dimension $0$ parametrizing certain pairs of elliptic curves with complex multiplication by $\co_\kk$, the Chern form of $ \widehat{\taut}_V$ is (obviously) trivial, and the theory of complex multiplication implies
\[
 \vol_\C(  \widehat{\taut}_V) =
 \int_{\mathcal{S}_V(\C)} 1 = \sum_{s\in \mathcal{S}_V(\C)} \frac{1}{ |\Aut(s)| } = \frac{|\mathrm{CL} (\kk)|^2}{ 2^{ o(D) -1 }  \cdot  |\co_\kk^\times|^2  } .
\]
This agrees with 
\[
\frac{|\mathrm{CL} (\kk)|}{   2^{ o(D) -1 }  |\co_\kk^\times|  } \cdot    \mathbf{A}_V (0)
= \frac{|\mathrm{CL} (\kk)|}{   2^{ o(D) -1 }  |\co_\kk^\times|  } \cdot  \mathbf{a}_1(0)
= \frac{|\mathrm{CL} (\kk)|^2}{ 2^{ o(D) -1 }  \cdot  |\co_\kk^\times|^2  },
\]
where the final equality is by Dirichlet's class number formula.
\end{remark}


\section{An alternate proof of the complex volume formula}
\label{ss:alt complex}


We have invoked Remark \ref{rem:strong complex volume} in the proofs of Proposition
\ref{prop:complex divisor-eisenstein} and Theorem \ref{thm:degree induction},
 but we have enough information at this point to give an independent proof of 
 Theorem \ref{thm:degree induction}, and deduce much of Remark \ref{rem:strong complex volume} and 
Proposition \ref{prop:complex divisor-eisenstein} as a consequence.
This is not logically necessary in what follows, but the same method will be used in the proof of Theorem \ref{thm:K volume} below, and so we explain the argument here.

As before, the proof of Theorem \ref{thm:degree induction} is by induction on $n\ge 2$, with the case $n=2$ already known by  Corollary \ref{cor:base case}.   Thus we assume $n>2$.

Suppose $p$ is a prime split in $\kk$, and let $V^\flat$ denote the $\kk$-hermitian space of signature $(n-2,1)$ characterized by \eqref{Vflat}.  
 It follows from Theorem \ref{thm:height descent 1} and the induction hypothesis that
 \begin{align*}
 \int_{ \mathcal{Z}_V(p)(\C) }    \chern(\widehat{\taut}_{V} )^{n-2}
&  = (p^{n-1}+1) \vol_\C ( \widehat{\taut}_{V^\flat} )   \\
& =  \frac{|\mathrm{CL} (\kk)|}{   2^{ o(D) -1 }  |\co_\kk^\times|  } \cdot  (p^{n-1}+1)  \cdot \mathbf{A}_{V^\flat} (0)  .
 \end{align*}
 Exactly as in the proof of Theorem  \ref{thm:degree induction},
Corollaries \ref{cor:eis2} and \ref{cor:eisodd2}  imply
\[
-  \frac{B(p,0,s_0)}{p^{n-1} +1}      =     \frac{  \mathbf{A}_{V^\flat} ( 0 )    }{ \mathbf{A}_V( 0 ) }   ,
\]
 and therefore
 \[
  \int_{ \mathcal{Z}_V(p)(\C) }    \chern(\widehat{\taut}_{V} )^{n-2}
  = 
   -  \frac{  |\mathrm{CL} (\kk)|}{   2^{ o(D) -1 }  |\co_\kk^\times|  } \cdot  \mathbf{A}_V( 0 )  \cdot    B(p,0,s_0) .
 \]
 Combining this with Proposition \ref{prop:complex expansions} shows that the $p^\mathrm{th}$ coefficient of the \emph{cuspidal} modular form
\begin{equation}\label{alt cusp}
 \deg_\C(   \widehat{\phi}^\mathrm{cusp}_V (\tau) \cdot \widehat{\taut}_V^{n-2} )
=
  \deg_\C(   \widehat{\phi}_V (\tau) \cdot \widehat{\taut}_V^{n-2} ) -  \deg_\C(   \widehat{\phi}^\mathrm{eis}_V (\tau) \cdot \widehat{\taut}_V^{n-2} )
\end{equation}
 is
 \begin{equation}\label{cuspidal fake coefficient}
     \left( 
      \mathrm{vol}_\C(\widehat{\taut}_V ) - 
   \frac{  |\mathrm{CL} (\kk)|}{   2^{ o(D) -1 }  |\co_\kk^\times|  } \cdot  \mathbf{A}_V( 0 ) 
   \right)
    \cdot    B(p,0,s_0) .
 \end{equation}
 
 The essential thing is that the quantity in parentheses in   \eqref{cuspidal fake coefficient} is independent of the choice of split prime $p$, which we now allow to vary.
 The Hecke bound for coefficients of cusp forms implies that the absolute value of  \eqref{cuspidal fake coefficient}  is $O(p^{n/2})$  as $p\to \infty$.  On the other hand, Corollary \ref{cor:eisgrowth} shows that the absolute value of $ B(p,0,s_0)$ is greater than a constant times $p^{n-1}$.  As $n>2$, we conclude from this that the quantity in parentheses in \eqref{cuspidal fake coefficient} vanishes, completing our alternate proof of 
  Theorem  \ref{thm:degree induction}.

Moreover, having given a new proof that the $p^\mathrm{th}$ Fourier coefficient of \eqref{alt cusp}
vanishes for all primes $p$ split in $\kk$, we deduce from Corollary \ref{cor:bel1} that the $m^\mathrm{th}$ Fourier coefficient vanishes for all $m$ with $\eps(m)=1$.  
  Again invoking  Proposition \ref{prop:complex expansions}, we deduce that 
 \[
 \int_{ \mathcal{Z}_V(m)(\C) }    \chern(\widehat{\taut}_{V} )^{n-2}  
  = -  \mathrm{vol}_\C(\widehat{\taut}_V )    B(m,0,s_0)
\]
for all such $m$.  In this way, our alternate proof recovers much of 
 Remark \ref{rem:strong complex volume} and Proposition \ref{prop:complex divisor-eisenstein} in a new way.


\section{The  arithmetic volume formula}
 \label{ss:arithmetic intersections}


The arithmetic intersection and arithmetic degree of \S \ref{ss:chow} provide us with a bilinear pairing
\[
 \widehat{\CH}^1( \bar{\mathcal{S}}_V ,\mathscr{D}_\BKK )
 \otimes
  \widehat{\CH}^{n-1}( \bar{\mathcal{S}}_V ,\mathscr{D}_\BKK )  \to  \R,
\]
sending
$
\widehat{\mathcal{Z}}_1 \otimes \widehat{\mathcal{Z}}_2
 \mapsto \widehat{\deg}( \widehat{\mathcal{Z}}_1 \cdot \widehat{\mathcal{Z}}_2  ) .
$
The  hermitian line bundle 
\[
\widehat{\tautmod}_V =  2 \widehat{\taut}_V - (\mathrm{Exc}_V,0)   \in  \widehat{\Pic} ( \bar{\mathcal{S}}_V , \mathscr{D}_\BKK ) 
\]
 of Theorem \ref{thm:taut-hodge compare} therefore determines a linear functional
 \[
 \widehat{\CH}^1( \bar{\mathcal{S}}_V ,\mathscr{D}_\BKK )  \to  \R,
 \]
called  \emph{arithmetic intersection against $\widehat{\tautmod}_V^{n-1}$}, given by
$
 \widehat{\mathcal{Z}} \mapsto    \widehat{\deg} ( \widehat{\mathcal{Z}} \cdot \widehat{\tautmod}_V^{n-1}  ) .
$

Assuming (as throughout Chapter \ref{s:borcherds}) that $n>2$, we have the modular form
\begin{equation}\label{degree height}
 \widehat{\deg} (  \widehat{\phi}_V  (\tau) \cdot \widehat{\tautmod}_V^{n-1} )  
 = \sum_{m\ge 0}  \widehat{\deg} \big( \widehat{\mathcal{Z}}^\mathrm{tot}(m)  \cdot \widehat{\tautmod}_V^{n-1} \big)  \cdot q^m
\end{equation}
in $M_n(\Gamma_0(D) , \eps^n)$ obtained
 by applying this linear functional coefficient-by-coefficient to \eqref{BHKRYseries}, 
 and similarly for the other forms appearing in the decomposition 
\[
\widehat{\phi}_V(\tau)
=\widehat{\phi}^\mathrm{eis}_V(\tau)  +  \widehat{\phi}^\mathrm{exc}_V (\tau)+  \widehat{\phi}^\mathrm{cusp}_V (\tau)
\]
of  \eqref{BHKRY decomp}.  
As $\widehat{\tautmod}_V$  has trivial arithmetic intersection with every irreducible component of the exceptional divisor $\mathrm{Exc}_V \subset \bar{\mathcal{S}}_V$ by Theorem \ref{thm:taut-hodge compare}, it has  trivial arithmetic intersection with every coefficient of \ref{exceptional generating}, and 
\begin{equation}\label{theta vanish}
 \widehat{\deg} (  \widehat{\phi}^\mathrm{exc}_V (\tau)  \cdot \widehat{\tautmod}_V^{n-1} ) =0.
\end{equation}

Motivated by Proposition \ref{prop:complex divisor-eisenstein}, we make the following conjecture.

\begin{conjecture}\label{conj:arithmetic degrees}
Assuming $n>2$,  we have the equality of modular forms
\[
\widehat{\deg} (  \widehat{\phi}_V  (\tau) \cdot \widehat{\tautmod}_V^{n-1} ) 
= \widehat{\deg} (  \widehat{\phi}^\mathrm{eis}_V (\tau) \cdot \widehat{\tautmod}_V^{n-1} ) .
\]
Equivalently, given \eqref{theta vanish}, 
\[
\widehat{\deg} (  \widehat{\phi}^\mathrm{cusp}_V (\tau) \cdot \widehat{\tautmod}_V^{n-1} )  =0.
\]
\end{conjecture}

At long last we come to the central result of this work: the calculation of the  arithmetic volume of $\widehat{\tautmod}_V$ via induction.  
The calculation will simultaneously yield evidence for Conjecture \ref{conj:arithmetic degrees}, and also provide an explicit value for the mysterious rational numbers $a(p)$ of Theorem \ref{thm:height descent 1}.

\begin{theorem}\label{thm:K volume}
The hermitian line bundle  $\widehat{\tautmod}_V$  satisfies the following properties.
\begin{enumerate}
\item
If $n\ge 2$,  its arithmetic volume (in the sense of \S \ref{ss:volumes}) is 
\[
 \widehat{\vol}( \widehat{\tautmod}_V)  
=  \left(  2\frac{\mathbf{A}'_V (0) }{\mathbf{A}_V (0)}     +   \log(D)   \right)
 \vol_\C(  \widehat{\tautmod}_V).
\]
\item
If  $n>2$, then  for any prime $p$ split in $\kk$ the rational number $a(p)$ of Theorem \ref{thm:height descent 1} is given by
\[
  a(p)  = 2  \cdot  \frac{  p^{n-1} -1 }{  p^{n-1}+1 }  \cdot   \vol_\C (\widehat{\tautmod}_{V^\flat}  ) ,
\]
where $V^\flat$ the hermitian space of signature $(n-2,1)$ with local invariants determined by \eqref{Vflat}.
\item
If  $n> 2$ then for any  positive $m\in \Z$ with $\eps(m)=1$, the  $m^\mathrm{th}$ Fourier coefficient of 
\[
 \widehat{\deg} (  \widehat{\phi}^\mathrm{cusp}_V (\tau) \cdot \widehat{\tautmod}_V^{n-1} )  \in S_n(\Gamma_0(D) ,\eps^n) 
\]
vanishes.
\end{enumerate}
\end{theorem}

\begin{proof}
The proof is by induction on $n$. As we already know the first claim for $n=2$ by Corollary \ref{cor:base case}, it suffices to assume that  $n>2$, and that the first claim holds for all relevant (Definition \ref{def:relevant}) hermitian spaces $V^\flat$ of signature $(n-2,1)$.

We define $C(V)$ by the relation
\[
\widehat{\vol}( \widehat{\tautmod}_V)  = C(V)  \vol_\C( \widehat{\tautmod}_V) ,
\]
and similarly for any $V^\flat$ as above, so that our induction hypothesis is 
\begin{equation}\label{induction constant}
C( V^\flat )    =  2\frac{\mathbf{A}'_{V^\flat} (0) }{\mathbf{A}_{V^\flat} (0)}    +   \log(D) .
\end{equation}
Recall from Proposition \ref{prop:multi eisenstein} the  Eisenstein series
\[
 \sum_{r\mid D}   \gamma_r E_r  (\tau) 
 =
\sum_{ m \ge 0 } B(m, 0 ,s_0) q^m  \in M_n(\Gamma_0(D) , \eps^n ).
\]
Recall also from Proposition \ref{prop:eisenstein twist} the rational number
\[
\beta_\ell  = (-1)^{n+1} \inv_\ell(V)  \cdot \begin{cases}
   \leg{-1}{\ell}^{\frac{n}{2} }        \ell^{ \frac{n}{2} }  &  \mbox{if $n$ is even} \\[2ex]
\leg{-1}{\ell}^{\frac{n-1}{2} }     \ell ^{ \frac{n-1}{2}  } &  \mbox{if $n$ is odd}
\end{cases}
\]
for every prime $\ell \mid D$.

\begin{lemma}\label{lem:second volume lemma}
Let $p$ be a prime split in $\kk$,  let $V^\flat$ the hermitian space of signature $(n-2,1)$ with local invariants determined by \eqref{Vflat}.  If we abbreviate
\begin{align*}
\kappa (p)  &=   2 \frac{ \mathbf{A}'_V (0)}{  \mathbf{A}_V (0)  }   + \log(D)  
  +     \sum_{\ell \mid D}    \frac{  2   \log(\ell)    }  {       1+\beta_\ell \cdot  \leg{p}{\ell}^n    } , \\
\alpha(p)  &= (p^{n-1}+1)  a(p)  - 2    ( p^{n-1} -1 )   \vol_\C (\widehat{\tautmod}_{V^\flat}  )  ,
 \end{align*}
then the $p^\mathrm{th}$ coefficient of the modular form
\[
 \widehat{\deg} (  \widehat{\phi}_V  (\tau) \cdot \widehat{\tautmod}_V^{n-1} )  \in M_n(\Gamma_0(D) , \eps^n)
\]
   is equal to 
\[
- \frac{  \kappa(p)  }{2}   \cdot B(p,0,s_0)   \cdot   \vol_\C( \widehat{\tautmod}_V)   + \alpha(p) \cdot \log(p).
\]
\end{lemma}

\begin{proof}
By Theorem \ref{thm:no boundary heights}, the $p^\mathrm{th}$ coefficient of \eqref{degree height} is
\begin{equation}\label{degree breakdown}
\widehat{\deg} \big( \widehat{\mathcal{Z}}_V^\mathrm{tot} (p)  \cdot \widehat{\tautmod}_V^{n-1}  \big)
=
\mathrm{ht}_{\widehat{\tautmod}_V}  (  \bar{\mathcal{Z}}_V (p) ) 
+ \int_{\mathcal{S}_V(\C)}  \Phi_V(p)  \chern( \widehat{\tautmod}_V)^{n-1}   .
\end{equation}
Using Theorem \ref{thm:height descent 1}, we rewrite the first term on the right hand side as
\begin{align}
\mathrm{ht}_{\widehat{\tautmod}_V}  (  \bar{\mathcal{Z}}_V (p) ) 
& = 
(p^{n-1}+1) \left( \widehat{\vol} (\widehat{\tautmod}_{V^\flat} )  + a(p) \log(p) \right)   \nonumber \\
& = 
(p^{n-1}+1) \left(C(V^\flat) \vol_\C (\widehat{\tautmod}_{V^\flat} )  + a(p) \log(p) \right) .  \label{height decomp application}
\end{align}

Now consider the second term on the right hand side of \eqref{degree breakdown}.
By Remark \ref{rem:S_V green integral} and the equality 
$
2 \chern( \widehat{\taut}_V ) = \chern( \widehat{\tautmod}_V ) 
$
 of Theorem \ref{thm:taut-hodge compare}, it is equal to
\[
\int_{ \mathcal{S}_V(\C)  } \Phi_V (p)  \chern( \widehat{\tautmod}_V) ^{n-1}  
  =    \vol_\C( \widehat{\tautmod}_V)   B'(p,0,s_0) .
  \]
  Theorem  \ref{thm:degree induction} implies the equality
\begin{equation}\label{complex vol swap}
\vol_\C(  \widehat{\tautmod}_V )   = 2   \cdot   \frac{  \mathbf{A}_V (0)  }{  \mathbf{A}_{V^\flat} (0) }   \cdot   \vol_\C(  \widehat{\tautmod}_{V^\flat} )  ,
\end{equation}
which we use to rewrite the integral above as
\begin{equation}\label{pheight C}
 \int_{\mathcal{S}_V(\C)}  \Phi_V(p)  \chern( \widehat{\tautmod}_V)^{n-1}     
    =  C_1 \cdot  (p^{n-1}+1) \cdot \vol_\C (\widehat{\tautmod}_{V^\flat} ) ,
 \end{equation}
in which we have set
\[
C_1 = 2 \cdot     \frac{    B'(p, 0 ,s_0)  }{   p^{n-1}+1   }    \cdot   \frac{  \mathbf{A}_V (0)  }{  \mathbf{A}_{V^\flat} (0) }. 
\]

Plugging  \eqref{height decomp application} and \eqref{pheight C}  into \eqref{degree breakdown}, we obtain 
  \begin{align*}
\widehat{\deg} \big( \widehat{\mathcal{Z}}_V^\mathrm{tot} (p)  \cdot \widehat{\tautmod}_V^{n-1}  \big)
 & =(p^{n-1}+1)  ( C_1+ C(V^\flat)   )  \vol_\C (\widehat{\tautmod}_{V^\flat}  )   \\
 &\quad +(p^{n-1}+1) a(p) \log(p) .
 \end{align*}
 By direct calculation, Corollaries \ref{cor:eis2} and \ref{cor:eisodd2} (along with the definition \eqref{A_V} of $\mathbf{A}_V(s)$ and $\mathbf{A}_{V^\flat}(s)$)   give the explicit formulas
\begin{equation}\label{explicit p}
\frac{ B(p,0,s_0)  }{ p^{n-1} +1 }    =     -\frac{\mathbf{A}_{V^\flat} ( 0 )    }{ \mathbf{A}_V( 0 ) }   
\end{equation}
and
\begin{align*}
\frac{  B'(p,0,s_0) }{  B(p,0,s_0)  }
       & =    
    \frac{   \mathbf{A}'_{V^\flat} (0) }{   \mathbf{A}_{V^\flat} (0)  }    -  \frac{ \mathbf{A}'_V (0)}{  \mathbf{A}_V (0)  }   
    +   \frac{  p^{n-1} -1   }{   p^{n-1}  +1    }   \log(p)  
    -   \sum_{\ell \mid D}    \frac{    \log(\ell)    }  {      1+\beta_\ell \cdot  \leg{p}{\ell}^n   }  .
\end{align*}
Using these equalities we see that the constant $C_1$ can be rewritten as
\[
C_1
=      - 2  \frac{    \mathbf{A}'_{V^\flat} (0) }{   \mathbf{A}_{V^\flat} (0)  }    + 2    \frac{ \mathbf{A}'_V (0)}{  \mathbf{A}_V (0)  }   
   - 2     \frac{  p^{n-1} -1   }{   p^{n-1}  +1    }   \log(p)  
    +     \sum_{\ell \mid D}    \frac{ 2   \log(\ell)    }  {      1+\beta_\ell \cdot  \leg{p}{\ell}^n    }  .
\]
Combining this with the induction hypothesis \eqref{induction constant} shows that
\begin{align*}
C_1+ C(V^\flat)   & =  \kappa(p)   - 2     \frac{  p^{n-1} -1   }{   p^{n-1}  +1    }   \log(p)  ,
\end{align*}
  and  we obtain 
\[
\widehat{\deg} \big( \widehat{\mathcal{Z}}_V^\mathrm{tot} (p)  \cdot \widehat{\tautmod}_V^{n-1}  \big)
 =  (p^{n-1}+1) \kappa(p)  \vol_\C (\widehat{\tautmod}_{V^\flat}  )    + \alpha(p) \log(p)     .
\]
To complete the proof of the lemma, substitute the equality 
\[
 \vol_\C (\widehat{\tautmod}_{V^\flat}  ) 
   \stackrel{\eqref{complex vol swap}}{=}
 \frac{ \mathbf{A}_{V^\flat}(0)}{2\mathbf{A}_V(0)} \cdot  \vol_\C (\widehat{\tautmod}_{V}  )  \\
       \stackrel{\eqref{explicit p}}{=}
-  \frac{  B(p,0,s_0) }{ 2( p^{n-1}+1) }    \vol_\C (\widehat{\tautmod}_{V}  ) 
\]
into the first term on the right hand side.
\end{proof}

\begin{lemma}\label{lem:third volume lemma}
We have the equality of modular forms
\begin{align}
\widehat{\deg}(  \widehat{\phi}^\mathrm{eis}_V (\tau)  \cdot \widehat{\tautmod}_V^{n-1} ) 
  & = -  \frac{1}{2}  \widehat{\mathrm{vol}}( \widehat{\tautmod}_V    )
    \sum_{r\mid D}   \gamma_r E_r  (\tau) \nonumber \\
  & \quad -   \mathrm{vol}_\C (  \widehat{\tautmod}_V )   \sum_{r\mid D} \log(D/r)     \gamma_r E_r (\tau)  .
  \label{first eis volume}
  \end{align}
  Moreover, for positive $m\in \Z$ with $\gcd(m,D)=1$  the $m^\mathrm{th}$ coefficient of this modular form is
   \[
 -    \frac{1}{2}     \left(     \widehat{\mathrm{vol}}( \widehat{\tautmod}_V    )  
     +  \mathrm{vol}_\C (  \widehat{\tautmod}_V )  
 \sum_{\ell \mid D}    \frac{ 2 \log(\ell )  }{  1+\beta_\ell \cdot  \leg{m}{\ell}^n }   \right) B(m,0,s_0) .
   \] 
  \end{lemma}

\begin{proof}
Directly from the definition \eqref{eis part} of   $\widehat{\phi}^\mathrm{eis}_V  (\tau)$
we find
\begin{align*}
\widehat{\deg}(  \widehat{\phi}^\mathrm{eis}_V (\tau)  \cdot \widehat{\tautmod}_V^{n-1} ) 
  & = -   \widehat{\deg}( \widehat{\taut}_V  \cdot \widehat{\tautmod}_V^{n-1} )  \sum_{r\mid D}   \gamma_r E_r (\tau) \\
  & \quad - \sum_{r\mid D} \widehat{\deg} \big(    ( \mathrm{Exc}_V  ,     \log(D/r) )    \cdot \widehat{\tautmod}_V^{n-1} \big)  \gamma_r E_r (\tau)   .
  \end{align*}
 For the first term on the right hand side, we use Theorem \ref{thm:taut-hodge compare} to see that
\begin{align*}
\widehat{\deg}( \widehat{\taut}_V  \cdot \widehat{\tautmod}_V^{n-1} )
  = 2^{-1} \cdot \widehat{\deg} ( \widehat{\tautmod}_V   \cdot \widehat{\tautmod}_V^{n-1} )  
  =  2^{-1} \cdot  \widehat{\mathrm{vol}}( \widehat{\tautmod}_V    ) .
\end{align*}
 For the second term on the right hand side, it follows from Theorem \ref{thm:taut-hodge compare} that $ ( \mathrm{Exc}_V   ,  0 )    \cdot  \widehat{\tautmod}_V^{n-1}    =0$.
As in the proof of Lemma \ref{lem:trivial volume shift}, we have the easy arithmetic intersection formula
\[
( 0    ,     \log(D/r)  )    \cdot  \widehat{\tautmod}_V^{n-1} 
 = \big(  0 ,     \log(D/r)  \chern( \widehat{\tautmod}_V )^{n-1}  \big) 
 \in  \widehat{\CH}^n( \bar{\mathcal{S}}_V ,\mathscr{D}_\BKK ) ,
\]
and hence, directly from the definition \eqref{degree normalization} of arithmetic degree,  
\begin{align*}
\widehat{\deg}(     ( 0    ,     \log(D/r) )  \cdot  \widehat{\tautmod}_V^{n-1}  )  
&  =     \log(D/r)   \int_{ \mathcal{S}_V(\C)}   \chern( \widehat{\tautmod}_V )^{n-1}  \\
&  =     \log(D/r)  \mathrm{vol}_\C (  \widehat{\tautmod}_V ) .
\end{align*}
Thus
\[
\widehat{\deg}(     (  \mathrm{Exc}_V   ,     \log(D/r) )  \cdot  \widehat{\tautmod}_V^{n-1}  )
= \log(D/r)  \mathrm{vol}_\C (  \widehat{\tautmod}_V ).
\]
The equality \eqref{first eis volume} follows immediately from these formulas.

For the second claim, using Proposition \ref{prop:multi eisenstein} and  Proposition \ref{prop:eisenstein twist}, the $m^\mathrm{th}$ coefficient of the modular form on the right hand side of \eqref{first eis volume} is
\begin{align*}\lefteqn{
-  \frac{1}{2}  \widehat{\mathrm{vol}}( \widehat{\tautmod}_V    )
    \sum_{r\mid D}   \gamma_r e_r(m)  
     -   \mathrm{vol}_\C (  \widehat{\tautmod}_V )   \sum_{r\mid D}    \gamma_r e_r(m)  \log(D/r)   } \\
     & =
     -  \frac{1}{2}  \widehat{\mathrm{vol}}( \widehat{\tautmod}_V    )  B(m,0,s_0) 
     -   \mathrm{vol}_\C (  \widehat{\tautmod}_V )  
      \left(   \sum_{\ell \mid D}    \frac{  \log(\ell )  }{  1+\beta_\ell \cdot  \leg{m}{\ell}^n }   \right) B(m,0,s_0) ,
\end{align*}
as desired.
\end{proof}

The proof of Theorem \ref{thm:K volume} now proceeds by examining the Fourier coefficients of the modular form
\begin{equation}\label{cuspidal difference}
\widehat{\deg} ( \widehat{\phi}^\mathrm{cusp}_V(\tau) \cdot \widehat{\tautmod}_V^{n-1} ) =
\widehat{\deg} ( \widehat{\phi}_V(\tau) \cdot \widehat{\tautmod}_V^{n-1} ) - 
\widehat{\deg} ( \widehat{\phi}^\mathrm{eis}_V(\tau) \cdot \widehat{\tautmod}_V^{n-1} ).
\end{equation}
If we abbreviate
\[
c = 
   \frac{    \widehat{\vol} ( \widehat{\tautmod}_V)   }{2}  
  - \left( \frac{\mathbf{A}'_{V} (0) }{\mathbf{A}_{V} (0)}    +   \frac{ \log(D) }{2}   \right)  \vol_\C( \widehat{\tautmod}_V)   ,
\]
then comparing Lemma \ref{lem:second volume lemma} and Lemma \ref{lem:third volume lemma} shows that  the $p^\mathrm{th}$ coefficient of \eqref{cuspidal difference} is 
\[
c  B(p,0,s_0)       + \alpha(p) \log(p)
\]
 for all primes $p$ split in $\kk$.
 In particular, the modular form 
\[
\widehat{\deg} ( \widehat{\phi}^\mathrm{cusp}_V(\tau) \cdot \widehat{\tautmod}_V^{n-1} )  
- c \sum_{m\ge 0}B(m,0,s_0)
\]
has the property that its  $p^\mathrm{th}$ coefficient is $\alpha(p) \log(p) \in \Q \log(p)$ for all such primes.
It follows from Corollary \ref{cor:bel3} that $\alpha(p)=0$, proving the second claim of Theorem \ref{thm:K volume}.

We have now seen the $p^\mathrm{th}$ coefficient of  the \emph{cuspidal} modular form \eqref{cuspidal difference} is $cB(p,0,s_0)$ for all primes $p$ split in $\kk$, and may argue as in \S \ref{ss:alt complex}. 
 The Hecke bound for coefficients of cusp forms implies that the absolute value of  \eqref{cuspidal fake coefficient}  is $O(p^{n/2})$  as $p\to \infty$, while Corollary \ref{cor:eisgrowth} shows that the absolute value of $ B(p,0,s_0)$ is greater than a constant times $p^{n-1}$.  We conclude  that $c=0$,  proving the first claim of Theorem \ref{thm:K volume}.

Finally, having now shown that the $p^\mathrm{th}$ coefficient of \eqref{cuspidal difference} vanishes for all primes $p$ split in $\kk$, Corollary \ref{cor:bel1}  shows that the $m^\mathrm{th}$ coefficient vanishes whenever $\eps(m)=1$. This proves the third claim of Theorem \ref{thm:K volume}.
\end{proof}


\section{Consequences of Theorem 8.4.2}
\label{ss:volume corollaries}


We  now collect many corollaries of Theorem \ref{thm:K volume}.
The first of these are explicit formulas for the arithmetic volumes of the other (perhaps more natural) hermitian line bundles on $\mathcal{S}_V$ defined in \S \ref{ss:hermitian bundles}.

\begin{corollary}\label{cor:other volumes}
Assuming $n\ge 2$,  we have the  arithmetic volume formulas
\[
\widehat{\mathrm{vol}} (  \widehat{\omega}^\mathrm{Hdg}_{ A / \mathcal{S}_V}    ) 
=
 \left(  2\frac{\mathbf{A}'_V (0) }{\mathbf{A}_V (0)}   -n C_0(n)    +   \log(D)   \right)
 \vol_\C( \widehat{\omega}^\mathrm{Hdg}_{ A / \mathcal{S}_V}    )
\]
and
\begin{align*}
 \widehat{\vol} ( \widehat{\taut}_V ) 
&= 
    \left(  \frac{\mathbf{A}'_V (0) }{\mathbf{A}_V (0)}    +   \frac{ \log(D)  }{2}   \right)
   \vol_\C(  \widehat{\taut}_V)
+  \frac{ (-1)^{n-1}}{2}   \sum_{E \subset \mathrm{Exc}_V}  \widehat{m}_E   .
\end{align*}
In the second equality the  sum is over all irreducible components $E \subset \mathrm{Exc}_V$  of the exceptional divisor of Definition \ref{def:special exceptional} and, if $p\mid D$ denotes the prime at which $E$ is supported,    $\widehat{m}_E = m_E \log(p) $ is the constant of \S \ref{ss:exceptional volume}.
\end{corollary}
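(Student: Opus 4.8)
The plan is to deduce Corollary~\ref{cor:other volumes} from Theorem~\ref{thm:K volume} by unwinding the definition $\widehat{\tautmod}_V = 2\widehat{\taut}_V - (\mathrm{Exc}_V,0)$ and applying the comparison results of Theorem~\ref{thm:taut-hodge compare} and the exceptional-divisor computation of \S\ref{ss:exceptional volume}. Both formulas are purely formal consequences of what has already been established, so there is no serious obstacle; the work is bookkeeping.

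First I would treat the Hodge bundle. By Proposition~\ref{prop:taut-hodge strong volume} we have
\[
\widehat{\mathrm{vol}}(\widehat{\tautmod}_V) = \widehat{\mathrm{vol}}(\widehat{\omega}^\mathrm{Hdg}_{A/\mathcal{S}_V}) + nC_0(n)\,\vol_\C(\widehat{\omega}^\mathrm{Hdg}_{A/\mathcal{S}_V}),
\]
and by the equality of Chern forms in part~(2) of Theorem~\ref{thm:taut-hodge compare} we have $\vol_\C(\widehat{\tautmod}_V) = \vol_\C(\widehat{\omega}^\mathrm{Hdg}_{A/\mathcal{S}_V})$. Substituting the formula for $\widehat{\mathrm{vol}}(\widehat{\tautmod}_V)$ from Theorem~\ref{thm:K volume} and solving for $\widehat{\mathrm{vol}}(\widehat{\omega}^\mathrm{Hdg}_{A/\mathcal{S}_V})$ immediately gives the first claimed identity. (When $n=2$ one must note that Proposition~\ref{prop:taut-hodge strong volume} was proved for all $n\ge 2$, so no special case is needed here.)

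Next I would handle $\widehat{\taut}_V$. Starting from \eqref{taut volume comparison},
\[
2^n\cdot\widehat{\vol}(\widehat{\taut}_V) = \widehat{\vol}(\widehat{\tautmod}_V) + \widehat{\vol}(\mathrm{Exc}_V,0),
\]
I would insert Theorem~\ref{thm:K volume} for the first term, and for the second term use additivity of arithmetic volume over connected components together with Corollary~\ref{cor:exceptional volume}, which gives $\widehat{\vol}(E,0) = (-2)^{n-1}\widehat{m}_E$ for each connected component $E\subset\mathrm{Exc}_V$; hence $\widehat{\vol}(\mathrm{Exc}_V,0) = (-2)^{n-1}\sum_{E}\widehat{m}_E$. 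Finally I would use $\vol_\C(\widehat{\tautmod}_V) = 2^{n-1}\vol_\C(\widehat{\taut}_V)$ (again from the equality of Chern forms, since $\chern(\widehat{\tautmod}_V) = 2\chern(\widehat{\taut}_V)$ and the exceptional divisor contributes nothing to the Chern form), divide through by $2^n$, and collect terms:
\[
\widehat{\vol}(\widehat{\taut}_V) = \frac{1}{2}\Big(2\frac{\mathbf{A}'_V(0)}{\mathbf{A}_V(0)} + \log(D)\Big)\vol_\C(\widehat{\taut}_V) + \frac{(-2)^{n-1}}{2^n}\sum_{E\subset\mathrm{Exc}_V}\widehat{m}_E,
\]
and since $(-2)^{n-1}/2^n = (-1)^{n-1}/2$ this is exactly the asserted formula. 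The only point requiring a sentence of care is that when $n=2$ the exceptional divisor may be empty, in which case the sum is zero and the formula still reads correctly; and one should remark that all of $\mathbf{A}_V(0)$, the integrals $\vol_\C$, and the $\widehat{m}_E$ are rational (by Theorem~\ref{thm:degree induction} and the construction of \S\ref{ss:exceptional volume}), so the identities make sense as stated.
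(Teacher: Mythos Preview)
Your proposal is correct and follows essentially the same approach as the paper's proof: the first formula from Proposition~\ref{prop:taut-hodge strong volume} combined with Theorem~\ref{thm:K volume} and the equality of Chern forms, and the second from \eqref{taut volume comparison} together with Corollary~\ref{cor:exceptional volume}. One small point of phrasing: ``additivity of arithmetic volume over connected components'' is not a general principle---what makes $\widehat{\vol}(\mathrm{Exc}_V,0)=\sum_E\widehat{\vol}(E,0)$ hold here is that distinct connected components $E_1,E_2$ of $\mathrm{Exc}_V$ are disjoint, so $(E_1,0)\cdot(E_2,0)=0$ and all cross terms in the expansion of $(\sum_E(E,0))^n$ vanish.
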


\begin{proof}
The first equality follows from Theorem \ref{thm:K volume}, using Proposition \ref{prop:taut-hodge strong volume}  and the equality of Chern forms of Theorem \ref{thm:taut-hodge compare}.

For the second equality, combining \ref{Kbun} with  the  intersection formula  
\[
  \widehat{\tautmod}_V \cdot  (\mathrm{Exc}_V,0)  =0
  \]
   of Theorem \ref{thm:taut-hodge compare} shows that
\begin{align*}
2^n \cdot \widehat{\vol} ( \widehat{\taut}_V ) 
&= \widehat{\vol} ( \widehat{\tautmod}_V + (\mathrm{Exc}_V, 0 ))   \nonumber \\
&= \widehat{\vol} ( \widehat{\tautmod}_V )+ \widehat{\vol}  (\mathrm{Exc}_V, 0 ).
\end{align*}
Combining this with Theorem \ref{thm:K volume} and the equality of Chern forms of Theorem \ref{thm:taut-hodge compare} yields
\[
 \widehat{\vol} ( \widehat{\taut}_V ) 
= 
 \left(  \frac{\mathbf{A}'_V (0) }{\mathbf{A}_V (0)}     +   \frac{ \log(D) }{2}   \right)
 \vol_\C(  \widehat{\taut}_V)
+  \frac{  \widehat{\vol}  (\mathrm{Exc}_V, 0 ) }{ 2^n } .
\]
Now use Corollary \ref{cor:exceptional volume}.
\end{proof}

The most interesting aspect of the expected equality
\[
\sum_{m\ge 0}  \widehat{\deg} \big( \widehat{\mathcal{Z}}^\mathrm{tot}(m)  \cdot \widehat{\tautmod}_V^{n-1} \big) \cdot q^m
\stackrel{?}{=} \widehat{\deg} (  \widehat{\phi}^\mathrm{eis}_V (\tau) \cdot \widehat{\tautmod}_V^{n-1} )
\]
of Conjecture \ref{conj:arithmetic degrees} is that the coefficients of the modular form on the right hand side can be made completely explicit, by combining the formulas of \S \ref{ss:eis decomp} and Theorem \ref{thm:degree induction} with the following corollary of Theorem \ref{thm:K volume}.

\begin{corollary}\label{cor:arithmetic eis}
Assuming $n> 2$, we have the equality of modular forms
\begin{align*}\lefteqn{
\widehat{\deg}(  \widehat{\phi}^\mathrm{eis}_V (\tau) \cdot \widehat{\tautmod}_V^{n-1} )  }  \nonumber \\
  & = 
  - 
   \mathrm{vol}_\C (  \widehat{\tautmod}_V )
 \sum_{r\mid D}\left(  \frac{\mathbf{A}'_V (0) }{\mathbf{A}_V (0)}     +  \frac{3}{2}  \log(D) -\log(r)   \right)    \gamma_r E_r  (\tau)  .
  \end{align*}
 \end{corollary}

\begin{proof}
Start  with the equality 
\begin{align*}
\widehat{\deg}(  \widehat{\phi}^\mathrm{eis}_V (\tau)  \cdot \widehat{\tautmod}_V^{n-1} ) 
  & = -  \frac{1}{2}  \widehat{\mathrm{vol}}( \widehat{\tautmod}_V    )
    \sum_{r\mid D}   \gamma_r E_r  (\tau) \nonumber \\
  & \quad -   \mathrm{vol}_\C (  \widehat{\tautmod}_V )   \sum_{r\mid D} \log(D/r)     \gamma_r E_r (\tau)  
  \end{align*}
  of Lemma \ref{lem:third volume lemma}, and  substitute the arithmetic  volume formula of Theorem \ref{thm:K volume} into the first term on the right hand side.
\end{proof}

  Assume now that $n>2$.
   In light of Corollary \ref{cor:arithmetic eis},  Conjecture  \ref{conj:arithmetic degrees} is equivalent to
   \begin{align}\lefteqn{
 \widehat{\deg} \big( \widehat{\mathcal{Z}}^\mathrm{tot}(m)  \cdot \widehat{\tautmod}_V^{n-1} \big)  }  \nonumber\\
 &  \stackrel{?}{=} 
  - 
   \mathrm{vol}_\C (  \widehat{\tautmod}_V )
 \sum_{r\mid D}\left(  \frac{\mathbf{A}'_V (0) }{\mathbf{A}_V (0)}     +  \frac{3}{2}  \log(D) -\log(r)   \right)    \gamma_r e_r  (m), \label{degree conjecture 1}
\end{align}
for all positive $m\in \Z$.   Of course the right hand side can be made explicit using the formulas of \S \ref{ss:eis decomp}.

For example, let us now  suppose  $\gcd(m,D)=1$, and abbreviate
\[
\kappa (m) =   2 \frac{ \mathbf{A}'_V (0)}{  \mathbf{A}_V (0)  }   + \log(D)  
  +     \sum_{\ell \mid D}    \frac{  2   \log(\ell)    }  {       1+\beta_\ell \cdot  \leg{m}{\ell}^n    } ,
\]
where
\[
\beta_\ell  = (-1)^{n+1} \inv_\ell(V)  \cdot \begin{cases}
   \leg{-1}{\ell}^{\frac{n}{2} }        \ell^{ \frac{n}{2} }  &  \mbox{if $n$ is even} \\[2ex]
\leg{-1}{\ell}^{\frac{n-1}{2} }     \ell ^{ \frac{n-1}{2}  } &  \mbox{if $n$ is odd.}
\end{cases}.
\]
We can use 
 Propositions \ref{prop:multi eisenstein} and  \ref{prop:eisenstein twist} to rewrite \eqref{degree conjecture 1} as 
\[
 \widehat{\deg} \big( \widehat{\mathcal{Z}}^\mathrm{tot}(m)  \cdot \widehat{\tautmod}_V^{n-1} \big)  
   \stackrel{?}{=} 
  - 
 \frac{    \kappa(m)   }{2}  \cdot  \mathrm{vol}_\C (  \widehat{\tautmod}_V )  \cdot    B(m,0,s_0) .
\]
As in the proof of Proposition \ref{prop:complex divisor-eisenstein}, Remark \ref{rem:strong complex volume} implies 
\begin{equation}\label{Kbun complex integral}
 \int_{ \mathcal{Z}_V(m)(\C) }    \chern(\widehat{\tautmod}_{V} )^{n-2}  
  = - \frac{1}{2}   \cdot  \mathrm{vol}_\C(\widehat{\tautmod}_V )   \cdot   B(m,0,s_0)
\end{equation}
(the  factor of $1/2$ appears because of the relation 
$2  \chern(\widehat{\taut}_{V})   =   \chern(\widehat{\tautmod}_{V} )$
of Theorem \ref{thm:taut-hodge compare}), and  we find that \eqref{degree conjecture 1} is equivalent to 
\begin{equation}
\label{degree conjecture 2}
 \widehat{\deg} \big( \widehat{\mathcal{Z}}^\mathrm{tot}(m)  \cdot \widehat{\tautmod}_V^{n-1} \big) 
   \stackrel{?}{=} 
  \kappa(m)   \int_{ \mathcal{Z}_V(m)(\C) }    \chern(\widehat{\tautmod}_{V} )^{n-2}   .
 \end{equation}
The following corollary of Theorem \ref{thm:K volume} shows that this equality holds for many values of $m$.

 \begin{corollary}\label{cor:known degree}
 Assume $n>2$.
 The equality \eqref{degree conjecture 2}   holds for all positive $m\in \Z$ with $\eps(m)=1$.
 \end{corollary}

 \begin{proof}
 The final claim of Theorem \ref{thm:K volume}  shows that the $m^\mathrm{th}$ Fourier coefficients of 
 \[
 \widehat{\deg}(  \widehat{\phi}_V (\tau) \cdot \widehat{\tautmod}_V^{n-1} ) 
 \quad \mbox{and}\quad 
 \widehat{\deg}(  \widehat{\phi}^\mathrm{eis}_V (\tau) \cdot \widehat{\tautmod}_V^{n-1} ) 
 \]
 agree  for all $m$ with $\eps(m)=1$, and hence (by Corollary \ref{cor:arithmetic eis}) the equality \eqref{degree conjecture 1} holds for all such $m$.  
 The discussion above shows that this is equivalent to \eqref{degree conjecture 2}.
 \end{proof}

Again suppose that $n>2$ and $\gcd(m,D)=1$, and recall the equality 
\[
\widehat{\deg} \big( \widehat{\mathcal{Z}}_V^\mathrm{tot} (m)  \cdot \widehat{\tautmod}_V^{n-1}  \big)
=
\mathrm{ht}_{\widehat{\tautmod}_V}  (  \bar{\mathcal{Z}}_V (m) ) 
+ \int_{\mathcal{S}_V(\C)}  \Phi_V(m)  \chern( \widehat{\tautmod}_V)^{n-1}   
\]
of Theorem \ref{thm:no boundary heights}.
As in the proof of Lemma \ref{lem:second volume lemma}, the integral on the right hand side  is
\begin{align*}
 \int_{\mathcal{S}_V(\C)}  \Phi_V(m)  \chern( \widehat{\tautmod}_V)^{n-1}    
 & = 
   B'(m, 0 ,s_0)  \cdot   \vol_\C(\widehat{\tautmod}_V )  \\
   & \stackrel{\eqref{Kbun complex integral}}{=}
 -2  \frac{B'(m, 0 ,s_0)}{B(m, 0 ,s_0)}    \int_{ \mathcal{Z}_V(m)(\C) }    \chern(\widehat{\tautmod}_{V} )^{n-2}  .
\end{align*}
Solving for the height term, we find that the conjectural equality of \eqref{degree conjecture 2} is equivalent to 
\[
\mathrm{ht}_{\widehat{\tautmod}_V}  (  \bar{\mathcal{Z}}_V (m) )   
\stackrel{?}{=}  \left(  \kappa(m)
   +  2   \frac{  B'(m, 0 ,s_0)  }{ B(m,0,s_0)   } 
 \right)   \int_{ \mathcal{Z}_V(m)(\C) }    \chern(\widehat{\tautmod}_{V} )^{n-2} .
\]
By the explicit  calculation of $B(m,0,s)$ found in Corollaries \ref{cor:eis2} and \ref{cor:eisodd2}, this last conjectural equality is equivalent to 
\begin{equation}\label{degree conjecture 3}
\mathrm{ht}_{\widehat{\tautmod}_V}  (  \bar{\mathcal{Z}}_V (m) )   
 \stackrel{?}{=}  
\left(  2 \frac{ \mathbf{A}'_{V,m}(0)  }{  \mathbf{A}_{V,m}(0 )    }    +  \log(D)  \right)
 \int_{ \mathcal{Z}_V(m)(\C) }    \chern(\widehat{\tautmod}_{V} )^{n-2} .
\end{equation}
Here  if $n$ is even  we have set
\[
\mathbf{A}_{V,m}(s) = \mathbf{a}_1(s) \cdots \mathbf{a}_{n-1}(s) \cdot  m^{s+n-1}   \sigma_{-2s - n + 1 }(m), 
\]
while if $n$ is odd we have set
\begin{align*}
\mathbf{A}_{V,m}(s)  
&= \mathbf{a}_1(s) \cdots \mathbf{a}_{n-1}(s) \cdot m^{s+  n-1  }\sigma_{-2s - n + 1  ,\eps}(m)    \\
& \quad \times 
 \prod_{ \ell \mid D } \left(  1+ \leg{-1}{\ell}^{ \frac{n-1}{2} } \leg{m}{\ell} \inv_\ell(V) \ell^{ -s - \frac{n-1}{2}}     \right).
\end{align*}

 \begin{corollary}
  Assume $n>2$.  The equality \eqref{degree conjecture 3}
holds for all positive $m\in \Z$ with $\eps(m)=1$.
 \end{corollary}

\begin{proof}
The discussion above shows that \eqref{degree conjecture 3} is equivalent to \eqref{degree conjecture 2}, which is known when $\eps(m)=1$ by Corollary \ref{cor:known degree}.
\end{proof}


\section{Volume of the Hodge bundle}


We now derive Theorem \ref{thm:intro main} of the introduction as a consequence of Theorem \ref{thm:K volume}.    Let $W$ be a $\kk$-hermitian space of signature $(n-1,1)$ with $n\ge 1$, containing a self-dual $\co_\kk$-lattice.  Let $\mathcal{M}_W$ be the moduli space of \eqref{relevant decomp}, and recall that in  \S \ref{ss:hermitian bundles} we associated a metrized Hodge bundle 
\[
\widehat{\omega}^\mathrm{Hdg}_{A/\mathcal{M}_W} \in \widehat{\Pic}( \bar{\mathcal{M}}_W, \mathscr{D}_\BKK ) 
\]
to the universal abelian scheme $A \to \mathcal{M}_W$.

\begin{theorem}\label{thm:final hodge}
The metrized Hodge bundle  has complex volume
\[
\vol_\C(  \widehat{\omega}^\mathrm{Hdg}_{A/\mathcal{M}_W}  )
 =
 \mathbf{A}_W (0) \times
 \begin{cases}
 2^{n-1  }    & \mbox{if $n$ is odd } \\
2^{n-o(D)  }   & \mbox{if $n$ is even}
\end{cases}
\]
and arithmetic volume
\[
\widehat{\vol}  ( \widehat{\omega}^\mathrm{Hdg}_{A/ \mathcal{M}_W  } )
= 
  \left(  2 \frac{  \mathbf{A}_W'(0) }{ \mathbf{A}_W(0) }    - nC_0(n)  
  + \log(D)  \right)     \vol_\C(   \widehat{\omega}^\mathrm{Hdg}_{A/ \mathcal{M}_{W}  } ),
  \]
where $C_0(n)$ is the constant defined in Theorem \ref{thm:intro main}.
\end{theorem}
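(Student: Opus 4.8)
The plan is to deduce Theorem \ref{thm:final hodge} from the results already established for the Shimura variety $\mathcal{S}_V$, by tracking the finite \'etale projection $\mathcal{S}_V \to \mathcal{M}_W$ of Remark \ref{rem:projection fiber}. Concretely, for a relevant $W$ of signature $(n-1,1)$ choose any hermitian $V$ of signature $(n-1,1)$ containing a self-dual lattice (e.g. $V=W$ via Remark \ref{rem:projection fiber}), and recall that $\widehat{\omega}^\mathrm{Hdg}_{A/\mathcal{S}_V}$ is the pullback of $\widehat{\omega}^\mathrm{Hdg}_{A/\mathcal{M}_W}$ along $\mathcal{S}_V \to \mathcal{M}_W$. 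The degree of this projection, suitably interpreted on stacks, is computed in Remark \ref{rem:projection fiber} to be
\[
\deg( \mathcal{S}_V \to \mathcal{M}_W) = \frac{|\mathrm{CL}(\kk)|}{|\co_\kk^\times|} \cdot
\begin{cases} 1 & n\text{ even} \\ 2^{1-o(D)} & n\text{ odd.} \end{cases}
\]
Since pullback along a finite \'etale surjection multiplies both the complex volume and the arithmetic volume by this degree (with the usual caveat about the factor coming from the $\kk$-versus-$\Q$ conventions of Remark \ref{rem:no half}, which here is harmless as everything is already over $\co_\kk$), I would obtain
\[
\vol_\C(\widehat{\omega}^\mathrm{Hdg}_{A/\mathcal{S}_V}) = \deg \cdot \vol_\C(\widehat{\omega}^\mathrm{Hdg}_{A/\mathcal{M}_W}),
\qquad
\widehat{\vol}(\widehat{\omega}^\mathrm{Hdg}_{A/\mathcal{S}_V}) = \deg \cdot \widehat{\vol}(\widehat{\omega}^\mathrm{Hdg}_{A/\mathcal{M}_W}).
\]

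Next I would feed in the volume formulas for $\mathcal{S}_V$. For $n\ge 2$, Theorem \ref{thm:degree induction} together with the equality of Chern forms in Theorem \ref{thm:taut-hodge compare} gives $\vol_\C(\widehat{\omega}^\mathrm{Hdg}_{A/\mathcal{S}_V}) = 2^{n-2} \cdot \vol_\C(\widehat{\tautmod}_V) = \frac{|\mathrm{CL}(\kk)|}{2^{o(D)-2}|\co_\kk^\times|}\mathbf{A}_V(0)$, and Corollary \ref{cor:other volumes} gives the arithmetic volume $\widehat{\vol}(\widehat{\omega}^\mathrm{Hdg}_{A/\mathcal{S}_V}) = \big(2\tfrac{\mathbf{A}_V'(0)}{\mathbf{A}_V(0)} - nC_0(n) + \log(D)\big)\vol_\C(\widehat{\omega}^\mathrm{Hdg}_{A/\mathcal{S}_V})$. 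Dividing by $\deg$ and noting that $\mathbf{A}_V(0) = \mathbf{A}_W(0)$ whenever $V$ and $W$ have the same local invariants (which one arranges, and which in any case only depends on the dimension when $n$ is odd) yields exactly the asserted formulas: the factor $\frac{|\mathrm{CL}(\kk)|}{|\co_\kk^\times|}$ cancels, leaving $2^{n-o(D)}$ when $n$ is even and $2^{n-2}\cdot 2^{o(D)-1} = 2^{n+o(D)-3}$ — so here I will need to be careful and recheck that the $\mathcal{S}_V$ normalization of $\mathbf{A}_V$ versus $\mathbf{A}_W$ and the power of $2$ match up with the claimed $2^{n-1}$; I expect the discrepancy is absorbed because when $n$ is odd all relevant $W$ form a single strict similarity class and $\mathcal{S}_V$ is built from $\mathcal{M}_{W_0}\times\mathcal{M}_W$ with $\mathcal{M}_W$ itself not connected, so the bookkeeping of connected components is the delicate point. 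The arithmetic volume formula for $\mathcal{S}_V$ passes through the division by $\deg$ unchanged in its shape since it is a scalar multiple of the complex volume, so no new input is needed there beyond Corollary \ref{cor:other volumes}.

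The base case $n=1$ must be handled separately since $\mathcal{S}_V$ and the inductive machinery of \S\ref{s:KR divisors}--\S\ref{s:volumes} require $n\ge 2$. Here $\mathcal{M}_{(0,1)} \iso \mathcal{M}_{(1,0)}$ is finite \'etale over $\co_\kk$, classifying CM elliptic curves, and $\mathbf{A}_W(0) = \mathbf{a}_1(0) = \frac{\sqrt{D}L(1,\eps)}{2\pi} = \frac{|\mathrm{CL}(\kk)|}{|\co_\kk^\times|}$ by the class number formula, so the claimed complex volume is $\mathbf{A}_W(0)\cdot 2^0 = \frac{|\mathrm{CL}(\kk)|}{|\co_\kk^\times|}$, which is just the (orbifold) cardinality of $\mathcal{M}_{(1,0)}(\C)$ — immediate. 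For the arithmetic volume one invokes the Chowla--Selberg formula \eqref{faltings}: the Hodge bundle on $\mathcal{M}_{(1,0)}$ has arithmetic volume equal to $2 h^\mathrm{Falt}_\kk + \log(2\pi)$ times its complex volume, up to the normalization conventions of \S\ref{ss:volumes}, and one checks that this equals $\big(2\tfrac{\mathbf{A}_W'(0)}{\mathbf{A}_W(0)} - C_0(1) + \log D\big)$ via the logarithmic-derivative identity \eqref{a dlogs} for $k=1$, i.e. $\tfrac{\mathbf{a}_1'(0)}{\mathbf{a}_1(0)} = -2\tfrac{L'(0,\eps)}{L(0,\eps)} + \gamma + \log\tfrac{4\pi}{D} - \log D$, combined with the definition $C_0(1) = 2\log\tfrac{4\pi e^\gamma}{\sqrt D} - 3\big(\tfrac{L'(0,\eps)}{L(0,\eps)} + \tfrac{\log D}{2}\big)$. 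This is a direct computation with no conceptual content.

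The main obstacle I anticipate is not any single hard theorem but the careful matching of normalization factors: the power of $2$ and the class-number factor $\frac{|\mathrm{CL}(\kk)|}{|\co_\kk^\times|}$ must come out exactly right, and this depends on three separate bookkeeping conventions — the stacky degree of $\mathcal{S}_V \to \mathcal{M}_W$ in Remark \ref{rem:projection fiber} (which itself distinguishes $n$ even from $n$ odd), the $\kk$-versus-$\Q$ convention of Remark \ref{rem:no half} for arithmetic degrees, and the relation $\mathbf{A}_V(0) = \mathbf{A}_W(0)$ which holds on the nose only for matching local invariants. The safest route is to verify the whole chain at $n=2$ against Proposition \ref{prop:base case} as a consistency check, since there both sides are known explicitly, and then argue that the general $n$ case differs only by the inductively-established factor $\mathbf{b}_{W,n}(0) = \mathbf{A}_W(0)/\mathbf{A}_{W'}(0)$ and a corresponding doubling of the power of $2$, exactly as in the proof of Theorem \ref{thm:degree induction}.
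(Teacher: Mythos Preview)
Your approach is exactly the paper's: for $n\ge 2$ descend along the finite \'etale surjection $\mathcal{S}_V \to \mathcal{M}_W$ of Remark \ref{rem:projection fiber} using Theorem \ref{thm:degree induction} and Corollary \ref{cor:other volumes}, and handle $n=1$ via the class number formula and Chowla--Selberg. The $n=1$ computation you sketch is correct.

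Your confusion about the power of $2$ comes from one slip: you wrote $\vol_\C(\widehat{\omega}^\mathrm{Hdg}_{A/\mathcal{S}_V}) = 2^{n-2}\vol_\C(\widehat{\tautmod}_V)$, but by Theorem \ref{thm:taut-hodge compare}(2) the Chern forms of $\widehat{\omega}^\mathrm{Hdg}_{A/\mathcal{S}_V}$ and $\widehat{\tautmod}_V$ are \emph{equal}, so
\[
\vol_\C(\widehat{\omega}^\mathrm{Hdg}_{A/\mathcal{S}_V}) = \vol_\C(\widehat{\tautmod}_V) = \frac{|\mathrm{CL}(\kk)|}{2^{o(D)-n}|\co_\kk^\times|}\,\mathbf{A}_V(0)
\]
directly from Theorem \ref{thm:degree induction}. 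Dividing by the degree $t_n$ of Remark \ref{rem:projection fiber} then gives $2^{n-1}\mathbf{A}_W(0)$ for $n$ odd and $2^{n-o(D)}\mathbf{A}_W(0)$ for $n$ even, on the nose, with $\mathbf{A}_V = \mathbf{A}_W$ because $V$ and $W$ are strictly similar (identical local invariants when $n$ is even, and $\mathbf{A}$ depends only on the dimension when $n$ is odd). No further bookkeeping with connected components or the Remark \ref{rem:no half} convention is needed.
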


\begin{proof}
First assume $n\ge 2$.
Let $W_0=\kk$ endowed with its hermitian form $h(x,y)=x\overline{y}$,
and define a hermitian form on $V=\Hom_\kk(W_0,W)$ as in \eqref{basic hom hermitian}.   
In particular $W\iso V$ as hermitian spaces.

As in Remark \ref{rem:projection fiber}, there is a  finite \'etale surjection
$
\mathcal{S}_V \to \mathcal{M}_W
$
of degree
\[
t_n =   \frac{ |\mathrm{CL}(\kk) |} {  | \co_\kk^\times|} 
\times \begin{cases}
1 & \mbox{if $n$ is even} \\
2^{1- o(D)  }  & \mbox{if $n$ is odd.}
\end{cases}
\]
Denote again by $A \to \mathcal{S}_V$ the pullback of $A\to \mathcal{M}_W$ via this morphism, 
so that 
\[
 \vol_\C(\widehat{\omega}^\mathrm{Hdg}_{ A / \mathcal{M}_W }  )
  =
  \frac{1}{t_n}  \vol_\C(\widehat{\omega}^\mathrm{Hdg}_{ A/\mathcal{S}_V}  )
\]
and
\[
 \widehat{\vol} ( \widehat{\omega}^\mathrm{Hdg}_{ A / \mathcal{M}_W }  ) 
=  
  \frac{1}{t_n}  \widehat{\vol} ( \widehat{\omega}^\mathrm{Hdg}_{ A / \mathcal{S}_V } ).
\]
Using these formulas,  the theorem  for $n>2$ follows from Theorem \ref{thm:degree induction}  and  Corollary \ref{cor:other volumes}, along with the equality of Chern forms of Theorem \ref{thm:taut-hodge compare}.

It remains to treat the case $n=1$,  so that 
\[
\mathcal{M}_{W} \iso \mathcal{M}_{(0,1)}  \iso \mathcal{M}_{(1,0)}.
\]
 In this case
 $\mathbf{A}_W(s) = \mathbf{a}_1(s)$.  Dirichlet's class number formula implies
\[
\mathbf{a}_1(0) = \frac{  | \mathrm{CL}(\kk) | }{  | \co_\kk^\times | }  ,
\]
while  \eqref{a dlogs} and \eqref{faltings} imply  
\[
 2 \frac{  \mathbf{a}_1'(0) }{ \mathbf{a}_1(0) }    - C_0(1)    + \log(D)    
 =    -  \frac{L'(0,\eps)}{L(0,\eps)}   -    \frac{    \log(D)}{2}  = \log(2\pi) + 2 h_\kk^\mathrm{Falt} .
\]
The theory of complex multiplication implies
\[
\vol_\C(   \widehat{\omega}^\mathrm{Hdg}_{A/ \mathcal{M}_{(1,0)}  } ) =
 \int_{\mathcal{M}_{(1,0)} (\C)} 1   = \sum_{ x \in \mathcal{M}_{(1,0)}(\C) } \frac{1}{ | \Aut(x) | } =  \frac{  | \mathrm{CL}(\kk) | }{  | \co_\kk^\times | } = \mathbf{A}_W(0) ,
\]
while  the argument of Proposition \ref{prop:easy numerical}, which is essentially the Chowla-Selberg formula, implies the first equality in 
\begin{align*}
\widehat{\deg}  ( \widehat{\omega}^\mathrm{Hdg}_{A/ \mathcal{M}_{(1,0)}  } )
 & =
 \widehat{\deg}  (0 ,  \log(2\pi) + 2 h_\kk^\mathrm{Falt} ) \\ 
 & =  ( \log(2\pi) + 2 h_\kk^\mathrm{Falt} )  \int_{ \mathcal{M}_{(1,0)} (\C) }  1  \\
 & =   \left( 2 \frac{  \mathbf{A}_W'(0) }{ \mathbf{A}_W(0) }    - C_0(1)    + \log(D)  \right) 
\vol_\C(   \widehat{\omega}^\mathrm{Hdg}_{A/ \mathcal{M}_{(1,0)}  } ) .
\end{align*}
This  completes the proof.
\end{proof}

\appendix


\chapter{Coefficients of modular forms}
\label{appendix:bel}


In this appendix we quote a theorem of Bella\"{i}che, and derive some consequences that play an essential role in the proof of Theorem \ref{thm:K volume}.  We thank Joel Bella\"{i}che for his help.  
Let $N$ be a positive integer and let $\chi$ be a Dirichlet character modulo $N$. 
Fix a modular form
\[
f(\tau)  = \sum_{m\ge 0} c(m) \cdot q^m \in M_k(\Gamma_0(N),\chi) 
\]
of weight $k\in \Z_{>0}$ with nebentypus character $\chi$ (we do not assume that $f$ is a Hecke eigenform).

\begin{Atheorem}[Bella\"{i}che]
\label{thm:bel}
Suppose the coefficients of $f(\tau)$ satisfy 
\begin{enumerate}
\item[(i)] $c(m)=0$ for all integers $m$ with $\gcd(m,N)>1$, and 
\item[(ii)] $c(p)=0$ for a set of primes $p$ of density $1$.
\end{enumerate}
Then $f(\tau)$ vanishes identically. 
\end{Atheorem}

\begin{proof}
If the nebentypus character $\chi$ is trivial, this is a direct consequence of \cite[Theorem~I]{Be}, which actually gives an analogous result in positive characteristic. An inspection of the proof shows that the result equally holds for general nebentypus. 

For the convenience of the reader, we also sketch a more direct proof that was provided to us by Bella\"{i}che. 
Let $L$ be be the field generated by the coefficients of $f$. 
Without loss of generality, possibly replacing $L$ by an extension, we may view $L$  as an extension of $\Q_p$ for any prime $p$. We may also assume that  $L$ contains all $N$-th roots of unity.
We first consider the case when $N>1$. Choose a prime $p$ that divides $N$.

Let $M_f$ be the subspace of $M_k(\Gamma_0(N),\chi,L)$ generated by $f$ under the action of the Hecke operators $T_\ell$ for $\ell$ prime not dividing $N$. 
Let $A_f$ be the $L$-subalgebra of
$\End_L(M_f)$ generated by the $T_\ell$ for $\ell$ prime not dividing $N$.

Notice that $A_f$ contains all the $T_n$ for $n$ prime to $N$. This is because any such $T_n$ is a polynomial in the $T_\ell$ ($\ell$ prime not dividing $N$) and the diamond
operators, which are just scalar in $L$, and hence belong to $A_f$. (Here we use the hypothesis that $f$ has a nebentypus).

Let $G=G_{\Q,N}$ be the Galois group of the maximal algebraic extension of $\Q$ unramified
outside $N$. There exists  a continuous
 pseudo-representation $(\tau,\delta) : G \to A_f$ of dimension $2$
which sends $\operatorname{Frob}_\ell$ (for $\ell$ prime not dividing $N$)
to $T_\ell$, see e.g.~\cite[Section~10]{Be} or \cite[Section~3.10]{BS}. Here 
$\delta$ is $w^{k-1} \chi$, where  $w$ denotes the $p$-adic cyclotomic character, and hence takes values in $L^\times$.

We claim that the linear subspace generated by  $\tau(G)$ in $A_f$ has to be $A_f$. This  follows from the formula $\tau(gh)+\delta(h) \tau(gh^{-1})= \tau(g) \tau(h)$, a
formula true for all pseudo-representations of dimension $2$, since it is true for $\tau =$ the trace, and $\delta =$ the determinant in the algebra $\operatorname{Mat}_2(\C)$. Thus the linear subspace
generated by $\tau(G)$ is a subalgebra containing  all the $T_\ell$ ($\ell$ not dividing $N$), hence is $A_f$.

Let $V$ be the kernel of the  linear map $A_f\to L$ given by $T \mapsto c_{Tf}(1)$. 
By the hypothesis (ii) we know that $c_f(\ell) = c_{T_\ell f}(1)=0$ for a dense set of primes $\ell$. Hence $\tau(\operatorname{Frob}_\ell)$ is in $V$ for a dense set of $\ell$.
Since $V$ is closed, this means by the Chebotarev density theorem that $\tau(G)$ is contained in $V$. By the above claim, we find that $V = A_f$. This implies that $T_n\in V$ for
all $n$ prime to $N$, and hence that $c_f(n)=0$ for all $n$ prime to $N$.
On the other hand, by hypothesis (i) we know that $c_f(n)=0$  for all $n$ not prime to $N$. Hence 
we have $f=0$, concluding the proof in the case $N>1$.

If $N=1$, we chose an auxiliary prime $q$ and replace $f$ by 
\[
f'= \sum_{\substack{n\geq 0\\ (n,q)=1}} c(n) q^n,
\]
which belongs to $M_k(\Gamma_0(q^2),L)$ by  \cite[Lemma 4.3.10]{miyake}.
Applying the above argument to $f'$ we find $c(n)=0$ for all $n$ coprime to $q$. But since $f$ has level $1$, this means that $f=0$.     
\end{proof}

\begin{Acorollary}
\label{cor:bel1}
Keep the form
\[
f(\tau)  = \sum_{m\ge 0} c(m) \cdot q^m \in M_k(\Gamma_0(N),\chi) 
\]
as above.
Let $\psi$ be a quadratic Dirichlet character modulo $N$.
Fix $\delta\in \{\pm 1\}$, and suppose that  $c(p)=0$ for all but finitely many primes $p$ with $\psi(p)=\delta$.
Then $c(m)=0$ for all integers $m$ with $\psi(m)=\delta$.
\end{Acorollary}

\begin{proof}
Recall from \cite[Lemma 4.3.10]{miyake} that if $\psi$ is any quadratic Dirichlet character modulo $N$, then the twist 
\[
f_\psi (\tau) = \sum_{m\geq 0} \psi(m) c(m) \cdot  q^m
\]
of $f(\tau)$ defines an element of $M_k(\Gamma_0(N^2),\chi)$. 
As the Fourier coefficients of 
\[
\sum_{\substack{m\geq 0\\ \psi(m)= \delta} }  c (m)  \cdot q^m 
 =
  \frac{1}{2} \cdot \big(  f_{\psi^2}(\tau) +\delta f_\psi (\tau)  \big)
   \in M_k(\Gamma_0(N^2),\chi)
\]
 satisfy the hypotheses of Theorem \ref{thm:bel},  this modular form  vanishes identically. 
The claim follows.
\end{proof}

We now apply these results to the space of modular forms $M_n(\Gamma_0(D),\eps^n)$  relevant to the present paper.  In other words,  $\eps$ and $D$ are as in the introduction.
Let us write  
\[
M  \subset M_n(\Gamma_0(D),\eps^n)
\]
for the $\Q$-subspace of forms with rational Fourier coefficients.
The $\Q$-linear dual  $M^\vee=\Hom_\Q(M,\Q)$ is generated by the coefficient extraction functionals 
\[
a_m: M \to \Q,
\]
sending a modular form $f(\tau)$ to its $m^\mathrm{th}$ Fourier coefficient.

\begin{Acorollary}
\label{cor:bel2}
For  $\delta\in \{\pm 1\}$ and any positive $t\in \Z$, the subspaces
\begin{align*}
A_{\delta}  & = \mathrm{Span}_\Q \{  a_m :  \text{$m\in \Z_{>0}$ with $\eps(m)=\delta$} \}   \\
P_{\delta,t}  & =  \mathrm{Span}_\Q \{ a_p :  \text{$p$ prime with $\eps(p)=\delta$ and $p>t$} \} 
\end{align*}
of $M^\vee$ are equal.
\end{Acorollary}

\begin{proof}
For any subspace $B\subset M^\vee$ we write $B^\perp \subset M$ for its orthogonal complement with respect to the natural non-degenerate pairing $M^\vee\times M\to \Q$.  
It suffices to show that  the inclusion
$
A_\delta^\perp \subset P_{\delta,t}^\perp
$
is an equality, so fix an 
\[
f(\tau) = \sum_{ m\ge 0} c(m) \cdot q^m \in P_{\delta,t}^\perp \subset M.
\]
By definition of $P_{\delta,t}^\perp$, we have $c(p)=0$ for all primes  $p>t$ with $\eps(p)=\delta$, 
and so Corollary \ref{cor:bel1} implies that $c(m)=0$ for all $m\in \Z_+$ with  $\eps(m)=\delta$.  In other words, $f(\tau) \in A_\delta^\perp$.
\end{proof}

\begin{Acorollary}
\label{cor:bel3}
Fix $\delta\in \{\pm 1\}$.  If 
\[
f(\tau)  = \sum_{m\ge 0} c(m) \cdot q^m \in M_n(\Gamma_0(D),\eps^n) 
\]
is a modular form with the property that 
\[
c(p) \in \Q \log(p)
\]
for all primes $p$ with $\eps(p)=\delta$, then $c(p)=0$ for all such primes.
\end{Acorollary}

\begin{proof}
Fix a prime $p$ with $\eps(p)=\delta$, and consider the coefficient extraction functional
$
a_p : M  \to \Q.
$
Taking $t=p$ in Corollary \ref{cor:bel2}, we find that there are rational numbers $\alpha_1,\ldots, \alpha_r$ and primes $p< p_1 < \ldots < p_r $ with all $\eps(p_i)=\delta$, such that 
\[
a_p =  \sum_{i=1}^r \alpha_i  \cdot a_{p_i}   
\]
holds as an equality of $\Q$-linear functionals on $M$.
We now view this as an equality of $\C$-linear functionals on 
$
M_n(\Gamma_0(D),\eps^n) = M\otimes_\Q\C, 
$
 and  apply this linear functional to $f(\tau)$.  The  result is
\[
c(p) = \sum_{i=1}^r \alpha_i  \cdot c(p_i)  \in \sum_{i=1}^r \Q \log(p_i) ,
\]
and the claim follows from  the $\Q$-linear independence of the set 
\[
\{ \log(p),\log(p_1) ,\ldots, \log(p_r)\}. \qedhere
\]
\end{proof}

\begin{Aremark}
Of course Corollary \ref{cor:bel1} allows us to strengthen the conclusion of Corollary \ref{cor:bel3}
to $c(m)=0$ whenever $\eps(m)=\delta$.
\end{Aremark}

\backmatter
\bibliographystyle{amsalpha}

\newcommand{\etalchar}[1]{$^{#1}$}
\def\cprime{$'$}
\providecommand{\bysame}{\leavevmode\hbox to3em{\hrulefill}\thinspace}
\providecommand{\MR}{\relax\ifhmode\unskip\space\fi MR }
\providecommand{\MRhref}[2]{%
  \href{http://www.ams.org/mathscinet-getitem?mr=#1}{#2}
}
\providecommand{\href}[2]{#2}

\printindex

\end{document}